\numberwithin{equation}{section}
\newtheorem{theorem}{Theorem}[section]
\newtheorem{proposition}[theorem]{Proposition}
\newtheorem{corollary}[theorem]{Corollary}
\newtheorem{lemma}[theorem]{Lemma}
\newtheorem{conjecture}[theorem]{Conjecture}
\newtheorem*{remark*}{Remark}
\theoremstyle{definition}
\newtheorem{definition}[theorem]{Definition}
\newtheorem{remark}[theorem]{Remark}
\newcommand{\vpi}{\varphi}
\newcommand{\mfo}{\mathfrak{o}}
\newcommand{\mfu}{\mathfrak{g}}
\newcommand{\wn}{\widetilde{n}}
\newcommand{\mfof}{\mathfrak{o}_{\widetilde{F}}}
\newcommand{\kf}{\kappa_{\widetilde{F}}}
\newcommand{\T}{\mathcal{T}}
\newcommand{\JT}{\mathcal{JT}}
\newcommand{\sog}{\mathcal{SO}_{\gamma}}
\newcommand{\gl}{\mathfrak{gl}}
\newcommand{\glno}{\mathfrak{gl}_{n, \mathfrak{o}}}
\newcommand{\GL}{\mathfrak{gl}}
\newcommand{\End}{\mathrm{End}}
\newcommand{\cL}{\mathcal{L}}
\newcommand{\Z}{\mathbb{Z}}
\newcommand{\uGr}{\underline{G}_{\gamma}}
\newcommand{\ord}{\mathrm{ord}}
\newcommand*{\Rom}[1]{\expandafter\@slowromancap\romannumeral #1@}
\begin{document}

\title[Stable orbital integrals for classical  Lie algebras and smooth integral models]
{Stable orbital integrals for  classical  Lie algebras and smooth integral models}

\keywords{}
\subjclass[2010]{MSC11F72, 11S80, 14B05}

\author[Sungmun Cho]{Sungmun Cho}
\author[Taeyeoup Kang]{Taeyeoup Kang}
\author[Yuchan Lee]{Yuchan Lee}
\thanks{The authors are supported by  Samsung Science and Technology Foundation under Project Number SSTF-BA2001-04.}

\address{Sungmun Cho \\  Department of Mathematics, POSTECH, 77, Cheongam-ro, Nam-gu, Pohang-si, Gyeongsangbuk-do, 37673, KOREA}

\email{sungmuncho12@gmail.com}

\address{Taeyeoup Kang \\  Department of Mathematics, POSTECH, 77, Cheongam-ro, Nam-gu, Pohang-si, Gyeongsangbuk-do, 37673, KOREA}

\email{taeyeoupkang@gmail.com}

\address{Yuchan Lee \\  Department of Mathematics, POSTECH, 77, Cheongam-ro, Nam-gu, Pohang-si, Gyeongsangbuk-do, 37673, KOREA}

\email{yuchanlee329@gmail.com}

\maketitle

\begin{abstract}
A main goal of this paper is to introduce a new description of the stable orbital integral for a regular semisimple element and  for the unit element of the Hecke algebra in the case of $\mathfrak{gl}_{n,F}$, $\mathfrak{u}_{n,F}$, and $\mathfrak{sp}_{2n,F}$, by assigning a certain stratification  and then smoothening each stratum, where $F$ is a non-Archimedean local field of any characteristic.

  As  applications,  we will provide a closed formula for the stable orbital integral for $\mathfrak{gl}_{2,F}$, $\mathfrak{gl}_{3,F}$, and $\mathfrak{u}_{2,F}$.
We will also  provide a lower bound for the stable orbital integral for $\mathfrak{gl}_{n,F}$, $\mathfrak{u}_{n,F}$, and $\mathfrak{sp}_{2n,F}$ with all $n$.
Finally we will propose conjectures that our lower bounds are optimal in a sense of the second leading term for $\mathfrak{gl}_{n,F}$ and the first leading term for $\mathfrak{u}_{n,F}$ and $\mathfrak{sp}_{2n,F}$. 
There is a restriction about the factorization of the characteristic polynomial arising from the parabolic descent when we work with $\mathfrak{u}_{n,F}$ and $\mathfrak{sp}_{2n,F}$, whereas this assumption does not appear  in  $\mathfrak{gl}_{n,F}$ case.
\end{abstract}

\tableofcontents

\section{Introduction}
An orbital integral over a non-Archimedean local field is the volume of a compact $p$-adic manifold given by a conjugacy class of a fixed element in a reductive group (or its Lie algebra) with respect to a certain Haar measure. A traditional way to study an orbital integral is through either an analytic method to analyze the integral or a geometric method using Bruhat-Tits building. 

In this paper, we propose another geometric method to investigate an orbital integral for a  regular and semisimple element, using smoothening of a certain scheme defined over an henselian ring $R$.

\subsection{Background on  smoothening}
In this subsection, we will explain a general geometric setting.
We fix the following notations:
\[
\left\{
\begin{array}{l}
\textit{Let $F$ be a  non-Archimedean local field  of any characteristic};\\
\textit{Let $\mathfrak{o}_F$  be the ring of integers in $F$ with $\pi$ a uniformizer};\\
\textit{Let $\kappa$  be the residue field of $\mfo_F$};\\
\textit{Let $q$ be the cardinality of $\kappa$}.
\end{array}\right.
\]
We sometimes use $\mfo$ for $\mfo_{F}$, if there is no confusion.
We consider the following general setting:
\[
\left\{
\begin{array}{l}
\textit{Let $X$ be a smooth scheme defined over $\mfo$ of finite type};\\
\textit{Let $\mathbb{A}_{\mathfrak{o}}$ be an affine space defined over $\mfo$ of finite dimensional};\\
\textit{Let $\varphi$ be a morphism $\varphi: X \longrightarrow \mathbb{A}_{\mathfrak{o}}$ defined over $\mfo$
whose generic fiber  is smooth over $F$.}
\end{array}\right.
\]
For a suitable $a\in \mathbb{A}_{\mathfrak{o}}(\mfo)$, our main question is 
\[
\textit{What is the volume of $\varphi^{-1}(a)(\mfo)$?}
\]
Here, the volume form is the quotient of a canonical volume form of $X$  by that of $\mathbb{A}_{\mathfrak{o}}$ (see Section \ref{measure} for more discussion).
The easiest situation   is when $\varphi$ is smooth over $\mfo$ so that  $\varphi^{-1}(a)$ is a smooth scheme over $\mfo$.
It is well-known that the desired volume is $\frac{\#\varphi^{-1}(a)(\kappa)}{q^{\dim_{\mfo}{X}-\dim_{\mfo}{\mathbb{A}_{\mfo}}}}$ due to Weil's formula given in  \cite[Theorem 2.2.5]{Weil}.

This formula is no longer true if $\varphi$ is not smooth. 
In many  but interesting cases, the morphism $\varphi$ is  far from smoothness unfortunately. 
This fact makes the volume difficult  to understand as well as to compute.

In the following  subsections, we will introduce three specific cases of the setting $\varphi: X \longrightarrow \mathbb{A}_{\mathfrak{o}}$
 which  have rich and important applications. 
 We will briefly explain how to ``overcome'' non-smoothness issue so as to use Weil's formula, as a motivation of the idea used in this paper.
\\

\subsubsection{\textbf{Local density of a bilinear form: Congruence conditions}}

Consider
\[
\varphi: \mathfrak{gl}_{n, \mfo} \longrightarrow \mathrm{Sym}_{n, \mfo}, ~~~~~~~~~~~~m \mapsto f\circ m,
 \]
where $f$ is a quadratic form over $\mfo$ with $n$ variables whose discriminant is nonzero and $\mathrm{Sym}_{n, \mfo}$ represents the set of symmetric matrices over $\mfo$ of size $n$ so as to be an affine space of dimension $n(n+1)/2$.
Then $\varphi^{-1}(f)$ is the orthogonal group scheme stabilizing $f$, which is not  smooth in general. 
The volume of $\varphi^{-1}(f)(\mfo)$ is known to be the local density of the quadratic form $f$, which is the local factor of \textit{the celebrated Smith-Minkowski-Siegel mass formula}. 

Since $\varphi$ is highly non-smooth, we need to do ``smoothening'' of the morphism $\varphi$.
A methodology of smoothening is to assign  certain congruence conditions (at the level of $\mfo$-points) on both $\mathfrak{gl}_{n, \mfo}$ and $\mathrm{Sym}_{n, \mfo}$ to produce ``subgroups'' such that the restriction of $\varphi$,  denoted by $\varphi'$, turns out to be smooth. 
Here, ``subgroup'' means a smooth group scheme which is a subsheaf of $\mathfrak{gl}_{n, \mfo}$ on the small fppf site on $\mfo$. 
Two main points of the morphism $\varphi'$ are
\[
\left\{
\begin{array}{l}
\textit{the generic fiber of $\varphi'$ is the same as that of $\varphi$};\\
\textit{$\varphi'$ and $\varphi$ are the same at the level of $R$-points with any  \'etale $\mfo$-algebra $R$}.
\end{array}\right.
\]

We can now use Weil's formula to describe the volume of $(\varphi')^{-1}(f)(\mfo)$.
The desired volume $\varphi^{-1}(f)(\mfo)$ is then the product of the volume of $(\varphi')^{-1}(f)(\mfo)$ and the difference between two volume forms arising from $\varphi$ and $\varphi'$.

This observation was first discovered by Gan and Yu in \cite{GY}, to find the formula for the local density when $q$ is odd.
When $F$ is unramified over $\mathbb{Q}_2$, it  was completed by one of our authors in \cite{C1} for  quadratic forms and in \cite{C2} and \cite{C2'} for  hermitian forms. 
A conceptual nature of the construction of a smooth morphism $\varphi'$ associated to any bilinear form and any local field 
was provided in \cite{C3}. 
\\

\subsubsection{\textbf{Siegel series: Stratification}}

Consider
\[
\varphi: \underline{M}_{\mfo}^{\ast}(L,H_k) \longrightarrow \mathrm{Sym}_{n, \mfo}, ~~~~~~~~~~~~m \mapsto q_k\circ m,
 \]
where $(L,f)$ is a quadratic $\mfo$-lattice of rank  $n$ with nonzero discriminant, $(H_k, q_k)$ is the $k$-copies of the hyperbolic quadratic plane over $\mfo$, and $\underline{M}_{\mfo}^{\ast}(L,H_k)$ represents the set of injective maps from $L$ to $H_k$ (at the level of $R$-points with a flat $\mfo$-algebra $R$) by forgetting the inherent quadratic forms.
Then the volume of $\varphi^{-1}(f)(\mfo)$ is  the Siegel series, which is   the local factor of the Siegel-Eisenstein series with full level structure.
This has an important role in Kudla's program.

As in the above case, the morphism  $\varphi$ is  highly non-smooth so that we need to do ``smoothening'' of the morphism $\varphi$. 
Instead of ``assigning congruence conditions'', 
we stratify $\underline{M}_{\mfo}^{\ast}(L,H_k)(\mfo)$ into 
$$\underline{M}_{\mfo}^{\ast}(L,H_k)(\mfo)=\bigsqcup\limits_{L\subset L'} \underline{M}_{\mfo}^{prim}(L',H_k)(\mfo),$$
where $L'$ runs over  quadratic $\mfo$-lattices of rank $n$ including $L$ and 
$\underline{M}_{\mfo}^{prim}(L',H_k)$ represents the set of injective maps from $L'$ to $H_k$ whose reduction over $\kappa$ is  injective as well.
Then $\underline{M}_{\mfo}^{prim}(L',H_k)$ is an open subscheme of $\underline{M}_{\mfo}^{\ast}(L',H_k)$ which is a subsheaf of $\underline{M}_{\mfo}^{\ast}(L,H_k)$ on the small fppf site on $\mfo$. 

The restriction of $\varphi$ to $\underline{M}_{\mfo}^{prim}(L',H_k)$, denoted by $\varphi_{L'}$, turns out to be smooth so that Weil's formula is applicable. 
Using this process, one of the authors and Yamauchi produced a formula of the Siegel series in \cite{CY}, which serves as a key ingredient in the proof of Kudla-Rapoport conjecture in 
\cite{LZ1} and \cite{LZ2}.
\\

\subsubsection{\textbf{Stable orbital integral: Stratification $+$ Congruence conditions}}

Let $\mathfrak{g}$ be a reductive group scheme or its Lie algebra over $\mfo$ whose generic fiber $\mathfrak{g}_F$ is a simply connected reductive group or its Lie algebra over $F$, respectively. 
Let $n$ be the dimension of a maximal torus of $\mathfrak{g}_F$.
Then the Chevalley restriction theorem defines a Chevalley morphism
\[
\rho_n: \mathfrak{g}_F \longrightarrow \mathbb{A}_F^n.
\]
If $\mathfrak{g}_F$ is a classical algebraic group or its Lie algebra of type $A_n, B_n, C_n$, then $\rho_n(m)$ for $m\in \mathfrak{g}_F$ is nothing but the coefficients of the characteristic polynomial, denoted by $\chi_m(x)$, of $m$ (up to certain symmetries).

If $\mathfrak{g}_F$ is an unramified reductive group, then it allows an integral model which is a reductive group scheme $\mathfrak{g}$. 
We further assume that $\mathfrak{g}_F$ is a classical group of type $A_n, B_n, C_n$.
Since $\rho_n(m)$ is basically the coefficients of the characteristic polynomial,  
the morphism $\rho_n$ defined over $F$ is lifted to the morphism defined over $\mfo$ described as follows:

\[
\varphi_n: \mathfrak{g} \longrightarrow \mathbb{A}^n_{\mfo}, ~~~~~~ m \mapsto \textit{coefficients of $\chi_m(x)$}.
 \]
 Then the stable orbital integral for a regular and semisimple element $\gamma\in \mathfrak{g}(\mfo)$ and for the unit element of the Hecke algebra is the volume of $\varphi_n^{-1}(\chi_{\gamma}(x))(\mfo)$.
 Note that this description of the  stable orbital integral holds for any characteristic of $F$.
 We denote it by 
 \[
 \sog.
 \]
Note that our definition of $\sog$ follows that of \cite{FLN} which is different from the classical definition. 
The difference between two definitions is described in  \cite[Proposition 3.29]{FLN}  when the characteristic of $F$ is either $0$ or $>n$. 
We explain about it precisely in Section \ref{sscotn} for $\mathfrak{gl}_{n,\mfo}$ and in Section \ref{section:comparison} for $\mathfrak{u}_{n,\mfo}$ and $\mathfrak{sp}_{2n,\mfo}$.

As in the above two cases, $\varphi_n$ is highly non-smooth.
In this paper, we will work with the case  $\mathfrak{g}=\mathfrak{gl}_{n,\mfo}, \mathfrak{u}_{n,\mfo}, \mathfrak{sp}_{2n,\mfo}$, which are the Lie algebras of the general linear group, the unitary group, and the symplectic group over $\mfo$ respectively. 
Our method to do ``smoothening'' consists of both \textbf{stratification} and assigning \textbf{congruence conditions}. 
\begin{enumerate}
\item{\textbf{Stratification}}
We will first apply the method of stratification,  based on the theory of  a finitely generated module over PID. This yields a new summation formula for $\sog$.

\item{\textbf{Congruence conditions}}
After that we will assign certain congruence conditions on each stratum.
\end{enumerate}

Then we will prove that these two considerations are enough to do ``smoothening'' of $\varphi_n$ when $\mathfrak{g}=\mathfrak{gl}_{2,\mfo}, \mathfrak{gl}_{3,\mfo}, \mathfrak{u}_{2,\mfo}$, so that we provide  a closed formula for $\sog$.
For a general $n$, we  provide a lower bound for $\sog$ by using these two without any restriction in $\mathfrak{gl}_{n,\mfo}$ case and  with a few restrictions in $\mathfrak{u}_{n,\mfo}$ and  $\mathfrak{sp}_{2n,\mfo}$ cases.
Our method and main results will be explained more precisely in the following subsection.
\\

\subsection{Main results of Part 1: for \texorpdfstring{$\mathfrak{gl}_{n,\mfo}$}{gln}}\label{subsection1.2}
Let $\mathfrak{g}$ be  $\mathfrak{gl}_{n,\mfo}$ 
and let $\gamma\in \mathfrak{gl}_{n,\mfo}$ be regular and semisimple. 
Note that this condition is equivalent that the characteristic polynomial $\chi_{\gamma}(x)$ is separable.
Write $\chi_{\gamma}(x)=x^n+c_1x^{n-1}+\cdots + c_{n-1}x+c_n$.
Due to    \cite[Section 4]{Yun13}, we may and do assume that $\chi_{\gamma}(x)$ is irreducible whose reduction modulo $\pi$ is $x^n$ (cf. Proposition \ref{cor410} and Section \ref{reductionss}).
\\

\subsubsection{\textbf{Stratification}}\label{subsubsection1.2.1}
Put $\underline{G}_{\gamma}:=\varphi_n^{-1}(\chi_{\gamma}(x))$ and let $d$ be the exponential order of $c_n$.
Let us identify $\mathfrak{gl}_{n, \mfo}$ with $\End_\mfo(L)$ for a free $\mfo$-module $L$ of rank $n$. 
An element  $f \in \underline{G}_{\gamma}(\mfo)$ can be viewed as an endomorphism of $L$.
Then the image of $f$  is a sublattice $M$ of $L$ such that $[L:M]=d$. 

This observation motivates us to define the following two sets associated to any sublattice $M$ with  $[L:M]=d$;
\[
\left\{
\begin{array}{l}
\cL(L,M)(\mfo)=\{f|\textit{$f:L\rightarrow M$ is surjective}\}\subset \mathrm{End}(L);\\
O_{\gamma, \cL(L,M)}=\uGr(\mfo)\cap \cL(L,M)(\mfo)=\{f\in \cL(L,M)(\mfo)| \vpi_n(f)=\vpi_n(\gamma)\}.
\end{array}\right.
\]
The set $O_{\gamma, \cL(L,M)}$ is non-empty and open  in $\uGr(\mfo)$ and so its volume, denoted by $\mathcal{SO}_{\gamma, \cL(L,M)}$, is nontrivial.
We can also assign the type $(k_1, \cdots, k_{n-m})$ to $M$ such that $L/M \cong \mfo/\pi^{k_1}\mfo \oplus \cdots \oplus \mfo/\pi^{k_{n-m}}\mfo$, where $k_i$ is an  integer such that $1\leq k_1\leq \cdots \leq k_{n-m}$.
Then we have the following stratification:
\begin{equation}\label{stra121}
\uGr(\mfo) =\bigsqcup_{\substack{(k_1, \cdots, k_{n-m}),  \\ \sum k_i=\mathrm{ord}(c_n),\\ n-m\leq n }}
\left(\bigsqcup_{\substack{M:\textit{type(M)=} \\ (k_1, \cdots, k_{n-m})}} O_{\gamma, \cL(L,M)}\right).
\end{equation}
We prove that if $M$ and $M'$ have the same type, then the volume of $O_{\gamma, \cL(L,M)}$ is the same as the volume of $O_{\gamma, \cL(L,M')}$ in Lemma \ref{lemred1}. 
Thus we can define $\mathcal{SO}_{\gamma, (k_1, \cdots, k_{n-m})}$ to be the volume of $O_{\gamma, \cL(L,M)}$ such that the type of $M$ is $(k_1, \cdots, k_{n-m})$.\
The stratification directly yields a summation formula for $\sog$ as follows:
\begin{proposition}(Proposition \ref{stra1})\label{stra11}
We have
\[
\mathcal{SO}_{\gamma}= \sum\limits_{\substack{(k_1, \cdots, k_{n-m}),  \\ \sum k_i=\mathrm{ord}(c_n),\\ m \geq 0 }}
c_{(k_1, \cdots, k_{n-m})}\cdot
\mathcal{SO}_{\gamma, (k_1, \cdots, k_{n-m})}.
\]
Here, $c_{(k_1, \cdots, k_{n-m})}$ is the number of sublattices of type $(k_1, \cdots, k_{n-m})$ in $L$.
\end{proposition}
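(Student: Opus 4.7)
The proposition is the volume version of the set-theoretic stratification \eqref{stra121}, so the plan has three steps: verify \eqref{stra121} as a disjoint decomposition of $\uGr(\mfo)$, pass to volumes term by term, and collapse the inner sum using Lemma \ref{lemred1}.

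First I would establish \eqref{stra121}. Fix $f \in \uGr(\mfo) = \varphi_n^{-1}(\chi_\gamma(x))(\mfo)$, viewed as an endomorphism of $L$. Since $\chi_\gamma(x)$ is irreducible of degree $n$, its constant term $c_n$ is nonzero, and $\det f = (-1)^n c_n \neq 0$, so $f$ is injective on $L$. Setting $M := f(L)$, the map $f \colon L \twoheadrightarrow M$ is tautologically surjective, so $f \in O_{\gamma,\cL(L,M)}$. The finite $\mfo$-module $L/M$ has length $\ord(\det f) = \ord(c_n) = d$, so its elementary divisor type $(k_1,\ldots,k_{n-m})$ satisfies $\sum_i k_i = d$ and $1 \leq n-m \leq n$. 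Conversely $M$ is recovered from $f$ as $f(L)$, so the sets $O_{\gamma,\cL(L,M)}$ are pairwise disjoint for distinct $M$, yielding \eqref{stra121}.

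Second, each $O_{\gamma,\cL(L,M)}$ is open in $\uGr(\mfo)$ (being cut out by the open condition $\cL(L,M) \subset \End(L)$), hence measurable, and the decomposition \eqref{stra121} is a finite disjoint union: $d$ has only finitely many partitions into at most $n$ parts, and for each type there are only finitely many sublattices of $L$ of the corresponding bounded finite index. Finite additivity of the canonical measure thus gives
\begin{equation*}
\mathcal{SO}_\gamma \;=\; \sum_{\substack{(k_1,\ldots,k_{n-m}) \\ \sum k_i = \ord(c_n)}} \;\; \sum_{\substack{M \subset L \\ \mathrm{type}(M) = (k_1,\ldots,k_{n-m})}} \mathrm{Vol}\bigl(O_{\gamma,\cL(L,M)}\bigr).
\end{equation*}
Third, Lemma \ref{lemred1} says that $\mathrm{Vol}(O_{\gamma,\cL(L,M)})$ depends only on $\mathrm{type}(M)$ and so equals $\mathcal{SO}_{\gamma,(k_1,\ldots,k_{n-m})}$; the inner sum therefore contributes a factor of $c_{(k_1,\ldots,k_{n-m})}$, which by definition counts sublattices of $L$ of that type.

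No genuinely difficult step survives in the statement itself, since the real technical content is packaged into the invariance assertion of Lemma \ref{lemred1} (plausibly proved by transporting $M$ to any $M'$ of the same type via an element of $\mathrm{GL}(L) = \mathrm{GL}_n(\mfo)$ and using the fact that this action preserves both the volume form on $\uGr(\mfo)$ and the condition $\varphi_n(f) = \varphi_n(\gamma)$). Everything else is a bookkeeping reindexing of \eqref{stra121} by the isomorphism class of the image sublattice $f(L)$.
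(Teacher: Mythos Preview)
Your proposal is correct and follows essentially the same route as the paper: the paper derives Proposition \ref{stra1} directly from the stratification \eqref{st2} together with Lemma \ref{lemred1}, and your three steps (verify the stratification via $M = f(L)$, pass to volumes by finite additivity, collapse the inner sum using Lemma \ref{lemred1}) are exactly this argument spelled out in slightly more detail. Your parenthetical sketch of how Lemma \ref{lemred1} is proved (conjugation by an element of $\mathrm{GL}_n(\mfo)$ carrying $M$ to $M'$) is also what the paper does.
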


In Lemma \ref{counttype}, we provide a closed formula for  $c_{(k_1, \cdots, k_{n-m})}$.
\\

\subsubsection{\textbf{Refined stratification}}
Since $f\in O_{\gamma, \cL(L,M)}$ defines an $\mfo$-linear morphism from $L$ to $M$, it induces a $\kappa$-linear morphism $\bar{f}: \overline{M}\rightarrow \overline{M}$, where $\overline{M}=M/M\cap \pi L$. 
Let $V$ be the image of $\bar{f}$ as a  subspace of $\overline{M}$.
This observation also motivates us to define the following two sets associated to a subspace $V$:
\[
\left\{
\begin{array}{l}
\cL(L,M,V)(\mfo)=\{f\in \cL(L,M)(\mfo)|\bar{f}(\overline{M})=V\};\\
O_{\gamma, \cL(L,M,V)}=\uGr(\mfo)\cap \cL(L,M,V)(\mfo)=\{f\in \cL(L,M,V)(\mfo)| \vpi_n(f)=\vpi_n(\gamma)\}.
\end{array}\right.
\]

We prove that if $V$ and $V'$ are subspaces of $\overline{M}$ having the same dimension, then the volume of $O_{\gamma, \cL(L,M,V)}$ is the same as the volume of $O_{\gamma, \cL(L,M,V')}$ in Lemma \ref{lem412}.
Thus it makes sense to define $\mathcal{SO}_{\gamma, (k_1, \cdots, k_{n-m}),t}$ to be the volume of $O_{\gamma, \cL(L,M,V)}$, where $m-t$ is the dimension of $V$ as a  $\kappa$-vector space.
Here $m$ is the dimension of $\overline{M}$, where the type of $M$ is $(k_1, \cdots, k_{n-m})$. 
This yields a refined stratification of $\cL(L,M)(\mfo)$ in terms of $\cL(L,M,V)(\mfo)$ for a subspace $V$ in $\overline{M}$. 
Therefore we have a refined summation formula for $\sog$ as follows:
\begin{proposition}(Proposition \ref{stra2})\label{stra12}
We have 
\[
\mathcal{SO}_{\gamma}=\sum\limits_{\substack{(k_1, \cdots, k_{n-m}),  \\ \sum k_i=\mathrm{ord}(c_n),\\ m\geq 0 }}
c_{(k_1, \cdots, k_{n-m})}\cdot
\left(
\sum\limits_{t=1}^{n-m}d_t\cdot \mathcal{SO}_{\gamma, (k_1, \cdots, k_{n-m}),t}
\right).
\]
Here, $d_t$ is the number of subspaces $V$ of $\overline{M}$ of dimension $m-t$.
\end{proposition}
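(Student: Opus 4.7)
The plan is to refine the stratification behind Proposition \ref{stra11} by further decomposing each open piece $O_{\gamma, \cL(L,M)}$ according to the image $V := \bar{f}(\overline{M})$ of the induced $\kappa$-linear endomorphism $\bar{f}$ on $\overline{M} = M/(M \cap \pi L)$. First I would verify that the map $f \mapsto V$ is well-defined on $O_{\gamma, \cL(L,M)}$: since $f(L) = M$, we have $f(\pi L) = \pi M \subset M \cap \pi L$, and similarly $f(M) \subset M$, so $f$ descends to a $\kappa$-linear map $\bar f : \overline{M} \to \overline{M}$. This produces the tautological disjoint decomposition
\[
O_{\gamma, \cL(L,M)} \;=\; \bigsqcup_{V \subset \overline{M}} O_{\gamma, \cL(L,M,V)},
\]
indexed by the $\kappa$-subspaces $V$ of $\overline{M}$.

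Next, Lemma \ref{lem412} asserts that the volume of $O_{\gamma, \cL(L,M,V)}$ depends only on $\dim_\kappa V$. Writing $\dim V = m-t$, this common value is $\mathcal{SO}_{\gamma, (k_1, \ldots, k_{n-m}), t}$ by definition. Gathering the terms with fixed codimension $t$, if $d_t$ denotes the number of subspaces of $\overline{M}$ of dimension $m-t$ (which is the Gaussian binomial $\binom{m}{t}_q$), then
\[
\mathcal{SO}_{\gamma, (k_1, \ldots, k_{n-m})} \;=\; \sum_{t} d_t \cdot \mathcal{SO}_{\gamma, (k_1, \ldots, k_{n-m}), t}.
\]
Substituting this refinement into the formula of Proposition \ref{stra11} immediately produces the asserted identity.

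The only subtle bookkeeping is pinning down the range $1 \leq t \leq n-m$. Under the running reduction to the case $\chi_\gamma(x) \equiv x^n \pmod \pi$, the map $\bar f$ is nilpotent on $\overline{M}$, so $\ker \bar f \neq 0$ whenever $m \geq 1$; this forces $t \geq 1$. The upper bound $t \leq n-m$ comes from an elementary-divisor comparison: since $f(L) = M$ and $L/M$ has exactly $n-m$ invariant factors, $\dim \ker \bar{f}$ cannot exceed $n-m$, so strata outside this range are empty and contribute nothing to the sum. The main obstacle in the proof is therefore not the re-indexing but the justification of Lemma \ref{lem412} itself (already established) — once the volume invariance under change of $V$ is in hand, the rest is a mechanical regrouping of the previous stratification by a finer invariant.
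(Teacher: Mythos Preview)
Your proposal is correct and follows essentially the same approach as the paper: decompose each $O_{\gamma,\cL(L,M)}$ according to the image subspace $V=\bar f(\overline{M})$, invoke Lemma~\ref{lem412} for the dimension-only dependence of the volumes, and plug this refinement into Proposition~\ref{stra11}. Your justification of the range $1\le t\le n-m$ is exactly the content of Lemma~\ref{lem411}, phrased in terms of elementary divisors rather than the block-matrix rank argument given there.
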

The integer $d_t$ is the cardinality of a certain  Grassmannian over a finite field.
In  Lemma \ref{lem415}, we provide a closed formula for $d_t$.

Due to the above summation formulas in Propositions \ref{stra11}-\ref{stra12}, it suffices to analyze $\mathcal{SO}_{\gamma, (k_1, \cdots, k_{n-m})}$ or $\mathcal{SO}_{\gamma, (k_1, \cdots, k_{n-m}),t}$ in order to investigate $\sog$.
In Section \ref{sec51}, we assign scheme structures to $O_{\gamma, \cL(L,M)}$ and $O_{\gamma, \cL(L,M,V)}$.
Especially,  we re-describe  $\mathcal{SO}_{\gamma, (k_1, \cdots, k_{n-m}),t}$ based on the measure arising from the inherent scheme structure in Proposition \ref{prop52}.
\\

\subsubsection{\textbf{Formula for $\sog$ with $n=2$ or $n=3$}}
For $\gamma \in \mathfrak{gl}_{n}(\mfo)$ with $ n=2 \textit{ or } n=3$, 
we assign certain congruence conditions to $O_{\gamma,\cL(L,M)}$ to make it smooth in Sections 6-7. Then Weil's formula given in  \cite[Theorem 2.2.5]{Weil} is applicable so that we obtain a series of formulas for $\sog$ with $n=2$ or $n=3$.
Before the statement of the result, we introduce two notions in the following:
\[
\left\{\begin{array}{l}
F_{\chi_{\gamma}}:=F[x]/(\chi_{\gamma}(x));\\
 S(\gamma):=[\mathfrak{o}_{F_{\chi_{\gamma}}}:\mathfrak{o}[x]/(\chi_{\gamma}(x))].
    \end{array}\right.
\]
Here, $[\mathfrak{o}_{F_{\chi_{\gamma}}}:\mathfrak{o}[x]/(\chi_{\gamma}(x))]$ is 
the relative $\mfo$-length   (see \cite[Section 2.1]{Yun13}).



\begin{theorem}(Theorems \ref{result2}-\ref{thm66})\label{copy2}
The formula for $\sog$ in $\gl_{2}(\mfo)$ is as follows.
\begin{enumerate}
    \item For an elliptic regular semisimple element $\gamma \in \gl_{2}(\mfo)$,
\[
    \mathcal{SO}_{\gamma}=\left\{\begin{array}{l l}
\frac{q+1}{q}-\frac{2}{q^{S(\gamma)+1}}  &\textit{if $F_{\chi_{\gamma}}/F$ is unramified};\\
\frac{q+1}{q}-\frac{q+1}{q^{S(\gamma)+2}}
     & \textit{if $F_{\chi_{\gamma}}/F$ is ramified}.
    \end{array}\right.
\]
    \item For a hyperbolic regular semisimple element $\gamma \in \mathfrak{gl}_{2}(\mfo)$,
\[
    \mathcal{SO}_{\gamma}=\frac{q+1}{q}.
\]
\end{enumerate}
Here, (1) holds for an arbitrary local field $F$ of any characteristic, and (2)  holds when $char(F)=0$ or $char(F)>2$.
\end{theorem}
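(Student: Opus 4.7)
The plan is to apply the refined stratification of Proposition \ref{stra12} to the case $n=2$ and to compute each stratum by assigning congruence conditions that smoothen $\varphi_2$ on the stratum, after which Weil's formula \cite[Thm.~2.2.5]{Weil} converts volumes into elementary point counts over $\kappa$.

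For the elliptic part (1), I would first invoke the parabolic descent recalled in Section \ref{reductionss} from \cite{Yun13} to reduce to the normalization in which $\chi_\gamma(x)=x^2+c_1x+c_2$ is irreducible with $c_1,c_2\in\pi\mfo$, so that $d:=\ord(c_2)\geq 1$ and the reduction of $\chi_\gamma$ modulo $\pi$ equals $x^2$. A short local computation in the quadratic extension $F_{\chi_\gamma}=F(\alpha)$ then expresses the conductor $S(\gamma)$ explicitly in terms of $\ord(c_1)$ and $d$; the split between the unramified and ramified branches is governed by the parity of $\ord(c_1^2-4c_2)$, and this is precisely the dictionary that ultimately produces the denominators $q^{S(\gamma)+1}$ and $q^{S(\gamma)+2}$ in the final formula.

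Next I would enumerate the strata appearing in Proposition \ref{stra12}. For $n=2$ the allowed types are $(d)$ with $m=1$ and $(k_1,k_2)$ with $1\leq k_1\leq k_2$, $k_1+k_2=d$, with $m=0$; the refined index $t$ contributes nontrivially only in the type-$(d)$ stratum, and the combinatorial weights $c_{(k_1,\ldots,k_{n-m})}$ and $d_t$ are already in hand from Lemmas \ref{counttype} and \ref{lem415}. For each stratum I would impose congruence conditions on $\cL(L,M,V)(\mfo)$, in the spirit of the construction recalled in Section 1.1.1 of the introduction, that cut out a smooth open subscheme whose $\mfo$-points coincide with the given stratum. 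The smoothness check reduces to surjectivity of the differential of $\varphi_2$ on the restricted domain, which for $n=2$ is a low-dimensional Jacobian calculation on the two equations $\mathrm{tr}(m)=-c_1$ and $\det(m)=c_2$, tracked carefully through the chosen lattice $M\subset L$ and the chosen image subspace $V\subset\overline{M}$. Weil's formula then delivers each $\mathcal{SO}_{\gamma,(k_1,\ldots,k_{n-m}),t}$ as an elementary rational function in $q$.

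Summing the stratum contributions with weights $c_{(k_1,\ldots,k_{n-m})}d_t$ and telescoping in the parameter $d$, one expects the generic strata to contribute the universal leading term $(q+1)/q$, while the extremal type $(d)$ (together with the balanced two-part type when $d$ is even) furnishes the correction; passing through the conductor dictionary turns this correction into $-2q^{-S(\gamma)-1}$ in the unramified branch and $-(q+1)q^{-S(\gamma)-2}$ in the ramified branch. For the hyperbolic case (2) with $\chi_\gamma(x)=(x-a)(x-b)$, $a\ne b$, the hypothesis $\mathrm{char}(F)\ne 2$ lets me parameterize the fiber by $(s,y,z)$ with $s=\mathrm{tr}(m)-(a+b)$ and $(y,z)$ the off-diagonal entries, subject to a single quadratic equation; when $a-b\in\mfo^\times$ the scheme is already smooth and Weil's formula yields $(q+1)/q$ directly, while when $a-b\in\pi\mfo$ a congruence-condition smoothening analogous to the elliptic case recovers the same universal constant. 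The main obstacle I anticipate is the smoothness verification for the two-part stratum $(k_1,k_2)$ with $k_1<k_2$, where the two distinct valuations interact nontrivially with the congruence subgroup and require a careful choice of local coordinates; once that is handled, the remaining work is routine bookkeeping of Grassmannian counts and geometric-series sums.
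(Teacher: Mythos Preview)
Your plan would work, but the paper takes a shorter path that sidesteps exactly the difficulty you flag. Rather than smoothening each two-part stratum $(k_1,k_2)$ by hand, the paper observes that for $n=2$ every such stratum has $m=0$ and applies the rescaling reduction of Proposition \ref{propendred}: dividing by $\pi^{k_1}$ identifies $\mathcal{SO}_{\gamma,(k_1,k_2)}$ with $q^{-k_1}\,\mathcal{SO}_{\gamma^{(k_1)},(k_2-k_1)}$, which is a one-part type with $m=n-1=1$. That case is already smooth by Theorem \ref{thm55}, and its value is given once and for all by Corollary \ref{cornm1}; no further congruence conditions are imposed anywhere. The coarse stratification of Proposition \ref{stra1} (not the refined Proposition \ref{stra12}, which for $n=2$ adds nothing) then collapses to a finite geometric series in $q^{-1}$ whose length is $\lfloor d_\gamma/2\rfloor$, and Proposition \ref{da2} converts $d_\gamma$ into $2S(\gamma)$ or $2S(\gamma)+1$ according to whether $F_{\chi_\gamma}/F$ is unramified or ramified; in the unramified case one extra boundary term $q^{-S(\gamma)}\mathcal{SO}_{\gamma^{(S(\gamma))}}$ is evaluated via Corollary \ref{corred}. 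For the hyperbolic part the paper does not parameterize the fiber at all: Proposition \ref{cor410} reduces instantly to two copies of $\mathfrak{gl}_1$ (Section \ref{sec62}), giving $(q+1)/q$ in one line. Your direct approach would recover the same answers but at the cost of the Jacobian verification you anticipate for $k_1<k_2$; the rescaling trick is precisely what makes that verification unnecessary.
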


\begin{theorem}(Theorems \ref{result3}-\ref{result4})\label{copy3}
The formula for $\sog$ in $\gl_{3}(\mfo)$ is as follows.
\begin{enumerate}
\item For an elliptic regular semisimple element $\gamma \in \gl_{3}(\mfo)$, 
\[\mathcal{SO}_{\gamma}=\left\{\begin{array}{l l}
\frac{(q+1)(q^{2}+q+1)}{q^{3}}-\frac{3(q^{2}+q+1)}{q^{d'+3}}+\frac{3}{q^{3d'+3}}&\textit{if }F_{\chi_{\gamma}}/F\textit{ is unramified and }S(\gamma)=3d';\\
\frac{(q+1)(q^{2}+q+1)}{q^{3}}-\frac{(2q+1)(q^{2}+q+1)}{q^{d'+4}}+\frac{q^{2}+q+1}{q^{3d'+5}} & \textit{if }F_{\chi_{\gamma}}/F\textit{ is ramified and }S(\gamma)=3d';\\
\frac{(q+1)(q^{2}+q+1)}{q^{3}}-\frac{(q+2)(q^{2}+q+1)}{q^{d'+4}}+\frac{q^{2}+q+1}{q^{3d'+6}} & \textit{if }F_{\chi_{\gamma}}/F\textit{ is ramified and }S(\gamma)=3d'+1.\\
\end{array}\right.
\]
Here $S(\gamma)$ cannot be of the form $3d'+2$ (see Proposition \ref{da3}).
\\

\item If $\chi_{\gamma}(x)$ involves an irreducible quadratic polynomial as a factor,  then the irreducible quadratic  factor of $\chi_{\gamma}(x)$ is expressed as $\chi_{\gamma'}(x)\in \mfo[x]$ for a certain  $\gamma'\in \mathfrak{gl}_{2}(\mfo)$.  
We then have the following formula:
\[
\mathcal{SO}_{\gamma}=\left\{\begin{array}{l l}
\frac{(q+1)(q^{2}+q+1)}{q^{3}}-\frac{2(q^{2}+q+1)}{q^{S(\gamma')+3}}& \textit{if a direct summand of $F_{\chi_{\gamma}}/F$ involves unramified quadratic};\\
\frac{(q+1)(q^{2}+q+1)}{q^{3}}-\frac{(q+1)(q^2+q+1)}{q^{S(\gamma')+4}} & \textit{if a direct summand of $F_{\chi_{\gamma}}/F$ involves ramified quadratic}.
\end{array}\right.
\]
The relation between $S(\gamma')$ and $S(\gamma)$ is explained in Theorem \ref{result4}.
\\
\item For a hyperbolic regular semisimple element $\gamma \in \mathfrak{gl}_{3}(\mfo)$,
\[
\mathcal{SO}_{\gamma}=\frac{(q+1)(q^{2}+q+1)}{q^{3}}.
\]
\end{enumerate}
Here, (1) holds for an arbitrary local field $F$ of any characteristic, and (2) and (3) hold when $char(F)=0$ or $char(F)>n$.
\end{theorem}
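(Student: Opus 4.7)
The plan is to prove all three parts by combining the refined summation formula of Proposition \ref{stra12} with an explicit smoothening of each stratum, and then invoking Weil's formula. Concretely, we endow the locus $O_{\gamma,\cL(L,M,V)}$ with the scheme structure of Section \ref{sec51}, and then impose congruence conditions depending on the sublattice type $(k_{1},\ldots,k_{3-m})$, the dimension parameter $t$, and the invariant $S(\gamma)$, cutting out a smooth $\mfo$-model whose generic fibre agrees with the original locus and whose $\mfo$-points coincide with $O_{\gamma,\cL(L,M,V)}$. Counting $\kappa$-points of this smooth model via Weil's formula produces each $\mathcal{SO}_{\gamma,(k_{1},\ldots,k_{3-m}),t}$, after which summation against the lattice-counts $c_{(k_{1},\ldots,k_{3-m})}$ of Lemma \ref{counttype} and the Grassmannian-counts $d_{t}$ of Lemma \ref{lem415} yields the closed formulas in the theorem.

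For part (1), after the reductions of Section \ref{subsubsection1.2.1} we may assume $\chi_{\gamma}(x)$ is irreducible with reduction $x^{3}$, and set $d=\ord(c_{3})$. Proposition \ref{da3} rules out $d\equiv 2\pmod{3}$, leaving the three subcases in the statement. In each subcase we enumerate the partitions of $d$ into at most three parts, perform the congruence smoothening on every resulting stratum, apply Weil's formula, and collect terms; the geometric series in $q$ arising from sums parametrized by $d'$ collapse to the displayed expressions. The precise shape of the correction terms (powers of $q^{-(d'+3)}$, $q^{-(d'+4)}$, $q^{-(3d'+3)}$, etc.) tracks directly the ``excess level'' of the congruence conditions required to absorb the non-smoothness of $\varphi_{3}$ along the nilpotent locus.

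For part (2), the factorization $\chi_{\gamma}(x)=(x-a)\chi_{\gamma'}(x)$ with $\chi_{\gamma'}$ irreducible quadratic permits a parabolic descent: the generic-fibre decomposition of $L_{F}$ into its $1$- and $2$-dimensional characteristic subspaces induces a stratification of $\uGr(\mfo)$ by the relative position of these subspaces with $L$, and each stratum becomes a product of a trivial $\gl_{1}$-factor with a $\gl_{2}$-computation to which Theorem \ref{copy2}(1) applies. The relation between $S(\gamma)$ and $S(\gamma')$ asserted in Theorem \ref{result4} is obtained by a discriminant/length comparison between the $\mfo$-length of $\mathfrak{o}_{F_{\chi_{\gamma}}}$ over $\mfo[x]/(\chi_{\gamma}(x))$ and the analogous quantity for $\gamma'$. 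For part (3), when $\chi_{\gamma}$ splits with three distinct linear factors over $\mfo$, only the trivial stratum $M=L$ (i.e.\ $d=0$) contributes; the morphism $\varphi_{3}$ is already smooth there, and Weil's formula directly yields the orbit count $(q+1)(q^{2}+q+1)/q^{3}$ without any further smoothening.

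The hard part will be the verification, in part (1), that our congruence-prescribed model on each stratum is genuinely smooth over $\mfo$ and has the right generic fibre; this requires a delicate case analysis jointly parametrized by the partition $(k_{1},\ldots,k_{3-m})$, the subspace dimension $t$, and the residue of $d=\ord(c_{3})$ modulo $3$. A secondary difficulty is the combinatorial cancellation that consolidates an a priori $O(d)$-fold sum into the three compact formulas; once $c_{(k_{1},\ldots,k_{3-m})}$, $d_{t}$, and the $\kappa$-point counts are expressed in closed form, this cancellation should emerge as a telescoping pattern in $d'$ that leaves only the leading term $(q+1)(q^{2}+q+1)/q^{3}$ and a small number of correction terms governed by $S(\gamma)$ and the ramification of $F_{\chi_{\gamma}}/F$.
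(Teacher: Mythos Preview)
There are concrete gaps in parts (1) and (3).

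For part (3), your claim that ``only the trivial stratum $M=L$ (i.e.\ $d=0$) contributes'' is false: if $\gamma=\mathrm{diag}(\pi,1,2)$ then $d=\ord(c_3)=1$, and in general $d$ can be arbitrarily large in the hyperbolic case. The stratification of Section~\ref{sec42st} is developed only after reducing to the case where $\chi_\gamma$ is irreducible with $\overline{\chi}_\gamma(x)=x^n$, so it is not available here. The paper instead applies the parabolic descent of Proposition~\ref{cor410} directly, reducing to three copies of the $\mathfrak{gl}_1$ formula $\mathcal{SO}_\gamma=1$ from Section~\ref{sec62}; this is also why parts (2) and (3) require $\mathrm{char}(F)=0$ or $>3$.

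For part (1), invoking the refined stratification of Proposition~\ref{stra12} does not help: when $n=3$ and the type is $(k_1,k_2)$ (so $m=1$), the space $\overline{M}$ is one-dimensional and Lemma~\ref{lem411} forces $\overline{X}_{1,1}$ to have rank $0$, so $t=1$ is the only value and Proposition~\ref{stra12} collapses to Proposition~\ref{stra1}. More seriously, you do not say what to do with the $m=0$ strata (types $(k_1,k_2,k_3)$), where there is no $V$ at all. The paper handles these via the reduction of Proposition~\ref{propendred}, which replaces $\gamma$ by $\gamma^{(k_1)}$ and drops the order of the discriminant; iterating this gives Equation~(\ref{eqsor}), so that only types $(k_1)$ and $(k_1,k_2)$ need to be smoothened. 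The $(k_1)$ case is Section~\ref{subsec52}. For the $(k_1,k_2)$ case the actual smoothening (Theorem~\ref{thm71}, proved in Appendix~\ref{App:AppendixA}) is not a congruence condition indexed by $t$: it is a further substratification of $O_{\gamma,\cL(L,M)}$ by the valuation $l=\ord(x_{2,1})$, and the analysis splits into four cases according to whether $k_1<k_2$ or $k_1=k_2$ and how $k_1$ compares with $t=\lceil d/3\rceil$ and $s=\lfloor d/3\rfloor$. Each $(\varphi_{3,M}^l)^{-1}(\chi_\gamma)$ is shown to be smooth and its $\kappa$-points are counted explicitly; summing over $l$ gives closed expressions for $c_{(k_1,k_2)}\cdot\mathcal{SO}_{\gamma,(k_1,k_2)}$. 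Without this substratification, your vague ``congruence conditions depending on $t$ and $S(\gamma)$'' has no content, and the telescoping you anticipate cannot be set up.

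Your part (2) is essentially correct and matches the paper: apply Proposition~\ref{cor410} to reduce to $\mathcal{SO}_{\gamma'}$ for $\gamma'\in\mathfrak{gl}_2(\mathfrak{o})$, then quote Theorem~\ref{result2}.
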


\begin{remark}
In \cite{CL}, the above formulas play a key role to the explicit formula for orbital integrals associated with elliptic regular semisimple elements in $\mathfrak{gl}_{3}(\mfo)$  and associated with  arbitrary elements of the spherical Hecke algebra of $\mathrm{GL}_{3}(F)$.
\end{remark}

\subsubsection{\textbf{Lower bound for $\sog$ with general $n$}}
With a general $n$, it seems quite complicated to assign  congruence conditions  to $O_{\gamma, \cL(L,M,V)}$ to make it smooth.
Instead, we could find congruence conditions to make both $O_{\gamma, \cL(L,M)}$, where the type of $M$ is $(k_{1})$, and a certain open subset of $O_{\gamma, \cL(L,M,V)}$, where the type of $M$ is $(k_{1},k_{2})$ and $t=1$, smooth. This consideration yields a lower bound for $\sog$.
We also translate our lower bound with respect to another measure $d\mu$ used in \cite{Yun13}, whose associated  stable orbital integral will be denoted by $\mathcal{SO}_{\gamma,d\mu}$ (cf. Section \ref{sec321}).

\begin{theorem}(Theorems \ref{lb1}-\ref{genlb1})\label{copylb1}
Suppose  $char(F)=0$ or $char(F)>n$.
For a  regular semisimple element $\gamma \in \mathfrak{gl}_{n}(\mfo)$, a lower bound for $\sog$ is provided as follows:
\[\left\{
\begin{array}{l}
    \mathcal{SO}_{\gamma} >\frac{\#\mathrm{GL}_{n}(\kappa)}{q^{n^{2}}}\prod\limits_{i\in B(\gamma)}N'_{\gamma_{i}}(q^{d_{i}});\\
    \mathcal{SO}_{\gamma,d\mu} > q^{\rho(\gamma)}\prod\limits_{i\in B(\gamma)}N'_{\gamma_{i},d\mu}(q^{d_{i}}),
\end{array}
\right.
\]
where for an elliptic regular semisimple element $\gamma_{i}\in\mathfrak{gl}_{n}(\mfo)$,
\[\left\{
\begin{array}{l}
N'_{\gamma_{i}}(x)=\frac{x}{x-1}(1+2x^{-1}+\cdots+2x^{-\lfloor\frac{\bar{d}_{\gamma_{i}}}{2}\rfloor+1}+\varepsilon(\bar{d}_{\gamma_{i}})x^{-\lfloor\frac{\bar{d}_{\gamma_{i}}}{2}\rfloor});\\
N'_{\gamma_{i},d\mu}(x)=(x^{S_{R_{i}}(\gamma_{i})}+x^{S_{R_{i}}(\gamma_{i})-1}+\cdots+x^{S_{R_{i}}(\gamma_{i})-r_i+1})(1+2x^{-1}+\cdots+2x^{-\lfloor\frac{\bar{d}_{\gamma_{i}}}{2}\rfloor+1}+\varepsilon(\bar{d}_{\gamma_{i}})x^{-\lfloor\frac{\bar{d}_{\gamma_{i}}}{2}\rfloor}).
\end{array}\right.\]

\end{theorem}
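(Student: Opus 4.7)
The plan is to prove the bound in three stages: parabolic reduction to the elliptic irreducible case, restriction of the stratification sum to a tractable sub-family of strata, and explicit smoothening of each retained stratum.

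I would first apply parabolic descent to reduce to the case where $\chi_\gamma(x)$ is irreducible. Since $\gamma$ is regular semisimple, its characteristic polynomial factors into distinct irreducibles indexed by $i \in B(\gamma)$, each of degree $d_i$, and an elliptic regular semisimple element $\gamma_i$ lives over the appropriate extension. The standard Levi-product formula for (stable) orbital integrals writes $\mathcal{SO}_\gamma$ as the prefactor $\#\mathrm{GL}_n(\kappa)/q^{n^2}$ times a product of elliptic stable orbital integrals $\mathcal{SO}_{\gamma_i}$ (and analogously in the $d\mu$-normalization, with the exponent $\rho(\gamma)$ absorbing the discrepancy of measures). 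This reduces the theorem to establishing the lower bound $\mathcal{SO}_{\gamma_i} > N'_{\gamma_i}(q^{d_i})$ for each elliptic block. By Section \ref{reductionss} one may further assume $\chi_{\gamma_i}(x) \equiv x^{d_i} \pmod{\pi}$, which is exactly the setting in which the stratification of Section \ref{subsubsection1.2.1} is applicable.

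Next, for an elliptic $\gamma$ with irreducible characteristic polynomial of reduction $x^n$, I would apply the refined stratification of Proposition \ref{stra12} and discard all strata except two families: the single-row strata (sublattices $M \subset L$ of type $(k_1)$, with cyclic quotient $L/M \cong \mfo/\pi^{k_1}$), and the two-row strata of type $(k_1,k_2)$ with $t=1$ (equipped with a line $V \subset \overline{M}$). Since every omitted stratum has strictly positive volume, this automatically yields the strict lower bound. Summing the combinatorial factor $c_{(k_1)}$ (Lemma \ref{counttype}) over the admissible values of $k_1$---a finite range controlled by $S_{R_i}(\gamma_i)$ and $\bar{d}_{\gamma_i}$---generates the first factor of $N'_{\gamma_i,d\mu}$ (respectively the rational $x/(x-1)$ after normalization to $N'_{\gamma_i}$), while summing $c_{(k_1,k_2)}\cdot d_1$ (with $d_1$ counting the lines of $\overline{M}$ via Lemma \ref{lem415}) over admissible pairs yields the tail $1+2x^{-1}+\cdots+\varepsilon(\bar{d}_{\gamma_i})x^{-\lfloor\bar{d}_{\gamma_i}/2\rfloor}$, whose truncation length and boundary parity coefficient reflect the precise range of $(k_1,k_2)$ forced by $k_1+k_2=\mathrm{ord}(c_n)$ together with $k_1\le k_2$.

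For each retained stratum I would construct a congruence-condition smoothening in the spirit of Sections 6-7, where the $n=2,3$ cases are treated in full. Concretely, one imposes explicit divisibility constraints on the matrix entries of $f\in\cL(L,M)$ or $\cL(L,M,V)$ so that the restricted morphism $\varphi_n'$ becomes smooth over $\mfo$; Weil's formula \cite[Theorem 2.2.5]{Weil} then computes the stratum volume as a point count over $\kappa$, and the measure correction between $\varphi_n$ and $\varphi_n'$ is tracked explicitly. Multiplying by the combinatorial factors and summing recovers precisely the factors in $N'_{\gamma_i}$ and $N'_{\gamma_i,d\mu}$. The main obstacle is carrying out this smoothening for the $(k_1,k_2,t=1)$ stratum in arbitrary rank $n$: the singular locus of $\varphi_n$ is genuinely more delicate than in the cyclic case, so identifying the right congruence conditions, verifying smoothness, and computing the volume-form correction is the crux; it is also why only an open subset of the $(k_1,k_2)$ stratum can be handled, producing a lower rather than exact bound. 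Finally, passage between the two forms of the bound is obtained by comparing the quotient measure underlying $\mathcal{SO}_\gamma$ with $d\mu$, which contributes the exponential prefactor $q^{\rho(\gamma)}$ in the $d\mu$-version.
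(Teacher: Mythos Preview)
Your overall architecture matches the paper: parabolic descent (Proposition \ref{cor410}) together with the unramified reduction (Corollary \ref{corred}) brings you to the elliptic case with $\overline{\chi}_\gamma(x)=x^n$, and then one keeps only the single-row and two-row strata in Proposition \ref{stra2}, smoothening each and applying Weil's formula. You are also right that for general $n$ only an open subscheme of the $(k_1,k_2,t=1)$ stratum can be made smooth (this is Proposition \ref{prop83} and Corollary \ref{ineqm2}), which is the source of the strict inequality.

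However, your bookkeeping for how the two factors of $N'_{\gamma_i}$ and $N'_{\gamma_i,d\mu}$ arise is wrong, and as stated the argument would not assemble into the claimed bound. There is no range of single-row types to sum over: the constraint $\sum k_i=\mathrm{ord}(c_n)=d_\gamma$ forces the single-row type to be exactly $(d_\gamma)$, so there is precisely one such stratum. By Corollary \ref{cornm1} its contribution is the constant $\frac{\#\mathrm{GL}_n(\kappa)}{(q-1)q^{n^2-1}}$, and this constant is the leading ``$1$'' in the tail $(1+2x^{-1}+\cdots)$. The remaining terms $2x^{-k_1}$ (with the boundary coefficient $\varepsilon(\bar d_\gamma)$ when $k_1=k_2$) come from the two-row strata $(k_1,d_\gamma-k_1)$ for $k_1=1,\ldots,\lfloor d_\gamma/2\rfloor$; see the proof of Theorem \ref{lb1}. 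The first factor $x^{S_R(\gamma_i)}+\cdots+x^{S_R(\gamma_i)-r_i+1}$ in $N'_{\gamma_i,d\mu}$ does not come from the stratification at all: it enters through the measure comparison of Propositions \ref{proptrans}--\ref{propserre}, which produces the factor $q^{S(\gamma)}\cdot\frac{q^r-1}{q^{r-1}(q-1)}$ when passing from $\mathcal{SO}_\gamma$ to $\mathcal{SO}_{\gamma,d\mu}$. Likewise the $x/(x-1)$ in $N'_{\gamma_i}$ is simply the repackaging of $\frac{\#\mathrm{GL}_n(\kappa)}{(q-1)q^{n^2-1}}$ after factoring out $\frac{\#\mathrm{GL}_n(\kappa)}{q^{n^2}}$ and substituting $x=q^{d_i}$ via Corollary \ref{corred}.
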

In this formula, we use the following notations:
\begin{equation}\label{nota12}
\left\{
\begin{array}{l}
    \textit{$B(\gamma)$: an index set in bijection with the irreducible factors of $\chi_{\gamma}(x)$}~  (\textit{see Section \ref{sec322}}); \\
    \gamma=(\gamma_{i})\in Lie(L)\textit{ with }\gamma_{i}\in Lie(L_{i})~(\textit{see Section \ref{sec322}});\\
    R_{i}=\mfo[x]/(\chi_{\gamma_{i}}(x))\textit{ and }F_{\chi_{\gamma_{i}}}=F[x]/(\chi_{\gamma_{i}}(x));\\
    \textit{$\kappa_{R_{i}}$ and $\kappa_{F_{\chi_{\gamma_{i}}}}$ are the residue fields of $R_{i}$ and $F_{\chi_{\gamma_{i}}}$, respectively};\\
    d_{i}=[\kappa_{R_{i}}:\kappa], ~~~~~ r_{i}=[\kappa_{F_{\chi_{\gamma_{i}}}}:\kappa_{R_{i}}];\\
    \rho(\gamma)=S(\gamma)-\sum\limits_{i\in B(\gamma)}S(\gamma_{i}) ;\\
    S_{R_{i}}(\gamma_{i})=[\mfo_{F_{\chi_{\gamma_{i}}}}:R_{i}]_{R_{i}}, ~~~~~ \textit{where $[\mfo_{F_{\chi_{\gamma_{i}}}}:R_{i}]_{R_{i}}$ is the relative $R_{i}$-length};\\
     \varepsilon(\bar{d}_{\gamma_i})=\left \{ \begin{array}{l l} 1 &\textit{if }\bar{d}_{\gamma_i}\textit{ is even};\\2 & \textit{if }\bar{d}_{\gamma_i}\textit{ is odd};
\end{array}
\right.
\\
\textit{$\bar{d}_{\gamma_{i}}$ is the integer defined in the paragraph just before Remark  \ref{rmk89}}.
\end{array}
\right.
\end{equation}


\textit{}

\subsection{Comparison of the lower bound with the  bounds of \texorpdfstring{\cite{Yun13}}{Yun's}:  for $\mathfrak{gl}_{n,\mfo}$}

\subsubsection{Yun's bounds}

 One of the main results in \cite{Yun13} is to give lower and  upper bounds for $\mathcal{SO}_{\gamma,d\mu}$. 
 His method is based on a functional equation of the Dedekind Zeta function for an order.
We will describe Yun's bounds following of  \cite[Theorem 1.5]{Yun13}, in order to compare them with our lower bound.

For a regular semisimple element $\gamma \in \mathfrak{gl}_{n}(\mfo)$, Yun's bounds are given as follows:
\[
q^{\rho(\gamma)}\prod_{i \in B(\gamma)}N_{\gamma_{i},d\mu}(q^{d_{i}})\leq \mathcal{SO}_{\gamma,d\mu} \leq q^{\rho(\gamma)}\prod_{i \in B(\gamma)}M_{\gamma_{i},d\mu}(q^{d_{i}}),
\]
where
\[\left\{
\begin{array}{l }
    N_{\gamma_{i},d\mu}(x)=\left\{
\begin{array}{l l}
    x^{S_{R_{i}}(\gamma_i)}+x^{S_{R_{i}}(\gamma_i)-1}+\cdots+x^{S_{R_{i}}(\gamma_i)-r_i+1}+r_i&\textit{if }r_i\leq S_{R_{i}}(\gamma_i);\\
    x^{S_{R_{i}}(\gamma_i)}+x^{S_{R_{i}}(\gamma_i)-1}+\cdots+x+S_{R_{i}}(\gamma_i)+1 &\textit{if }r_i>S_{R_{i}}(\gamma_i);
\end{array}\right.\\
M_{\gamma_{i},d\mu}(x)=\sum\limits_{|\lambda|\leq S_{R_{i}}(\gamma_i), m_{1}(\lambda)<r_i}x^{S_{R_{i}}(\gamma_i)-l(\lambda)}+\sum\limits_{S_{R_{i}}(\gamma_i)-r_i\leq |\lambda|<S_{R_{i}}(\gamma_i)}x^{|\lambda|-l(\lambda)}.
\end{array}\right.
\]
Here notations are as explained in (\ref{nota12}) and unexplained notations in $M_{\gamma_{i},d\mu}(x)$ can be found at Section 1.4 in \cite{Yun13}.

Yun's bounds are controlled by two polynomials $N_{\gamma_{i},d\mu}(x)$ and $M_{\gamma_{i},d\mu}(x)$. Both have the same leading term 
$x^{S_{R_{i}}(\gamma_i)}$ but the coefficients of the second leading terms are quite different.

\subsubsection{\textbf{Comparison; the second leading terms}}\label{sec132}
\begin{enumerate}
\item{Our lower bound}

Our lower bound in Theorem \ref{copylb1} is also controlled by a rational function  $N'_{\gamma_{i},d\mu}(x)$.
The leading term of $N'_{\gamma_{i},d\mu}(x)$ is 
$x^{S_{R_{i}}(\gamma_i)}$, which is the same as the case of Yun's bound \footnote{It should be since Yun's leading term is optimal.}.

Since the case with $S_{R_{i}}(\gamma_i)=0$ is trivial, 
we suppose that $S_{R_{i}}(\gamma_i)\geq 1$.
We define $K_i$ to be the unramified extension over $F$ in $F_{\chi_{\gamma_i}}$ which corresponds to the residue field extension $\kappa_{R_i}/\kappa$ (cf. Section \ref{reductionss}), and so $F_{\chi_{\gamma_i}}/K_i$ is totally ramified if and only if $r_i=1$.
The second leading term of  $N'_{\gamma_{i},d\mu}(x)$ is then described as follows:
\begin{itemize}
    \item When $\bar{d}_{\gamma_{i}}=2$,
        \[\left\{
        \begin{array}{l l}
        x^{S_{R_{i}}(\gamma_i)-1} & \textit{if $r_i=1$, or equivalently  $F_{\chi_{\gamma_{i}}}/K_i$ is totally ramified};\\
        2x^{S_{R_{i}}(\gamma_i)-1}  & \textit{otherwise}.
        \end{array}\right.
        \]    
    \item When $\bar{d}_{\gamma_{i}}\geq 3$
        \[\left\{
        \begin{array}{l l}
         2x^{S_{R_{i}}(\gamma_i)-1} & \textit{if $r_i=1$, or equivalently  $F_{\chi_{\gamma_{i}}}/K_i$ is totally ramified};\\
        3x^{S_{R_{i}}(\gamma_i)-1}  & \textit{otherwise}.
        \end{array}\right.
        \]

\item If $\bar{d}_{\gamma_{i}}=1$, then $S_{R_{i}}(\gamma_{i})=0$ and so  $N_{\gamma_{i},d\mu}'=N_{\gamma_{i},d\mu}=1$ by (\ref{r=1}).
\end{itemize}

Note that the second leading term   is due to the presence of   $\left\{
        \begin{array}{l l}
        \varepsilon(\bar{d}_{\gamma_{i}})x^{-1} & \textit{when $d_{\gamma_{i}}=2$};\\
        2x^{-1}  & \textit{when $d_{\gamma_{i}}\geq 3$}
        \end{array}\right.
        $ in the description of  $N'_{\gamma_{i},d\mu}(x)$, which appears in the right side of Theorem \ref{copylb1}.
\\

\item{Yun's lower bound}

The second leading term of $N_{\gamma_{i},d\mu}(x)$ in Yun's lower bound is  described as follows:
$$\left\{
        \begin{array}{l l}
        0 & \textit{if $r_i=1$, or equivalently  $F_{\chi_{\gamma_{i}}}/K_i$ is totally ramified};\\
        x^{S_{R_{i}}(\gamma_i)-1}  & \textit{otherwise}.
        \end{array}\right.$$
Therefore, our result improves the second leading term in \cite{Yun13}.

By using a similar argument used in the comparison of the second leading terms, we can also deduce that other remaining terms  of $N'_{\gamma_{i},d\mu}(x)$ in Theorem \ref{copylb1} improve the corresponding terms of $N_{\gamma_{i},d\mu}(x)$ in \cite{Yun13}.
\\

\item{Yun's upper bound}

The second leading term  of $M_{\gamma_{i},d\mu}(x)$  in Yun's upper bound is described as follows:
\begin{itemize}
    \item When $S_{R_{i}}(\gamma_{i})=1 \textit{ or }2$
    \[\left\{
    \begin{array}{l l}
        x^{S_{R_{i}}(\gamma_{i})-1} & \textit{if $r_i=1$, or equivalently  $F_{\chi_{\gamma_{i}}}/K_i$ is totally ramified};\\
        2x^{S_{R_{i}}(\gamma_{i})-1} & \textit{otherwise}.
    \end{array}
    \right.
    \]
    \item When $S_{R_{i}}(\gamma_{i})\geq 3$
    \[\left\{
    \begin{array}{l l}
        (S_{R_{i}}(\gamma_{i})-1)x^{S_{R_{i}}(\gamma_{i})-1} &  \textit{if $r_i=1$, or equivalently  $F_{\chi_{\gamma_{i}}}/K_i$ is totally ramified};\\
        S_{R_{i}}(\gamma_{i})x^{S_{R_{i}}(\gamma_{i})-1} & \textit{otherwise}.
    \end{array}
    \right.
    \]
\end{itemize}
By comparing this with our lower bound, we have the following interpretations:
\begin{enumerate}
\item{The case of $S_{R_{i}}(\gamma_{i})=\bar{d}_{\gamma_{i}}=2$}

If $S_{R_{i}}(\gamma_{i})=\bar{d}_{\gamma_{i}}=2$, then the second leading term in our lower bound coincides with the second leading term of Yun's upper bound, which is bigger than that of Yun's lower bound. 
Therefore, our second leading term in this case is \textbf{optimal}.

\item{The general case}

 By the direct calculation for $S_{R_{i}}(\gamma_{i})$, we can deduce that $S_{R_{i}}(\gamma_{i})\geq \bar{d}_{\gamma_{i}}$ with $n\geq 4$.
For example, if $n$ is prime, then $S_{R_{i}}(\gamma_{i})$ is bigger than $\frac{n-1}{2}\cdot \bar{d}_{\gamma_{i}}$ (cf. Remark \ref{rmk74}).
Therefore, $S_{R_{i}}(\gamma_{i})$ grows faster than 
 $\bar{d}_{\gamma_{i}}$.

 In conclusion, the second leading term of Yun's upper bound is much bigger than  the second leading term of our lower bound. 
Indeed we expect that our second leading term  is optimal.
This will be explained  in Section \ref{subsec1.5} more carefully.
\end{enumerate}

\end{enumerate}

\subsection{Main results of Part 2: for \texorpdfstring{$\mathfrak{u}_{n,\mfo}$}{un} and \texorpdfstring{$\mathfrak{sp}_{2n,\mfo}$}{sp2n}}
In Part 2, we extend our argument used in part 1 (that is, reduction, stratification, and smoothening of each stratum) to classical Lie algebras. 
This argument requires two conditions:
\begin{itemize}
    \item the stable orbital  should be determined by the characteristic polynomial $\chi_{\gamma}(x)$;
    \item the reduction of $\chi_{\gamma}(x)$ modulo $\pi$ should be of the form $x^n$ (or $x^{2n}$ for $\mathfrak{sp}_{2n,\mfo}$ case). 
\end{itemize}
The first condition holds when $\mathfrak{g}$ is of type $A_n, B_n$, or $C_n$ since the stable orbit for $\mathfrak{so}_{2n}$ of type $D_n$ requires the additional invariant, called Phaffian (when $char(F)\neq 2$).
See \cite[Theorems 1.2-1.3]{CR}.
On the other hand, our stratification is based on the determinant of $\gamma$, regular and semisimple element in $\mathfrak{g}(\mfo)$. 
Since the determinant of an element in $\mathfrak{so}_{2n+1}$ is zero, we will exclude the case of type $B_n$ so as to  work with $\mathfrak{g}=\mathfrak{u}_{n,\mfo}$ or $\mathfrak{g}=\mathfrak{sp}_{2n,\mfo}$ and with a regular and semisimple element $\gamma\in \mathfrak{g}(\mfo)$.
Note that this condition is equivalent that the characteristic polynomial $\chi_{\gamma}(x)$ is separable (cf. \cite[Section 3.1]{Gor22} or \cite[Sections 6-7]{Gro05}).

For the second condition, it is necessary to have reduction as in the case of $\mathfrak{gl}_{n,\mfo}$, which is provided in Proposition \ref{cor410} and Section \ref{reductionss}.
However, the parabolic descent yields that the stable orbital integral is reduced to the case when $\gamma$ is elliptic whose characteristic polynomial is not necessarily irreducible.
We explain the parabolic descent explicitly in terms of the factorization of a characteristic polynomial in 
Propositions \ref{lem:par_des_sorb}-\ref{pro:par_des_sorb}.

When $\mathfrak{g}=\mathfrak{u}_{n,\mfo}$ and $\chi_{\gamma}(x)$ is irreducible, we prove that the stable orbital integral is reduced to the case that the reduction of $\chi_{\gamma}(x)$ modulo $\pi$ is $x^n$ in Proposition \ref{red:result1}, Corollary \ref{red:result2}, and Lemma \ref{lem:invarianct_translation}. 

\begin{remark}
    We initially obtained  result for $\mathfrak{u}_{n,\mfo}$ but realized that majority of arguments  work for $\mathfrak{sp}_{2n,\mfo}$ with a few restrictions without serious modification.
This is why the case for $\mathfrak{sp}_{2n,\mfo}$  requires additional assumptions.

For example, it is necessary to prove that $\sog$ is invariant under the translation (i.e. $\mathcal{SO}_{\gamma}=\mathcal{SO}_{\gamma+c}$ for a constant matrix $c\in \mathfrak{g}(\mfo)$).
This is proved in Lemma \ref{lem:invarianct_translation} for $\mathfrak{u}_{n,\mfo}$ but is nonsense for $\mathfrak{sp}_{2n,\mfo}$ since a constant matrix is not an element of $\mathfrak{sp}_{2n,\mfo}$.
This is why $\sog$  for the $\mathfrak{u}_{n,\mfo}$ case when $\chi_{\gamma}(x)$ is irreducible is reduced to the case when the reduction of $\chi_{\gamma}(x)$ modulo $\pi$ is $x^n$, whereas we need extra assumption beyond irreducibility of $\chi_{\gamma}(x)$ that the reduction of $\chi_{\gamma}(x)$ modulo $\pi$ is $x^{2n}$ for the $\mathfrak{sp}_{2n,\mfo}$ case. 

A more general situation for the $\mathfrak{sp}_{2n,\mfo}$ case will be  treated in our subsequent work.
\end{remark}

\subsubsection{\textbf{Stratification}}
To simplify notation, 
let $\mathfrak{g}=\mathfrak{u}_{n,\mfo}$ associated with a unimodular hermitian lattice $(L, h)$.
Here $L$ is a free $\mfo_E$-module having the hermitian form $h$, where $E$ is an unramified and quadratic field extension of $F$.
Write $\chi_{\gamma}(x)=x^n+c_1x^{n-1}+\cdots + c_{n-1}x+c_n$ which is irreducible in $\mfo_E[x]$ and whose reduction modulo $\pi$ is $x^n$.
Let $d_i$ be the exponential order of $c_i$.

We still have the stratification of $\uGr(\mfo)$ as in Equation (\ref{stra121}). 
However, the volumes of two strata $O_{\gamma, \cL(L,M)}$ and $O_{\gamma, \cL(L,M')}$ could be different  although $type(M)=type(M')$.
Thus  Proposition \ref{stra11} does not hold anymore. 

To solve it, we introduce additional invariant of $M$, called \textit{the Jordan type} and denoted by $\JT(M)$,   which comes from the exponential orders of the Jordan splitting (i.e. diagonalization of the hermitian form $h$) associated to the hermitian lattice $(M, h|_M)$ in Definition \ref{def:tjt}.
Let $\mathcal{SO}_{\gamma, M}$ be the volume of $O_{\gamma, \cL(L,M)}$ with respect to the measure associated to the Chevalley morphism (cf. Definition \ref{def:stableorbital} and Equation (\ref{equation:som})).

\begin{conjecture}
For sublattices  $M$ and $M'$ of $L$, if they are of the same type and of the same Jordan type,
then $\mathcal{SO}_{\gamma, M}=\mathcal{SO}_{\gamma, M'}$.
\end{conjecture}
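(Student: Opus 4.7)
The plan is to prove the equality $\mathcal{SO}_{\gamma, M}=\mathcal{SO}_{\gamma, M'}$ by producing a measure-preserving bijection between the two strata, induced by the conjugation action of an element of the unitary group $U(L,h)(\mfo_F)$ that carries $M$ to $M'$. More precisely, I would first try to establish the following orbit statement: the pair $(\mathrm{type}(M),\JT(M))$ classifies the $U(L,h)(\mfo_F)$-orbit of $M$ in the set of sublattices of $L$. Granting this, choose $g \in U(L,h)(\mfo_F)$ with $g(M)=M'$ and consider
\[
\Phi_g \colon O_{\gamma, \cL(L,M)} \longrightarrow O_{\gamma, \cL(L,M')}, \qquad f \mapsto g \circ f \circ g^{-1}.
\]
Since $g$ is a unitary isometry, conjugation by $g$ preserves the Lie algebra condition cutting out $\mathfrak{u}_{n,\mfo}$; being a conjugation, it preserves the characteristic polynomial and hence the fiber of $\vpi_n$ over $\vpi_n(\gamma)$; and because $g(M)=M'$ it sends surjections $L\twoheadrightarrow M$ to surjections $L\twoheadrightarrow M'$. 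This identifies $\Phi_g$ as a bijection, and as conjugation by an element of $\mathrm{GL}(L)(\mfo_F)$ it preserves the measure built from the Chevalley morphism $\vpi_n$.

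The substance of the proof is thus to establish the orbit classification, which I would attack as a Witt-type extension theorem: any isometry of hermitian sublattices $(M, h|_M) \xrightarrow{\sim} (M', h|_{M'})$ of $(L,h)$ extends to an isometry of $L$. By the classification of hermitian lattices over $\mfo_E$ with $E/F$ unramified quadratic (Jacobowitz, O'Meara), an abstract isometry of $(M, h|_M)$ with $(M', h|_{M'})$ exists as soon as their Jordan types coincide, so the starting data is supplied automatically. The extension step would proceed by choosing a Jordan splitting of $M$, lifting each Jordan summand to a piece of $L$ compatible with the prescribed elementary divisors of $L/M$ (the type of $M$), and then applying Witt cancellation on the orthogonal complement of $M$ inside the unimodular lattice $L$.

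The hardest point will be this compatibility between the type and the Jordan type during the extension: the type of $M$ is an invariant of the pair $(L,M)$ of $\mfo_E$-modules, while the Jordan type is an invariant of the hermitian form on $M$, and a priori the embedding $(M,h|_M)\hookrightarrow (L,h)$ carries information beyond either invariant alone. One must verify that when the two invariants do both arise from some sublattice of $L$, they together determine the embedding up to unitary automorphism of $L$. I expect this to follow from a block-by-block analysis in a Jordan-adapted basis of $L$, together with transitivity of the stabilizer of a Jordan splitting on compatible bases, but the bookkeeping that intertwines the orders of the Jordan blocks with the elementary divisors is delicate and is presumably the reason the statement is recorded here only as a conjecture. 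The same plan carries over to $\mathfrak{sp}_{2n,\mfo}$, substituting the symplectic Witt extension theorem and the classification of symplectic sublattices for their hermitian counterparts.
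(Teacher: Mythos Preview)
The statement is recorded in the paper as a conjecture, and the paper does not prove it in general. What the paper does establish is the special case $\mathcal{T}(M)=(d_n)$ (i.e., $L/M$ cyclic), and its method is entirely different from yours: rather than invoking an orbit classification, the paper computes $\mathcal{SO}_{\gamma, M}$ explicitly for each such $M$ (Proposition~\ref{cor:610}) and observes a posteriori that the answer depends only on $d_{n-1}$ and $d_n$, hence only on the type and Jordan type. This is done by first producing a basis of $L$ adapted simultaneously to the inclusion $M\subset L$ and to the hermitian form (Lemmas~\ref{modif_matr}--\ref{lemma:matrixformoflmh}, pinning down the $(1,1)$-entry of $h$ via Lemma~\ref{lem:compare_xi}), then proving smoothness of the relevant scheme (Theorem~\ref{theorem:smoothness}), and finally counting points over the residue field (Proposition~\ref{corcounting}). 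No Witt-type extension is used.

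Your approach via transitivity of $U(L,h)(\mfo_F)$ on sublattices with fixed $(\mathcal{T},\mathcal{JT})$ is the natural conceptual route, and the reduction step---conjugation by such a $g$ is a measure-preserving bijection of strata, exactly parallel to Lemma~\ref{lemred1} in the $\mathfrak{gl}_n$ case---is correct. But the orbit claim itself is precisely what you have not proved, and you have effectively reduced the conjecture to a statement that is at least as hard. The paper explicitly flags this as the obstruction: unlike in $\mathfrak{gl}_n$, one cannot in general choose a single basis of $L$ that exhibits both the elementary divisors of $L/M$ and a Jordan splitting of $h|_M$, and the paper only overcomes this for type $(d_n)$ by ad hoc manipulations. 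Your ``block-by-block analysis in a Jordan-adapted basis'' is exactly where this tension lives; there is no a priori guarantee that $(\mathcal{T}(M),\mathcal{JT}(M))$ exhausts the invariants of the embedding $M\hookrightarrow (L,h)$ for general types. So the proposal is a sound programme, and would give a cleaner and more general proof than the paper's computational verification, but as written it does not close the gap that makes this a conjecture rather than a theorem.
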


A major issue, which was not appeared in the $\mathfrak{gl}_n$ case, is that it is difficult to choose a basis of $L$ which reflects both properties of a type and of a Jordan type.
For example, If the type of $M$ is $(d_n)$, then there exists a basis $(e_1, \cdots, e_n)$ of $L$ such that $(e_1, \cdots, e_{n-1}, \pi^{d_n} e_n)$ is a basis for $M$. But this basis  does not yield a Jordan splitting of $h|_{M}$. 

Nonetheless, when the type of $M$ is $(d_n)$, then we could find a suitable basis for $L$ in Section \ref{section:jordan}. 
Using this basis, we prove exactly when $O_{\gamma, \cL(L,M')}$ is non-empty, purely in terms of the Jordan type of $M$ in Propositions  \ref{lem:Jordan_of_type_m}-\ref{corcounting} and Theorem \ref{theorem:smoothness}. 
Furthermore, we prove the explicit formula for $\mathcal{SO}_{\gamma, M}$ in  Proposition \ref{cor:610}, which only depends on $d_{n-1}$ and $d_n$.
This formula directly yields that the above conjecture is true.

In Proposition \ref{cor:Sab}, we prove the formula for the number of sublattices of $L$ whose type is $(d_n)$ and whose Jordan type is fixed such that $O_{\gamma, \cL(L,M')}$ is non-empty.
This directly yields a lower bound for $\sog$ as in the case of $\mathfrak{gl}_{n, \mfo}$.

\begin{remark}
 The condition about the characteristic of a field or of the residue field is quite sensitive in each step. For example, the parabolic descent for $\mathfrak{u}_{n,\mfo}$ with respect to the quotient measure requires that the residue characteristic is not $2$, whereas there is no restriction for $\mathfrak{sp}_{2n,\mfo}$ (cf. Section \ref{subsection12.3}).
 The translation of the parabolic descent into the geometric measure requires further assumption that $char(F) = 0$ or $char(F)>n$ (cf. Proposition \ref{pro:par_des_sorb}).
 
    We thoroughly checked this and explained  which place requires a specific assumption on the characteristic. 
\end{remark}

\subsubsection{\textbf{Lower bound for $\sog$  and the formula when $\mathfrak{g}=\mathfrak{u}_{2,\mfo}$}}

Let $\mathfrak{g}=\mathfrak{u}_{n,\mfo}$ for simplication. 
To explain our main results, we need a factorization of $\chi_{\gamma}(x)$ over $F$ as the $\mathfrak{gl}_n$ case which was explained at the first paragraph of Section \ref{subsection1.2}.
But,  $\chi_{\gamma}(x)\in \mfo_E[x]$ and thus we need a modification of $\chi_{\gamma}(x)$ as an object over $\mfo$.

To do that, we first understand the factorization in terms of rings and the Galois descent.
It is easy to see that $E[x]/(\chi_{\gamma}(x))$ is stable under the nontrivial Galois action $\sigma$ on $E/F$ so that the ring $\left(E[x]/(\chi_{\gamma}(x)\right)^{\sigma}$ makes sense. 
Then we can do the factorization of $\left(E[x]/(\chi_{\gamma}(x)\right)^{\sigma}$ as the product of  finite field extensions of $F$ by the Chinese remainder theorem. 

This discussion is  translated with respect to the factorization of a certain polynomial defined over $\mfo_F$ associated with  $\chi_{\gamma}(x)$.
Choose $\alpha \in \mathfrak{o}_E^\times$ such that $\alpha + \sigma(\alpha) = 0$ (for the existence, see  the proof of Lemma \ref{lem:c+sigma(c)_bij}).
Note that $\alpha\gamma$ is not contained in $\mathfrak{u}_n(\mfo)$. 
Nonetheless, the characteristic polynomial of $\alpha\gamma$, denoted by $\chi_{\alpha\gamma}(x)$, makes sense and has coefficients in $\mfo$. Furthermore we have that  $\left(E[x]/(\chi_{\gamma}(x)\right)^{\sigma}=F[x]/(\chi_{\alpha\gamma}(x))$ (cf. Equation (\ref{equation:F_gamma_iso})). 

Now, we consider the following factorization (cf. Section \ref{desc_field}):
\[
\chi_{\alpha\gamma}(x)=\prod_{i\in B(\gamma)}\psi_i(x)=\left(\prod_{i\in B(\gamma)^{irred}}\psi_i(x)\right) \times \left(\prod_{i\in B(\gamma)^{split}}\psi_i(x)\right),
\]
where $\psi_i(x)(\in \mfo[x])$ is irreducible over $F$, and  where $i\in B(\gamma)^{irred}$ if and only if $\psi_i(x)$ is irreducible over $E$ as well.
Then the degree of $\psi_i(x)$ with $i \in B(\gamma)^{split}$ is always even by the Galois descent. 
We write 
\[
m=\sum\limits_{i \in B(\gamma)^{irred}}deg(\psi_i(x)) ~~~  \textit{ and  } ~~~~  2l_i=deg(\psi_i(x)) \textit{ for } i \in B(\gamma)^{split}.
\]
With respect to this factorization, then we may and do write $\gamma$ as follows (cf. Section \ref{section:matdescofr}):
\[
\gamma = \gamma_0 \oplus \bigoplus\limits_{i \in B(\gamma)^{split}}\begin{pmatrix} g_i & 0 \\ 0 & -{}^t\sigma(g_i)   \end{pmatrix},  ~~~~  \textit{$\gamma_0\in \mathfrak{u}_{m, \mfo}(\mfo)$ and $g_i\in \mathfrak{gl}_{l_i, \mfo_E}(\mfo_E)$ such that } 
\]
\[
\left\{
\begin{array}{l}
\textit{the characteristic polynomial of } \alpha\gamma_0 = \prod\limits_{i\in B(\gamma)^{irred}}\psi_i(x);\\
\textit{the characteristic polynomial of }\alpha\begin{pmatrix}  g_i & 0 \\ 0 & -{}^t\sigma(g_i)   \end{pmatrix}  = \psi_i(x).
 \end{array}
\right.
\]
This is an interpretation of $\gamma$ as an element of the Levi subalgebra (cf. Section \ref{sec:desc_Levi}). 
Here we need to suppose that  $char(\kappa)>2$ if $n$ is even  due to the existence of a Kostant section (cf. \cite{Lee23}). If $\chi_{\alpha\gamma}(x)$ is irreducible over $F$, i.e. $B(\gamma)$ is a singleton, or if $n$ is odd,  then this condition  is unnecessary.


When  $\mathfrak{g}=\mathfrak{sp}_{2n,\mfo}$ with no restriction on the characteristic of a field, we can also define index sets $B(\gamma)^{irred}$ and $B(\gamma)^{split}$ together with a  factorization, the involution $\sigma$ on $F[x]/(\chi_{\gamma}(x))$ so that $\left(F[x]/(\chi_{\gamma}(x))\right)^\sigma$ makes sense, and so on.
For a detailed discussion, see Sections \ref{desc_field} and \ref{section:matdescofr}-\ref{sec:desc_Levi}, which explain all contents in a uniform way combining $\mathfrak{u}_{n,\mathfrak{o}}$ and $\mathfrak{sp}_{2n,\mathfrak{o}}$. 
We state our main result on a lower bound as follows:

\begin{theorem}\label{thm:intro_Part2}(Theorems \ref{thm:lowerbound_geom}-\ref{thm:lowerbound_SO})
        Suppose that $char(F)=0$ or $char(F)>n$ and  that $B(\gamma)^{irred}$ is a singleton.
        Then  we have the following bounds:
        \[ 
\mathcal{SO}_{\gamma}>
\begin{cases}
    \frac{\#\mathrm{U}_n(\kappa)}{(1+q^{-l})q^{n^2}} \cdot \prod\limits_{i \in B(\gamma)^{split}} N'_{g_{i}}(q^{2d_{i}}) &\textit{if $\mathfrak{g} = \mathfrak{u}_{n,\mathfrak{o}}$};\\
\frac{\#\mathrm{Sp}_{2n}(\kappa)}{q^{n(2n+1)}} \cdot \prod\limits_{i \in B(\gamma)^{split}}N'_{g_{i}}(q^{d_{i}})   &\textit{if $\mathfrak{g} = \mathfrak{sp}_{2n,\mathfrak{o}}$ and $\overline{\chi}_\gamma(x) = x^{2n}$},
\end{cases}
\]
        Here for $\mathfrak{u}_{n,\mathfrak{o}}$,  $l$ is the degree of the irreducible factor of $\overline{\chi}_{\alpha\gamma_0}(x)$, and the other notations $d_i$ and $N'_{g_i}(-)$  with $i \in B(\gamma)^{split}$ are referred to Theorem \ref{copylb1}  (by replacing $\gamma \in \mathfrak{gl}_n(\mathfrak{o})$ with an elliptic regular semisimple element $g_i \in \mathfrak{gl}_{l_i}(\mathfrak{o}_{E})$ so that $B(g_i)$ is a singleton).

         We refer to Sections \ref{sec:trans_herm}-\ref{desc_field} for the notations of $\mathfrak{sp}_{2n, \mfo}$.
\end{theorem}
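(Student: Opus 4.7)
The strategy is to combine the parabolic descent (Propositions~\ref{lem:par_des_sorb}--\ref{pro:par_des_sorb}) with the stratification and smoothening machinery of Part~2, using Theorem~\ref{copylb1} as a black box for the split components.

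First, I would apply parabolic descent to the Levi decomposition
\[
\gamma = \gamma_0 \oplus \bigoplus_{i \in B(\gamma)^{split}}\begin{pmatrix} g_i & 0 \\ 0 & -{}^t\sigma(g_i) \end{pmatrix},
\]
expressing $\mathcal{SO}_\gamma$ for the geometric measure on $\uGr(\mfo)$ as a product of a stable orbital integral attached to the elliptic piece $\gamma_0 \in \mathfrak{u}_{m,\mfo}(\mfo)$ (resp.\ $\mathfrak{sp}_{2m,\mfo}(\mfo)$) and one stable orbital integral attached to each $g_i \in \mathfrak{gl}_{l_i}(\mfo_E)$ (resp.\ $\mathfrak{gl}_{l_i}(\mfo)$). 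The assumption $char(F)=0$ or $char(F)>n$ is precisely what allows the parabolic descent to be translated from the quotient measure $d\mu$ to a multiplicative identity for the geometric measure by Proposition~\ref{pro:par_des_sorb}.

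Next, for each $i \in B(\gamma)^{split}$, the element $g_i$ is elliptic regular semisimple with $B(g_i)$ a singleton, so Theorem~\ref{copylb1} produces a strict lower bound whose ``moving'' part is $N'_{g_i}(q^{2d_i})$ in the unitary case (the residue field of $\mfo_E$ having $q^2$ elements) and $N'_{g_i}(q^{d_i})$ in the symplectic case. For the elliptic factor $\gamma_0$, whose reduced characteristic polynomial is a power of an irreducible polynomial of degree $l$ (after Proposition~\ref{red:result1}, Corollary~\ref{red:result2}, and Lemma~\ref{lem:invarianct_translation} in the unitary case; by hypothesis in the symplectic case), I would restrict the stratification~(\ref{stra121}) to sublattices $M \subset L$ of type $(d_n)$. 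The refined Jordan-type analysis of Propositions~\ref{lem:Jordan_of_type_m}--\ref{corcounting}, combined with the smoothness assertion of Theorem~\ref{theorem:smoothness}, allows Weil's formula to be applied stratum by stratum; the explicit volume of Proposition~\ref{cor:610} together with the Jordan-type count of Proposition~\ref{cor:Sab} then delivers the prefactors $\frac{\#\mathrm{U}_n(\kappa)}{(1+q^{-l})q^{n^2}}$ and $\frac{\#\mathrm{Sp}_{2n}(\kappa)}{q^{n(2n+1)}}$. The inequality is strict because the strata of type $(k_1,\ldots,k_{n-m})$ with $n-m\geq 2$ contribute strictly positive volume on the unitary/symplectic side, and because Theorem~\ref{copylb1} already provides strict bounds on the general-linear side.

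The main obstacle will be the measure comparison in the parabolic descent. Unlike $\mathfrak{gl}_n$, the Chevalley morphism for $\mathfrak{u}_n$ and $\mathfrak{sp}_{2n}$ is defined through the twisted characteristic polynomial via the auxiliary element $\alpha$ of Lemma~\ref{lem:c+sigma(c)_bij}, and passing the parabolic descent from $d\mu$ to the geometric measure requires careful tracking of Jacobian factors, which is also where the characteristic restriction $char(F)>n$ is needed. A second subtlety, absent from the $\mathfrak{gl}_n$ case, is that every type-$(d_n)$ sublattice $M\subset L$ carries additional hermitian (resp.\ symplectic) invariants encoded by the Jordan type of $h|_M$; the factor $(1+q^{-l})^{-1}$ in the unitary prefactor is precisely the count of which Jordan types support an endomorphism with characteristic polynomial $\chi_{\gamma_0}(x)$, and reconciling the basis adapted to $type(M)=(d_n)$ with the Jordan splitting of $h|_M$ (done via the explicit basis of Section~\ref{section:jordan}) is the most delicate ingredient of the proof.
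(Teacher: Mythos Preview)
Your proposal is correct and follows essentially the same approach as the paper: parabolic descent via Proposition~\ref{pro:par_des_sorb} splits off the $\mathfrak{gl}$-factors to which Theorem~\ref{copylb1} applies, and for the elliptic piece $\gamma_0$ one reduces (Corollary~\ref{red:result2}, Lemma~\ref{lem:invarianct_translation}) to the case $\overline{\chi}_{\gamma_0}(x)=x^{\widetilde{n}}$ and bounds $\mathcal{SO}_{\gamma_0}$ below by the type-$(d_n)$ contribution computed in Propositions~\ref{cor:610} and~\ref{cor:Sab} (packaged as Theorem~\ref{thm:SO_dn_type}). One minor clarification: the factor $(1+q^{-l})^{-1}$ does not arise directly from the Jordan-type count over $\kappa$, but rather appears as $(1+q_R^{-1})^{-1}$ in Theorem~\ref{thm:SO_dn_type} applied to $\gamma_K$ over $\mfo_{K^\sigma}$ (where $q_R=q^l$), and then survives the comparison ratio of Corollary~\ref{red:result2}; the remaining group-order factors cancel exactly to give $\#\mathrm{U}_n(\kappa)/q^{n^2}$.
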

        When $\mathfrak{g} = \mathfrak{u}_{n,\mathfrak{o}}$, we need to suppose that  $char(\kappa)>2$ if $n$ is even. 
But this assumption is unnecessary if $B(\gamma)$ is a singleton (cf. Section \ref{section:matdescofr}) or  $n$ is odd.

We  also translate our lower bound with respect to another measure $d\mu$ used in \cite[Section 3.2]{Yun16}, whose associated  stable orbital integral is denoted by $\mathcal{SO}_{\gamma,d\mu}$ (cf. Section \ref{section:comparison}).
The result is stated in Theorem \ref{thm:lowerbound_SO}.

When $n=2$, we have the following closed formula.
\begin{theorem}(Theorem \ref{theorem12.1sorn=2})
    Suppose that $char(F)=0$ or $char(F)>2$. 
For $\gamma \in \mathfrak{u}_2(\mfo)$ such that $\left(E[x]/(\chi_{\gamma}(x)\right)^{\sigma}$ is a quadratic field extension of $F$, we have the following formula:
\[
\left(\mathcal{SO}_{\gamma},  \mathcal{SO}_{\gamma,d\mu}\right)=
\left\{\begin{array}{l l}
\left(\frac{q+1}{q},  q^{S(\gamma)}\right) & \textit{ if $\left(E[x]/(\chi_{\gamma}(x)\right)^{\sigma}/F$ is unramified};\\
\left(\frac{(q+1)(q^{S(\gamma)+1}-1)}{q^{S(\gamma)+2}}, \frac{q^{S(\gamma)+1}-1}{q-1}\right) & \textit{ if $\left(E[x]/(\chi_{\gamma}(x)\right)^{\sigma}/F$ is ramified}.
\end{array}\right.
\]
\end{theorem}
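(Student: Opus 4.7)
The plan is to specialize the stratification-plus-smoothening machinery of Part 2 to the case $n=2$, where the combinatorics becomes simple enough to collapse into a closed formula. First, I would invoke the reduction Proposition \ref{red:result1}, Corollary \ref{red:result2}, and Lemma \ref{lem:invarianct_translation} to assume $\chi_\gamma(x) = x^2+c_1 x+c_2 \in \mathfrak{o}_E[x]$ is irreducible with $\overline{\chi}_\gamma(x) = x^2$. The hypothesis that $\bigl(E[x]/(\chi_\gamma(x))\bigr)^\sigma$ is a quadratic field extension of $F$ forces $B(\gamma)^{\mathrm{irred}}$ to be a singleton and $B(\gamma)^{\mathrm{split}}$ to be empty, so no Levi descent is required and the Chevalley-invariants of $\gamma$ reduce entirely to $(c_1,c_2)$.

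Next, I would apply the sublattice stratification \eqref{stra121} to $\uGr(\mathfrak{o})$. Writing $d_2 = \mathrm{ord}(c_2) = \mathrm{ord}(\det \gamma)$, only two families of types occur for $n=2$: the cyclic-quotient type $(d_2)$, and the types $(k_1,k_2)$ with $1 \le k_1 \le k_2$ and $k_1+k_2 = d_2$. Each type must be further refined by the Jordan type $\JT(M)$ of the hermitian sublattice $(M, h|_M)$ in the sense of Definition \ref{def:tjt}. Using the well-adapted bases of Section \ref{section:jordan} together with the non-emptiness criteria of Propositions \ref{lem:Jordan_of_type_m}--\ref{corcounting} and Theorem \ref{theorem:smoothness}, I would pin down exactly which Jordan types produce non-empty strata $O_{\gamma,\cL(L,M)}$; Proposition \ref{cor:Sab} then counts the number of sublattices realizing each admissible pair (type, Jordan type).

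After that, Proposition \ref{cor:610} supplies an explicit value of $\mathcal{SO}_{\gamma,M}$ depending only on $(d_1,d_2)$ and on $\JT(M)$. Summing these contributions, weighted by the counts from Proposition \ref{cor:Sab}, yields $\mathcal{SO}_\gamma$ in closed form. The ramified/unramified dichotomy enters precisely here: when $\bigl(E[x]/(\chi_\gamma(x))\bigr)^\sigma/F$ is unramified, the admissible Jordan types are constrained and the intermediate $S(\gamma)$-dependent contributions telescope to the constant $\tfrac{q+1}{q}$; when it is ramified, extra Jordan types appear and their weighted sum assembles into the geometric-series-type expression $\tfrac{(q+1)(q^{S(\gamma)+1}-1)}{q^{S(\gamma)+2}}$. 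The companion formulas for $\mathcal{SO}_{\gamma,d\mu}$ then follow by applying the explicit comparison between the Chevalley measure and the Yun measure developed in Section \ref{section:comparison}.

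The main obstacle I expect is the bookkeeping needed to identify which Jordan types of sublattices are actually compatible with being the image of some $f \in \End_\mathfrak{o}(L)$ with $\chi_f = \chi_\gamma$, and how this compatibility reflects the ramification of $\bigl(E[x]/(\chi_\gamma(x))\bigr)^\sigma/F$ through the arithmetic of $S(\gamma)$. A useful sanity check is that the resulting $\mathcal{SO}_\gamma$ must strictly exceed the lower bound of Theorem \ref{thm:intro_Part2} specialized to $n=2$ with $B(\gamma)^{\mathrm{split}} = \varnothing$, and must match its leading term $\tfrac{q+1}{q}$; likewise the ratio $\mathcal{SO}_\gamma/\mathcal{SO}_{\gamma,d\mu}$ must agree with the explicit measure-comparison factor from Section \ref{section:comparison}, which pins down the precise denominators in both the unramified and ramified cases.
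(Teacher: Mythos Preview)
Your plan contains a genuine error in the unramified case. You claim that the hypothesis ``$\bigl(E[x]/(\chi_{\gamma}(x))\bigr)^{\sigma}$ is a quadratic field extension of $F$'' forces $B(\gamma)^{\mathrm{irred}}$ to be a singleton and $B(\gamma)^{\mathrm{split}}$ to be empty. This is false: when $F_{\chi_\gamma}^\sigma/F$ is \emph{unramified}, that unramified quadratic extension is $E$ itself, so $\widetilde{F}_{\chi_\gamma}\cong E\times E$ and $\chi_\gamma(x)$ splits over $E$. In the language of Section~\ref{desc_field} this means $B(\gamma)=B(\gamma)^{\mathrm{split}}$ is the singleton and $B(\gamma)^{\mathrm{irred}}=\varnothing$. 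Consequently the entire machinery of Section~\ref{sectypem}, which is set up under the standing assumption that $\chi_\gamma(x)$ is irreducible over $\mathfrak{o}_{\widetilde{F}}$, does not apply; your proposed reductions (Proposition~\ref{red:result1}, Corollary~\ref{red:result2}, Lemma~\ref{lem:invarianct_translation}) and the Jordan-type analysis never get off the ground in this case. The paper instead obtains $\mathcal{SO}_\gamma=\tfrac{q+1}{q}$ in one line via parabolic descent (Proposition~\ref{pro:par_des_sorb}) to $\mathrm{Res}_{\mathfrak{o}_E/\mathfrak{o}}\mathrm{GL}_1$, and then reads off $\mathcal{SO}_{\gamma,d\mu}$ from Proposition~\ref{prop:comparison_measures}.

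In the ramified case your outline is closer to the paper, but you are missing a key simplification: rather than analyzing the types $(k_1,k_2)$ with $k_1\ge 1$ directly via Jordan types, the paper applies the reduction Proposition~\ref{propendred_unsp} (the $m=0$ case) to convert $\mathcal{SO}_{\gamma,M}$ with $\mathcal{T}(M)=(k_1,k_2)$ into $q^{-k_1}\mathcal{SO}_{\gamma^{(k_1)},\pi^{-k_1}M}$ with $\mathcal{T}(\pi^{-k_1}M)=(d_\gamma-2k_1)$. After this step every stratum is of type $(d)$, and Theorem~\ref{thm:SO_dn_type} gives each inner sum as the constant $\tfrac{\#\mathrm{U}_2(\kappa)}{(q+1)q^3}$; the geometric series in $q^{-k_1}$ then produces the stated formula. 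Without Proposition~\ref{propendred_unsp} you would be re-deriving the $(k_1,k_2)$ contributions from scratch, which is unnecessary.
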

For the notion of $S(\gamma)$ see Definition \ref{def:Serre_inv}.
Here the first case is when $\chi_{\gamma}(x)$ is not irreducible over $E$ but $\chi_{\alpha\gamma}(x)$ is irreducible over $F$.
The second  case is when both $\chi_{\gamma}(x)$ and  $\chi_{\alpha\gamma}(x)$ are irreducible over $E$ and $F$, respectively.

\subsection{Conjecture of \texorpdfstring{$\mathcal{SO}_{\gamma}$}{SO}}\label{subsec1.5}

Let us firstly consider the $\mathfrak{gl}_n$ case, to have intuition about $\mathcal{SO}_{\gamma}$.
In the context of Equations (\ref{matrixform})-(\ref{matrixform2}), we can say that congruence conditions are most accumulated in the case of type $(k_1)$.
The secondly most accumulated case is of type $(k_1, k_2)$ with $t=1$ in a sense of our refined stratification.
Especially, $(1,k_{2})$ type is most accumulated among the types of the form $(k_{1},k_{2})$.
Furthermore,  congruence conditions assigned on the open subset of $\mathcal{SO}_{\gamma, (k_1, k_2),1}$
treated in Theorem \ref{copylb1} are most accumulated inside the case of type $(k_1, k_2)$ with $t=1$.

It seems that the more accumulated congruence conditions are, the bigger the volume is.
This is supported by our results  about the type $(k_1)$ and the type $(k_{1},k_{2})$.

More precisely,  the volume in the case of type $(k_1)$ is much bigger than the volume in the case of type $(k_1, k_2)$ (cf. Corollaries \ref{cornm1} and \ref{ineqm2}).
Furthermore, in the case of  the type $(k_{1},k_{2})$ with $k_{1}\leq k_{2}$,  the smaller  $k_{1}$ is, the bigger the volume of (an open subset of) $\mathcal{SO}_{\gamma,(k_{1},k_{2}),1}$ is (cf. Corollary \ref{ineqm2}).

Because of such observation, our lower bound might be close to the explicit value of $\sog$ for a general $n$.
In this context, we propose the following conjecture.

\begin{conjecture}\label{conj1}
        Suppose  $char(F)=0$ or $char(F)>n$.
For a  regular semisimple element $\gamma\in \mathfrak{gl}_{n}(\mfo)$ with $n\geq 3$,
\[\left\{
\begin{array}{l}
\mathcal{SO}_{\gamma}=\frac{\#\mathrm{GL}_{n}(\kappa)}{q^{n^{2}}}\prod\limits_{i\in B(\gamma)}\frac{q^{d_{i}}}{q^{d_{i}}-1}(1+\alpha(\bar{d}_{\gamma_i})q^{-d_{i}}+O(q^{-2d_{i}}))\\
\mathcal{SO}_{\gamma,d\mu}=
q^{\rho(\gamma)}\prod\limits_{i\in B(\gamma)}(q^{S(\gamma_{i})d_{i}}+q^{(S(\gamma_{i})-1)d_{i}}+\cdots+q^{(S(\gamma_{i})-r_{i}+1)d_{i}})(1+\alpha(\bar{d}_{\gamma_i})q^{-d_{i}}+O(q^{-2d_{i}})).
\end{array}
\right.
\]
where $\alpha(\bar{d}_{\gamma_i})=\left\{\begin{array}{l l}
    0 & \textit{if $\bar{d}_{\gamma_i}=0$ or $1$};\\
    1 & \textit{if $\bar{d}_{\gamma_i}=2$};\\
    2 & \textit{if $\bar{d}_{\gamma_i}\geq 3$}.
\end{array}\right.$
Here we use the notation in (\ref{nota12}).
\end{conjecture}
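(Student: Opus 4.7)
The plan is to upgrade the lower bound of Theorem \ref{copylb1} into an equality modulo $O(q^{-2d_i})$ by controlling every stratum in the summation formula of Proposition \ref{stra12} that was not already smoothened in the proof of Theorem \ref{copylb1}. After the reductions of Section \ref{sscotn} together with \cite[Section 4]{Yun13}, and after the parabolic descent which at the level of characteristic polynomials splits the computation over the index set $B(\gamma)$, we reduce to the elliptic case with $B(\gamma)$ a singleton and $\chi_\gamma(x)\equiv x^n\pmod{\pi}$ irreducible; the general case follows by taking the product. The target therefore becomes to show that the discrepancy between $\mathcal{SO}_\gamma$ and the partial sum captured by the strata of type $(k_1)$ and $(k_1,k_2)$ with $t=1$ is at most $O(q^{-2d})\cdot \#\mathrm{GL}_n(\kappa)/q^{n^2}$.

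The first step is to refine the evaluation of $\mathcal{SO}_{\gamma,(k_1)}$ and of $\mathcal{SO}_{\gamma,(k_1,k_2),1}$ beyond the smooth-locus estimate used in Theorem \ref{copylb1}. In that proof, congruence conditions are imposed only far enough to guarantee smoothness on an open subscheme, and smoothness fails on the complement already at the first level. I would iterate the smoothening by imposing successively deeper congruence conditions, obtaining a nested filtration whose volumes decay geometrically in $q^{-d}$: each additional layer contributes a multiplicative factor of $q^{-d}$, so after the second refinement the residual is already $O(q^{-2d})$. Summing the contributions of the first two refinements reproduces the two-term expansion $1+\alpha(\bar{d}_\gamma)q^{-d}+O(q^{-2d})$ appearing in the conjecture, and the compatibility with the combinatorial factors $c_{(k_1)}$, $c_{(k_1,k_2)}$, $d_1$ from Lemmas \ref{counttype} and \ref{lem415} converts this into the precise form involving $\#\mathrm{GL}_n(\kappa)/q^{n^2}\cdot q^d/(q^d-1)$.

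The second step is to show that every stratum with $n-m\geq 3$, and the stratum $(k_1,k_2)$ with $t=2$, contributes only $O(q^{-2d})$. For each such stratum, one builds a further sequence of congruence conditions on $O_{\gamma,\cL(L,M,V)}$; the key observation is that $L/M$ having at least three elementary divisors, or $V\subsetneq\overline{M}$ of codimension at least two, imposes at least two independent vanishing conditions on $f$ beyond those forced by the characteristic polynomial. Via Weil's formula applied upstairs, the resulting normalized volume is of the form $q^{-2d}$ times a bounded rational function in $q^{-d}$; the polynomial growth of $c_{(k_1,\ldots,k_{n-m})}$ and $d_t$ is more than absorbed by this decay. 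Running the same argument with the measure $d\mu$ of \cite[Section 3.2]{Yun13} and using the translation of Section \ref{sec321} yields the parallel asymptotic for $\mathcal{SO}_{\gamma,d\mu}$.

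The main obstacle is producing a uniform smoothening, or an equivalent effective upper bound, for strata of arbitrary depth: Theorem \ref{copylb1} demonstrates that a finite sequence of congruence conditions suffices in depth $\leq 2$, but the combinatorics become rapidly more delicate with depth, and there is no a priori guarantee that the iterative refinement terminates. One promising route is to construct a N\'eron-type or blow-up desingularization of $\varphi_n$ along each stratum and apply Weil's formula upstairs; another is to establish a motivic or Igusa-zeta-function bound uniform in $\gamma$. A sanity check is provided by the explicit formulas in Theorems \ref{copy2}--\ref{copy3}, whose expansions are consistent with the conjectured shape at small $n$ and already exhibit the predicted coefficient $\alpha(\bar{d}_\gamma)$; extending this from the smoothening-based closed formulas to a general asymptotic for all $n\geq 3$ is where the essential new input is required.
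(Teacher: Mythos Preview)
The statement you are attempting to prove is labeled as a \emph{Conjecture} in the paper, not a theorem: the authors do not prove it. The paper explicitly presents it as an open problem and offers only two pieces of evidence in its favor: (i) the closed formula for $n=3$ in Theorem~\ref{copy3} verifies it in that case, and (ii) when $S_{R_i}(\gamma_i)=\bar d_{\gamma_i}=2$, the second leading term of their lower bound matches Yun's upper bound (Section~\ref{sec132}(3)), showing optimality in that narrow case. No general argument is given or claimed.

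Your proposal, to its credit, is honest about where the genuine gap lies. The two steps you describe --- iterated smoothening on the type $(k_1)$ and $(k_1,k_2)$ strata, and an $O(q^{-2d})$ bound on all deeper strata --- are exactly what would be needed, and the paper's own discussion in Section~\ref{subsec1.5} and Remark~\ref{rmk81} confirms this. But the central claim, that ``each additional layer contributes a multiplicative factor of $q^{-d}$'' and that deeper strata uniformly contribute $O(q^{-2d})$, is not established anywhere: Remark~\ref{rmk81}(4) notes that already for type $(k_1,k_2)$ with $k_1<k_2$ the morphism $\varphi_{n,M,V}$ fails to be smooth on the complement of $\cL(L,M,V)_1$, and the authors explicitly say that ``the precise formula for $\mathcal{SO}_{\gamma,(k_1,k_2),1}$ (and so $\sog$) with $n\geq 4$ would require much more involved congruence conditions than those considered in this paper.'' Your final paragraph correctly identifies this as the essential missing input; but absent that input, what you have written is a plausible strategy for attacking an open conjecture rather than a proof.
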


The conjecture is equivalent to saying that   $\alpha(\bar{d}_{\gamma_i})x^{-1}$ in the description of  $N'_{\gamma_{i},d\mu}(x)$, which appears in the right side of Theorem \ref{copylb1}, is optimal. Equivalently, the second leading term of our lower bound is optimal.

The conjecture is true in the following two cases, which serve as evidences:
\begin{enumerate}
\item{the case $n=3$;}
Theorem \ref{copy3} directly yields that 
 the conjecture is true when $n=3$.

\item{the case $S(\gamma_{i})=d_{\gamma_{i}}=2$;}
  In Section \ref{sec132}.(3), we already confirm it.
\end{enumerate}

In the $\mathfrak{u}_{n,\mfo}$ and $\mathfrak{sp}_{2n,\mfo}$ cases, we treated sublattices of type $(d_n)$. 
Based on the above  observation, we propose the following conjecture.

\begin{conjecture}\label{conj:Intro_Part2}
        Suppose that $char(F)=0$ or $char(F)>n$.
        Suppose that $B(\gamma)^{irred}$ is a singleton for a regular semisimple element $\gamma \in \mathfrak{u}_{n,\mfo}(\mfo)$ or $\gamma \in \mathfrak{sp}_{2n,\mfo}(\mfo)$.
        Then we have the followings:
        \[ 
\mathcal{SO}_{\gamma}=
\begin{cases}
    \frac{\#\mathrm{U}_n(\kappa)}{q^{n^2}(1+q^{-l})} \cdot(1+O(q^{-l}))  \prod\limits_{i \in B(\gamma)^{split}}\frac{q^{2d_{i}}}{q^{2d_{i}-2}}(1+\alpha(\bar{d}_{g_i})q^{-2d_i} + O(q^{-4d_i})) &\textit{if $\mathfrak{g} = \mathfrak{u}_{n,\mathfrak{o}}$};\\
    \frac{\#\mathrm{Sp}_{2n}(\kappa)}{q^{2n^2+n}} \cdot (1+O(q^{-1})) \cdot\prod\limits_{i \in B(\gamma)^{split}}\frac{q^{d_{i}}}{q^{d_{i}}-1}(1+\alpha(\bar{d}_{g_i})q^{-d_i} + O(q^{-2d_i}))   &\textit{if $\mathfrak{g} = \mathfrak{sp}_{2n,\mathfrak{o}}$ and $\overline{\chi}_\gamma(x) = x^{2n}$}.
    \end{cases}
    \]
           We refer to Theorem \ref{thm:intro_Part2} and Conjecture \ref{conj1} for the notations.
    \end{conjecture}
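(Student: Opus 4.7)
The plan is to extend the stratification–smoothening strategy already used to derive the lower bound in Theorem \ref{thm:intro_Part2} into a \emph{two-sided} control, by quantifying exactly how much the sub-dominant strata contribute. Concretely, I would decompose $\uGr(\mfo)$ into the strata $O_{\gamma,\cL(L,M)}$ indexed by the type $(k_1,\dots,k_{n-m})$ of $M$ and, in the hermitian or symplectic setting, also by $\JT(M)$, and isolate the type $(d_n)$ stratum, whose contribution should already realise the stated leading and second-leading terms. The remaining strata must then be shown to sum to $O(q^{-2d_i})$ (respectively $O(q^{-l})$ along the irreducible direction), matching the error in the conjecture.

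For the dominant stratum, I would feed the explicit formula for $\mathcal{SO}_{\gamma,M}$ from Proposition \ref{cor:610} together with the lattice count of Proposition \ref{cor:Sab} into the factorisation of $\gamma$ through the Levi subalgebra described in Sections \ref{sec:desc_Levi} and \ref{section:matdescofr}. Along the singleton $B(\gamma)^{irred}$ direction this should produce the factor $\#\mathrm{U}_n(\kappa)/\bigl(q^{n^2}(1+q^{-l})\bigr)$ (resp.\ $\#\mathrm{Sp}_{2n}(\kappa)/q^{n(2n+1)}$); along each $i\in B(\gamma)^{split}$ the parabolic descent of Propositions \ref{lem:par_des_sorb}--\ref{pro:par_des_sorb} reduces the computation to an elliptic $\mathfrak{gl}_{l_i,\mfo_E}$ (resp.\ $\mathfrak{gl}_{l_i,\mfo}$) integral, at which point Conjecture \ref{conj1} would supply exactly the Euler factor $\frac{q^{2d_i}}{q^{2d_i}-2}(1+\alpha(\bar d_{g_i})q^{-2d_i}+O(q^{-4d_i}))$ (resp.\ the symplectic analogue). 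Thus, modulo Conjecture \ref{conj1}, the main task is reduced to the purely hermitian/symplectic error analysis.

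For the subdominant strata, the approach is to refine the smoothening used in Theorem \ref{copylb1} and its Part~2 analogue so as to yield sharp \emph{upper} bounds: for each type $(k_1,\dots,k_{n-m})$ with $n-m\geq 2$ one expects $\mathcal{SO}_{\gamma,M}$ to lose an extra factor of $q^{-d_i}$ compared with the type $(d_n)$ stratum, and the number of such sublattices (respectively with a prescribed Jordan type) is controlled by the Grassmannian-type count $d_t$ and by the hermitian analogue developed in Propositions \ref{lem:Jordan_of_type_m}--\ref{corcounting}. Summing over admissible types and Jordan types with the inequalities of Corollaries \ref{cornm1} and \ref{ineqm2} as a template should bound the tail by the stated $O(q^{-2d_i})$, leaving only the $(d_n)$ stratum to dictate the second leading term.

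The hard part, I expect, will be controlling the Jordan-type dependence that is absent from the $\mathfrak{gl}_n$ story: unlike in Part~1, two sublattices of the same type but different Jordan types may give genuinely different volumes, so the uniform upper bound on $\mathcal{SO}_{\gamma,M}$ for types $(k_1,\dots,k_{n-m})$ with $n-m\geq 2$ must be proved Jordan-type-by-Jordan-type and then summed, a step that currently has no analogue in Section~\ref{section:jordan}. Compounding this, for $\mathfrak{sp}_{2n,\mfo}$ the failure of translation invariance prevents a reduction to the case $\overline{\chi}_\gamma(x)=x^{2n}$ along split factors, so the Jordan-type analysis has to be carried out in each ramification pattern separately. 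As partial evidence, Theorem \ref{theorem12.1sorn=2} already matches the first line of the conjecture when $B(\gamma)^{split}=\emptyset$ and $n=2$, and the split-factor reduction together with the $n=3$ case of Theorem \ref{copy3} gives consistency checks whenever the conjecture degenerates to Conjecture \ref{conj1}.
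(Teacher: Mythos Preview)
The statement you are attempting to prove is a \emph{conjecture} in the paper, not a theorem: the authors do not give a proof of it anywhere. They state it in Section~\ref{subsec1.5} as an expectation about the optimality of their lower bound, explain that the $O(q^{-l})$ and $O(q^{-1})$ terms encode the conjectured optimality of the leading term along the irreducible direction, and note that the split-factor Euler products are simply imported from Conjecture~\ref{conj1}. No further argument is offered.

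What you have written is therefore not a proof to be compared against the paper's proof, but a research program, and you seem aware of this: you explicitly condition on Conjecture~\ref{conj1} for the split factors, and you flag the Jordan-type-dependent upper bound for strata with $n-m\ge 2$ as ``the hard part'' with ``no analogue in Section~\ref{section:jordan}''. That is an accurate assessment. The paper only handles the type-$(d_n)$ stratum in Part~2 (Propositions~\ref{cor:610}, \ref{cor:Sab}, Theorem~\ref{thm:SO_dn_type}); it does not compute, or even bound from above, the contribution of any other stratum in the $\mathfrak{u}_n$ or $\mathfrak{sp}_{2n}$ case, so your proposed error analysis would be genuinely new work rather than a reconstruction of something in the paper. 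Your outline is a reasonable plan of attack, but it is not a proof, and there is nothing in the paper to check it against.

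One small slip: in your split-factor Euler product you wrote $\frac{q^{2d_i}}{q^{2d_i}-2}$; the intended factor (matching Conjecture~\ref{conj1} and Theorem~\ref{copylb1}) is $\frac{q^{2d_i}}{q^{2d_i}-1}$.
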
   
In this conjecture, 
the appearance of $O(q^{-l})$ and $O(q^{-1})$ means that the first leading term of our lower bound is optimal.
The appearance of the big $O$-notion for $i \in B(\gamma)^{split}$ follows Conjecture \ref{conj1}.

\begin{remark}
  We translate the above conjecture in terms of the another measure $d\mu$.
        \begin{enumerate}
            \item For the case $\mathfrak{g} = \mathfrak{u}_{n,\mathfrak{o}}$, we have
            \begin{align*}
            \mathcal{SO}_{\gamma,d\mu}
            &= q^{\rho(\gamma)} \cdot \frac{1 + q^{-d}}{1+q^{-l}} \cdot (1+ O(q^{-l})) \\
            &\times\prod\limits_{i\in B(\gamma)^{split}}(q^{S(g_{i})2d_{i}}+q^{(S(g_{i})-1)2d_{i}}+\cdots+q^{(S(g_{i})-r_{i}+1)2d_{i}})(1+\alpha(\bar{d}_{g_{i}})q^{-2d_{i}}+O(q^{-4d_{i}})).
            \end{align*}
           \item For the case $\mathfrak{g} = \mathfrak{sp}_{2n,\mathfrak{o}}$,  $\overline{\chi}_{\gamma_0}(x) = x^{2n}$, and $\widetilde{F}_{\chi_{\gamma_0}}/F^\sigma_{\chi_{\gamma_0}}$ is unramified, we have
            \begin{align*}
            \mathcal{SO}_{\gamma,d\mu}
            &= q^{\rho(\gamma) - S(\psi)} \cdot (1 + q^{-d}) \cdot (1+O(q^{-1}))\\
            &\times\prod\limits_{i\in B(\gamma)^{split}}(q^{S(g_{i})d_{i}}+q^{(S(g_{i})-1)d_{i}}+\cdots+q^{(S(g_{i})-r_{i}+1)d_{i}})(1+\alpha(\bar{d}_{g_{i}})q^{-d_{i}}+O(q^{-2d_{i}})).
            \end{align*}

            \item For the case $\mathfrak{g} = \mathfrak{sp}_{2n,\mathfrak{o}}$,  $\overline{\chi}_{\gamma_0}(x) = x^{2n}$, and $\widetilde{F}_{\chi_{\gamma_0}}/F^\sigma_{\chi_{\gamma_0}}$ is ramified, we have
            \begin{align*}
            \mathcal{SO}_{\gamma,d\mu}
            &= q^{\rho(\gamma) - S(\psi)} \cdot 2 \cdot (1+O(q^{-1}))\\
            &\times\prod\limits_{i\in B(\gamma)^{split}}(q^{S(g_{i})d_{i}}+q^{(S(g_{i})-1)d_{i}}+\cdots+q^{(S(g_{i})-r_{i}+1)d_{i}})(1+\alpha(\bar{d}_{g_{i}})q^{-d_{i}}+O(q^{-2d_{i}})).
            \end{align*}
        \end{enumerate}
        Here the notations for $\mathfrak{u}_{n,\mathfrak{o}}$ are as follows:
        \[
        \begin{cases}
            E_{\chi_{\gamma_0}} = E[x]/(\chi_{\gamma_0}(x)),
            F^\sigma_{\chi_{\gamma_0}} = \left(E_{\chi_{\gamma_0}}\right)^\sigma, \textit{ and }
            R = \left(\mathfrak{o}_E[x]/(\chi_{\gamma_0}(x))\right)^\sigma; \\
            l = [\kappa_{R}:\kappa] ~ (=\textit{the degree of the irreducible factor of } \overline{\chi}_{\alpha \gamma_0}(x)); \\
            d : \textit{the inertial degree of } F^\sigma_{\chi_{\gamma_0}}/F;\\
            \rho(\gamma) = S(\gamma) - \sum\limits_{i \in B(\gamma)^{split}} S(g_i) \textit{ where } S(\gamma):=[\mathfrak{o}_{E_{\chi_\gamma}}:\mathfrak{o}_E[x]/(\chi_{\gamma}(x))] \textit{ is the relative } \mathfrak{o}_E\textit{-length}.
        \end{cases}
        \] 
        We refer to Sections \ref{sec:trans_herm}-\ref{desc_field} for the notations of $\mathfrak{sp}_{2n, \mfo}$ and Definition \ref{def:Serre_inv} for $S(\psi)$.
        The other notations $S(g_i), d_i, \alpha(\bar{d}_{g_{i}})$ for $i \in B(\gamma)^{split}$ are referred to Theorem \ref{copylb1} and Conjecture \ref{conj1}  (by replacing $\gamma$ with an elliptic regular semisimple element $g_i$ so that $B(g_i)$ is a singleton).
\end{remark}

\begin{remark}
\begin{enumerate}
\item In the $\mathfrak{sp}_{2n, \mfo}$ case, $\mathcal{SO}_{\gamma,d\mu}$ is divided into two cases whether $\widetilde{F}_{\chi_{\gamma_0}}/F_{\chi_{\gamma_0}}^\sigma$ is unramified or ramified, whereas such distinction is not appeared in $\mathcal{SO}_{\gamma}$.
This is due to the comparison between two measures (cf. Section \ref{section:comparison}).

    \item 
If $\mathfrak{g} = \mathfrak{sp}_{2n,\mathfrak{o}}$,  $\widetilde{F}_{\chi_\gamma}/F_{\chi_\gamma}^\sigma$ is ramified, and $char(\kappa) > 2$, then $\overline{\chi}_\gamma(x) = x^{2n}$ always so that the above conjecture is applicable (cf. Remark \ref{rmk:reduction_cond}).
\end{enumerate}    
\end{remark}

\subsection{Heuristics}
Orbital integrals and the associated smoothenings are much more difficult than the cases of the local density of a bilinear form and the Siegel series. The reason is that schemes appearing in the latter two cases are determined by homogeneous polynomials of degree  $\leq 2$. In the case of orbital integrals, the schemes we need to handle are determined by homogeneous polynomials of degree  $\leq n$. 

On the other hand, the smoothening process is, in some sense, determined by the differentials which are polynomials of degree reduced by $1$.
Thus the differentials appearing in the two cases of the local density of a bilinear form and the Siegel series  are linear forms so that we can use the theory of linear algebras (the theory of modules over PID, in our case). In the case of orbital integrals, the differentials are determined by polynomials of degree  $\leq n-1$.
This proves that the formula for  $n=3$ is  quite difficult since the differentials are quadratic forms so that there seems no way to apply linear algebra.

The stratification introduced in this paper has a role to reduce the degree of polynomials by $1$.
This is why we could use  the theory of modules over PID in smoothening process and  find a closed formula  when $n=3$.
To treat higher rank cases, one might need more geometric observations which enable  to reduce the degree of polynomials by $>1$. 

Our method is applicable in other cases including classical Lie algebras. This is  pursued in the subsequent work in Part \ref{part2}.

We hope that our argument might be used to Diophantine geometry of cubic polynomials of certain types.
\\

\subsection{Organizations}
This manuscript is organized into two parts: Part 1 treating $\mathfrak{gl}_{n, \mfo}$ and Part 2 treating $\mathfrak{u}_{n, \mfo}$ and $\mathfrak{sp}_{2n, \mfo}$.

After fixing notations at the beginning of Part 1, we explain the measure associated to $\underline{G}_{\gamma}$ to define the  stable orbital integral $\sog$ and  the difference between our measure and the measure used in \cite{Yun13}, following \cite{FLN},  in Section \ref{sec3}.
Section \ref{sec4} contains a few reduction steps 
and stratifications to $\underline{G}_{\gamma}(\mfo)$ by introducing $O_{\gamma, \cL(L,M)}(\mfo)$ and $O_{\gamma, \cL(L,M,V)}(\mfo)$. 
In Section \ref{sec5}, we explain a scheme theoretic description of $O_{\gamma, \cL(L,M)}$ and $O_{\gamma, \cL(L,M,V)}$ and discuss two extreme cases of type $(k_1)$ and $(k_1, \cdots, k_n)$. 
We provide a formula for $\sog$ with $n=1,2$ in Section \ref{sec6} and a formula for $\sog$ with $n=3$ in Section \ref{sec7}. 
In Section \ref{sec8} we provide  a lower bound for $\sog$ with all $n$.
  In Appendix \ref{App:AppendixA}, we explain a  proof of Theorem   \ref{thm71}, which is  a detailed process of smoothening for each stratum in the case $n=3$.
  In Appendix \ref{appc}, we provide  proofs of Theorems \ref{result3}-\ref{result4} which give the closed formula for $\sog$ with $n=3$.

The structure of Part 2 is similar to that of Part 1.
After fixing  notations at the beginning of Part 2,  we explain the measure to define the stable orbital integral  $\sog$ and the difference between two measures in Section \ref{section10soi}. 
 Section \ref{sec:parabolicdescent} is devoted to  explain a parabolic descent  in terms of the factorization of the characteristic polynomial.
 Section \ref{sec:reduction} contains a few reduction steps and stratifications.
 In Section \ref{sectypem},
we explain a scheme theoretic formulation of $\sog$ and provide a lower bound for $\sog$ with all $n$.
In Section  \ref{sectionformulaforu2}, we provide a closed formula for $\sog$ with $\mathfrak{u}_{2, \mfo}$ when $B(\gamma)$ is a singleton. 
\\

\textbf{Acknowledgments.} 
We sincerely thank Jungin Lee  to provide a better proof of Lemma \ref{counttype} and helpful comments.
We deeply thank Professor Benedict Gross and Professor Jiu-Kang Yu for their fruitful comments and encouragement on this project.

\part{Stable orbital integrals for \texorpdfstring{$\mathfrak{gl}_n$}{gln} and smoothening}\label{part1}

\section*{Notations}\label{sectionnss}

\begin{itemize}
\item Let $F$ be a  non-Archimedean local field  of any characteristic with $\mathfrak{o}_F$  its ring of integers and $\kappa$  its residue field.
Let $\pi$ be a uniformizer in $\mathfrak{o}_F$.
Let $q$ be the cardinality of the finite field $\kappa$.
If there is no confusion, we sometimes use $\mathfrak{o}$ to stand for $\mathfrak{o}_F$. 

More generally, for a finite field extension $F'$ of $F$, the ring of integers in $F'$ is denoted by $\mfo_{F'}$ and the residue field of $\mfo_{F'}$ is denoted by $\kappa_{F'}$.

\item For an element $x\in F$, the exponential order of $x$ with respect to the maximal ideal in $\mathfrak{o}$ is written by $\mathrm{ord}(x)$.

\item For an element $x\in F$, the value of $x$ is $|x|_F:=q^{-\mathrm{ord}(x)}$.
If there is no confusion, then we sometimes omit $F$ so that the value of $x$ is written as $|x|$.

\item Let $\mathrm{GL}_{n, A}$ be  the general linear group scheme defined over $A$,
 $\mathfrak{gl}_{n, A}$ be the Lie algebra of  $\mathfrak{gl}_{n, A}$,
  and $\mathbb{A}^n_A$ be the affine space of dimension $n$ defined over $A$,
   where $A$ is a commutative $\mfo$-algebra. 
If there is no confusion then we sometimes omit $A$ in the subscript to express schemes over $\mfo$. 
Thus $\mathrm{GL}_{n}$ and $\mathfrak{gl}_{n}$ stand for $\mathrm{GL}_{n, \mathfrak{o}}$ and $\mathfrak{gl}_{n, \mathfrak{o}}$, respectively.

Similarly $\mathrm{M}_n$ is the scheme over $\mfo$ representing the set of $n \times n$ matrices.



\item For $a\in R$ or $f(x)\in R[x]$ with a flat $\mfo$-algebra $R$, $\bar{a}\in R\otimes \kappa$ or $\bar{f}(x)\in R\otimes \kappa[x]$ is the reduction of $a$ or $f(x)$ modulo $\pi$, respectively.


\item Let $\chi_{\gamma}(x)\in \mathfrak{o}[x]$  be the characteristic polynomial of $\gamma \in \mathfrak{gl}_{n, \mfo}(\mfo)$.
We always write $$\chi_{\gamma}(x)=x^n+c_1x^{n-1}+\cdots + c_{n-1}x+c_n$$ with $c_i\in \mfo$.

\item Let $\Delta_{\gamma}$ be the discriminant of $\chi_{\gamma}(x)$.
 
\item By saying $\gamma\in \mathfrak{gl}_{n, \mfo}(\mfo)$ regular, we mean that the identity component of the centralizer of $\gamma$ in $\mathrm{GL}_{n, F}$ is a maximal torus.
 In particular,  $\gamma$ of being  regular and semisimple  is equivalent that $\chi_{\gamma}(x)$ has distinct roots in the algebraic closure of $F$,
equivalently  $\Delta_{\gamma}\neq 0$ (cf. \cite[Section 3.1]{Gor22} or \cite[Sections 6-7]{Gro05}).

\item For a matrix $M$, the transpose of $M$ is denoted by ${}^tM$.

\item For a rational number $a\in \mathbb{Q}$, 
$\lfloor a\rfloor$ is the largest integer which is less than or equal to $a$ and 
 $\lceil a \rceil$ is the smallest integer which is greater than or equal to $a$.

\end{itemize}


\section{Stable orbital integral}\label{sec3}
In this section, we define the stable orbital integral for a regular element in $\mathfrak{gl}_{n, \mfo}(\mfo)$ and for the unit element in the Hecke algebra, following \cite{FLN}. Note that our definition is different from the classical one which is used in \cite{Yun13}.
We will also explain the exact difference between them.

\subsection{Measure}\label{measure}

Let $\omega_{\mathfrak{gl}_{n, \mathfrak{o}}}$ and $\omega_{\mathbb{A}^n_{\mathfrak{o}}}$ be nonzero,  translation-invariant forms on   $\mathfrak{gl}_{n, F}$ and $\mathbb{A}^n_F$,
 respectively, with normalizations
$$\int_{\mathfrak{gl}_{n, \mathfrak{o}}(\mathfrak{o})}|\omega_{\mathfrak{gl}_{n, \mathfrak{o}}}|=1 \mathrm{~and~}  \int_{\mathbb{A}^n_{\mathfrak{o}}(\mathfrak{o})}|\omega_{\mathbb{A}^n_{\mathfrak{o}}}|=1.$$

Define a map 
\[
\rho_n : \gl_{n,F} \longrightarrow \mathbb{A}_F^n, ~~~ \gamma\mapsto 
\textit{coefficients of $\chi_{\gamma}(x)$}.
\]

That is,  $\rho_n(\gamma)=(c_{1}, \cdots, c_n)$ for $\chi_{\gamma}(x)=x^n+c_1x^{n-1}+\cdots + c_{n-1}x+c_n$ with $c_i\in F$.
The morphism $\rho_n$ is then representable as a morphism of schemes over $F$. 
It is well known that  $\rho_n$ is smooth at $\gamma$ if and only if $\gamma$ is regular (cf. \cite[Theorem 4.20]{Hum}).
Let $\gl_{n,F}^{\ast}$ be the smooth locus of $\rho_n$. 
 It is non-empty and a nonsingular variety since the smooth locus of a morphism  is open (and non-empty for $\rho_n$). 

From now on until the end of Part 1, $\gamma$ is a regular and semisimple element in $\glno(\mathfrak{o})$.
We define $G_{\gamma}$ to be $\rho_n^{-1}(\chi_{\gamma})$.
Since $\rho_n|_{\gl_{n,F}^{\ast}}$ is smooth  and  $G_{\gamma}$ is a subvariety of  $\gl_{n,F}^{\ast}$, 
$G_{\gamma}$ is also smooth over $F$ since smoothness is stable under base change.

\begin{definition}\label{diff1}
We will define a differential  $\omega_{\gamma}^{\mathrm{ld}}$ on $G_{\gamma}$
associated to $\omega_{\mathfrak{gl}_{n, \mathfrak{o}}}$ and $\omega_{\mathbb{A}^n_{\mathfrak{o}}}$.
This is taken from \cite[Section 3]{GY} (or  \cite[Definition 3.1]{CY}). 
Smoothness of the morphism $\rho_n : \gl_{n,F}^{\ast} \rightarrow \mathbb{A}_F^n$ induces the following short exact sequence of locally free sheaves on $\gl_{n,F}^{\ast}$ (cf.  \cite[Proposition II.5]{BRL}):
\begin{equation*}\label{eqshort}
0\rightarrow \rho_n^{\ast}\Omega_{\mathbb{A}_F^n\slash F}\rightarrow \Omega_{\gl_{n,F}^{\ast}\slash F} \rightarrow \Omega_{\gl_{n,F}^{\ast}\slash \mathbb{A}_F^n} \rightarrow 0.
\end{equation*}

This gives rise to an isomorphism
\begin{equation*}\label{eqtop}
\rho_n^{\ast}\left(\bigwedge\limits^{\mathrm{top}}\Omega_{\mathbb{A}_F^n\slash F}\right)\otimes
\bigwedge\limits^{\mathrm{top}}\Omega_{\gl_{n,F}^{\ast}\slash \mathbb{A}_F^n}\simeq 
\bigwedge\limits^{\mathrm{top}}\Omega_{\gl_{n,F}^{\ast}\slash F}.
\end{equation*}

Let $\omega_{\chi_{\gamma}}\in \bigwedge\limits^{\mathrm{top}}\Omega_{\gl_{n,F}^{\ast}\slash \mathbb{A}_F^n}(\gl_{n,F}^{\ast})$ be such that 
$\rho_n^{\ast}\omega_{\mathbb{A}^n_{\mathfrak{o}}}\otimes \omega_{\chi_{\gamma}}=\omega_{\mathfrak{gl}_{n, \mathfrak{o}}}|_{\gl_{n,F}^{\ast}}$. 
We then denote by $\omega_{\chi_{\gamma}}^{\mathrm{ld}}$  the restriction of $\omega_{\chi_{\gamma}}$ to $G_{\gamma}$.
We sometimes write $\omega_{\chi_{\gamma}}^{\mathrm{ld}}=\omega_{\mathfrak{gl}_{n, \mathfrak{o}}}/\rho_n^{\ast}\omega_{\mathbb{A}^n_{\mathfrak{o}}}$.
\end{definition}

We consider the following morphism of schemes defined over $\mfo$:
\[
\varphi_n: \mathfrak{gl}_{n, \mathfrak{o}} \longrightarrow \mathbb{A}_{\mathfrak{o}}^n, \gamma \mapsto 
\textit{coefficients of $\chi_{\gamma}(x)$}.
\]
Thus the  generic fiber of $\varphi_n$ over $F$ is   $\rho_n$.
If we put $\underline{G}_{\gamma}=\varphi_n^{-1}(\chi_{\gamma})$, then $\underline{G}_{\gamma}(\mfo)$ is an open subset of $G_{\gamma}(F)$ with respect to the $\pi$-adic topology.

\begin{definition}\label{defsoi}
The stable orbital integral for $\gamma$ and  for the unit element in the Hecke algebra, denoted by $\mathcal{SO}_{\gamma}$, is defined to be 
\[
\mathcal{SO}_{\gamma}=\int_{\uGr(\mfo)}|\omega_{\chi_{\gamma}}^{\mathrm{ld}}|.
\]
\end{definition}

We emphasize that all of $\mathcal{SO}_{\gamma}$, $\omega_{\chi_{\gamma}}^{\mathrm{ld}}$,  $\uGr$, and $G_{\gamma}$ depend  on the characteristic polynomial $\chi_{\gamma}$, not the element $\gamma$.
We sometimes use the notation $\mathcal{SO}_{\chi_{\gamma}}$ for $\sog$ when we need to emphasize the role of $\chi_{\gamma}$ (cf. Sections \ref{sec6}-\ref{sec7}).

\begin{lemma}( \cite[Lemma 3.2]{CY})\label{volumeform}
We have the following equation: 
\[
\int_{\uGr(\mfo)}|\omega_{\chi_{\gamma}}^{\mathrm{ld}}|=\lim_{N\rightarrow \infty} q^{-N\cdot \mathrm{dim} G_{\gamma}}
\#\uGr(\mfo/\pi^N \mfo),
\]
where the limit stabilizes for $N$ sufficiently large.
Here, $\mathrm{dim} G_{\gamma}=n^2-n$.
\end{lemma}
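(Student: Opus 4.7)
The plan is to derive the formula from smoothness of $G_\gamma$ over $F$ combined with a $p$-adic Riemann-sum argument, since the integral model $\uGr$ is not itself smooth over $\mfo$. Regularity of $\gamma$ implies that $\rho_n$ is smooth on $\gl_{n,F}^*$ and hence on the fiber $G_\gamma$, so $G_\gamma$ is smooth over $F$ of dimension $\dim\gl_{n,F}-\dim\mathbb{A}_F^n=n^2-n$, which accounts for the assertion about $\dim G_\gamma$.

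\textbf{Local charts and Riemann sum.} Since $G_\gamma$ is smooth over $F$ and $\uGr(\mfo)\subset G_\gamma(F)$ is compact, we may partition $\uGr(\mfo)$ into finitely many disjoint compact-open $F$-analytic polydiscs $U_i\cong\mfo^{n^2-n}$ (a standard consequence of the totally disconnected $p$-adic topology) on which $|\omega_{\chi_\gamma}^{\mathrm{ld}}|$ coincides with Haar measure up to a $p$-adic unit factor. Non-vanishing of $\omega_{\chi_\gamma}^{\mathrm{ld}}$ on the smooth locus, combined with its integrality (being a quotient of $\omega_{\mathfrak{gl}_{n,\mfo}}$ by $\rho_n^*\omega_{\mathbb{A}^n_\mfo}$), makes the unit factor achievable after sufficient refinement. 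For $N$ large enough that the $U_i$ have pairwise disjoint images modulo $\pi^N$, each fiber of the reduction $r_N:U_i\to\uGr(\mfo/\pi^N\mfo)$ is a translate of $(\pi^N\mfo)^{n^2-n}$ of $|\omega|$-volume $q^{-N(n^2-n)}$, and summing gives
\[
\int_{\uGr(\mfo)}|\omega_{\chi_\gamma}^{\mathrm{ld}}| \;=\; q^{-N(n^2-n)}\cdot\#r_N(\uGr(\mfo)).
\]

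\textbf{Matching the scheme-theoretic count.} To replace $\#r_N(\uGr(\mfo))$ by $\#\uGr(\mfo/\pi^N\mfo)$, one shows that for $N$ large every $\mfo/\pi^N$-point of $\uGr$ lifts to an $\mfo$-point. Smoothness of $G_\gamma$ over $F$, together with the fact that the non-smooth locus of $\uGr\to\mathrm{Spec}\,\mfo$ is supported on a proper closed subscheme of the special fiber, yields, via a Hensel-type argument, a uniform threshold $N_0$ beyond which every $\mfo/\pi^N$-point is the reduction of some $\mfo$-point. Consequently both sides of the displayed identity are constant for $N\geq N_0$, giving the desired stabilization.

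\textbf{Main obstacle.} The delicate ingredient is the uniform Hensel lifting: although $\uGr$ is generally singular over $\mfo$, the smoothness of its generic fiber $G_\gamma$ must be leveraged to produce $\mfo$-lifts of all $\mfo/\pi^N$-points once $N$ is large. Once this is in hand, the rest of the argument is the standard Weil-type counting on the smooth $p$-adic manifold $\uGr(\mfo)$, and the $\dim G_\gamma=n^2-n$ follows immediately from the dimension formula for a smooth morphism.
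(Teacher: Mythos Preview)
Your outline has a genuine gap, and it lies earlier than where you flag the ``main obstacle.'' The claim that $|\omega_{\chi_\gamma}^{\mathrm{ld}}|$ restricts to Haar measure (up to a unit) on polydiscs in matrix-entry coordinates is false when $\uGr$ is not smooth over $\mfo$. Writing $\omega_{\mathfrak{gl}_n}=\rho_n^\ast\omega_{\mathbb{A}^n}\wedge\omega^{\mathrm{ld}}$ in coordinates gives $\omega^{\mathrm{ld}}=J^{-1}\,dt_1\wedge\cdots\wedge dt_{n^2-n}$ where $J$ is an $n\times n$ Jacobian minor, so $|\omega^{\mathrm{ld}}|=|J|^{-1}\,|dt|$; at points where the special fiber is singular one has $|J|<1$ and hence $|\omega^{\mathrm{ld}}|>|dt|$. (Your ``integrality'' remark is backwards: dividing integral forms can enlarge, not shrink, the density.) Consequently the fibers of $r_N$ do \emph{not} all have $|\omega^{\mathrm{ld}}|$-measure $q^{-N(n^2-n)}$, and your displayed identity $\int|\omega^{\mathrm{ld}}|=q^{-N(n^2-n)}\cdot\#r_N(\uGr(\mfo))$ fails. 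For instance, take $n=2$, $\chi_\gamma=x^2-\pi^3$, and $X=\begin{pmatrix}0&\pi\\\pi^2&0\end{pmatrix}$: every Jacobian minor lies in $(\pi)$, and the fiber of $r_N$ over $\bar X$ has $|\omega^{\mathrm{ld}}|$-measure $q\cdot q^{-2N}$.

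Your step (b) is also false as stated: in the same example, for each admissible $(\bar a,\bar b)$ there are $q$ choices of $\bar c$ in $\uGr(\mfo/\pi^N)$ but only one lifts to $\uGr(\mfo)$. The two errors cancel---the extra factor $|J|^{-1}$ in the fiber measure exactly matches the overcount of $\mfo/\pi^N$-points---but this cancellation \emph{is} the lemma, not a step one can skip.

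The argument the paper defers to (explained in the proof of Lemma~\ref{lemma:comparisonofsogweilrest}, following \cite[Lemma 3.2]{CY}) avoids this entirely. One works on the source $\mathfrak{gl}_n(\mfo)$ rather than on the fiber: for a ball $B_N=\chi_\gamma+\pi^N\mfo^n$, the set $\varphi_n^{-1}(B_N)(\mfo)$ is a union of cosets of $(\pi^N\mfo)^{n^2}$, so its $|\omega_{\mathfrak{gl}_n}|$-volume is $q^{-Nn^2}\cdot\#\uGr(\mfo/\pi^N)$ exactly. On the other hand, by Fubini this volume equals $\int_{B_N}\bigl(\int_{\varphi_n^{-1}(a)(\mfo)}|\omega^{\mathrm{ld}}|\bigr)|\omega_{\mathbb{A}^n}|$, and the inner integral is locally constant in $a$ by \cite[Theorem~7.6.1]{Igu}. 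Equating the two expressions gives the formula and the stabilization simultaneously, with no lifting argument needed.
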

The proof is identical to that of \cite[Lemma 3.2]{CY} and so we omit it.
Although we do not use the lemma in the paper, 
this is a formulation of the orbital integral in a sense of local densities.
Gekeler used this formulation in order to compute the orbital integral for $\mathfrak{gl}_2$ in \cite{Gek}.

\subsection{Comparison of two normalizations}\label{sscotn}
Our normalization of a Haar measure follows that introduced in \cite{FLN}.
This   is different from the classical one used in  \cite{Yun13}.
In this subsection, we explain the normalization of \cite{Yun13} and then compare it with ours.

\subsubsection{Normalization of \cite{Yun13}}\label{sec321}
We first explain the normalization used in \cite{Yun13} by reproducing Section 1.3 of loc.cit. 
Let
\[
\left\{
\begin{array}{l}
\textit{$\mathrm{T}_{\gamma}$ be the centralizer of  $\gamma$ via the adjoint action in $\mathrm{GL}_{n,F}$ which is a  maximal torus};\\
\textit{$\mathrm{T}_c$ be the maximal compact subgroup of $\mathrm{T}_{\gamma}(F)$};\\
\textit{$dt$ be the Haar measure on $\mathrm{T}_{\gamma}(F)$ such that $\mathrm{vol}(dt, \mathrm{T}_c)=1$};\\
\textit{$dg$ be the Haar measure on $\mathrm{GL}_{n}(F)$ such that $\mathrm{vol}(dg, \mathrm{GL}_n(\mathfrak{o}))=1$};\\
\textit{$d\mu=\frac{dg}{dt}$ be the quotient measure defined on $\mathrm{T}_{\gamma}(F)\backslash \mathrm{GL}_n(F)$.}
\end{array}\right.
\]
The  stable  orbital integral of \cite{Yun13} uses the quotient measure $d\mu$. We denote it by $\mathcal{SO}_{\gamma, d\mu}$ to emphasize the role of the Haar measure $d\mu$.
This  is formulated as follows:
\[
\mathcal{SO}_{\gamma, d\mu}=\int_{\mathrm{T}_{\gamma}(F)\backslash \mathrm{GL}_n(F)} \mathbf{1}_{\mathfrak{gl}_n(\mathfrak{o})}(g^{-1}\gamma g) d\mu(g).
\]
Here, $\mathbf{1}_{\mathfrak{gl}_n(\mathfrak{o})}$ is the characteristic function of $\mathfrak{gl}_n(\mathfrak{o})\subset \mathfrak{gl}_n(F)$.

\subsubsection{Explicit description of \texorpdfstring{$\mathrm{T}_{\gamma}$}{Tg}}\label{sec322}
To compare two measures, we need a precise description of $\mathrm{T}_{\gamma}$ and $\mathrm{T}_c$. 
This is essentially explained at the beginning of \cite[Section 4.8]{Yun13} or \cite[Section 3.2.4]
{Yun16}. We will reproduce the relevant part following \cite[Sections 4.1 and 4.8]{Yun13}.
Let
\[
\left\{
\begin{array}{l}
\textit{$B(\gamma)$ be an index set in bijection with the irreducible factors $\chi_{\gamma, i}(x)$ of $\chi_{\gamma}(x)$};\\
\textit{$F_i$ be the finite field extension of $F$ obtained by adjoining a root  of $\chi_{\gamma, i}(x)$.}
\end{array}\right.
\]
Then the centralizer $\mathrm{T}_{\gamma}$ is identified with
\[
\mathrm{T}_{\gamma}\cong \prod_{i\in B(\gamma)} \mathrm{Res}_{F_i/F}\mathbb{G}_m.
\]

Let $\mathrm{L}=\prod_{i\in B(\gamma)}\mathrm{L}_i$ be a Levi subgroup of $\mathrm{GL}_{n, F}$ such that 
\[
\left\{
\begin{array}{l}
\textit{$\mathrm{Res}_{F_i/F}\mathbb{G}_m$ is a maximal torus of $\mathrm{L}_i$ 
 (so $\mathrm{L}_i\cong \mathrm{GL}_{n_i, F}$ with $n_i=[F_i:F]$)};\\
 \textit{$\gamma=(\gamma_i)\in Lie(L)$ with $\gamma_i\in Lie(L_i)$};\\
\textit{$\mathrm{Res}_{F_i/F}\mathbb{G}_m$ is the centralizer of $\gamma_i$ in $\mathrm{L}_i$.}
\end{array}\right.
\]
Note that this $n_{i}=[F_{i}:F]$ is different to $n_{i}$ defined in \cite{Yun13}.
 Here, $Lie(\mathrm{G})$ is the Lie algebra of $\mathrm{G}$ for any algebraic group $\mathrm{G}$.
Due to the last condition in the above description of $\mathrm{L}$,  we can denote  $\mathrm{Res}_{F_i/F}\mathbb{G}_m$ by $\mathrm{T}_{\gamma_i}$.
Then $\mathrm{T}_{\gamma_i}$ has a natural integral structure given by $\mathrm{Res}_{\mathfrak{o}_{F_i}/\mathfrak{o}_F}\mathbb{G}_m$, which we also denote by $\mathrm{T}_{\gamma_i}$.
Note that $\mathrm{Res}_{\mathfrak{o}_{F_i}/\mathfrak{o}_F}\mathbb{G}_m$ is a smooth group scheme over $\mfo_F$ 
and that $\mathrm{Res}_{\mathfrak{o}_F}^{\mathfrak{o}_{F_i}}\mathbb{G}_m(\mfo_F)=\mfo_{F_i}^{\times}$ is the maximal compact subgroup of $\mathrm{T}_{\gamma_i}(F)=F_i^{\times}$.
Therefore, 
\[
\mathrm{T}_c = \prod_{i\in B(\gamma)} \mfo_{F_i}^{\times}.
\]

\subsubsection{Comparison}
 The difference between two measures $\omega_{\chi_{\gamma}}^{\mathrm{ld}}$ and $d\mu$ is described in  \cite[Proposition 3.29]{FLN}.
 This is  also given in  \cite[Proposition 3.9]{Gor22} with more detailed explanation. 
 A group version of the comparison is described in  \cite[Equation (3.31)]{FLN}, which is precisely explained in  \cite[Theorem 3.11]{Gor22}. 
 
\begin{proposition}(\cite[Proposition 3.29]{FLN} or \cite[Proposition 3.9]{Gor22})\label{proptrans}
Suppose that $char(F) = 0$ or $char(F)>n$.
The difference between two stable orbital integrals
$\mathcal{SO}_{\gamma}$ and $\mathcal{SO}_{\gamma, d\mu}$
 is described by the equation:
\[
\mathcal{SO}_{\gamma}=
|\Delta_{\gamma}|^{1/2}\cdot
\frac{\#\mathrm{GL}_n(\kappa)q^{-\mathrm{dim}\mathrm{GL_n}}}
{\#\mathrm{T}_{\gamma}(\kappa)q^{-\mathrm{dim}\mathrm{T}_{\gamma}}}
\cdot \left(\prod_{i\in B(\gamma)}|\Delta_{F_i/F}|^{-1/2}\right)
\cdot \mathcal{SO}_{\gamma, d\mu}.
\]
Here, $\Delta_{F_i/F}$ is the discriminant of the field extension $F_i/F$.
\end{proposition}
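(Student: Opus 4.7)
The proposition is proved in \cite[Proposition 3.29]{FLN}, with a detailed exposition in \cite[Proposition 3.9]{Gor22}. The plan is to identify $G_\gamma(F)$ with the homogeneous space $\mathrm{T}_\gamma(F)\backslash\mathrm{GL}_n(F)$ via the orbit map $\phi: g \mapsto g^{-1}\gamma g$, pull back the geometric measure $|\omega_{\chi_\gamma}^{\mathrm{ld}}|$, and compare it with $d\mu$ pointwise at the identity coset. Since in $\mathfrak{gl}_n$ every stable conjugacy class coincides with a single rational class (by Hilbert 90 applied to the torus $\mathrm{T}_\gamma$), the map $\phi$ is an isomorphism of $F$-varieties, and
\[
\sog = \int_{\mathrm{T}_\gamma(F)\backslash\mathrm{GL}_n(F)} \mathbf{1}_{\mathfrak{gl}_n(\mfo)}(g^{-1}\gamma g)\,|\phi^{\ast}\omega_{\chi_\gamma}^{\mathrm{ld}}|.
\]
By $\mathrm{GL}_n$-invariance the ratio $|\phi^{\ast}\omega_{\chi_\gamma}^{\mathrm{ld}}|/d\mu$ is a constant, which can be read off at $g=1$.

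To compute this constant, I would combine the Weyl integration formula for the Lie algebra $\int_{\mathfrak{gl}_n} f\,|\omega_{\mathfrak{gl}_n}| = \sum_T \frac{1}{|W|}\int_\mathfrak{t} |D(t)|\,\mathcal{O}_t^{\omega}(f)\,|\omega_\mathfrak{t}|$ with the fiber integration identity $\int_{\mathfrak{gl}_n} f\,|\omega_{\mathfrak{gl}_n}| = \int_{\mathbb{A}^n} |\omega_{\mathbb{A}^n}|(a)\int_{\rho_n^{-1}(a)} f\,|\omega_{\chi_\gamma}^{\mathrm{ld}}|$ coming from the very definition of $\omega_{\chi_\gamma}^{\mathrm{ld}}$. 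The restriction $\rho_n|_{\mathfrak{t}}: \mathfrak{t}\to \mathbb{A}^n$ has Vandermonde Jacobian $|\Delta_t|^{1/2} = |\prod_{i<j}(\lambda_i-\lambda_j)|$, while the Weyl discriminant is $|D(t)| = |\prod_{i\neq j}(\lambda_i-\lambda_j)| = |\Delta_t|$, so these two identities combine to give
\[
\int_{G_\gamma \cap \mathfrak{gl}_n(\mfo)} |\omega_{\chi_\gamma}^{\mathrm{ld}}| = |\Delta_\gamma|^{1/2}\cdot\mathcal{O}_\gamma^{\omega},
\]
where $\mathcal{O}_\gamma^{\omega}$ is the classical orbital integral with respect to the Haar measures $|\omega_{\mathrm{GL}_n}|$ and $|\omega_{\mathrm{T}_\gamma}|$ attached to the translation-invariant top forms. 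The hypothesis $\mathrm{char}(F) = 0$ or $\mathrm{char}(F) > n$ enters here to guarantee separability of each $F_i/F$, which underlies both Weyl integration and the non-degeneracy of the trace form used below.

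The final step is to convert $|\omega_{\mathrm{GL}_n}|$ and $|\omega_{\mathrm{T}_\gamma}|$ into the normalized Haar measures $dg, dt$ of Section \ref{sec321} via Weil's formula. Since $\mathrm{GL}_n$ is smooth over $\mfo$, the normalization $\mathrm{vol}(\mathrm{GL}_n(\mfo),dg) = 1$ gives $|\omega_{\mathrm{GL}_n}| = \#\mathrm{GL}_n(\kappa)\,q^{-\dim\mathrm{GL}_n}\,dg$. For the torus, the smooth integral model $\mathrm{T}_\gamma = \prod_{i\in B(\gamma)}\mathrm{Res}_{\mfo_{F_i}/\mfo}\mathbb{G}_m$ satisfies $\mathrm{T}_\gamma(\mfo) = \mathrm{T}_c$, and Weil's formula yields a factor of $\#\mathrm{T}_\gamma(\kappa)\,q^{-\dim\mathrm{T}_\gamma}$; however, the invariant form $\omega_{\mathrm{T}_\gamma}$ inherited from $\omega_{\mathfrak{gl}_n}$ differs from the one normalizing $\mathrm{vol}(\mfo_{F_i}^\times, dt) = 1$ by the product $\prod_i|\Delta_{F_i/F}|^{1/2}$, reflecting the discrepancy between the lattice $\mfo_{F_i}\subset F_i$ and its trace-form dual $\mathfrak{d}_{F_i/F}^{-1}$. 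Substituting these conversions into the previous display and inverting the discriminant factors yields the stated formula.

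The main obstacle is the last step: producing the discriminants $|\Delta_{F_i/F}|^{-1/2}$ in the correct exponent demands a careful analysis of how the invariant form $\omega_{\mathrm{T}_\gamma}$, inherited from the standard $\mfo$-lattice on $\mathfrak{gl}_n$, compares with the natural $\mfo_{F_i}$-coordinates on $\mathrm{Res}_{\mfo_{F_i}/\mfo}\mathbb{G}_m$. Any systematic misidentification of the $\mfo$-structures on $\mathfrak{t}$ and on each $F_i$ propagates into the final formula, and must be balanced against the Vandermonde Jacobian contribution in Step 2.
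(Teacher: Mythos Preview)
Your proposal is correct and follows essentially the same three-step structure as the paper's proof: establish $|\omega_{\chi_\gamma}^{\mathrm{ld}}| = |\Delta_\gamma|^{1/2}\cdot|\omega_{\mathrm{T}_\gamma\backslash\mathrm{GL}_n}|$, then convert $|\omega_{\mathrm{GL}_n}|$ to $dg$ and $|\omega_{\mathrm{T}_\gamma}|$ to $dt$ via Weil's formula, with the discriminant factors $|\Delta_{F_i/F}|^{1/2}$ arising in the torus comparison. The paper simply cites these three relations from \cite{Gor22}, whereas you sketch their derivation (via Weyl integration and the Vandermonde Jacobian for the first, and the trace-form dual lattice for the third); this adds useful detail but is the same argument.

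One small correction: your explanation of where the hypothesis $\mathrm{char}(F)=0$ or $>n$ enters is not quite right. Separability of each $F_i/F$ already follows from $\gamma$ being regular semisimple (so $\chi_\gamma$ and hence each $\chi_{\gamma,i}$ is separable), without any restriction on characteristic. The hypothesis is actually needed in the torus-form comparison: the identification of the discrepancy between $\omega_{\mathrm{T}_\gamma}$ and the character-lattice form with the Artin conductor (hence with $\prod_i|\Delta_{F_i/F}|^{1/2}$) requires that $\mathrm{T}_\gamma$ split over a Galois extension of degree prime to $\mathrm{char}(F)$, and $\mathrm{T}_\gamma$ splits over the Galois closure of $F_{\chi_\gamma}/F$, which has degree dividing $n!$. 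This is spelled out in the paper's more detailed Part~2 analogue (Proposition~\ref{prop:comparison_measures}).
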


Indeed, \cite{FLN} and \cite{Gor22} used the \textit{Weyl discriminant} of $\gamma$ in the place of $\Delta_{\gamma}$.
 In the case of a regular semisimple element $\gamma$ of $\mathfrak{gl}_n$, the Weyl discriminant of $\gamma$ is the same as the discriminant of the characteristic polynomial of $\gamma$ (cf.   \cite[Section 3.3]{Gor22}).
 

\begin{proof}
This is a reformulation of  \cite[Proposition 3.29]{FLN} (or see  \cite[Proposition 3.9]{Gor22}). Since the presentation of loc. cit. is different from our formulation, we will provide a proof of this, following \cite{Gor22}.

When $char(F) = 0$ or $char(F)>n$ (see Proposition \ref{prop:comparison_measures} for a detailed explanation), \cite[Proposition 3.9]{Gor22} gives the following relation:
\[
|\omega_{\chi_{\gamma}}^{\mathrm{ld}}|=|\Delta_{\gamma}|^{1/2}\cdot |\omega_{\mathrm{T}_{\gamma}\backslash \mathrm{GL}_n}|.
\]
Here, $\omega_{\GL_n}$ is given in \cite[Section 2.3]{Gor22},  $\omega_{\mathrm{T}_{\gamma}}$ is given in  \cite[Section 2.2]{Gor22}, and $\omega_{\mathrm{T}_{\gamma}\backslash \mathrm{GL}_n}$ is given in  \cite[Section 3.4.3]{Gor22}. 

The difference between  $\omega_{\mathrm{GL}_n}$ and $dg$, following \cite[Equation (17)]{Gor22}, is
\[
|\omega_{\mathrm{GL}_n}|=\#\mathrm{GL}_n(\kappa)q^{-\mathrm{dim}\mathrm{GL_n}}\cdot dg
\] 
and the difference between $\omega_{\mathrm{T}_{\gamma}}$ and $dt$, by  \cite[Example 2.8 and Section 2.2.2]{Gor22}, is 
\[
|\omega_{\mathrm{T}_{\gamma}}|=\left(\prod_{i\in B(\gamma)}|\Delta_{F_i/F}|^{1/2}\right)\cdot \#\mathrm{T}_{\gamma}(\kappa)q^{-\mathrm{dim}\mathrm{T}_{\gamma}}\cdot dt.
\]
The desired equation directly follows by combining these three relations.
\end{proof}

In the case that $\chi_{\gamma}(x)$ is irreducible, let $F_{\chi_{\gamma}}=F[x]/(\chi_{\gamma}(x))$ and let $\Delta_{F_{\chi_{\gamma}}/F}$ be the discriminant of the field extension $F_{\chi_{\gamma}}/F$.
The relation between $|\Delta_{\gamma}|$ and $\Delta_{F_{\chi_{\gamma}}/F}$ is described in the following proposition:

\begin{proposition}\label{propserre}
If $\chi_{\gamma}(x)$ is irreducible, then 
$|\Delta_{\gamma}|$ and $\Delta_{F_{\chi_{\gamma}}/F}$ are related by the following equation:
\[
|\Delta_{F_{\chi_{\gamma}}/F}|^{1/2}=q^{S(\gamma)}\cdot |\Delta_{\gamma}|^{1/2},
\]
 where $S(\gamma)$ is the Serre invariant, which is the relative $\mfo$-length $[\mathfrak{o}_{F_{\chi_{\gamma}}}:\mathfrak{o}[x]/(\chi_{\gamma}(x))]$ 
 (see \cite[Section 2.1]{Yun13}).
\end{proposition}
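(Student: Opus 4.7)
The plan is to derive the claimed identity from the classical conductor--discriminant relation between an order and the maximal order in a finite separable extension of $F$, together with the identification of the polynomial discriminant with the discriminant of the monogenic order generated by a root.

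First, I would set $K := F_{\chi_{\gamma}} = F[x]/(\chi_{\gamma}(x))$, $\mathcal{O}_K := \mathfrak{o}_{F_{\chi_{\gamma}}}$, and $\mathcal{O} := \mathfrak{o}[x]/(\chi_{\gamma}(x))$. Since $\chi_{\gamma}$ is irreducible over $F$ (hence separable, as $\gamma$ is regular semisimple), $K/F$ is a finite separable field extension and $\mathcal{O}$ is an $\mathfrak{o}$-order in $\mathcal{O}_K$. The standard conductor--discriminant formula for orders in a Dedekind setting (see e.g.\ Serre, \emph{Local Fields}, Ch.\ III) gives the equality of ideals in $\mathfrak{o}$:
\[
\mathrm{disc}(\mathcal{O}/\mathfrak{o}) \;=\; \bigl(\mathrm{Fitt}_{\mathfrak{o}}(\mathcal{O}_K/\mathcal{O})\bigr)^{2} \cdot \mathrm{disc}(\mathcal{O}_K/\mathfrak{o}).
\]
Next, I would invoke the classical identification of the discriminant of a monic polynomial with the discriminant of the monogenic order it generates, namely $\mathrm{disc}(\mathcal{O}/\mathfrak{o}) = (\Delta_{\gamma})$. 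By definition $\mathrm{disc}(\mathcal{O}_K/\mathfrak{o}) = \Delta_{F_{\chi_{\gamma}}/F}$.

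To handle the Fitting ideal, I would use that $\mathfrak{o}$ is a discrete valuation ring, so the finite torsion $\mathfrak{o}$-module $\mathcal{O}_K/\mathcal{O}$ has Fitting ideal equal to $(\pi^{\ell})$ where $\ell$ is its length as an $\mathfrak{o}$-module. By the definition of the relative $\mathfrak{o}$-length recalled before the statement, $\ell = [\mathcal{O}_K : \mathcal{O}] = S(\gamma)$. Combining,
\[
(\Delta_{\gamma}) \;=\; (\pi^{2 S(\gamma)}) \cdot (\Delta_{F_{\chi_{\gamma}}/F}) \quad \text{as ideals in } \mathfrak{o}.
\]
Finally I would take $|\cdot|_F$ of both sides: since $|\pi|_F = q^{-1}$, this yields $|\Delta_{\gamma}| = q^{-2 S(\gamma)} \cdot |\Delta_{F_{\chi_{\gamma}}/F}|$, and taking square roots and rearranging gives the stated formula $|\Delta_{F_{\chi_{\gamma}}/F}|^{1/2} = q^{S(\gamma)} \cdot |\Delta_{\gamma}|^{1/2}$.

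There is no real obstacle here: the whole statement is a packaging of textbook facts, and the only mildly subtle point is the passage from the length of $\mathcal{O}_K/\mathcal{O}$ as an $\mathfrak{o}$-module to the exponent of $\pi$ in the Fitting ideal, which follows from the elementary divisor decomposition over the DVR $\mathfrak{o}$. One could alternatively cite this identity directly from \cite[Section 2.1]{Yun13}, where the Serre invariant $S(\gamma)$ is introduced precisely as this comparison exponent.
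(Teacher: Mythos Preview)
Your proposal is correct and follows essentially the same approach as the paper. Both arguments reduce to the change-of-basis formula for discriminants: the paper chooses explicit $\mathfrak{o}$-bases $\{\beta_i\}$ of $\mathfrak{o}[x]/(\chi_\gamma(x))$ and $\{\alpha_i\}$ of $\mathfrak{o}_{F_{\chi_\gamma}}$, applies the relation $D(\beta_i) = \det(a_{ij})^2 D(\alpha_i)$ (citing Milne), and identifies $|\det(a_{ij})|$ with $q^{-S(\gamma)}$; you package the same computation as the conductor--discriminant relation for orders and the identification of the Fitting ideal exponent with the $\mathfrak{o}$-length.
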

\begin{proof}
Let $R=\mfo[x]/(\chi_{\gamma}(x))$. 
Then $R$ is a free $\mfo$-module of rank $n$ having a basis  $\{1,\bar{x},\bar{x}^{2}, \cdots, \bar{x}^{n-1}\}$. For simplicity, let $\beta_{i+1}=\bar{x}^{i}$ for $0\leq i \leq n-1$ so that $\{1,\bar{x}, \cdots, \bar{x}^{n-1}\}=\{\beta_{1},\beta_{2},\cdots, \beta_{n}\}$. Since $\mfo_{F_{\chi_{\gamma}}}$ is also a free $\mfo$-module of rank $n$, we can choose a basis  $\{\alpha_{1},\alpha_{2},\cdots \alpha_{n}\}$.

Then
\[\Delta_{\gamma}=D(\beta_{1}, \cdots, \beta_{n})\textit{ and }\Delta_{F_{\chi_{\gamma}}/F}=D(\alpha_{1}, \cdots, \alpha_{n}),\] 
where $D(t_{1},\cdots,t_{n})=\mathrm{det}(\mathrm{Tr}_{\mfo_{F_{\chi_{\gamma}}}/\mfo}(t_{i}\cdot t_{j}))$. 

Note that both  $\{\alpha_{1}, \cdots, \alpha_{n} \}$ and $\{\beta_{1},  \cdots, \beta_{n}\}$ are bases of $F_{\chi_{\gamma}}$ as an $F$-vector space. 
Thus   \cite[Lemma 2.23]{Mil} yields the following relation:
\begin{align*}
  |D(\beta_{1}, \cdots, \beta_{n})|&=|\mathrm{det}(a_{ij})^{2} \cdot D(\alpha_{1}, \cdots, \alpha_{n})|\\
  &=|(\mfo_{F_{\chi_{\gamma}}}:R)|^{2}\cdot|D(\alpha_{1}, \cdots, \alpha_{n})|,
\end{align*}
where $\beta_{i}=\sum\limits_{j=1}^{n} a_{ij}\alpha_{j}$ for $1\leq i\leq n$. 
Since $q^{S(\gamma)}=(\mfo_{F_{\chi_{\gamma}}}:R)$, we finally have 
\[
\frac{|\Delta_{F_{\chi_{\gamma}}/F}|^{1/2}}{|\Delta_{\gamma}|^{1/2}}
=\frac{|D(\alpha_{1}, \cdots, \alpha_{n})|^{1/2}}{|D(\beta_{1}, \cdots, \beta_{n})|^{1/2}}
=\frac{1}{|(\mfo_{F_{\chi_{\gamma}}}:R)|}=q^{S(\gamma)}.
\]
\end{proof}

\subsubsection{Parabolic descent}\label{sec324}
\cite[Corollary 4.10]{Yun13} explains an inductive structure of $\mathcal{SO}_{\gamma, d\mu}$ with respect to irreducible factors of $\chi_{\gamma}(x)$.
 We rephrase it in terms of $\mathcal{SO}_{\gamma}$.

We define
\[
\mathrm{Res(\gamma)}=\prod_{\{i,j\}\subset B(\gamma), i\neq j}\mathrm{Res}(\chi_{\gamma,i}, \chi_{\gamma,j}),
\]
where $\mathrm{Res}(\cdot, \cdot)$ is the resultant of two polynomials. 
Then the Weyl discriminant is formulated as follows:
\begin{equation}\label{eq324}
|\Delta_{\gamma}|=
|\mathrm{Res(\gamma)}|^2\cdot \prod\limits_{i\in B(\gamma)}|\textit{disc}(\chi_{\gamma, i})|=
|\mathrm{Res(\gamma)}|^2\cdot \prod\limits_{i\in B(\gamma)}|\Delta_{\gamma_i}|
.    
\end{equation}
  \cite[Corollary 4.10]{Yun13} is then rephrased as follows:

\begin{proposition}(\cite[Corollary 4.10]{Yun13})\label{cor410}
We have the following inductive formula;
\[
\mathcal{SO}_{\gamma}=
\frac{\#\mathrm{GL}_n(\kappa)q^{-n^2}}{\prod\limits_{i\in B(\gamma)}\#\mathrm{GL}_{n_i}(\kappa)q^{-n_i^2}}\prod_{i\in B(\gamma)}\mathcal{SO}_{\gamma_i}.
\]

\end{proposition}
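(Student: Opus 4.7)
The plan is to deduce this inductive formula by starting from the analogous parabolic descent statement for the classical measure $d\mu$ (which is exactly \cite[Corollary 4.10]{Yun13}) and then translating both sides into the geometric normalization via Proposition \ref{proptrans}. Concretely, Yun's corollary, when written in our notation, takes the form
\[
\mathcal{SO}_{\gamma, d\mu} \;=\; |\mathrm{Res}(\gamma)|^{-1}\prod_{i\in B(\gamma)}\mathcal{SO}_{\gamma_i, d\mu},
\]
so the task is to show that after passing to $\mathcal{SO}_{\gamma}$ and $\mathcal{SO}_{\gamma_i}$ the resultant factor is absorbed into the purely combinatorial prefactor on the right-hand side of the statement.

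First I would apply Proposition \ref{proptrans} once globally to $\gamma\in\mathfrak{gl}_{n,\mfo}(\mfo)$ and once to each $\gamma_i\in\mathfrak{gl}_{n_i,\mfo}(\mfo)$, and take the ratio $\mathcal{SO}_{\gamma}/\prod_i\mathcal{SO}_{\gamma_i}$. Three groups of factors then need to be reconciled. The torus factors simplify immediately from the product decomposition $\mathrm{T}_{\gamma}=\prod_{i\in B(\gamma)}\mathrm{T}_{\gamma_i}$ established in Section \ref{sec322}, which gives $\#\mathrm{T}_{\gamma}(\kappa)=\prod_i\#\mathrm{T}_{\gamma_i}(\kappa)$ and $\dim\mathrm{T}_{\gamma}=\sum_i\dim\mathrm{T}_{\gamma_i}$, so those terms cancel between numerator and denominator. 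The local discriminants $|\Delta_{F_i/F}|^{-1/2}$ appear identically on both sides and cancel in the ratio.

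The only nontrivial cancellation is then between the Weyl discriminants and the resultant. Here I would invoke Equation (\ref{eq324}), which factors $|\Delta_{\gamma}|^{1/2}=|\mathrm{Res}(\gamma)|\cdot\prod_i|\Delta_{\gamma_i}|^{1/2}$, producing exactly one factor of $|\mathrm{Res}(\gamma)|$ in the ratio. Combining this with Yun's formula above, the $|\mathrm{Res}(\gamma)|$ coming from the discriminants precisely cancels the $|\mathrm{Res}(\gamma)|^{-1}$ coming from $\mathcal{SO}_{\gamma,d\mu}/\prod_i\mathcal{SO}_{\gamma_i,d\mu}$. What remains is the prefactor $\#\mathrm{GL}_n(\kappa)q^{-n^2}\big/\prod_i\#\mathrm{GL}_{n_i}(\kappa)q^{-n_i^2}$, which is exactly the claim.

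The conceptual step is routine bookkeeping, but the main obstacle to watch for is the hypothesis on $\mathrm{char}(F)$: Proposition \ref{proptrans} requires $\mathrm{char}(F)=0$ or $\mathrm{char}(F)>n$, and since we are applying it both globally and on each Levi factor (where the analogous condition $\mathrm{char}(F)>n_i$ is automatic from $n_i\leq n$), the global hypothesis is sufficient. One should also verify that the resultant identity in Equation (\ref{eq324}) holds with the absolute value normalization used here, which follows from the standard determinantal identity $\mathrm{disc}(\chi_\gamma)=\prod_i\mathrm{disc}(\chi_{\gamma,i})\cdot\prod_{i\neq j}\mathrm{Res}(\chi_{\gamma,i},\chi_{\gamma,j})$ for a product factorization of a polynomial.
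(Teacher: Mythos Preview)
Your proposal is correct and is precisely the rephrasing the paper alludes to but does not spell out: the paper simply states that \cite[Corollary 4.10]{Yun13} ``is then rephrased'' via Proposition~\ref{proptrans} and Equation~(\ref{eq324}), and your argument makes this explicit with the correct bookkeeping of torus, discriminant, and resultant factors. Your observation about the characteristic hypothesis inherited from Proposition~\ref{proptrans} is also accurate and matches how the paper later restricts the non-elliptic cases (e.g.\ Theorems~\ref{thm66} and~\ref{result4}) to $\mathrm{char}(F)=0$ or $\mathrm{char}(F)>n$.
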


\section{Reductions and stratification}\label{sec4}
 Recall that we defined the followings;
\[
\left\{
\begin{array}{l}
\varphi_n: \mathfrak{gl}_{n, \mathfrak{o}} \longrightarrow \mathbb{A}_{\mathfrak{o}}^n, \gamma \mapsto 
\textit{coefficients of $\chi_{\gamma}(x)$};\\
\uGr=\varphi_n^{-1}(\chi_{\gamma}).
\end{array}\right.
\]




\subsection{Reductions}\label{reductionss} 
 Due to Proposition \ref{cor410}, 
 we may and do assume that $\gamma \in \mfo$ is elliptic, that is,  $\chi_{\gamma}(x)$ is irreducible.
 Then $\overline{\chi}_{\gamma}(x)\in \kappa[x]$, which is the reduction of $\chi_{\gamma}(x)$ to the residue field $\kappa$, has only one irreducible factor due to Hensel's lemma.
 
 Z. Yun mentioned in the sentence just below \cite[Corollary 4.10]{Yun13} that we may reduce to the case that the irreducible factor of $\overline{\chi}_{\gamma}(x)$ is linear, without proof. Although this could be well known, we have not found a proof in literature. Thus we provide a detailed proof of it in this subsection.

 \subsubsection{Interpretation of $\gamma$ in $\mathfrak{gl}_n(\mfo)$ as an element in $\mathfrak{gl}_{n/l}(\mfo_K)$}
 We first introduce notions used in \cite{Yun13}. For an elliptic regular semisimple element $\gamma \in \mathfrak{gl}_{n}( \mathfrak{o})$, let $R=\mfo [x]/(\chi_{\gamma}(x))$. 
Recall from the paragraph just before Proposition \ref{propserre} that 
 $F_{\chi_{\gamma}}=F[x]/(\chi_{\gamma}(x))$, which is the field of fractions of $R$, so that $\mfo_{F_{\chi_{\gamma}}}$ is the ring of integers of ${F_{\chi_{\gamma}}}$.

We denote by $\kappa_{R}$ and $\kappa_{F_{\chi_{\gamma}}}$  the residue fields of $R$ and $\mfo_{F_{\chi_{\gamma}}}$, respectively, and by $d$  the degree of the field extension $\kappa_{R}/\kappa$.
This setting can be visualized as follows:

\begin{equation}\label{diag}
    \begin{tikzcd}
    F \arrow[d, no head] \arrow[rr, "n"] & &{F_{\chi_{\gamma}}} \arrow[ld, no head] \arrow[d, no head]\\
\mathfrak{o} \arrow[d, no head] \arrow[r, "n"] & R \arrow[d, no head] \arrow[r] & \mathfrak{o}_{F_{\chi_{\gamma}}} \arrow[d, no head] \\
\kappa \arrow[r, "d"]                          & \kappa_{R} \arrow[r]           & \kappa_{F_{\chi_{\gamma}}}.                         
\end{tikzcd}
\end{equation}
Here, the letter over the arrow stands for the degree (the case of fields) or the rank (the case of free modules). 

Recall that $\overline{\chi}_{\gamma}(x)\in \kappa[x]$ has the only one irreducible factor by Hensel's lemma. We denote it by   $\phi(x)\in \kappa[x]$ so that we have
\[
\overline{\chi}_{\gamma}(x)= (\phi(x))^{m} \textit{ modulo }\pi
\]
where $0<m\leq n$.

By  \cite[Lemma I.4]{Ser}, the maximal ideal of the local ring $R=\mfo[x]/(\chi_{\gamma}(x))$ is $(g(x), \pi)$ where $g(x)$ is an irreducible polynomial in $\mfo[x]$ such that $\bar{g}(x)=\phi(x)$ in $\kappa[x]$. 
Then we have 
\[
\kappa_R\cong \kappa[x]/(\phi(x)).
\]
Therefore, the degree of $\phi(x)$ is $l$ and thus $\overline{\chi}_{\gamma}(x)=(\phi(x))^{n/l}$ and  $m=n/l$.

By \cite[Corollary 2 of Theorem 2 in Chapter 3]{Ser}, there exists a unique unramified extension $K$ over $F$ in ${F_{\chi_{\gamma}}}$ which corresponds to the residue field extension $\kappa_{R}/\kappa$. The ring of integers of $K$ is of the form $\mfo_{K}\cong \mfo[x]/(g(x))$ by  \cite[Proposition I.15]{Ser}.

 \begin{lemma}\label{unramadd}
 $\mfo_{K}$ is contained in $R$.
 \end{lemma}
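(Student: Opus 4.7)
The plan is to apply Hensel's lemma in $R$ to produce an element $\beta \in R$ with $g(\beta) = 0$, and then identify the subring $\mathfrak{o}[\beta] \subseteq R$ with $\mathfrak{o}_K \cong \mathfrak{o}[x]/(g(x))$. This will give the required inclusion.

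First I would verify that $R = \mathfrak{o}[x]/(\chi_\gamma(x))$ is a henselian local ring. Since $R$ is a finitely generated $\mathfrak{o}$-module and $\mathfrak{o}$ is complete, $R$ is $\pi$-adically complete; combined with the local structure already recorded in the excerpt (maximal ideal $\mathfrak{m}_R = (g(x), \pi)$ with residue field $\kappa_R \cong \kappa[x]/(\phi(x))$), this makes $R$ a complete Noetherian local ring, hence henselian.

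Next I would apply Hensel's lemma to $g(x) \in R[x]$. In $\kappa_R$ the class of $x$ is a root of $\overline{g}(x) = \phi(x)$, and because $\kappa$ is a finite field, in particular perfect, the irreducible polynomial $\phi$ is separable, so $\phi'(\bar{x}) \neq 0$ in $\kappa_R$ and this root is simple. Hensel's lemma then produces a unique $\beta \in R$ with $g(\beta) = 0$ and $\beta \equiv x \pmod{\mathfrak{m}_R}$.

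Finally I would show that the resulting $\mathfrak{o}$-algebra homomorphism $\mathfrak{o}_K \cong \mathfrak{o}[x]/(g(x)) \to R$ sending $x \mapsto \beta$ is injective. Since $\mathfrak{o}_K$ is $\mathfrak{o}$-torsion free, it embeds into its generic fiber $K = F \otimes_{\mathfrak{o}} \mathfrak{o}_K$; extending scalars to $F$ turns the map into a nonzero $F$-algebra homomorphism $K \to F_{\chi_\gamma}$ between fields, which is automatically injective. Hence $\mathfrak{o}_K \hookrightarrow R$ as required, and the image sits inside $R \subseteq \mathfrak{o}_{F_{\chi_\gamma}}$ as the unique unramified $F$-subalgebra of residue degree $d$. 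The only point requiring care is the existence of the lift $\beta$, which is ultimately guaranteed by the perfectness of the finite residue field $\kappa$; there is no serious obstacle.
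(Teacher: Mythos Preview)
Your proof is correct and takes a genuinely different, more streamlined route than the paper. The paper first shows that $R$ is $(\pi,g(x))$-adically complete (by comparing the $\pi$-adic and $(\pi,g(x))$-adic topologies, using that $(g(x))^{m}\subset(\pi)$), and then splits into two cases: in equal characteristic it constructs the embedding explicitly as $\kappa_R[[t]]\to R$, while in unequal characteristic it invokes the theory of strict $p$-rings from \cite{Ser} to obtain the unique lifting homomorphism $\mathfrak{o}_K\to R$. Your argument avoids both the case split and the $p$-ring machinery: once $R$ is henselian, a single application of Hensel's lemma to the separable polynomial $g$ produces $\beta\in R$ with $g(\beta)=0$, and the embedding $\mathfrak{o}_K\cong\mathfrak{o}[x]/(g(x))\hookrightarrow R$ follows by passing to fraction fields. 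This is cleaner and more uniform; the paper's approach has the modest advantage of making the structure maps more explicit.

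One small point to tighten: you write that $\pi$-adic completeness ``combined with the local structure\ldots\ makes $R$ a complete Noetherian local ring,'' but $\pi$-adic completeness is not literally completeness with respect to $\mathfrak{m}_R=(\pi,g(x))$. You should either note, as the paper does, that the two topologies coincide because $\mathfrak{m}_R^{m}\subset(\pi)\subset\mathfrak{m}_R$, or bypass the issue entirely by observing that any finite local algebra over a henselian local ring is itself henselian.
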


The lemma yields that 
we can improve Diagram (\ref{diag}) as follows:
\begin{equation}\label{diag2_gl}
\begin{tikzcd}
F \arrow[r,"l"] \arrow[d, no head]               & K \arrow[rr,"n/l"] \arrow[d, no head]     &                        & {F_{\chi_{\gamma}}} \arrow[ld, no head] \arrow[d, no head]    \\
\mfo \arrow[r, "l"] \arrow[d, no head] & \mfo_{K} \arrow[r, "n/l"] \arrow[d, no head]      & R \arrow[r] \arrow[ld, no head] & \mfo_{F_{\chi_{\gamma}}} \arrow[d, no head] \\
\kappa \arrow[r,"l"]                     & \kappa_{R} \arrow[rr] & {}                     & \kappa_{F_{\chi_{\gamma}}}.            
\end{tikzcd}
\end{equation}

 \begin{proof}
It suffices to find an injective ring homomorphism from $\mfo_{K}$ into $R$. Note that the finite $\mfo$-module $R$ is $\pi$-adically complete.
 Since $\mfo$ is a noetherian $\pi$-adically complete ring,
 $\widehat{R} \cong \widehat{\mfo} \otimes R \cong \mfo \otimes R \cong R$, where $\widehat{\cdot}$ means $\pi$-adic completion.
 
 Suppose that $R$ is $(\pi, g(x))$-adically complete. Then we obtain the injective homomorphism as follows:
 \begin{itemize}
    \item{Equal characteristic case, equivalently,  $F=\kappa((t))$ and $\mfo=\kappa[[t]]$}

    In this case $K\cong \kappa_{R}((t))$ and $\mfo_{K}\cong \kappa_{R}[[t]]$. From the inclusion $\kappa \subset \kappa_{R}$ of the residue fields, we obtain the inclusion $\kappa[[t]]\subset \kappa_{R}[[t]]$. Since $R$ is $(\pi, g(x))$-adically complete, we have $f:\kappa_{R}\rightarrow R$ defined in \cite[Proposition II.8.(1)]{Ser}. Thus, we have the inclusion
\[        \kappa_{R}[[t]]\longrightarrow R, ~~~~~~~~~
        t\mapsto t ~~\textit{and}~~  \lambda \mapsto f(\lambda) \textit{ for }\lambda \in \kappa_{R}.
  \]
    \item{Unequal characteristic case}
    
    Since $R$ is $(\pi, g(x))$-adically complete, $\mfo_{K}$ and $R$ are $p$-rings defined in  \cite[Chapter 2.5]{Ser} where $p$ is the characteristic of the residue field $\kappa_{R}$. Especially, since $\mfo_{K}$ is the ring of integers of an unramified extension of $F$, the descending filtration $\{a_i\}$ of ideals, which gives the $p$-ring structure to $\mfo_{K}$, is exactly the same as $\{(\pi)^i\}$. Thus it is a strict $p$-ring.
    
    Now, we can use \cite[Chapter 2.5]{Ser} to obtain the unique homomorphism $\varphi$ satisfying the following   commutative diagram;
    \[
        \begin{tikzcd}  
        \mathfrak{o}_{K} \arrow[r, "\varphi"] \arrow[d] &  R \arrow[d] \\
        \kappa_{R} \arrow[r, "id"]                &  \kappa_{R}.
        \end{tikzcd}
    \]
    From the construction of $\varphi$ in  \cite[Proposition II.10]{Ser}, $\varphi$ is an injective ring homomorphism since $char(\mfo_{K})=char(R)\neq p$ and the restriction of $\varphi$ to the residue field $\kappa_{R}$ is the identity map.
    
 \end{itemize}

 
 The remaining part    is to prove the assumption; $R$ is a $(\pi, g(x))$-adically complete ring. We already know that $R$ is $\pi$-adically complete. From the definition of $g(x)$,
 \[
 \overline{\chi}_{\gamma}(x)=(\phi(x))^{m}=(\bar{g}(x))^{m},\ \chi_{\gamma}(x)\equiv (g(x))^{m} \textit{ modulo }\pi
 \]where $m=n/l$.
 Thus $(g(x))^{m} \in (\pi)$ and $(\pi, g(x))^{m} \subset (\pi)$ as ideals of $R$. We obtain the following relation between the $(\pi)$-adic topology and the $(\pi, g(x))$-adic topology  on $R$;
 \[
\left\{
\begin{array}{l}
(\pi, g(x))^{mi} \subset (\pi)^{i}\textit{ for arbitrary integer }i>0;\\
(\pi)^{j} \subset (\pi, g(x))^{j}\textit{ for arbitrary integer }j>0.
\end{array} \right.
\]
These two relations directly imply the equivalence of two topologies.
\end{proof}
We identify  $\mathrm{End}_{\mfo} (R)=\mathfrak{gl}_{n}(\mfo)$, where $R\left(=\mfo[x]/(\chi_{\gamma}(x))\right)$ is considered as a free $\mfo$-module of rank $n$.
 The element $\gamma$ can then be considered as an element of $R$ through the following ring isomorphism:
 \[
\mfo[\gamma]\cong R, ~~~~~~~~~~~~~~~~~~~~~~~~~~~~~~~~  \gamma \mapsto x.  
 \]
Then $R$ is embedded into $\mathrm{End}_{\mfo} (R)$ such that the action of $\gamma$ on $R$ is the multiplication by $x$.

By Lemma \ref{unramadd},  $R$ can also be considered as  a free $\mfo_K$-module of rank $n/l$.
Let  $\chi_{\gamma}^{K}(x)\in \mfo_{K}[x]$ be the minimal polynomial of $\gamma$ over $\mfo_K$.
We then have the following identification of rings:
\[R\cong \mfo[\gamma]\cong\mfo_K[\gamma]\cong \mfo_{K}[x]/(\chi_{\gamma}^{K}(x)).
\]
Thus we may consider $\gamma$ as an element of $\mathrm{End}_{\mfo_K} (R)$, which is identified with $\mathfrak{gl}_{n/l}(\mfo_K)$. We denote $\gamma$ by $\gamma_K$ in this case.
 
\subsubsection{Comparison between $\mathcal{SO}_{\gamma,d\mu}$ and $\mathcal{SO}_{\gamma_K,d\mu_K}$}
We define the stable orbital integral $\mathcal{SO}_{\gamma_K,d\mu_K}$ where $d\mu_K$ is the quotient measure on $\mathrm{T}_{\gamma_K}(K)\backslash \mathrm{GL}_{n/l}(K)$ defined as in Section \ref{sec321}.

 
Yun interpreted  the orbital integral  as follows (cf. \cite[Section 4.2]
{Yun13}):
 \begin{equation}\label{yunobs}
 \mathcal{SO}_{\gamma,d\mu}=\#(\Lambda \backslash X_{R}),
 \end{equation}
 where $X_{R}$ is the set of fractional $R$-ideals and $\Lambda \left(\subset F_{\chi_{\gamma}}^{\times}\right)$ is $\varpi^{\mathbb{Z}}$.
 Here $\varpi$ is a uniformizer in ${F_{\chi_{\gamma}}}$.


\begin{lemma}\label{lem42}
For an elliptic regular semisimple element $\gamma \in \mathfrak{gl}_{n}(\mfo)$, two orbital integrals for $\gamma$ are identified as follows:
\[\mathcal{SO}_{\gamma,d\mu}=\mathcal{SO}_{\gamma_K,d\mu_K}.\]
 \end{lemma}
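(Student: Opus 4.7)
The plan is to apply Yun's lattice-counting interpretation (\ref{yunobs}) on both sides of the claimed equality and observe that the combinatorial data depends only on the ring $R$ and its fraction field, which are the same whether we regard our element as $\gamma \in \mathfrak{gl}_n(\mfo)$ or as $\gamma_K \in \mathfrak{gl}_{n/l}(\mfo_K)$.

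First I would verify that $\gamma_K$ is itself an elliptic regular semisimple element of $\mathfrak{gl}_{n/l}(\mfo_K)$, so that (\ref{yunobs}) applies on the $K$-side. Regularity and semisimplicity are inherited from $\gamma$, since $\Delta_\gamma \neq 0$ is unaffected by restriction of scalars to a smaller base. Ellipticity amounts to irreducibility of the minimal polynomial $\chi_\gamma^K(x)$ of $\gamma$ over $K$, which holds because $K[\gamma]$ sits between $F[\gamma] = F_{\chi_\gamma}$ and $F_{\chi_\gamma}$ and is therefore equal to the field $F_{\chi_\gamma}$.

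Next I would identify all ingredients appearing in (\ref{yunobs}) on the two sides. The ring $\mfo_K[\gamma_K]$ equals $R$: the inclusion $\mfo \subset \mfo_K$ gives $\mfo[\gamma] \subset \mfo_K[\gamma]$, while Lemma \ref{unramadd} gives $\mfo_K \subset R$ and hence $\mfo_K[\gamma] \subset R$. Similarly, the fraction field of $\mfo_K[\gamma_K]$ is $K(\gamma) = F_{\chi_\gamma}$. Consequently, a uniformizer $\varpi$ of $F_{\chi_\gamma}$ can be used on both sides, the cyclic group $\Lambda = \varpi^{\mathbb{Z}}$ is common, and the set of fractional $R$-ideals $X_R$ (which is intrinsic to the ring $R$ and its fraction field) is the same in both descriptions. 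Applying (\ref{yunobs}) then yields
\[
\mathcal{SO}_{\gamma,d\mu} = \#(\Lambda \backslash X_R) = \mathcal{SO}_{\gamma_K,d\mu_K}.
\]

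The one bookkeeping point that I view as the main thing to double check is that the Haar measures $dt$ on $T_\gamma(F)$ and $dt_K$ on $T_{\gamma_K}(K)$ agree as measures on the same topological group $F_{\chi_\gamma}^\times$: both are normalized so that the maximal compact subgroup $\mfo_{F_{\chi_\gamma}}^\times$ has volume one, and this normalization is independent of whether $F_{\chi_\gamma}^\times$ is viewed as a torus over $F$ or over $K$. Once this is noted, the lemma is a formal consequence of the fact that Yun's lattice-count is intrinsic to $R$, and therefore invariant under enlarging the base from $\mfo$ to the unramified suborder $\mfo_K$.
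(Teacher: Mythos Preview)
Your proof is correct and takes essentially the same approach as the paper: both apply Yun's lattice-counting identity (\ref{yunobs}) on each side and observe that the quotient $\Lambda \backslash X_R$ depends only on the order $R$ and its fraction field $F_{\chi_\gamma}$, which are unchanged under the passage from $\mfo$ to $\mfo_K$. Your version is somewhat more explicit in verifying that $\mfo_K[\gamma_K] = R$ and in noting the compatibility of the Haar-measure normalizations on $T_\gamma(F) = T_{\gamma_K}(K) = F_{\chi_\gamma}^\times$, points the paper leaves implicit.
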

 \begin{proof}
Considering $\gamma$ as an element of $\mathfrak{gl}_{n/l}(\mfo_K)$, the minimal polynomial $\chi_{\gamma}^K (x)$ of $\gamma$ is of degree $n/l$ and thus the characteristic polynomial of $\gamma$ is also $\chi_{\gamma}^K (x)$.
Note that  $\gamma$ is an elliptic element in $\mathfrak{gl}_{n/l}(\mfo_K)$ since  $\chi_{\gamma}^{K}(x)$ is irreducible. Furthermore, $\gamma$ is a regular semisimple element in $\mathfrak{gl}_{n/l}(\mfo_K)$ since $\gamma$ is in $\mathfrak{gl}_{n}(\mfo)$.
 By Equation (\ref{yunobs}),  we have
  \[\mathcal{SO}_{\gamma,d\mu}=\#(\Lambda \backslash X_{R})~~~~~~~\textit{ and } ~~~~~~~~~~~\mathcal{SO}_{\gamma_K,d\mu_K}=\#(\Lambda \backslash X_{R}).
  \]
This completes the proof.
\end{proof}
With respect to the geometric measure, Proposition \ref{proptrans} yields the following reduction:

\begin{corollary}\label{corred}
We have the following relation between two orbital integrals 
$ \mathcal{SO}_{\gamma}$ and $\mathcal{SO}_{\gamma_K}$:
 \[
 \mathcal{SO}_{\gamma}=\frac{\#\mathrm{GL}_n(\kappa)}{\#\mathrm{GL}_{n/l}(\kappa_R)\cdot q^{n^2-n^2/l}} \cdot\mathcal{SO}_{\gamma_K}.
 \]
 \end{corollary}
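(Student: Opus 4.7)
The plan is to apply Proposition \ref{proptrans} twice — once for $\gamma \in \mathfrak{gl}_n(\mfo)$ over $F$ and once for $\gamma_K \in \mathfrak{gl}_{n/l}(\mfo_K)$ over $K$ — and then use Lemma \ref{lem42} to identify the right hand sides up to the factors we need to track. Since $\gamma$ and $\gamma_K$ are both elliptic regular semisimple, the index sets $B(\gamma)$ and $B(\gamma_K)$ are singletons, and Proposition \ref{proptrans} gives
\[
\mathcal{SO}_{\gamma} = |\Delta_\gamma|_F^{1/2}\cdot |\Delta_{F_{\chi_\gamma}/F}|_F^{-1/2}\cdot \frac{\#\mathrm{GL}_n(\kappa)\, q^{-n^2}}{\#\mathrm{T}_\gamma(\kappa)\, q^{-n}}\cdot \mathcal{SO}_{\gamma,d\mu},
\]
and likewise for $\mathcal{SO}_{\gamma_K}$ with $F,\mfo,q,n$ replaced by $K,\mfo_K,q^l,n/l$, noting $F_{\chi_\gamma}=F_{\chi_\gamma^K}$. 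Dividing these two identities and invoking Lemma \ref{lem42} (which tells us the $d\mu$-orbital integrals coincide) reduces the corollary to an explicit computation of four ratios.

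Next, I would dispatch the discriminant ratios. By Proposition \ref{propserre},
\[
|\Delta_{F_{\chi_\gamma}/F}|_F^{1/2}=q^{S_F(\gamma)}|\Delta_\gamma|_F^{1/2},\qquad |\Delta_{F_{\chi_\gamma}/K}|_K^{1/2}=q_K^{S_K(\gamma_K)}|\Delta_{\gamma_K}|_K^{1/2},
\]
where $q_K=q^l$. Since $R=\mfo[\gamma]=\mfo_K[\gamma_K]$ by Lemma \ref{unramadd}, the quotient $\mfo_{F_{\chi_\gamma}}/R$ is the same abelian group, and because $\mfo_K/\mfo$ is unramified one has $S_F(\gamma)=l\cdot S_K(\gamma_K)$, so $q^{S_F(\gamma)}=q_K^{S_K(\gamma_K)}$. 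Combining this with the tower formula for discriminants, $|\Delta_{F_{\chi_\gamma}/F}|_F^{1/2}=|\Delta_{F_{\chi_\gamma}/K}|_K^{1/2}$ (using $K/F$ unramified), yields $|\Delta_\gamma|_F^{1/2}=|\Delta_{\gamma_K}|_K^{1/2}$ as well. Consequently the entire discriminant contribution cancels between $\mathcal{SO}_\gamma$ and $\mathcal{SO}_{\gamma_K}$.

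The remaining step is to compare the torus counts. Since $\mathrm{T}_\gamma=\mathrm{Res}_{\mfo_{F_{\chi_\gamma}}/\mfo_F}\mathbb{G}_m$ and $\mathrm{T}_{\gamma_K}=\mathrm{Res}_{\mfo_{F_{\chi_\gamma}}/\mfo_K}\mathbb{G}_m$, Weil restriction gives
\[
\mathrm{T}_\gamma(\kappa)=\bigl(\mfo_{F_{\chi_\gamma}}/\pi\mfo_{F_{\chi_\gamma}}\bigr)^\times=\mathrm{T}_{\gamma_K}(\kappa_R),
\]
since $\pi$ generates the maximal ideal of $\mfo_K$ as well (again using $K/F$ unramified). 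Moreover $q_K^{n/l}=q^n$, so the factor $\#\mathrm{T}_\gamma(\kappa)\,q^{-n}$ equals $\#\mathrm{T}_{\gamma_K}(\kappa_R)\,q_K^{-n/l}$ and cancels too. What remains is precisely
\[
\frac{\mathcal{SO}_\gamma}{\mathcal{SO}_{\gamma_K}}=\frac{\#\mathrm{GL}_n(\kappa)\,q^{-n^2}}{\#\mathrm{GL}_{n/l}(\kappa_R)\,q_K^{-(n/l)^2}}=\frac{\#\mathrm{GL}_n(\kappa)}{\#\mathrm{GL}_{n/l}(\kappa_R)\cdot q^{n^2-n^2/l}},
\]
using $q_K^{(n/l)^2}=q^{l\cdot(n/l)^2}=q^{n^2/l}$, which is the claimed identity.

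The one step that requires genuine care — rather than bookkeeping — is the identification $S_F(\gamma)=l\cdot S_K(\gamma_K)$ together with the unramified tower formula for discriminants; all other ingredients (Weil restriction, rewriting $q_K=q^l$, the Weyl discriminant) are routine. I would write that verification explicitly and then collect the cancellations in a single display before concluding.
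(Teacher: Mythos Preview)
Your proposal is correct and follows exactly the approach the paper intends: the paper's proof consists only of the sentence ``Proposition \ref{proptrans} yields the following reduction,'' and you have correctly filled in the bookkeeping it omits. One minor streamlining: Proposition \ref{propserre} gives $|\Delta_\gamma|_F^{1/2}\cdot|\Delta_{F_{\chi_\gamma}/F}|_F^{-1/2}=q^{-S_F(\gamma)}$ directly, so you only need $q^{S_F(\gamma)}=q_K^{S_K(\gamma_K)}$ (from the length comparison) and can skip the separate tower-formula verification.
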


\subsubsection{Invariance under the translation}
 
By the virtue of Corollary $\ref{corred}$, we may and do assume that $[\kappa_{R}:\kappa]=l=1$. 
Equivalently, we may and do assume that 
 the irreducible factor of $\overline{\chi}_{\gamma}(x)$ is linear so that $\overline{\chi}_{\gamma}(x)=(x-a)^n$ for some $a\in \kappa$. 
 
\begin{lemma}\label{constantlemma}
 $\mathcal{SO}_{\gamma}=\mathcal{SO}_{\gamma+c}$ for a constant matrix $c\in \mathfrak{gl}_n(\mfo)$.
\end{lemma}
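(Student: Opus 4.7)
My plan is to interpret ``constant matrix'' as a scalar matrix $cI_n$ and to show that the translation map $T_c : m \mapsto m+cI_n$ gives an isomorphism of schemes $\underline{G}_{\gamma} \xrightarrow{\sim} \underline{G}_{\gamma+c}$ over $\mfo$ which preserves the volume form $\omega_{\chi_\gamma}^{\mathrm{ld}}$. This then immediately yields $\mathcal{SO}_{\gamma}=\mathcal{SO}_{\gamma+c}$ via Definition \ref{defsoi}.

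First I would set up the commutative diagram
\[
\begin{tikzcd}
\mathfrak{gl}_{n,\mfo} \arrow[r,"T_c"] \arrow[d,"\varphi_n"'] & \mathfrak{gl}_{n,\mfo} \arrow[d,"\varphi_n"] \\
\mathbb{A}_{\mfo}^n \arrow[r,"T_c^{\mathbb{A}}"] & \mathbb{A}_{\mfo}^n,
\end{tikzcd}
\]
where $T_c^{\mathbb{A}}$ is the induced map on coefficients coming from the identity $\chi_{m+cI_n}(x) = \chi_m(x-c)$. Explicitly, if $\chi_m(x) = \sum_{i=0}^n a_i x^{n-i}$ with $a_0=1$, then the new coefficient of $x^{n-j}$ equals $\sum_{i=0}^{j} \binom{n-i}{j-i}(-c)^{j-i}\,a_i$. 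For fixed $c$, this is a lower-triangular affine-linear transformation in $(a_1,\dots,a_n)$ with $1$'s on the diagonal, hence a polynomial automorphism of $\mathbb{A}_{\mfo}^n$ with Jacobian identically $1$. In particular $(T_c^{\mathbb{A}})^* \omega_{\mathbb{A}^n_{\mfo}} = \omega_{\mathbb{A}^n_{\mfo}}$, and clearly $T_c^* \omega_{\mathfrak{gl}_{n,\mfo}} = \omega_{\mathfrak{gl}_{n,\mfo}}$ since $T_c$ is a translation on an affine space.

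From the commutative diagram, $T_c$ restricts to an isomorphism of $\mfo$-schemes $\uGr = \varphi_n^{-1}(\chi_\gamma) \xrightarrow{\sim} \varphi_n^{-1}(\chi_{\gamma+c}) = \underline{G}_{\gamma+c}$. By the construction of the quotient differential in Definition \ref{diff1} together with the invariance of both $\omega_{\mathfrak{gl}_{n,\mfo}}$ and $\omega_{\mathbb{A}^n_{\mfo}}$ under $T_c$ and $T_c^{\mathbb{A}}$ respectively, it follows that $T_c^*\,\omega_{\chi_{\gamma+c}}^{\mathrm{ld}} = \omega_{\chi_\gamma}^{\mathrm{ld}}$. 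Integrating and applying the change of variables formula gives the desired equality
\[
\mathcal{SO}_{\gamma+c} = \int_{\underline{G}_{\gamma+c}(\mfo)} |\omega_{\chi_{\gamma+c}}^{\mathrm{ld}}| = \int_{\uGr(\mfo)} |T_c^*\omega_{\chi_{\gamma+c}}^{\mathrm{ld}}| = \int_{\uGr(\mfo)} |\omega_{\chi_\gamma}^{\mathrm{ld}}| = \mathcal{SO}_{\gamma}.
\]

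There is no real obstacle beyond the verification that $T_c^{\mathbb{A}}$ has trivial Jacobian; everything else is a tautology from the definitions. The one bookkeeping point that must be checked is that $T_c$ sends $\uGr(\mfo)$ bijectively onto $\underline{G}_{\gamma+c}(\mfo)$ (with inverse $T_{-c}$), which is immediate since $T_c$ is an automorphism of $\mathfrak{gl}_{n,\mfo}$ as a scheme over $\mfo$.
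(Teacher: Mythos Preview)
Your proof is correct and follows essentially the same approach as the paper: both construct the commutative square with $T_c$ upstairs and the induced automorphism $\iota_c = T_c^{\mathbb{A}}$ on $\mathbb{A}^n_{\mfo}$ downstairs, and conclude from it. The paper is terser, simply noting that $\iota_c\circ\iota_{-c}=\mathrm{id}$ so $\iota_c$ is an automorphism of $\mathbb{A}^n_{\mfo}$ (hence automatically has unit Jacobian and preserves $|\omega_{\mathbb{A}^n_{\mfo}}|$); your explicit verification that the Jacobian is triangular with $1$'s on the diagonal is a harmless elaboration of the same point.
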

\begin{proof}

It suffices to show that there exists an automorphism $\iota_c$ of $\mathbb{A}_{\mathfrak{o}}^n$ , depending on the choice of $c$, which makes the following diagram commutes;

\[\xymatrixcolsep{5pc}\xymatrix{
\mathfrak{gl}_{n, \mathfrak{o}}  \ar[d]^{m \mapsto m+c} \ar[r]^{\varphi_n} &  \mathbb{A}_{\mathfrak{o}}^n \ar[d]^{\iota_c} \\ \mathfrak{gl}_{n, \mathfrak{o}} \ar[r]^{\varphi_n} & \mathbb{A}_{\mathfrak{o}}^n.
}\]

We will first construct a morphism $\iota_c$ which makes the diagram commute.
Choose $\underline{a}=(a_{1}, \cdots,  a_n)\in \mathbb{A}_{\mathfrak{o}}^n$.
Let $f$ be the polynomial of degree $n$ having $\underline{a}$ as coefficients.
Let $\underline{a}'=(a'_{1}, \cdots,  a'_n)\in \mathbb{A}_{\mathfrak{o}}^n$ such that 
$\underline{a}'$ is the coefficients of $f(x-c)$. 
Then each $a'_{i}$ is  the sum of a linear combination of $a_j$'s whose coefficients are elements of $\mathbb{Z}[c]$.
Define the morphism $\iota_c$ which sends  $a_i$'s to $a_i'$'s respectively.
Then $\iota_c$ satisfies the desired property.

The morphism $\iota_c$ satisfies the following property:
 $$\iota_c\circ \iota_{-c}=\iota_{-c}\circ \iota_{c}=\iota_0.$$ 
 Since $\iota_0$ is the identity, the morphism  $\iota_c$ is an automorphism.
\end{proof}

 

  


\subsection{Stratification}\label{sec42st}
The goal of this subsection is to impose a certain linear algebraic data to the orbit $\underline{G}_{\gamma}(\mfo)$ of $\gamma$. 
Let us introduce the following notations and related facts:
\begin{itemize}
\item Let $L$ be a free $\mfo$-module $L$ of rank $n$ and let $M$ be a sublattice of $L$ of the same rank $n$.

\item Define a functor $\cL(L,M)$ on the category of flat $\mfo$-algebras to the category of sets such that 
\[
\cL(L,M)(R)=\{f\mid\textit{$f:L\otimes R\rightarrow M\otimes R $ is $R$-linear and surjective}\}
\]
for a flat $\mfo$-algebra $R$. 
The functor $\cL(L,M)$ is then represented by an open subscheme of the affine space over $\mfo$ of dimension $n^2$ so as to be smooth over $\mfo$.
Here, `open subscheme' structure is due to surjectivity of elements of $\cL(L,M)(R)$. 

\item 
We can assign the $\mfo$-scheme structure on $\mathrm{End}(L)$ by defining the functor on the category of flat $\mfo$-algebras to the category of  sets as follows:
\[
\End(L)(R)=\End_{R}(L\otimes R)
\]
for an $\mfo$-algebra $R$. 

Then $\mathrm{End}(L)$ is represented by an affine space over $\mfo$ of dimension $n^2$.
As schemes, $\cL(L,L)$ is an open subscheme of  $\mathrm{End}(L)$. 
As sets,  $\cL(L,L)(\mfo)$ is an open (in terms of $\pi$-adic  topology) and proper subset of $\mathrm{End}(L)(\mfo)$. 

However, $\cL(L,M)$ is no longer  a subscheme  of $\End(L)$ unless $M= L$. 
On the other hand,  $\cL(L,M)$ is a subfunctor of $\End(L)$ on the category of flat $\mfo$-algebras.
This defines a morphism of schemes $\cL(L,M) \rightarrow \End(L)$.

We sometimes use  $\End(L)$ to stand for the set $\End(L)(\mfo)$ if there is no confusion.


\item We express $\gl_{n}(\mfo)=\End(L)$.
Then  we may and do regard $\cL(L,M)(\mfo)$  as  an open subset  of $\gl_{n}(\mfo)$.

\item Define the orbit of $\gamma$ inside $\cL(L,M)(\mfo)$ as follows:
\[O_{\gamma, \cL(L,M)}=\uGr(\mfo)\cap \cL(L,M)(\mfo)=\{f\in \cL(L,M)(\mfo)| \vpi_n(f)=\vpi_n(\gamma)\}\] 
where the intersection is taken inside $\End(L)$.
For example,  $O_{\gamma, \End(L)}=\uGr(\mfo)$.
Then $O_{\gamma, \cL(L,M)}$ is an open subset of $\uGr(\mfo)$.


\item Let $\varphi_{n,M}$ be the restriction of $\varphi_n$ to $\cL(L,M)$ so that 
\[
\varphi_{n,M}:  \cL(L,M) \longrightarrow \mathbb{A}_{\mathfrak{o}}^n.
\]
\end{itemize}

We now have the following stratification;
\begin{equation}\label{st1}
O_{\gamma, \End(L)} \left(=\uGr(\mfo)\right)  =\bigsqcup_M O_{\gamma, \cL(L,M)}
\end{equation}
where $M$ runs over all sublattices of $L$ such that $[L:M]=\mathrm{ord}(c_n)$ where $c_{n}$ is the constant term of the characteristic polynomial $\chi_{\gamma}(x)$.
Here, $[L:M]$ is the index as $\mfo$-modules.
Thus there are finitely many such $M$'s and so the above disjoint union is finite.

\begin{definition}
Let $M$ be a sublattice of $L$ with rank $n$. The type of $M$ is defined to be $(k_1, \cdots, k_{n-m})$, where $k_i\in \Z$ and $1\leq k_i\leq k_j$ for $i\leq j$, such that 
\[
L/M \cong \mfo/\pi^{k_1}\mfo \oplus \cdots \oplus \mfo/\pi^{k_{n-m}}\mfo.
\]
\end{definition}
Note that $k_1+\cdots +k_{n-m}=[L:M]$. Using this,  the above stratification (\ref{st1}) can be refined as follows:
\begin{equation}\label{st2}
O_{\gamma, \End(L)} \left(=\uGr(\mfo)\right) =\bigsqcup_{\substack{(k_1, \cdots, k_{n-m}),  \\ \sum k_i=\mathrm{ord}(c_n),\\ m\geq 0 }}
\left(\bigsqcup_{\substack{M:\textit{type(M)=} \\ (k_1, \cdots, k_{n-m})}} O_{\gamma, \cL(L,M)}\right).
\end{equation}

Let\[
\mathcal{SO}_{\gamma, \cL(L,M)}=\int_{O_{\gamma, \cL(L,M)}}|\omega_{\chi_{\gamma}}^{\mathrm{ld}}|.
\]

\begin{lemma}\label{lemred1}
If the type of $M$ is the same as that of $M'$, then $\mathcal{SO}_{\gamma, \cL(L,M)}=\mathcal{SO}_{\gamma, \cL(L,M')}$.
\end{lemma}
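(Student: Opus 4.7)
The plan is to exhibit an explicit measure-preserving bijection between $O_{\gamma,\cL(L,M)}$ and $O_{\gamma,\cL(L,M')}$, induced by a suitable automorphism of $L$ that intertwines $M$ and $M'$.

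First I would invoke the structure theorem for finitely generated modules over the PID $\mfo$: since $M$ and $M'$ have the same type $(k_1,\dots,k_{n-m})$, one can choose bases of $L$ exhibiting $M$ and $M'$ respectively via the elementary divisors $\pi^{k_1},\dots,\pi^{k_{n-m}}$ (together with $1$'s). The change-of-basis matrix yields an element $g\in\mathrm{GL}(L)(\mfo)=\mathrm{GL}_n(\mfo)$ such that $g(M)=M'$.

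Next I would define the conjugation map
\[
\Phi_g:\End(L)\longrightarrow\End(L),\qquad f\longmapsto gfg^{-1}.
\]
I claim $\Phi_g$ restricts to a bijection $O_{\gamma,\cL(L,M)}\to O_{\gamma,\cL(L,M')}$. For $f\in\cL(L,M)(\mfo)$, the image of $gfg^{-1}$ equals $g(f(g^{-1}(L)))=g(f(L))=g(M)=M'$, so $\Phi_g(f)\in\cL(L,M')(\mfo)$; the inverse is $\Phi_{g^{-1}}$. Moreover $\varphi_n$ is conjugation-invariant, so $\varphi_n(\Phi_g(f))=\varphi_n(f)=\varphi_n(\gamma)$, i.e.\ the condition cutting out $\uGr$ is preserved.

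The heart of the argument is the measure-preservation statement $\Phi_g^{\,*}|\omega_{\chi_\gamma}^{\mathrm{ld}}|=|\omega_{\chi_\gamma}^{\mathrm{ld}}|$, which I expect to be the main technical point. The translation-invariant top form $\omega_{\mathfrak{gl}_{n,\mfo}}$ pulls back under $\Phi_g$ to $\det(\mathrm{Ad}(g))\cdot\omega_{\mathfrak{gl}_{n,\mfo}}$; but the eigenvalues of $\mathrm{Ad}(g)$ on $\mathfrak{gl}_n$ are the ratios $\lambda_i/\lambda_j$ of the eigenvalues of $g$, so $\det(\mathrm{Ad}(g))=1$ and $\Phi_g^{\,*}\omega_{\mathfrak{gl}_{n,\mfo}}=\omega_{\mathfrak{gl}_{n,\mfo}}$. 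Since $\rho_n\circ\Phi_g=\rho_n$, the pullback by $\Phi_g$ of the short exact sequence defining $\omega_{\chi_\gamma}$ in Definition \ref{diff1} agrees with the original sequence, and the defining relation $\rho_n^{\ast}\omega_{\mathbb{A}_{\mfo}^n}\otimes\omega_{\chi_\gamma}=\omega_{\mathfrak{gl}_{n,\mfo}}|_{\mathfrak{gl}_{n,F}^{\ast}}$ forces $\Phi_g^{\,*}\omega_{\chi_\gamma}=\omega_{\chi_\gamma}$. Restricting to $\uGr$ gives $\Phi_g^{\,*}\omega_{\chi_\gamma}^{\mathrm{ld}}=\omega_{\chi_\gamma}^{\mathrm{ld}}$. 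Taking the change-of-variables formula for $p$-adic integrals then yields
\[
\mathcal{SO}_{\gamma,\cL(L,M)}=\int_{O_{\gamma,\cL(L,M)}}|\omega_{\chi_\gamma}^{\mathrm{ld}}|=\int_{O_{\gamma,\cL(L,M')}}|\omega_{\chi_\gamma}^{\mathrm{ld}}|=\mathcal{SO}_{\gamma,\cL(L,M')},
\]
completing the proof.
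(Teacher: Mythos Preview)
Your proof is correct and follows essentially the same approach as the paper: choose $g\in\mathrm{GL}_n(\mfo)$ carrying $M$ to $M'$ via the elementary-divisor bases, observe that conjugation by $g$ gives a bijection between the two orbits, and use that $\varphi_n$ is conjugation-invariant. The paper phrases the measure step more tersely by pointing to the commutative-diagram template from Lemma~\ref{constantlemma}; your explicit computation $\det(\mathrm{Ad}(g))=1$ is fine but slightly more than needed, since $g\in\mathrm{GL}_n(\mfo)$ already makes conjugation an $\mfo$-automorphism of $\mathfrak{gl}_{n,\mfo}$ preserving the normalized $|\omega_{\mathfrak{gl}_{n,\mfo}}|$.
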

\begin{proof}
Since all lattices  are  modules over PID, we can choose a basis $(e_1, \cdots, e_n)$ (resp. $(e'_1, \cdots, e'_n)$)  of $L$ such that $(\pi^{k_1}e_1, \cdots, \pi^{k_{n-m}}e_{n-m}, e_{n-m+1}, \cdots, e_n)$ (resp. $(\pi^{k_1}e'_1, \cdots, \pi^{k_{n-m}}e'_{n-m}, e'_{n-m+1}, \cdots, e'_n)$) is a basis of $M$ (resp. $M'$). 
Choose $g$ in $\End(L)$ which sends $e_i$ to $e_i'$. Then $g$ is invertible as an endomorphism of $L$ and defines a bijection from $M$ to $M'$.
This also yields 
$\cL(L,M)=g^{-1}\cL(L,M')g$ as a scheme over $\mfo$.

As in the proof of Lemma \ref{constantlemma},
it suffices to show the existence of  an automorphism $\iota$ of $\mathbb{A}_{\mathfrak{o}}^n$ which the following diagram commutes;

\[\xymatrixcolsep{5pc}\xymatrix{
\cL(L,M')  \ar[d]^{f \mapsto g^{-1}fg} \ar[r]^{\varphi_{n,M'}} &  \mathbb{A}_{\mathfrak{o}}^n \ar[d]^{\iota} \\ \cL(L,M) \ar[r]^{\varphi_{n,M}} & \mathbb{A}_{\mathfrak{o}}^n.
}\]
Since $\vpi_n(f)=\vpi_n(g^{-1}fg)$, letting $\iota$ be the identity map makes the above diagram commute.
 This completes the proof.
\end{proof}


This lemma enables us to define the following two notations:
 \[
\left\{
\begin{array}{l l}
\mathcal{SO}_{\gamma, (k_1, \cdots, k_{n-m})}:=\mathcal{SO}_{\gamma, \cL(L,M)}\textit{ where the type of $M$ is $(k_1, \cdots, k_{n-m})$};\\
c_{(k_1, \cdots, k_{n-m})}:=\#\{M\subset L \mid \textit{the type of $M$ is $(k_1, \cdots, k_{n-m})$}\}.
\end{array} \right.
\]

Then the stratification (\ref{st2}) yields the following formula:
\begin{proposition}\label{stra1}
We have
\[
\mathcal{SO}_{\gamma}= \sum\limits_{\substack{(k_1, \cdots, k_{n-m}),  \\ \sum k_i=\mathrm{ord}(c_n),\\ m \geq 0 }}
c_{(k_1, \cdots, k_{n-m})}\cdot
\mathcal{SO}_{\gamma, (k_1, \cdots, k_{n-m})}.
\]
\end{proposition}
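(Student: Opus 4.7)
The plan is to verify the stratification (\ref{st2}) precisely at the level of $\mfo$-points, then apply additivity of the integral and invoke Lemma \ref{lemred1} to collect terms by type.

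First, I would show that every $f \in \uGr(\mfo)$ lies in $O_{\gamma, \cL(L,M)}$ for a unique sublattice $M \subset L$ of index $\mathrm{ord}(c_n)$. Identifying $\gl_n(\mfo) = \End(L)$, the image $M_f := f(L)$ is a sublattice of $L$, which has rank $n$ because $\det(f) = (-1)^n c_n \neq 0$: by the reductions of Section \ref{reductionss} we may assume $\chi_\gamma(x)$ is irreducible, so $c_n = 0$ would force $n = 1$ and $\gamma = 0$, contradicting regularity. Then the structure theorem for finitely generated modules over the PID $\mfo$ gives $L/M_f \cong \mfo/\pi^{k_1}\mfo \oplus \cdots \oplus \mfo/\pi^{k_{n-m}}\mfo$ with $\sum k_i = \mathrm{ord}(\det f) = \mathrm{ord}(c_n)$, so $M_f$ has a well-defined type. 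Since $f \colon L \twoheadrightarrow M_f$ is surjective by construction, $f \in \cL(L,M_f)(\mfo)$, and the sublattice $M_f$ is uniquely determined by $f$. This establishes (\ref{st2}) as a finite disjoint decomposition of $\uGr(\mfo)$.

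Next, since each $\cL(L,M)(\mfo)$ is open in $\End(L)(\mfo)$ for the $\pi$-adic topology, each stratum $O_{\gamma,\cL(L,M)} = \uGr(\mfo) \cap \cL(L,M)(\mfo)$ is measurable, and the index set $\{M \subset L : [L:M] = \mathrm{ord}(c_n)\}$ is finite. Additivity of the measure $|\omega_{\chi_\gamma}^{\mathrm{ld}}|$ over the disjoint decomposition (\ref{st2}) then gives
\[
\mathcal{SO}_\gamma \;=\; \sum_{\substack{(k_1,\ldots,k_{n-m}),\\ \sum k_i = \mathrm{ord}(c_n),\\ m \geq 0}} \;\;\sum_{\substack{M \subset L:\,\text{type}(M) \\ = (k_1,\ldots,k_{n-m})}} \mathcal{SO}_{\gamma,\cL(L,M)}.
\]
By Lemma \ref{lemred1}, each inner summand depends only on the type of $M$, so the inner sum collapses to $c_{(k_1,\ldots,k_{n-m})} \cdot \mathcal{SO}_{\gamma,(k_1,\ldots,k_{n-m})}$, which yields the asserted identity.

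The proof is essentially bookkeeping once one has the image-lattice invariant $M_f$ and Lemma \ref{lemred1}, so no step presents a serious obstacle. The only point worth double-checking is that the sum ranges only over $M$ with $[L:M] = \mathrm{ord}(c_n)$ (not a smaller index), which follows directly from $\det(f) = (-1)^n c_n$ for every $f \in \uGr(\mfo)$.
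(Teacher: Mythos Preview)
Your proof is correct and follows exactly the approach the paper takes: the paper states Proposition~\ref{stra1} as an immediate consequence of the stratification~(\ref{st2}) together with Lemma~\ref{lemred1}, and you have simply written out the details of that derivation (the image-lattice invariant $M_f$, finiteness, additivity, and collecting by type). There is nothing to add.
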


\subsubsection{Formula for $c_{(k_1,\cdots,k_{n-m})}$}

We will provide a precise formula for each coefficient $c_{(k_1, \cdots, k_{n-m})}$.
To do that, 
We rewrite $(k_1, \cdots, k_{n-m})=(r_{1}, \cdots, r_{1}, r_{2}, \cdots, r_{2}, r_{3}, \cdots, r_{3}, \cdots,r_{l}, \cdots, r_{l})$, where $r_i<r_j$ if $i<j$.
Let  $m_{i}$  be the number of occurrence of  $r_{i}$ in the above type  and let $m_{0}=m$ so that 
 $\sum\limits_{i=0}^{l}m_{i}=n$.

\begin{lemma}\label{counttype}

 The formula for $c_{(k_1, \cdots, k_{n-m})}$ $\left(=c_{(r_{1},\cdots ,r_{1},\cdots,r_{l},\cdots,r_{l})}\right)$ is given as follows:
\[
c_{(r_{1},\cdots ,r_{1},\cdots,r_{l},\cdots,r_{l})}=\prod_{i=1}^{l}\#\mathrm{Gr}_{\kappa}(m_{i},s_{i})\cdot q^{\left((r_{i}-r_{i-1}-1)(n-s_{i-1})+(n-s_{i})\right)s_{i-1}},
\]
where $r_{0}=0$, $s_{i}=\sum\limits_{j=0}^{i}m_{j}$, and $\mathrm{Gr}_{\kappa}(m,n)$ is the Grassmannian variety classifying $m$-dimensional subspaces of the $n$-dimensional vector space over $\kappa$.
\end{lemma}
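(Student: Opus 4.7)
The plan is to realise each sublattice $M\subset L$ of the prescribed type as an equivalence class of surjections. Since $M$ is uniquely the kernel of some surjection $L\twoheadrightarrow Q$, with the target $Q:=\bigoplus_{i=1}^{l}(\mfo/\pi^{r_i}\mfo)^{m_i}$, and since two surjections share a kernel iff they differ by an automorphism of $Q$, one has
\[
c_{(r_1,\dots,r_1,\dots,r_l,\dots,r_l)}
=\frac{|\mathrm{Surj}(L,Q)|}{|\mathrm{Aut}(Q)|}.
\]
This converts the geometric count into two finite module-hom counts.

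Both quantities I would evaluate by reducing modulo $\pi$ and counting lifts. By Nakayama, a homomorphism $\phi:L\to Q$ (resp.\ $Q\to Q$) is surjective iff its induced $\kappa$-linear map $\bar\phi:L/\pi L\to Q/\pi Q$ (resp.\ $Q/\pi Q\to Q/\pi Q$) is surjective. For $|\mathrm{Surj}(L,Q)|$ the number of surjective $\bar\phi:\kappa^n\to\kappa^{n-m_0}$ is $\#\mathrm{Gr}_{\kappa}(m_0,n)\cdot|\mathrm{GL}_{n-m_0}(\kappa)|$, and each entry of the component $\phi_i:L\to(\mfo/\pi^{r_i})^{m_i}$ admits $q^{r_i-1}$ lifts above its reduction. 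For $|\mathrm{Aut}(Q)|$ the structural point is that $\mathrm{Hom}(\mfo/\pi^{r_i},\mfo/\pi^{r_j})=\pi^{\max(r_j-r_i,0)}\mfo/\pi^{r_j}$, so when $r_j>r_i$ every such map already lies in $\pi\mfo/\pi^{r_j}$ and reduces to $0$; ordering blocks by increasing $r_i$ then forces $\bar\phi$ on $Q/\pi Q=\bigoplus_i\kappa^{m_i}$ to be block upper-triangular with diagonal blocks in $\prod_i\mathrm{GL}_{m_i}(\kappa)$. A parallel lift-count for each of the diagonal, strictly-upper and strictly-lower block types then produces a closed product formula for $|\mathrm{Aut}(Q)|$.

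To match the Grassmannian form in the lemma I would invoke the telescoping identity
\[
\frac{|\mathrm{GL}_n(\kappa)|}{\prod_{\alpha=0}^{l}|\mathrm{GL}_{m_\alpha}(\kappa)|}
=q^{\sum_{0\le\alpha<\beta\le l}m_\alpha m_\beta}\cdot\prod_{i=1}^{l}\#\mathrm{Gr}_{\kappa}(m_i,s_i),
\]
which follows from $\#\mathrm{Gr}_{\kappa}(m_i,s_i)=\#\mathrm{Gr}_{\kappa}(s_{i-1},s_i)$ and the decomposition of a full flag of type $(s_0,s_1,\dots,s_l=n)$ into successive Grassmannians. This cleanly extracts the product $\prod_i\#\mathrm{Gr}_{\kappa}(m_i,s_i)$ in the statement and collects the remaining data into a single $q$-exponent.

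The main obstacle is the bookkeeping of $q$-powers: all lift counts combine into a symmetric double sum of the form $\sum_{a,b\ge 1}r_{\min(a,b)}m_a m_b$, which I would split by the $\alpha\leftrightarrow\beta$ symmetry and the diagonal, and then combine with the $q^{-m_0(n-m_0)}$ arising from rewriting $\#\mathrm{Gr}_{\kappa}(m_0,n)\cdot|\mathrm{GL}_{n-m_0}(\kappa)|$. After applying the identity above, the residual exponent reduces to $\sum_{0\le\alpha<\beta\le l}m_\alpha m_\beta(r_\beta-r_\alpha-1)$; a direct expansion using $n-s_{i-1}=\sum_{\beta\ge i}m_\beta$, $n-s_i=\sum_{\beta>i}m_\beta$, $s_{i-1}=\sum_{\alpha<i}m_\alpha$, and the telescoping $\sum_{\alpha<i\le\beta}(r_i-r_{i-1}-1)=r_\beta-r_\alpha-(\beta-\alpha)$ shows that this coincides with the stated exponent $\sum_{i=1}^{l}\bigl((r_i-r_{i-1}-1)(n-s_{i-1})+(n-s_i)\bigr)s_{i-1}$.
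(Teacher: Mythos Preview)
Your argument is correct and takes a genuinely different route from the paper. The paper reduces the count to submodules of the finite module $L_0=(\mfo/\pi^{r_l}\mfo)^n$, applies duality of finite $\mfo$-modules to pass from prescribed quotient type to prescribed submodule type, and then invokes a known Hall-polynomial formula (Theorem~31 of \cite{HM}, in the language of conjugate partitions $\lambda',\mu'$) before simplifying. Your approach is instead entirely self-contained: the identity $c=|\mathrm{Surj}(L,Q)|/|\mathrm{Aut}(Q)|$, the Nakayama reduction, and the block-triangular structure of $\mathrm{End}(Q)$ modulo $\pi$ give both counts directly, and your telescoping identity for $|\mathrm{GL}_n(\kappa)|/\prod_\alpha|\mathrm{GL}_{m_\alpha}(\kappa)|$ (equivalently, counting flags of type $(s_0,\dots,s_l)$) is exactly what is needed to extract the product of Grassmannians. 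Your final exponent manipulation is correct: I checked that the residual exponent coming out of the ratio is indeed $\sum_{0\le\alpha<\beta\le l}m_\alpha m_\beta(r_\beta-r_\alpha-1)$, and your expansion via $n-s_{i-1}=\sum_{\beta\ge i}m_\beta$, $n-s_i=\sum_{\beta>i}m_\beta$, $s_{i-1}=\sum_{\alpha<i}m_\alpha$ together with the telescoping of $r_i-r_{i-1}-1$ over $\alpha<i\le\beta$ recovers the stated exponent. The paper's route is shorter if one is willing to quote the Hall-polynomial literature; yours is more elementary and makes the combinatorics transparent.
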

\begin{proof}\footnote{Jungin Lee informed us a simplified proof based on \cite{Mac} and \cite{HM}.}
Define $L_{0}:=(\mfo/\pi^{r_{l}}\mfo)^{n}$. Then we have
\[
    c_{(r_{1},\cdots ,r_{1},\cdots,r_{l},\cdots,r_{l})}=\#\left\{N\subset L_{0}:L_{0}/N\cong \prod_{j=0}^{l}(\mfo/\pi^{r_{j}}\mfo)^{m_{j}} \right\}
    =\#\left\{N\subset L_{0}:N\cong \prod_{j=0}^{l}(\mfo/\pi^{r_{j}}\mfo)^{m_{j}} \right\},
\]
where the last equality is due to the duality of finite $\mfo$-modules. ( cf. \cite[II.(1.5)]{Mac})

For integers $n_{1}>\cdots>n_{l}\geq 1$, let $(\overset{(t_{1})}{n_{1}},\cdots ,\overset{(t_{l})}{n_{l}})$ be the $(t_1+\cdots +t_l)$-tuple such that each number $n_{i}$ appears $t_{i}$ times.
If we consider tuples $\lambda=(\overset{(n)}{r_{l}})$ and $\mu=(\overset{(m_{l})}{r_{l}},\cdots,\overset{(m_{1})}{r_{1}},\overset{(m_{0})}{r_{0}})$ as partitions of $nr_{l}$ and $\sum\limits_{i=0}^{l}m_{i}r_{i}$, respectively, then their conjugate partitions are given by $\lambda'=(\overset{(r_{l})}{n})$ and $\mu'=(\overset{(r_{1})}{n-s_{0}},\overset{(r_{2}-r_{1})}{n-s_{1}},\cdots, \overset{(r_{l}-r_{l-1})}{n-s_{l-1}})$, respectively.
We then apply 
 \cite[Theorem 31]{HM} to these conjugate partitions so that we have
\[\#\left\{N\subset L_{0}:N\cong \prod_{j=0}^{l}(\mfo/\pi^{r_{j}}\mfo)^{m_{j}} \right\}
    =\prod_{i=1}^{r_{l}}\#\mathrm{Gr}_{\kappa}(\lambda'_{i}-\mu'_{i+1},\mu'_{i}-\mu'_{i+1})\cdot q^{\mu'_{i+1}(\lambda'_{i}-\mu'_{i})}.\]
    Here, $\lambda_i'$ is the $i$-th component of $\lambda'$ (so that it is $n$) and 
    $\mu_i'$ is the $i$-th component of $\mu'$.
The right hand side is equal to    
\begin{align*}
    &\prod_{t=1}^{l}q^{(r_{t}-r_{t-1}-1)(n-s_{t-1})s_{t-1}}~~~~(\textit{when }\mu'_{i}=\mu'_{i+1}=n-s_{t-1})\\
    &\times\prod_{t-1}^{l}\#\mathrm{Gr}_{\kappa}(m_{t},s_{t})q^{(n-s_{t})s_{t-1}}~~~~(\textit{when }\mu'_{t}=n-s_{t-1}, \mu'_{t+1}=n-s_{t})\\
&=\prod_{t=1}^{l}\#\mathrm{Gr}_{\kappa}(m_{t},s_{t})\cdot q^{((r_{t}-r_{t-1}-1)(n-s_{t-1})+(n-s_{t}))s_{t-1}}.
\end{align*}    
This completes the proof.
\end{proof}
\begin{corollary}\label{cor49}
We have
\[
\left\{
\begin{array}{l}
c_{(k_{1})}={\frac{q^{n}-1}{q-1}}\cdot q^{(n-1)(k_{1}-1)};\\
c_{(k_{1},k_{1})}
=\frac{(q^{n}-1)(q^{n-1}-1)}{(q^{2}-1)(q-1)}\cdot q^{2(n-2)(k_{1}-1)};\\
c_{(k_{1},k_{2})}
=\frac{(q^{n}-1)(q^{n-1}-1)}{(q^{2}-1)(q-1)}\cdot (q+1) \cdot q^{(n-3)k_{1}+(n-1)k_{2}-2n+3}\textit{ where }k_{1}<k_{2}.
\end{array} \right.
\]
\end{corollary}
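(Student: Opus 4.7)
The plan is to obtain all three formulas as direct specializations of the general formula in Lemma \ref{counttype}, by reading off the parameters $l$, $r_i$, $m_i$, $s_i$ in each case and simplifying. No new combinatorial input is needed; the only task is bookkeeping and verifying that the resulting exponents match the stated closed forms.

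For the type $(k_1)$, I would set $l=1$, $r_1 = k_1$, $m_1 = 1$, and $m_0 = n-1$, which gives $s_0 = n-1$ and $s_1 = n$. The single Grassmannian factor becomes $\#\mathrm{Gr}_\kappa(1,n) = (q^n-1)/(q-1)$, and the $q$-exponent reduces to $((k_1-1)\cdot 1 + 0)(n-1) = (n-1)(k_1-1)$. Analogously, for the type $(k_1,k_1)$ I would take $l=1$, $r_1 = k_1$, $m_1 = 2$, $m_0 = n-2$, so that $s_0 = n-2$, $s_1 = n$; this yields the single Grassmannian $\#\mathrm{Gr}_\kappa(2,n) = (q^n-1)(q^{n-1}-1)/((q^2-1)(q-1))$ and exponent $2(k_1-1)(n-2)$.

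The only case requiring a small computation is $(k_1,k_2)$ with $k_1 < k_2$, where $l=2$, $r_1 = k_1$, $r_2 = k_2$, $m_0 = n-2$, $m_1 = m_2 = 1$, so $s_0 = n-2$, $s_1 = n-1$, $s_2 = n$. Here the two Grassmannian factors multiply to
\[
\frac{q^{n-1}-1}{q-1}\cdot \frac{q^n-1}{q-1} \;=\; (q+1)\cdot \frac{(q^n-1)(q^{n-1}-1)}{(q^2-1)(q-1)},
\]
after inserting the factor $(q+1)/(q+1)$, which matches the stated form. For the $q$-exponent I would add the two contributions $((k_1-1)\cdot 2 + 1)(n-2)$ from $i=1$ and $((k_2-k_1-1)\cdot 1 + 0)(n-1)$ from $i=2$, and expand to obtain $(n-3)k_1 + (n-1)k_2 - 2n + 3$.

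I do not anticipate a genuine obstacle: once the parameters $(l,r_i,m_i,s_i)$ are identified, everything is arithmetic. The only place to be careful is the algebraic identity rewriting $(q^n-1)(q^{n-1}-1)/(q-1)^2$ in the third case as the claimed product involving $(q^2-1)$ and $(q+1)$, and the expansion of the exponent in that case; both are routine.
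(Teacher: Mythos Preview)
Your proposal is correct and follows essentially the same approach as the paper: specialize Lemma~\ref{counttype} to each of the three types, read off the Grassmannian factors and $q$-exponents, and simplify. The paper's proof records exactly the same intermediate expressions $\#\mathrm{Gr}_\kappa(1,n)\cdot q^{(n-1)(k_1-1)}$, $\#\mathrm{Gr}_\kappa(2,n)\cdot q^{2(n-2)(k_1-1)}$, and $\#\mathrm{Gr}_\kappa(1,n-1)\cdot q^{((k_1-1)\cdot 2+1)(n-2)}\cdot \#\mathrm{Gr}_\kappa(1,n)\cdot q^{(k_2-k_1-1)(n-1)}$ that you obtain, and then substitutes the standard $q$-binomial formula.
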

\begin{proof}
From  Lemma \ref{counttype}, we have the following formula;
\[
\left\{
\begin{array}{l}
c_{(k_{1})}=\#\mathrm{Gr}_{\kappa}(1,n)\cdot q^{(n-1)(k_{1}-1)};\\
c_{(k_{1},k_{1})}=\#\mathrm{Gr}_{\kappa}(2,n)\cdot q^{(n-2)\cdot 2 \cdot (k_{1}-1)};\\
c_{(k_{1},k_{2})}=\#\mathrm{Gr}_{\kappa}(1,n-1)\cdot q^{((k_{1}-1)\cdot 2+1)(n-2)}\cdot \#\mathrm{Gr}_{\kappa}(1,n)\cdot q^{(k_{2}-k_{1}-1)(n-1)}.
\end{array} \right.
\]
Plugging in
\[
\#\mathrm{Gr}_{\kappa}(d,n)=\frac{(q^{n}-1)(q^{n}-q)\cdots (q^{n}-q^{d-1})}{(q^{d}-1)(q^{d}-q)\cdots(q^{d}-q^{d-1})}
\]
into the above formula completes the proof.
\end{proof}
\begin{remark}\label{rmk410}
\begin{enumerate}
\item
If $n=2$, then 
\[
c_{(k_{1})}=(q+1) q^{k_{1}-1}.
\]

\item
If $n=3$, then 
\[
\left\{
\begin{array}{l}
c_{(k_{1})}=(q^{2}+q+1) q^{2(k_{1}-1)};\\
c_{(k_{1},k_{1})}=(q^{2}+q+1)q^{2(k_{1}-1)};\\
c_{(k_{1},k_{2})}=(q^{2}+q+1)(q+1)q^{2k_{2}-3}\textit{ where }k_{1}<k_{2}.
\end{array} \right.
\]
\end{enumerate}
\end{remark}

 \subsection{Refined stratification}\label{subsec43}
By using the reduction steps (e.g. Lemma \ref{lem42} (or Corollary \ref{corred}) and Lemma \ref{constantlemma}) introduced in Section \ref{reductionss},
we may and do assume that $\chi_{\gamma}(x)$ is irreducible  whose reduction $\overline{\chi}_{\gamma}(x)$ modulo $\pi$ is $x^n$.
For a lattice $M$ of type $(k_1, \cdots, k_{n-m})$ with $m \geq 1$, we choose  a basis $(e_1, \cdots, e_n)$  of $L$ such that $(e_1, \cdots, e_{m}, \pi^{k_{1}}e_{m+1}, \cdots, \pi^{k_{n-m}}e_n)$ is a basis of $M$. With respect to this basis, the matrix representation of an element $X$ of $\cL(L,M)(\mfo)$ is given as follows:

\begin{equation}\label{matrixform}
X=\begin{pmatrix} &&&x_{1, m+1}& \cdots &x_{1, n}\\ & X_{m,m} &&\vdots&  &\vdots \\&&&x_{m, m+1}& \cdots &x_{m, n}
\\ \pi^{k_1}x_{m+1, 1}&\cdots&\pi^{k_1}x_{m+1, m}& \pi^{k_1}x_{m+1, m+1}& \cdots &\pi^{k_1}x_{m+1, n}\\ 
 \vdots &&\vdots &\vdots & &\vdots \\  
 \pi^{k_{n-m}}x_{n, 1}&\cdots&\pi^{k_{n-m}}x_{n, m}& \pi^{k_{n-m}}x_{n, m+1}& \cdots &\pi^{k_{n-m}}x_{n, n}
\end{pmatrix}
\end{equation}
such that
\[
X':=\begin{pmatrix} &&&x_{1, m+1}& \cdots &x_{1, n}\\ & X_{m,m} &&\vdots&  &\vdots \\&&&x_{m, m+1}& \cdots &x_{m, n}
\\ x_{m+1, 1}&\cdots&x_{m+1, m}& x_{m+1, m+1}& \cdots &x_{m+1, n}\\ 
 \vdots &&\vdots &\vdots & &\vdots \\  
 x_{n, 1}&\cdots&x_{n, m}& x_{n, m+1}& \cdots &x_{n, n}
\end{pmatrix}
\]
is invertible in $\mathrm{M}_n(\mfo)$ with respect to the multiplication.
Here
 the size of $X_{m,m}$ is $m\times m$.
 
 If we further restrict $x$ to be an element of $O_{\gamma, \cL(L,M)}$, then  the characteristic polynomial of the matrix $\overline{X}_{m,m}$, which is the reduction of the matrix $X_{m,m}$ modulo $\pi$, is $x^{m}$. 

\begin{lemma}\label{lem411}
The rank of $\overline{X}_{m,m}$ is at least $2m-n$ and at most $m-1$.
\end{lemma}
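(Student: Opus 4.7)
\medskip

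\textbf{Proof plan.} The strategy is to combine two structural constraints on $X$: the nilpotency of $\overline{X}$ coming from $\overline{\chi}_\gamma(x)=x^n$, and the invertibility of $X'$ coming from surjectivity of $f\colon L\to M$.

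First I would establish the upper bound. Since every $k_j\geq 1$, reducing $X$ modulo $\pi$ annihilates the bottom block of rows, so
\[
\overline{X}=\begin{pmatrix}\overline{X}_{m,m}&\overline{A}\\ 0&0\end{pmatrix},
\]
where $\overline{A}$ denotes the reduction of the top-right $m\times(n-m)$ block of $X$. Expanding the characteristic polynomial of this block-triangular matrix gives $\chi_{\overline{X}}(x)=x^{n-m}\cdot\chi_{\overline{X}_{m,m}}(x)$, and since $\overline{\chi}_\gamma(x)=x^n$ we recover (as already noted in the paragraph preceding the lemma) that $\chi_{\overline{X}_{m,m}}(x)=x^m$. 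In particular $\overline{X}_{m,m}$ is nilpotent; assuming $m\geq 1$ it is therefore singular, yielding $\mathrm{rank}(\overline{X}_{m,m})\leq m-1$.

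For the lower bound I would exploit the fact that $f\in \cL(L,M)(\mfo)$ is by definition surjective onto $M$. With respect to the chosen basis $(e_1,\ldots,e_m,\pi^{k_1}e_{m+1},\ldots,\pi^{k_{n-m}}e_n)$ of $M$, surjectivity of $f$ is exactly the statement that the matrix $X'$ belongs to $\mathrm{GL}_n(\mfo)$; indeed $X'$ is, by construction, the matrix of $f$ read in the basis of $L$ on the source and the chosen basis of $M$ on the target. Reducing modulo $\pi$, the matrix
\[
\overline{X'}=\begin{pmatrix}\overline{X}_{m,m}&\overline{A}\\ \overline{B}&\overline{D}\end{pmatrix}
\]
has rank $n$ over $\kappa$, so its top $m$ rows are $\kappa$-linearly independent. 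Thus the $m\times n$ matrix $[\overline{X}_{m,m}\,|\,\overline{A}]$ has rank $m$. Adjoining the $n-m$ columns of $\overline{A}$ to $\overline{X}_{m,m}$ can raise the rank by at most $n-m$, so $\mathrm{rank}(\overline{X}_{m,m})\geq m-(n-m)=2m-n$, as claimed.

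The two bounds together give the lemma. The only mildly delicate point is the identification of surjectivity of $f$ with invertibility of $X'$, but this is already built into the definition of $\cL(L,M)$ together with the explicit basis of $M$ displayed in \eqref{matrixform}, so no genuine obstacle remains.
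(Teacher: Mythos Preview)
Your proof is correct and follows essentially the same approach as the paper: nilpotency of $\overline{X}_{m,m}$ from $\chi_{\overline{X}_{m,m}}(x)=x^m$ gives the upper bound, while invertibility of $\overline{X'}$ forces the top $m$ rows $[\overline{X}_{m,m}\mid\overline{A}]$ to have rank $m$, and since the $n-m$ columns of $\overline{A}$ can add at most $n-m$ to the rank, this gives the lower bound. The paper's proof is terser but identical in substance.
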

\begin{proof}
Since the characteristic polynomial of $\overline{X}_{m,m}$ is $x^m$, its determinant is zero, which gives an upper bound of the rank of $\overline{X}_{m,m}$ of being $m-1$.

For a lower bound of the rank of $\overline{X}_{m,m}$, we express $\overline{X}'$ as the matrix
$\begin{pmatrix}
\overline{X}_{m,m} & \bar{y} \\ \bar{c} & \bar{d}
\end{pmatrix} \in \mathrm{M}_n(\kappa)$. 
Since $\overline{X}'$ is invertible, the (row) rank of $\begin{pmatrix}
\overline{X}_{m,m} & \bar{y} \end{pmatrix}$ should be $m$.
Since the (column) rank of $\bar{y}$ is at most $n-m$, the rank of $\overline{X}_{m,m}$ is at least $2m-n$.
\end{proof}

Let $\overline{L}=L/\pi L$ and let $\overline{M}=M/ M\cap \pi L$ so that $\overline{M}$ is a subspace of $\overline{L}$ over $\kappa$. 
Note that the dimension of $\overline{M}$ is $m$. 
The linear map $\overline{X}$ induces a linear map from $\overline{M}$ to $\overline{M}$ which is represented by the matrix $\overline{X}_{m,m}$. 

Let $V$ be a subspace of $\overline{M}$ whose dimension is at least $2m-n$ and at most $m-1$.
Define a functor $\cL(L,M,V)$ on the category of flat $\mfo$-algebras to the category of sets such that 
\[
\cL(L,M,V)(R)=\{f\in \cL(L,M)(R)\mid \bar{f}(\overline{M}\otimes R/\pi R)=V\otimes R/\pi R\}
\]
for a flat $\mfo$-algebra $R$. 
\begin{remark}\label{refinedmatrix}
A matrix representation of an element $x$ of $\cL(L,M,V)(\mfo)$, such that  the dimension of $V$ is $m-t$, is given by the matrix form (\ref{matrixform}) with the extra congruent condition;
\begin{equation}\label{matrixform2}
X_{m,m}=\begin{pmatrix}
\pi x_1 & \cdots &\pi x_t&x_{t+1}&\cdots &x_{m}
\end{pmatrix},
\end{equation}
such that 
$\begin{pmatrix}\bar{x}_{t+1}&\cdots &\bar{x}_{m}\end{pmatrix}$ is of rank $m-t$ over $\kappa$, 
equivalently, at least one $(m-t)\times (m-t)$ minor of $X_{m,m}$ is a unit in $\mfo$.
Here, $x_i$ is the $i$-th column vector of $X_{m,m}$.
\end{remark}

\begin{lemma}\label{lem413}
The functor $\cL(L,M,V)$ is represented by an open subscheme of an affine space over $\mfo$ of dimension $n^2$ so as to be smooth over $\mfo$.
\end{lemma}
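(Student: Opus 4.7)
The plan is to reduce Lemma \ref{lem413} to the representability and smoothness of $\cL(L,M)$, which were established earlier in this subsection, via an explicit change of coordinates on the matrix description of Remark \ref{refinedmatrix}. Recall that, for a flat $\mfo$-algebra $R$, an element of $\cL(L,M,V)(R)$ is an $n\times n$ matrix $X$ of the form \eqref{matrixform} whose $m\times m$ block $X_{m,m}$ further satisfies \eqref{matrixform2}: its first $t$ columns lie in $\pi R$, and the submatrix consisting of its last $m-t$ columns has at least one $(m-t)\times (m-t)$ minor in $R^{\times}$.

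The first step is to perform the substitution $x_{ij}=\pi y_{ij}$ on the $mt$ entries of $X_{m,m}$ that are forced into $\pi R$ by \eqref{matrixform2}. Because $R$ is flat over $\mfo$, multiplication by $\pi$ is injective on $R$, so this substitution induces a natural bijection between the set of tuples $(x_{ij})$ with those entries in $\pi R$ and the set of tuples where the corresponding $y_{ij}$'s range freely over $R$. Keeping the remaining $n^{2}-mt$ coordinates $x_{ij}$ untouched, one obtains $n^{2}$ free parameters in all. Under this substitution the two remaining defining conditions, namely invertibility of the matrix $X'$ associated to $X$ as in Remark \ref{refinedmatrix} and non-vanishing modulo $\pi$ of at least one of the distinguished $(m-t)\times(m-t)$ minors, become open conditions: the former is the non-vanishing of $\det(X')$ viewed as a polynomial in the new coordinates, and the latter is the union of the non-vanishing loci of the corresponding minor polynomials. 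Their intersection carves out an open subscheme $U$ of $\mathbb{A}^{n^{2}}_{\mfo}$.

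By naturality of the substitution, $U(R)\xrightarrow{\sim}\cL(L,M,V)(R)$ for every flat $\mfo$-algebra $R$, so $U$ represents the functor $\cL(L,M,V)$ on the category of flat $\mfo$-algebras. Since an open subscheme of an affine space over $\mfo$ is smooth over $\mfo$ of relative dimension equal to the dimension of that affine space, the conclusion follows. The only points requiring some care are that the substitution, the surjectivity/rank conditions, and the flatness of $R$ interact compatibly under base change along flat $\mfo$-algebra maps; this is automatic because $\pi$ remains a non-zero-divisor, and both unit-ness of an element of $R$ and the rank of a matrix over $R/\pi R$ are preserved under flat base change.
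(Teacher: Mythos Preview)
Your proof is correct and takes essentially the same approach as the paper's. The paper packages the $\pi$-substitution step by introducing the auxiliary functor $\widetilde{\cL}(L,M,V)$ (dropping both the invertibility of $X'$ and the rank condition on $(\bar x_{t+1},\dots,\bar x_m)$), observes that it is represented by a polynomial ring in $n^2$ variables over $\mfo$, and then notes that $\cL(L,M,V)$ is the open subscheme cut out by the two omitted conditions; your argument does exactly this with the intermediate functor left implicit.
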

\begin{proof}
Define a functor $\widetilde{\cL}(L,M,V)$ on the category of flat $\mfo$-algebras to the category of additive abelian groups such that 
\[
\widetilde{\cL}(L,M,V)(R)=\{f|\textit{$f:L\otimes R\rightarrow M\otimes R$ is $R$-linear and $\bar{f}(\overline{M}\otimes R/\pi R)\subset V\otimes R/\pi R$}
\}
\]
for a flat $\mfo$-algebra $R$. 
Then each element of $\widetilde{\cL}(L,M,V)(R)$ is represented by a matrix of the form described in Equation (\ref{matrixform}) with $x_{i,j}\in R$ for $(i,j)\neq (m,m)$, by omitting the condition that $x'$ is invertible, and a matrix of the form described in Equation (\ref{matrixform2}) with $x_i$ a column matrix in $R$, by omitting the condition that $\begin{pmatrix}\bar{x}_{t+1}&\cdots &\bar{x}_{m}\end{pmatrix}$ is of rank $m-t$.
Then $\widetilde{\cL}(L,M,V)$ is uniquely represented by a flat $\mfo$-algebra which is a polynomial ring over $\mfo$ of $n^2$ variables.

The functor $\cL(L,M,V)$ is then represented by an open subscheme of $\widetilde{\cL}(L,M,V)$ and thus is smooth. 
This completes the proof.
\end{proof}
Note that  $\cL(L,M,V)$ is not necessarily affine, but has finite affine covers given by each $(m-t)\times (m-t)$ minor of $X_{m,m}$. 
Since $\cL(L,M,V)$ is representable,  we can now talk of  $\cL(L,M,V)(R)$ for any (not necessarily flat) $\mfo$-algebra $R$.

As in the previous section, we define
\[
\left\{
  \begin{array}{l }
O_{\gamma, \cL(L,M,V)}=\uGr(\mfo)\cap \cL(L,M,V)(\mfo)=\{f\in \cL(L,M,V)(\mfo)| \vpi_n(f)=\vpi_n(\gamma)\};\\
\mathcal{SO}_{\gamma, \cL(L,M,V)}=\int_{O_{\gamma, \cL(L,M,V)}}|\omega_{\chi_{\gamma}}^{\mathrm{ld}}|.
    \end{array} \right.
\]

\begin{lemma}\label{lem412}
If $V$ and $W$ are subspaces of $\overline{M}$ having  the same dimension,  then 
\[
\mathcal{SO}_{\gamma, \cL(L,M,V)}=\mathcal{SO}_{\gamma, \cL(L,M,W)}.
\]
\end{lemma}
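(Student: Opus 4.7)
The plan is to adapt the conjugation argument of Lemma \ref{lemred1} to this slightly more refined setting. The key difference is that now we need an automorphism of $L$ that not only stabilizes $M$ but also realizes a prescribed linear map on $\overline{M}$.

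First, I would choose a basis $(e_1,\dots,e_n)$ of $L$ such that $(e_1,\dots,e_m,\pi^{k_1}e_{m+1},\dots,\pi^{k_{n-m}}e_n)$ is a basis of $M$, as in Equation (\ref{matrixform}). In this basis $\overline{M}\subset\overline{L}$ is the $\kappa$-subspace spanned by $(\bar{e}_1,\dots,\bar{e}_m)$. Since $V$ and $W$ are subspaces of $\overline{M}$ of the same dimension, there exists $\bar{A}\in\mathrm{GL}(\overline{M})=\mathrm{GL}_m(\kappa)$ with $\bar{A}(W)=V$. Lift $\bar{A}$ to a matrix $A\in\mathrm{GL}_m(\mfo)$ and define $g\in\mathrm{Aut}_\mfo(L)$ to act as $A$ on $\mfo e_1\oplus\cdots\oplus\mfo e_m$ and as the identity on $\mfo e_{m+1}\oplus\cdots\oplus\mfo e_n$. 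Then $g(M)=M$, because $g(e_i)\in\sum_{j\le m}\mfo e_j\subset M$ for $i\le m$ and $g(\pi^{k_i}e_{m+i})=\pi^{k_i}e_{m+i}$ for $1\le i\le n-m$.

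Next, I would verify that $f\mapsto g^{-1}fg$ induces a scheme isomorphism $\cL(L,M,V)\xrightarrow{\sim}\cL(L,M,W)$ over $\mfo$. Indeed, for any flat $\mfo$-algebra $R$ and $f\in\cL(L,M,V)(R)$, one has $(g^{-1}fg)(L\otimes R)=g^{-1}f(L\otimes R)\subset g^{-1}(M\otimes R)=M\otimes R$, so $g^{-1}fg\in\cL(L,M)(R)$; moreover, on reductions,
\[
\overline{g^{-1}fg}(\overline{M}\otimes R/\pi R)=\bar{g}^{-1}\bar{f}\bar{g}(\overline{M}\otimes R/\pi R)=\bar{g}^{-1}(V\otimes R/\pi R)=W\otimes R/\pi R,
\]
because $\bar{g}$ stabilizes $\overline{M}$ and $\bar{g}(W)=V$. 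The inverse is given by $f\mapsto gfg^{-1}$. Exactly as in Lemma \ref{lemred1}, conjugation preserves the characteristic polynomial, so the diagram
\[
\xymatrix{
\cL(L,M,V)\ar[d]_{f\mapsto g^{-1}fg}\ar[r]^{\varphi_{n,M}}&\mathbb{A}^n_\mfo\ar[d]^{\mathrm{id}}\\
\cL(L,M,W)\ar[r]^{\varphi_{n,M}}&\mathbb{A}^n_\mfo
}
\]
commutes.

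Finally, this conjugation is an isomorphism of smooth $\mfo$-schemes lying over the identity of $\mathbb{A}^n_\mfo$, so by construction (Definition \ref{diff1}) it pulls the relative top form $\omega^{\mathrm{ld}}_{\chi_\gamma}$ on $O_{\gamma,\cL(L,M,W)}$ back to $\omega^{\mathrm{ld}}_{\chi_\gamma}$ on $O_{\gamma,\cL(L,M,V)}$; integrating the associated measures yields $\mathcal{SO}_{\gamma,\cL(L,M,V)}=\mathcal{SO}_{\gamma,\cL(L,M,W)}$. The only point requiring care is the construction of $g$: one must make sure that lifting $\bar{A}$ to $\mathrm{GL}_m(\mfo)$ and extending by the identity on the remaining coordinates really produces an element of $\mathrm{Aut}_\mfo(L)$ stabilizing $M$ in the way asserted, which is straightforward given the adapted basis chosen in the first step.
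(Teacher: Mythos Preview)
Your proof is correct and follows exactly the same approach as the paper: construct an invertible $g\in\End(L)$ stabilizing $M$ whose reduction on $\overline{M}$ carries $W$ to $V$, and use that conjugation by $g$ gives a scheme isomorphism $\cL(L,M,V)\to\cL(L,M,W)$ over the identity on $\mathbb{A}^n_\mfo$. The paper's version is terser, simply asserting the existence of such $g$ by analogy with Lemma~\ref{lemred1}, whereas you spell out the explicit construction via an adapted basis and a lift of $\bar{A}\in\mathrm{GL}_m(\kappa)$.
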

\begin{proof}
As in the proof of Lemma \ref{lemred1}, there exists  an invertible element $g\in \End(L)$  such that $g$ sends $M$ to $M$ and $\bar{g}$ sends $W$ to $V$ isomorphically.
Then the following diagram is commutative:
\[\xymatrixcolsep{5pc}\xymatrix{
 \cL(L,M,V) \ar[d]^{f\mapsto g^{-1}fg} \ar[r]^{\vpi_n} &  \mathbb{A}_{\mathfrak{o}}^n \ar[d]^{identity} \\ \cL(L,M,W) \ar[r]^{\varphi_n} &  \mathbb{A}_{\mathfrak{o}}^n
}\]
This completes the proof.
\end{proof}

Thus if the dimension of $V$ is $m-t$, then we can let 
\[
\mathcal{SO}_{\gamma, (k_1, \cdots, k_{n-m}), t}=\mathcal{SO}_{\gamma, \cL(L,M,V)}.
\]

Here the range of $t$ is  $1\leq t \leq n-m$.

\begin{lemma}\label{lem415}
Let $d_t=\#\{V\subset \overline{M}\mid \textit{the dimension of $V$ is $m-t$}\}$.
Then
\[
d_{t}=\#\mathrm{Gr}_{\kappa}(t,m).
\]
Here, $\mathrm{Gr}_{\kappa}(t,m)$ is the Grassmannian variety classifying $t$-dimensional subspaces of $\overline{M}$. 


\end{lemma}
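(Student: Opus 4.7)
The plan is essentially immediate, since the statement is a book-keeping identity about subspaces of a finite-dimensional vector space over $\kappa$. By construction $\overline{M} = M/(M \cap \pi L)$ is an $m$-dimensional $\kappa$-vector space (cf.\ the paragraph preceding Lemma \ref{lem411}, where we noted that $\dim_\kappa \overline{M} = m$). Hence $d_t$ is literally the number of $\kappa$-rational points of the Grassmannian $\mathrm{Gr}_\kappa(m-t,m)$ parametrising $(m-t)$-dimensional subspaces of an $m$-dimensional $\kappa$-vector space.

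First I would invoke the standard duality on Grassmannians: fixing an isomorphism $\overline{M} \simeq \kappa^m$, the map $V \mapsto V^{\perp}$ (the annihilator in the dual space, or equivalently the orthogonal complement with respect to any chosen non-degenerate bilinear pairing on $\overline{M}$) defines a bijection
\[
\mathrm{Gr}_\kappa(m-t,m)(\kappa) \;\longleftrightarrow\; \mathrm{Gr}_\kappa(t,m)(\kappa),
\]
since $\dim V^{\perp} = m - \dim V = t$. Consequently $d_t = \#\mathrm{Gr}_\kappa(m-t,m) = \#\mathrm{Gr}_\kappa(t,m)$, which is exactly the claim.

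Alternatively, and more explicitly, one may simply cite the classical closed formula used already in the proof of Corollary \ref{cor49}, namely
\[
\#\mathrm{Gr}_\kappa(d,n) \;=\; \frac{(q^n-1)(q^n-q)\cdots (q^n-q^{d-1})}{(q^d-1)(q^d-q)\cdots(q^d-q^{d-1})},
\]
i.e.\ the Gaussian binomial coefficient $\binom{n}{d}_q$. A direct comparison shows $\binom{m}{m-t}_q = \binom{m}{t}_q$, which again yields the lemma.

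There is no substantive obstacle: the only thing to check is that the symmetry $V \mapsto V^{\perp}$ is well-defined and bijective, which is standard linear algebra. I would therefore present the argument in one or two lines, referring back to the explicit Gaussian binomial formula already invoked in Corollary \ref{cor49} for concreteness.
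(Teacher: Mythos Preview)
Your proposal is correct and essentially identical to the paper's proof: the paper also notes that $\overline{M}$ is $m$-dimensional over $\kappa$, so $d_t = \#\mathrm{Gr}_\kappa(m-t,m)(\kappa) = \#\mathrm{Gr}_\kappa(t,m)(\kappa)$ by the standard symmetry of Grassmannians. Your additional remarks about the explicit bijection $V \mapsto V^\perp$ and the Gaussian binomial formula are more detailed than what the paper records, but the argument is the same.
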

\begin{proof}
The dimension of $\overline{M}$, as a $\kappa$-vector space,  is $m$. 
Therefore the number of $(m-t)$-dimensional subspaces of $\overline{M}$ is $\#\mathrm{Gr}_{\kappa}(m-t,m)(\kappa)=\#\mathrm{Gr}_{\kappa}(t,m)(\kappa)$.
\end{proof}

By plugging the above two lemmas into Proposition \ref{stra1}, we have a more refined reduction formula as follows:

\begin{proposition}\label{stra2}
We have 
\[
\mathcal{SO}_{\gamma}=\sum\limits_{\substack{(k_1, \cdots, k_{n-m}),  \\ \sum k_i=\mathrm{ord}(c_n),\\ m\geq 0 }}
c_{(k_1, \cdots, k_{n-m})}\cdot
\left(
\sum\limits_{t=1}^{n-m}d_t\cdot \mathcal{SO}_{\gamma, (k_1, \cdots, k_{n-m}),t}
\right).
\]
\end{proposition}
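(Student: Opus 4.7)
The plan is to refine the summation in Proposition~\ref{stra1} by further decomposing each $O_{\gamma, \cL(L,M)}$ according to the $\kappa$-subspace $V = \bar{f}(\overline{M}) \subset \overline{M}$ obtained from the reduction of $f \in O_{\gamma, \cL(L,M)}$ modulo $\pi$. Granted Proposition~\ref{stra1}, it suffices to establish that for any fixed sublattice $M$ of type $(k_1, \ldots, k_{n-m})$ one has
\[
\mathcal{SO}_{\gamma, (k_1, \ldots, k_{n-m})} \;=\; \sum_{t=1}^{n-m} d_t \cdot \mathcal{SO}_{\gamma, (k_1, \ldots, k_{n-m}), t}.
\]

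First I would note that the assignment $f \mapsto \bar{f}(\overline{M})$ induces a partition
\[
O_{\gamma, \cL(L,M)} \;=\; \bigsqcup_{V \subset \overline{M}} O_{\gamma, \cL(L,M,V)},
\]
where $V$ ranges over all $\kappa$-subspaces of $\overline{M}$. Since each $f \in O_{\gamma, \cL(L,M)}$ has $\bar{f}$ with characteristic polynomial $x^m$, Lemma~\ref{lem411} forces $2m-n \leq \dim V \leq m-1$, so that with $t := m - \dim V$ only the range $1 \leq t \leq n-m$ can give nonempty strata. Each $\cL(L,M,V)$ is represented by a smooth $\mfo$-scheme by Lemma~\ref{lem413}, so each $O_{\gamma, \cL(L,M,V)}$ is $\pi$-adically open in $O_{\gamma, \cL(L,M)}$. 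Because the disjoint union is finite (there are finitely many subspaces of a finite-dimensional $\kappa$-vector space), finite additivity of the measure $|\omega_{\chi_\gamma}^{\mathrm{ld}}|$ gives
\[
\mathcal{SO}_{\gamma, \cL(L,M)} \;=\; \sum_{V \subset \overline{M}} \mathcal{SO}_{\gamma, \cL(L,M,V)}.
\]

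Next I would invoke Lemma~\ref{lem412}, which says $\mathcal{SO}_{\gamma, \cL(L,M,V)}$ depends on $V$ only through its dimension, and group terms by $t = m - \dim V$; Lemma~\ref{lem415} identifies the number of subspaces of $\overline{M}$ of dimension $m-t$ as $d_t = \#\mathrm{Gr}_\kappa(t, m)$. This yields the displayed refinement for $\mathcal{SO}_{\gamma, (k_1, \ldots, k_{n-m})}$. Substituting into Proposition~\ref{stra1} then produces the desired formula.

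The argument is essentially bookkeeping on top of the preparatory lemmas, and the only substantive step is the finite additivity of $|\omega_{\chi_\gamma}^{\mathrm{ld}}|$ over the stratification, which is immediate from the $\pi$-adic openness of each $O_{\gamma, \cL(L,M,V)}$ and the finiteness of the Grassmannian over $\kappa$. No serious obstacle is expected; the only mild point of care is the range of $t$, which is dictated by Lemma~\ref{lem411} and matches the upper limit $n-m$ appearing in the statement.
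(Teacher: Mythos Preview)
Your proposal is correct and follows exactly the approach of the paper, which simply states that the result follows by plugging Lemma~\ref{lem412} and Lemma~\ref{lem415} into Proposition~\ref{stra1}. You have spelled out the finite additivity and the stratification by $V$ more explicitly than the paper does, but the argument is the same.
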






\subsection{Reduction on \texorpdfstring{$\mathcal{SO}_{\gamma,\mathrm{Hom}(L,\pi^{k}L)}$}{SO}}\label{sec44}
In this subsection, we will make a relation between $\mathcal{SO}_{\gamma,\mathrm{Hom}(L,\pi^{k}L)}$ and $\mathcal{SO}_{\gamma^{(k)}}$ for a certain element $\gamma^{(k)}\in \End(L)$ with an integer $k\geq 1$. 

\subsubsection{Description of $\mathcal{SO}_{\gamma, \mathrm{Hom}(L,\pi^k L)}$}
We endow the $\mfo$-scheme structure on $\mathrm{Hom}(L,\pi^k L)$ as follows:
Define a functor  $\mathrm{Hom}(L,\pi^k L)$ on the category of flat $\mfo$-algebras to the category of sets such that 
\[
\mathrm{Hom}(L, \pi^k L)(R)=\mathrm{Hom}_R(L\otimes R, \pi^k L \otimes R). 
\]
Then $\mathrm{Hom}(L,\pi^k L)$ is uniquely represented by the affine space over $\mfo$ of dimension $n^2$.
We can then define $\mathcal{SO}_{\gamma, \mathrm{Hom}(L,\pi^k L)}$ by using the same argument used in the definition of $\mathcal{SO}_{\gamma, \cL(L,M)}$ (cf. Section \ref{sec42st}) as follows;
\[
\mathcal{SO}_{\gamma, \mathrm{Hom}(L,\pi^k L)}=\int_{O_{\gamma, \mathrm{Hom}(L,\pi^k L)}}|\omega_{\chi_{\gamma}}^{\mathrm{ld}}|
\]
where \[
O_{\gamma, \mathrm{Hom}(L,\pi^k L)}=\bigsqcup_{M \subset \pi^{k}L}O_{\gamma,\cL(L,M)}.
\]

\subsubsection{Another description of $\mathcal{SO}_{\gamma, \mathrm{Hom}(L,\pi^k L)}$ using a different measure}\label{subsub442}
We keep assuming that $\chi_{\gamma}(x)$ is irreducible  whose reduction $\overline{\chi}_{\gamma}(x)$ modulo $\pi$ is $x^n$.
 We suppose that $\mathrm{ord}(c_n)\geq nk$ in this subsection.
Then $\mathrm{ord}(c_i)\geq ik$ for all $i$ by Newton polygon. 
We define another functor $\mathbb{A}_{\mathrm{Hom}(L, \pi^k L)}$  on the category of flat $\mfo$-algebras to the category of sets such that
\[
\mathbb{A}_{\mathrm{Hom}(L, \pi^{k} L)}(R)=\pi^k R \times \pi^{2k} R \times \cdots \times \pi^{nk} R
\]
for a flat $\mfo$-algebra $R$. Then $\mathbb{A}_{\mathrm{Hom}(L, \pi^k L)}$ is represented by the affine space over $\mfo$ of dimension $n$ and $\chi_{\gamma}(x)\in \mathbb{A}_{\mathrm{Hom}(L, \pi^k L)}(\mfo)$.
 Note that $\mathbb{A}_{\mathrm{Hom}(L, \pi^k L)}$ is a subfunctor of $\mathbb{A}_{\mfo}^n$ on the category of flat $\mfo$-algebras, but not a subscheme of $\mathbb{A}_{\mfo}^n$.
Nonetheless, being a subfunctor induces a morphism of schemes $\mathbb{A}_{\mathrm{Hom}(L, \pi^k L)} \longrightarrow \mathbb{A}_{\mfo}^n$.
If we restrict the morphism $\vpi_n$ to $\mathrm{Hom}(L, \pi^k L)$, then we have a well-defined morphism of schemes over $\mfo$ as follows;
\[
\varphi_{\mathrm{Hom}(L, \pi^{k} L)}: \mathrm{Hom}(L, \pi^{k} L) \longrightarrow \mathbb{A}_{\mathrm{Hom}(L, \pi^k L)}.
\]

Let $\omega_{\mathrm{Hom}(L, \pi^{k} L)}$ and $\omega_{\mathbb{A}_{\mathrm{Hom}(L, \pi^{k} L)}}$ be nonzero,  translation-invariant forms on   $\mathfrak{gl}_{n, F}$ and $\mathbb{A}^n_F$,
 respectively, with normalizations
$$\int_{\mathrm{Hom}(L, \pi^{k} L)(\mathfrak{o})}|\omega_{\mathrm{Hom}(L, \pi^{k} L)}|=1 \mathrm{~and~}  \int_{\mathbb{A}_{\mathrm{Hom}(L, \pi^{k} L)}(\mathfrak{o})}|\omega_{\mathbb{A}_{\mathrm{Hom}(L, \pi^{k} L)}}|=1.$$
Let $\omega^{ld}_{(\chi_{\gamma},\mathrm{Hom}(L, \pi^{k} L))}=\omega_{\mathrm{Hom}(L, \pi^{k} L)}/\rho_n^{\ast}\omega_{\mathbb{A}_{\mathrm{Hom}(L, \pi^{k} L)}}$ be a differential on $G_{\gamma}$. 
Here, we refer to Section \ref{measure} for the notion of the quotient of two forms.
 Then we have the following comparison among differentials;
 
 \[
\left\{
  \begin{array}{l }
|\omega_{\mathfrak{gl}_{n, \mathfrak{o}}}|=|\pi|^{kn^2}\cdot|\omega_{\mathrm{Hom}(L, \pi^{k} L)}|;\\
|\omega_{\mathbb{A}^n_{\mathfrak{o}}}|=
|\pi|^{kn(n+1)/2}\cdot |\omega_{\mathbb{A}_{\mathrm{Hom}(L, \pi^{k} L)}}|;\\
|\omega_{\chi_{\gamma}}^{\mathrm{ld}}|=|\pi|^{kn(n-1)/2}|\omega^{ld}_{(\chi_{\gamma},\mathrm{Hom}(L, \pi^{k} L))}|.
    \end{array} \right.
\]
and thus we have
\begin{equation}\label{eq477}
\mathcal{SO}_{\gamma, \mathrm{Hom}(L,\pi^k L)}=\int_{O_{\gamma, \mathrm{Hom}(L,\pi^k L)}}|\omega_{\chi_{\gamma}}^{\mathrm{ld}}|=q^{-kn(n-1)/2}\int_{O_{\gamma, \mathrm{Hom}(L,\pi^k L)}}|\omega^{ld}_{(\chi_{\gamma},\mathrm{Hom}(L, \pi^{k} L))}|.
\end{equation}
Here $\gamma$ does not need to be contained in $\mathrm{Hom}(L,\pi^k L)(\mfo)$. 
As we mentioned in the sentence just below Definition \ref{defsoi},  $\mathcal{SO}_{\gamma, \mathrm{Hom}(L,\pi^k L)}$ depends on $\chi_{\gamma}(x)$, not $\gamma$.

\subsubsection{Comparison  between $\mathcal{SO}_{\gamma, \mathrm{Hom}(L,\pi^k L)}$ and $\mathcal{SO}_{\gamma^{(k)}}$}\label{sec443}
Two schemes $\mathrm{Hom}(L, \pi^{k} L)$   and $\mathbb{A}_{\mathrm{Hom}(L, \pi^k L)}$ can be identified with  $\End(L)$ and $\mathbb{A}^n_{\mfo}$ as $\mfo$-schemes, respectively.
Precisely, for a flat $\mfo$-algebra $R$, 
define
 \[
\left\{
  \begin{array}{l }
\iota_1: \mathrm{Hom}(L, \pi^{k} L)(R)\cong \End(L)(R), ~~~~~~~ f \mapsto \frac{1}{\pi^k} f;\\
\iota_2: \mathbb{A}_{\mathrm{Hom}(L, \pi^{k} L)}(R) \cong \mathbb{A}^n_{\mfo}(R), ~~~~~~~ (\pi^k a_1, \cdots, \pi^{nk} a_n) \mapsto (a_1, \cdots, a_n).
    \end{array} \right.
\]
Then both $\iota_1$ and $\iota_2$ are isomorphisms of $\mfo$-schemes.
The following diagram is now commutative:
\begin{equation}\label{diag48}
\xymatrixcolsep{5pc}\xymatrix{
\mathrm{Hom}(L, \pi^{k} L) \ar[d]^{\iota_1, \cong} \ar[r]^{\vpi_{\mathrm{Hom}(L, \pi^{k} L)}} &  \mathbb{A}_{\mathrm{Hom}(L, \pi^{k} L)} \ar[d]^{\iota_2, \cong} \\ \End(L) \ar[r]^{\varphi_n} &  \mathbb{A}_{\mathfrak{o}}^n
}
\end{equation}

 Let 
 \[
\left\{
  \begin{array}{l }
\chi_{\gamma}(\pi^k x)/\pi^{nk}=x^n+c_{1}^{(k)}x^{n-1}+\cdots +c_{n-1}^{(k)}x+c_n^{(k)} \textit{ with } c_i^{(k)}:=c_i/\pi^{ik}\in \mfo;\\
\textit{$\gamma^{(k)}\in \End(L)$ whose characteristic polynomial (denoted by $\chi_{\gamma^{(k)}}(x)$) is $\chi_{\gamma}(\pi^k x)/\pi^{nk}$}.
    \end{array} \right.
\]
Here, the choice of $\gamma^{(k)}$ is not unique but it does not matter in our situation since our orbital integral  depends on the characteristic polynomial, not an element.

By considering $\chi_{\gamma}(x)$ as an element of $\mathbb{A}_{\mathrm{Hom}(L, \pi^{k} L)}(\mfo)$, 
the image of $\chi_{\gamma}(x)$ under the morphism $\iota_2$ is $\chi_{\gamma^{(k)}}(x)\left(\in \mathbb{A}^n_{\mfo}(\mfo)  \right)$.
The above commutative diagram (\ref{diag48}) then induces  
\begin{equation}\label{eq48}
\int_{O_{\gamma, \mathrm{Hom}(L,\pi^k L)}}|\omega^{ld}_{(\chi_{\gamma},\mathrm{Hom}(L, \pi^{k} L))}|=\int_{O_{\gamma^{(k)}, \End(L)}}|\omega^{ld}_{(\chi_{\gamma^{(k)}})}|\left(=:\mathcal{SO}_{\gamma^{(k)}}\right).
\end{equation}

If we combine Equation (\ref{eq48})  with Equation (\ref{eq477}), then we have the following formula:

\begin{proposition}\label{propredend}
Suppose that $\mathrm{ord}(c_n)\geq nk$ for an integer $k\geq 1$.
Let $\gamma^{(k)}$ be an element of $\End(L)$ whose characteristic polynomial is 
$$\chi_{\gamma^{(k)}}(x):=\chi_{\gamma}(\pi^k x)/\pi^{nk}=x^n+c_{1}^{(k)}x^{n-1}+\cdots +c_{n-1}^{(k)}x+c_n^{(k)},$$
where $c_i^{(k)}:=c_i/\pi^{ik}\in \mfo$.
Then we have
\[
\mathcal{SO}_{\gamma, \mathrm{Hom}(L,\pi^k L)}=q^{-k\cdot \frac{n(n-1)}{2}}\mathcal{SO}_{\gamma^{(k)}}.
\]
\end{proposition}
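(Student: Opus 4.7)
The plan is to derive the proposition by combining two comparisons of differential forms: the first relating the canonical form $\omega_{\chi_{\gamma}}^{\mathrm{ld}}$ on $G_\gamma$ used to define $\mathcal{SO}_{\gamma}$ with the variant form $\omega^{ld}_{(\chi_{\gamma},\mathrm{Hom}(L, \pi^{k} L))}$ adapted to the subfunctor $\mathrm{Hom}(L,\pi^k L)$, and the second transporting the latter integral through the isomorphisms $\iota_1$ and $\iota_2$ to an orbital integral for $\gamma^{(k)}$ on $\End(L)$. These are precisely the content of Equations (\ref{eq477}) and (\ref{eq48}) that appear in the preceding discussion, so the task is to justify each one cleanly and then stitch them together.

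For the first step, I would write down the normalized translation-invariant forms $\omega_{\mathfrak{gl}_{n,\mfo}}$, $\omega_{\mathrm{Hom}(L,\pi^k L)}$, $\omega_{\mathbb{A}^n_{\mfo}}$, $\omega_{\mathbb{A}_{\mathrm{Hom}(L,\pi^{k}L)}}$ explicitly in coordinates. The $n^2$ matrix coordinates on $\mathrm{Hom}(L,\pi^k L)$ are $\pi^k$ times those on $\End(L)$, giving a factor $|\pi|^{kn^2}$; similarly, the coordinates $(c_1,\ldots,c_n)$ on $\mathbb{A}_{\mathrm{Hom}(L,\pi^{k}L)}$ pick up powers $\pi^k,\pi^{2k},\ldots,\pi^{nk}$ relative to those on $\mathbb{A}^n_{\mfo}$, producing $|\pi|^{kn(n+1)/2}$. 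Taking the quotient of the top forms as in Definition \ref{diff1}, the exponents subtract to yield $|\omega_{\chi_\gamma}^{\mathrm{ld}}| = |\pi|^{kn(n-1)/2}|\omega^{ld}_{(\chi_{\gamma},\mathrm{Hom}(L,\pi^k L))}|$, which gives the factor $q^{-kn(n-1)/2}$ in (\ref{eq477}).

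For the second step, I would check that the isomorphisms $\iota_1 : \mathrm{Hom}(L,\pi^k L)\xrightarrow{\sim}\End(L)$, $f\mapsto \pi^{-k}f$, and $\iota_2 : \mathbb{A}_{\mathrm{Hom}(L,\pi^k L)}\xrightarrow{\sim}\mathbb{A}^n_\mfo$, $(\pi^{ik}a_i)\mapsto (a_i)$, intertwine $\varphi_{\mathrm{Hom}(L,\pi^{k}L)}$ with $\varphi_n$, which is the commutative diagram (\ref{diag48}); this amounts to the identity $\chi_{\pi^{-k}f}(x) = \chi_f(\pi^k x)/\pi^{nk}$ for $f\in\mathrm{Hom}(L,\pi^k L)(\mfo)$. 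In particular, $\iota_2$ carries $\chi_\gamma(x)$ to $\chi_{\gamma^{(k)}}(x)$, so $\iota_1$ induces a bijection $O_{\gamma,\mathrm{Hom}(L,\pi^k L)}\to O_{\gamma^{(k)},\End(L)}$, which pulls back the normalized forms on $\End(L)$ and $\mathbb{A}^n_\mfo$ to those on $\mathrm{Hom}(L,\pi^k L)$ and $\mathbb{A}_{\mathrm{Hom}(L,\pi^{k}L)}$ by construction of the normalization. Consequently the quotient differentials agree under $\iota_1^*$, yielding (\ref{eq48}). Combining (\ref{eq477}) and (\ref{eq48}) gives the claimed identity.

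The main obstacle is purely bookkeeping: making sure the normalizations of the four forms are handled consistently so that the $|\pi|$ factors from $\iota_1^*\omega_{\End(L)}$ and $\iota_2^*\omega_{\mathbb{A}^n_\mfo}$ cancel precisely rather than producing a spurious extra power of $q$. The hypothesis $\mathrm{ord}(c_n)\geq nk$ is what ensures via the Newton polygon that each $c_i^{(k)}=c_i/\pi^{ik}$ actually lies in $\mfo$, so that $\chi_{\gamma^{(k)}}(x)\in\mathbb{A}^n_\mfo(\mfo)$ and the orbital integral $\mathcal{SO}_{\gamma^{(k)}}$ on the right-hand side is well defined in the sense of Definition \ref{defsoi}; this point should be flagged before invoking (\ref{eq48}).
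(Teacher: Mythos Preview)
Your proposal is correct and follows essentially the same approach as the paper: the paper's proof consists precisely of combining Equations (\ref{eq477}) and (\ref{eq48}), which were established in the immediately preceding subsections via exactly the differential-form comparison and the commutative diagram (\ref{diag48}) that you describe. Your write-up in fact supplies more explicit detail on the bookkeeping than the paper itself does, and your remark about the Newton polygon guaranteeing $c_i^{(k)}\in\mfo$ matches the paper's comment immediately following the proposition.
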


Here, the Newton polygon of an irreducible polynomial $\chi_{\gamma}(x)$  confirms that $\mathrm{ord}(c_i)\geq ik$ so that $\chi_{\gamma^{(k)}}(x)\in \mathbb{A}^n_{\mfo}(\mfo)$.

\begin{remark}\label{remarkdisc}
Since
\[
\mathrm{ord}(\Delta_{\gamma^{(k)}})=\mathrm{ord}(\Delta_{\gamma})-k\cdot n(n-1),
\]
the above formula gives a reduction of the orbital integral with respect to the order of the discriminant of the characteristic polynomial of $\gamma$.

\end{remark}

\section{Geometric formulation of \texorpdfstring{$\sog$}{SO}}\label{sec5}
The goal of this section is to give a geometric formulation of each stratum and then to give another formulation of $\sog$.
We keep assuming that $\chi_{\gamma}(x)$ is irreducible  whose reduction $\overline{\chi}_{\gamma}(x)$ modulo $\pi$ is $x^n$ in this section.

\subsection{Geometric formulation of each stratum}\label{sec51}

Recall from Section \ref{subsec43} that we defined the scheme $\cL(L,M,V)$, where the type of $M$ is $\left(k_1, \cdots, k_{n-m}  \right)$ and the dimension of $V$ is $\left( m-t\right)$, and that \[
O_{\gamma, \cL(L,M,V)}=\uGr(\mfo)\cap \cL(L,M,V)(\mfo)=\{f\in \cL(L,M,V)(\mfo)| \vpi_n(f)=\vpi_n(\gamma)\}.
\]
We define another functor $\mathbb{A}_{M,V}$  on the category of flat $\mfo$-algebras to the category of sets such that
\[
\mathbb{A}_{M,V}(R)=R^{m-t}\times \prod_{i=1}^{t} \pi^{\min_{0\leq i'\leq i/2}\{\left(\sum\limits_{l=1}^{i'}k_l\right)+i-2i'\}}R \times \prod_{j=1}^{t}\pi^{\left(\sum\limits_{i=1}^{j}k_i\right) +t-j}R\times 
\prod_{j=t+1}^{n-m}\pi^{\sum\limits_{i=1}^{j}k_i} R
\]
for a flat $\mfo$-algebra $R$. Then $\mathbb{A}_{M,V}$ is represented by an affine space over $\mfo$ of dimension $n$. Note that $\mathbb{A}_{M,V}$ is a subfunctor of $\mathbb{A}_{\mfo}^n$ on the category of flat $\mfo$-algebras, but not a subscheme of $\mathbb{A}_{\mfo}^n$.

\begin{remark}\label{rmkchangemea}
In this remark, we will describe a morphism $\varphi_{n,M,V}: \cL(L,M,V) \longrightarrow \mathbb{A}_{M,V}$.
\begin{enumerate}
\item 
If we restrict the morphism $\vpi_n$ to $\cL(L,M,V)$ on the category of flat $\mfo$-algebras, then it induces a well-defined morphism
\[
\varphi_{n,M,V}: \cL(L,M,V)(R) \longrightarrow \mathbb{A}_{M,V}(R).
\]
This morphism is represented by a morphism of schemes over $\mfo$. 
Furthermore, $O_{\gamma, \cL(L,M,V)}\neq \emptyset$ if and only if $\chi_{\gamma}(x)\in \mathbb{A}_{M,V}(\mfo)$. Thus, if $O_{\gamma, \cL(L,M,V)}\neq \emptyset$, then
\[
\varphi_{n,M,V}^{-1}(\chi_{\gamma})(\mfo)=O_{\gamma, \cL(L,M,V)}.
\]

\item Let us explain how to describe the morphism $\varphi_{n,M,V}$ in terms of matrices. 
For simplicity, we suppose that $t=1$. 
We write $m\in \cL(L,M,V)(R)$ as a matrix of the form described in  Equations (\ref{matrixform}) and (\ref{matrixform2}), for an arbitrary $\mfo$-algebra  $R$. 
Then compute  $\varphi_{n,M,V}(m)$ formally. It is of the form 
\[
\begin{pmatrix}r_1, \cdots, r_{m-1}, \pi^1r_{m},  \pi^{k_1}r_{m+1},   \cdots, \pi^{\sum\limits_{i=1}^{n-m}k_i}r_n\end{pmatrix},
\]
where $r_i\in R$.
The image of $m$, under the morphism $\varphi_{n,M,V}$, is then $\begin{pmatrix}
r_1, \cdots, r_n\end{pmatrix}$.  
\end{enumerate}
\end{remark}

We now define an inherent volume form on  $O_{\gamma, \cL(L,M,V)}$ based on the morphism $\varphi_{n,M,V}$ by using a similar argument used in Section \ref{subsub442}.
Recall that we defined  $\widetilde{\cL}(L,M,V)$ in the proof of Lemma \ref{lem413} such that  $\cL(L,M,V)$ is an open subscheme of  $\widetilde{\cL}(L,M,V)$. 
Then $\varphi_{n,M,V}$ can be defined on $\widetilde{\cL}(L,M,V)$ so that we have 
\[
\tilde{\varphi}_{n,M,V}: \widetilde{\cL}(L,M,V) \longrightarrow \mathbb{A}_{M,V}.
\]
Here, we emphasize that the generic fibers of $\tilde{\varphi}_{n,M,V}$, $\widetilde{\cL}(L,M,V)$, and  $\mathbb{A}_{M,V}$  are   $\rho_n$, $\mathfrak{gl}_{n,F}$, and $\mathbb{A}_{F}^n$, respectively.

Let $\omega_{\widetilde{\cL}(L,M,V)}$ and $\omega_{\mathbb{A}_{M,V}}$ be nonzero,  translation-invariant forms on   $\mathfrak{gl}_{n, F}$ and $\mathbb{A}^n_F$,
 respectively, with normalizations
$$\int_{\widetilde{\cL}(L,M,V)(\mathfrak{o})}|\omega_{\widetilde{\cL}(L,M,V)}|=1 \mathrm{~and~}  \int_{\mathbb{A}_{M,V}(\mathfrak{o})}|\omega_{\mathbb{A}_{M,V}}|=1.$$
Let $\omega^{ld}_{(\chi_{\gamma},\widetilde{\cL}(L,M,V))}=\omega_{\widetilde{\cL}(L,M,V)}/\rho_n^{\ast}\omega_{\mathbb{A}_{M,V}}$ be a differential on $G_{\gamma}$. 
Here, we refer to Section \ref{measure} for the notion of the quotient of two forms.
 Then we have the following comparison among differentials;
 
 \[
\left\{
  \begin{array}{l }
|\omega_{\mathfrak{gl}_{n, \mathfrak{o}}}|=|\pi|^{n(k_1+\cdots +k_{n-m})+mt}|\omega_{\widetilde{\cL}(L,M,V)}|;\\
|\omega_{\mathbb{A}^n_{\mathfrak{o}}}|=
|\pi|^{t^2+\sum\limits_{i=1}^{n-m}(n-m-i+1)k_i}|\omega_{\mathbb{A}_{M,V}}|;\\
|\omega_{\chi_{\gamma}}^{\mathrm{ld}}|=|\pi|^{t(m-t)+\sum\limits_{i=1}^{n-m}(m+i-1)k_i}|\omega^{ld}_{(\chi_{\gamma},\widetilde{\cL}(L,M,V))}|.
    \end{array} \right.
\]
This yields the following formulation:
\begin{proposition}\label{prop52}
$\mathcal{SO}_{\gamma, (k_1, \cdots, k_{n-m}),t}$, which is the same as $\mathcal{SO}_{\gamma, \cL(L,M,V)}$, is formulated as follows:
\[
\mathcal{SO}_{\gamma, (k_1, \cdots, k_{n-m}),t}
=\int_{O_{\gamma, \cL(L,M,V)}}|\omega_{\chi_{\gamma}}^{\mathrm{ld}}|=q^{-t(m-t)-\sum\limits_{i=1}^{n-m}(m+i-1)k_i}\int_{O_{\gamma, \cL(L,M,V)}}|\omega^{ld}_{(\chi_{\gamma},\widetilde{\cL}(L,M,V))}|.
\]
\end{proposition}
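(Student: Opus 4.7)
The plan is to deduce the stated formula as a direct consequence of the three measure identities displayed immediately before the proposition. Two of these, namely the source comparison $|\omega_{\mathfrak{gl}_{n,\mfo}}| = |\pi|^{n\sum k_{i} + mt}|\omega_{\widetilde{\cL}(L,M,V)}|$ and the target comparison $|\omega_{\mathbb{A}_{\mfo}^{n}}| = |\pi|^{t^{2} + \sum(n-m-i+1)k_{i}}|\omega_{\mathbb{A}_{M,V}}|$, are the substance; the third identity for $|\omega_{\chi_{\gamma}}^{\mathrm{ld}}|$ and the proposition itself then follow formally.

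For the source comparison, I would invoke the construction of $\widetilde{\cL}(L,M,V)$ from the proof of Lemma \ref{lem413}, which presents it as an affine space over $\mfo$ of dimension $n^{2}$ whose coordinates are the free parameters appearing in (\ref{matrixform})--(\ref{matrixform2}). The natural morphism to $\mathfrak{gl}_{n,\mfo}$ substitutes these parameters into the matrix shape, multiplying each of the $mt$ entries in the first $t$ columns of $X_{m,m}$ by $\pi$ and each of the $n$ entries in row $m+i$ (for $i=1,\ldots,n-m$) by $\pi^{k_{i}}$. The pullback of the normalized Haar measure on $\mathfrak{gl}_{n,\mfo}(\mfo)$ to $\widetilde{\cL}(L,M,V)(\mfo)$ therefore picks up a factor of $|\pi|$ for every such scaling, giving the exponent $mt + n\sum k_{i}$ after re-normalization.

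For the target comparison, $\mathbb{A}_{M,V}$ is by definition a product of $n$ copies of $\mfo$ scaled by the explicit powers of $\pi$ read off from its definition in Section \ref{sec51}, and the $\pi$-exponent in the comparison is the sum of those powers. The third and fourth blocks of the product together contribute $\sum_{j=1}^{n-m}\sum_{i=1}^{j}k_{i} + \binom{t}{2} = \sum_{i=1}^{n-m}(n-m-i+1)k_{i} + t(t-1)/2$. The remaining contribution of $t(t+1)/2$ must come from the second block $\sum_{i=1}^{t}\min_{0 \le i' \le i/2}\{(\sum_{\ell=1}^{i'}k_{\ell}) + i - 2i'\}$, and verifying this collapse by a case analysis on the minimizers is the main combinatorial obstacle; adding everything up then gives $t^{2} + \sum(n-m-i+1)k_{i}$.

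With both comparisons in hand, the third identity for $|\omega_{\chi_{\gamma}}^{\mathrm{ld}}|$ is immediate from Definition \ref{diff1}: both $\omega_{\chi_{\gamma}}^{\mathrm{ld}}$ and $\omega^{ld}_{(\chi_{\gamma},\widetilde{\cL}(L,M,V))}$ are defined as quotients of a top form on the total space by the pullback of a top form on the base of a smooth morphism, and on the common generic fiber $\tilde{\varphi}_{n,M,V}$ restricts to $\rho_{n}$, so their ratio in absolute value is $|\pi|^{(mt + n\sum k_{i}) - (t^{2} + \sum(n-m-i+1)k_{i})} = |\pi|^{t(m-t) + \sum(m+i-1)k_{i}}$. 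Integrating both sides of this identity over $O_{\gamma,\cL(L,M,V)}$ and recalling the definition $\mathcal{SO}_{\gamma,(k_{1},\ldots,k_{n-m}),t} = \int_{O_{\gamma,\cL(L,M,V)}}|\omega_{\chi_{\gamma}}^{\mathrm{ld}}|$ from Section \ref{subsec43} completes the proof.
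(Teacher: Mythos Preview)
Your proposal is correct and takes essentially the same approach as the paper: the proposition is stated as a formal consequence of the three measure comparison identities displayed immediately before it, and the paper gives no separate proof beyond the phrase ``This yields the following formulation.'' You go a step further by sketching how the source and target comparisons arise from the explicit coordinate descriptions of $\widetilde{\cL}(L,M,V)$ and $\mathbb{A}_{M,V}$, which the paper leaves implicit.
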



\begin{remark}\label{rmk53}
We can also work with $\cL(L,M)$ in order to reformulate $\mathcal{SO}_{\gamma, \cL(L,M)}$ by using the same argument used in the above. Since the procedure is  identical but simpler than the above case, we just list constructions and results.

\begin{enumerate}
\item The functor $\widetilde{\cL}(L,M)$ with 
$$\widetilde{\cL}(L,M)(R)=\{f|\textit{$f:L\otimes R\rightarrow M\otimes R $ is  $R$-linear}\}$$
 for a flat $\mfo$-algebra $R$ is represented by an affine space over $\mfo$ of dimension $n^2$ such that $\cL(L,M)$ is an open subscheme of $\widetilde{\cL}(L,M)$.
 
 \item The functor $\mathbb{A}_{M}$ with
\[
\mathbb{A}_{M}(R)=R^{m}\times  
\prod_{j=1}^{n-m}\pi^{\sum\limits_{i=1}^{j}k_i} R
\]
for a flat $\mfo$-algebra $R$
is represented by an affine space over $\mfo$ of dimension $n$.

\item The morphisms
 \[
\left\{
  \begin{array}{l }
\varphi_{n,M}: \cL(L,M)(R) \longrightarrow \mathbb{A}_{M}(R);\\
\tilde{\varphi}_{n,M}: \widetilde{\cL}(L,M)(R) \longrightarrow \mathbb{A}_{M}(R)
    \end{array} \right.
\]
for a flat $\mfo$-algebra $R$
are represented by  morphisms of schemes over $\mfo$ such that 
 \[
\left\{
  \begin{array}{l }
\varphi_{n,M}^{-1}(\chi_{\gamma})(\mfo)=O_{\gamma, \cL(L,M)};\\
\textit{the generic fibers of $\tilde{\varphi}_{n,M}$, $\widetilde{\cL}(L,M)$, and $\mathbb{A}_{M}$ are $\rho_n$, $\mathfrak{gl}_{n,F}$, and $\mathbb{A}_{F}^n$, respectively.}
    \end{array} \right.
\]

\item Let $\omega_{\widetilde{\cL}(L,M)}$ and $\omega_{\mathbb{A}_{M}}$ be nonzero,  translation-invariant forms on   $\mathfrak{gl}_{n, F}$ and $\mathbb{A}^n_F$,
 respectively, with normalizations
$$\int_{\widetilde{\cL}(L,M)(\mathfrak{o})}|\omega_{\widetilde{\cL}(L,M)}|=1 \mathrm{~and~}  \int_{\mathbb{A}_{M}(\mathfrak{o})}|\omega_{\mathbb{A}_{M}}|=1.$$
Let $\omega^{ld}_{(\chi_{\gamma},\widetilde{\cL}(L,M))}=\omega_{\widetilde{\cL}(L,M)}/\rho_n^{\ast}\omega_{\mathbb{A}_{M}}$ be a differential on $G_{\gamma}$. 
 Then we have the following comparison among differentials;
 
 \[
\left\{
  \begin{array}{l }
|\omega_{\mathfrak{gl}_{n, \mathfrak{o}}}|=|\pi|^{n(k_1+\cdots +k_{n-m})}|\omega_{\widetilde{\cL}(L,M)}|;\\
|\omega_{\mathbb{A}^n_{\mathfrak{o}}}|=
|\pi|^{\sum\limits_{i=1}^{n-m}(n-m-i+1)k_i} |\omega_{\mathbb{A}_{M}}|;\\
|\omega_{\chi_{\gamma}}^{\mathrm{ld}}|=|\pi|^{\sum\limits_{i=1}^{n-m}(m+i-1)k_i}|\omega^{ld}_{(\chi_{\gamma},\widetilde{\cL}(L,M))}|.
    \end{array} \right.
\]
\end{enumerate}
\end{remark}
We finally have the following formulation:
\begin{proposition}\label{prop54}
$\mathcal{SO}_{\gamma, (k_1, \cdots, k_{n-m})}$, which is the same as $\mathcal{SO}_{\gamma, \cL(L,M)}$, is formulated as follows:
\[\mathcal{SO}_{\gamma, (k_1, \cdots, k_{n-m})}
=\int_{O_{\gamma, \cL(L,M)}}|\omega_{\chi_{\gamma}}^{\mathrm{ld}}|=q^{-\sum\limits_{i=1}^{n-m}(m+i-1)k_i}\int_{O_{\gamma, \cL(L,M)}}|\omega^{ld}_{(\chi_{\gamma},\widetilde{\cL}(L,M))}|.
\]
\end{proposition}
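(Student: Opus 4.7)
The proposition is a bookkeeping identity which reduces, via the three comparisons of differentials stated in Remark \ref{rmk53}, to an exponent computation. Since the last comparison is precisely what appears in the desired formula after integrating over $O_{\gamma,\cL(L,M)}$, the plan is simply to justify Remark \ref{rmk53} and then to combine the pieces.

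First, I would fix a basis $(e_1,\ldots,e_n)$ of $L$ such that $(e_1,\ldots,e_m,\pi^{k_1}e_{m+1},\ldots,\pi^{k_{n-m}}e_n)$ is a basis of $M$, as in the proof of Lemma \ref{lemred1}. With respect to this basis, $\widetilde{\cL}(L,M)(\mfo)$ is identified, via the natural morphism $\widetilde{\cL}(L,M)\to\End(L)$, with the $\mfo$-submodule of $\mathrm{M}_n(\mfo)$ consisting of matrices of the shape described in Equation (\ref{matrixform}) (without the invertibility condition on $X'$). The $(m+j)$-th row is $\pi^{k_j}$ times a free row of length $n$; comparing translation-invariant top forms normalized to have total volume $1$ on $\mathfrak{gl}_n(\mfo)$ and on $\widetilde{\cL}(L,M)(\mfo)$ yields
\[
|\omega_{\mathfrak{gl}_{n,\mfo}}|=|\pi|^{n(k_1+\cdots+k_{n-m})}\,|\omega_{\widetilde{\cL}(L,M)}|,
\]
which is the first line of Remark \ref{rmk53}.

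Next I would do the analogous computation on the target. The $\mfo$-points $\mathbb{A}_M(\mfo)$ sit inside $\mathbb{A}^n_{\mfo}(\mfo)$ as the subgroup $\mfo^m\times\prod_{j=1}^{n-m}\pi^{\sum_{i=1}^{j}k_i}\mfo$, so the ratio of normalized translation-invariant forms is $|\pi|^{\sum_{j=1}^{n-m}\sum_{i=1}^{j}k_i}$. Reversing the order of summation gives $\sum_{j=1}^{n-m}\sum_{i=1}^{j}k_i=\sum_{i=1}^{n-m}(n-m-i+1)k_i$, which is the second line of Remark \ref{rmk53}. Dividing the first relation by the pullback of the second, and using the definitions $\omega^{\mathrm{ld}}_{\chi_\gamma}=\omega_{\mathfrak{gl}_{n,\mfo}}/\rho_n^{*}\omega_{\mathbb{A}^n_{\mfo}}$ and $\omega^{\mathrm{ld}}_{(\chi_\gamma,\widetilde{\cL}(L,M))}=\omega_{\widetilde{\cL}(L,M)}/\rho_n^{*}\omega_{\mathbb{A}_M}$ (together with the compatibility of the two morphisms $\tilde{\varphi}_{n,M}$ and $\varphi_n$ noted in Remark \ref{rmk53}(3)), the resulting exponent is
\[
n\sum_{i=1}^{n-m}k_i-\sum_{i=1}^{n-m}(n-m-i+1)k_i=\sum_{i=1}^{n-m}(m+i-1)k_i,
\]
which establishes the third line.

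Finally, I would integrate both sides of the last identity over the open subset $O_{\gamma,\cL(L,M)}\subset \widetilde{\cL}(L,M)(\mfo)$ (the openness, used to pass the identity of differentials through the integral, is exactly the content of Lemma \ref{lem413} together with the open inclusion $\cL(L,M)\hookrightarrow \widetilde{\cL}(L,M)$); by Lemma \ref{lemred1} this integral is $\mathcal{SO}_{\gamma,(k_1,\ldots,k_{n-m})}$, proving the proposition. There is no real obstacle; the only point requiring mild care is checking that the generic fiber of $\tilde{\varphi}_{n,M}$ agrees with $\rho_n$ so that $\omega^{\mathrm{ld}}_{(\chi_\gamma,\widetilde{\cL}(L,M))}$ is a bona fide section of $\bigwedge^{\mathrm{top}}\Omega_{G_\gamma/F}$ on which the integral makes sense.
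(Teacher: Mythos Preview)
Your proposal is correct and matches the paper's approach exactly: the paper derives Proposition \ref{prop54} directly from the three differential comparisons in Remark \ref{rmk53}(4), and you have simply spelled out the exponent bookkeeping that verifies those comparisons and then integrated. One tiny referencing nit: the open inclusion $\cL(L,M)\hookrightarrow\widetilde{\cL}(L,M)$ you need is Remark \ref{rmk53}(1), not Lemma \ref{lem413} (which concerns $\cL(L,M,V)$).
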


\begin{remark}
Based on Propositions \ref{stra1}, \ref{stra2}  and Propositions \ref{prop52}, \ref{prop54},
if we can compute $\int_{O_{\gamma, \cL(L,M,V)}}|\omega^{ld}_{(\chi_{\gamma},\widetilde{\cL}(L,M,V))}|$ or $\int_{O_{\gamma, \cL(L,M)}}|\omega^{ld}_{(\chi_{\gamma},\widetilde{\cL}(L,M))}|$, then we obtain the precise formula for $\sog$.

If the scheme $\varphi_{n,M}^{-1}(\chi_{\gamma})$ or $\varphi_{n,M,V}^{-1}(\chi_{\gamma})$ is smooth, then this integral  is $\frac{\#\varphi_{n,M}^{-1}(\chi_{\gamma})(\kappa)}{q^{n^2-n}}$ or 
$\frac{\#\varphi_{n,M,V}^{-1}(\chi_{\gamma})(\kappa)}{q^{n^2-n}}$, respectively, by \cite[Theorem 2.2.5]{Weil}.
However, these two schemes are far from smoothness in the general situation.
In order to obtain a smooth morphism, there will probably be required more congruence conditions beyond our two stratifications.

\end{remark}

In the following two subsections, we treat two extreme cases; when $m=n-1$ and when $m=0$.

\subsection{The case when \texorpdfstring{$m=n-1$}{m=n-1}}\label{subsec52}
Suppose that the type of $M$ is $(k_1)$ so that $m=n-1$.
In this case, $\mathbb{A}_M(R)=R^{n-1}\times \pi^{k_1} R$ for an $\mfo$-algebra $R$. 
\begin{theorem}\label{thm55}
The scheme $\vpi_{n,M}^{-1}(\chi_{\gamma})$ is smooth over $\mfo$.
\end{theorem}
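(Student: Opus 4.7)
The plan is to apply the Jacobian criterion to $\varphi_{n,M}\colon \cL(L,M) \to \mathbb{A}_M$. Since $\cL(L,M)$ is smooth over $\mfo$ of relative dimension $n^2$ and $\mathbb{A}_M$ is a relative affine space of dimension $n$, smoothness of the fiber $\varphi_{n,M}^{-1}(\chi_\gamma)$ reduces to showing that $d\varphi_{n,M}$ is surjective at every closed point. The generic fiber is already smooth because $\gamma$ is regular semisimple, so the verification happens only at $\kappa$-points of the special fiber.

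Writing $X = \left(\begin{smallmatrix} A & b \\ \pi^{k_1} c & \pi^{k_1} d \end{smallmatrix}\right)$ as in \eqref{matrixform} with $m = n-1$, a $\kappa$-point reduces to $\overline{X} = \left(\begin{smallmatrix} \overline{A} & \overline{b} \\ 0 & 0 \end{smallmatrix}\right)$ with $\chi_{\overline{X}}(x) = x\cdot \chi_{\overline{A}}(x) = x^n$, forcing $\chi_{\overline{A}}(x) = x^{n-1}$. The crucial structural input is that invertibility of $\overline{X}'$ (built into the definition of $\cL(L,M)$) forces $\overline{X}$ to be regular nilpotent: the first $n-1$ rows $[\overline{A}\mid \overline{b}]$ of $\overline{X}'$ must have rank $n-1$, so $\mathrm{rank}(\overline{X}) = n-1$ and $\dim \ker \overline{X} = 1$, making $\overline{X}$ a single Jordan block of size $n$. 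A direct consequence is that $\overline{A}$ itself is regular nilpotent in $\mathrm{M}_{n-1}(\kappa)$, so its minimal polynomial equals $x^{n-1}$.

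I would then split $d\varphi_{n,M}$ along $\mathbb{A}_M \cong \mathbb{A}_\mfo^{n-1} \times \pi^{k_1} \mathbb{A}_\mfo^1$. Using the classical identity $d c_i(X)(Y) = -\mathrm{tr}(B_{n-i}(X)\,Y)$ with $\mathrm{adj}(xI - X) = \sum_l B_l(X) x^l$, the reduction $\chi_{\overline{X}} = x^n$ collapses the recursion to $B_{n-i}(\overline{X}) = \overline{X}^{i-1}$, so the first $n-1$ components of $d\varphi_{n,M}$ become modulo $\pi$ the functionals $\overline{Y} \mapsto -\mathrm{tr}(\overline{X}^{i-1}\overline{Y})$ on variations $\overline{Y} = \left(\begin{smallmatrix}\delta\overline{A} & \delta\overline{b} \\ 0 & 0\end{smallmatrix}\right)$ (the bottom row of $\overline{Y}$ dies because of the $\pi^{k_1}$ factor). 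For the rescaled last component $c_n/\pi^{k_1} = (-1)^n \det X'$, the differential at $\overline{X}$ is $(-1)^n \mathrm{tr}(\mathrm{adj}(\overline{X}')\,\delta\overline{X}')$, and $\mathrm{adj}(\overline{X}')$ being invertible ensures a nontrivial pairing with $(\delta\overline{c}, \delta\overline{d})$.

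The main obstacle is linear independence of these first $n-1$ functionals on the constrained variation space. A relation $P(\overline{X}) = \sum_{i=0}^{n-2} \alpha_i \overline{X}^i$ pairs trivially with every such $\overline{Y}$ iff the first $n-1$ columns of $P(\overline{X})$ vanish, equivalently $P(\overline{X})|_W = 0$ for $W = \mathrm{span}(\overline{e}_1,\dots,\overline{e}_{n-1})$. Since $W$ is $\overline{X}$-stable with $\overline{X}|_W = \overline{A}$, this becomes $P(\overline{A}) = 0$, which forces $P = 0$ because $\deg P \leq n-2$ is strictly less than the degree $n-1$ of the minimal polynomial of $\overline{A}$; this gives full rank $n-1$ in the first block. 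Since $(\delta\overline{c}, \delta\overline{d})$ variations affect only the last component (via the invertible $\mathrm{adj}(\overline{X}')$) and not the first $n-1$, the full Jacobian has rank $n$ at every closed point, and smoothness follows.
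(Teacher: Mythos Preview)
Your proof is correct and follows the same overall strategy as the paper: reduce to the special fiber (the generic fiber is smooth since $\gamma$ is regular semisimple), then verify surjectivity of the tangent map at every point of $\varphi_{n,M}^{-1}(\chi_\gamma)(\bar\kappa)$, using that the $(n-1)\times(n-1)$ block $\overline{A}$ is regular nilpotent. The execution of the surjectivity step, however, is genuinely different. The paper (Lemma~\ref{lemsm}) normalizes $\overline{A}$ to Jordan form, reads off that the corner entries $z_1,y_{n-1}$ are units, and then exhibits an explicit $n$-dimensional subspace of tangent vectors (varying only the first column) whose image is visibly $(a_1,\dots,a_{n-1},c_1 y_{n-1})$. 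Your route is coordinate-free: you invoke the adjugate identity $dc_i(\overline X)(\overline Y)=-\mathrm{tr}(\overline X^{\,i-1}\overline Y)$, observe that a linear dependence $\sum\alpha_i\,\mathrm{tr}(\overline X^{\,i-1}\overline Y)=0$ for all $\overline Y$ with vanishing last row forces $P(\overline A)=0$ with $\deg P\le n-2$, and conclude via the minimal polynomial of $\overline A$; the last coordinate is handled separately through $\det X'$ and the invertibility of $\mathrm{adj}(\overline{X}')$. Your argument is cleaner conceptually and avoids any choice of basis, while the paper's explicit form has the side benefit of feeding directly into the point count of Corollary~\ref{corcounting_1}. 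One small remark: the phrase ``a direct consequence is that $\overline A$ itself is regular nilpotent'' is correct but deserves a one-line justification (e.g.\ $\mathrm{rank}[\overline A\,|\,\overline b]=n-1$ forces $\mathrm{rank}\,\overline A\ge n-2$, and nilpotency gives the reverse inequality), which is exactly the content of the paper's Lemma~\ref{lem411}.
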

\begin{proof}
It suffices to show that $\vpi_{n,M}^{-1}(\chi_{\gamma})$ is contained in the smooth locus of the morphism $\varphi_{n,M}$ since smoothness is stable under the base change.
By \cite[Lemma 5.5.1]{GY} combined with the fact that $\cL(L,M)$ is flat over $\mfo$, it suffices to show that both the generic fiber and the special fiber of $\vpi_{n,M}$ are smooth at all points of $\vpi_{n,M}^{-1}(\chi_{\gamma})$.

The generic fiber of $\vpi_{n,M}$ is the same as the generic fiber of $\vpi_n$, which is  $\rho_n$. 
It suffices to show that $\vpi_{n,M}\otimes F$ is smooth at any element of $\vpi_{n,M}^{-1}(\chi_{\gamma})(\bar{F})$.
Since an element of $\vpi_{n,M}^{-1}(\chi_{\gamma})(\bar{F})$ is conjugate to $\gamma$ so as to be regular, $\rho_n$ is smooth by \cite[Theorem 4.20]{Hum}. 
The case of the special fiber is treated in the following lemma, which completes the proof of the theorem.
\end{proof}

\begin{lemma}\label{lemsm}
The special fiber of $\vpi_{n,M}$, formulated as follows,
\[\varphi_{n,M}\otimes \kappa: \cL(L,M)\otimes \kappa \longrightarrow \mathbb{A}_{M}\otimes \kappa\]
 is smooth at all elements of $\vpi_{n,M}^{-1}(\chi_{\gamma})\otimes \kappa$.
\end{lemma}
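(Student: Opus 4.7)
The plan is to show the differential of $\varphi_{n,M}\otimes\kappa$ is surjective at an arbitrary $\bar\kappa$-point $\bar X_{0}$ of the fiber $\varphi_{n,M}^{-1}(\chi_\gamma)\otimes\kappa$, which suffices for smoothness there since $\cL(L,M)\otimes\kappa$ is itself smooth of the correct dimension. Writing elements of $\cL(L,M)$ in the block form of Equation~(\ref{matrixform}) with $m=n-1$, so $X=\begin{pmatrix}A&y\\ \pi^{k_1}c&\pi^{k_1}d\end{pmatrix}$, a Schur complement computation yields
\[
\chi_X(x)\;=\;x\,\chi_A(x)\;-\;\pi^{k_1}\bigl[d\,\chi_A(x)+c\,\mathrm{adj}(xI_{n-1}-A)\,y\bigr].
\]
Rescaling the last coordinate on $\mathbb{A}_M$ as $c_n=\pi^{k_1}c_n'$, the reduction of $\varphi_{n,M}$ is then the morphism
\[
(A,y,c,d)\;\longmapsto\;\bigl(a_1(A),\dots,a_{n-1}(A),\,-d\,\chi_A(0)-c\,\mathrm{adj}(-A)\,y\bigr),
\]
where $a_i(A)$ denote the coefficients of $\chi_A$.

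The crucial step is the observation that the openness condition defining $\cL(L,M)$ (invertibility of $X'$) automatically forces $\bar A$ to be a \emph{regular} nilpotent. Since $\bar X_0$ lies in the fiber over $\chi_\gamma$, whose reduction is $x^n$, we have $\chi_{\bar A}(x)=x^{n-1}$, so $\bar A$ is nilpotent of rank at most $n-2$. Over $\kappa$ the bottom row of $\bar X_0$ vanishes, so the invertibility of $\overline{X'}$ forces the rectangular block $(\bar A\mid\bar y)$ to have rank $n-1$, which in turn gives $\mathrm{rk}(\bar A)\geq n-2$. Hence $\mathrm{rk}(\bar A)=n-2$ exactly, so $\bar A$ consists of a single Jordan block of size $n-1$.

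Granting this, the argument finishes cleanly. Regularity of $\bar A$ makes the Chevalley map $A\mapsto(a_1,\dots,a_{n-1})$ smooth at $\bar A$ (cf.~\cite[Theorem 4.20]{Hum}), so its Jacobian block has maximal rank $n-1$; and the first $n-1$ coordinates of $\varphi_{n,M}\otimes\kappa$ do not involve $y$, $c$, or $d$. It remains to produce an independent direction moving $\bar c_n'$ with $A$ fixed. Substituting $\chi_{\bar A}(0)=0$ into the formula gives $\bar c_n'=-\bar c\,\mathrm{adj}(-\bar A)\,\bar y$, and this scalar equals $\overline{-c_n/\pi^{k_1}}$, which is a nonzero element of $\kappa$ because $\mathrm{ord}(c_n)=k_1$ (the image $X(L)=M$ has index $\pi^{k_1}$ in $L$, so $\mathrm{ord}(\det X)=k_1$). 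In particular $\mathrm{adj}(-\bar A)\,\bar y\neq 0$, so a directional derivative in the $c$ variables alone changes $\bar c_n'$ while leaving $A$ fixed. Assembling the two blocks yields a surjective differential.

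The principal obstacle I anticipate is precisely the observation that the openness condition promotes $\bar A$ from an arbitrary nilpotent to a regular one; without it the Chevalley map could fail to be smooth at $\bar A$, and the first $n-1$ coordinates would not absorb enough degrees of freedom. Everything else is routine block calculation and the standard smoothness of the Chevalley map at regular elements.
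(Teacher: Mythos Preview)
Your proposal is correct and follows the same overall strategy as the paper: show the differential of the special fiber is surjective at each $\bar\kappa$-point of the fiber, using crucially that the upper-left $(n-1)\times(n-1)$ block $\bar A$ is a \emph{regular} nilpotent (which both you and the paper extract from the invertibility of $X'$ together with $\chi_{\bar A}(x)=x^{n-1}$; this is exactly the content of Lemma~\ref{lem411}).

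The difference is one of packaging. The paper puts $\bar A$ into Jordan normal form, then exhibits explicit tangent vectors (supported on the first column of the $A$-block and on the $c$-entry) whose images $(a_1,\dots,a_{n-1},c_1\,y_{n-1})$ visibly span the target; the nonvanishing of $y_{n-1}$ comes directly from the invertibility of $X'$ in those coordinates. You instead use the Schur-complement identity for $\chi_X$ to separate the map into the Chevalley map $A\mapsto(a_1,\dots,a_{n-1})$ (handled by \cite[Theorem~4.20]{Hum} at the regular element $\bar A$) plus the scalar $-\bar c\,\mathrm{adj}(-\bar A)\bar y$, whose nonvanishing you deduce from $\mathrm{ord}(c_n)=k_1$. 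Your version is more structural and avoids a choice of coordinates; the paper's version is entirely self-contained and, as noted in Remark~5.10(2), actually reproves the $\mathfrak{gl}_n$ case of \cite[Theorem~4.20]{Hum} rather than invoking it. Either route is fine.
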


\begin{proof}
It suffices to show that for any $m\in \varphi_{n,M}^{-1}(\chi(\gamma))(\bar{\kappa})$,
the induced map on the Zariski tangent space 
\[
d(\vpi_{n,M})_{\ast, m}: T_m \longrightarrow T_{\vpi_{n,M}(m)}
\] 
is surjective, where $T_m$ is the Zariski tangent space of $ \cL(L,M)\otimes \bar{\kappa}$ at $m$ and $T_{\vpi_{n,M}(m)}$ is the Zariski tangent space of $\mathbb{A}_{M}\otimes \bar{\kappa}$ at $\vpi_{n,M}(m)$.

Using the matrix description given in Equation (\ref{matrixform}), 
we write $m\in \varphi_{n,M}^{-1}(\chi(\gamma))(\bar{\kappa})$ and $X\in T_m$ as the following matrices formally;
 \[
m=\begin{pmatrix}
x &y \\ \pi^{k_1} z & \pi^{k_1} w
\end{pmatrix}   \textit{   and   } X=\begin{pmatrix}
a &b \\ \pi^{k_1} c & \pi^{k_1} d
\end{pmatrix},
\]
 where $x$ and $a$ are matrices of size $(n-1)\times (n-1)$  with entries in $\bar{\kappa}$, 
  $w$ and $d$ are matrices of size $1\times 1$ with entries in $\bar{\kappa}$ and so on, and the matrix $\begin{pmatrix}
x &y \\ z & w
\end{pmatrix}$ is invertible as a matrix of $\mathfrak{gl}_n(\bar{\kappa})$. 
Note that the matrix  $\begin{pmatrix}
a &b \\ c & d
\end{pmatrix}$ does not need to be invertible since $X$ is an element of the  tangent space $T_{m}$.
Since $\cL(L,M)$ is an open subscheme of $\widetilde{\cL}(L,M)$, $T_m$ is also the tangent space of $\widetilde{\cL}(L,M)\otimes \tilde{\kappa}$ at $m$.

Since the characteristic polynomial of $x$ is of the form $x^{n-1}$ and the rank of $x$ is $n-2$ as a matrix in $M_{n-1}(\bar{\kappa})$ by Lemma \ref{lem411}, $x$ is conjugate to the Jordan canonical form having one Jordan component with eigenvalue $0$. 
By change of variables, we may and do assume that $x$ is the Jordan canonical form  described as follows;
\[
x=\begin{pmatrix}
0 &1&\cdots & 0\\
\vdots &\vdots   &\ddots &\vdots  \\
0 &0&\cdots &1\\
0 &0&\cdots &0
 \end{pmatrix}.
\] 
Here we emphasize that such change of basis does not affect on congruence conditions on $m$ and $X$.
Express $z=\begin{pmatrix}
z_1, \cdots, z_{n-1}
\end{pmatrix}$ and $y={}^t\begin{pmatrix}
y_1, \cdots, y_{n-1}
\end{pmatrix}$. Then both $z_1$ and $y_{n-1}$ are nonzero in $\bar{\kappa}$ since 
the matrix  $\begin{pmatrix}
x &y \\ z & w
\end{pmatrix}$ is invertible.

We explain how to compute the image of $X$ under the morphism $d(\vpi_{n,M})_{\ast, m}$ explicitly, based on Remark \ref{rmkchangemea}.(2).
Let us  identify $T_m$ with the subset of $\cL(L,M)(\bar{\kappa}[\epsilon])$, where $\epsilon^2=0$, whose element is of the form $m+\epsilon X$. 
We formally compute $\vpi_{n,M}(m+\epsilon X)$. It consists of $n$ entries; the first is the sum of $1\times 1$ principal  minors of $m+\epsilon X$, the second  is the sum of $2 \times 2$ principal  minors of $m+\epsilon X$, and so on, and the last is the determinant of $m+\epsilon X$, up to the sign $\pm 1$. 
Then it is of   the form $\vpi_{n,M}(m)+\epsilon Y$, where $Y=\begin{pmatrix} f_1, \cdots, f_{n-1}, \pi^{k_1}f_n \end{pmatrix}$ with $f_i\in \bar{\kappa}$, as an element of $\mathbb{A}_{M}(\bar{\kappa}[\epsilon])$. 
The image of $X$ is then $\begin{pmatrix}
f_1, \cdots, f_{n-1}, f_n
\end{pmatrix}\in \bar{\kappa}^n$. 

Our method to prove the surjectivity of $d(\vpi_{n,M})_{\ast, m}$ is to choose a certain subspace of $T_m$ mapping onto $T_{\vpi_{n,M}(m)}$.

Take $X$ such that $b=0, d=0$ and all entries of $a$ and $c$ are zero except for the first column. 
Write ${}^t\begin{pmatrix} a_1, \cdots, a_{n-1}\end{pmatrix}$ (resp. $c_1$) for the first column vector of $a$ (resp. $c$). The image of such $X$ is then $\begin{pmatrix}
a_1, \cdots, a_{n-1}, c_1\cdot y_{n-1} \end{pmatrix}$, where each entry is up to the sign $\pm 1$. 
Since $y_{n-1}$ is a unit in $\bar{\kappa}$, this  shows the surjectivity of the map  $d(\vpi_{n,M})_{\ast, m}$.
\end{proof}

\begin{corollary}\label{corcounting_1}
The order of the set  $\vpi_{n,M}^{-1}(\chi_{\gamma})(\kappa)$ is 
\[
\#\vpi_{n,M}^{-1}(\chi_{\gamma})(\kappa)=
\#\mathrm{GL}_{n-1}(\kappa)\cdot q^{n-1}.
\]
\end{corollary}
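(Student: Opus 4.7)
The plan is to compute the count directly by parametrizing $\kappa$-points of the fiber. Writing $X \in \cL(L,M)(\kappa)$ in the block form of Equation~(\ref{matrixform}) with $\bar{x} := \overline{X}_{m,m}$, $\bar{y}$ the upper-right column, and $(\bar{z},\bar{w})$ the entries of the lower row of the auxiliary matrix $X'$, the openness condition on $\cL(L,M)$ becomes $\bar{X}' \in \mathrm{GL}_n(\kappa)$; the last row of $\bar{X}$ itself is zero because $\pi^{k_1} \equiv 0$ in $\kappa$. A Laplace expansion along this zero last row gives $\chi_{\bar{X}}(t) = t \cdot \chi_{\bar{x}}(t)$ and $\det(X) = \pm\pi^{k_1}\det(X')$. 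Under the parametrization of $\mathbb{A}_M$ by coordinates $(s_1, \ldots, s_n)$ with the embedding $\mathbb{A}_M \hookrightarrow \mathbb{A}^n_{\mfo}$ sending $s_n \mapsto \pi^{k_1} s_n$, the map $\vpi_{n,M}$ sends $X$ to $(c_1,\dots,c_{n-1}, c_n/\pi^{k_1})$, so $s_n$ specializes to $\pm\det(\bar{X}')$. Non-emptiness of the stratum forces $\mathrm{ord}(c_n) = k_1$, so writing $c_n = \pi^{k_1} u$ with $u \in \mfo^{\times}$ produces a fixed nonzero target scalar $\bar{u} \in \kappa^{\times}$.

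The fiber condition $\vpi_{n,M}(X) = \chi_\gamma$ over $\kappa$ thus decomposes into (i) $\chi_{\bar{x}}(t) = t^{n-1}$ and (ii) $\det(\bar{X}') = \pm\bar{u}$, with the openness requirement $\bar{X}' \in \mathrm{GL}_n(\kappa)$ then automatic. Condition~(i) makes $\bar{x}$ nilpotent; Lemma~\ref{lem411} (specialized to $m = n-1$) forces $\mathrm{rank}(\bar{x}) = n-2$, so $\bar{x}$ is regular nilpotent and $\mathrm{GL}_{n-1}(\kappa)$-conjugate to the Jordan block $J_{n-1}$. The centralizer of $J_{n-1}$ in $\mathrm{GL}_{n-1}(\kappa)$ is the unit group of $\kappa[J_{n-1}] \cong \kappa[t]/(t^{n-1})$, of size $(q-1)q^{n-2}$, so the set of admissible $\bar{x}$ has cardinality $\#\mathrm{GL}_{n-1}(\kappa)/((q-1)q^{n-2})$.

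For a fixed such $\bar{x}$, the top $n-1$ rows of $\bar{X}'$ must span a rank $n-1$ subspace of $\kappa^n$ (otherwise $\det\bar{X}' = 0$), equivalently $\bar{y} \notin \mathrm{Im}(\bar{x})$, giving $q^{n-1} - q^{n-2} = q^{n-2}(q-1)$ admissible $\bar{y}$. Then $(\bar{z},\bar{w}) \mapsto \det\bar{X}'$ is a nonzero $\kappa$-linear form on $\kappa^n$ (by Laplace expansion along the last row), so the equation $\det\bar{X}' = \pm\bar{u}$ has exactly $q^{n-1}$ solutions, each automatically making $\bar{X}'$ invertible. Multiplying the three factors yields
$$\frac{\#\mathrm{GL}_{n-1}(\kappa)}{(q-1)q^{n-2}} \cdot q^{n-2}(q-1) \cdot q^{n-1} = \#\mathrm{GL}_{n-1}(\kappa) \cdot q^{n-1}.$$
The only real subtlety I anticipate is the bookkeeping around $s_n$: the openness condition $\det\bar{X}' \neq 0$ is strictly weaker than the fiber condition $\det\bar{X}' = \pm\bar{u}$, and conflating the two would introduce a spurious factor of $q-1$.
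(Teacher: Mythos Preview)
Your proof is correct and follows essentially the same approach as the paper: both identify the fiber conditions as $\chi_{\bar{x}}(t)=t^{n-1}$ together with $\det(\bar{X}')=\pm\bar{u}$, deduce that $\bar{x}$ must be regular nilpotent (so conjugate to $J_{n-1}$), and count the centralizer orbit. The only difference is cosmetic bookkeeping in the final count of $(\bar{y},\bar{z},\bar{w})$: the paper takes $\bar{x}=J_{n-1}$ explicitly, reads off $\det\bar{X}'=\pm z_1 y_{n-1}$, and counts by fixing $z_1\in\kappa^\times$ first; you instead stratify by whether $\bar{y}$ lies in the column space of $\bar{x}$ and then observe that the determinant becomes a nonzero linear form in $(\bar{z},\bar{w})$. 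Both routes give $(q-1)q^{2n-3}$ per regular nilpotent $\bar{x}$.
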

\begin{proof}
We use  Remark \ref{rmkchangemea}.(2) to find the equations defining the smooth variety $\vpi_{n,M}^{-1}(\chi_{\gamma})$. 
We write
 \[
\left\{
  \begin{array}{l }
m=\begin{pmatrix}
x &y \\ \pi^{k_1} z & \pi^{k_1} w
\end{pmatrix}\in \cL(L,M)(\kappa) \textit{ such that } det\begin{pmatrix}
x &y \\ z & w
\end{pmatrix}\neq 0\in\kappa;\\
det(\gamma)=\pi^{k_1} u \textit{ with } u\in \mfo^{\times}.
    \end{array} \right.
\]

Then  
 \[
m\in \vpi_{n,M}^{-1}(\chi_{\gamma})(\kappa)\textit{ if and only if }\left\{
  \begin{array}{l }
\textit{the characteristic polynomial of $x$ is $x^{n-1}$};\\
\textit{the rank of $x$ is $n-2$};\\
det\begin{pmatrix}
x &y \\ z & w
\end{pmatrix}=\bar{u}\in \kappa^{\times}.
    \end{array} \right.
\]
 
In particular the matrix $x$ is conjugate to the Jordan canonical form consisting of  one Jordan block of size $n-1$, which is $J_{n-1}:=
\begin{pmatrix}
0 &1&\cdots & 0\\
\vdots &\vdots   &\ddots &\vdots  \\
0 &0&\cdots &1\\
0 &0&\cdots &0
 \end{pmatrix}.$
If $x$ is the Jordan canonical form  $J_{n-1}$, then 
the only condition that $m$ is   an element of $\vpi_{n,M}^{-1}(\chi_{\gamma})(\kappa)$ is that $z_1\cdot y_{n-1}=\bar{u}$. 
Thus there are $q-1$ choices for $z_1$ and it determines $y_{n-1}$.
All of the other entries in $y, z, w$ are arbitrary. This shows that 
\[
\#\vpi_{n,M}^{-1}(\chi_{\gamma})(\kappa)=(\textit{the number of matrices conjugate to $J_{n-1}$}) \times \#\left(\mathbb{G}_m\times \mathbb{A}_{\kappa}^{2n-3}\right)(\kappa).
\]
Here $\mathbb{A}_{\kappa}^{2n-3}$ is the affine space of dimension $2n-3$ over $\kappa$.
The latter number is $(q-1)q^{2n-3}$. 

We claim that the former number is $\frac{\#\mathrm{GL}_{n-1}(\kappa)}{q^{n-2}(q-1)}$.
 The group $\mathrm{GL}_{n-1}(\kappa)$ acts transitively on the set of matrices conjugate to $J_{n-1}$. 
The stabilizer of $J_{n-1}$ is isomorphic to the group of units in $\kappa[x]/(x^{n-1})$ (since it is the group of $\kappa$ points of the Weil restriction of $\mathbb{G}_m$ with respect to a totally ramified extension of $\mfo$ of degree $n-1$). Thus the order of the stabilizer is $q^{n-1}-q^{n-2}$. This completes our claim.
\end{proof}

This corollary combined with Remark \ref{rmk53}.(4), which is $|\omega_{\chi_{\gamma}}^{\mathrm{ld}}|=|\pi|^{(n-1)k_1}|\omega^{ld}_{(\chi_{\gamma},\widetilde{\cL}(L,M))}|$, yields the following formula:

\begin{corollary}\label{cornm1}
We have
\[c_{(k_1)} \cdot \mathcal{SO}_{\gamma, (k_1)}=\frac{\#\mathrm{GL}_{n}(\kappa)}{(q-1)\cdot q^{n^{2}-1}}.\]
\end{corollary}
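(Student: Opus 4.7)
The plan is to simply chain together four already-established ingredients: the smoothness statement of Theorem \ref{thm55}, the point-count of Corollary \ref{corcounting_1}, the measure comparison from Proposition \ref{prop54} (which is the $t$-free version of Remark \ref{rmk53}.(4)), and the combinatorial count $c_{(k_1)}$ from Corollary \ref{cor49}. There is no substantive obstacle here: the statement is essentially the assembly of Section \ref{subsec52}, with a final elementary identity among orders of finite general linear groups.

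First, since Theorem \ref{thm55} guarantees that $\varphi_{n,M}^{-1}(\chi_\gamma)$ is smooth over $\mfo$ of relative dimension $n^2-n$, Weil's formula \cite[Theorem 2.2.5]{Weil} gives
\[
\int_{O_{\gamma,\cL(L,M)}}\bigl|\omega^{ld}_{(\chi_\gamma,\widetilde{\cL}(L,M))}\bigr| \;=\; \frac{\#\varphi_{n,M}^{-1}(\chi_\gamma)(\kappa)}{q^{n^2-n}} \;=\; \frac{\#\mathrm{GL}_{n-1}(\kappa)\cdot q^{n-1}}{q^{n^2-n}},
\]
where the second equality is Corollary \ref{corcounting_1}. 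Next, specializing Proposition \ref{prop54} to the type $(k_1)$ (so that $m=n-1$ and $\sum_{i=1}^{1}(m+i-1)k_i = (n-1)k_1$) produces
\[
\mathcal{SO}_{\gamma,(k_1)} \;=\; q^{-(n-1)k_1}\cdot\frac{\#\mathrm{GL}_{n-1}(\kappa)\cdot q^{n-1}}{q^{n^2-n}}.
\]

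Then I would insert the value of $c_{(k_1)}$ from Corollary \ref{cor49}, namely $c_{(k_1)}=\tfrac{q^n-1}{q-1}\cdot q^{(n-1)(k_1-1)}$. Multiplying and collecting the powers of $q$, the dependence on $k_1$ disappears and one obtains
\[
c_{(k_1)}\cdot\mathcal{SO}_{\gamma,(k_1)} \;=\; \frac{(q^n-1)\cdot\#\mathrm{GL}_{n-1}(\kappa)}{(q-1)\cdot q^{n^2-n}}.
\]
Finally, the elementary identity $\#\mathrm{GL}_n(\kappa)=(q^n-1)\,q^{n-1}\,\#\mathrm{GL}_{n-1}(\kappa)$ (from the transitive action of $\mathrm{GL}_n(\kappa)$ on $\kappa^n\setminus\{0\}$, whose stabilizer is $\mathrm{GL}_{n-1}(\kappa)\ltimes\kappa^{n-1}$) converts this expression into $\frac{\#\mathrm{GL}_n(\kappa)}{(q-1)\,q^{n^2-1}}$, completing the proof. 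The only place where a minor error could creep in is bookkeeping of the exponents of $q$ in the measure-change factor of Proposition \ref{prop54}, so I would double-check that step carefully before writing up the chain of equalities.
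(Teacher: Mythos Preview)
Your proposal is correct and follows essentially the same approach as the paper's proof: invoke Theorem~\ref{thm55} and Weil's formula together with Corollary~\ref{corcounting_1}, apply the measure comparison of Proposition~\ref{prop54} specialized to type $(k_1)$, multiply by $c_{(k_1)}$ from Corollary~\ref{cor49}, and simplify via the identity $\#\mathrm{GL}_n(\kappa)=(q^n-1)q^{n-1}\#\mathrm{GL}_{n-1}(\kappa)$. The only cosmetic difference is that the paper rewrites $c_{(k_1)}$ in terms of $\#\mathrm{GL}_n(\kappa)/\#\mathrm{GL}_{n-1}(\kappa)$ before multiplying, whereas you apply the group-order identity at the end; the arithmetic is identical.
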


\begin{proof}
\cite[Theorem 2.2.5]{Weil} says that 
\[
\int_{O_{\gamma, \cL(L,M)}}|\omega^{ld}_{(\chi_{\gamma},\widetilde{\cL}(L,M))}|=
\frac{\#\vpi_{n,M}^{-1}(\chi_{\gamma})(\kappa)}{q^{n^2-n}}=
\frac{\#\mathrm{GL}_{n-1}(\kappa)}{q^{(n-1)^2}}
\]
since $\vpi_{n,M}^{-1}(\chi_{\gamma})$ is smooth over $\mfo$ with 
$\vpi_{n,M}^{-1}(\chi_{\gamma})(\mfo)=O_{\gamma, \cL(L,M)}$ and thus 
$\omega^{ld}_{(\chi_{\gamma},\widetilde{\cL}(L,M))}$ is a differential of top degree on $\vpi_{n,M}^{-1}$ over $\mfo$.
Here, the dimension of the special fiber of $\vpi_{n,M}^{-1}(\chi_{\gamma})$ is  $n^2-n$.

Therefore, Proposition \ref{prop54} yields
\[
\mathcal{SO}_{\gamma, (k_1)}=\int_{O_{\gamma, \cL(L,M)}}|\omega_{\chi_{\gamma}}^{\mathrm{ld}}|=q^{-(n-1)k_1}\int_{O_{\gamma, \cL(L,M)}}|\omega^{ld}_{(\chi_{\gamma},\widetilde{\cL}(L,M))}|=
\frac{\#\mathrm{GL}_{n-1}(\kappa)}{ q^{(n-1)^2+k_1(n-1)}}.
\]
From  Corollary \ref{cor49}, we have 
\[
c_{(k_1)}=\frac{q^{n}-1}{q-1}\cdot q^{(n-1)(k_{1}-1)}=\frac{\#\mathrm{GL}_{n}(\kappa)}{\#\mathrm{GL}_{n-1}(\kappa)\cdot (q-1)}\cdot q^{(n-1)(k_{1}-2)}.
\]
The desired formula is the product of these two.
\end{proof}

\begin{remark}
\begin{enumerate}
\item If $\mathrm{ord}(c_{n})=k_1=1$, then $c_{(k_1)}\cdot \mathcal{SO}_{\gamma, (k_1)}$ in the above corollary is the same as $\sog$. The formula for $\sog$ in this case is well known in the literature, such as \cite{Kot05}, which is the same as our formula.

\item \cite[Theorem 4.20]{Hum} proves that the smooth locus of $\rho_n$, over a field, is the same as the regular locus. 
Indeed, this theorem can also be proved by using the same argument used in the proof of  Lemma \ref{lemsm} at least in the case of $\mathfrak{gl}_n$.
Therefore, the proof of  Lemma \ref{lemsm} provides another proof of the well known fact, \cite[Theorem 4.20]{Hum} in the case of  $\mathfrak{gl}_n$.
\end{enumerate}
\end{remark}

\subsection{The case when \texorpdfstring{$m=0$}{m=0}}
Suppose that $m=0$ and thus the type of $M$ is $(k_1, \cdots, k_n)$ with $k_1\geq 1$.
In this case, $\mathbb{A}_M(R)= \pi^{\sum\limits_{i=1}^{1}k_i} R\times \cdots \times \pi^{\sum\limits_{i=1}^{n}k_i}R$ for an $\mfo$-algebra $R$ with $k_1\leq \cdots \leq k_n$,  as given in Remark \ref{rmk53}.(2).
Recall from Proposition \ref{propredend} that 
 \[
\left\{
  \begin{array}{l }
\chi_{\gamma^{(k_1)}}(x):=\chi_{\gamma}(\pi^{k_1} x)/\pi^{k_1n}=x^n+c_{1}^{(k_1)}x^{n-1}+\cdots +c_{n-1}^{(k_1)}x+c_n^{(k_1)} \textit{ with }c_i^{(k_1)}:=c_i/\pi^{ik_1}\in \mfo;\\
\textit{$\gamma^{(k_1)}\in\glno(\mfo)$ whose characteristic polynomial is $\chi_{\gamma^{(k_1)}}(x)$.}
    \end{array} \right.
\]

\begin{proposition}\label{propendred}
We have
\[
\mathcal{SO}_{\gamma, (k_1, \cdots, k_n)}=q^{-k_1\cdot \frac{n(n-1)}{2}}\mathcal{SO}_{\gamma^{(k_1)}, (k_1-k_1, \cdots, k_n-k_1)}.
\]
\end{proposition}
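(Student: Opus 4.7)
The plan is to adapt the scaling argument of Proposition \ref{propredend} to the single stratum $\cL(L,M)$ in place of the entire $\mathrm{Hom}(L,\pi^{k_1}L)$. Since $k_1 \le k_i$ for every $i$, the sublattice $M$ lies in $\pi^{k_1}L$, so $M' := \pi^{-k_1}M$ is a well-defined sublattice of $L$ of type $(k_1-k_1,\, k_2-k_1,\, \ldots,\, k_n-k_1)$ (leading zero entries just indicate that some basis vectors of $L$ already lie in $M'$). The restriction of $\iota_1 : f \mapsto \pi^{-k_1}f$ from Diagram (\ref{diag48}) then gives an isomorphism of $\mfo$-schemes $\widetilde{\cL}(L,M) \xrightarrow{\sim} \widetilde{\cL}(L,M')$ carrying the open subscheme $\cL(L,M)$ bijectively onto $\cL(L,M')$.

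On the base, the scaling of coefficients $(c_1,\ldots,c_n) \mapsto (c_1/\pi^{k_1},\ldots,c_n/\pi^{nk_1})$ (i.e.\ the map $\iota_2$) induces an isomorphism $\mathbb{A}_M \xrightarrow{\sim} \mathbb{A}_{M'}$ of $\mfo$-schemes that sends $\chi_\gamma$ to $\chi_{\gamma^{(k_1)}}$, because the defining congruences $c_i \in \pi^{\sum_{j\le i}k_j}\mfo$ of $\mathbb{A}_M$ are equivalent to $c_i/\pi^{ik_1} \in \pi^{\sum_{j\le i}(k_j-k_1)}\mfo$ defining $\mathbb{A}_{M'}$. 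In the intrinsic coordinates on $\widetilde{\cL}(L,M), \widetilde{\cL}(L,M'), \mathbb{A}_M, \mathbb{A}_{M'}$ (coordinates that identify each with an affine space over $\mfo$), both $\iota_1$ and $\iota_2$ become the identity map. This will yield the analogue of Diagram (\ref{diag48}) restricted to the stratum, and hence a bijection
\[
\iota_1 : O_{\gamma,\cL(L,M)} \xrightarrow{\sim} O_{\gamma^{(k_1)},\cL(L,M')}.
\]

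Next I would verify that $\iota_1$ and $\iota_2$ identify the respective $\mfo$-points in a measure-preserving way: a direct computation using the comparison formulas of Remark \ref{rmk53}(4) shows $\iota_1^*\omega_{\widetilde{\cL}(L,M')} = \omega_{\widetilde{\cL}(L,M)}$ and $\iota_2^*\omega_{\mathbb{A}_{M'}} = \omega_{\mathbb{A}_M}$, so by functoriality of the quotient construction
\[
\iota_1^*\omega^{\mathrm{ld}}_{(\chi_{\gamma^{(k_1)}},\widetilde{\cL}(L,M'))} = \omega^{\mathrm{ld}}_{(\chi_\gamma,\widetilde{\cL}(L,M))}.
\]
Integrating over the matched orbits then gives
\[
\int_{O_{\gamma,\cL(L,M)}}\bigl|\omega^{\mathrm{ld}}_{(\chi_\gamma,\widetilde{\cL}(L,M))}\bigr| \;=\; \int_{O_{\gamma^{(k_1)},\cL(L,M')}}\bigl|\omega^{\mathrm{ld}}_{(\chi_{\gamma^{(k_1)}},\widetilde{\cL}(L,M'))}\bigr|.
\]

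Finally I would read off the remaining factor from the $\pi$-exponents supplied by Proposition \ref{prop54}. For the left-hand stratum, with $m=0$, that exponent is $-\sum_{i=1}^n(i-1)k_i$. For the right-hand stratum, writing $k_1=\cdots=k_j<k_{j+1}$ so that dropping the $j$ leading zero entries produces an effective type of length $n-j$ with $m'=j$, the exponent is $-\sum_{l=1}^{n-j}(j+l-1)(k_{j+l}-k_1) = -\sum_{i=1}^n(i-1)(k_i-k_1)$. Their difference is exactly $-k_1\sum_{i=1}^n(i-1) = -k_1\cdot n(n-1)/2$, which produces the claimed factor. The only step that demands real care is the measure comparison in the third paragraph; once it is checked, everything else is just bookkeeping of exponents.
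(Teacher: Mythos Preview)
Your proposal is correct and follows essentially the same approach as the paper, which simply notes that $M\subset\pi^{k_1}L$ makes $\cL(L,M)(\mfo)$ an open subset of $\mathrm{Hom}(L,\pi^{k_1}L)(\mfo)$ and then defers to the argument of Proposition \ref{propredend}. You have written out that deferred argument in full: the scaling isomorphisms $\iota_1,\iota_2$ restricted to the stratum, the identification of intrinsic volume forms, and the exponent bookkeeping via Proposition \ref{prop54} are exactly what the paper intends.
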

\begin{proof}
Since $M\subset \pi^{k_1}L$, $\cL(L,M)(\mfo)$ is an open subset of $\mathrm{Hom}(L,\pi^{k_1}L)(\mfo)$ with respect to the $\pi$-adic topology.  
The rest of the proof is the same as that of Proposition \ref{propredend} and so we skip it.
\end{proof}

As mentioned in Remark \ref{remarkdisc}, 
$
\mathrm{ord}(\Delta_{\gamma^{(k_1)}})=\mathrm{ord}(\Delta_{\gamma})-k_1\cdot n(n-1).
$

\section{Application 1: Formula for \texorpdfstring{$\sog$}{SO} in \texorpdfstring{$\gl_1$}{gl1} and \texorpdfstring{$\gl_2$}{gl2}}\label{sec6}

Suppose that  $\chi_{\gamma}(x)$ is irreducible of degree $n$, not restricted to $2$.
We first introduce the following notation and related facts:
\begin{itemize}
\item Let $\chi_{\gamma, a}(x):=\chi_{\gamma}(x+a) \in \mfo[x]$, which is irreducible,  for $a\in \mfo$. 
Note that $\mathcal{SO}_{\chi_{\gamma}}=\mathcal{SO}_{\chi_{\gamma, a}}$ by Lemma \ref{constantlemma}.

\item Let $d_a=\ord(\chi_{\gamma, a}(0))=\ord(\chi_{\gamma}(a))$.

\item Let $\Delta_{\gamma,a}$ be the discriminant of $\chi_{\gamma,a}(x)$. Since the discriminant of a polynomial is invariant under the change of variable $x$ to $x+a$, $\Delta_{\gamma,a}=\Delta_{\gamma}$, not depending on $a$.
Then $\ord (\Delta_{\gamma,a})$ gives an upper bound for $d_a$ given by 
\[
(n-1)d_a\leq \ord (\Delta_{\gamma,a})
\]
for any $a\in \mfo$, since roots of $\chi_{\gamma, a}(x)$ in $\bar{F}$ have the order $d_a/n$.  

\item Write 
\[
\chi_{\gamma, a}(x)=x^n+\pi^{a_1}c_{1,a}x^{n-1}+\cdots+\pi^{a_{n-1}}c_{n-1,a}x+\pi^{d_a}c_{n,a},\]
where $a_i=\lceil \frac{d_a i}{n} \rceil$ for $1\leq i<n$ and where $c_{1,a},\cdots, c_{n-1,a}\in \mfo$ and $c_{n,a}\in \mfo^{\times}$. 
  This is justified by  Newton polygon of  $\chi_{\gamma, a}(x)$, which is irreducible.
Here, $\lceil \frac{d_a}{n} \rceil$ is the smallest integer which is greater than or equal to $\frac{d_a}{n}$, and the same principle for  $\lceil \frac{d_a i}{n} \rceil$.

\item Recall from the paragraph just before Proposition \ref{propserre} that   
$$F_{\chi_{\gamma}} = F[x]/(\chi_{\gamma}(x))$$
is a finite field extension of $F$ of degree $n$. 
Note that $F_{\chi_{\gamma}}\cong F_{\chi_{\gamma, a}}$ as an $F$-algebra.

\end{itemize}

\begin{proposition}\label{charred3}
For a prime number $n$, there exists $a\in \mfo$ such that 
\[
\left\{
\begin{array}{l l}
    d_{a}\equiv 0 \textit{ modulo }n   &\textit{ if $F_{\chi_{\gamma}}$ is unramified over $F$};\\
    d_{a}\not\equiv 0 \textit{ modulo }n  &\textit{ if $F_{\chi_{\gamma}}$ is (totally) ramified over $F$}
\end{array}
\right.
\]
and in the first case, the reduction of  $x^n+c_{1,a}x^{n-1}+\cdots+c_{n-1,a}x+c_{n,a}$ modulo $\pi$ is irreducible over $\kappa$.
\end{proposition}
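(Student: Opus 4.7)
Fix a root $\alpha \in \mathfrak{o}_{F_{\chi_{\gamma}}}$ of $\chi_{\gamma}(x)$. The plan begins from the identity $\chi_{\gamma}(a) = N_{F_{\chi_{\gamma}}/F}(a - \alpha)$ and the fact that all Galois conjugates of $a-\alpha$ share the same valuation, which yields
\[
d_a \;=\; f \cdot w(\alpha - a),
\]
where $w$ denotes the normalized valuation on $F_{\chi_{\gamma}}$ and $ef = n$ are the ramification and residue indices of $F_{\chi_{\gamma}}/F$. Since $n$ is prime, $F_{\chi_{\gamma}}/F$ is either unramified (so $f = n$) or totally ramified (so $f = 1$), and these two cases will be handled separately.

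\textbf{Unramified case.} Here $d_a = n \cdot w(\alpha - a)$ is automatically a multiple of $n$, so only the irreducibility assertion requires work. I will fix a set-theoretic section $s$ of the reduction map $\mathfrak{o}_{F_{\chi_{\gamma}}} \twoheadrightarrow \kappa_{F_{\chi_{\gamma}}}$ and expand $\alpha = \sum_{i \ge 0} s(\bar{\alpha}_i) \pi^i$, which is legitimate because $\pi$ remains a uniformizer of $F_{\chi_{\gamma}}$ in the unramified case. Let $i_0$ be the smallest index with $\bar{\alpha}_{i_0} \notin \kappa$, which exists because $\alpha \notin F$, and take $a := \sum_{i<i_0} \tilde{a}_i \pi^i \in \mathfrak{o}$ where each $\tilde{a}_i \in \mathfrak{o}$ lifts $\bar{\alpha}_i \in \kappa$. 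Then $w(\alpha - a) = i_0$, and substituting $x = \pi^{i_0} y$ converts $\chi_{\gamma,a}(x)/\pi^{n i_0}$ into the polynomial $y^n + c_{1,a}y^{n-1} + \cdots + c_{n,a}$ of the proposition, whose roots are the Galois conjugates of $(\alpha-a)/\pi^{i_0}$. Their reductions modulo $\pi$ are the $\mathrm{Gal}(\kappa_{F_{\chi_{\gamma}}}/\kappa)$-conjugates of $\bar{\alpha}_{i_0}$; because $n$ is prime and $\bar{\alpha}_{i_0} \notin \kappa$, we must have $\kappa(\bar{\alpha}_{i_0}) = \kappa_{F_{\chi_{\gamma}}}$ of degree $n$, so these $n$ conjugates are pairwise distinct and the reduced polynomial coincides with the minimal polynomial of $\bar{\alpha}_{i_0}$ over $\kappa$, which is irreducible.

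\textbf{Totally ramified case.} Now $d_a = w(\alpha - a)$, and the task becomes exhibiting $a \in \mathfrak{o}$ with $w(\alpha - a) \not\equiv 0 \pmod{n}$. I will use a greedy successive approximation of $\alpha$: starting from $a_0 = 0$, if $w(\alpha - a_j) = n k_j$ is a multiple of $n$, then $(\alpha - a_j)/\pi_F^{k_j} \in \mathfrak{o}_{F_{\chi_{\gamma}}}^{\times}$ reduces into $\kappa_{F_{\chi_{\gamma}}} = \kappa$, so its reduction lifts to some $b_j \in \mathfrak{o}^{\times}$ and one sets $a_{j+1} := a_j + \pi_F^{k_j} b_j$, forcing $w(\alpha - a_{j+1}) > n k_j$. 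This process must terminate with some $w(\alpha - a_j) \not\equiv 0 \pmod{n}$: otherwise $(a_j)_j$ is $\pi_F$-Cauchy with limit $\alpha$, placing $\alpha \in \mathfrak{o}$ and contradicting $[F_{\chi_{\gamma}} : F] = n \ge 2$.

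\textbf{Main obstacle.} The delicate step is establishing irreducibility of the reduction in the unramified case, not merely its non-constancy. The argument succeeds only because $n$ is prime, which rules out proper intermediate subfields of $\kappa_{F_{\chi_{\gamma}}}/\kappa$ and forces $\bar{\alpha}_{i_0} \notin \kappa$ to generate the full residue extension. For composite $n$ the same construction produces a non-constant reduction, but its roots could a priori lie in a proper intermediate subfield so that the reduced polynomial would be a higher power of a lower-degree irreducible; this is why the proposition restricts to $n$ prime.
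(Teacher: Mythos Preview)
Your argument is correct and takes a genuinely different route from the paper's. The paper runs a single iterative translation algorithm—translate by $\pi^t\alpha$ whenever the reduced normalized polynomial has a linear factor, with termination guaranteed by the discriminant bound $(n-1)d_a \le \mathrm{ord}(\Delta_\gamma)$—and only afterwards reads off the ramification type from whether the process halted with $d_a \equiv 0$ or $d_a \not\equiv 0 \pmod n$. You instead case on the ramification type up front: in the unramified case you use the $\pi$-adic expansion together with the isomorphism $\mathrm{Gal}(F_{\chi_\gamma}/F) \cong \mathrm{Gal}(\kappa_{F_{\chi_\gamma}}/\kappa)$ to identify the reduced polynomial with the minimal polynomial of $\bar\alpha_{i_0}$ over $\kappa$; in the totally ramified case your greedy approximation is the paper's algorithm in disguise, but terminating via completeness of $F$ rather than the discriminant bound. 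Your approach makes the role of primality of $n$—namely forcing $\kappa(\bar\alpha_{i_0}) = \kappa_{F_{\chi_\gamma}}$—more transparent.

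One small point to tighten in the unramified case: the claims $w(\alpha-a)=i_0$ and $(\alpha-a)/\pi^{i_0}\equiv\bar\alpha_{i_0}\pmod\pi$ require the section $s$ and the lifts $\tilde a_i$ to be compatible. As written, $s$ is an arbitrary section and the $\tilde a_i\in\mathfrak{o}$ are fresh lifts, so each difference $s(\bar\alpha_i)-\tilde a_i$ lies in $\pi\mathfrak{o}_{F_{\chi_\gamma}}$ but need not vanish; these error terms can shift both the valuation and the leading residue of $\alpha-a$ (and, for the same reason, the existence of $i_0$ is not automatic). The fix is immediate: choose $s$ with $s(\kappa)\subset\mathfrak{o}$ and set $\tilde a_i=s(\bar\alpha_i)$, so that $a$ agrees with $\alpha$ modulo $\pi^{i_0}$ on the nose and the rest of your computation goes through verbatim. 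Alternatively, just run your greedy approximation from the ramified case here as well, halting the first time the next residue falls outside $\kappa$.
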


\begin{proof}
If $d_{0}\not\equiv 0 \textit{ modulo }n$, then set $a=0$.
Now suppose that $d_{0}\equiv 0 \textit{ modulo }n$ and put $d_0=nt$.
Since $\chi_{\gamma}(\pi^{t}x)/\pi^{nt}=x^n+c_{1,0}x^{n-1}+\cdots+c_{n-1,0}x+c_{n,0}\in \mfo[x]$ is  irreducible over $\mfo$ with $c_{n,0}\in \mfo^{\times}$,  the reduction modulo $\pi$  is either irreducible or a power of one linear polynomial over  $\kappa$ by Hensel's lemma.
If the reduction is irreducible, then set $a=0$.
Otherwise, if the reduction is a power of a linear poloynomial over $\kappa$, then choose $\alpha\in \mfo^{\times}$ such that $\bar{\alpha}\in \kappa^{\times}$ is a root of the reduction of $x^n+c_{1,0}x^{n-1}+\cdots+c_{n-1,0}x+c_{n,0}$.
Working with   $\chi_{\gamma, \pi^t \alpha}(x)$,
we have that $d_{\pi^t \alpha}\geq d_0+1$. 
If $d_{\pi^t \alpha} \not\equiv 0$ modulo $n$ or $d_{\pi^{t}\alpha}\equiv 0$ and the reduction of $x^n+c_{1,\pi^{t}\alpha}x^{n-1}+\cdots+c_{n-1,\pi^{t}\alpha}x+c_{n,\pi^{t}\alpha}$ is irreducible over $\kappa$, then we take $a=\pi^t \alpha$.
Otherwise, we repeat this process with $\chi_{\gamma, \pi^t \alpha}(x)$ instead of $\chi_{\gamma}(x)$. 
Since $d_a\leq ord(\Delta_{\gamma,a})$ and $\Delta_{\gamma,a}=\Delta_{\gamma}$  for any $a\in \mfo$ as explained just before the proposition, the process is finalized in a finitely many steps.



Suppose that $d_a \equiv 0 $ modulo $n$ and the reduction of  
$x^n+c_{1,a}x^{n-1}+\cdots+c_{n-1,a}x+c_{n,a}$ modulo $\pi$ is irreducible over $\kappa$ for some $a\in \mfo$.
Then $x^n+c_{1,a}x^{n-1}+\cdots+c_{n-1,a}x+c_{n,a}$ is irreducible over $\mfo$ by Hensel's lemma and the field extension $F[x]/(x^n+c_{1,a}x^{n-1}+\cdots+c_{n-1,a}x+c_{n,a})$ is unramified over $F$.
Since
 \[
 x^n+c_{1,a}x^{n-1}+\cdots+c_{n-1,a}x+c_{n,a}=\frac{\chi_{\gamma, a }(\pi^{\frac{d_{a}}{n}} x)}{\pi^{d_a}},\] 
we have that $F[x]/(x^n+c_{1,a}x^{n-1}+\cdots+c_{n-1,a}x+c_{n,a}) \cong F_{\chi_{\gamma, a}}\cong F_{\chi_{\gamma}}$ as an $F$-algebra. 
This proves that the field extension $F_{\chi_{\gamma}}$ over $F$ is unramified. 

On the other hand, if $d_a\not\equiv 0 $ modulo $n$, 
then the order of a root of $\chi_{\gamma, a}(x)$ which is taken in $\bar{F}$ is $d_a/n$.
Since  $d_a/n$ is not an integer,  
the field extension $F[x]/(\chi_{\gamma, a}(x))$ over $F$ is not unramified so as to be totally ramified since $[F[x]/(\chi_{\gamma, a}(x)):F]=n$ is a prime integer.
\end{proof}

\subsection{Formula for \texorpdfstring{$\sog$}{SO} in  \texorpdfstring{$\gl_2$}{gl2}}\label{sec61}
Recall that $\chi_\gamma(x)$ is assumed to be irreducible over $\mfo$.
 Suppose the degree of $\chi_{\gamma}(x)$ is $2$ and  that the reduction of the characteristic polynomial $\chi_{\gamma}(x)=x^2+c_1x+c_2$  modulo $\pi$ is $x^2$. 
Then $F_{\chi_{\gamma}}=F[x]/(\chi_{\gamma}(x))$ is a quadratic extension over $F$. By Proposition \ref{charred3}, there exists $a\in \mfo$ such that
\[
\left\{
\begin{array}{l l}
    d_{a}\equiv 0 \textit{ modulo }2   &\textit{ if $F_{\chi_{\gamma}}$ is unramified over $F$};\\
    d_{a}\equiv 1 \textit{ modulo }2  &\textit{ if $F_{\chi_{\gamma}}$ is (totally) ramified over $F$}
\end{array}
\right.
\]
and in the first case, the reduction of $\frac{\chi_{\gamma,a}(\pi^{\frac{d_{a}}{2}}x)}{\pi^{d_{a}}}=x^{2}+c_{1,a}x+c_{2,a}$ is irreducible over $\kappa$.
Since $\mathcal{SO}_{\chi_{\gamma}}=\mathcal{SO}_{\chi_{\gamma, a}}$ by Lemma \ref{constantlemma}, we may and do work with $\chi_{\gamma, a}$ instead of $\chi_{\gamma}$.

\begin{proposition}\label{da2}
The integer $d_a$ is characterized canonically as follows:
\[
S(\gamma)=
\left\{
\begin{array}{l l}
    \frac{d_{a}}{2} &\textit{if }F_{\chi_{\gamma}}\textit{ is unramified over }F;  \\
    \frac{d_{a}-1}{2} &\textit{if }F_{\chi_{\gamma}}\textit{ is ramified over }F.
\end{array}\right.
\]
Here, $S(\gamma)$ is the Serre invariant, which is the relative $\mfo$-length $[\mathfrak{o}_{F_{\chi_{\gamma}}}:\mathfrak{o}[x]/(\chi_{\gamma}(x))]$ explained in Proposition \ref{propserre}. 
\end{proposition}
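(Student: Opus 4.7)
The plan is to compute $S(\gamma)=[\mfo_{F_{\chi_{\gamma}}}:R_a]$ directly, where I set $R_a := \mfo[x]/(\chi_{\gamma,a}(x))$. First I justify that this index equals the Serre invariant $S(\gamma) = [\mfo_{F_{\chi_{\gamma}}}:\mfo[x]/(\chi_\gamma(x))]$: the substitution $x\mapsto x-a$ is an isomorphism of $\mfo$-algebras $\mfo[x]/(\chi_\gamma)\xrightarrow{\sim}\mfo[x]/(\chi_{\gamma,a})$ that is compatible with their embeddings into $F_{\chi_{\gamma}}$, so the two subrings of $\mfo_{F_{\chi_{\gamma}}}$ coincide. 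Let $\beta\in F_{\chi_{\gamma}}$ denote a root of $\chi_{\gamma,a}$; then $R_a = \mfo\oplus\mfo\beta$ as a free $\mfo$-module of rank $2$.

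Let $v$ be the normalized valuation on $F_{\chi_{\gamma}}$ with $v(\pi)=e\in\{1,2\}$. Reading off the Newton polygon of $\chi_{\gamma,a}(x)=x^2+\pi^{a_1}c_{1,a}x+\pi^{d_a}c_{2,a}$ (whose slope is $-d_a/2$ on $[0,2]$) gives $v(\beta)=d_a/2$ in the unramified case ($e=1$, which forces $d_a$ even since $v(\beta)\in\Z$) and $v(\beta)=d_a$ in the ramified case ($e=2$, $d_a$ odd).

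In the unramified case I set $\theta:=\beta/\pi^{d_a/2}$; this is an integral unit whose minimal polynomial over $\mfo$ is $x^2+c_{1,a}x+c_{2,a}$, which by the last clause of Proposition \ref{charred3} has irreducible reduction over $\kappa$. Hence $\bar\theta$ generates $\kappa_{F_{\chi_{\gamma}}}/\kappa$, which forces $\mfo_{F_{\chi_{\gamma}}}=\mfo\oplus\mfo\theta$. Writing $R_a = \mfo\oplus\pi^{d_a/2}\mfo\theta$ inside $\mfo_{F_{\chi_{\gamma}}}=\mfo\oplus\mfo\theta$, the change-of-basis matrix is diagonal with determinant of $\mfo$-order $d_a/2$, so $S(\gamma)=d_a/2$.

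In the ramified case, pick any uniformizer $\varpi$ of $F_{\chi_{\gamma}}$; since $F_{\chi_{\gamma}}/F$ is totally ramified of degree $2$, the standard structure theorem gives $\mfo_{F_{\chi_{\gamma}}}=\mfo[\varpi]=\mfo\oplus\mfo\varpi$ (Serre, \emph{Local Fields}, Ch.~I, Prop.~18, valid in any characteristic). Writing $\beta = u\,\varpi^{d_a} = u\,\pi^{(d_a-1)/2}\varpi$ for a unit $u\in\mfo_{F_{\chi_{\gamma}}}^\times$, the change-of-basis matrix from $(1,\beta)$ to $(1,\varpi)$ is triangular with determinant of $\mfo$-order $(d_a-1)/2$, yielding $S(\gamma)=(d_a-1)/2$. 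The main point of using this module-index approach rather than a discriminant-based shortcut (via Proposition \ref{propserre}) is that it is uniform in the characteristic, sidestepping the delicate computation of $\mathrm{ord}(\Delta_{F_{\chi_{\gamma}}/F})$ in residue characteristic~$2$; this is the only real obstacle for a characteristic-free argument, and it is bypassed entirely by working with the lattice inclusion directly.
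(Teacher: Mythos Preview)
Your proof is correct and follows essentially the same approach as the paper (which defers the proof to the $n=3$ case in Proposition~\ref{da3}): choose a root $\beta$ of $\chi_{\gamma,a}$, compute its valuation, exhibit an explicit $\mfo$-basis of $\mfo_{F_{\chi_\gamma}}$ (via $\beta/\pi^{d_a/2}$ in the unramified case and a uniformizer $\varpi$ in the ramified case), and read off the index. The paper phrases the comparison as a difference of sums of valuations of basis elements, while you phrase it via the determinant of the change-of-basis matrix; these are equivalent and your version is arguably more careful about why the unit $u\in\mfo_{F_{\chi_\gamma}}^\times$ does not disturb the order of the relevant entry.
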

The proof of this proposition is identical to that of  Proposition \ref{da3} and so we skip it since Proposition \ref{da3} treats a more complicated case.

\begin{corollary}
When $n=2$, $d_{a}$ is independent of any choice of $a\in\mfo$ which satisfies the conditions in Proposition \ref{charred3} for $\chi_{\gamma}(x)$. 
\end{corollary}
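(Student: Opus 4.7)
The plan is to derive the corollary as a direct consequence of Proposition \ref{da2}, which has just been stated. The point is that Proposition \ref{da2} expresses $d_a$ in terms of two quantities, neither of which depends on the choice of $a$: namely, the Serre invariant $S(\gamma)$ and the ramification type of the field extension $F_{\chi_\gamma}/F$.

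More precisely, I would argue as follows. Suppose $a, a' \in \mathfrak{o}$ both satisfy the conditions of Proposition \ref{charred3} applied to $\chi_{\gamma}(x)$. The Serre invariant $S(\gamma) = [\mathfrak{o}_{F_{\chi_{\gamma}}} : \mathfrak{o}[x]/(\chi_{\gamma}(x))]$ and the ramification behaviour of $F_{\chi_{\gamma}}/F$ are both intrinsic invariants of the characteristic polynomial $\chi_{\gamma}(x)$, and in particular are independent of the choice of $a$. Applying Proposition \ref{da2} to each of the choices $a$ and $a'$ separately yields:
\[
d_a =
\begin{cases}
2 S(\gamma) & \text{if } F_{\chi_{\gamma}}/F \text{ is unramified},\\
2 S(\gamma) + 1 & \text{if } F_{\chi_{\gamma}}/F \text{ is ramified},
\end{cases}
\qquad
d_{a'} =
\begin{cases}
2 S(\gamma) & \text{if } F_{\chi_{\gamma}}/F \text{ is unramified},\\
2 S(\gamma) + 1 & \text{if } F_{\chi_{\gamma}}/F \text{ is ramified}.
\end{cases}
\]
In either case, the right-hand sides agree, so $d_a = d_{a'}$, proving independence of the choice of $a$.

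Since the statement is an immediate corollary of Proposition \ref{da2}, there is essentially no obstacle beyond appealing to that proposition. The only subtlety worth noting is that Proposition \ref{da2} does apply to each valid $a$, because the hypotheses of Proposition \ref{charred3} (in particular, the irreducibility over $\kappa$ of the reduction when $F_{\chi_{\gamma}}/F$ is unramified) are satisfied by construction of $a$; hence both cases in Proposition \ref{da2} are available. No further computation is required.
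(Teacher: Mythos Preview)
Your proposal is correct and takes essentially the same approach as the paper: the corollary is stated immediately after Proposition \ref{da2} with no proof, precisely because it follows at once from the fact that $S(\gamma)$ and the ramification type of $F_{\chi_\gamma}/F$ are intrinsic to $\chi_\gamma$ and hence independent of $a$.
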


From now on until the end of this section, we denote the integer $d_{a}$ by $d_{\gamma}$.

\begin{theorem}\label{result2}
For an elliptic regular semisimple element $\gamma \in \gl_{2}(\mfo)$, the orbital integral for $\gamma$ is as follows:

\[
\mathcal{SO}_{\gamma}=\left\{\begin{array}{l l}
    q^{-S(\gamma)}\cdot \frac{q-1}{q}(1+(q+1)\cdot{q^{S(\gamma)}-1\over q-1})=\frac{q +1}{q}-\frac{2}{q^{S(\gamma)+1}} & \textit{if $F_{\chi_{\gamma}}/F$ is unramified};\\
 q^{-S(\gamma)}\cdot {q^{2}-1 \over q^{2}}\cdot{q^{S(\gamma)+1}-1\over q-1}=\frac{q+1}{q}-\frac{q+1}{q^{S(\gamma)+2}}
 & \textit{if $F_{\chi_{\gamma}}/F$ is ramified}.
\end{array}\right.
\]
\end{theorem}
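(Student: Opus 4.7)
The plan is to apply the stratification of Proposition \ref{stra1} for $n=2$ and evaluate each stratum using the smooth models and reduction formulas of Sections \ref{subsec52} and \ref{sec44}. By Lemma \ref{constantlemma} and Propositions \ref{charred3}--\ref{da2}, I may assume $\chi_\gamma(x) = x^2 + c_1 x + c_2$ is irreducible with $\ord(c_2) = d_\gamma$, where $d_\gamma = 2S(\gamma)$ in the unramified case and $d_\gamma = 2S(\gamma)+1$ in the ramified case. For $n=2$ the only possible types of sublattices $M\subset L$ with $[L:M]=d_\gamma$ are $(d_\gamma)$ (with $m=1$) and $(k_1,k_2)$ with $k_1+k_2=d_\gamma$ and $1\le k_1\le k_2$ (with $m=0$), and the total orbital integral decomposes accordingly.

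For the stratum of type $(d_\gamma)$, I will invoke Theorem \ref{thm55} (smoothness) and Corollary \ref{cornm1} to obtain $c_{(d_\gamma)}\cdot\mathcal{SO}_{\gamma,(d_\gamma)}=(q^2-1)/q^2$, independent of $d_\gamma$. For each $(k_1,k_2)$ with $k_1<k_2$, every such sublattice lies in $\pi^{k_1}L$, so I will apply Proposition \ref{propredend} to identify the stratum (rescaled by $q^{-k_1}$) with the smooth type-$(k_2-k_1)$ stratum of $\gamma^{(k_1)}$. Combining Corollary \ref{cornm1} for $\gamma^{(k_1)}$ with the count $c_{(k_1,k_2)}=(q+1)q^{k_2-k_1-1}$ from Corollary \ref{cor49} will give $c_{(k_1,k_2)}\,\mathcal{SO}_{\gamma,(k_1,k_2)}=(q^2-1)/q^{k_1+2}$, a form ready to be summed as a geometric series in $k_1$.

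In the ramified case $d_\gamma$ is odd, so the strata above exhaust all types with $k_1$ ranging from $1$ to $S(\gamma)$; a direct geometric sum will produce the desired formula $(q+1)/q - (q+1)/q^{S(\gamma)+2}$. In the unramified case $d_\gamma$ is even, so $k_1$ runs only up to $S(\gamma)-1$, but there is the extra diagonal stratum $(S(\gamma),S(\gamma))$ to handle separately: here $c_{(S(\gamma),S(\gamma))}=1$ since the only such sublattice is $\pi^{S(\gamma)}L$, and Proposition \ref{propredend} gives $\mathcal{SO}_{\gamma,(S(\gamma),S(\gamma))}=q^{-S(\gamma)}\mathcal{SO}_{\gamma^{(S(\gamma))}}$. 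By Proposition \ref{charred3} the polynomial $\chi_{\gamma^{(S(\gamma))}}$ has unit constant term and irreducible reduction modulo $\pi$, so $\gamma^{(S(\gamma))}$ is regular semisimple already modulo $\pi$; the orbit will thus be smooth over $\mfo$ and Weil's formula combined with orbit-stabilizer, using $\#T(\kappa)=q^2-1$ for the unramified anisotropic torus, will yield $\mathcal{SO}_{\gamma^{(S(\gamma))}}=(q-1)/q$. Assembling the contributions then gives $(q+1)/q - 2/q^{S(\gamma)+1}$.

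The main obstacle is the evaluation of this base case $\mathcal{SO}_{\gamma^{(S(\gamma))}}$ in the unramified branch: I must confirm smoothness at a regular semisimple reduction (a direct specialization of Lemma \ref{lemsm} to the separable-reduction setting) and correctly identify the centralizer as $T(\kappa)\cong\kappa_E^\times$ for the quadratic unramified extension $\kappa_E/\kappa$. Everything else is routine geometric summation, and the ramified case turns out to be a strict specialization with the diagonal stratum absent.
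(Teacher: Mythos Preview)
Your proposal is correct and follows essentially the same route as the paper: stratify via Proposition~\ref{stra1}, reduce each $(k_1,k_2)$ stratum by Proposition~\ref{propendred} to a type-$(k_2-k_1)$ stratum for $\gamma^{(k_1)}$, invoke Corollary~\ref{cornm1} to get $c_{(k_1,d_\gamma-k_1)}\mathcal{SO}_{\gamma,(k_1,d_\gamma-k_1)}=(q^2-1)/q^{k_1+2}$, and sum the resulting geometric series. The only difference is in the unramified base case $\mathcal{SO}_{\gamma^{(S(\gamma))}}$: the paper appeals to Corollary~\ref{corred} (reduction to $\mathfrak{gl}_1$ over the unramified quadratic extension $K$, giving $\#\mathrm{GL}_2(\kappa)/(\#\mathrm{GL}_1(\kappa_E)\cdot q^2)=(q-1)/q$), whereas you argue directly that $\overline{\gamma^{(S(\gamma))}}$ is regular semisimple so $\underline{G}_{\gamma^{(S(\gamma))}}$ is smooth over $\mathfrak{o}$, then count $\#\mathrm{GL}_2(\kappa)/\#T(\kappa)=q(q-1)$ points over $\kappa$ and apply Weil's formula. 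Both yield $(q-1)/q$; your argument is slightly more self-contained since it bypasses the measure comparison underlying Corollary~\ref{corred}.
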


\begin{proof}
As we mentioned at the beginning of Section \ref{sec61}, we may and do work with $\chi_{\gamma,a}$ instead of $\chi_{\gamma}$ where $a\in\mfo$ satisfies the conditions in Proposition \ref{charred3} for $\chi_{\gamma}(x)$.
By Proposition \ref{stra1}, 
\begin{equation}\label{2sum}
\mathcal{SO}_{\gamma}=\sum\limits_{k_1=0}^{\lfloor d_{\gamma}/2\rfloor}c_{(k_1, d-k_1)}\cdot \mathcal{SO}_{\gamma, (k_1, d-k_1)}.
\end{equation}

By Proposition \ref{propendred}, $\mathcal{SO}_{\gamma, (k_1, d_{\gamma}-k_1)}=q^{-k_1}\cdot \mathcal{SO}_{\gamma^{(k_1)}, (0, d_{\gamma}-2k_1)}$. 
Note that $c_{(k_1, d_{\gamma}-k_1)}=c_{(0, d_{\gamma}-2k_1)}$.
If $d_{\gamma}-2k_1>0$, then Corollary \ref{cornm1} says that 
\[
{c_{(k_1, d_{\gamma}-k_1)}\cdot\mathcal{SO}_{\gamma, (k_1, d_{\gamma}-k_1)}=q^{-k_1}\cdot \frac{\#\mathrm{GL}_{2}(\kappa)}{(q-1)\cdot q^{3}}=q^{-k_{1}}\cdot \frac{q^{2}-1}{q^{2}}.}
\]

     If $F_{\chi_{\gamma}}$ is unramified over $F$, 
    then we can set $d_{\gamma}=2S(\gamma)$ by Proposition \ref{da2}. By Summation (\ref{2sum}),
    \[
    \mathcal{SO}_{\gamma}=\sum\limits_{k_1=0}^{S(\gamma)-1}q^{-k_{1}}\cdot {q^{2}-1 \over q^{2}}+q^{-S(\gamma)}\cdot \mathcal{SO}_{\gamma^{(S(\gamma))}}.
    \]
    Since the characteristic polynomial of $\gamma^{(S(\gamma))}$ is an irreducible polynomial in $\mfo[x]$, by the inductive formula in Corollary \ref{corred}, 
    \[
    \mathcal{SO}_{\gamma^{(S(\gamma))}}={q-1 \over q}.
    \]
    Therefore,
    \begin{align*}
        \mathcal{SO}_{\gamma}&={q^{2}-1 \over q^{2}}\cdot \sum\limits_{k_1=0}^{S(\gamma)-1}q^{-k_{1}}+q^{-S(\gamma)}\cdot {q-1 \over q}
        =q^{-S(\gamma)}\cdot {q-1 \over q}(1+(q+1)\cdot{q^{S(\gamma)}-1\over q-1}).
    \end{align*}
     
     If $F_{\chi_{\gamma}}$ is ramified over $F$, 
    then we can set $d_{\gamma}=2S(\gamma)+1$ by Proposition \ref{da2}. By Summation (\ref{2sum}),
    \begin{align*}
    \mathcal{SO}_{\gamma}&=\sum\limits_{k_{1}=0}^{S(\gamma)}q^{-k_{1}}\cdot {q^{2}-1 \over q^{2}}
    =q^{-S(\gamma)}\cdot {q^{2}-1 \over q^{2}}\cdot{q^{S(\gamma)+1}-1\over q-1}.
    \end{align*}
\end{proof}


\begin{theorem}\label{thm66}


For a hyperbolic regular semisimple element $\gamma \in \mathfrak{gl}_{2}(\mfo)$, 
\[
\mathcal{SO}_{\gamma}=\frac{q+1}{q}.
\]
\end{theorem}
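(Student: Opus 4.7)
The plan is to deduce Theorem~\ref{thm66} directly from the parabolic descent formula of Proposition~\ref{cor410}, reducing the calculation to the trivial case of $\mathfrak{gl}_1$. Since $\gamma \in \mathfrak{gl}_2(\mfo)$ is hyperbolic regular semisimple, the characteristic polynomial $\chi_\gamma(x)$ splits as a product of two distinct linear polynomials over $F$; as $\chi_\gamma(x) \in \mfo[x]$, the roots are automatically integral over $\mfo$ and hence lie in $\mfo$ itself. Thus the index set $B(\gamma)$ has two elements with $n_1 = n_2 = 1$, and the associated Levi factors $\gamma_1, \gamma_2$ live in $\mathfrak{gl}_1(\mfo)$. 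The hypothesis $\mathrm{char}(F) = 0$ or $\mathrm{char}(F) > 2$ guarantees that the comparison of measures underlying Proposition~\ref{cor410} (via Proposition~\ref{proptrans}) is valid.

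Next, I would verify that $\mathcal{SO}_{\gamma_i} = 1$ for every $\gamma_i \in \mathfrak{gl}_1(\mfo)$. Indeed, the Chevalley morphism $\varphi_1 : \mathfrak{gl}_1 \to \mathbb{A}^1_{\mfo}$ sends $m$ to $-m$, which is an isomorphism of $\mfo$-schemes. Consequently $\underline{G}_{\gamma_i} = \varphi_1^{-1}(\chi_{\gamma_i})$ is a reduced $\mfo$-point, and the quotient differential $\omega^{\mathrm{ld}}_{\chi_{\gamma_i}}$ is the constant $1$ on this point; equivalently, by Weil's formula (or Lemma~\ref{volumeform}) applied to the smooth morphism $\varphi_1$, the volume is $\#\underline{G}_{\gamma_i}(\kappa)/q^{1-1} = 1$.

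Plugging these values into the inductive formula of Proposition~\ref{cor410} gives
\[
\mathcal{SO}_\gamma = \frac{\#\mathrm{GL}_2(\kappa)\, q^{-4}}{\left(\#\mathrm{GL}_1(\kappa)\right)^2 q^{-2}} \cdot \mathcal{SO}_{\gamma_1} \cdot \mathcal{SO}_{\gamma_2} = \frac{q(q-1)^2(q+1)\, q^{-4}}{(q-1)^2\, q^{-2}} = \frac{q+1}{q},
\]
which is the desired formula. There is essentially no main obstacle here: the only substantive input is the parabolic descent, which was already established, and the base case $n=1$ is immediate. One could alternatively argue in a self-contained way by stratifying $\underline{G}_\gamma(\mfo)$ as in Section~\ref{sec42st} and observing that for hyperbolic $\gamma$ the constant term $c_2 = \alpha\beta$ may be a unit (in which case only the type $( \, )$ stratum contributes), but the parabolic descent is the cleanest route.
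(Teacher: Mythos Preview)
Your proof is correct and follows essentially the same approach as the paper: apply the parabolic descent formula of Proposition~\ref{cor410} to reduce to two copies of $\mathfrak{gl}_1$, invoke the trivial computation $\mathcal{SO}_{\gamma_i}=1$ from Section~\ref{sec62}, and simplify the ratio of group orders. Your additional remarks (integrality of the roots, justification of the characteristic hypothesis, and the explicit verification of the $\mathfrak{gl}_1$ case via smoothness of $\varphi_1$) are all correct and make the argument slightly more self-contained than the paper's version.
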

\begin{proof}

Since $\gamma \in \mathfrak{gl}_{2}(\mfo)$ is a hyperbolic regular semisimple element, $\chi_{\gamma}(x)$ is the product of two distinct linear monic polynomials over $F$. We can then use the inductive formula of Proposition \ref{cor410}, together with the result of Section \ref{sec62}, so as to have the following formula:
\[
\mathcal{SO}_{\gamma}=\frac{\# \mathrm{GL}_{2}(\kappa)\cdot q^{-4}}{(\#\mathrm{GL}_{1}(\kappa)\cdot q^{-1})^{2}}\cdot 1^{2} = \frac{q+1}{q}.
\]
\end{proof}

\begin{remark}\label{rmk67}
Note that in \cite[Example 4.1]{Gor22}, we can easily deduce that $d_{\gamma}$ used in loc. cit. is equal to $S(\gamma)$. (Note that this is not equal to our $d_{\gamma}$.) Then, our result exactly coincides with the result in  \cite[Equation (40)]{Gor22}.
\end{remark}
\begin{remark}
Using Propositions \ref{proptrans}-\ref{propserre}, we translate our formula into  the orbital integral with respect to the measure $d\mu$. 
Suppose that $char(F) = 0$ or $char(F)>2$. Then 
\[
\left\{
\begin{array}{l l}
\mathcal{SO}_{\gamma, d\mu}=q^{S(\gamma)}&\textit{if $\gamma$ is hyperbolic};\\
\mathcal{SO}_{\gamma, d\mu}=1+(q+1)\cdot{q^{S(\gamma)}-1\over q-1}&\textit{if $\gamma$ is elliptic and $F_{\chi_{\gamma}}$ is unramified over $F$};\\
\mathcal{SO}_{\gamma, d\mu}={q^{S(\gamma)+1}-1\over q-1} &\textit{if $\gamma$ is elliptic and $F_{\chi_{\gamma}}$ is ramified over $F$}.
\end{array} \right.
\]
Note that $S(\gamma)=ord(\Delta_{\gamma})$ if $\gamma$ is hyperbolic.  
\end{remark}

\subsection{Formula for \texorpdfstring{$\sog$}{SO} in \texorpdfstring{$\gl_1$}{gl1}}\label{sec62}
When $n=1$, $\vpi_1$ is smooth over $\mfo$ and so we can directly obtain the formula
\[
\sog =1.
\]

\section{Application 2: Formula for \texorpdfstring{$\sog$}{SOg} in \texorpdfstring{$\gl_3$}{gl3}}\label{sec7}
The goal of this section is to provide an explicit formula for $\sog$ with $n=3$. We continue to use the notation in Section 6.
Recall that $\chi_{\gamma}(x)$ is assumed to be irreducible over $\mfo$.
 Suppose that the reduction of the characteristic polynomial $\chi_{\gamma}(x)=x^3+c_1x^{2}+c_2 x+c_3$  modulo $\pi$ is $x^3$. 
  Then $F_{\chi_{\gamma}}(:=F[x]/(\chi_{\gamma}(x)))$ is a cubic field extension of $F$. 
Recall from  Proposition \ref{charred3} that there exists $a\in \mfo$ such that
\[
\left\{
\begin{array}{l l}
    d_{a}\equiv 0 \textit{ modulo }3 &\textit{ if $F_{\chi_{\gamma}}$ is unramified over $F$};\\
    d_{a}\equiv 1 \textit{ or } 2 \textit{ modulo }3  &\textit{ if $F_{\chi_{\gamma}}$ is (totally) ramified over $F$}
\end{array}
\right.
\]
and in the first case, the reduction of $\frac{\chi_{\gamma,a}(\pi^{\frac{d_{a}}{3}}x)}{\pi^{d_{a}}}=x^{3}+c_{1,a}x^{2}+c_{2,a}x+c_{3,a}$ modulo  $\pi$ is irreducible over $\kappa.$
Since $\mathcal{SO}_{\chi_{\gamma}}=\mathcal{SO}_{\chi_{\gamma, a}}$ by Lemma \ref{constantlemma}, we may and do work with $\chi_{\gamma, a}$ instead of $\chi_{\gamma}$.

By Propositions \ref{stra1} and \ref{propendred}, we have
\begin{equation}\label{eqsor}
\mathcal{SO}_{\gamma}=\sum\limits_{s=0}^{\lfloor\frac{d_{a}}{3}\rfloor}q^{-3s}\sum\limits_{k_1=0}^{\lfloor\frac{d_{a}-3s}{2}\rfloor}c_{(k_1, d_{a}-3s-k_1)}\cdot \mathcal{SO}_{\gamma^{(s)}, (k_1, d_{a}-3s-k_1)}=\sum\limits_{\substack{k_1+k_2=d_{a}-3s;\\ 0\leq k_1\leq k_2;\\ 0\leq s \leq \lfloor\frac{d_{a}}{3}\rfloor}}q^{-3s}c_{(k_1, k_2)}\cdot \mathcal{SO}_{\gamma^{(s)}, (k_1, k_2)}.
\end{equation}

Here, $\lfloor\frac{d_{a}}{3}\rfloor$ is the largest integer which is less than or equal to $\frac{d_{a}}{3}$, and the same principle for $\lfloor\frac{d_{a}-3s}{2}\rfloor$.
Note that this equation holds without any restriction on $q$. 

\subsection{Smoothening of stratum}
By  virtue of Equation (\ref{eqsor}), it suffices to find an explicit formula for $\mathcal{SO}_{\gamma, (k_1, k_2)}$ for an irreducible polynomial $\chi_{\gamma}(x)=x^3+c_1x^{2}+c_2 x+c_3$.
In this subsection, we will compute it  based on  the argument used in Section \ref{subsec52}.

Let $M$ be a sublattice of $L$ of type $(k_1, k_2)$ with $k_1>0$ and let $d=k_{1}+k_{2}$. 
Recall 
\[
\mathcal{SO}_{\gamma, (k_1, k_2)}=\mathcal{SO}_{\gamma, \cL(L,M)}=\int_{O_{\gamma, \cL(L,M)}}|\omega_{\chi_{\gamma}}^{\mathrm{ld}}|,
\]
where 
 \[
\left\{
  \begin{array}{l }
  \textit{$\omega_{\chi_{\gamma}}^{\mathrm{ld}}=\omega_{\mathfrak{gl}_{n, \mathfrak{o}}}/\rho_n^{\ast}\omega_{\mathbb{A}^n_{\mathfrak{o}}}$ in Section \ref{measure}};\\
O_{\gamma, \cL(L,M)}=\varphi_{3,M}^{-1}(\chi_{\gamma})(\mfo) \textit{ in Remark \ref{rmk53}.(3)};\\
\varphi_{3,M}: \cL(L,M) \longrightarrow \mathbb{A}_{M} \textit{ in Remark \ref{rmk53}.(3)};\\
\textit{$\cL(L,M)$ is an open subscheme of $\widetilde{\cL}(L,M)$ in Remark \ref{rmk53}.(1)};\\
\textit{an element of $\widetilde{\cL}(L,M)(R)$ is of the form }\begin{pmatrix}
x_{1,1} &x_{1,2} & x_{1,3} \\
\pi^{k_1}x_{2,1} &\pi^{k_1}x_{2,2} & \pi^{k_1}x_{2,3} \\
\pi^{k_2}x_{3,1} &\pi^{k_2}x_{3,2} & \pi^{k_2}x_{3,3}
\end{pmatrix} \textit{ in Equation (\ref{matrixform})};\\
\mathbb{A}_{M}(R)=R\times \pi^{k_1}R\times \pi^d R \textit{ in Remark \ref{rmk53}.(2)}.
    \end{array} \right.
\]

Let $t=\lceil\frac{d}{3}\rceil$ and let $s=\lfloor \frac{d}{3}\rfloor$ (so that $d$ is $3t$, $3t-1$, or $3t-2$).
The relation between $s$ and $t$ is given  below:
\begin{equation}\label{eq72}
\left\{
\begin{array}{l l}
s=t  & \textit{if $d=3t$};\\
s=t-1  & \textit{if $d=3t-1$ or $d=3t-2$}.
\end{array} \right.
\end{equation}

We separate the condition $0<k_1\leq k_2$ with $k_1+k_2=d$ into four cases;
\[
\left\{
\begin{array}{l}
\textit{Case 1: $k_1< k_2$ and $k_1<t$};\\
\textit{Case 2: $k_1< k_2$ and $k_1>s$};\\
\textit{Case 3: $k_1=k_2$};\\
\textit{Case 4: $k_1=t$ and $d=3t$}.
\end{array} \right.
\]
Here, \textit{Case 4} occurs only when $F_{\chi_{\gamma}}$ is unramified over $F$ and the reduction of $\chi_{\gamma}(\pi^tx)/\pi^{3t}$ modulo $\pi$ is irreducible over $\kappa$ by Proposition \ref{charred3}.

\begin{theorem}\label{thm71}
The formula of $c_{(k_1, k_2)}\mathcal{SO}_{\gamma, (k_1, k_2)}$ is given as follows:
\[
c_{(k_1, k_2)}\cdot \mathcal{SO}_{\gamma, (k_1, k_2)}=\left\{
  \begin{array}{l l}
(q^{3}-1)(q^{2}-1)q^{k_{2}}\cdot \frac{(k_1+1)q-(k_1-1)}{q^{6+d}}   & \textit{in Case 1};\\
(q^{3}-1)(q^{2}-1)q^{k_{2}}\cdot \frac{(k_2-k_1+1)q-(k_2-k_1-1)}{q^{6+d}}   & \textit{in Case 2};\\
\frac{(q^{3}-1)(q^{2}-1)}{q^{5+d}}\cdot q^{k_{1}}   & \textit{in Case 3};\\
 (q^{3}-1)(q^{2}-1)q^{k_{2}}\cdot \frac{(t+1)q-(t-1)}{q^{6+d}}
  & \textit{in Case 4}.
    \end{array} \right.
\]
Here, $d=k_1+k_2$.
\end{theorem}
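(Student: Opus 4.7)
The plan is to treat each of the four cases by constructing an explicit smoothening of the stratum $O_{\gamma, \cL(L,M)}$, applying Weil's mass formula (\cite[Theorem 2.2.5]{Weil}) together with Proposition \ref{prop54}, and then multiplying by the explicit count $c_{(k_1, k_2)}$ from Corollary \ref{cor49} and Remark \ref{rmk410}. Throughout I would write an element of $\widetilde{\cL}(L,M)$ in the matrix form displayed in Remark \ref{rmk53}, so that the three defining equations of $\varphi_{3,M}^{-1}(\chi_\gamma)$ are the trace condition, the sum of $2\times 2$ principal minors, and the determinant condition.

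First I would read off the forced congruence conditions on the entries $x_{i,j}$ by expanding these three equations and comparing their $\pi$-adic valuations against $\operatorname{ord}(c_1), \operatorname{ord}(c_2), \operatorname{ord}(c_3)=d$ (each of which is positive because $\overline{\chi}_\gamma(x)=x^3$). The Newton-polygon numerology determines precisely which of $x_{1,1}, x_{2,2}, x_{3,3}, x_{1,2} x_{2,1}$, etc., must lie in prescribed powers of $\pi$; this is exactly where the comparison of $k_1$ with $s=\lfloor d/3\rfloor$ and $t=\lceil d/3\rceil$ enters and produces the four case split. In \textbf{Case 1} ($k_1<t$) and \textbf{Case 2} ($k_1>s$), the asymmetry between $k_1$ and $k_2$ propagates to different forced orders on $x_{1,1}$ versus on the off-diagonal pair $(x_{1,2}, x_{2,1})$; in \textbf{Case 3} ($k_1=k_2$), a symmetric pattern appears; and in \textbf{Case 4} ($k_1=t$, $d=3t$, unramified), the irreducibility of the reduction of $\chi_\gamma(\pi^t x)/\pi^{3t}$ provided by Proposition \ref{charred3} plays the role of an extra unit condition.

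Having assigned these congruences, I would define an open subscheme $O^{\mathrm{sm}}_{\gamma, \cL(L,M)}$ of the appropriate congruence subscheme of $\widetilde{\cL}(L,M)$, and verify its smoothness over $\mfo$ by the strategy of Theorem \ref{thm55}: the generic fiber is smooth by regularity of $\gamma$ via \cite[Theorem 4.20]{Hum}, while for the special fiber one checks the surjectivity of the tangent map at every closed point by exhibiting concrete tangent vectors, taking advantage of the fact that invertibility of $X'$ forces certain entries in the first or last row/column to be units in $\kappa$. Smoothness then allows Weil's formula to compute $\int |\omega^{\mathrm{ld}}_{(\chi_\gamma,\widetilde{\cL}(L,M))}|=\#O^{\mathrm{sm}}_{\gamma,\cL(L,M)}(\kappa)/q^{n^2-n}$, Proposition \ref{prop54} converts this to $\mathcal{SO}_{\gamma, (k_1, k_2)}$ via the factor $q^{-\sum (m+i-1)k_i}$, and multiplication by $c_{(k_1, k_2)}$ yields the claimed closed form.

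The main obstacle, as the paper's Heuristics subsection foreshadows, is that for $n=3$ the differential of $\varphi_{3,M}$ no longer reduces to linear algebra: the second defining equation (sum of $2\times 2$ principal minors) is genuinely quadratic in the $x_{i,j}$, so the smoothening cannot be produced in a single uniform stroke. The linear coefficients $(k_1+1)q-(k_1-1)$, $(k_2-k_1+1)q-(k_2-k_1-1)$, and $(t+1)q-(t-1)$ in the statement suggest that after smoothening one is left counting $\kappa$-points on a family of quadric fibrations whose degeneracy locus is indexed by either $k_1$, $k_2-k_1$, or $t$, and verifying that this count really equals the listed polynomial -- rather than something that is merely asymptotically correct -- will be the delicate step, which is why the actual verification is deferred to Appendix \ref{App:AppendixA}.
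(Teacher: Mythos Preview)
Your overall strategy---impose congruences forced by the Newton polygon, verify smoothness of the resulting scheme by the tangent-space criterion, apply Weil plus Proposition \ref{prop54}, then multiply by $c_{(k_1,k_2)}$---matches the paper's architecture, and in Case~3 this is exactly what the paper does: the stratum is already smooth after a single set of congruences (Proposition \ref{prop77}), and the count is a direct $\kappa$-point computation.

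The gap is in Cases 1, 2, 4. You correctly anticipate that the second defining equation (the sum of $2\times 2$ principal minors) causes trouble, but your diagnosis---that one is ``counting $\kappa$-points on a family of quadric fibrations''---is not what actually happens. The issue is more concrete: the forced congruence is not on an individual entry but on the \emph{product} $x_{1,2}x_{2,1}$, namely $x_{1,2}x_{2,1}\in(\pi^{N})$ with $N=k_1$ (Case~1), $N=k_2-k_1$ (Case~2), or $N=t$ (Case~4). The locus cut out by a product condition of this kind is not smooth and cannot be handled by assigning congruences to the entries separately in a single stroke. The paper resolves this by a \emph{further stratification according to $\ord(x_{2,1})$}: one decomposes $\varphi_{3,M}^{-1}(\chi_\gamma)(\mfo)$ as a disjoint union over $l=0,1,\dots,N$, where on the $l$-th piece $\ord(x_{2,1})=l$ (forcing $\ord(x_{1,2})\ge N-l$), and on the last piece $\ord(x_{2,1})\ge N$. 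Each such piece $\cL(L,M)_l$ now carries honest affine congruences on individual entries, and the paper checks smoothness and counts $\kappa$-points on each separately: the two endpoint strata $l=0$ and $l=N$ contribute $q^4(q-1)^2$ points each, while each interior stratum contributes $q^3(q-1)^3$. Summing over $l$ is precisely what produces the linear coefficients $(N+1)q-(N-1)$: it is a sum of $N+1$ terms, not a quadric count. Without this extra stratification by $\ord(x_{2,1})$, no single smooth scheme over $\mfo$ can have a $\kappa$-point count depending linearly on $k_1$ (or $k_2-k_1$, or $t$), so your proposed approach cannot close in those three cases.
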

We postpone the proof to Appendix \ref{App:AppendixA}.


\subsection{Formula for \texorpdfstring{$\sog$}{SOg} in \texorpdfstring{$\gl_3$}{gl3}} For a given $\gamma$, we first clarify invariance of $d_{a}$ to $a$ where $a\in \mfo$ satisfies that
\[
\left\{
\begin{array}{l l}
    d_{a}\equiv 0 \textit{ modulo }3 &\textit{ if $F_{\chi_{\gamma}}$ is unramified over $F$};\\
    d_{a}\equiv 1 \textit{ or } 2 \textit{ modulo }3  &\textit{ if $F_{\chi_{\gamma}}$ is ramified over $F$}
\end{array}
\right.
\]
and that in the first case, the reduction of $\frac{\chi_{\gamma,a}(\pi^{\frac{d_{a}}{3}}x)}{\pi^{d_{a}}}=x^{3}+c_{1,a}x^{2}+c_{2,a}x+c_{3,a}$ is irreducible over $\kappa$ (cf. Proposition \ref{charred3}).
\begin{proposition}\label{da3}
The integer $d_a$ is characterized canonically as follows:
\[
S(\gamma)=\left\{
\begin{array}{l l}
    d_{a} &\textit{ if $F_{\chi_{\gamma}}$ is unramified over $F$};\\
    d_{a}-1 &\textit{ if $F_{\chi_{\gamma}}$ is ramified over $F$}.
\end{array}
\right.
\]
Here, $S(\gamma)$ is the Serre invariant, which is the relative $\mfo$-length $[\mathfrak{o}_{F_{\chi_{\gamma}}}:\mathfrak{o}[x]/(\chi_{\gamma}(x))]$ explained in Proposition \ref{propserre}. 
\end{proposition}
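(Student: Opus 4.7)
The plan is to compute the Serre invariant $S(\gamma)=[\mfo_{F_{\chi_\gamma}}:\mfo[x]/(\chi_\gamma(x))]$ directly by exhibiting an explicit $\mfo$-basis for $\tilde R:=\mfo_{F_{\chi_\gamma}}$ that is compatible with the obvious basis $\{1,\alpha,\alpha^2\}$ of $R:=\mfo[\alpha]\cong\mfo[x]/(\chi_{\gamma,a}(x))$, where $\alpha$ is a root of $\chi_{\gamma,a}(x)$. The first step is to reduce to the case $a=0$: since $\chi_{\gamma,a}(x)=\chi_\gamma(x+a)$, the substitution $x\mapsto x+a$ induces an $\mfo$-algebra isomorphism $\mfo[x]/(\chi_\gamma(x))\cong \mfo[x]/(\chi_{\gamma,a}(x))$, so $S(\gamma)$ can be read off from $\chi_{\gamma,a}(x)$. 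We therefore fix $a$ as in Proposition \ref{charred3} and rename $\chi_{\gamma,a}$ to $\chi$. Throughout, let $v$ be the valuation on $F_{\chi_\gamma}$ normalized so that $v(\varpi)=1$ for a uniformizer $\varpi$ of $\tilde R$; then $v(\pi)$ equals $1$ or $3$ according as $F_{\chi_\gamma}/F$ is unramified or totally ramified, and $v(\alpha)=d_a/3\cdot v(\pi)\cdot\tfrac{3}{[F_{\chi_\gamma}:F]}$ works out to $v(\alpha)=d_a$ in both cases.

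Next, I would carry out the three sub-cases separately.

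\emph{Unramified case $d_a=3t$.} By the choice of $a$, the polynomial $g(x):=\chi(\pi^t x)/\pi^{3t}$ has irreducible reduction modulo $\pi$, so $\beta:=\alpha/\pi^t$ lies in $\tilde R$ and generates the unique unramified cubic extension, whence $\tilde R=\mfo[\beta]=\mfo\oplus\mfo\beta\oplus\mfo\beta^2$. Comparing with $R=\mfo\oplus\mfo\pi^t\beta\oplus\mfo\pi^{2t}\beta^2$ gives $\tilde R/R\cong \mfo/\pi^t\oplus\mfo/\pi^{2t}$, which has $\mfo$-length $3t=d_a$.

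\emph{Totally ramified case $d_a=3t-2$.} Here $v(\alpha)=3t-2=3(t-1)+1$, so $\varpi:=\alpha/\pi^{t-1}\in\tilde R$ satisfies $v(\varpi)=1$. A direct substitution shows that the minimal polynomial of $\varpi$ over $F$ is
\[ g(x)=x^3+c_{1,a}\pi\,x^2+c_{2,a}\pi\,x+c_{3,a}\pi, \]
which is Eisenstein since $c_{3,a}\in\mfo^\times$; thus $\tilde R=\mfo[\varpi]$. Then $R=\mfo\oplus\mfo\pi^{t-1}\varpi\oplus\mfo\pi^{2t-2}\varpi^2$, giving $\tilde R/R\cong\mfo/\pi^{t-1}\oplus\mfo/\pi^{2t-2}$ of length $3t-3=d_a-1$.

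\emph{Totally ramified case $d_a=3t-1$.} Now $v(\alpha)=3t-1$ and $v(\alpha^2)=6t-2$, so the elements $e_0:=1$, $e_1:=\alpha^2/\pi^{2t-1}$, $e_2:=\alpha/\pi^{t-1}$ lie in $\tilde R$ with valuations $0,1,2$. Since each $e_i/\varpi^i$ is a unit in $\tilde R$, the set $\{e_0,e_1,e_2\}$ is an $\mfo$-basis of $\tilde R$. Writing $R=\mfo\,e_0\oplus\mfo\pi^{t-1}e_2\oplus\mfo\pi^{2t-1}e_1$ yields $\tilde R/R\cong\mfo/\pi^{t-1}\oplus\mfo/\pi^{2t-1}$ of length $3t-2=d_a-1$.

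The main obstacle is justifying that the candidate basis in each case actually spans $\tilde R$ as an $\mfo$-module. In the unramified and $d_a=3t-2$ subcases this is handled cleanly by showing the generator has an irreducible (resp.\ Eisenstein) minimal polynomial, so that $\mfo[\cdot]$ is already the full ring of integers. In the subcase $d_a=3t-1$ one cannot single out a uniformizer inside $R$, so I would instead use the general fact that any three elements of $\tilde R$ whose valuations are $0,1,2$ and whose leading coefficients are units form an $\mfo$-basis of $\tilde R$; this follows from Nakayama applied to the filtration $\tilde R\supset\varpi\tilde R\supset\varpi^2\tilde R$. Once the bases are in hand, the length computation for $\tilde R/R$ is immediate via the Smith normal form of the change-of-basis matrix, and combining the three cases gives exactly the statement of the proposition.
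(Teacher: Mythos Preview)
Your proof is correct and follows essentially the same approach as the paper: compute $S(\gamma)$ by exhibiting compatible $\mfo$-bases of $R$ and $\tilde R$ and reading off the index. The paper's version is terser---in the ramified cases it just writes $S(\gamma)$ as the difference of the $\pi$-adic orders of $\{1,\alpha,\alpha^2\}$ and of a power basis $\{1,\alpha_{F_{\chi_\gamma}},\alpha_{F_{\chi_\gamma}}^2\}$ for an abstract Eisenstein generator, without spelling out why this difference of orders computes the index---while you make the change-of-basis explicit. Your treatment of the $d_a=3t-1$ case is actually a bit nicer: rather than invoking an unrelated uniformizer, you build the basis of $\tilde R$ directly from $\alpha$ via $\{1,\alpha^2/\pi^{2t-1},\alpha/\pi^{t-1}\}$, which makes the comparison with $R=\mfo\oplus\mfo\alpha\oplus\mfo\alpha^2$ immediate.

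One small slip: the sentence ``$v(\alpha)=d_a$ in both cases'' is wrong in the unramified case, where your own normalization gives $v(\pi)=1$ and hence $v(\alpha)=d_a/3=t$. This does not affect anything, since you correctly use $\beta=\alpha/\pi^t$ immediately afterward, but you should delete or fix that parenthetical claim.
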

\begin{proof}
We will explain the relation between $d_{a}$ and $S(\gamma)$ by direct computation of  the relative $\mfo$-length $[\mfo_{F_{\chi_{\gamma}}}:\mfo[x]/(\chi_{\gamma}(x))]$.
\begin{itemize}
    \item If $F_{\chi_{\gamma}}/F$ is unramified, then we can set $d_{a}=3d'$. Choose a root of $\chi_{\gamma,a}(x)$ denoted by $\alpha$.
    Then its order is $d'$ and $\{1,\alpha,\alpha^{2}\}$ is a basis of $R$ as an $\mfo$-module. 
    Note that $\frac{\alpha}{\pi^{d'}}$ is a root of $f(x):=\frac{\chi_{\gamma,a}(\pi^{d'} x)}{\pi^{3d'}}$. 
    Here, $F_{\chi_{\gamma}}\cong F_{f}$ and $\mfo_{F_{\chi_{\gamma}}}\cong\mfo[x]/(f(x))$, since $\bar{f}(x)$ is irreducible over $\kappa$.
    Therefore, $\{1, \frac{\alpha}{\pi^{d'}},(\frac{\alpha}{\pi^{d'}})^{2}\}$ forms a basis of $\mfo_{F_{\chi_{\gamma}}}$ as $\mfo$-module and we have
    \[
    S(\gamma)=d'+2d'=3d'=d_a.
    \]
    \item If $F_{\chi_{\gamma}}/F$ is ramified, then we can set $d_{a}=3d'+1$ or $d_{a}=3d'+2$. 
    Choose a root of $\chi_{\gamma,a}(x)$ denoted by $\alpha$. Then its order is $d'+\frac{1}{3}$ (resp. $d'+\frac{2}{3}$) and $\{1, \alpha,\alpha^{2}\}$ is a basis of $R$ as an $\mfo$-module.
    Note that an Eisenstein polynomial defines the ring of integers of totally ramified extension over $F$, thus let $f(x)$ be the Eisenstein polynomial such that $\mfo_{F_{\chi_{\gamma}}}=\mfo[x]/(f(x))$.
    Choose a root of $f(x)$, denote it by $\alpha_{F_{\chi_{\gamma}}}$. Then its order is $\frac{1}{3}$ and $\{1,\alpha_{F_{\chi_{\gamma}}},\alpha_{F_{\chi_{\gamma}}}^{2}\}$ is a basis of $\mfo_{F_{\chi_{\gamma}}}$ as $\mfo$-module.
    We then have
        \[
    S(\gamma)=
    \left\{
    \begin{array}{l l}
    (d'+\frac{1}{3}+2d'+\frac{2}{3})-(\frac{1}{3}+\frac{2}{3})=3d'&\textit{ when }d_{a}=3d'+1;\\
    (d'+\frac{2}{3}+2d'+\frac{4}{3})-(\frac{1}{3}+\frac{2}{3})=3d'+1&\textit{ when }d_{a}=3d'+2.
    \end{array}\right.
    \]
\end{itemize}
\end{proof}

\begin{corollary}\label{cor73}
When $n=3$, $d_{a}$ is independent of any choice of  $a\in\mfo$ which satisfies the conditions in Proposition \ref{charred3} for $\chi_{\gamma}(x)$.
\end{corollary}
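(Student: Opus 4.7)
The plan is to derive the corollary as an immediate consequence of Proposition \ref{da3}. The key observation is that the Serre invariant $S(\gamma) = [\mfo_{F_{\chi_{\gamma}}}:\mfo[x]/(\chi_{\gamma}(x))]$ is intrinsic to $\gamma$: it depends only on the characteristic polynomial $\chi_{\gamma}(x)$ and the ring $\mfo_{F_{\chi_{\gamma}}}$, with no reference to the parameter $a$ used to produce $\chi_{\gamma,a}(x)$. Similarly, the ramification type of the field extension $F_{\chi_{\gamma}}/F$ is intrinsic to $\gamma$, being a property of $F_{\chi_{\gamma}} \cong F_{\chi_{\gamma,a}}$ as an $F$-algebra.

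First I would split into the two cases of Proposition \ref{da3}. In the unramified case, Proposition \ref{da3} gives $d_a = S(\gamma)$, and since the right-hand side is independent of $a$, so is the left-hand side. In the ramified case, the proposition treats separately the subcases $d_a = 3d'+1$ (yielding $S(\gamma) = 3d' = d_a - 1$) and $d_a = 3d'+2$ (yielding $S(\gamma) = 3d'+1 = d_a - 1$); in either subcase one obtains $d_a = S(\gamma) + 1$, which again depends only on $\gamma$.

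Combining the two cases, the formula
\[
d_a = \begin{cases} S(\gamma) & \textit{if } F_{\chi_{\gamma}}/F \textit{ is unramified}, \\ S(\gamma)+1 & \textit{if } F_{\chi_{\gamma}}/F \textit{ is ramified} \end{cases}
\]
expresses $d_a$ entirely in terms of invariants of $\gamma$. Hence for any two choices $a, a' \in \mfo$ satisfying the conditions of Proposition \ref{charred3}, one has $d_a = d_{a'}$, proving the corollary.

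There is essentially no obstacle here, since all the real work is already contained in Proposition \ref{da3}; the corollary merely repackages its content as an invariance statement. The only thing to be slightly careful about is making sure that in the ramified case both subcases $d_a \equiv 1$ and $d_a \equiv 2 \pmod{3}$ give the same relation $d_a = S(\gamma)+1$, which is a routine check from the formulas already displayed in the proof of Proposition \ref{da3}.
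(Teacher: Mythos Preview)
Your proposal is correct and takes essentially the same approach as the paper: the corollary is stated immediately after Proposition \ref{da3} with no separate proof, since it follows directly by expressing $d_a$ in terms of the intrinsic Serre invariant $S(\gamma)$ exactly as you do.
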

\begin{remark}\label{rmk74}
If $n$ is a prime integer, then we have the following relation between $S(\gamma)$ and $d_a$:
\begin{enumerate}
    \item Suppose that $F_{\chi_{\gamma}}$ is unramified over $F$.
    We write  $d_{a}=nd'$. Then we have the following relation:
        \[
    S(\gamma)=d'+2d'+\cdots+(n-1)d'=\frac{n(n-1)}{2}d'. 
    \]
    \item Suppose that  $F_{\chi_{\gamma}}$ is (totally) ramified over $F$.
    We write $d_{a}=nd'+r'$ with $0<r'<n$. Then we have the following relation: 
    \begin{align*}
        S(\gamma)&=(d'+\frac{r'}{n}+2d'+\frac{2r'}{n}+\cdots+(n-1)d'+\frac{(n-1)r'}{n})-(\frac{1}{n}+\frac{2}{n}+\cdots+\frac{n-1}{n})\\
        &=\frac{n(n-1)}{2}d'+\frac{(r'-1)(n-1)}{2}=\frac{(nd'+r'-1)(n-1)}{2}.
    \end{align*}
\end{enumerate}
The proof is the same as that of  Proposition \ref{da3} and so we skip it.
Therefore, $d_{a}$ is independent of any choice of $a\in \mfo$ which satisfies the conditions in Proposition \ref{charred3} for $\chi_{\gamma}(x)$. 

\end{remark}

\begin{theorem}\label{result3}
For an elliptic regular semisimple element $\gamma \in \gl_{3}(\mfo)$, the orbital integral for $\gamma$ is as follows:

\begin{enumerate}
\item If $F_{\chi_{\gamma}}$ is an unramified extension over $F$, then we have
\[
\mathcal{SO}_{\gamma}=\frac{(q+1)(q^{2}+q+1)}{q^{3}}-\frac{3(q^{2}+q+1)}{q^{d'+3}}+\frac{3}{q^{3d'+3}}
\]
where $S(\gamma)=3d'$.

\item If $F_{\chi_{\gamma}}$ is a ramified extension over $F$, then we have
\[
\mathcal{SO}_{\gamma}=\left\{\begin{array}{l l}
\frac{(q+1)(q^{2}+q+1)}{q^{3}}-\frac{(2q+1)(q^{2}+q+1)}{q^{d'+4}}+\frac{q^{2}+q+1}{q^{3d'+5}} & \textit{if $S(\gamma)=3d'$};\\
\frac{(q+1)(q^{2}+q+1)}{q^{3}}-\frac{(q+2)(q^{2}+q+1)}{q^{d'+4}}+\frac{q^{2}+q+1}{q^{3d'+6}} & \textit{if $S(\gamma)=3d'+1$}.
\end{array}\right.
\]

\end{enumerate}

\end{theorem}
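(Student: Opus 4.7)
The plan is to substitute the stratum-wise formulas of Theorem \ref{thm71} (together with the $n=3$ instance of Corollary \ref{cornm1} for the degenerate stratum $k_1=0$, which corresponds to type $(d_s)$) into the summation formula (\ref{eqsor}), and to evaluate the resulting double sum. By Lemma \ref{constantlemma} and Proposition \ref{charred3} we may replace $\chi_\gamma$ with $\chi_{\gamma,a}$ for a suitable $a \in \mfo$, so that either $d_a \equiv 0 \pmod 3$ with $\overline{\chi_{\gamma,a}(\pi^{d_a/3}x)/\pi^{d_a}}$ irreducible over $\kappa$ (unramified case) or $d_a \equiv 1,2 \pmod 3$ (ramified case); by Proposition \ref{da3}, $d_a$ is determined by $S(\gamma)$, which produces the three sub-cases of the statement.

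For each $0 \le s \le \lfloor d_a/3\rfloor$, set $d_s := d_a - 3s$ and
\[
T_s \;:=\; \sum_{\substack{k_1+k_2 = d_s \\ 0\le k_1 \le k_2}} c_{(k_1,k_2)}\cdot \mathcal{SO}_{\gamma^{(s)},(k_1,k_2)},
\]
so that $\mathcal{SO}_\gamma = \sum_{s} q^{-3s} T_s$. First I would compute $T_s$ in closed form. The $k_1 = 0$ stratum (type $(d_s)$) contributes $(q^3-1)(q^2-1)/q^5$, \emph{independently of} $d_s$, by Corollary \ref{cornm1}. The strata with $k_1 \ge 1$ are distributed exhaustively and without overlap among Cases~1--4 of Theorem \ref{thm71} by comparing $k_1$ with $t_s := \lceil d_s/3\rceil$, $s_s := \lfloor d_s/3\rfloor$, and $d_s/2$; in particular, Case~4 is active exactly once per $s$ in the unramified case (at $k_1 = d'-s$) and never in the ramified cases, since there $d_s \not\equiv 0 \pmod 3$. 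Each term in Cases~1, 2, 4 has the shape $(q^3-1)(q^2-1)\,q^{-k_1}/q^6$ times an affine polynomial in $k_1$, so the sum over the admissible range collapses via the elementary identities $\sum_k q^{-k}$ and $\sum_k k\,q^{-k}$; Case~3 contributes, only when $d_s$ is even, the single term $(q^3-1)(q^2-1)\,q^{-d_s/2}/q^5$.

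Next I would substitute $d_s = 3(d'-s)$, $3(d'-s)+1$, or $3(d'-s)+2$ according to the three sub-cases and evaluate the outer geometric sum $\sum_{s=0}^{d'} q^{-3s} T_s$, which reduces to a short combination of $\sum q^{-3s}$ and $\sum s\,q^{-3s}$. In the unramified case the terminal value $s=d'$ needs separate treatment, since then $d_s = 0$ and the only stratum is the trivial $(0,0)$: here one computes $\mathcal{SO}_{\gamma^{(d')}}$ directly via Weil's formula (Lemma \ref{volumeform}) using that $\overline{\chi}_{\gamma^{(d')}}$ is irreducible over $\kappa$, obtaining $(q^2-1)(q-1)/q^3$. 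The principal obstacle is purely bookkeeping: correctly partitioning the range $1 \le k_1 \le \lfloor d_s/2\rfloor$ among Cases~1--4 and tracking the three boundary phenomena (Case~3 when $d_s$ is even, Case~4 when $d_s \equiv 0 \pmod 3$, and the trivial stratum when $d_s = 0$) so that no over- or undercounting occurs. Once $T_s$ is simplified, collecting the coefficients of $1$, $q^{-d'-3}$, $q^{-3d'-3}$ in the unramified case, and of $1$, $q^{-d'-4}$, $q^{-3d'-5}$ (or $q^{-3d'-6}$) in the two ramified cases, will reproduce the stated formulas after routine algebra.
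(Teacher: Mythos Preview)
Your approach is essentially the same as the paper's: both substitute Theorem \ref{thm71} and Corollary \ref{cornm1} into the summation (\ref{eqsor}), evaluate the inner sum $T_s$ in closed form by partitioning the range of $k_1$ among Cases~1--4, and then perform the outer geometric sum over $s$, treating the three sub-cases $d_a \in \{3d', 3d'+1, 3d'+2\}$ separately. The only minor deviation is your handling of the terminal stratum $d_s=0$ in the unramified case: the paper invokes Corollary \ref{corred} (reduction to $\mathfrak{gl}_1$ over $\kappa_R$) to obtain $(q^2-1)(q-1)/q^3$, whereas you propose a direct smoothness-plus-point-count argument; both are valid and yield the same value.
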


In the theorem, $S(\gamma)$ cannot be of the form $3d'+2$ by Proposition \ref{da3}.

\begin{theorem}\label{result4}
For other regular semisimple elements $\gamma \in \gl_{3}(\mfo)$, the orbital integral for $\gamma$ is as follows:

\begin{enumerate}
\item If $\chi_{\gamma}(x)$ involves an irreducible quadratic polynomial as a factor,  then the irreducible quadratic  factor of $\chi_{\gamma}(x)$ is expressed as $\chi_{\gamma'}(x)\in \mfo[x]$ for a certain  $\gamma'\in \mathfrak{gl}_{2}(\mfo)$.  
We then have the following formula:
\[
\mathcal{SO}_{\gamma}=\left\{\begin{array}{l l}
\frac{(q+1)(q^{2}+q+1)}{q^{3}}-\frac{2(q^{2}+q+1)}{q^{S(\gamma')+3}}& \textit{if a direct summand of $F_{\chi_{\gamma}}/F$ involves unramified quadratic};\\
\frac{(q+1)(q^{2}+q+1)}{q^{3}}-\frac{(q+1)(q^{2}+q+1)}{q^{S(\gamma')+4}} & \textit{if a direct summand of $F_{\chi_{\gamma}}/F$ involves ramified quadratic}.
\end{array}\right.
\]
Here  
$S(\gamma')=S(\gamma)-\frac{\ord (\Delta_{\gamma})-\ord (\Delta_{\gamma'})}{2}$.

\item For a hyperbolic regular semisimple element $\gamma \in \mathfrak{gl}_{3}(\mfo)$,
\[
\mathcal{SO}_{\gamma}=\frac{(q+1)(q^{2}+q+1)}{q^{3}}.
\]

\end{enumerate}

\end{theorem}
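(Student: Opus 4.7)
The plan is to derive both parts of Theorem \ref{result4} as direct consequences of the parabolic descent identity in Proposition \ref{cor410}, combined with the already computed orbital integrals for $\gl_1$ (Section \ref{sec62}) and $\gl_2$ (Theorem \ref{result2}). No new smoothening is required; everything reduces to a short bookkeeping.

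For part (2), the hyperbolic case: $\chi_\gamma(x)$ splits over $\mathfrak{o}$ as a product of three distinct monic linear factors, hence $B(\gamma)$ has cardinality three with each $n_i=1$. Proposition \ref{cor410} gives
\[
\mathcal{SO}_\gamma \;=\; \frac{\#\mathrm{GL}_3(\kappa)\,q^{-9}}{(\#\mathrm{GL}_1(\kappa)\,q^{-1})^{3}} \prod_{i\in B(\gamma)} \mathcal{SO}_{\gamma_i}.
\]
Each $\mathcal{SO}_{\gamma_i}=1$ by Section \ref{sec62}, and an elementary computation using $\#\mathrm{GL}_3(\kappa)=(q^3-1)(q^3-q)(q^3-q^2)$ shows that the prefactor equals $(q+1)(q^2+q+1)/q^3$, matching the claim.

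For part (1): the factorization takes the form $\chi_\gamma(x)=\chi_{\gamma'}(x)(x-a)$ with $\chi_{\gamma'}(x)\in\mathfrak{o}[x]$ irreducible of degree $2$ and $a\in\mathfrak{o}$. Here $|B(\gamma)|=2$ with $n_{i_1}=2$, $n_{i_2}=1$, and Proposition \ref{cor410} together with $\mathcal{SO}=1$ for the linear factor gives
\[
\mathcal{SO}_\gamma \;=\; \frac{\#\mathrm{GL}_3(\kappa)\,q^{-9}}{\#\mathrm{GL}_2(\kappa)\,q^{-4}\cdot\#\mathrm{GL}_1(\kappa)\,q^{-1}}\cdot\mathcal{SO}_{\gamma'} \;=\; \frac{q^2+q+1}{q^2}\cdot\mathcal{SO}_{\gamma'}.
\]
Substituting the two formulas of Theorem \ref{result2} according to whether $F_{\chi_{\gamma'}}/F$ is unramified or ramified immediately reproduces the two expressions in the statement.

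The only remaining point is the Serre-invariant identity $S(\gamma')=S(\gamma)-\tfrac{1}{2}(\ord\Delta_\gamma-\ord\Delta_{\gamma'})$. Applying Equation (\ref{eq324}) to the factorization above gives $\ord\Delta_\gamma-\ord\Delta_{\gamma'}=2\,\ord\mathrm{Res}(\chi_{\gamma'},x-a)=2\,\ord\chi_{\gamma'}(a)$, so the assertion reduces to $S(\gamma)-S(\gamma')=\ord\chi_{\gamma'}(a)$. This is a standard Chinese Remainder computation: the natural injection $\mathfrak{o}[x]/\chi_\gamma(x)\hookrightarrow \mathfrak{o}[x]/\chi_{\gamma'}(x)\oplus\mathfrak{o}$ has cokernel of $\mathfrak{o}$-length $\ord\chi_{\gamma'}(a)$, and chaining this inclusion with $\mathfrak{o}_{F_{\chi_\gamma}}=\mathfrak{o}_{F_{\chi_{\gamma'}}}\oplus\mathfrak{o}$ yields the identity after taking $\mathfrak{o}$-lengths. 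This bookkeeping is the only (minor) obstacle in the proof; the geometric content has already been absorbed into Theorems \ref{result2} and Proposition \ref{cor410}.
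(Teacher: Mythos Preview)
Your proof is correct and follows essentially the same approach as the paper: both parts are obtained by applying the parabolic descent of Proposition \ref{cor410} together with the known values for $\mathfrak{gl}_1$ and $\mathfrak{gl}_2$. The only minor difference is in justifying the Serre-invariant relation: the paper cites Yun's formula $S(\gamma)-\sum_i S(\gamma_i)=\sum_{i\neq j}\ord\mathrm{Res}(\chi_{\gamma_i},\chi_{\gamma_j})$ from \cite{Yun13} and then invokes Equation (\ref{eq324}), whereas you reprove the special case needed here directly via the Chinese Remainder cokernel computation; both arguments are equivalent and yield the same identity.
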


We postpone the proofs of Theorems \ref{result3}-\ref{result4} to Appendix \ref{appc}.

\begin{remark}\label{rmk77}
In this remark, we will describe our formula provided in Theorems \ref{result3}-\ref{result4} with respect to another measure $d\mu$ used in \cite{Yun13}, using  Propositions \ref{proptrans}-\ref{propserre}.
Suppose that $char(F) = 0$ or $char(F)>3$. 
The orbital integral  for a regular semisimple element $\gamma \in \mathfrak{gl}_{3}(\mfo)$ with respect to $d\mu$, which was denoted by $\mathcal{SO}_{\gamma,d\mu}$ in Section \ref{sscotn}, is given as follows: 
\begin{enumerate}
\item
 For an elliptic regular semisimple element $\gamma \in \gl_{3}(\mfo)$, 
\[\mathcal{SO}_{\gamma,d\mu}=\left\{\begin{array}{l l}
\frac{(q^{2}+q+1)}{(q-1)^2}\left(q^{3d'} - 3 \cdot \frac{q^{2d'}-1}{q+1}-1\right)+1 & \textit{if $F_{\chi_{\gamma}}/F$ is unramified and $S(\gamma)=3d'$};\\
\frac{q^2}{(q-1)^2}\left(q^{3d'}-(2q+1)\frac{q^{2d'}-1}{q(q+1)}-1\right)+1& \textit{if $F_{\chi_{\gamma}}/F$ is ramified and $S(\gamma)=3d'$};\\
\frac{q^2}{(q-1)^2}\left(q^{3d'+1} - (q+2)\frac{q^{2d'}-1}{q+1}-q\right)+q+1& \textit{if $F_{\chi_{\gamma}}/F$ is ramified and  $S(\gamma)=3d'+1$}.\\
\end{array}\right.
\]

\item If $\chi_{\gamma}(x)$ involves an irreducible quadratic polynomial as a factor,  then the irreducible quadratic  factor of $\chi_{\gamma}(x)$ is expressed as $\chi_{\gamma'}(x)\in \mfo[x]$ for a certain  $\gamma'\in \mathfrak{gl}_{2}(\mfo)$.  
We then have the following formula:
\[
\mathcal{SO}_{\gamma,d\mu}=\left\{\begin{array}{l l}
q^{S(\gamma)-S(\gamma')}\cdot (1+(q+1)\frac{q^{S(\gamma')}-1}{q-1})& \textit{if a direct summand of $F_{\chi_{\gamma}}/F$ involves unramified quadratic};\\
q^{S(\gamma)-S(\gamma')}\cdot \frac{q^{S(\gamma')+1}-1}{q-1} & \textit{if a direct summand of $F_{\chi_{\gamma}}/F$ involves ramified quadratic}.
\end{array}\right.
\]

\item For a hyperbolic regular semisimple element $\gamma \in \mathfrak{gl}_{3}(\mfo)$,
\[
\mathcal{SO}_{\gamma,d\mu}=q^{S(\gamma)}.
\]
\end{enumerate}
\end{remark}

\section{Application 3: Lower bound for \texorpdfstring{$\mathcal {SO}_{\gamma}}{SOg}$ in $\mathfrak{gl}_n$}\label{sec8}
In this section, we provide a lower bound for  $\sog$, when $n\geq 4$.
Our strategy is to compute two easy terms in the formula of Proposition \ref{stra2}, when $m=n-1$ and when $m=n-2$  and $t=1$.
The case $m=n-1$ is already treated in Section \ref{subsec52} and thus we will compute the latter case.

\subsection{The case of \texorpdfstring{$m=n-2$}{m = n-2} and \texorpdfstring{$t=1$}{t=1}}
In this subsection, we will provide a formula for a lower bound of $\mathcal{SO}_{\gamma, (k_1, k_2),1}$.
Let $M$ be a sublattice of $L$ of type $(k_1, k_2)$ with $k_1>0$
and let $V$ be a subspace of $M/ M\cap \pi L$ of dimension $n-3$ over $\kappa$.  
Recall 
\[
\mathcal{SO}_{\gamma, (k_1, k_2),1}=\mathcal{SO}_{\gamma, \cL(L,M,V)}=\int_{O_{\gamma, \cL(L,M,V)}}|\omega_{\chi_{\gamma}}^{\mathrm{ld}}|,
\]
where 
 \[
\left\{
  \begin{array}{l }
  \textit{$\omega_{\chi_{\gamma}}^{\mathrm{ld}}=\omega_{\mathfrak{gl}_{n, \mathfrak{o}}}/\rho_n^{\ast}\omega_{\mathbb{A}^n_{\mathfrak{o}}}$ in Section \ref{measure}};\\
O_{\gamma, \cL(L,M,V)}=\varphi_{n,M,V}^{-1}(\chi_{\gamma})(\mfo) \textit{ in Remark \ref{rmkchangemea}.(1)};\\
\varphi_{n,M,V}: \cL(L,M,V) \longrightarrow \mathbb{A}_{M,V} \textit{ in Remark \ref{rmkchangemea}.(1)};\\
\textit{$\cL(L,M,V)$ is an open subscheme of $\widetilde{\cL}(L,M,V)$ in Lemma \ref{lem413}};\\
\mathbb{A}_{M,V}(R)=R^{n-3}\times \pi R \times \pi^{k_1} R \times  \pi^{k_1+k_2}R \textit{ in Section \ref{sec51}}.
    \end{array} \right.
\]

By Equations (\ref{matrixform})-(\ref{matrixform2}), each element $x$ of $\cL(L,M,V)(R)$ for a flat $\mfo$-algebra $R$ is expressed as the following matrix:
\begin{equation}\label{eq81}
x=\begin{pmatrix}
\pi x_{1,1} &x_{1,2}&\cdots & x_{1,n-2}&x_{1,n-1}&x_{1,n}\\
\vdots &\vdots & &\vdots &\vdots &\vdots \\
\pi x_{n-2,1} &x_{n-2,2}&\cdots & x_{n-2,n-2}&x_{n-2,n-1}&x_{n-2,n}\\
\pi^{k_1} x_{n-1,1} &\pi^{k_1} x_{n-1,2}&\cdots & \pi^{k_1} x_{n-1,n-2}&\pi^{k_1} x_{n-1,n-1}&\pi^{k_1} x_{n-1,n}\\
\pi^{k_2} x_{n,1} &\pi^{k_2} x_{n,2}&\cdots & \pi^{k_2} x_{n,n-2}&\pi^{k_2} x_{n,n-1}&\pi^{k_2} x_{n,n}
\end{pmatrix}
\end{equation}
such that
 \[
\left\{
  \begin{array}{l }
x':=\begin{pmatrix}
\pi x_{1,1} &x_{1,2}&\cdots & x_{1,n-2}&x_{1,n-1}&x_{1,n}\\
\vdots &\vdots & &\vdots &\vdots &\vdots \\
\pi x_{n-2,1} &x_{n-2,2}&\cdots & x_{n-2,n-2}&x_{n-2,n-1}&x_{n-2,n}\\
 x_{n-1,1} &  x_{n-1,2}&\cdots &  x_{n-1,n-2}&  x_{n-1,n-1}&  x_{n-1,n}\\
 x_{n,1} & x_{n,2}&\cdots &  x_{n,n-2}&  x_{n,n-1}&  x_{n,n}
\end{pmatrix}
\textit{ is invertible in }\mathrm{M}_n(R);\\
\textit{for } X_{n-2,n-2}:= \begin{pmatrix}
\pi x_{1,1} &x_{1,2}&\cdots & x_{1,n-2}\\
\vdots &\vdots & &\vdots  \\
\pi x_{n-2,1} &x_{n-2,2}&\cdots & x_{n-2,n-2}
\end{pmatrix},
\textit{$\overline{X}_{n-2,n-2}\in \mathrm{M}_{n-2}(R\otimes \kappa)$ is of rank $n-3$}.
    \end{array} \right.
\] 

Here, the rank of a matrix in $\mathrm{M}_{n-2}(R\otimes \kappa)$ means the following;
a matrix in $\mathrm{M}_{n-2}(R\otimes \kappa)$ is an endomorphism of a free $R\otimes \kappa$-module of rank $n-2$. 
If the image is free of rank $n-3$ and its cokernel is free of rank $1$, then we say that such matrix is of rank $n-3$.

\begin{remark}\label{rmk81}
\begin{enumerate}
\item 
As in the former cases treating $\gl_2$ and $\gl_3$, it would be the best to show that 
 the scheme $\vpi_{n,M, V}^{-1}(\chi_{\gamma})$ is smooth over $\mfo$.
 However, this is no longer true if $k_1<k_2$. 
 \\

\item Instead, we will prove that a certain open subscheme of  $\vpi_{n,M, V}^{-1}(\chi_{\gamma})$ is smooth over $\mfo$ in the  proposition provided below. 
To define such ``open'' subscheme, we need a bit more  setting.

\begin{enumerate}
\item Suppose that $k_1<k_2$.
Let $\mathrm{M}_{n-1}^0$ be the closed subscheme of $\mathrm{M}_{n-1}$ defined over $\mfo$ representing 
$$\mathrm{M}_{n-1}^0(R)=\begin{pmatrix}
0 &x_{1,2}&\cdots & x_{1,n-2}&x_{1,n-1}\\
\vdots &\vdots & &\vdots &\vdots  \\
0 &x_{n-2,2}&\cdots & x_{n-2,n-2}&x_{n-2,n-1}\\
 x_{n-1,1} &  x_{n-1,2}&\cdots &  x_{n-1,n-2}&  x_{n-1,n-1}
\end{pmatrix}$$ for any $\mfo$-algebra $R$, where $x_{i,j}\in R$. 
Define the projection from $\cL(L,M,V)$ to $\mathrm{M}_{n-1}^0$ as follows:
\[pr:\cL(L,M,V)\rightarrow \mathrm{M}_{n-1}^0, ~~~~~ x \mapsto \begin{pmatrix}
0 &x_{1,2}&\cdots & x_{1,n-2}&x_{1,n-1}\\
\vdots &\vdots & &\vdots &\vdots  \\
0 &x_{n-2,2}&\cdots & x_{n-2,n-2}&x_{n-2,n-1}\\
 x_{n-1,1} &  x_{n-1,2}&\cdots &  x_{n-1,n-2}&  x_{n-1,n-1}
\end{pmatrix}.\]
Here, $x$ is a matrix described in  Equation (\ref{eq81}).
Then $pr$ is obviously smooth. 

Consider the determinant morphism $det: \mathrm{M}_{n-1}^0 \rightarrow \mathbb{A}^1_{\mfo}$. The singular locus of the special fiber of the morphism $det$ is then a closed subscheme of $\mathrm{M}_{n-1}^0$ since the special fiber is determined by the ideal $(\pi)$.
Let $\tilde{\mathrm{M}}_{n-1}^0$ be its complement inside $\mathrm{M}_{n-1}^0$ so as to be an open subscheme of  $\mathrm{M}_{n-1}^0$. 

Caution that $\tilde{\mathrm{M}}_{n-1}^0$ is not the smooth locus of the morphism $det$ since we do not consider any property over the generic fiber. Only the special fiber of $\tilde{\mathrm{M}}_{n-1}^0$ is the smooth locus of the special fiber of $det$ over $\kappa$.

We now define $\cL(L,M,V)_1$ to be the inverse image of $\tilde{\mathrm{M}}_{n-1}^0$ along the morphism $pr$. 
Then  $\cL(L,M,V)_1$  is an open subscheme of $\cL(L, M, V)$ having the same generic fiber.
It is visualized as the following commutative diagram:

\[\xymatrixcolsep{5pc}\xymatrix{
\cL(L, M, V)   \ar[r]^{pr} & \mathrm{M}_{n-1}^0 \ar[r]^{det} & \mathbb{A}^1_{\mfo} \\ 
\cL(L,M,V)_1 \ar@{^{(}->}[u]^{open} \ar[r]^{pr}    & \tilde{\mathrm{M}}_{n-1}^0 \ar@{^{(}->}[u]^{open}  \ar[ur]^{det}
}
\]

Indeed  the special fiber of the composite $det\circ pr: \cL(L, M, V) \rightarrow \mathbb{A}^1_{\mfo}$ is the same as the $(n-1)$-th component of the morphism $\vpi_{n,M, V}\otimes 1$ over $\kappa$.
Let $\vpi_{n,M, V}^1$ be the  restriction of the morphism $\vpi_{n,M, V}$ to $\cL(L,M,V)_1$ so that 
\[
\vpi_{n,M, V}^1 : \cL(L,M,V)_1 \rightarrow  \mathbb{A}_{M,V}.
\]

\item If $k_1=k_2$, then we let $\cL(L,M,V)_1=\cL(L,M,V)$.
\\
\end{enumerate}

\item If $k_1<k_2$, then 
\[
(\vpi^1_{n,M, V})^{-1}(\chi_{\gamma})(\mfo) \overset{open}{\subsetneq} \vpi_{n,M, V}^{-1}(\chi_{\gamma})(\mfo).
\]
Furthermore, the complement of $(\vpi^1_{n,M, V})^{-1}(\chi_{\gamma})(\mfo)$ inside $\vpi_{n,M, V}^{-1}(\chi_{\gamma})(\mfo)$ is non-empty and open  as well.
Thus the volume of the right hand side is  strictly bigger than  the volume of the left hand side. 
Nonetheless, we will provide a formula of the volume of the left hand side. This will be used to provide a lower bound of $\sog$ with any $n\geq 4$.
\\

\item
We remark that  if $k_1<k_2$ then the  morphism $\vpi_{n,M, V}$ is no longer smooth at a point of $\vpi_{n,M, V}^{-1}(\chi_{\gamma}) \cap  \left(\cL(L,M,V) \backslash \cL(L,M,V)_1\right)$.  
Therefore the precise formula for $\mathcal{SO}_{\gamma, (k_1, k_2),1}$ (and so $\sog$) with $n\geq 4$ would require much more involved congruence conditions than those considered in this paper.
\end{enumerate}

\end{remark}

We will show that  $(\vpi_{n,M, V}^1)^{-1}(\chi_{\gamma})$ is smooth over $\mfo$.
As a preparation, we need the following lemma which explains a matrix description of an element of $x\in \cL(L,M,V)_1(\bar{\kappa})$.
\begin{lemma}\label{lem82}
Suppose that $k_1<k_2$.
Let $x\in \cL(L,M,V)(\bar{\kappa})$ such that  
$$\begin{pmatrix}
x_{1,2}&\cdots & x_{1,n-2}\\
\vdots & &\vdots  \\
x_{n-2,2}&\cdots & x_{n-2,n-2}
\end{pmatrix}=
\begin{pmatrix}
x_{1,2}&x_{1,3}&\cdots & x_{1,n-2}\\
0&1&\cdots & x_{2,n-2}\\
\vdots &\vdots &  &\vdots   \\
0&\cdots & 1&x_{n-4,n-2}\\
0&\cdots & 0&1\\
0&\cdots & 0&0
 \end{pmatrix}$$
 with $x_{1,2}\neq 0$.
Then $x\in \cL(L,M,V)_1(\bar{\kappa})$ if and only if  either $x_{n-1,1}$ or $x_{n-2,n-1}$ is nonzero.
\end{lemma}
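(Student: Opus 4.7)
The plan is to unfold the definitions and reduce the claim to a direct computation of the gradient of a determinant map at the point $pr(x)$. Since $\cL(L,M,V)_1$ is by construction the preimage $pr^{-1}(\tilde{\mathrm{M}}_{n-1}^0)$, the condition $x\in \cL(L,M,V)_1(\bar{\kappa})$ is equivalent to $pr(x)\in \tilde{\mathrm{M}}_{n-1}^0(\bar{\kappa})$. By the definition of $\tilde{\mathrm{M}}_{n-1}^0$ (the complement, inside $\mathrm{M}_{n-1}^0$, of the singular locus of the special fiber of the morphism $\det$), this means exactly that $pr(x)$ is a smooth point of $\det:\mathrm{M}_{n-1}^0\otimes\kappa\to\mathbb{A}^1_\kappa$; since the target is one-dimensional, this amounts to the non-vanishing of at least one partial derivative of $\det$ at $pr(x)$.

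First I would compute $\det(pr(x))$ by Laplace expansion along the first column. Because every entry in the first column of $pr(x)$ is zero except possibly $x_{n-1,1}$, we obtain $\det(pr(x))=(-1)^{n}\,x_{n-1,1}\cdot\det(B)$, where $B$ is the $(n-2)\times(n-2)$ submatrix obtained from $pr(x)$ by deleting the last row and the first column. Substituting the given echelon form of $X_{n-2,n-2}$ together with the last column $(x_{1,n-1},\ldots,x_{n-2,n-1})^{t}$, the matrix $B$ becomes upper triangular with diagonal $(x_{1,2},1,1,\ldots,1,x_{n-2,n-1})$, so $\det(B)=x_{1,2}\,x_{n-2,n-1}$ and hence $\det(pr(x))=\pm\,x_{n-1,1}\,x_{1,2}\,x_{n-2,n-1}$.

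Next I would inspect the partial derivatives of $\det$ with respect to the free coordinates of $\mathrm{M}_{n-1}^0$ at the point $pr(x)$. Since $x_{n-1,1}$ does not occur inside $B$, one has $\partial\det/\partial x_{n-1,1}=\pm\det(B)=\pm x_{1,2}\,x_{n-2,n-1}$. For the variable $x_{n-2,n-1}$, only the $(n-2,n-2)$-entry of $B$ is affected, so $\partial\det/\partial x_{n-2,n-1}=\pm x_{n-1,1}\cdot\text{cof}_{n-2,n-2}(B)=\pm x_{n-1,1}\,x_{1,2}$. For any other variable $v\neq x_{n-1,1}$, the product rule gives $\partial\det/\partial v=\pm x_{n-1,1}\cdot\partial\det(B)/\partial v$, which vanishes whenever $x_{n-1,1}=0$. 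Using the hypothesis $x_{1,2}\neq 0$, these formulas show that the gradient of $\det$ at $pr(x)$ is nonzero if and only if at least one of $x_{n-1,1}$ or $x_{n-2,n-1}$ is nonzero, which is the desired equivalence.

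The only delicate points are bookkeeping of cofactor signs and verifying that, when both $x_{n-1,1}=x_{n-2,n-1}=0$, no other partial derivative is inadvertently nonzero; the product-rule argument above handles this uniformly. There is no substantive obstacle beyond these routine verifications.
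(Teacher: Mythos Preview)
Your proof is correct and follows essentially the same approach as the paper: both reduce to checking smoothness of $\det$ at $pr(x)$ by computing the differential, and both exploit the factorization $\det(pr(x))=\pm\,x_{n-1,1}\,x_{1,2}\,x_{n-2,n-1}$ coming from the echelon form. The only cosmetic difference is that the paper phrases the computation in terms of images of specific tangent vectors $A_i$ (with a single nonzero entry), while you phrase it in terms of partial derivatives; these are literally the same calculation.
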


\begin{proof}
For such $x\in \cL(L,M,V)(\bar{\kappa})$,    $pr(x)\left(\in \mathrm{M}_{n-1}^0(\bar{\kappa})\right)$ is 
$$pr(x)=\begin{pmatrix}
0 &x_{1,2}&x_{1,3}&\cdots & x_{1,n-2}&x_{1,n-1}\\
0 &0&1&\cdots & x_{2,n-2}&x_{2,n-1}\\
\vdots &\vdots & & &\vdots &\vdots  \\
0 & 0 &\cdots &0&1&x_{n-3,n-1}  \\
0 &0&\cdots & & 0&x_{n-2,n-1}\\
 x_{n-1,1} &  x_{n-1,2}& & \cdots &  x_{n-1,n-2}&  x_{n-1,n-1}
\end{pmatrix}.$$
It suffices to show that the special fiber of $det$ is smooth at $pr(x)$ if and only if either $x_{n-1,1}$ or $x_{n-2,n-1}$ is nonzero.

The morphism $det$ is smooth at $pr(x)$ only when the induced map on the Zariski tangent space is surjective. 
Let $X=\begin{pmatrix}
0 &a_{1,2}&\cdots & a_{1,n-2}&a_{1,n-1}\\
\vdots &\vdots & &\vdots &\vdots  \\
0 &a_{n-2,2}&\cdots & a_{n-2,n-2}&a_{n-2,n-1}\\
 a_{n-1,1} &  a_{n-1,2}&\cdots &  a_{n-1,n-2}&  a_{n-1,n-1}
\end{pmatrix}$ be an element of the Zariski tangent space of $\mathrm{M}^0_{n-1}(\bar{\kappa})$ at $pr(x)$.
\begin{enumerate}
\item If $x_{n-1,1}=x_{n-2,n-1}=0$, then it is easy to see that  the image of $X$ under the differential of $det$ is $0$.  

\item Suppose that $x_{n-1,1}\neq 0$. 
Let $a_{i,j}=0$ except for $a_{n-2, n-1}$. 
Then the image of $X$ under the differential of $det$ is $x_{n-1,1}x_{1,2}a_{n-2,n-1}$, which yields surjectivity.

\item Suppose that $x_{n-2,n-1}\neq 0$. 
Let $a_{i,j}=0$ except for $a_{n-1, 1}$. 
Then the image of $X$ under the differential of $det$ is $x_{n-2,n-1}x_{1,2}a_{n-1,1}$, which yields surjectivity.
\end{enumerate}
Here, the calculation of the image of $X$ under the differential of $det$ follows the argument described in the proof of Lemma \ref{lemsm}.
The above three considerations complete the proof.
\end{proof}

\begin{proposition}\label{prop83}
The scheme $(\vpi_{n,M, V}^1)^{-1}(\chi_{\gamma})$ is smooth over $\mfo$.
\end{proposition}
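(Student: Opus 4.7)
The plan is to mimic the argument used for Lemma \ref{lemsm} (the $m=n-1$ case). By \cite[Lemma 5.5.1]{GY} and flatness of $\cL(L,M,V)_1$ over $\mfo$, it suffices to check that $(\vpi_{n,M,V}^1)^{-1}(\chi_\gamma)$ lies in the smooth locus of $\vpi_{n,M,V}^1$ on both fibers. On the generic fiber, $\vpi_{n,M,V}^1 \otimes F$ agrees with $\rho_n$ (the two open subschemes $\cL(L,M,V)_1$ and $\cL(L,M,V)$ share the same generic fiber), and any point of the fiber is conjugate to $\gamma$, hence regular, so smoothness at such points follows from \cite[Theorem 4.20]{Hum}. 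The content is therefore entirely in the special fiber.

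For the special fiber, I would fix $m\in (\vpi_{n,M,V}^1)^{-1}(\chi_\gamma)(\bar\kappa)$, write it in the matrix form \eqref{eq81}, and exhibit an explicit subspace of the Zariski tangent space $T_m$ that surjects onto $T_{\vpi_{n,M,V}^1(m)} \cong \bar\kappa^n$. First I would use Lemma \ref{lem411} applied to the $(n-2)\times(n-2)$ block $\overline{X}_{n-2,n-2}$: its characteristic polynomial is $x^{n-2}$ and, because the stratum corresponds to $t=1$ (i.e.\ the rank of the image in $\overline{M}$ is $n-3$), its rank is exactly $n-3$. After a change of basis of $L$ preserving the type and subspace data (as in Lemma \ref{lemsm}), I may assume $\overline{X}_{n-2,n-2}$ has the standard Jordan-like shape displayed in Lemma \ref{lem82}, so that in particular $x_{1,2}$ is a unit. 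Lemma \ref{lem82} then guarantees that at least one of $x_{n-1,1}$ or $x_{n-2,n-1}$ is a unit in $\bar\kappa$; this is the crucial input that distinguishes the present proposition from $\cL(L,M,V)$ itself.

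The main computation, and the principal obstacle, is to show surjectivity of $d(\vpi_{n,M,V}^1)_{\ast,m}$ onto all $n$ coordinates of the target $\mathbb{A}_{M,V}$ simultaneously, following the formal computation of coefficients of the characteristic polynomial of $m + \epsilon X$ as in the proof of Lemma \ref{lemsm}. The easy coordinates (the first $n-3$, essentially corresponding to the free entries of $X_{n-2,n-2}$) are hit by varying an analogue of the ``first column'' of the interior block, as before. The delicate coordinates are the last three, which are twisted by $\pi^1$, $\pi^{k_1}$, and $\pi^{k_1+k_2}$; for these I would test with tangent vectors supported in the $(n-1)$-th and $n$-th rows and columns, using the units $x_{1,2}$ together with the specific entry ($x_{n-1,1}$ or $x_{n-2,n-1}$) that Lemma \ref{lem82} has made available. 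The hard part is precisely checking that this collection of test vectors produces a triangular (or otherwise invertible) system in the three critical output coordinates in each of the two cases of Lemma \ref{lem82}; once that triangular structure is in place, surjectivity, and hence smoothness, follows.
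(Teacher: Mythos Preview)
Your plan is essentially the paper's proof, and the overall architecture (reduce to the special fiber via \cite[Lemma~5.5.1]{GY}, normalize the interior block to Jordan form, then exhibit explicit tangent vectors case-by-case using the unit supplied by Lemma~\ref{lem82}) matches exactly. Two adjustments are needed to make it go through.

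First, the ``first column'' heuristic from Lemma~\ref{lemsm} does not transfer directly: in the present stratum the first column of the interior $(n-2)\times(n-2)$ block is $\pi$-twisted (cf.\ Equation~\eqref{eq81}), so perturbing it only affects the $\pi$-twisted $(n-2)$-th coordinate of $\mathbb{A}_{M,V}$, not the first $n-3$ untwisted ones. The paper instead varies the $(n-2)$-th \emph{row} of the interior block, taking $A_l$ with $a_{n-2,l}=1$ for $l=1,\dots,n-2$; since that row of $J_{n-2}$ is zero, the resulting images form a triangular system hitting the first $n-2$ coordinates (the $\pi$-twisted $a_{n-2,1}$ hits the $(n-2)$-th). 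The remaining two coordinates are then handled in two cases, $x_{n-1,1}\neq 0$ and $x_{n-1,1}=0,\,x_{n-2,n-1}\neq 0$, by tangent vectors supported in the last two rows/columns.

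Second, you invoke Lemma~\ref{lem82} but do not discuss $k_1=k_2$, where that lemma is vacuous (by definition $\cL(L,M,V)_1=\cL(L,M,V)$ in Remark~\ref{rmk81}(2)(b)). There the invertibility of $m'$ only gives that one of $x_{n-1,1},x_{n,1}$ is nonzero; the paper swaps the last two basis vectors if necessary to assume $x_{n-1,1}\neq 0$, reducing to the first case above.
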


\begin{proof}
The proof is similar to that of Theorem \ref{thm55}.
It suffices to show that for any 
$m\in (\vpi_{n,M, V}^1)^{-1}(\chi_{\gamma})(\bar{\kappa})$,
the induced map on the Zariski tangent space 
\[
d(\vpi_{n,M,V}^1)_{\ast, m}: T_m \longrightarrow T_{\vpi_{n,M,V}^1(m)}
\] 
is surjective, where $T_m$ is the Zariski tangent space of $\cL(L,M,V)_1\otimes \bar{\kappa}$ at $m$ and $T_{\vpi_{n,M,V}^1(m)}$ is the Zariski tangent space of $\mathbb{A}_{M}\otimes \bar{\kappa}$ at $\vpi^1_{n,M,V}(m)$.

Using the matrix description given in Equation (\ref{eq81}), 
we write $m\in \cL(n,M,V)(\bar{\kappa})$ and $X\in T_m$ as the following matrices formally;
 \begin{equation}\label{eq82}
\left\{
  \begin{array}{l }
m=\begin{pmatrix}
\pi x_{1,1} &x_{1,2}&\cdots & x_{1,n-2}&x_{1,n-1}&x_{1,n}\\
\vdots &\vdots & &\vdots &\vdots &\vdots \\
\pi x_{n-2,1} &x_{n-2,2}&\cdots & x_{n-2,n-2}&x_{n-2,n-1}&x_{n-2,n}\\
\pi^{k_1} x_{n-1,1} &\pi^{k_1} x_{n-1,2}&\cdots & \pi^{k_1} x_{n-1,n-2}&\pi^{k_1} x_{n-1,n-1}&\pi^{k_1} x_{n-1,n}\\
\pi^{k_2} x_{n,1} &\pi^{k_2} x_{n,2}&\cdots & \pi^{k_2} x_{n,n-2}&\pi^{k_2} x_{n,n-1}&\pi^{k_2} x_{n,n}
\end{pmatrix};\\
X=\begin{pmatrix}
\pi a_{1,1} &a_{1,2}&\cdots & a_{1,n-2}&a_{1,n-1}&a_{1,n}\\
\vdots &\vdots & &\vdots &\vdots &\vdots \\
\pi a_{n-2,1} &a_{n-2,2}&\cdots & a_{n-2,n-2}&a_{n-2,n-1}&a_{n-2,n}\\
\pi^{k_1} a_{n-1,1} &\pi^{k_1} a_{n-1,2}&\cdots & \pi^{k_1} a_{n-1,n-2}&\pi^{k_1} a_{n-1,n-1}&\pi^{k_1} a_{n-1,n}\\
\pi^{k_2} a_{n,1} &\pi^{k_2} a_{n,2}&\cdots & \pi^{k_2} a_{n,n-2}&\pi^{k_2} a_{n,n-1}&\pi^{k_2} a_{n,n}
\end{pmatrix}.
    \end{array} \right.
 \end{equation} 
 
 where  
\[
\left\{
  \begin{array}{l }
\textit{$x_{ij}, a_{ij}\in \bar{\kappa}$};\\
m'=\begin{pmatrix}
0 &x_{1,2}&\cdots & x_{1,n-2}&x_{1,n-1}&x_{1,n}\\
\vdots &\vdots & &\vdots &\vdots &\vdots \\
0 &x_{n-2,2}&\cdots & x_{n-2,n-2}&x_{n-2,n-1}&x_{n-2,n}\\
 x_{n-1,1} &  x_{n-1,2}&\cdots &  x_{n-1,n-2}&  x_{n-1,n-1}&  x_{n-1,n}\\
 x_{n,1} & x_{n,2}&\cdots &  x_{n,n-2}&  x_{n,n-1}&  x_{n,n}
\end{pmatrix} \textit{ is invertible  in } \mathrm{M}_n(\bar{\kappa});\\
\textit{the rank of }m'_{n-2}:= 
\begin{pmatrix}
0 &x_{1,2}&\cdots & x_{1,n-2}\\
\vdots &\vdots & &\vdots  \\
0 &x_{n-2,2}&\cdots & x_{n-2,n-2}
\end{pmatrix} \textit{ is $n-3$  in } \mathrm{M}_{n-2}(\bar{\kappa}) .
    \end{array} \right.
\] 

Note that change of a basis does not affect on  smoothness (cf. Lemma \ref{lem412}).
Since $m\in (\vpi_{n,M, V}^1)^{-1}(\chi_{\gamma})(\bar{\kappa})$, the characteristic polynomial of $m'_{n-2}$ is $x^{n-2}$.
By choosing a  basis for $V$ inducing the Jordan canonical form, we may and do  assume that 
\[
\begin{pmatrix}
0 &x_{1,2}&\cdots & x_{1,n-2}\\
\vdots &\vdots & &\vdots \\
0 &x_{n-2,2}&\cdots & x_{n-2,n-2}
\end{pmatrix}=\begin{pmatrix}
0 &1&\cdots & 0\\
\vdots &\vdots   &\ddots &\vdots  \\
0 &0&\cdots &1\\
0 &0&\cdots &0
 \end{pmatrix}.
\]

Our method to prove the surjectivity of $d(\vpi^1_{n,M, V})_{\ast, m}$ is to choose a certain subspace of $T_m$ mapping onto $T_{\vpi_{n,M,V}^1(m)}$.

\begin{enumerate}
\item{The case that $x_{n-1,1}\neq 0$;}

Since $m'$ is invertible, either $x_{n-2,n-1}$ or $x_{n-2,n}$ is nonzero.
Let $A_l$ be the element of $T_m$ such that
\[
A_l=\left\{
  \begin{array}{l l}
\textit{$a_{i,j}=0$ for all $0\leq i,j\leq n$ except $a_{n-2,l}=1$} & \textit{if $1\leq l \leq n-2$};\\
\textit{$a_{i,j}=0$ for all $0\leq i,j\leq n$ except $a_{n-2,n-1}=1$} & \textit{if $l=n-1$}.
    \end{array} \right.
\] 
and
\[
A_n=\left\{
  \begin{array}{l l}
\textit{$a_{i,j}=0$ for all $0\leq i,j\leq n$ except $a_{n,n-1}=1$} & \textit{if $x_{n-2,n}\neq 0$};\\
\textit{$a_{i,j}=0$ for all $0\leq i,j\leq n$ except $a_{n,n}=1$} & \textit{if $x_{n-2,n}=0$ and $ x_{n-2,n-1}\neq 0$}.
    \end{array} \right.
\]

Then the images of $\left(A_{n-2}, \cdots, A_{1}, A_{n-1}, A_n\right)$ are 
\begin{multline*}
(1, 0, \cdots), (0, 1,  0, \cdots), \cdots, (0, \cdots, 0_{n-4}, 1,  0, \cdots), (0, \cdots, 0_{n-3}, \pi,  0, 0), \\
 (0_1, \cdots, 0_{n-3}, \pi^{k_1} x_{n-1,2}, \pi^{k_1} x_{n-1,1}, \ast), \\
\textit{and }
\left\{
  \begin{array}{l l}
   (0, \cdots, 0_{n-1}, \pi^{k_1+k_2} x_{n-1,1}x_{n-2,n}) & \textit{if $x_{n-2,n}\neq 0$};\\
(0, \cdots, 0_{n-1}, \pi^{k_1+k_2} x_{n-1,1}x_{n-2,n-1}) & \textit{if $x_{n-2,n}=0, x_{n-2,n-1}\neq 0$}.
    \end{array} \right.
\end{multline*}
Here, the subscript of $0$ stands for the position of the entry between $1$ and $n$. 
These $n$-vectors are linearly independent as elements of $T_{\vpi_{n,M,V}^1(m)}$ and so the map $d(\vpi^1_{n,M, V})_{\ast, m}$ is surjective.

\item{The case that $x_{n-1,1}=0$ and $x_{n-2, n-1}\neq 0$;}

Since $m'$ is invertible, $x_{n,1}$ is nonzero.
Let $A_l$ be the matrix of $T_m$ such that
\[
A_l=\left\{
  \begin{array}{l l}
\textit{$a_{i,j}=0$ for all $0\leq i,j\leq n$ except $a_{n-2,l}=1$} & \textit{if $1\leq l \leq n-2$};\\
\textit{$a_{i,j}=0$ for all $0\leq i,j\leq n$ except $a_{n-1,1}=1$} & \textit{if $l=n-1$};\\
\textit{$a_{i,j}=0$ for all $0\leq i,j\leq n$ except $a_{n-1,n}=1$} & \textit{if $l=n$}.
    \end{array} \right.
\]

Then the images of $\left(A_{n-2}, \cdots, A_{1}, A_{n-1}, A_n\right)$ are 
\begin{multline*}
(1, 0, \cdots), (0, 1,  0, \cdots), \cdots, (0, \cdots, 0_{n-4}, 1,  0, \cdots), (0, \cdots, 0_{n-3}, \pi,  0, 0),\\
  ((0_1, \cdots, 0_{n-2}, \pi^{k_1} x_{n-2,n-1}, \ast),  (0, \cdots, 0_{n-1}, \pi^{k_1+k_2} x_{n-2,n-1}x_{n,1}). 
\end{multline*}

These $n$-vectors are linearly independent as elements of $T_{\vpi_{n,M,V}^1(m)}$ and so the map $d(\vpi^1_{n,M, V})_{\ast, m}$ is surjective.
\\
\end{enumerate}

Now suppose that $m\in \cL(n,M,V)_1(\bar{\kappa})$.
If  $k_1<k_2$, then  either $x_{n-1,1}$ or $x_{n-2,n-1}$ is nonzero by Lemma \ref{lem82}.
The above two cases then complete  the proof.

The remaining case is when $k_1=k_2$. 
Since $m'$ is invertible,  either  $x_{n-1,1}$ or $x_{n,1}$ is nonzero.
By change of the last two vectors in a basis forming the matrix  in Equation (\ref{eq82}) (if necessary), we may and do assume that $x_{n-1,1}\neq 0$.
The proof is then completed by  the first case given above.
\end{proof}

\begin{corollary}\label{cor84}
The cardinality of the set $(\vpi_{n,M, V}^1)^{-1}(\chi_{\gamma})(\kappa)$ is
\[
\left\{
  \begin{array}{l l}
\frac{\#\mathrm{GL}_{n-3}(\kappa)}{(q-1)q^{n-4}}\cdot 2(q-1)^3q^{6n-15}
 & \textit{if $k_1<k_2$};\\
\frac{\#\mathrm{GL}_{n-3}(\kappa)}{(q-1)q^{n-4}}\cdot (q-1)^3(q+1)q^{6n-16}
 & \textit{if $k_1=k_2$}.
    \end{array} \right.
\]

\end{corollary}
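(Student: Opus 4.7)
The plan is to use the smoothness of $(\vpi_{n,M,V}^1)^{-1}(\chi_\gamma)$ established in Proposition \ref{prop83} and to count its $\kappa$-points directly from the matrix description in (\ref{eq81}). Each $\bar x$ in the special fiber has, over $\kappa$, vanishing first column in rows $1$ through $n-2$ (from the $\pi$-factors there) and vanishing last two rows (from the $\pi^{k_1},\pi^{k_2}$ factors with $k_1,k_2\ge 1$), so expanding $\det(tI-\bar x)$ along these two zero rows and then along the zero first column of the surviving $(n-2)\times(n-2)$ block yields $\chi_{\bar x}(t) = t^3 \chi_A(t)$, where $A=(x_{i,j})_{2\le i,j\le n-2}$ is the $(n-3)\times(n-3)$ central submatrix. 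The hypothesis $\overline{\chi}_\gamma(x)=x^n$ forces $A$ to be nilpotent, and together with the rank-$(n-3)$ condition on $\overline{X}_{n-2,n-2}$ this forces $A$ to be conjugate to the single Jordan block $J_{n-3}$ and the first row $v=(x_{1,2},\dots,x_{1,n-2})$ to lie outside the row span of $A$.

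The first factor $\#\mathrm{GL}_{n-3}(\kappa)/((q-1)q^{n-4})$ in the formula then counts matrices $A$ similar to $J_{n-3}$: as in the proof of Corollary \ref{corcounting_1}, the centralizer of $J_{n-3}$ in $\mathrm{GL}_{n-3}(\kappa)$ is the unit group of $\kappa[x]/(x^{n-3})$, of order $(q-1)q^{n-4}$; for each such $A$ there are also $(q-1)q^{n-4}$ admissible $v$. For fixed $(A,v)$ placed in the Jordan normal form used in the proof of Proposition \ref{prop83}, I then parameterize the remaining $5n-6$ entries ($x_{i,1}, x_{i,n-1}, x_{i,n}$ for $i\le n-2$ and the full rows $n-1$ and $n$) subject to the three matching equations for $c_{n-2}(x)/\pi$, $c_{n-1}(x)/\pi^{k_1}$, $c_n(x)/\pi^{k_1+k_2}$ modulo $\pi$ and, when $k_1<k_2$, the open condition of Lemma \ref{lem82} defining $\cL(L,M,V)_1$.

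The key computation is that, after factoring out the minimal $\pi$-power from each relevant principal minor, these three equations become explicit. A direct expansion along the first column gives $c_n(x)/\pi^{k_1+k_2}|_\kappa = x_{n-1,1}D_{n-1} - x_{n,1}D_n = (-1)^n\det(m'|_\kappa)$, so the nonvanishing condition $c_n/\pi^{k_1+k_2}\in\mfo^\times$ automatically forces $m'$ invertible. A parallel analysis of the $(n-1)\times(n-1)$ principal minors gives $c_{n-1}(x)/\pi^{k_1}|_\kappa = -x_{n-1,1}\det(E_{n-1})$ when $k_1<k_2$ and $-\bigl(x_{n-1,1}\det(E_{n-1})+x_{n,1}\det(E_n)\bigr)$ when $k_1=k_2$; the $(n-2)\times(n-2)$ principal minors give $c_{n-2}(x)/\pi|_\kappa$ as a polynomial dominated by $\pm\det(\overline{X}'_{n-2,n-2})$, with further contributions from rows $n-1,n$ only in the subcase $k_1=1$. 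In the Jordan form $\det(E_{n-1}) = \pm x_{n-2,n-1}$, so that Lemma \ref{lem82} identifies $\cL(L,M,V)_1$ (when $k_1<k_2$) with $\{x_{n-1,1}\ne 0\}\cup\{x_{n-2,n-1}\ne 0\}$; splitting into these two open pieces and solving the decoupled linear equations in $(x_{n-1,1},x_{n,1})$ on each piece yields the factor $2$ and the powers of $q$ in the $k_1<k_2$ formula. When $k_1=k_2$, by Remark \ref{rmk81}(2)(b) $\cL(L,M,V)_1 = \cL(L,M,V)$, and the two linear forms controlling $c_{n-1}/\pi^{k_1}$ and $c_n/\pi^{k_1+k_2}$ couple symmetrically in $(x_{n-1,1},x_{n,1})$ so that admissible pairs are counted by a $\mathbb{P}^1_\kappa$, producing the factor $q+1$.

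The main obstacle is the simultaneous bookkeeping of the three matching equations together with the $\cL(L,M,V)_1$ openness, verifying that the leading minors $D_{n-1},D_n,\det(E_{n-1}),\det(E_n)$ are generically nonzero on the Jordan-form stratum so that the naive linear-algebra count of $(x_{n-1,1},x_{n,1})$-solutions applies, and rigorously separating the subcases — especially $k_1=1$, where the contributions from rows $n-1$ and $n$ to $c_{n-2}/\pi$ are no longer $\pi$-suppressed and must be tracked carefully when solving for the free parameters.
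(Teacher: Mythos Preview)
Your overall strategy matches the paper's: direct $\kappa$-point counting from the matrix form (\ref{eq81}), Jordan-normalization of the central $(n-3)\times(n-3)$ block (your $A$), and analysis of the three equations coming from the $(n-2)$-th, $(n-1)$-th, and $n$-th coefficients of $\chi_\gamma$. The paper organizes the case split on whether $x_{n-2,n-1}$ (and, for $k_1=k_2$, also $x_{n-2,n}$) vanishes, and in each subcase eliminates one explicit auxiliary variable from each equation (first $x_{n-2,1}$, then one of $x_{n-1,n}$ or $x_{n,n-1}$). For $k_1<k_2$ your argument is essentially the same: the two open pieces $\{x_{n-1,1}\neq 0\}$ and $\{x_{n-2,n-1}\neq 0\}$ are in fact disjoint on the fiber, since Equation~(\ref{eq86}) forces $x_{n-1,1}x_{n-2,n-1}=0$ over $\kappa$; this recovers the paper's partition and the factor $2$.

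There is a genuine gap in your $k_1=k_2$ case: the $\mathbb{P}^1$-heuristic does not give the factor $q+1$ as you describe. If you treat the $c_{n-1}$- and $c_n$-equations as a $2\times 2$ linear system in $(x_{n-1,1},x_{n,1})$ with the other entries fixed, the system generically has a \emph{unique} solution (the right-hand side of the determinant equation is a unit), not a $\mathbb{P}^1$-worth of solutions. In the paper the factor $q+1$ arises instead from summing three disjoint subcases according to which of $x_{n-2,n-1},x_{n-2,n}$ vanish; the counts on these pieces are $(q-1)^3q^{6n-16}$, $(q-1)^4q^{6n-16}$, $(q-1)^3q^{6n-16}$, and their sum produces $(q+1)$. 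You will need this case analysis rather than a projective-line shortcut. Finally, your concern about $k_1=1$ is unnecessary: the $(n-2)$-th coefficient equation always has the shape $x_{n-2,1}x_{1,2}+(\text{terms not involving }x_{n-2,1})=\overline{c_{n-2}/\pi}$, so it eliminates $x_{n-2,1}$ uniformly since $x_{1,2}\neq 0$, regardless of whether the row-$(n-1)$ contributions survive modulo $\pi$.
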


\begin{proof}
Our strategy is to analyze the equations defining $(\vpi_{n,M, V}^1)^{-1}(\chi_{\gamma})(\kappa)$.
We use Equation (\ref{eq82}) for a matrix description of $m\in (\vpi_{n,M, V}^1)^{-1}(\chi_{\gamma})(\kappa)$.

\begin{enumerate}
\item 
The former $n-3$ entries of $\varphi_{n,M,V}^1(m)$, as an element of $\mathbb{A}_{M,V}(\kappa)$,  are just coefficients of the characteristic polynomial of the submatrix of $m$ consisting of entries from $x_{2,2}$ to $x_{n-2,n-2}$ of size $n-3$.
This characteristic polynomial is $x^{n-3}$ defined over $\kappa$. In addition, the rank of such matrix is $n-4$ by the third condition of Equation (\ref{eq82}).
Thus the set of such submatrices is isomorphic to the set of nilpotent matrices whose characteristic polynomial is $x^{n-3}$ and whose rank is $n-4$.
This is the set of matrices which are similar to the Jordan canonical form 
\[
J_{n-3}=\begin{pmatrix}
0 &1&\cdots & 0\\
\vdots &\vdots   &\ddots &\vdots  \\
0 &0&\cdots &1\\
0 &0&\cdots &0
 \end{pmatrix}.
\]

The stabilizer of $J_{n-3}$ in $\mathfrak{gl}_{n-3}(\kappa)$ with respect to the conjugation is the same as the set of  polynomials of $J_{n-3}$ having non trivial constant.
Since $(J_{n-3})^{n-3}=0$, this set is bijectively identified with the set of polynomials having non trivial constant of degree $\leq n-4$ over $\kappa$, whose cardinality is $(q-1)q^{n-4}$.
Therefore, the cardinality of the set of such submatrices of size $n-3$ is
\begin{equation}\label{eq83}
\frac{\#\mathrm{GL}_{n-3}(\kappa)}{(q-1)q^{n-4}}.
\end{equation}

\item 
Note that the scheme $(\vpi_{n,M, V}^1)^{-1}(\chi_{\gamma})$ is independent of the  change of a basis forming the matrix  in Equation (\ref{eq82}) by   Lemma \ref{lem412}.
Let $(e_1, \cdots, e_n)$ be a basis used in Equation (\ref{eq82}). 
Then a matrix description with respect to another basis  $(e_1, e_2', \cdots, e_{n-2}', e_{n-1}, e_n)$ such that  $e_i'$ with $2\leq i \leq n-2$ is a linear combination of $(e_2, \cdots, e_{n-2})$ is also as described in Equation (\ref{eq82}).

Suppose that   the submatrix of $m$ from $x_{2,2}$ to $x_{n-2,n-2}$ is the Jordan canonical form $J_{n-3}$.
The  third condition in Equation (\ref{eq82}) then implies that 
\begin{equation}\label{eq84}
x_{1,2}\neq 0.
\end{equation}

\item  
The $(n-2)$-th entry of $\varphi_{n,M,V}^1(m)$ is $(-1)^{n-2}$ times the sum of the determinant of the submatrix from $x_{1,1}$ to $x_{n-2,n-2}$ and the determinants of  submatrices of size $n-2$ including either the $(n-1)$-row or the $n$-th row.
Under the assumption that  the submatrix of $m$ from $x_{2,2}$ to $x_{n-2,n-2}$ is  $J_{n-3}$, it turns out to be 
\begin{equation}\label{eq85}
(-1)^{n-2}\pi(x_{n-2,1}x_{1,2}+\pi^{k_1-1}\ast+\pi^{k_2-1}\ast\ast).
\end{equation}

Its contribution  to the equations defining $(\vpi_{n,M, V}^1)^{-1}(\chi_{\gamma})$ is to eliminate the variable $x_{n-2,1}$ since $x_{1,2}\neq 0$.

\item
The $(n-1)$-th entry of $\varphi_{n,M,V}^1(m)$ is $(-1)^{n-1}$ times the sum of the determinant of the submatrix from $x_{1,1}$ to $x_{n-1,n-1}$, the determinant of the submatrix of size $n-1$ excluding the $n-1$-th row, and the determinant of the submatrices of size $n-1$ including both the $(n-1)$-th row and $n$-th row.
Under the assumption that  the submatrix of $m$ from $x_{2,2}$ to $x_{n-2,n-2}$ is  $J_{n-3}$, it turns out to be 
\[
(-1)^{n-1}\pi^{k_1}(x_{n-1,1}x_{n-2,n-1}x_{1,2}+\pi^{k_2-k_1}x_{n,1}x_{n-2,n}x_{1,2}+\pi\ast).
\]
The Newton polygon of an irreducible polynomial $\chi_{\gamma}(x)$ yields that  the $(n-1)$-th coefficient of $\chi_{\gamma}(x)$ is of order at least $k_1+1$. Since $x_{1,2}\neq 0$, its  contribution  to the equations defining $(\vpi_{n,M, V}^1)^{-1}(\chi_{\gamma})$ is 
\begin{equation}\label{eq86}
x_{n-1,1}x_{n-2,n-1}-\pi^{k_2-k_1}x_{n,1}x_{n-2,n}\in (\pi).
\end{equation}

\item 
If we express the determinant of $m$ formally as the form of $\pi^{k_1+k_2}\cdot d$ with $d\in \kappa$, then the $n$-th entry of $\varphi_{n,M,V}^1(m)$ is $d$.
Thus, we can assume that $x_{1,1}=\cdots = x_{n-2,1}=0$.
Since $d$ is invariant under row/column operations,
we may and do assume that the submatrix from $x_{1,1}$ to $x_{n-2,n-2}$ is the Jordan canonical form having 1 on the superdiagonal entries and $0$ on the rest, and all other entries, except for $x_{n-1,1}, x_{n,1}, x_{n-2,n-1}, x_{n-2,n}, x_{n-1,n-1}, x_{n-1,n}, x_{n,n-1}, x_{n,n}$, are zero.
Then the determinant turns out to be
\begin{equation}\label{eq87}
\pi^{k_1+k_2}\cdot \left(x_{n-2,n-1}\begin{vmatrix}  x_{n-1,1} &x_{n-1,n} \\
x_{n,1} &x_{n,n}
 \end{vmatrix}- x_{n-2,n}\begin{vmatrix} x_{n-1,1} &x_{n-1,n-1} \\
x_{n,1} &x_{n,n-1}
 \end{vmatrix}\right).
\end{equation}
\\

\end{enumerate}

We now collect Equations (\ref{eq83})-(\ref{eq87}) together with three conditions in Equation (\ref{eq82}) to compute the desired cardinality.
\begin{enumerate}
\item Suppose that $k_1<k_2$ so that Equation (\ref{eq86}) turns out to be $x_{n-1,1}x_{n-2,n-1}=0$.
\begin{enumerate}
\item If  $x_{n-2, n-1}\neq 0$, then $x_{n-1,1}=0$ by Equation (\ref{eq86}) so that $x_{n,1}\neq 0$  by the second condition of Equation (\ref{eq82}).
The contributions of Equations (\ref{eq85}) and (\ref{eq87}) are to eliminate the variables $x_{n-2,1}$ and $x_{n-1,n}$.
Therefore, the cardinality with the condition $x_{n-2, n-1}\neq 0$ is 
\[
\frac{\#\mathrm{GL}_{n-3}(\kappa)}{(q-1)q^{n-4}}\cdot (q-1)^3q^{6n-15}.
\]

\item If  $x_{n-2, n-1}= 0$, then $x_{n-1,1}\neq 0$ by Lemma \ref{lem82} and $x_{n-2,n}\neq 0$  by the second condition of Equation (\ref{eq82}).
The contributions of Equations (\ref{eq85}) and (\ref{eq87}) are to eliminate the variables $x_{n-2,1}$ and $x_{n,n-1}$.
Therefore, the cardinality with the condition $x_{n-2, n-1}= 0$ is 
\[
\frac{\#\mathrm{GL}_{n-3}(\kappa)}{(q-1)q^{n-4}}\cdot (q-1)^3q^{6n-15}.
\]

\end{enumerate}
The claimed formula is the sum of these two.
	
\item Suppose that $k_1=k_2$ so that Equation (\ref{eq86}) turns out to be $x_{n-1,1}x_{n-2,n-1}=x_{n,1}x_{n-2,n}$.
\begin{enumerate}
\item If  $x_{n-2, n-1}\neq 0$ and $x_{n-2,n}=0$,  then $x_{n-1,1}=0$ by Equation (\ref{eq86}) so that $x_{n,1}\neq 0$  by the second condition of Equation (\ref{eq82}).
The contributions of Equations (\ref{eq85}) and (\ref{eq87}) are to eliminate the variables $x_{n-2,1}$ and $x_{n-1,n}$.
Therefore, the cardinality in this case is 
\[
\frac{\#\mathrm{GL}_{n-3}(\kappa)}{(q-1)q^{n-4}}\cdot (q-1)^3q^{6n-16}.
\]

\item If  $x_{n-2, n-1}\neq 0$ and $x_{n-2,n}\neq 0$, then $x_{n,1}\neq 0$ so that $x_{n-1,1}$ is determined by Equation (\ref{eq86}).
The contributions of Equations (\ref{eq85}) and (\ref{eq87}) are to eliminate the variables $x_{n-2,1}$ and $x_{n-1,n}$.
Therefore, the cardinality in this case is 
\[
\frac{\#\mathrm{GL}_{n-3}(\kappa)}{(q-1)q^{n-4}}\cdot (q-1)^4q^{6n-16}.
\]

\item If  $x_{n-2, n-1}= 0$, then $x_{n-2,n}\neq 0$ by the second condition of Equation (\ref{eq82}). 
Then Equation (\ref{eq86}) yields that $x_{n,1}=0$.
Lemma \ref{lem82} yields that   $x_{n-1,1}\neq 0$.
The contributions of Equations (\ref{eq85}) and (\ref{eq87}) are to eliminate the variables $x_{n-2,1}$ and $x_{n,n-1}$.
Therefore, the cardinality with the condition $x_{n-2, n-1}\neq 0$ is 
\[
\frac{\#\mathrm{GL}_{n-3}(\kappa)}{(q-1)q^{n-4}}\cdot (q-1)^3q^{6n-16}.
\]
\end{enumerate}
The claimed formula is the sum of these three cases.
\end{enumerate}
\end{proof}

\begin{corollary}\label{ineqm2}
We have 
\[
\mathcal{SO}_{\gamma, (k_1, k_2),1}>\left\{
  \begin{array}{l l}
\frac{\#\mathrm{GL}_{n-3}(\kappa)}{(q-1)q^{n-4}}\cdot 2(q-1)^3\cdot \frac{1}{q^{n^2-6n+(n-2)k_1+(n-1)k_2+12}}
 & \textit{if $k_1<k_2$};\\
\frac{\#\mathrm{GL}_{n-3}(\kappa)}{(q-1)q^{n-4}}\cdot (q-1)^3(q+1)\cdot \frac{1}{q^{n^2-6n+(2n-3)k_1+13}}.
 & \textit{if $k_1=k_2$}.
    \end{array} \right.
\] 

\end{corollary}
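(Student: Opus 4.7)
The plan is to combine the scheme-theoretic reformulation of $\mathcal{SO}_{\gamma,(k_1,k_2),1}$ from Proposition~\ref{prop52} with the smoothness result of Proposition~\ref{prop83} and the point count of Corollary~\ref{cor84}. Specifically, I first set $m=n-2$ and $t=1$ in Proposition~\ref{prop52}, which gives
\[
\mathcal{SO}_{\gamma,(k_1,k_2),1} \;=\; q^{-t(m-t)-\sum_{i=1}^{2}(m+i-1)k_i}\cdot \int_{O_{\gamma,\cL(L,M,V)}}\bigl|\omega^{ld}_{(\chi_\gamma,\widetilde{\cL}(L,M,V))}\bigr|,
\]
where the exponent becomes $(n-3)+(n-2)k_1+(n-1)k_2$.

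Next I pass from the integral over $O_{\gamma,\cL(L,M,V)}$ to the integral over the open subset $(\varphi^1_{n,M,V})^{-1}(\chi_\gamma)(\mfo)$. As noted in Remark~\ref{rmk81}(3), the inclusion
\[
(\varphi^1_{n,M,V})^{-1}(\chi_\gamma)(\mfo)\;\subsetneq\; O_{\gamma,\cL(L,M,V)}
\]
is strict with the complement carrying positive measure (open and non-empty in the $\pi$-adic topology), so the resulting inequality is strict. Proposition~\ref{prop83} ensures smoothness of $(\varphi^1_{n,M,V})^{-1}(\chi_\gamma)$ over $\mfo$, whose relative dimension equals $\dim\widetilde{\cL}(L,M,V)-\dim\mathbb{A}_{M,V}=n^2-n$. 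Hence Weil's formula \cite[Theorem 2.2.5]{Weil} applies and yields
\[
\int_{(\varphi^1_{n,M,V})^{-1}(\chi_\gamma)(\mfo)}\bigl|\omega^{ld}_{(\chi_\gamma,\widetilde{\cL}(L,M,V))}\bigr| \;=\; \frac{\#(\varphi^1_{n,M,V})^{-1}(\chi_\gamma)(\kappa)}{q^{n^2-n}}.
\]

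Finally I substitute the two cardinalities from Corollary~\ref{cor84}. In the case $k_1<k_2$, the numerator is $\tfrac{\#\mathrm{GL}_{n-3}(\kappa)}{(q-1)q^{n-4}}\cdot 2(q-1)^3 q^{6n-15}$, and combining with the prefactor $q^{-(n-3)-(n-2)k_1-(n-1)k_2}$ and $q^{-(n^2-n)}$ collects all powers of $q$ into the single exponent $-(n^2-6n+12+(n-2)k_1+(n-1)k_2)$. In the case $k_1=k_2$, the numerator is $\tfrac{\#\mathrm{GL}_{n-3}(\kappa)}{(q-1)q^{n-4}}\cdot (q-1)^3(q+1)q^{6n-16}$, and the same bookkeeping yields the exponent $-(n^2-6n+13+(2n-3)k_1)$, matching the stated bounds.

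The main obstacle is not conceptual but the arithmetic of exponent tracking across the three formulas (the $\omega^{ld}$ normalization, Weil's formula, and the point counts), together with verifying that the complement of the smooth open locus inside $O_{\gamma,\cL(L,M,V)}$ indeed has positive volume to guarantee the strict inequality; for the latter one can exhibit explicit $\mfo$-points of $O_{\gamma,\cL(L,M,V)}$ on which the $(n-1,n-1)$-minor of $pr$ vanishes modulo $\pi$, producing an open non-empty piece outside $(\varphi^1_{n,M,V})^{-1}(\chi_\gamma)(\mfo)$.
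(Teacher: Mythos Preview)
Your proof is correct and follows essentially the same approach as the paper: invoke the measure comparison (you cite Proposition~\ref{prop52} directly, the paper re-derives the same exponent $(n-3)+(n-2)k_1+(n-1)k_2$ inline), pass to the open smooth locus via Remark~\ref{rmk81}(3), apply Weil's formula using Proposition~\ref{prop83}, and plug in Corollary~\ref{cor84}. One small remark: your justification of the \emph{strict} inequality via Remark~\ref{rmk81}(3) only covers the case $k_1<k_2$; when $k_1=k_2$ the paper sets $\cL(L,M,V)_1=\cL(L,M,V)$ (Remark~\ref{rmk81}(2)(b)), so the passage to the open subset gives equality rather than a strict bound---but the paper's own statement and proof carry the same imprecision, so this is not a defect of your argument relative to the source.
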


\begin{proof}
The proof is parallel to that of Corollary \ref{cornm1}.
Let $\tilde{\cL}(L,M,V)$ be the affine space defined over $\mfo$ such that $\tilde{\cL}(L,M,V)(R)$ is the set of matrices of the form in Equation (\ref{eq81}) by ignoring two conditions about invertibility of $x'$ and the rank of $X_{n-2,n-2}$.
Since $\cL(L,M,V)$ is an open subscheme of $\tilde{\cL}(L,M,V)$, $\cL(L,M,V)_1$ is also an open subscheme of $\tilde{\cL}(L,M,V)$.

Let $\omega_{\tilde{\cL}(L,M,V)}$ and $\omega_{\mathbb{A}_{M,V}}$ be nonzero,  translation-invariant forms on   $\mathfrak{gl}_{n, F}$ and $\mathbb{A}^n_F$,
 respectively, with normalizations
$$\int_{\tilde{\cL}(L,M,V)(\mathfrak{o})}|\omega_{\tilde{\cL}(L,M,V)}|=1 \mathrm{~and~}  \int_{\mathbb{A}_{M,V}(\mathfrak{o})}|\omega_{\mathbb{A}_{M,V}}|=1.$$
Putting $\omega^{can}_{(\chi_{\gamma},L,M,V)}=\omega_{\tilde{\cL}(L,M,V)}/\rho_n^{\ast}\omega_{\mathbb{A}_{M,V}}$, 
  we have the following comparison among differentials;
 
 \[
\left\{
  \begin{array}{l }
|\omega_{\mathfrak{gl}_{n, \mathfrak{o}}}|=|\pi|^{n(k_1+k_2)+n-2}\cdot|\omega_{\tilde{\cL}(L,M,V)}|;\\
|\omega_{\mathbb{A}^n_{\mathfrak{o}}}|=
|\pi|^{1+2k_1+k_2}\cdot |\omega_{\mathbb{A}_{M,V}}|;\\
|\omega_{\chi_{\gamma}}^{\mathrm{ld}}|=|\pi|^{(n-2)k_1+(n-1)k_2+(n-3)}|\omega^{can}_{(\chi_{\gamma},L,M,V)}|.
    \end{array} \right.
\]
Recall that
\[
\mathcal{SO}_{\gamma, (k_1, k_2),1}=\int_{\vpi_{n,M, V}^{-1}(\chi_{\gamma})(\mfo)}|\omega_{\chi_{\gamma}}^{\mathrm{ld}}|>\int_{(\vpi^1_{n,M, V})^{-1}(\chi_{\gamma})(\mfo)}|\omega_{\chi_{\gamma}}^{\mathrm{ld}}|.
\]
Here the last inequality is explained in Remark \ref{rmk81}.(3).
Proposition \ref{prop83} combined with the above comparison of differentials then yields 
\[
\int_{(\vpi^1_{n,M, V})^{-1}(\chi_{\gamma})(\mfo)}|\omega_{\chi_{\gamma}}^{\mathrm{ld}}|=
\frac{1}{q^{(n-2)k_1+(n-1)k_2+(n-3)}}\cdot \frac{\#(\vpi_{n,M, V}^1)^{-1}(\chi_{\gamma})(\kappa)}{q^{n^2-n}}.
\]
The desired inequalities follow from 
Corollary \ref{cor84}.
\end{proof}
\textit{   }

\subsection{A lower bound for \texorpdfstring{$\sog$}{SO} with \texorpdfstring{$n\geq 4$}{n>=4}}\label{sec82}

In this subsection, we first compute a lower bound for an elliptic regular semisimple element $\gamma \in \mathfrak{gl}_{n}(\mfo)$ with $n\geq 4$ 
such that $\overline{\chi}_{\gamma}(x)$ is the power of one linear polynomial in $\kappa[x]$. After that, by using the inductive formulas in Proposition \ref{cor410} and Corollary \ref{corred}, we will obtain a lower bound for a general regular semisimple element $\gamma\in\mathfrak{gl}_{n}(\mfo)$.

Before we build a lower bound for $\mathcal{SO}_{\gamma}$, we will extend Proposition \ref{charred3} to a general $n>1$. 
Recall that  $\chi_{\gamma,a}(x)=\chi_{\gamma}(x+a)$ and $d_{a}=ord(\chi_{\gamma,a}(0))$ for $a\in \mfo$.

\begin{proposition}\label{dan}
Let $r$ be the inertial degree of $F_{\chi_{\gamma}}$ over $F$, where $\gamma$ is elliptic. 
For an integer $n>1$,  there exists $a\in \mfo$ such that 
\[
\left\{
\begin{array}{l l}
    d_{a}\equiv 0 \textit{ modulo }n   \textit{ or};\\
    d_{a}\equiv f \textit{ modulo }n  \textit{ where }r|f \textit{ and } 0<f<n,
\end{array}
\right.
\]
and in the first case, the reduction of $\frac{\chi_{\gamma,a}(\pi^{t}x)}{\pi^{nt}}$ modulo $\pi$ is a power of an irreducible polynomial of degree $>1$ in $\kappa[x]$, where $d_a=nt$.

\end{proposition}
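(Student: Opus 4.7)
The plan is to iteratively adjust $a \in \mfo$, mirroring the argument of Proposition \ref{charred3} for prime $n$, with one additional ingredient: a Newton polygon constraint that forces $r \mid d_a$ unconditionally. Since $\chi_{\gamma,a}(x) = \chi_\gamma(x+a)$ is irreducible over $F$, its Newton polygon has a single slope $d_a/n$, so every root $\alpha \in \bar F$ satisfies $v(\alpha) = d_a/n$, where $v$ denotes the normalized valuation on $F$ extended to $\bar F$. All such valuations must lie in $\tfrac{1}{e}\mathbb{Z}$, where $e = n/r$ is the ramification index of $F_{\chi_\gamma}/F$, and this forces $d_a \in r\mathbb{Z}$ for \emph{every} $a \in \mfo$. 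Consequently, whenever $d_a \not\equiv 0 \pmod n$, the residue $f := d_a \bmod n$ automatically satisfies $0 < f < n$ and $r \mid f$, which is the second alternative in the statement.

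Next I would construct $a$ inductively. If the unique irreducible factor of $\overline{\chi}_\gamma(x)$ already has degree $> 1$, take $a = 0$: then $d_0 = 0$ and the reduction of $\chi_\gamma(x)$ itself is already a power of an irreducible of degree $> 1$ (by Hensel, applied to the irreducibility of $\chi_\gamma$), so the first alternative holds at once. Otherwise $\overline{\chi}_\gamma(x) = (x-\bar c)^n$ for some $\bar c \in \kappa$; lift $\bar c$ to $a_0 \in \mfo$ so that $d_{a_0} \geq 1$. At step $i$, given $a_i$ with $d_i := d_{a_i}$, consider three cases: (i) if $d_i \not\equiv 0 \pmod n$, take $a = a_i$ and stop in the second alternative; (ii) if $d_i = n t_i$ and the reduction $\overline{g_i(x)}$ of $g_i(x) := \chi_{\gamma,a_i}(\pi^{t_i} x)/\pi^{n t_i} \in \mfo[x]$ has an irreducible factor of degree $> 1$, take $a = a_i$ and stop in the first alternative; (iii) otherwise, $g_i$ is itself irreducible over $\mfo$ (since $\chi_{\gamma,a_i}$ is), so by Hensel's lemma the reduction $\overline{g_i(x)}$ is a power of a single irreducible polynomial, hence of the form $(x-\bar\beta_i)^n$ for some $\bar\beta_i \in \kappa$; lift $\bar\beta_i$ to $\beta_i \in \mfo$ and set $a_{i+1} := a_i + \pi^{t_i}\beta_i$, which forces $d_{i+1} \geq d_i + 1$.

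Termination follows from the bound $(n-1) d_a \leq \mathrm{ord}(\Delta_{\gamma,a}) = \mathrm{ord}(\Delta_\gamma)$ noted at the start of Section \ref{sec6}: the integers $d_i$ strictly increase along the iteration yet remain uniformly bounded, so the procedure halts after finitely many steps and produces the required $a$. The main point that needs care is step (iii), namely that $\overline{g_i(x)}$ is always a \emph{pure} power of a single irreducible polynomial when one fails to terminate at step (ii); this is the standard consequence of Hensel's lemma applied to $g_i$. Beyond this, the argument is a direct bookkeeping extension of Proposition \ref{charred3} from prime $n$ to arbitrary $n > 1$, with the Newton polygon observation in the first paragraph accounting for the new divisibility $r \mid f$.
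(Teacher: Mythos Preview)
Your proposal is correct and follows essentially the same approach as the paper: the paper also first shows $r \mid d_a$ for every $a$ (via a Bezout/GCD argument constructing an element of impossibly small valuation, which is the contrapositive of your direct observation that root valuations lie in the value group $\tfrac{1}{e}\mathbb{Z}$), and then defers the iterative shifting argument to Proposition~\ref{charred3}, which you have spelled out. The two versions of the divisibility step are equivalent, yours being slightly more streamlined.
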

\begin{proof}
Express $d_{a}=nt_a+f_a$ for $a\in \mfo$, where $t_a, f_a\in \mathbb{Z}$ with $0\leq  f_a < n$.
We  claim that either $f_a=0$ or $r \mid f_a$ if $f_a\neq 0$.

Suppose that $r\nmid f_a$ and $f_a\neq 0$.
By Newton's lemma the order of a root of $\chi_{\gamma,a}(x)$ is $t_a+\frac{f_a}{n}$. 
On the other hand, since the inertial degree of $F_{\chi_{\gamma}}$ over $F$ is $r$, the order of a uniformizer of $F_{\chi_{\gamma}}$ is $\frac{r}{n}$. 
Here the order means the exponential order with respect to $\pi$, a uniformizer of $F$.
Note that there exist integers $m_{1},m_{2}$ such that $m_{1}r+m_{2}f_a=(r,f_a)$, where $(r,f_a)$ is the greatest common divisor of $r$ and $f_a$. 
Since $r\nmid f_a$, $1 \leq (r,f_a)<r$. 
Therefore, there exists an element of order $\frac{(r,f_a)}{n}$  in $F_{\chi_{\gamma}}$, which is the product of the $m_1$-th power of a uniformizer of $F_{\chi_{\gamma}}$  and the $m_2$-th power of a root of $\chi_{\gamma,a}(x)$ divided by $\pi^{t_a}$.
 This contradicts to the assumption since the order of an element in $F_{\chi_{\gamma}}$ should be a $\mathbb{Z}$-multiple of $\frac{r}{n}$.

The rest of the proof is the same as that of Proposition \ref{charred3} and so we may skip it.
\end{proof}

\begin{remark}\label{dn}
In this remark, we will show that $d_{a}$ is independent of any choice of $a\in \mfo$ which satisfies the conditions in Proposition \ref{dan}.
In the following, the order means the   exponential order with respect to a uniformizer $\pi$ of $F$ as in the proof of Proposition \ref{dan}.

Suppose that $a,a'\in \mfo$ are distinct elements which satisfy the conditions in Proposition \ref{dan} for a given elliptic element $\gamma$. 
We can then write $a'$ as $a+b$ where $a'-a=b\in \mfo \backslash \{0\}$. 
Our goal is to show that 
$$d_a=d_{a+b}.$$
\begin{enumerate}
\item
If $ord(b)\neq \frac{d_{a}}{n}$, then $\frac{d_{a+b}}{n}=min\{ord(b),\frac{d_{a}}{n}\}$. Indeed, by Newton's lemma, the order of roots of $\chi_{\gamma,a+b}$ and $\chi_{\gamma,a}$ are $\frac{d_{a+b}}{n}$ and $\frac{d_{a}}{n}$, respectively. Since roots of $\chi_{\gamma,a+b}$ are translation of roots of $\chi_{\gamma,a}$ by $-b$, we conclude that  $\frac{d_{a+b}}{n}=min\{ord(b),\frac{d_{a}}{n}\}$. There are two cases whether $ord(b)<\frac{d_{a}}{n}$ or $ord(b)>\frac{d_{a}}{n}$.
We claim that  $ord(b)>\frac{d_{a}}{n}$. If it is, then we can directly see that $d_{a+b}=d_{a}$.

Suppose that  $ord(b)<\frac{d_{a}}{n}$ so that $\frac{d_{a+b}}{n}= ord(b)$. 
Note that $d_{a+b}$ satisfies the first condition of Proposition \ref{dan} so that the reduction of the polynomial $\frac{\chi_{\gamma,a+b}(\pi^{ord(b)}x)}{\pi^{n \cdot  ord(b)}}$ modulo $\pi$ should be a power of an irreducible polynomial of degree $>1$.

Note that $-\frac{b}{\pi^{ord(b)}}\in \mfo^{\times}$.
If we plug in $x=-\frac{b}{\pi^{ord(b)}}$ into the polynomial $\frac{\chi_{\gamma,a+b}(\pi^{ord(b)}x)}{\pi^{n  \cdot ord(b)}}$, then we have
\[
\frac{\chi_{\gamma,a+b}(-b)}{\pi^{n \cdot ord(b)}}=\frac{\chi_{\gamma,a}(0)}{\pi^{n \cdot ord(b)}}.
\]
The order of this element is $d_a-n\cdot ord(b)$, which is   $>0$ by the assumption that $ord(b)<\frac{d_{a}}{n}$. 
This implies that the reduction of the polynomial $\frac{\chi_{\gamma,a+b}(\pi^{ord(b)}x)}{\pi^{n  \cdot ord(b)}}$ modulo $\pi$ has a linear factor which is $x+\overline{b/\pi^{ord(b)}}$. This contradicts to the assumption.
\\


\item If $ord(b)=\frac{d_{a}}{n}$, then $d_{a}\equiv 0 \textit{ modulo }n$ and $d_{a+b}\geq d_{a}$.
Note that $d_a$ satisfies the  first condition of Proposition \ref{dan} so that the reduction of the polynomial $\frac{\chi_{\gamma,a}(\pi^{ord(b)}x)}{\pi^{n \cdot  ord(b)}}$ modulo $\pi$ should be a power of an irreducible polynomial of degree $>1$.

Suppose that $d_{a+b}>d_{a}$.
Then if we plug in $x=\frac{b}{\pi^{ord(b)}}\left(\in \mfo^{\times}\right)$ into the polynomial  $\frac{\chi_{\gamma,a}(\pi^{ord(b)}x)}{\pi^{n\cdot ord(b)}}$, then we have
\[ord(\frac{\chi_{\gamma,a}(b)}{\pi^{n\cdot ord(b)}}
)=ord(\frac{\chi_{\gamma,a+b}(0)}{\pi^{n\cdot ord(b)}})=d_{a+b}-n\cdot ord(b)>0.\] 
Thus $x-\overline{ b/{\pi^{ord(b)}}}\in \kappa^{\times}$ is a linear factor of  the reduction of $\frac{\chi_{\gamma,a}(\pi^{ord(b)}x)}{\pi^{n\cdot ord(b)}}$ modulo $\pi$, which is a contradiction. 
\end{enumerate}
\end{remark}

From now on, we can and do denote  $d_{a}$ in Proposition \ref{dan} by $d_{\gamma}$.
If we assume that  $\overline{\chi}_{\gamma}(x)=x^{n}$ for an elliptic element $\gamma$, then 
$d_{\gamma}$ cannot be $0$. Otherwise the reduction of $\overline{\chi}_{\gamma}(x) \textit{ modulo }\pi$ is a power of irreducible polynomial of degree $>1$ which contradicts to the assumption that $\overline{\chi}_{\gamma}(x)=x^{n}$.

\begin{theorem}\label{lb1}
For an elliptic regular semisimple element $\gamma\in \mathfrak{gl}_{n}(\mfo)$ such that $\overline{\chi}_{\gamma}(x)=x^{n}$, we have the following lower bound for the orbital integral for $\gamma$;
\[\left\{\begin{array}{l}
\mathcal{SO}_{\gamma}>\frac{\#\mathrm{GL}_{n}(\kappa)}{(q-1) q^{n^{2}-1}}(1+2q^{-1}+\cdots+2q^{-\lfloor\frac{d_{\gamma}}{2}\rfloor+1}+\varepsilon(d_{\gamma})q^{-\lfloor\frac{d_{\gamma}}{2}\rfloor});\\ 
\mathcal{SO}_{\gamma,d\mu}>q^{S(\gamma)}\cdot\frac{q^{r}-1}{q^{r-1}(q-1)}\cdot(1+2q^{-1}+\cdots+2q^{-\lfloor\frac{d_{\gamma}}{2}\rfloor+1}+\varepsilon(d_{\gamma})q^{-\lfloor\frac{d_{\gamma}}{2}\rfloor}) \textit{ if } char(F) = 0 \textit{ or } char(F)>n,
\end{array}\right .\]
where $d_{\gamma}$ is a positive integer defined in Remark \ref{dn}, $r$ is the inertial degree of $F_{\chi_{\gamma}}/F$, and \[\varepsilon(d_\gamma)=\left \{ \begin{array}{l l} 1 &\textit{if }d_\gamma\textit{ is even};\\2 & \textit{if }d_\gamma\textit{ is odd}.
\end{array}\right.\]
\end{theorem}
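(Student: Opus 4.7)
\textit{Plan of proof.}
The strategy is to extract two families of strata from the refined summation formula in Proposition \ref{stra2} and discard all the rest, exploiting positivity. By Lemma \ref{constantlemma} we may translate $\gamma$ so that $\mathrm{ord}(c_n) = d_\gamma$ (the invariant of Remark \ref{dn}, using Proposition \ref{dan}), and by Proposition \ref{stra2} the orbital integral decomposes as a sum of nonnegative contributions indexed by $(k_1,\ldots,k_{n-m})$ and $t$. We keep only
\begin{enumerate}
    \item the single stratum of type $(d_\gamma)$ (so $m=n-1$, $t=1$), and
    \item the strata of type $(k_1,k_2)$ with $k_1+k_2=d_\gamma$, $1\le k_1\le k_2$, and $t=1$ (so $m=n-2$).
\end{enumerate}

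For the first family, Corollary \ref{cornm1} gives exactly $c_{(d_\gamma)}\cdot \mathcal{SO}_{\gamma,(d_\gamma)} = \frac{\#\mathrm{GL}_n(\kappa)}{(q-1)q^{n^2-1}}$, producing the leading $1$ in the bound. For the second family, the plan is to multiply three known quantities: $c_{(k_1,k_2)}$ from Corollary \ref{cor49}, $d_1=\#\mathrm{Gr}_\kappa(1,n-2)=\frac{q^{n-2}-1}{q-1}$ from Lemma \ref{lem415}, and the strict lower bound on $\mathcal{SO}_{\gamma,(k_1,k_2),1}$ from Corollary \ref{ineqm2}. After gathering the $\#\mathrm{GL}$ and $(q-1)$ factors using the identity $\#\mathrm{GL}_n(\kappa)/\#\mathrm{GL}_{n-3}(\kappa) = q^{3(n-2)}(q^{n}-1)(q^{n-1}-1)(q^{n-2}-1)$, and collecting the $q$-exponents, a direct calculation should yield
\[
c_{(k_1,k_2)}\cdot d_1 \cdot \mathcal{SO}_{\gamma,(k_1,k_2),1} > \frac{\#\mathrm{GL}_n(\kappa)}{(q-1)q^{n^2-1}}\cdot \alpha\cdot q^{-k_1},
\]
with $\alpha=2$ when $k_1<k_2$ and $\alpha=1$ when $k_1=k_2=d_\gamma/2$. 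Summing over $k_1=1,\ldots,\lfloor d_\gamma/2\rfloor$ produces precisely the claimed geometric series with correction term $\varepsilon(d_\gamma)q^{-\lfloor d_\gamma/2\rfloor}$, where the coefficient at the last index is $2$ if $d_\gamma$ is odd (since $k_1<k_2$ persists) and $1$ if $d_\gamma$ is even (since $k_1=k_2$).

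For the second assertion involving the measure $d\mu$, the plan is to translate via Proposition \ref{proptrans} combined with Proposition \ref{propserre}. Since $\gamma$ is elliptic with inertial degree $r$, the centralizer $\mathrm{T}_\gamma=\mathrm{Res}_{\mfo_{F_{\chi_\gamma}}/\mfo}\mathbb{G}_m$ has $\#\mathrm{T}_\gamma(\kappa)=q^{n-r}(q^r-1)$ and the single discriminant factor $|\Delta_{F_{\chi_\gamma}/F}|^{1/2}$ equals $q^{S(\gamma)}|\Delta_\gamma|^{1/2}$. Substituting these into Proposition \ref{proptrans} cancels the $|\Delta_\gamma|^{1/2}$ against $|\Delta_{F_{\chi_\gamma}/F}|^{-1/2}$, leaving the factor $q^{S(\gamma)}\cdot \frac{q^r-1}{q^{r-1}(q-1)}\cdot \frac{(q-1)q^{n^2-1}}{\#\mathrm{GL}_n(\kappa)}$, which converts the first bound into the second.

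\textit{Main obstacle.} The proof is not conceptually hard --- everything reduces to a bookkeeping exercise --- but the main obstacle is the exact cancellation of $q$-powers and $(q-1)$-factors between the three quantities $c_{(k_1,k_2)}$, $d_1$, and the bound on $\mathcal{SO}_{\gamma,(k_1,k_2),1}$, across the two regimes $k_1<k_2$ and $k_1=k_2$. In particular, one must verify that the $q$-exponent $(n-3)k_1+(n-1)k_2-2n+3$ in $c_{(k_1,k_2)}$ (resp.\ $2(n-2)(k_1-1)$ in $c_{(k_1,k_1)}$) combines with $(n-2)k_1+(n-1)k_2$ (resp.\ $(2n-3)k_1$) in the measure-conversion denominator of Corollary \ref{ineqm2} to yield precisely $-k_1$, independently of $k_2$. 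This invariance under shifting $k_1$ (with $k_2$ determined by $k_1+k_2=d_\gamma$) is what makes the geometric series in $q^{-k_1}$ appear cleanly.
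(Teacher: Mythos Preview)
Your proposal is correct and follows essentially the same approach as the paper's own proof: isolate the $(d_\gamma)$-stratum via Corollary \ref{cornm1} and the $(k_1,k_2)$-strata with $t=1$ via Corollary \ref{ineqm2}, verify the bookkeeping yields $\frac{\#\mathrm{GL}_n(\kappa)}{(q-1)q^{n^2-1}}\cdot\alpha q^{-k_1}$ with $\alpha\in\{1,2\}$, sum the resulting geometric series, and then convert to the $d\mu$-measure via Propositions \ref{proptrans}--\ref{propserre} using $\#\mathrm{T}_\gamma(\kappa)=q^{n-r}(q^r-1)$. The paper organizes the same computation through Proposition \ref{stra1} rather than \ref{stra2}, bounding $\mathcal{SO}_{\gamma,(k_1,k_2)}\geq d_1\cdot\mathcal{SO}_{\gamma,(k_1,k_2),1}$, but this is a cosmetic difference.
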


\begin{proof}
Firstly, we have the following formula in the case of $(k_{1})$-type in Corollary \ref{cornm1}:
\begin{align*}
    c_{(k_{1})}\cdot\mathcal{SO}_{\gamma,(k_{1})}=\frac{\#\mathrm{GL}_{n}(\kappa)}{(q-1)\cdot q^{n^{2}-1}}.
\end{align*}

Secondly, we will treat $d_{1} c_{(k_{1},k_{2})}\cdot \mathcal{SO}_{\gamma,(k_{1},k_{2}),1}$.
The value of $d_{1} c_{(k_{1},k_{2})}$ is described as follows using  Corollary \ref{cor49} and Lemma \ref{lem415}:
\[
d_{1}c_{(k_{1},k_{2})}=\left\{\begin{array}{c l}
    \frac{\#\mathrm{GL}_{n}(\kappa)\cdot q^{(n-3)k_{1}+(n-1)k_{2}-5n+9}}{\#\mathrm{GL}_{n-3}(\kappa)\cdot (q-1)^{3}} &\textit{ if }k_{1}< k_{2};\\
    \frac{\#\mathrm{GL}_{n}(\kappa)\cdot q^{(2n-4)k_{1}-5n+10}}{\#\mathrm{GL}_{n-3}(\kappa)\cdot (q-1)^{3}(q+1)} &\textit{ if }k_{1} = k_{2}.
\end{array}\right.
\]

From the result in Corollary \ref{ineqm2}, we can obtain a lower bound for $d_{1} c_{(k_{1},k_{2})}\cdot\mathcal{SO}_{\gamma,(k_{1},k_{2}),1}$ as follows:
\[d_{1}c_{(k_{1},k_{2})} \cdot \mathcal{SO}_{\gamma,(k_{1},k_{2}),1}>\left\{\begin{array}{l l}
    \frac{\#\mathrm{GL}_{n}(\kappa)}{(q-1)q^{n^{2}-1}}\cdot 2q^{-k_{1}}&\textit{ if }k_{1} < k_{2};\\
    \frac{\#\mathrm{GL}_{n}(\kappa)}{(q-1)q^{n^{2}-1}}\cdot q^{-k_{1}}&\textit{ if }k_{1} = k_{2}.
\end{array}\right.\]


Thus we have the following inequalities;
\[
\left\{
\begin{array}{l}
    c_{(k_{1})}\cdot\mathcal{SO}_{\gamma,(k_{1})}=\frac{\#\mathrm{GL}_{n}(\kappa)}{(q-1) q^{n^{2}-1}};\\
    c_{(k_{1},k_{2})}\cdot\mathcal{SO}_{\gamma,(k_{1},k_{2})}>\frac{\#\mathrm{GL}_{n}(\kappa)}{(q-1) q^{n^{2}-1}}\cdot 2q^{-k_{1}} ~~~~~~~~~~ \textit{       for $k_1<k_2$};\\
    c_{(k_{1},k_{1})}\cdot\mathcal{SO}_{\gamma,(k_{1},k_{1})}>\frac{\#\mathrm{GL}_{n}(\kappa)}{(q-1) q^{n^{2}-1}}\cdot q^{-k_{1}}.
\end{array}\right.
\]

We now plug in the above results into the following summation formula  due to  Proposition \ref{stra1}:
\[
\mathcal{SO}_{\gamma}=\sum\limits_{\substack{(k_{1}, \cdots, k_{n-m}),\\ \sum k_{i}=d_{\gamma},\\m\geq 0}}c_{(k_{1},\cdots,k_{n-m})}\cdot \mathcal{SO}_{\gamma,(k_{1},\cdots, k_{n-m})}
\]
where $d_{\gamma}>0$ is a positive integer defined in Remark \ref{dn}.

If $d_{\gamma}=1$, then $\chi_{\gamma}(x)$ is an Eisenstein polynomial over $\mfo$. 
Therefore $S(\gamma)=0$ and  $F_{\chi_{\gamma}}$ is totally ramified over $F$.
Since we only need to consider $(1)$-type lattices in the above summation, we have
\begin{equation}\label{r=1}
\mathcal{SO}_{\gamma}=c_{(1)}\cdot\mathcal{SO}_{\gamma, (1)}=\frac{\#\mathrm{GL}_{n}(\kappa)}{(q-1) q^{n^{2}-1}}\ \textit{ and }\ \mathcal{SO}_{\gamma,d\mu}=1.
\end{equation}
Suppose that $d_{\gamma}\geq 2$. We then have 
\begin{align*}
\mathcal{SO}_{\gamma}&>c_{(d_{\gamma})} \cdot\mathcal{SO}_{\gamma,(d_{\gamma})}+\sum\limits_{k_{1}=1}^{\lfloor \frac{d_{\gamma}}{2}\rfloor}c_{(k_{1},d_{\gamma}-k_{1})}\cdot \mathcal{SO}_{\gamma,(k_{1},d_{\gamma}-k_{1})}\\
&>\frac{\#\mathrm{GL}_{n}(\kappa)}{(q-1) q^{n^{2}-1}}+\frac{\#\mathrm{GL}_{n}(\kappa)}{(q-1) q^{n^{2}-1}}(2q^{-1}+\cdots+2q^{-\lfloor\frac{d_{\gamma}}{2}\rfloor+1}+\varepsilon(d_{\gamma})q^{-\lfloor\frac{d_{\gamma}}{2}\rfloor})\\
&=\frac{\#\mathrm{GL}_{n}(\kappa)}{(q-1) q^{n^{2}-1}}(1+2q^{-1}+\cdots+2q^{-\lfloor\frac{d_{\gamma}}{2}\rfloor+1}+\varepsilon(d_{\gamma})q^{-\lfloor\frac{d_{\gamma}}{2}\rfloor}).
\end{align*} 
This is the desired lower bound for $\sog$.
\\

For a lower bound for $\mathcal{SO}_{\gamma,d\mu}$, we use 
Propositions \ref{proptrans}-\ref{propserre} to have
\[
\mathcal{SO}_{\gamma,d\mu}=q^{S(\gamma)}\cdot\frac{\#\mathrm{T}_{\gamma}(\kappa)q^{-n}}{\#\mathrm{GL}_{n}(\kappa)q^{-n^{2}}}\cdot \mathcal{SO}_{\gamma}.
\]
Since $\gamma$ is an elliptic regular semisimple element and the inerial degree of $F_{\chi_{\gamma}}$ over $F$ is $r$, \[\# \mathrm{T}_{\gamma}(\kappa)=\# \mathrm{Res}_{\mfo_{F_{\chi_{\gamma}}}/\mathfrak{o}}\mathbb{G}_{m}(\kappa)=q^{n-r}(q^{r}-1).\]
Combining these two relations, we  obtain the desired lower bound for $\mathcal{SO}_{\gamma,d\mu}$.
\end{proof}

We now extend our result with a general regular semisimple element $\gamma \in \mathfrak{gl}_{n}(\mfo)$. By the inductive formula in Proposition \ref{cor410}, it suffices to consider the lower bound for an elliptic regular semisimple element.

To treat the case when $\gamma$ is elliptic without any restriction such as $\overline{\chi}_{\gamma}(x)=x^{n}$,
we recall that $R$ is $\mfo[x]/(\chi_{\gamma}(x))$ and $\kappa_{R}$ is the residue field of $R$ in Section \ref{reductionss} (or Diagrams (\ref{diag})-(\ref{diag2_gl})).
Define the integer $\bar{d}_{\gamma}$  as follows, which is  associated to the integer $d_{\gamma}$ defined in the paragraph just before Theorem \ref{lb1}.
\begin{enumerate}
    \item If $d_{\gamma}\neq 0$, then $[\kappa_{R}:\kappa]=1$. In this case, we define 
    $$\bar{d}_{\gamma}:=d_{\gamma}.$$
    \item If $d_{\gamma}=0$ then $[\kappa_{R}:\kappa]>1$.
    In this case, we use  the inductive formula in Corollary \ref{corred}.
    Namely, we  view $\gamma$ as an element of $\mathfrak{gl}_{n}(\mfo_{K})$ and work with the characteristic polynomial $\chi_{\gamma}^{K}(x)$ (where the notation $\mfo_{K}$ and $\chi_{\gamma}^{K}(x)$ follow those  explained in paragraphs before Lemma \ref{lem42}).
 \begin{enumerate}
 \item 
    If $\mathrm{deg}(\chi_{\gamma}^{K}(x))>1$, then we consider $d_{\gamma}$ for $\chi_{\gamma}^{K}(x)$ in Remark \ref{dn} and  denote it by $d_{\gamma}^{K}$, in order to distinguish it from $d_{\gamma}$ for $\chi_{\gamma}(x)$.
    Define 
    $$\bar{d}_{\gamma}:=d_{\gamma}^{K}.$$
\item
    If $\mathrm{deg}(\chi_{\gamma}^{K}(x))=1$, equivalently if $\overline{\chi}_{\gamma}(x)$ is irreducible over $\kappa$, then we define 
    $$\bar{d}_{\gamma}:=0.$$
    
 \end{enumerate}
\end{enumerate}
\begin{remark}\label{rmk89}
If  $n$ is a prime integer, then $\bar{d}_{\gamma}=d_{\gamma}$.
To show this, we only need to consider the case of $d_{\gamma}=0$.
In this case,    $\overline{\chi}_{\gamma}(x)$ is irreducible over $\kappa$ by Proposition \ref{charred3} and thus 
 $\bar{d}_{\gamma}=0$ as well by the above (b).
\end{remark}

Let
\begin{equation}\label{eq:Ngamma'}
\left\{
\begin{array}{l}
    N'_{\gamma}(x)=\frac{x}{x-1}(1+2x^{-1}+\cdots+2x^{-\lfloor\frac{\bar{d}_{\gamma}}{2}\rfloor+1}+\varepsilon(\bar{d}_{\gamma})x^{-\lfloor\frac{\bar{d}_{\gamma}}{2}\rfloor});\\
    N'_{\gamma,d\mu}(x)=(x^{S_{R}(\gamma)}+x^{S_{R}(\gamma)-1}+\cdots+x^{S_{R}(\gamma)-r+1})(1+2x^{-1}+\cdots+2x^{-\lfloor\frac{\bar{d}_{\gamma}}{2}\rfloor+1}+\varepsilon(\bar{d}_{\gamma})x^{-\lfloor\frac{\bar{d}_{\gamma}}{2}\rfloor})
\end{array}\right.
\end{equation}
for an elliptic regular semisimple element $\gamma \in \mathfrak{gl}_{n}(\mfo)$, where $S_{R}(\gamma)$ is the relative $R$-length $[\mfo_{F_{\chi_{\gamma}}}:R]_{R}$. Then 
$S(\gamma)=d\cdot S_{R}(\gamma)$ since $[\kappa_{R}:\kappa]=d$.

Combining inductive formulas in Proposition \ref{cor410} and Corollary \ref{corred} with the lower bound for Theorem \ref{lb1}, 
we have the following  lower bound for a regular semisimple element $\gamma \in \mathfrak{gl}_{n}(\mfo)$:
\begin{theorem}\label{genlb1}
Suppose that $char(F) = 0$ or $char(F)>n$.
For a regular semisimple element $\gamma \in \mathfrak{gl}_{n}(\mfo)$, we have
\[\left\{\begin{array}{l}
    \mathcal{SO}_{\gamma}>\frac{\#\mathrm{GL}_{n}(\kappa)}{q^{n^{2}}}\prod\limits_{i\in B(\gamma)}N'_{\gamma_{i}}(q^{d_{i}});\\
    \mathcal{SO}_{\gamma,d\mu} > q^{\rho(\gamma)}\prod\limits_{i\in B(\gamma)}N'_{\gamma_{i},d\mu}(q^{d_{i}})
\end{array}\right.\]
where $B(\gamma)$ and $\gamma_{i}$ are defined in Section \ref{sec322}.
Here   we use the following notation for an elliptic regular semisimple element $\gamma_{i}$:
\begin{equation}\label{eq:notations_rho_gamma}
\left\{
\begin{array}{l}
    \textit{$R_{i}=\mfo[x]/(\chi_{\gamma_{i}}(x))$ \textit{      and          } $
    F_{\chi_{\gamma_{i}}}=F[x]/(\chi_{\gamma_{i}}(x))$};\\
    \textit{$\kappa_{R_{i}}$ and $\kappa_{F_{\chi_{\gamma_{i}}}}$ are the residue fields of $R_{i}$ and $F_{\chi_{\gamma_{i}}}$, respectively};\\
    d_{i}=[\kappa_{R_{i}}:\kappa] \textit{      and          } r_{i}=[\kappa_{F_{\chi_{\gamma_{i}}}}:\kappa_{R_{i}}]; \\
    \rho(\gamma)=S(\gamma)-\sum\limits_{i\in B(\gamma)}S(\gamma_{i}).
\end{array}
\right.
\end{equation}
\end{theorem}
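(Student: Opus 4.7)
The plan is to combine the two inductive reductions already established (Proposition \ref{cor410} and Corollary \ref{corred}) with the ``reduced'' lower bound of Theorem \ref{lb1}, and then translate to the measure $d\mu$ using Propositions \ref{proptrans}--\ref{propserre}. The strategy is straightforward bookkeeping: the main task is to verify that the geometric factors produced by the two descents exactly recombine into the shapes $N'_{\gamma_i}(q^{d_i})$ and $N'_{\gamma_i,d\mu}(q^{d_i})$.

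First, I apply Proposition \ref{cor410}. This writes
\[
\mathcal{SO}_{\gamma} = \frac{\#\mathrm{GL}_n(\kappa)\,q^{-n^2}}{\prod_{i\in B(\gamma)} \#\mathrm{GL}_{n_i}(\kappa)\,q^{-n_i^2}} \prod_{i\in B(\gamma)} \mathcal{SO}_{\gamma_i},
\]
with $n_i = [F_i:F]$, reducing everything to elliptic regular semisimple $\gamma_i$. For each $i$, if $[\kappa_{R_i}:\kappa] = d_i > 1$, I use Corollary \ref{corred} with $l = d_i$ to reinterpret $\gamma_i$ as an elliptic regular semisimple element $\gamma_{i,K_i} \in \mathfrak{gl}_{n_i/d_i}(\mathfrak{o}_{K_i})$, obtaining
\[
\mathcal{SO}_{\gamma_i} = \frac{\#\mathrm{GL}_{n_i}(\kappa)}{\#\mathrm{GL}_{n_i/d_i}(\kappa_{R_i}) \cdot q^{n_i^2 - n_i^2/d_i}} \cdot \mathcal{SO}_{\gamma_{i,K_i}}.
\]
By the construction of $\bar d_{\gamma_i}$ in the paragraph before the theorem, the reduction of $\chi_{\gamma_{i,K_i}}(x)$ modulo the maximal ideal of $\mathfrak{o}_{K_i}$ is a power of a linear polynomial in $\kappa_{R_i}[x]$, so Theorem \ref{lb1} applies over $K_i$ (with residue cardinality $q^{d_i}$) and gives
\[
\mathcal{SO}_{\gamma_{i,K_i}} > \frac{\#\mathrm{GL}_{n_i/d_i}(\kappa_{R_i})}{(q^{d_i}-1)\,(q^{d_i})^{(n_i/d_i)^2-1}} \cdot S_i,
\]
where $S_i$ denotes the partial sum $1 + 2(q^{d_i})^{-1}+\cdots+\varepsilon(\bar d_{\gamma_i})(q^{d_i})^{-\lfloor \bar d_{\gamma_i}/2\rfloor}$.

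Substituting back, the factors $\#\mathrm{GL}_{n_i/d_i}(\kappa_{R_i})$ and the powers of $q^{n_i^2/d_i}$ cancel cleanly, leaving $\mathcal{SO}_{\gamma_i} > \frac{\#\mathrm{GL}_{n_i}(\kappa)}{q^{n_i^2}}\, N'_{\gamma_i}(q^{d_i})$ once one recognises $\frac{q^{d_i}}{q^{d_i}-1}\cdot S_i = N'_{\gamma_i}(q^{d_i})$ from (\ref{eq:Ngamma'}). Multiplying these bounds and plugging into the descent formula collapses all intermediate $\#\mathrm{GL}_{n_i}(\kappa)$ and $q^{n_i^2}$ factors and yields the first inequality of the theorem.

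For the $d\mu$ inequality, I pass through Proposition \ref{proptrans}, which (using $\#\mathrm{T}_\gamma(\kappa) = \prod_i q^{n_i - d_i r_i}(q^{d_i r_i}-1)$ for the Weil restriction torus, and Proposition \ref{propserre} to convert $|\Delta_\gamma|^{1/2}$ and $\prod_i |\Delta_{F_i/F}|^{-1/2}$ into $q^{S(\gamma)}/\prod_i q^{S(\gamma_i)} = q^{\rho(\gamma)}$) turns the bound for $\mathcal{SO}_\gamma$ into the bound
\[
\mathcal{SO}_{\gamma,d\mu} > q^{\rho(\gamma)} \prod_{i \in B(\gamma)} q^{S(\gamma_i) - d_i r_i}(q^{d_i r_i}-1)\, N'_{\gamma_i}(q^{d_i}).
\]
The remaining step is a short algebraic identity: using $S(\gamma_i) = d_i S_{R_i}(\gamma_i)$ and the factorisation $q^{d_i r_i}-1 = (q^{d_i}-1)(q^{d_i(r_i-1)}+\cdots+1)$, the product $q^{S(\gamma_i)-d_i r_i}(q^{d_i r_i}-1)\,N'_{\gamma_i}(q^{d_i})$ simplifies to exactly $N'_{\gamma_i,d\mu}(q^{d_i})$ as defined in (\ref{eq:Ngamma'}), completing the proof.

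The only real ``obstacle'' is bookkeeping: one must keep straight the three levels of invariants (over $F$, over $K_i$, and the residue-level $\kappa_{R_i}$), and verify that the geometric and torus-volume factors produced by Propositions \ref{cor410} and \ref{proptrans} cancel or recombine into the exponents $n^2$, $\rho(\gamma)$, and $S_{R_i}(\gamma_i) - r_i + 1$ appearing in the target formulas. No new geometric input beyond Theorem \ref{lb1} is needed.
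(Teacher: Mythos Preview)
Your proposal is correct and follows exactly the paper's approach: the paper's own proof is the single sentence ``Combining inductive formulas in Proposition \ref{cor410} and Corollary \ref{corred} with the lower bound for Theorem \ref{lb1},'' and you have carried out precisely that combination with the bookkeeping made explicit. One small wording issue: the phrase ``convert $|\Delta_\gamma|^{1/2}$ and $\prod_i|\Delta_{F_i/F}|^{-1/2}$ into $q^{\rho(\gamma)}$'' is imprecise (the combination actually gives $q^{S(\gamma)}$, which you then split as $q^{\rho(\gamma)}\prod_i q^{S(\gamma_i)}$), but your displayed intermediate formula and the final algebraic verification are both correct.
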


\begin{appendices}

\section{The proof of  Theorem  \ref{thm71}} \label{App:AppendixA}
 
In this Appendix, we provide the full proof of Theorem \ref{thm71}.
Each case will be handled independently in the following subsections.

\subsection{Case 1: \texorpdfstring{$k_1< k_2$}{k1<k2} and \texorpdfstring{$k_1<t$}{k1<t}}

Since $k_1+k_2=d$ and $k_1\leq t-1$, the Newton polygon of an irreducible polynomial $\chi_{\gamma}$ yields 
\begin{equation}\label{k12-1}
\left\{
  \begin{array}{l l}
\ord(c_1)\geq t, \ord(c_2)\geq 2t, k_2\geq 2t+1   & \textit{if $d=3t$};\\
\ord(c_1)\geq t, \ord(c_2)\geq 2t, k_2\geq 2t  & \textit{if $d=3t-1$};\\
\ord(c_1)\geq t, \ord(c_2)\geq 2t-1, k_2\geq 2t-1   & \textit{if $d=3t-2$}
    \end{array} \right.
\end{equation}
so that $\ord(c_1)>k_1$, $\ord(c_2)>2k_1$, and $k_2>2k_1$.
\\

Let  $m=\begin{pmatrix}
x_{1,1} &x_{1,2} & x_{1,3} \\
\pi^{k_1}x_{2,1} &\pi^{k_1}x_{2,2} & \pi^{k_1}x_{2,3} \\
\pi^{k_2}x_{3,1} &\pi^{k_2}x_{3,2} & \pi^{k_2}x_{3,3}
\end{pmatrix} $ be  an element of $\varphi_{3,M}^{-1}(\chi_{\gamma})(\mfo)$.
Since the characteristic polynomial of $m$ should satisfy Equation (\ref{k12-1}), both $x_{1,1}$ and $ x_{2,1}x_{1,2}$ are contained in the ideal $(\pi^{k_1})$.
Therefore we have the following stratification of $\varphi_{3,M}^{-1}(\chi_{\gamma})(\mfo)$ with respect to the valuation of $x_{2,1}$:
\begin{multline}\label{eq73}
\varphi_{3,M}^{-1}(\chi_{\gamma})(\mfo)=
\bigsqcup_{l=0}^{k_1-1}\Bigl\{\begin{pmatrix}
\pi^{k_1}x_{1,1} &\pi^{k_1-l}x_{1,2} & x_{1,3} \\
\pi^{k_1+l}x_{2,1} &\pi^{k_1}x_{2,2} & \pi^{k_1}x_{2,3} \\
\pi^{k_2}x_{3,1} &\pi^{k_2}x_{3,2} & \pi^{k_2}x_{3,3}
\end{pmatrix}\in \varphi_{3,M}^{-1}(\chi_{\gamma})(\mfo)|x_{2,1}\in \mfo^{\times}\Bigl\}\\
\bigsqcup \Bigl\{\begin{pmatrix}
\pi^{k_1}x_{1,1} &x_{1,2} & x_{1,3} \\
\pi^{2k_1}x_{2,1} &\pi^{k_1}x_{2,2} & \pi^{k_1}x_{2,3} \\
\pi^{k_2}x_{3,1} &\pi^{k_2}x_{3,2} & \pi^{k_2}x_{3,3}
\end{pmatrix}\in \varphi_{3,M}^{-1}(\chi_{\gamma})(\mfo)\Bigl\}.
\end{multline}
Here, the determinant of $\begin{pmatrix}
\pi^{k_1}x_{1,1} &\pi^{k_1-l}x_{1,2} & x_{1,3} \\
\pi^{l}x_{2,1} & x_{2,2} &  x_{2,3} \\
x_{3,1} & x_{3,2} &  x_{3,3}
\end{pmatrix}$ in each stratum for $0\leq l \leq k_1$ is a unit in $\mfo$, where the last case $l=k_1$ is about the last stratum.
In particular when $l=k_1$,
we claim that  $x_{1,2}\in \mfo^{\times}$.
Otherwise, $x_{1,2}\in (\pi)$ so that $x_{2,2}\in \mfo^{\times}$ since the determinant of the above matrix is a unit in $\mfo$.
On the other hand, Equation  (\ref{k12-1}) yields that $-x_{1,1}-x_{2,2},\  x_{1,1}x_{2,2}-x_{1,2}x_{2,1}\in (\pi)$. Thus $x_{1,1}$ is a unit as well, which contradicts to the fact that $x_{1,1}x_{2,2}-x_{1,2}x_{2,1}\in (\pi)$.
\\

Now, we assign a scheme structure on each stratum as in Section \ref{subsec52}.
Define   functors $\cL(L,M)_l$ for $0\leq l \leq k_1$ and $\mathbb{A}_{M,1}$ on the category of flat $\mfo$-algebras to the category of sets such that
 \[
\cL(L,M)_l(R)
=\Bigl\{\begin{pmatrix}
\pi^{k_1}x_{1,1} &\pi^{k_1-l}x_{1,2} & x_{1,3} \\
\pi^{k_1+l}x_{2,1} &\pi^{k_1}x_{2,2} & \pi^{k_1}x_{2,3} \\
\pi^{k_2}x_{3,1} &\pi^{k_2}x_{3,2} & \pi^{k_2}x_{3,3}
\end{pmatrix}\Bigl\}
   \textit{,    } ~~~~~~~~~~~ 
\mathbb{A}_{M,1}(R)=\pi^{k_1} R\times \pi^{2k_1} R\times \pi^d R
\]
where $x_{2,1}\in R^{\times}$ (resp. $x_{1,2}\in R^{\times}$)  if $0\leq l \leq k_1-1$ (resp. $l=k_1$) and $\begin{pmatrix}
\pi^{k_1}x_{1,1} &\pi^{k_1-l}x_{1,2} & x_{1,3} \\
\pi^{l}x_{2,1} & x_{2,2} &  x_{2,3} \\
x_{3,1} & x_{3,2} &  x_{3,3}
\end{pmatrix}$
is invertible as a matrix with entries in $R$ 
for  $0\leq l \leq k_1$.
Here, $R$ is a flat $\mfo$-algebra.

Let $\varphi_{3,M}^l$  be the restriction of $\varphi_{3,M}$ to $\cL(L,M)_l(R)$ for $0\leq l \leq k_1$.
Then the morphism
\[
\varphi_{3,M}^l: \cL(L,M)_l(R) \longrightarrow \mathbb{A}_{M,1}(R)
\]
is represented by a morphism of schemes over $\mfo$.
Note that each stratum in Equation (\ref{eq73}) is  $(\varphi_{3,M}^l)^{-1}(\chi_{\gamma})(\mfo)$.
Thus  the stratification in Equation (\ref{eq73}) turns out to be
\[
\varphi_{3,M}^{-1}(\chi_{\gamma})(\mfo)=\bigsqcup_{l=0}^{k_1}(\varphi_{3,M}^l)^{-1}(\chi_{\gamma})(\mfo).
\]

\begin{proposition}
The scheme $(\varphi_{3,M}^l)^{-1}(\chi_{\gamma})$ is smooth over $\mfo$.
\end{proposition}
\begin{proof}
The proof is similar to that of Theorem \ref{thm55}.
It suffices to show that for any $m\in (\varphi_{3,M}^l)^{-1}(\chi_{\gamma})(\bar{\kappa})$,
the induced map on the Zariski tangent space 
\[
d(\vpi_{3,M}^l)_{\ast, m}: T_m \longrightarrow T_{\varphi_{3,M}^l(m)}
\] 
is surjective, where $T_m$ is the Zariski tangent space of $ \cL(L,M)_l\otimes \bar{\kappa}$ at $m$ and $T_{\varphi_{3,M}^l(m)}$ is the Zariski tangent space of $\mathbb{A}_{M,1}\otimes \bar{\kappa}$ at $\varphi_{3,M}^l(m)$.

Write $m\in (\varphi_{3,M}^l)^{-1}(\chi_{\gamma})(\bar{\kappa})$ and $X\in T_m$ as the following matrix formally;
 \[
m=\begin{pmatrix}
\pi^{k_1}x_{1,1} &\pi^{k_1-l}x_{1,2} & x_{1,3} \\
\pi^{k_1+l}x_{2,1} &\pi^{k_1}x_{2,2} & \pi^{k_1}x_{2,3} \\
\pi^{k_2}x_{3,1} &\pi^{k_2}x_{3,2} & \pi^{k_2}x_{3,3}
\end{pmatrix}   \textit{   and   } X=
\begin{pmatrix}
\pi^{k_1}a_{1,1} &\pi^{k_1-l}a_{1,2} & a_{1,3} \\
\pi^{k_1+l}a_{2,1} &\pi^{k_1}a_{2,2} & \pi^{k_1}a_{2,3} \\
\pi^{k_2}a_{3,1} &\pi^{k_2}a_{3,2} & \pi^{k_2}a_{3,3}
\end{pmatrix},
\]
 where $x_{ij}, a_{ij}\in \bar{\kappa}$,  
 $x_{2,1}\in \bar{\kappa}^{\times}$ (resp. $x_{1,2}\in \bar{\kappa}^{\times})$  if $0\leq l \leq k_1-1$ (resp. $l=k_1$),  and
 $\begin{pmatrix}
0 &\pi^{k_1-l}x_{1,2} & x_{1,3} \\
\pi^{l}x_{2,1} & x_{2,2} &  x_{2,3} \\
x_{3,1} & x_{3,2} &  x_{3,3}
\end{pmatrix}$ is invertible  as a matrix of $\mathrm{M}_3(\bar{\kappa})$ for  $0\leq l \leq k_1$.
Thus 
 \[
\left\{
  \begin{array}{l l}
x_{2,1}, x_{1,3}\neq 0 &  \textit{if $l=0$};\\
x_{2,1}, x_{1,3}, x_{3,1}\neq 0 &  \textit{if $1\leq l \leq k_1-1$};\\
x_{1,2}, x_{3,1}\neq 0 & \textit{if $l=k_1$}.
    \end{array} \right.
\]

Our method to prove the surjectivity of $d(\vpi_{3,M}^l)_{\ast, m}$ is to choose a certain subspace of $T_m$ mapping onto $T_{\vpi_{3,M}^l(m)}$.
Let 
 $A_1=\begin{pmatrix}
\pi^{k_1}1 &\pi^{k_1-l}0 & 0 \\
\pi^{k_1+l}0 &\pi^{k_1}0 & \pi^{k_1}0 \\
\pi^{k_2}0 &\pi^{k_2}0 & \pi^{k_2}0
\end{pmatrix}$ for $0\leq l \leq k_1$ and let $A_2, A_3$ be
 \[
\left\{
  \begin{array}{l l}
  \begin{pmatrix}
\pi^{k_1}0 &\pi^{k_1}1 & 0 \\
\pi^{k_1}0 &\pi^{k_1}0 & \pi^{k_1}0 \\
\pi^{k_2}0 &\pi^{k_2}0 & \pi^{k_2}0
\end{pmatrix}, 
\begin{pmatrix}
\pi^{k_1}0 &\pi^{k_1}0 & 0 \\
\pi^{k_1}0 &\pi^{k_1}0 & \pi^{k_1}0 \\
\pi^{k_2}0 &\pi^{k_2}1 & \pi^{k_2}0
\end{pmatrix} &  \textit{if $l=0$};\\
\begin{pmatrix}
\pi^{k_1}0 &\pi^{k_1-l}1 & 0 \\
\pi^{k_1+l}0 &\pi^{k_1}0 & \pi^{k_1}0 \\
\pi^{k_2}0 &\pi^{k_2}0 & \pi^{k_2}0
\end{pmatrix}, 
\begin{pmatrix}
\pi^{k_1}0 &\pi^{k_1-l}0 & 0 \\
\pi^{k_1+l}0 &\pi^{k_1}1 & \pi^{k_1}0 \\
\pi^{k_2}0 &\pi^{k_2}0 & \pi^{k_2}0
\end{pmatrix} &  \textit{if $1\leq l \leq k_1-1$};\\
\begin{pmatrix}
\pi^{k_1}0 &0 & 0 \\
\pi^{2k_1}1 &\pi^{k_1}0 & \pi^{k_1}0 \\
\pi^{k_2}0 &\pi^{k_2}0 & \pi^{k_2}0
\end{pmatrix}, 
\begin{pmatrix}
\pi^{k_1}0 &0 & 0 \\
\pi^{2k_1}0 &\pi^{k_1}0 & \pi^{k_1}1 \\
\pi^{k_2}0 &\pi^{k_2}0 & \pi^{k_2}0
\end{pmatrix} & \textit{if $l=k_1$}
    \end{array} \right.
\]
as elements of $T_m$.
Then the images of $A_1, A_2, A_3$ are 
 \[
\left\{
  \begin{array}{l l}
\left(1, \ast, \ast\ast \right), \left(0, x_{2,1}, \ast \right), \left(0, 0, x_{2,1}x_{1,3} \right) &  \textit{if $l=0$};\\
\left(1, \ast, \ast\ast \right), \left(0, x_{2,1}, \ast \right), \left(0, 0, x_{3,1}x_{1,3} \right) &  \textit{if $1\leq l \leq k_1-1$};\\
\left(1,  \ast, \ast\ast \right), \left(0, x_{1,2}, \ast \right), \left(0, 0, x_{3,1}x_{1,2} \right) & \textit{if $l=k_1$}
    \end{array} \right.
\]
up to the sign $\pm 1$.
These three vectors are linearly independent as elements of $T_{\vpi_{3,M}^l(m)}$ and so the map $d(\vpi_{3,M}^l)_{\ast, m}$ is surjective.
\end{proof}

\begin{corollary}
The order of the set $(\varphi_{3,M}^l)^{-1}(\chi_{\gamma})(\kappa)$ is 
 \[
\#(\varphi_{3,M}^l)^{-1}(\chi_{\gamma})(\kappa)=\left\{
  \begin{array}{l l}
q^3(q-1)^3 &   \textit{if $1\leq l \leq k_1-1$};\\
q^4(q-1)^2 & \textit{if $l=0$ or $l=k_1$}.
    \end{array} \right.
\]
\end{corollary}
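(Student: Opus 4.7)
The plan is to mirror the strategy of Corollaries \ref{corcounting_1} and \ref{cor84}: since the preceding proposition shows $(\varphi_{3,M}^l)^{-1}(\chi_{\gamma})$ is smooth over $\mfo$, I can count its $\kappa$-points directly from the reduced defining equations. The input data are three polynomial equations in the nine variables $x_{i,j}\in\kappa$ coming from $\varphi_{3,M}^l(m)=\chi_{\gamma}$, together with unit conditions on certain $x_{i,j}$ that come partly from the definition of $\cL(L,M)_l$ and partly from invertibility of the associated $3\times 3$ matrix.

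First I would extract the reduced equations. Computing $\mathrm{tr}(m)$, the sum of the three $2\times 2$ principal minors, and $\det(m)$ for $m$ in the matrix form prescribed by $\cL(L,M)_l$, then dividing by $\pi^{k_1}$, $\pi^{2k_1}$, $\pi^d$ respectively (which is precisely why the target is $\mathbb{A}_{M,1}(R)=\pi^{k_1}R\times\pi^{2k_1}R\times\pi^d R$), and finally reducing modulo $\pi$, I obtain three polynomial equations in the $x_{i,j}$. The Newton-polygon inequalities in (\ref{k12-1}), namely $k_2>2k_1$ together with $\ord(c_1)>k_1$ and $\ord(c_2)>2k_1$, ensure that most of the ``cross'' terms carrying an extra factor of $\pi$ vanish modulo $\pi$, so the three equations that survive are simple enough to analyze directly.

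Next I would record the unit constraints. The definition of $\cL(L,M)_l$ already forces $x_{2,1}\in\kappa^{\times}$ when $0\leq l\leq k_1-1$ and $x_{1,2}\in\kappa^{\times}$ when $l=k_1$; invertibility of the associated matrix over $\kappa$ then supplies the additional non-vanishing conditions recorded in the preceding proposition ($x_{1,3}\neq 0$ when $l=0$; also $x_{3,1}\neq 0$ when $1\leq l\leq k_1-1$; $x_{3,1}\neq 0$ when $l=k_1$). Using these units, each of the three reduced equations eliminates a single variable: the trace equation solves for one of $x_{1,1},x_{2,2}$, the $2\times 2$-minor equation for a second variable, and the determinant equation reduces modulo $\pi$ to an expression of the form $(\text{unit})\cdot(\text{single variable})+(\text{other terms})$, making its distinguished variable solvable in terms of the rest.

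The main obstacle is verifying case by case that the three equations really do cut out three \emph{independent} variables, rather than collapsing or introducing spurious conditions; in particular, after substituting the trace equation into the $2\times 2$-minor equation, one must check that the remaining expression still depends on a fresh variable with a unit coefficient, and analogously for the determinant equation. Once that is confirmed, the count is immediate: of the $9-3=6$ remaining variables, two are units and four are free when $l\in\{0,k_1\}$, giving $q^{4}(q-1)^{2}$, while three are units and three are free when $1\leq l\leq k_1-1$, giving $q^{3}(q-1)^{3}$, matching the stated formula.
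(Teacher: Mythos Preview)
Your proposal is correct and follows essentially the same route as the paper's proof: compute the three reduced equations (the paper writes them out explicitly as $-x_{1,1}-x_{2,2}$, $\begin{vmatrix} x_{1,1}&x_{1,2}\\ x_{2,1}&x_{2,2}\end{vmatrix}$, and a product of $x_{1,3}$ or $x_{3,1}$ with a $2\times 2$ minor), solve each for one variable using the recorded unit conditions, and count the remaining variables. The only cosmetic difference is the bookkeeping. The paper eliminates $x_{1,1},x_{1,2},x_{1,3}$ (resp.\ $x_{1,1},x_{2,1},x_{3,1}$ when $l=k_1$) and then has to count invertible $2\times 2$ matrices with a prescribed nonzero entry, obtaining $\#\mathrm{GL}_2(\kappa)-q(q-1)^2=q^2(q-1)^2$; your framing instead eliminates a variable appearing linearly in the determinant equation (e.g.\ $x_{3,2}$ when $l=0$), after which the open invertibility condition becomes automatic and the count is literally ``two units, four free'' or ``three units, three free''. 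Both bookkeepings give the same numbers.
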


\begin{proof}
For $m\in \cL(L,M)_l(\kappa)$,  $\varphi_{3,M}^l(m)$  is 
 \[
\left\{
  \begin{array}{l l}
\left(\pi^{k_1}(-x_{1,1}-x_{2,2}), \pi^{2k_1}\begin{vmatrix}
x_{1,1} &x_{1,2} \\ x_{2,1} &x_{2,2}
\end{vmatrix}, \pi^d\left(-x_{1,3}\cdot 
\begin{vmatrix}
\pi^lx_{2,1} &x_{2,2} \\ x_{3,1} &x_{3,2}
\end{vmatrix}\right)\right) &   \textit{if $0\leq l \leq k_1-1$};\\
\left(\pi^{k_1}(-x_{1,1}-x_{2,2}), \pi^{2k_1}\begin{vmatrix}
x_{1,1} &x_{1,2} \\ x_{2,1} &x_{2,2}
\end{vmatrix}, \pi^d\left(-x_{3,1}\cdot 
\begin{vmatrix}
x_{1,2} &x_{1,3} \\ x_{2,2} &x_{2,3}
\end{vmatrix}\right)\right) & \textit{if $l=k_1$}.
    \end{array} \right.
\]

\begin{enumerate}
\item 
 Suppose that $l=0$ so that   $x_{2,1}, x_{1,3}\neq 0$ and $\begin{vmatrix}
x_{2,1} &x_{2,2} \\ x_{3,1} &x_{3,2}
\end{vmatrix} \neq 0$. 
If $m\in (\varphi_{3,M}^1)^{-1}(\chi_{\gamma})(\kappa)$, then $x_{1,1}, x_{1,2}, x_{1,3}$ are completely determined by all other entries satisfying $\begin{vmatrix}
x_{2,1} &x_{2,2} \\ x_{3,1} &x_{3,2}
\end{vmatrix} \neq 0$. Thus we have 
 \[
\#(\varphi_{3,M}^l)^{-1}(\chi_{\gamma})(\kappa)=
(\#\mathrm{GL}_2(\kappa)-q(q-1)^2)\cdot q^2=q^4(q-1)^2.
\]

\item Suppose that $1\leq l \leq k_1-1$ so that  $x_{2,1}, x_{1,3}, x_{3,1} \neq 0$ and $\begin{vmatrix}
0 &x_{2,2} \\ x_{3,1} &x_{3,2}
\end{vmatrix} \neq 0$. 
If $m\in (\varphi_{3,M}^1)^{-1}(\chi_{\gamma})(\kappa)$, then $x_{1,1}, x_{1,2}, x_{1,3}$ are completely determined by all other entries satisfying $\begin{vmatrix}
0 &x_{2,2} \\ x_{3,1} &x_{3,2}
\end{vmatrix} \neq 0$. Thus we have 
 \[
\#(\varphi_{3,M}^l)^{-1}(\chi_{\gamma})(\kappa)=
(q-1)^3q\cdot q^2=q^3(q-1)^3.
\]

\item Suppose that $l=k_1$ so that 
  $x_{1,2}, x_{3,1}\neq 0$ and $\begin{vmatrix}
x_{1,2} &x_{1,3} \\ x_{2,2} &x_{2,3}
\end{vmatrix}\neq 0$.
If $m\in (\varphi_{3,M}^1)^{-1}(\chi_{\gamma})(\kappa)$, then $x_{1,1}, x_{2,1}, x_{3,1}$ are completely determined by all other entries satisfying $\begin{vmatrix}
x_{1,2} &x_{1,3} \\ x_{2,2} &x_{2,3}
\end{vmatrix}\neq 0$.
 Thus we have
  \[
\#(\varphi_{3,M}^l)^{-1}(\chi_{\gamma})(\kappa)=
(\#\mathrm{GL}_2(\kappa)-q(q-1)^2)\cdot q^2=q^4(q-1)^2.
\]
\end{enumerate}
\end{proof}

\begin{corollary}
We have 
\[
c_{(k_{1},k_{2})}\cdot \mathcal{SO}_{\gamma, (k_1, k_2)}=(q^{3}-1)(q^{2}-1)q^{k_{2}}\cdot \frac{(k_1+1)q-(k_1-1)}{q^{6+d}}.
\]

\end{corollary}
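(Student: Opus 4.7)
The proof is an assembly of the ingredients established immediately above in this Case 1 discussion: the stratification $\varphi_{3,M}^{-1}(\chi_{\gamma})(\mfo) = \bigsqcup_{l=0}^{k_1}(\varphi_{3,M}^l)^{-1}(\chi_{\gamma})(\mfo)$, the smoothness of each stratum, and the explicit $\kappa$-point count from the preceding corollary.

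First, I would apply Weil's formula to each smooth scheme $(\varphi_{3,M}^l)^{-1}(\chi_{\gamma})$, which has relative dimension $6$ over $\mfo$, computing the volume of its $\mfo$-points against the canonical quotient differential $\omega^{l,\mathrm{can}} := \omega_{\cL(L,M)_l}/(\varphi_{3,M}^l)^{\ast}\omega_{\mathbb{A}_{M,1}}$ as $\#(\varphi_{3,M}^l)^{-1}(\chi_{\gamma})(\kappa)/q^6$. A Jacobian computation entirely parallel to Remark \ref{rmk53}(4) and Proposition \ref{prop54} -- summing $\pi$-exponents over the nine matrix entries of $\cL(L,M)_l$ (the $l$-dependence cancels between the $(1,2)$ and $(2,1)$ slots, giving a total exponent $5k_1+3k_2$) and over the three target coordinates ($3k_1+d$) -- yields $|\omega_{\chi_{\gamma}}^{\mathrm{ld}}| = |\pi|^{k_1+2k_2}\,|\omega^{l,\mathrm{can}}|$ uniformly in $l$, in agreement with Proposition \ref{prop54}.

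Summing over $l \in \{0,1,\dots,k_1\}$ with the preceding corollary,
\[
\sum_{l=0}^{k_1}\#(\varphi_{3,M}^l)^{-1}(\chi_{\gamma})(\kappa) = 2q^4(q-1)^2 + (k_1-1)q^3(q-1)^3 = q^3(q-1)^2\big((k_1+1)q-(k_1-1)\big),
\]
so $\mathcal{SO}_{\gamma,(k_1,k_2)} = (q-1)^2\big((k_1+1)q-(k_1-1)\big)/q^{k_1+2k_2+3}$. Multiplying by $c_{(k_1,k_2)} = (q^2+q+1)(q+1)q^{2k_2-3}$ from Remark \ref{rmk410}(2), using the identity $(q^2+q+1)(q+1)(q-1)^2 = (q^3-1)(q^2-1)$, and rewriting $q^{-k_1-6} = q^{k_2}/q^{d+6}$ delivers the claimed formula.

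No conceptual obstacle remains at this stage: smoothness, cardinality, and the stratification are all in hand, so the only care required is the $\pi$-power bookkeeping in the Jacobian step, which is mechanical once the explicit parametrizations of $\cL(L,M)_l$ and of $\mathbb{A}_{M,1}$ have been set up as in the text.
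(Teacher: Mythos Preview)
Your proposal is correct and follows essentially the same route as the paper's proof: set up the per-stratum volume forms, compute the $\pi$-exponent discrepancy ($5k_1+3k_2$ on the source, $3k_1+d$ on the target, difference $d+k_2=k_1+2k_2$), apply Weil's formula and the preceding point count, sum over $l$, and multiply by $c_{(k_1,k_2)}$ from Remark~\ref{rmk410}. The arithmetic and the logical structure match the paper's argument step for step.
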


\begin{proof}
The proof is parallel to that of Corollary \ref{cornm1}.
Let $\widetilde{\cL}(L,M)_l$ be the affine space defined over $\mfo$ such that 
$\widetilde{\cL}(L,M)_l(R)=\Bigl\{\begin{pmatrix}
\pi^{k_1}x_{1,1} &\pi^{k_1-l}x_{1,2} & x_{1,3} \\
\pi^{k_1+l}x_{2,1} &\pi^{k_1}x_{2,2} & \pi^{k_1}x_{2,3} \\
\pi^{k_2}x_{3,1} &\pi^{k_2}x_{3,2} & \pi^{k_2}x_{3,3}
\end{pmatrix}\Bigl\}$ for $0\leq l \leq k_1$. Then $\cL(L,M)_l$ is an open subscheme of $\widetilde{\cL}(L,M)_l$.

Let $\omega_{\widetilde{\cL}(L,M)_l}$ and $\omega_{\mathbb{A}_{M,1}}$ be nonzero,  translation-invariant forms on   $\mathfrak{gl}_{n, F}$ and $\mathbb{A}^n_F$,
 respectively, with normalizations
$$\int_{\widetilde{\cL}(L,M)_l(\mathfrak{o})}|\omega_{\widetilde{\cL}(L,M)_l}|=1 \mathrm{~and~}  \int_{\mathbb{A}_{M,1}(\mathfrak{o})}|\omega_{\mathbb{A}_{M,1}}|=1.$$
Putting $\omega^{can}_{(\chi_{\gamma},L,M)_l}=\omega_{\widetilde{\cL}(L,M)_l}/\rho_n^{\ast}\omega_{\mathbb{A}_{M,1}}$.
 Then we have the following comparison among differentials;
 
 \[
\left\{
  \begin{array}{l }
|\omega_{\mathfrak{gl}_{n, \mathfrak{o}}}|=|\pi|^{5k_1+3k_2}\cdot|\omega_{\widetilde{\cL}(L,M)_l}|;\\
|\omega_{\mathbb{A}^n_{\mathfrak{o}}}|=
|\pi|^{3k_1+d}\cdot |\omega_{\mathbb{A}_{M,1}}|;\\
|\omega_{\chi_{\gamma}}^{\mathrm{ld}}|=|\pi|^{d+k_2}|\omega^{can}_{(\chi_{\gamma},L,M)_l}|.
    \end{array} \right.
\]
Therefore, we have
\begin{multline*}
\mathcal{SO}_{\gamma, (k_1, k_2)}=\sum\limits_{l=0}^{k_1}\int_{(\varphi_{3,M}^l)^{-1}(\chi_{\gamma})(\mfo)}|\omega_{\chi_{\gamma}}^{\mathrm{ld}}|= 
\frac{(k_1-1)q^3(q-1)^3+2q^4(q-1)^2}{q^{6+d+k_2}}=\\
\frac{(q-1)^2((k_1+1)q-(k_1-1))}{q^{3+d+k_2}}.
\end{multline*}
Combing with Remark \ref{rmk410}, we have the final formula:
\begin{multline*}
c_{(k_{1},k_{2})}\cdot \mathcal{SO}_{\gamma, (k_1, k_2)}=(q^{2}+q+1)(q+1)\cdot q^{2k_{2}-3}\cdot\frac{(q-1)^2((k_1+1)q-(k_1-1))}{q^{3+d+k_2}}=\\
(q^{3}-1)(q^{2}-1)q^{k_{2}}\cdot \frac{(k_1+1)q-(k_1-1)}{q^{6+d}}.
\end{multline*}
\end{proof}

\subsection{Case 2: $k_1< k_2$ and $k_1>s$}
Recall that $t=\lceil\frac{d}{3}\rceil$ and $s=\lfloor \frac{d}{3}\rfloor$ whose relations are described in Equation (\ref{eq72}).
Since $k_1+k_2=d$ and $k_1\geq s+1$, the Newton polygon of an irreducible polynomial $\chi_{\gamma}$ yields
\begin{equation}\label{k12-2}
\left\{
  \begin{array}{l l}
\ord(c_1)\geq t, \ord(c_2)\geq 2t, k_2\leq 2t-1   & \textit{if $d=3t$};\\
\ord(c_1)\geq t, \ord(c_2)\geq 2t, k_2\leq 2t-1  & \textit{if $d=3t-1$};\\
\ord(c_1)\geq t, \ord(c_2)\geq 2t-1, k_2\leq 2t-2   & \textit{if $d=3t-2$}
    \end{array} \right.
\end{equation}
so that $k_1\geq t$, $t+k_1>k_2$, and $\ord(c_2)>k_2$.

Choose an element $m=\begin{pmatrix}
x_{1,1} &x_{1,2} & x_{1,3} \\
\pi^{k_1}x_{2,1} &\pi^{k_1}x_{2,2} & \pi^{k_1}x_{2,3} \\
\pi^{k_2}x_{3,1} &\pi^{k_2}x_{3,2} & \pi^{k_2}x_{3,3}
\end{pmatrix} $ of $\varphi_{3,M}^{-1}(\chi_{\gamma})(\mfo)$.
Since the characteristic polynomial of $m$ should satisfy Equation (\ref{k12-2}),  $x_{1,1}\in (\pi^{t})$ and $ x_{1,2}x_{2,1}\in (\pi^{k_2-k_1})$.

Therefore we have the following stratification of $\varphi_{3,M}^{-1}(\chi_{\gamma})(\mfo)$ with respect to the valuation of $x_{2,1}$:
\begin{multline}\label{eq76}
\varphi_{3,M}^{-1}(\chi_{\gamma})(\mfo)=
\bigsqcup_{l=0}^{k_2-k_1-1}\Bigl\{\begin{pmatrix}
\pi^{t}x_{1,1} &\pi^{k_2-k_1-l}x_{1,2} & x_{1,3} \\
\pi^{k_1+l}x_{2,1} &\pi^{k_1}x_{2,2} & \pi^{k_1}x_{2,3} \\
\pi^{k_2}x_{3,1} &\pi^{k_2}x_{3,2} & \pi^{k_2}x_{3,3}
\end{pmatrix}\in \varphi_{3,M}^{-1}(\chi_{\gamma})(\mfo)|x_{2,1}\in \mfo^{\times}\Bigl\}\\
\bigsqcup \Bigl\{\begin{pmatrix}
\pi^{t}x_{1,1} &x_{1,2} & x_{1,3} \\
\pi^{k_2}x_{2,1} &\pi^{k_1}x_{2,2} & \pi^{k_1}x_{2,3} \\
\pi^{k_2}x_{3,1} &\pi^{k_2}x_{3,2} & \pi^{k_2}x_{3,3}
\end{pmatrix}\in \varphi_{3,M}^{-1}(\chi_{\gamma})(\mfo)\Bigl\}.
\end{multline}
Here, the determinant of $\begin{pmatrix}
\pi^{t}x_{1,1} &\pi^{k_2-k_1-l}x_{1,2} & x_{1,3} \\
\pi^{l}x_{2,1} & x_{2,2} &  x_{2,3} \\
x_{3,1} & x_{3,2} &  x_{3,3}
\end{pmatrix}$ in each stratum for $0\leq l \leq k_2-k_1$ is a unit in $\mfo$, where the last case $l=k_2-k_1$ is about the last stratum.
In particular when $l=k_2-k_1$,
we claim that  $x_{1,2}\in \mfo^{\times}$.
Otherwise, $x_{1,2}\in (\pi)$ so that $x_{3,1}, x_{1,3}\in \mfo^{\times}$ since the determinant of the above matrix is a unit in $\mfo$.
On the other hand, Equation  (\ref{k12-2}) yields that $-x_{1,2}x_{2,1}-x_{3,1}x_{1,3}\in (\pi)$. This is a contradiction.
\\

Now, we assign a scheme structure on each stratum as in Section \ref{subsec52}.
Define   functors $\cL(L,M)_l$ for $0\leq l \leq k_2-k_1$ and $\mathbb{A}_{M,1}$ on the category of flat $\mfo$-algebras to the category of sets such that
 \[
\cL(L,M)_l(R)
=\begin{pmatrix}
\pi^{t}x_{1,1} &\pi^{k_2-k_1-l}x_{1,2} & x_{1,3} \\
\pi^{k_1+l}x_{2,1} &\pi^{k_1}x_{2,2} & \pi^{k_1}x_{2,3} \\
\pi^{k_2}x_{3,1} &\pi^{k_2}x_{3,2} & \pi^{k_2}x_{3,3}
\end{pmatrix}
   \textit{,    } 
\mathbb{A}_{M,1}(R)=\pi^{t} R\times \pi^{k_2} R\times \pi^d R
\]
where $x_{2,1}\in R^{\times}$ (resp. $x_{1,2}\in R^{\times}$) if $0\leq l \leq k_2-k_1-1$ (resp. $l=k_2-k_1$) and $\begin{pmatrix}
\pi^{t}x_{1,1} &\pi^{k_2-k_1-l}x_{1,2} & x_{1,3} \\
\pi^{l}x_{2,1} & x_{2,2} &  x_{2,3} \\
x_{3,1} &  x_{3,2} & x_{3,3}
\end{pmatrix}$
is invertible as a matrix with entries in $R$ for $0\leq l \leq k_2-k_1$.
Here, $R$ is a flat $\mfo$-algebra.

Let $\varphi_{3,M}^l$ be the restriction of $\varphi_{3,M}$ to $\cL(L,M)_l(R)$ for $0\leq l \leq k_2-k_1$.
Then the morphism
\[
\varphi_{3,M}^l: \cL(L,M)_l(R) \longrightarrow \mathbb{A}_{M,1}(R)
\]
is represented by a morphism of schemes over $\mfo$.
Note that each stratum in Equation (\ref{eq76}) is  $(\varphi_{3,M}^l)^{-1}(\chi_{\gamma})(\mfo)$.
Thus  the stratification in Equation (\ref{eq76}) turns out to be
\[
\varphi_{3,M}^{-1}(\chi_{\gamma})(\mfo)=\bigsqcup_{l=0}^{k_2-k_1}(\varphi_{3,M}^l)^{-1}(\chi_{\gamma})(\mfo).
\]

\begin{proposition}
The scheme $(\varphi_{3,M}^1)^{-1}(\chi_{\gamma})$ is smooth over $\mfo$.
\end{proposition}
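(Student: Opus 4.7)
The plan is to adapt the argument used for the analogous proposition in Case 1 essentially verbatim, treating the superscript as $l$ ranging over $0 \leq l \leq k_2-k_1$. Invoking \cite[Lemma 5.5.1]{GY} together with smoothness of $\cL(L,M)_l$ over $\mfo$ reduces the problem to showing that, for every $\bar\kappa$-point $m \in (\varphi_{3,M}^l)^{-1}(\chi_\gamma)(\bar\kappa)$, the induced map on Zariski tangent spaces $d(\varphi_{3,M}^l)_{*,m} : T_m \to T_{\varphi_{3,M}^l(m)}$ is surjective; generic-fiber smoothness follows from regularity of $\gamma$ exactly as in the proof of Theorem \ref{thm55}.

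First I would write $m$ in the matrix form prescribed by $\cL(L,M)_l$ and, by reducing the ``core'' matrix $\bigl(\begin{smallmatrix} 0 & \pi^{k_2-k_1-l}x_{1,2} & x_{1,3} \\ \pi^l x_{2,1} & x_{2,2} & x_{2,3} \\ x_{3,1} & x_{3,2} & x_{3,3} \end{smallmatrix}\bigr)$ modulo $\pi$ and invoking its invertibility over $\bar\kappa$, read off which entries of $m$ are forced to be units. For $l=0$, one obtains $x_{2,1},\, x_{1,3} \in \bar\kappa^\times$ and $x_{2,1}x_{3,2} - x_{2,2}x_{3,1} \neq 0$; for $1 \leq l \leq k_2-k_1-1$, all four of $x_{2,1}, x_{1,3}, x_{3,1}, x_{2,2}$ are nonzero; and for $l = k_2-k_1$, $x_{1,2},\, x_{3,1} \in \bar\kappa^\times$ and $x_{1,2}x_{2,3} - x_{1,3}x_{2,2} \neq 0$. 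These unit patterns are the analogue of those arising in Case 1.

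Next I would exhibit three tangent vectors $A_1, A_2, A_3 \in T_m$ whose images in $T_{\varphi_{3,M}^l(m)} \cong \bar\kappa^3$, computed via the formal expansion $\varphi_{3,M}^l(m + \epsilon X)$ of Remark \ref{rmkchangemea}.(2), form a lower-triangular $3 \times 3$ matrix over $\bar\kappa$ with nonzero diagonal. Concretely, $A_1$ perturbs only the $(1,1)$ entry by $\pi^t \cdot 1$ and has image $(\pm 1,\, *,\, **)$; $A_2$ perturbs the $(1,2)$ entry by $\pi^{k_2-k_1-l}\cdot 1$ (or, in the extremal stratum $l = k_2-k_1$, the $(2,1)$ entry by $\pi^{2k_1}\cdot 1$) and has image $(0,\, \pm x_{2,1},\, *)$ (resp.\ $(0,\, \pm x_{1,2},\, *)$); and $A_3$ perturbs one off-diagonal entry chosen so that the corresponding $2 \times 2$ cofactor in the determinant is a product of units from the list above, yielding image $(0, 0, \text{unit})$. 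The inequalities $k_1 \geq t$, $k_2 \leq 2t-1$ and $t + k_1 > k_2$ coming from Equation (\ref{k12-2}) ensure that every auxiliary $\pi$-power appearing in the higher coordinates from $A_1$ and $A_2$ is strictly positive, so the off-diagonal contributions vanish modulo $\pi$ and triangularity is genuine.

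The main obstacle is purely combinatorial bookkeeping of the exponents $t, k_1, k_2, l$, carried out separately for the three ranges of $l$ since the unit patterns differ. No new idea beyond Case 1 is needed; once the three images are shown to span $T_{\varphi_{3,M}^l(m)}$, surjectivity of $d(\varphi_{3,M}^l)_{*,m}$ follows, and smoothness of $(\varphi_{3,M}^l)^{-1}(\chi_\gamma)$ over $\mfo$ is established.
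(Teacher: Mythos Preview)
Your proposal is correct and follows essentially the same approach as the paper's own proof: reduce to surjectivity of the tangent map at every $\bar\kappa$-point, split into the three regimes $l=0$, $1\leq l\leq k_2-k_1-1$, $l=k_2-k_1$, and exhibit three explicit tangent vectors $A_1,A_2,A_3$ whose images form a triangular matrix with unit diagonal. One small slip: for the extremal stratum $l=k_2-k_1$, the $(2,1)$ entry carries weight $\pi^{k_1+l}=\pi^{k_2}$, not $\pi^{2k_1}$ (you are mixing in the Case~1 weight); this is harmless since only the fact that the perturbation lives in the prescribed congruence class matters, and your description of $A_3$ is a bit underspecified, but the paper's explicit choices (perturbing $(3,2)$, $(2,2)$, $(2,3)$ in the three regimes respectively) are exactly of the type you describe.
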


\begin{proof}
The proof is similar to that of Theorem \ref{thm55}.
It suffices to show that for any $m\in (\varphi_{3,M}^l)^{-1}(\chi_{\gamma})(\bar{\kappa})$,
the induced map on the Zariski tangent space 
\[
d(\vpi_{3,M}^l)_{\ast, m}: T_m \longrightarrow T_{\varphi_{3,M}^l(m)}
\] 
is surjective, where $T_m$ is the Zariski tangent space of $ \cL(L,M)_l\otimes \bar{\kappa}$ at $m$ and $T_{\varphi_{3,M}^l(m)}$ is the Zariski tangent space of $\mathbb{A}_{M,1}\otimes \bar{\kappa}$ at $\varphi_{3,M}^l(m)$.

Write $m\in (\varphi_{3,M}^l)^{-1}(\chi_{\gamma})(\bar{\kappa})$ and $X\in T_m$ as the following matrix formally;
 \[
m=\begin{pmatrix}
\pi^{t}x_{1,1} &\pi^{k_2-k_1-l}x_{1,2} & x_{1,3} \\
\pi^{k_1+l}x_{2,1} &\pi^{k_1}x_{2,2} & \pi^{k_1}x_{2,3} \\
\pi^{k_2}x_{3,1} &\pi^{k_2}x_{3,2} & \pi^{k_2}x_{3,3}
\end{pmatrix}   \textit{   and   } X=\begin{pmatrix}
\pi^{t}a_{1,1} &\pi^{k_2-k_1-l}a_{1,2} & a_{1,3} \\
\pi^{k_1+l}a_{2,1} &\pi^{k_1}a_{2,2} & \pi^{k_1}a_{2,3} \\
\pi^{k_2}a_{3,1} &\pi^{k_2}a_{3,2} & \pi^{k_2}a_{3,3}
\end{pmatrix},
\]
where $x_{ij}, a_{ij}\in \bar{\kappa}$,  
 $x_{2,1}\in \bar{\kappa}^{\times}$ (resp. $x_{1,2}\in \bar{\kappa}^{\times})$  if $0\leq l \leq k_2-k_1-1$ (resp. $l=k_2-k_1$),  and
 $\begin{pmatrix}
0 &\pi^{k_2-k_1-l}x_{1,2} & x_{1,3} \\
\pi^{l}x_{2,1} & x_{2,2} &  x_{2,3} \\
x_{3,1} & x_{3,2} &  x_{3,3}
\end{pmatrix}$ is invertible  as a matrix of $\mathrm{M}_3(\bar{\kappa})$ for  $0\leq l \leq k_2-k_1$.
Thus 
 \[
\left\{
  \begin{array}{l l}
x_{2,1}, x_{1,3}\neq 0 &  \textit{if $l=0$};\\
x_{2,1}, x_{1,3}, x_{3,1}\neq 0 &  \textit{if $1\leq l \leq k_2-k_1-1$};\\
x_{1,2}, x_{3,1}\neq 0 & \textit{if $l=k_2-k_1$}.
    \end{array} \right.
\]

Our method to prove the surjectivity of $d(\vpi_{3,M}^l)_{\ast, m}$ is to choose a certain subspace of $T_m$ mapping onto $T_{\vpi_{3,M}^l(m)}$.
Let 
 $A_1=\begin{pmatrix}
\pi^{t}1 &\pi^{k_2-k_1-l}0 & 0 \\
\pi^{k_1+l}0 &\pi^{k_1}0 & \pi^{k_1}0 \\
\pi^{k_2}0 &\pi^{k_2}0 & \pi^{k_2}0
\end{pmatrix}$ for $0\leq k\leq k_2-k_1$ and let  $A_2, A_3$ be
 \[
\left\{
  \begin{array}{l l}
\begin{pmatrix}
\pi^{t}0 &\pi^{k_2-k_1}1 & 0 \\
\pi^{k_1}0 &\pi^{k_1}0 & \pi^{k_1}0 \\
\pi^{k_2}0 &\pi^{k_2}0 & \pi^{k_2}0
\end{pmatrix}, 
\begin{pmatrix}
\pi^{t}0 &\pi^{k_2-k_1}0 & 0 \\
\pi^{k_1}0 &\pi^{k_1}0 & \pi^{k_1}0 \\
\pi^{k_2}0 &\pi^{k_2}1 & \pi^{k_2}0
\end{pmatrix} &  \textit{if $l=0$};\\
\begin{pmatrix}
\pi^{t}0 &\pi^{k_2-k_1-l}1 & 0 \\
\pi^{k_1+l}0 &\pi^{k_1}0 & \pi^{k_1}0 \\
\pi^{k_2}0 &\pi^{k_2}0 & \pi^{k_2}0
\end{pmatrix}, 
\begin{pmatrix}
\pi^{t}0 &\pi^{k_2-k_1-l}0 & 0 \\
\pi^{k_1+l}0 &\pi^{k_1}1 & \pi^{k_1}0 \\
\pi^{k_2}0 &\pi^{k_2}0 & \pi^{k_2}0
\end{pmatrix} &  \textit{if $1\leq l \leq k_2-k_1-1$};\\
\begin{pmatrix}
\pi^{t}0 &0 & 0 \\
\pi^{k_2}1 &\pi^{k_1}0 & \pi^{k_1}0 \\
\pi^{k_2}0 &\pi^{k_2}0 & \pi^{k_2}0
\end{pmatrix}, 
\begin{pmatrix}
\pi^{t}0 &0 & 0 \\
\pi^{k_2}0 &\pi^{k_1}0 & \pi^{k_1}1 \\
\pi^{k_2}0 &\pi^{k_2}0 & \pi^{k_2}0
\end{pmatrix} & \textit{if $l=k_2-k_1$}
    \end{array} \right.
\]
as elements of $T_m$.
Then the images of $A_1, A_2, A_3$ are 
 \[
\left\{
  \begin{array}{l l}
\left(1, \ast, \ast\ast \right), \left(0, x_{2,1}, \ast \right), \left(0, 0, x_{2,1}x_{1,3} \right) &  \textit{if $l=0$};\\
\left(1, \ast, \ast\ast \right), \left(0, x_{2,1}, \ast \right), \left(0, 0, x_{3,1}x_{1,3} \right) &  \textit{if $1\leq l \leq k_2-k_1-1$};\\
\left(1,  \ast, \ast\ast \right), \left(0, x_{1,2}, \ast \right), \left(0, 0, x_{3,1}x_{1,2} \right) & \textit{if $l=k_2-k_1$}
    \end{array} \right.
\]
up to the sign $\pm 1$.
These three vectors are linearly independent as elements of $T_{\vpi_{3,M}^l(m)}$ and so the map $d(\vpi_{3,M}^l)_{\ast, m}$ is surjective.
\end{proof}

\begin{corollary}
The order of the set $(\varphi_{3,M}^l)^{-1}(\chi_{\gamma})(\kappa)$ is 
\[
\#(\varphi_{3,M}^l)^{-1}(\chi_{\gamma})(\kappa)=\left\{
  \begin{array}{l l}
q^3(q-1)^3 &   \textit{if $1\leq l \leq k_2-k_1-1$};\\
q^4(q-1)^2 & \textit{if $l=0$ or $l=k_2-k_1$}.
    \end{array} \right.
\]
\end{corollary}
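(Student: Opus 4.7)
The plan is to mimic the argument used for the analogous corollary in Case 1, since the matrix shape of $m\in\cL(L,M)_{l}(\kappa)$ in Case 2 differs from Case 1 only in the exponents of $\pi$ appearing in the $(1,1)$ and $(2,1)$ entries. First I would compute $\varphi_{3,M}^{l}(m)$ explicitly as an element of $\mathbb{A}_{M,1}(\kappa)=\pi^{t}\kappa\times\pi^{k_{2}}\kappa\times\pi^{d}\kappa$ by expanding the three coefficients of the characteristic polynomial using the block shape coming from Remark~\ref{rmkchangemea}. The first coefficient reduces to $\pi^{t}(-x_{1,1})+\pi^{k_{1}}(-x_{2,2}-\pi^{k_{2}-k_{1}}x_{3,3})$ and the second to an expression whose only term of minimal order $\pi^{k_{2}}$ is a $2\times 2$ minor involving $x_{1,2}x_{2,1}$; these respectively pin down $x_{1,1}$ and (together with the stratum condition) one more entry. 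The decisive term is the third: for $0\le l\le k_{2}-k_{1}-1$ it equals $\pi^{d}\!\left(-x_{1,3}\begin{vmatrix}\pi^{l}x_{2,1}&x_{2,2}\\ x_{3,1}&x_{3,2}\end{vmatrix}\right)$, and for $l=k_{2}-k_{1}$ it equals $\pi^{d}\!\left(-x_{3,1}\begin{vmatrix}x_{1,2}&x_{1,3}\\ x_{2,2}&x_{2,3}\end{vmatrix}\right)$, just as in Case 1 (the $\pi$-exponents happen to align the same way).

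Once these formulas are on the table, I would carry out the three sub-cases in parallel with the Case 1 corollary. For $l=0$ the conditions $x_{2,1},x_{1,3}\in\kappa^{\times}$ and $\begin{vmatrix}x_{2,1}&x_{2,2}\\ x_{3,1}&x_{3,2}\end{vmatrix}\neq 0$ combined with the three equations cutting out $(\varphi_{3,M}^{l})^{-1}(\chi_{\gamma})$ leave $x_{1,1},x_{1,2},x_{1,3}$ determined by the rest; counting the bottom-left $2\times 2$ invertible block times a factor for the remaining three free variables yields $(\#\mathrm{GL}_{2}(\kappa)-q(q-1)^{2})\cdot q^{2}=q^{4}(q-1)^{2}$. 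For $1\le l\le k_{2}-k_{1}-1$ the extra condition $x_{3,1}\in\kappa^{\times}$ forces the reduced $2\times 2$ block to be $\begin{pmatrix}0&x_{2,2}\\ x_{3,1}&x_{3,2}\end{pmatrix}$, giving $(q-1)^{3}q\cdot q^{2}=q^{3}(q-1)^{3}$. For $l=k_{2}-k_{1}$ the nonvanishing data is $x_{1,2},x_{3,1}\in\kappa^{\times}$ and $\begin{vmatrix}x_{1,2}&x_{1,3}\\ x_{2,2}&x_{2,3}\end{vmatrix}\neq 0$, and this time $x_{1,1},x_{2,1},x_{3,1}$ are determined, again giving $q^{4}(q-1)^{2}$.

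The main thing to check carefully, and really the only place a subtlety can sneak in, is the validity of the third-coefficient formulas in the intermediate stratum $1\le l\le k_{2}-k_{1}-1$: the lower bounds on $\mathrm{ord}(c_{2})$ from Equation~(\ref{k12-2}) must be strong enough for the auxiliary terms suppressed in the displayed formula to lie in $(\pi)$, so that the equations actually reduce mod $\pi$ to the clean shape above. This is where Case 2 differs numerically from Case 1, so I would verify the Newton-polygon inequalities $t+k_{1}>k_{2}$ and $\mathrm{ord}(c_{2})>k_{2}$ (both recorded after Equation~(\ref{k12-2})) guarantee that every omitted cross-term in the expansion of the three principal minors carries a positive power of $\pi$. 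Everything else is routine bookkeeping, and the three contributions sum to the required counts.
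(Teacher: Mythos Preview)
Your proposal is correct and follows essentially the same approach as the paper's proof, with the same three sub-cases and the same counts. One small slip: the second coefficient at order $\pi^{k_{2}}$ is $-(x_{1,2}x_{2,1}+x_{3,1}x_{1,3})$, not just the $x_{1,2}x_{2,1}$ term --- but this does not affect your counts, since in each stratum one of the two products has a nonzero factor that lets you solve for the remaining variable.
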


\begin{proof}
For $m\in \cL(L,M)_l(\kappa)$,  $\varphi_{3,M}^l(m)$  is 
 \[
\left\{
  \begin{array}{l l}
\left(\pi^t(-x_{1,1}-\pi^{k_1-t}x_{2,2}), \pi^{k_2}(-x_{2,1}x_{1,2}-x_{3,1}x_{1,3}), \pi^d\left(-x_{1,3}\cdot 
\begin{vmatrix}
\pi^lx_{2,1} &x_{2,2} \\ x_{3,1} &x_{3,2}
\end{vmatrix}\right)\right) &   \textit{if $0\leq l \leq k_2-k_1-1$};\\
\left(\pi^t(-x_{1,1}-\pi^{k_1-t}x_{2,2}), \pi^{k_2}(-x_{2,1}x_{1,2}-x_{3,1}x_{1,3}), \pi^d\left(-x_{3,1}\cdot 
\begin{vmatrix}
x_{1,2} &x_{1,3} \\ x_{2,2} &x_{2,3}
\end{vmatrix}\right)\right) & \textit{if $l=k_2-k_1$}.
    \end{array} \right.
\]

\begin{enumerate}
\item 
 Suppose that $l=0$ so that   $x_{2,1}, x_{1,3}\neq 0$ and $\begin{vmatrix}
x_{2,1} &x_{2,2} \\ x_{3,1} &x_{3,2}
\end{vmatrix} \neq 0$. 
If $m\in (\varphi_{3,M}^1)^{-1}(\chi_{\gamma})(\kappa)$, then $x_{1,1}, x_{1,2}, x_{1,3}$ are completely determined by all other entries satisfying $\begin{vmatrix}
x_{2,1} &x_{2,2} \\ x_{3,1} &x_{3,2}
\end{vmatrix} \neq 0$. Thus we have 
 \[
\#(\varphi_{3,M}^l)^{-1}(\chi_{\gamma})(\kappa)=
(\#\mathrm{GL}_2(\kappa)-q(q-1)^2)\cdot q^2=q^4(q-1)^2.
\]

\item Suppose that $1\leq l \leq k_2-k_1-1$ so that  $x_{2,1}, x_{1,3}, x_{3,1} \neq 0$ and $\begin{vmatrix}
0 &x_{2,2} \\ x_{3,1} &x_{3,2}
\end{vmatrix} \neq 0$. 
If $m\in (\varphi_{3,M}^1)^{-1}(\chi_{\gamma})(\kappa)$, then $x_{1,1}, x_{1,2}, x_{1,3}$ are completely determined by all other entries satisfying $\begin{vmatrix}
0 &x_{2,2} \\ x_{3,1} &x_{3,2}
\end{vmatrix} \neq 0$. Thus we have 
 \[
\#(\varphi_{3,M}^l)^{-1}(\chi_{\gamma})(\kappa)=
(q-1)^3q\cdot q^2=q^3(q-1)^3.
\]

\item Suppose that $l=k_2-k_1$ so that 
  $x_{1,2}, x_{3,1}\neq 0$ and $\begin{vmatrix}
x_{1,2} &x_{1,3} \\ x_{2,2} &x_{2,3}
\end{vmatrix}\neq 0$.
If $m\in (\varphi_{3,M}^1)^{-1}(\chi_{\gamma})(\kappa)$, then $x_{1,1}, x_{2,1}, x_{3,1}$ are completely determined by all other entries satisfying $\begin{vmatrix}
x_{1,2} &x_{1,3} \\ x_{2,2} &x_{2,3}
\end{vmatrix}\neq 0$.
 Thus we have
  \[
\#(\varphi_{3,M}^l)^{-1}(\chi_{\gamma})(\kappa)=
(\#\mathrm{GL}_2(\kappa)-q(q-1)^2)\cdot q^2=q^4(q-1)^2.
\]
\end{enumerate}
\end{proof}

\begin{corollary}
We have 
\[
c_{(k_{1},k_{2})}\cdot \mathcal{SO}_{\gamma, (k_1, k_2)}=(q^{3}-1)(q^{2}-1)q^{k_{2}}\cdot \frac{(k_2-k_1+1)q-(k_2-k_1-1)}{q^{6+d}}.
\]

\end{corollary}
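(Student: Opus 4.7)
The plan is to imitate Case 1 verbatim, leveraging the stratification (\ref{eq76}), the smoothness of each $(\vpi_{3,M}^l)^{-1}(\chi_\gamma)$ from the preceding proposition, and the cardinalities $\#(\vpi_{3,M}^l)^{-1}(\chi_\gamma)(\kappa)$ from the preceding corollary. Only two steps remain: a comparison of measures on each stratum, and a summation over $l$ followed by multiplication by $c_{(k_1,k_2)}$.

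For the measure comparison I would introduce the ambient affine scheme $\widetilde{\cL}(L,M)_l$ obtained by dropping the invertibility condition from $\cL(L,M)_l$, so that $\cL(L,M)_l$ is open in $\widetilde{\cL}(L,M)_l$, and normalise translation-invariant top forms $\omega_{\widetilde{\cL}(L,M)_l}$ and $\omega_{\mathbb{A}_{M,1}}$ to have unit volume on $\mfo$-points. Setting $\omega^{can}_{(\chi_\gamma,L,M)_l}=\omega_{\widetilde{\cL}(L,M)_l}/\rho_n^{\ast}\omega_{\mathbb{A}_{M,1}}$ and reading off $\pi$-exponents from the matrix description of $\cL(L,M)_l(R)$ and from $\mathbb{A}_{M,1}(R)=\pi^t R\times\pi^{k_2}R\times\pi^{d}R$, I expect
\[
|\omega_{\gl_{n,\mfo}}|=|\pi|^{\,t+2k_1+4k_2}\,|\omega_{\widetilde{\cL}(L,M)_l}|,\qquad |\omega_{\mathbb{A}_\mfo^n}|=|\pi|^{\,t+k_1+2k_2}\,|\omega_{\mathbb{A}_{M,1}}|,
\]
whose ratio yields $|\omega_{\chi_\gamma}^{\mathrm{ld}}|=|\pi|^{\,k_1+2k_2}|\omega^{can}_{(\chi_\gamma,L,M)_l}|=|\pi|^{\,d+k_2}|\omega^{can}_{(\chi_\gamma,L,M)_l}|$, exactly as in Case 1; in particular the $\pm l$ contributions from the $(1,2)$ and $(2,1)$ entries cancel, so the exponent is independent of the stratum and a common denominator factors out. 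Since each $(\vpi_{3,M}^l)^{-1}(\chi_\gamma)$ is smooth of relative dimension $\dim G_\gamma=n^2-n=6$, Weil's formula gives
\[
\mathcal{SO}_{\gamma,(k_1,k_2)}=\sum_{l=0}^{k_2-k_1}\frac{\#(\vpi_{3,M}^l)^{-1}(\chi_\gamma)(\kappa)}{q^{\,6+d+k_2}},
\]
and plugging in $q^3(q-1)^3$ for the $k_2-k_1-1$ interior strata and $q^4(q-1)^2$ for the two boundary strata $l=0,\,l=k_2-k_1$, and regrouping, produces $\mathcal{SO}_{\gamma,(k_1,k_2)}=(q-1)^2\bigl[(k_2-k_1+1)q-(k_2-k_1-1)\bigr]/q^{\,3+d+k_2}$.

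Finally, multiplying by $c_{(k_1,k_2)}=(q^2+q+1)(q+1)q^{2k_2-3}$ from Remark \ref{rmk410} and using $(q^2+q+1)(q-1)=q^3-1$ together with $(q+1)(q-1)=q^2-1$ yields the claimed formula. The only potentially delicate point, which I would verify carefully, is the $\pi$-exponent bookkeeping in the measure comparison: in Case 2 the $(1,1)$-entry carries $\pi^t$ rather than $\pi^{k_1}$ and the $(1,2)$-entry carries $\pi^{k_2-k_1-l}$ rather than $\pi^{k_1-l}$, so one must check that the $l$-dependence still cancels and that the net exponent still coincides with $d+k_2$. Beyond that, the proof is entirely routine.
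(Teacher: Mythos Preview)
Your proposal is correct and follows essentially the same approach as the paper: the same ambient scheme $\widetilde{\cL}(L,M)_l$, the same measure comparison (your exponent $t+k_1+2k_2$ for $\mathbb{A}_{M,1}$ equals the paper's $t+k_2+d$ since $d=k_1+k_2$), the same application of Weil's formula summed over $l$, and the same multiplication by $c_{(k_1,k_2)}$ from Remark~\ref{rmk410}. The delicate point you flag about $l$-cancellation in the $\pi$-exponents is exactly right and is handled implicitly in the paper as well.
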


\begin{proof}
The proof is parallel to that of Corollary \ref{cornm1}.
Let $\widetilde{\cL}(L,M)_l$ be the affine space defined over $\mfo$ such that 
$\widetilde{\cL}(L,M)_l(R)=\Bigl\{\begin{pmatrix}
\pi^{t}x_{1,1} &\pi^{k_2-k_1-l}x_{1,2} & x_{1,3} \\
\pi^{k_1+l}x_{2,1} &\pi^{k_1}x_{2,2} & \pi^{k_1}x_{2,3} \\
\pi^{k_2}x_{3,1} &\pi^{k_2}x_{3,2} & \pi^{k_2}x_{3,3}
\end{pmatrix}\Bigl\}$ for $0\leq l \leq k_2-k_1$. Then $\cL(L,M)_l$ is an open subscheme of $\widetilde{\cL}(L,M)_l$.

Let $\omega_{\widetilde{\cL}(L,M)_l}$ and $\omega_{\mathbb{A}_{M,1}}$ be nonzero,  translation-invariant forms on   $\mathfrak{gl}_{n, F}$ and $\mathbb{A}^n_F$,
 respectively, with normalizations
$$\int_{\widetilde{\cL}(L,M)_l(\mathfrak{o})}|\omega_{\widetilde{\cL}(L,M)_l}|=1 \mathrm{~and~}  \int_{\mathbb{A}_{M,1}(\mathfrak{o})}|\omega_{\mathbb{A}_{M,1}}|=1.$$
Putting $\omega^{can}_{(\chi_{\gamma},L,M)_l}=\omega_{\widetilde{\cL}(L,M)_l}/\rho_n^{\ast}\omega_{\mathbb{A}_{M,1}}$.
 Then we have the following comparison among differentials;
 
 \[
\left\{
  \begin{array}{l }
|\omega_{\mathfrak{gl}_{n, \mathfrak{o}}}|=|\pi|^{2k_1+4k_2+t}\cdot|\omega_{\widetilde{\cL}(L,M)_l}|;\\
|\omega_{\mathbb{A}^n_{\mathfrak{o}}}|=
|\pi|^{t+k_2+d}\cdot |\omega_{\mathbb{A}_{M,1}}|;\\
|\omega_{\chi_{\gamma}}^{\mathrm{ld}}|=|\pi|^{d+k_2}|\omega^{can}_{(\chi_{\gamma},L,M)_l}|.
    \end{array} \right.
\]

Therefore, we have
\begin{multline*}
\mathcal{SO}_{\gamma, (k_1, k_2)}=\sum\limits_{l=0}^{k_2-k_1}\int_{(\varphi_{3,M}^l)^{-1}(\chi_{\gamma})(\mfo)}|\omega_{\chi_{\gamma}}^{\mathrm{ld}}|= 
\frac{(k_2-k_1-1)q^3(q-1)^3+2q^4(q-1)^2}{q^{6+d+k_2}}=\\
\frac{(q-1)^2((k_2-k_1+1)q-(k_2-k_1-1))}{q^{3+d+k_2}}.
\end{multline*}
Combing with Remark \ref{rmk410}, we have the final formula:
\begin{multline*}
c_{(k_{1},k_{2})}\cdot \mathcal{SO}_{\gamma, (k_1, k_2)}=(q^{2}+q+1)(q+1)\cdot q^{2k_{2}-3}\cdot\frac{(q-1)^2((k_2-k_1+1)q-(k_2-k_1-1))}{q^{3+d+k_2}}=\\
(q^{3}-1)(q^{2}-1)q^{k_{2}}\cdot \frac{(k_2-k_1+1)q-(k_2-k_1-1)}{q^{6+d}}.
\end{multline*}
\end{proof}

\subsection{Case 3: \texorpdfstring{$k_1=k_2$}{k1=k2}}
In this case $d=2k_1$.
The Newton polygon of the irreducible polynomial $\chi_{\gamma}$ yields 
\[
\ord(c_1)>0, ~~~~~~~~\ord(c_2)>k_1.
\]

\begin{proposition}\label{prop77}
The scheme $\vpi_{3,M}^{-1}(\chi_{\gamma})$ is smooth over $\mfo$.
\end{proposition}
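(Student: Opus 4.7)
The plan is to follow the same template used in Theorem \ref{thm55}, Lemma \ref{lemsm}, and the analogous propositions in Cases 1 and 2. Since $k_1 = k_2$, no further stratification of the source is required, so we prove directly that $\vpi_{3,M}^{-1}(\chi_{\gamma})$ is smooth over $\mfo$. By \cite[Lemma 5.5.1]{GY} together with flatness of $\cL(L,M)$ over $\mfo$, it suffices to verify smoothness of the generic fiber (immediate, since $\gamma$ is regular and $\vpi_{3,M}^{-1}(\chi_\gamma)$ is a subvariety of the smooth locus of $\rho_3$) and smoothness of the special fiber at every $\bar{\kappa}$-point. Thus the task reduces to showing that for every $m \in \vpi_{3,M}^{-1}(\chi_{\gamma})(\bar{\kappa})$, the induced map on Zariski tangent spaces
\[
d(\vpi_{3,M})_{*,m}: T_m \longrightarrow T_{\vpi_{3,M}(m)}
\]
is surjective.

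First I would write $m$ as a $3 \times 3$ matrix in the coordinates of $\cL(L,M)$, namely
\[
m = \begin{pmatrix} x_{1,1} & x_{1,2} & x_{1,3} \\ \pi^{k_1} x_{2,1} & \pi^{k_1} x_{2,2} & \pi^{k_1} x_{2,3} \\ \pi^{k_1} x_{3,1} & \pi^{k_1} x_{3,2} & \pi^{k_1} x_{3,3} \end{pmatrix},
\]
with $\bar{x}_{i,j} \in \bar{\kappa}$, and compute the image coordinates $(r_1, r_2, r_3) \in \mathbb{A}_M$. Applying the Newton polygon inequalities $\ord(c_1) > 0$ and $\ord(c_2) > k_1$, which specialize to $\bar{r}_1 = \bar{r}_2 = 0$, I obtain the constraints $\bar{x}_{1,1} = 0$ and $\bar{x}_{1,2}\bar{x}_{2,1} + \bar{x}_{1,3}\bar{x}_{3,1} = 0$. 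The defining invertibility condition of $\cL(L,M)$ further forces the matrix $\bar{X} = (\bar{x}_{i,j}) \in \mathrm{M}_3(\bar{\kappa})$ to be invertible; in particular at least one of $\bar{x}_{1,2}, \bar{x}_{1,3}$ and at least one of $\bar{x}_{2,1}, \bar{x}_{3,1}$ is nonzero.

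Next I would exhibit three tangent vectors from $\{E_{i,j}\}$ whose images span $T_{\vpi_{3,M}(m)}$. The vector $E_{1,1}$ contributes $-1$ in the first coordinate, producing $(-1, \ast, \ast)$. To hit the second coordinate, I note that $\partial r_2 / \partial x_{1,2}|_{\bar{m}} = -\bar{x}_{2,1}$, $\partial r_2 / \partial x_{1,3}|_{\bar{m}} = -\bar{x}_{3,1}$, $\partial r_2 / \partial x_{2,1}|_{\bar{m}} = -\bar{x}_{1,2}$, $\partial r_2 / \partial x_{3,1}|_{\bar{m}} = -\bar{x}_{1,3}$, so at least one of the corresponding $E_{i,j}$ has image of the form $(0, \ast, \ast)$ with nonzero second coordinate. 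For the third coordinate, $E_{2,2}$ has image $(0, 0, -\mathrm{cof}(2,2)) = (0, 0, \bar{x}_{1,3}\bar{x}_{3,1})$, which is nonzero whenever both $\bar{x}_{1,3}$ and $\bar{x}_{3,1}$ are nonzero; otherwise I use $E_{2,3}$, $E_{3,2}$ or $E_{3,3}$, whose image third coordinates are minors involving $\bar{x}_{1,2}, \bar{x}_{3,1}$ etc., and at least one is nonzero by invertibility of $\bar{X}$.

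The main obstacle is the case analysis forced by the relation $\bar{x}_{1,2}\bar{x}_{2,1} + \bar{x}_{1,3}\bar{x}_{3,1} = 0$, which allows degenerate configurations such as $\bar{x}_{2,1} = \bar{x}_{1,3} = 0$ (with $\bar{x}_{1,2}\bar{x}_{3,1} \neq 0$) and the three symmetric variants. In each such case the entries $(\bar{x}_{i,j})_{i \neq 1, j \neq 1}$ carry the missing nonzero minors, and a different triple of $E_{i,j}$'s must be selected whose images remain linearly independent. This parallels the sub-case analysis carried out in the proof of Proposition \ref{prop83}, and the invertibility of $\bar{X}$ provides enough nonzero cofactors in every sub-case to close the argument.
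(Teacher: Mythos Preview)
Your proposal is correct and follows essentially the same template as the paper: reduce to surjectivity of the tangent map on the special fiber, then exhibit elementary tangent vectors $E_{i,j}$ whose images form a triangular system in $T_{\vpi_{3,M}(m)}$. The one simplification you miss is that, because $k_1=k_2$, the paper swaps the basis vectors $e_2$ and $e_3$ (preserving the shape of $\cL(L,M)$) to arrange $\bar{x}_{2,1}\neq 0$ from the outset; this collapses your four degenerate sub-cases to just two ($\bar{x}_{1,2}\neq 0$ versus $\bar{x}_{1,2}=0,\ \bar{x}_{1,3}\neq 0$), after which $A_1=E_{1,1}$, $A_2=E_{1,2}$, and $A_3\in\{E_{3,3},E_{3,2}\}$ already give images $(1,\ast,\ast),(0,\bar{x}_{2,1},\ast),(0,0,\bar{x}_{2,1}\bar{x}_{1,j})$. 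Your direct case analysis via the four cofactors $\mathrm{cof}(i,j)=\pm \bar{x}_{1,j'}\bar{x}_{i',1}$ for $i,j\in\{2,3\}$ works equally well, since invertibility of $\bar{X}$ with $\bar{x}_{1,1}=0$ forces at least one such product to be nonzero; it is just slightly longer.
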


\begin{proof}
The proof is similar to that of Theorem \ref{thm55}.
It suffices to show that for any $m\in \varphi_{3,M}^{-1}(\chi(\gamma))(\bar{\kappa})$,
the induced map on the Zariski tangent space 
\[
d(\vpi_{3,M})_{\ast, m}: T_m \longrightarrow T_{\vpi_{3,M}(m)}
\] 
is surjective, where $T_m$ is the Zariski tangent space of $ \cL(L,M)\otimes \bar{\kappa}$ at $m$ and $T_{\vpi_{3,M}(m)}$ is the Zariski tangent space of $\mathbb{A}_{M}\otimes \bar{\kappa}$ at $\vpi_{3,M}(m)$.

Using the matrix description given in Equation (\ref{matrixform}), 
we write $m\in \varphi_{3,M}^{-1}(\chi(\gamma))(\bar{\kappa})$ and $X\in T_m$ as the following matrix formally;
 \[
m=\begin{pmatrix}
x_{1,1} &x_{1,2} & x_{1,3} \\
\pi^{k_1}x_{2,1} &\pi^{k_1}x_{2,2} & \pi^{k_1}x_{2,3} \\
\pi^{k_1}x_{3,1} &\pi^{k_1}x_{3,2} & \pi^{k_1}x_{3,3}
\end{pmatrix}   \textit{   and   } X=\begin{pmatrix}
a_{1,1} &a_{1,2} & a_{1,3} \\
\pi^{k_1}a_{2,1} &\pi^{k_1}a_{2,2} & \pi^{k_1}a_{2,3} \\
\pi^{k_1}a_{3,1} &\pi^{k_1}a_{3,2} & \pi^{k_1}a_{3,3}
\end{pmatrix},
\]
 where $x_{ij}, a_{ij}\in \bar{\kappa}$, $x_{1,1}=0$, and
the matrix $\begin{pmatrix}
0 &x_{1,2} & x_{1,3} \\
x_{2,1} &x_{2,2} & x_{2,3} \\
x_{3,1} &x_{3,2} & x_{3,3}
\end{pmatrix}$ is invertible  as a matrix of $\mathrm{M}_3(\bar{\kappa})$.
Note that either $x_{2,1}$ or $x_{3,1}$ is nonzero. Then 
by change of a basis (if necessary), we may and do assume that $x_{2,1}\neq 0$  and either $x_{1,2}$ or $x_{1,3}$ is nonzero.

Our method to prove the surjectivity of $d(\vpi_{3,M})_{\ast, m}$ is to choose a certain subspace of $T_m$ mapping onto $T_{\vpi_{3,M}(m)}$.
Let $A_1, A_2, A_3$ be
 \[
\left\{
  \begin{array}{l l}
\begin{pmatrix}
1 &0&0 \\
\pi^{k_1}0 &\pi^{k_1}0 & \pi^{k_1}0 \\
\pi^{k_1}0 &\pi^{k_1}0 & \pi^{k_1}0
\end{pmatrix}, \begin{pmatrix}
0 &1&0 \\
\pi^{k_1}0 &\pi^{k_1}0 & \pi^{k_1}0 \\
\pi^{k_1}0 &\pi^{k_1}0 & \pi^{k_1}0
\end{pmatrix}, \begin{pmatrix}
0 &0&0 \\
\pi^{k_1}0 &\pi^{k_1}0 & \pi^{k_1}0 \\
\pi^{k_1}0 &\pi^{k_1}0 & \pi^{k_1}1
\end{pmatrix} &   \textit{if $x_{1,2}\neq 0$};\\
\begin{pmatrix}
1 &0&0 \\
\pi^{k_1}0 &\pi^{k_1}0 & \pi^{k_1}0 \\
\pi^{k_1}0 &\pi^{k_1}0 & \pi^{k_1}0
\end{pmatrix}, \begin{pmatrix}
0 &1&0 \\
\pi^{k_1}0 &\pi^{k_1}0 & \pi^{k_1}0 \\
\pi^{k_1}0 &\pi^{k_1}0 & \pi^{k_1}0
\end{pmatrix}, \begin{pmatrix}
0 &0&0 \\
\pi^{k_1}0 &\pi^{k_1}0 & \pi^{k_1}0 \\
\pi^{k_1}0 &\pi^{k_1}1 & \pi^{k_1}0
\end{pmatrix} & \textit{if $x_{1,2}=0, x_{1,3}\neq 0$}.
    \end{array} \right.
\]

Then the images of $A_1, A_2, A_3$ are 
\[
\left\{
  \begin{array}{l l}
\left( (1, \ast, \ast), (0, \pi^{k_1}x_{2,1}, \ast), (0, 0, \pi^{2k_1}x_{2,1}x_{1,2}) \right) &   \textit{if $x_{1,2}\neq 0$};\\
\left((1, \ast, \ast), (0, \pi^{k_1}x_{2,1}, \ast), (0, 0, \pi^{2k_1}x_{2,1}x_{1,3})\right) & \textit{if $x_{1,2}=0, x_{1,3}\neq 0$}
    \end{array} \right.
\]
up to the sign $\pm 1$.
These three vectors are linearly independently as elements of $T_{\vpi_{3,M}(m)}$ and so the map 
$d(\vpi_{3,M})_{\ast, m}$ is surjective.
\end{proof}

\begin{corollary}\label{cor78}
The order of the set  $\vpi_{3,M}^{-1}(\chi_{\gamma})(\kappa)$ is 
\[
\#\vpi_{3,M}^{-1}(\chi_{\gamma})(\kappa)=(q+1)(q-1)^2q^3.
\]
\end{corollary}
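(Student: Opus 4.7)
My plan for Corollary \ref{cor78} is to count the $\kappa$-points of $\vpi_{3,M}^{-1}(\chi_{\gamma})$ directly from its defining equations. First I would express $\vpi_{3,M}^{-1}(\chi_{\gamma})$ as the zero locus inside $\cL(L,M)$ of the three relations obtained by equating the (rescaled) coefficients of the characteristic polynomial of $m$ with those of $\chi_{\gamma}$. With $d=2k_{1}$ and $\chi_{\gamma}$ irreducible, the Newton polygon forces $\ord(c_{1})>0$, $\ord(c_{2})>k_{1}$, and $\ord(c_{3})=2k_{1}$; reducing the three defining equations modulo $\pi$ then yields (E1) $\bar{x}_{1,1}=0$ from the trace equation, (E2) $\bar{x}_{1,2}\bar{x}_{2,1}+\bar{x}_{1,3}\bar{x}_{3,1}=0$ from the second-coefficient equation (after using (E1) to kill the $\bar{x}_{1,1}(\bar{x}_{2,2}+\bar{x}_{3,3})$ term), and (E3) $\det(\bar{x}_{i,j})=-\bar{u}$ for the specific unit $\bar{u}=\overline{c_{3}/\pi^{2k_{1}}}\in\kappa^{\times}$ from the determinant equation.

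Next I would count the $9$-tuples $(\bar{x}_{i,j})\in\kappa^{9}$ satisfying (E1), (E2), (E3). The plan is to stratify by $(\bar{x}_{1,2},\bar{x}_{1,3})$: the case $(0,0)$ makes the top row of the reduced matrix vanish and so contradicts (E3), leaving $q^{2}-1$ admissible vectors. For each such nonzero vector, equation (E2) constrains $(\bar{x}_{2,1},\bar{x}_{3,1})$ to lie on a line in $\kappa^{2}$; the origin of this line makes the first column vanish, again contradicting (E3), so exactly $q-1$ nonzero choices survive. Expanding the determinant along the first row with $\bar{x}_{1,1}=0$, equation (E3) becomes an inhomogeneous affine linear condition in the remaining four unknowns $(\bar{x}_{2,2},\bar{x}_{2,3},\bar{x}_{3,2},\bar{x}_{3,3})$ whose coefficient vector is $(-\bar{x}_{1,3}\bar{x}_{3,1},\,\bar{x}_{1,2}\bar{x}_{3,1},\,\bar{x}_{1,3}\bar{x}_{2,1},\,-\bar{x}_{1,2}\bar{x}_{2,1})$. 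Direct inspection of its four entries shows that this vector is nonzero whenever $(\bar{x}_{1,2},\bar{x}_{1,3})\neq 0$ and $(\bar{x}_{2,1},\bar{x}_{3,1})\neq 0$, so the affine equation admits precisely $q^{3}$ solutions. Multiplying gives $(q^{2}-1)(q-1)q^{3}=(q+1)(q-1)^{2}q^{3}$, as claimed.

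The delicate point I expect to be the main obstacle is the distinction between the equality (E3) $\det=-\bar{u}$ and the mere inequality $\det\neq 0$: since $\vpi_{3,M}^{-1}(\chi_{\gamma})$ is cut out by equations over $\mfo$, reduction modulo $\pi$ produces an equation to a fixed scalar rather than an open condition. Had one counted solutions to $\det\neq 0$ one would obtain $q^{3}(q-1)^{3}(q+1)$, a factor of $(q-1)$ too large; the correct count is obtained because the $q-1$ nonzero values of the determinant are attained equally often (visible by scaling one row), so the count at the fixed unit $-\bar{u}$ equals the unrestricted count divided by $q-1$. The verification that the affine coefficient vector above really is nonzero under (E1)--(E2) is a short case check that I would record explicitly. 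An alternative route would combine smoothness from Proposition \ref{prop77} with Hensel's lemma to pass from $\mfo$-points to $\kappa$-points, but the direct combinatorial count is cleaner.
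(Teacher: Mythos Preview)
Your proposal is correct and follows essentially the same approach as the paper: both directly count the $\kappa$-points by writing out the three defining equations (your (E1)--(E3)) and performing an elementary count. The only difference is organizational: the paper splits into the two cases $\bar{x}_{2,1}=0$ and $\bar{x}_{2,1}\neq 0$, eliminating a single variable from the determinant equation in each case, whereas you stratify first by the nonzero vector $(\bar{x}_{1,2},\bar{x}_{1,3})$, then by the nonzero point on the line $\bar{x}_{1,2}\bar{x}_{2,1}+\bar{x}_{1,3}\bar{x}_{3,1}=0$, and treat the determinant equation uniformly as an affine hyperplane in the remaining four variables; both organizations yield $(q+1)(q-1)^{2}q^{3}$.
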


\begin{proof}
For $m\in \cL(L,M)(\kappa)$,  $\varphi_{3,M}(m)$  is 
\[
\left( -x_{1,1}, \pi^{k_1}\left(\begin{vmatrix} x_{1,1} &x_{1,2} \\
x_{2,1} &x_{2,2}
 \end{vmatrix}+ \begin{vmatrix} x_{1,1} & x_{1,3} \\

x_{3,1} & x_{3,3}
 \end{vmatrix}\right), \pi^{2k_1}\left(-\begin{vmatrix} x_{1,1} &x_{1,2} & x_{1,3} \\
x_{2,1} &x_{2,2} & x_{2,3} \\
x_{3,1} &x_{3,2} & x_{3,3}
 \end{vmatrix} \right)  \right).
 \]
We claim that the    number of all choices of  $x_{ij}$'s for  $m\in (\varphi_{3,M})^{-1}(\chi_{\gamma})(\kappa)$ is 
  \[
\left\{
  \begin{array}{l l}
(q-1)^2q^3 & \textit{if $x_{2,1}=0$};\\
(q-1)^2q^4 & \textit{if $x_{2,1}\neq 0$}.
    \end{array} \right.
\]

For $m\in (\varphi_{3,M})^{-1}(\chi_{\gamma})(\kappa)$, 
the first two components of $\varphi_{3,M}(m)$ yield that $x_{1,1}=0$ and $-x_{2,1}x_{1,2}-x_{3,1}x_{1,3}=0$ and the third component of $\varphi_{3,M}(m)$ is nonzero. 
\begin{enumerate}
\item 
If $x_{2,1}=0$, then $x_{3,1}\neq 0$ and so the second component of $\varphi_{3,M}(m)$ yields that  $x_{1,3}=0$ and $x_{1,2}\neq 0$.
Then the third component of $\varphi_{3,M}(m)$ is $x_{3,1}x_{1,2}x_{2,3}$. Thus $x_{2,3}$ is completely determined by 
$x_{3,1}$ and $x_{1,2}$. This gives the desired formula.

\item Suppose that  $x_{2,1}\neq 0$. If $x_{1,3}=0$, then the second component of $\varphi_{3,M}(m)$ implies that $x_{1,2}=0$, which is a contradiction that the third component of $\varphi_{3,M}(m)$ is nonzero. 
Thus $x_{1,3}\neq 0$. 

The second component of $\varphi_{3,M}(m)$ yields that $x_{1,2}$ is completely determined by $x_{2,1}, x_{1,3}, x_{3,1}$.
On the other hand, the third component of $\varphi_{3,M}(m)$ is of the form $x_{2,1}x_{1,3}x_{3,2}+\ast$.
Since $x_{2,1}x_{1,3}\neq 0$, $x_{3,2}$ is completely determined by the third component of $\varphi_{3,M}(m)$. 
This gives the desired formula.
\end{enumerate}
Then $\#\vpi_{3,M}^{-1}(\chi_{\gamma})(\kappa)$ is the sum of these two.
\end{proof}

\begin{corollary}
We have 
\[
c_{(k_{1},k_{2})}\cdot \mathcal{SO}_{\gamma, (k_1, k_2)}=\frac{(q^{3}-1)(q^{2}-1)}{q^{5+d}}\cdot q^{k_{1}}.
\]

\end{corollary}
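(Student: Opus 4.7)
The plan is to mimic the proofs of the analogous corollaries in Cases 1 and 2, but now with a genuine global smoothness result on all of $\vpi_{3,M}^{-1}(\chi_\gamma)$ (no $l$-stratification is needed in Case 3). First I would introduce the ambient affine space $\widetilde{\cL}(L,M)$ defined by
\[
\widetilde{\cL}(L,M)(R)=\left\{\begin{pmatrix} x_{1,1} & x_{1,2} & x_{1,3} \\ \pi^{k_1} x_{2,1} & \pi^{k_1} x_{2,2} & \pi^{k_1} x_{2,3} \\ \pi^{k_1} x_{3,1} & \pi^{k_1} x_{3,2} & \pi^{k_1} x_{3,3} \end{pmatrix}\right\}
\]
so that $\cL(L,M)$ is an open subscheme, and I would take $\mathbb{A}_M(R)=R\times \pi^{k_1} R\times \pi^d R$ exactly as in Remark \ref{rmk53}.(2). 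Normalizing translation-invariant forms $\omega_{\widetilde{\cL}(L,M)}$ and $\omega_{\mathbb{A}_M}$ to have total mass $1$ on $\mfo$-points and setting $\omega^{ld}_{(\chi_\gamma,\widetilde{\cL}(L,M))}=\omega_{\widetilde{\cL}(L,M)}/\rho_n^{\ast}\omega_{\mathbb{A}_M}$ gives the comparisons spelled out in Remark \ref{rmk53}.(4), which in our case specialize (using $k_1=k_2$, $m=1$, $n-m=2$) to
\[
|\omega_{\chi_\gamma}^{\mathrm{ld}}|=|\pi|^{k_1+2k_2}\,|\omega^{ld}_{(\chi_\gamma,\widetilde{\cL}(L,M))}|=|\pi|^{3k_1}\,|\omega^{ld}_{(\chi_\gamma,\widetilde{\cL}(L,M))}|.
\]

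Next I would invoke Proposition \ref{prop77}, which states that $\vpi_{3,M}^{-1}(\chi_\gamma)$ is smooth over $\mfo$. Because $\vpi_{3,M}^{-1}(\chi_\gamma)(\mfo)=O_{\gamma,\cL(L,M)}$ and the relative dimension is $n^2-n=6$, Weil's formula combined with the point count of Corollary \ref{cor78} yields
\[
\int_{O_{\gamma,\cL(L,M)}}|\omega^{ld}_{(\chi_\gamma,\widetilde{\cL}(L,M))}|=\frac{\#\vpi_{3,M}^{-1}(\chi_\gamma)(\kappa)}{q^{6}}=\frac{(q+1)(q-1)^2 q^3}{q^6}=\frac{(q+1)(q-1)^2}{q^3}.
\]
Plugging this into Proposition \ref{prop54} gives
\[
\mathcal{SO}_{\gamma,(k_1,k_2)}=q^{-(k_1+2k_2)}\cdot\frac{(q+1)(q-1)^2}{q^3}=\frac{(q+1)(q-1)^2}{q^{3k_1+3}}.
\]

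Finally I would multiply by the lattice count $c_{(k_1,k_1)}=(q^2+q+1)q^{2(k_1-1)}$ from Remark \ref{rmk410}, which combined with $(q^2+q+1)(q+1)(q-1)^2=(q^3-1)(q^2-1)$ produces
\[
c_{(k_1,k_1)}\cdot \mathcal{SO}_{\gamma,(k_1,k_2)}=(q^3-1)(q^2-1)\cdot q^{-k_1-5}=\frac{(q^3-1)(q^2-1)}{q^{5+d}}\cdot q^{k_1},
\]
using $d=2k_1$. There is no genuine obstacle here: the smoothness input and the count have already been established in Proposition \ref{prop77} and Corollary \ref{cor78}, so the argument is just bookkeeping on the volume normalizations. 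The only place one must be careful is tracking the exponents in the differential comparison, which is the same mechanical step that appears in Cases 1 and 2.
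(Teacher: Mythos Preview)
Your proposal is correct and follows essentially the same route as the paper: invoke Proposition \ref{prop77} for smoothness, apply Weil's formula with the point count from Corollary \ref{cor78}, pass through the differential comparison of Proposition \ref{prop54} (equivalently Remark \ref{rmk53}.(4)) to get $\mathcal{SO}_{\gamma,(k_1,k_1)}=q^{-3k_1}\cdot (q+1)(q-1)^2/q^3$, and finish by multiplying with $c_{(k_1,k_1)}$ from Remark \ref{rmk410}. The paper's proof is exactly this bookkeeping, with no additional ideas.
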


\begin{proof}
The proof is parallel to that of Corollary \ref{cornm1}.
By Proposition \ref{prop54}, 
\[
\mathcal{SO}_{\gamma, (k_1, k_1)}=q^{-3k_1}\int_{O_{\gamma, \cL(L,M)}}|\omega^{ld}_{(\chi_{\gamma},\widetilde{\cL}(L,M))}|.
\]
By Proposition \ref{prop77} and Corollary \ref{cor78},
 \[
\int_{O_{\gamma, \cL(L,M)}}|\omega^{ld}_{(\chi_{\gamma},\widetilde{\cL}(L,M))}|=\frac{(q+1)(q-1)^2q^3}{q^6}.
\]
Combing with Remark \ref{rmk410}, we have the final formula:
\[
c_{(k_{1},k_{2})}\cdot \mathcal{SO}_{\gamma, (k_1, k_2)}=(q^{2}+q+1)\cdot q^{2k_{1}-2}\cdot \frac{(q+1)(q-1)^2}{q^{3+d+k_1}}=\frac{(q^{3}-1)(q^{2}-1)}{q^{5+d}}\cdot q^{k_{1}}.
\]
Here $d=2k_1$.
\end{proof}

\subsection{Case 4: \texorpdfstring{$k_1=t$}{k1=t} and \texorpdfstring{$d=3t$}{d=3t}}
In this case we suppose that the reduction of $\chi_{\gamma}(\pi^tx)/\pi^{3t}$ modulo $\pi$ is irreducible over $\kappa$, as mentioned at the paragraph following Equation (\ref{eq72}).
The Newton polygon of an irreducible polynomial $\chi_{\gamma}$ yields
\begin{equation}\label{eq77}
k_2=2t, ~~~~~~ \ord(c_1)\geq t, ~~~~~~~~~~~ \ord(c_2)\geq 2t.
\end{equation}

Choose an element $m=\begin{pmatrix}
x_{1,1} &x_{1,2} & x_{1,3} \\
\pi^{t}x_{2,1} &\pi^{t}x_{2,2} & \pi^{t}x_{2,3} \\
\pi^{2t}x_{3,1} &\pi^{2t}x_{3,2} & \pi^{2t}x_{3,3}
\end{pmatrix} $ of $\varphi_{3,M}^{-1}(\chi_{\gamma})(\mfo)$.
Since the characteristic polynomial of $m$ should satisfy Equation (\ref{eq77}),
$x_{1,1}, x_{2,1}x_{1,2}\in (\pi^t)$.
Therefore we have the following stratification of $\varphi_{3,M}^{-1}(\chi_{\gamma})(\mfo)$ with respect to the valuation of $x_{2,1}$:
\begin{multline}\label{eq78}
\varphi_{3,M}^{-1}(\chi_{\gamma})(\mfo)=
\bigsqcup_{l=0}^{t-1}\Bigl\{\begin{pmatrix}
\pi^t x_{1,1} &\pi^{t-l}x_{1,2} & x_{1,3} \\
\pi^{t+l}x_{2,1} &\pi^{t}x_{2,2} & \pi^{t}x_{2,3} \\
\pi^{2t}x_{3,1} &\pi^{2t}x_{3,2} & \pi^{2t}x_{3,3}
\end{pmatrix}\in \varphi_{3,M}^{-1}(\chi_{\gamma})(\mfo)|x_{2,1}\in \mfo^{\times}\Bigl\}\\
\bigsqcup \Bigl\{\begin{pmatrix}
\pi^tx_{1,1} &x_{1,2} & x_{1,3} \\
\pi^{2t}x_{2,1} &\pi^{t}x_{2,2} & \pi^{t}x_{2,3} \\
\pi^{2t}x_{3,1} &\pi^{2t}x_{3,2} & \pi^{2t}x_{3,3}
\end{pmatrix}\in \varphi_{3,M}^{-1}(\chi_{\gamma})(\mfo)\Bigl\}.
\end{multline}
Here, the determinant of $\begin{pmatrix}
\pi^t x_{1,1} &\pi^{t-l}x_{1,2} & x_{1,3} \\
\pi^{l}x_{2,1} &x_{2,2} & x_{2,3} \\
x_{3,1} &x_{3,2} & x_{3,3}
\end{pmatrix}$ in each stratum for $0\leq l \leq t$ is a unit in $\mfo$, where the last case $l=t$ is about the last stratum.
In particular when $l=t$,
we claim that  $x_{1,2}\in \mfo^{\times}$.
Otherwise, $x_{1,2}\in (\pi)$ and so $x_{2,2}, x_{1,3}, x_{3,1} \in \mfo^{\times}$ since the determinant of the above matrix is a unit in $\mfo$. 
Then the reduction of $\chi_m(\pi^tx)/\pi^{3t}$ modulo $\pi$ is 
\[
x^3-(\bar{x}_{1,1}+\bar{x}_{2,2})x^2+(\bar{x}_{1,1}\bar{x}_{2,2}-\bar{x}_{3,1}\bar{x}_{1,3})x+\bar{x}_{1,3}\bar{x}_{2,2}\bar{x}_{3,1}.
\]
This polynomial has a root $\bar{x}_{2,2}$, which is a contradiction to the irreducibility of the reduction of $\chi_m(\pi^tx)/\pi^{3t}$ modulo $\pi$.
\\

Now, we assign a scheme structure on each stratum as in Section \ref{subsec52}.
Define   functors $\cL(L,M)_l$ for $0\leq l \leq t$ and $\mathbb{A}_{M,1}$ on the category of flat $\mfo$-algebras to the category of sets such that
 \[
\cL(L,M)_l(R)
=\Bigl\{\begin{pmatrix}
\pi^t x_{1,1} &\pi^{t-l}x_{1,2} & x_{1,3} \\
\pi^{t+l}x_{2,1} &\pi^{t}x_{2,2} & \pi^{t}x_{2,3} \\
\pi^{2t}x_{3,1} &\pi^{2t}x_{3,2} & \pi^{2t}x_{3,3}
\end{pmatrix}\Bigl\}
   \textit{,    } ~~~~~~~~~~~ 
\mathbb{A}_{M,1}(R)=\pi^{t} R\times \pi^{2t} R\times \pi^{3t} R
\]
where $x_{2,1}\in R^{\times}$ (resp. $x_{1,2}\in R^{\times}$)  if $0\leq l \leq t-1$ (resp. $l=t$) and $\begin{pmatrix}
\pi^t x_{1,1} &\pi^{t-l}x_{1,2} & x_{1,3} \\
\pi^{l}x_{2,1} &x_{2,2} & x_{2,3} \\
x_{3,1} &x_{3,2} & x_{3,3}
\end{pmatrix}$
is invertible as a matrix with entries in $R$ 
for  $0\leq l \leq t$.
Here, $R$ is a flat $\mfo$-algebra.

Let $\varphi_{3,M}^l$  be the restriction of $\varphi_{3,M}$ to $\cL(L,M)_l(R)$ for $0\leq l \leq t$.
Then the morphism
\[
\varphi_{3,M}^l: \cL(L,M)_l(R) \longrightarrow \mathbb{A}_{M,1}(R)
\]
is represented by a morphism of schemes over $\mfo$.
Note that each stratum in Equation (\ref{eq78}) is  $(\varphi_{3,M}^l)^{-1}(\chi_{\gamma})(\mfo)$.
Thus  the stratification in Equation (\ref{eq78}) turns out to be
\[
(\varphi_{3,M})^{-1}(\chi_{\gamma})(\mfo)=\bigsqcup_{l=0}^{t}(\varphi_{3,M}^l)^{-1}(\chi_{\gamma})(\mfo).
\]

\begin{proposition}
The scheme $(\varphi_{3,M}^l)^{-1}(\chi_{\gamma})$ is smooth over $\mfo$.
\end{proposition}

\begin{proof}
The proof is similar to that of Theorem \ref{thm55}.
It suffices to show that for any $m\in (\varphi_{3,M}^l)^{-1}(\chi_{\gamma})(\bar{\kappa})$,
the induced map on the Zariski tangent space 
\[
d(\vpi_{3,M}^l)_{\ast, m}: T_m \longrightarrow T_{\varphi_{3,M}^l(m)}
\] 
is surjective, where $T_m$ is the Zariski tangent space of $ \cL(L,M)_l\otimes \bar{\kappa}$ at $m$ and $T_{\varphi_{3,M}^l(m)}$ is the Zariski tangent space of $\mathbb{A}_{M,1}\otimes \bar{\kappa}$ at $\varphi_{3,M}^l(m)$.

Write $m\in (\varphi_{3,M}^l)^{-1}(\chi_{\gamma})(\bar{\kappa})$ and $X\in T_m$ as the following matrix formally;
 \[
m=\begin{pmatrix}
\pi^tx_{1,1} &\pi^{t-l}x_{1,2} & x_{1,3} \\
\pi^{t+l}x_{2,1} &\pi^{t}x_{2,2} & \pi^{t}x_{2,3} \\
\pi^{2t}x_{3,1} &\pi^{2t}x_{3,2} & \pi^{2t}x_{3,3}
\end{pmatrix}   \textit{   and   } X=\begin{pmatrix}
\pi^ta_{1,1} &\pi^{t-l}a_{1,2} & a_{1,3} \\
\pi^{t+l}a_{2,1} &\pi^{t}a_{2,2} & \pi^{t}a_{2,3} \\
\pi^{2t}a_{3,1} &\pi^{2t}a_{3,2} & \pi^{2t}a_{3,3}
\end{pmatrix},
\]
 where $x_{ij}, a_{ij}\in \bar{\kappa}$,  
 $x_{2,1}\in \bar{\kappa}^{\times}$ (resp. $x_{1,2}\in \bar{\kappa}^{\times})$  if $0\leq l \leq t-1$ (resp. $l=t$),  and
 $\begin{pmatrix}
\pi^tx_{1,1} &\pi^{t-l}x_{1,2} & x_{1,3} \\
\pi^{l}x_{2,1} & x_{2,2} &  x_{2,3} \\
 x_{3,1} & x_{3,2} & x_{3,3}
\end{pmatrix}$ is invertible  as a matrix of $\mathrm{M}_3(\bar{\kappa})$ for  $0\leq l \leq t$.
Thus 
 \[
\left\{
  \begin{array}{l l}
x_{2,1}, x_{1,3}\neq 0 &  \textit{if $l=0$};\\
x_{2,1}, x_{1,3}, x_{3,1}\neq 0 &  \textit{if $1\leq l \leq t-1$};\\
x_{1,2}, x_{3,1}\neq 0 & \textit{if $l=t$}.
    \end{array} \right.
\]

Our method to prove the surjectivity of $d(\vpi_{3,M}^l)_{\ast, m}$ is to choose a certain subspace of $T_m$ mapping onto $T_{\vpi_{3,M}^l(m)}$.
Let 
 $A_1=\begin{pmatrix}
\pi^{t}1 &\pi^{t-l}0 & 0 \\
\pi^{t+l}0 &\pi^{t}0 & \pi^{t}0 \\
\pi^{2t}0 &\pi^{2t}0 & \pi^{2t}0
\end{pmatrix}$ for $0\leq l \leq t$ and let $A_2, A_3$ be
 \[
\left\{
  \begin{array}{l l}
  \begin{pmatrix}
\pi^{t}0 &\pi^{t}1 & 0 \\
\pi^{t}0 &\pi^{t}0 & \pi^{t}0 \\
\pi^{2t}0 &\pi^{2t}0 & \pi^{2t}0
\end{pmatrix}, 
\begin{pmatrix}
\pi^{t}0 &\pi^{t}0 & 0 \\
\pi^{t}0 &\pi^{t}0 & \pi^{t}0 \\
\pi^{2t}0 &\pi^{2t}1 & \pi^{2t}0
\end{pmatrix} &  \textit{if $l=0$};\\
\begin{pmatrix}
\pi^{t}0 &\pi^{t-l}1 & 0 \\
\pi^{t+l}0 &\pi^{t}0 & \pi^{t}0 \\
\pi^{2t}0 &\pi^{2t}0 & \pi^{2t}0
\end{pmatrix}, 
\begin{pmatrix}
\pi^{t}0 &\pi^{t-l}0 & 0 \\
\pi^{t+l}0 &\pi^{t}1 & \pi^{t}0 \\
\pi^{2t}0 &\pi^{2t}0 & \pi^{2t}0
\end{pmatrix} &  \textit{if $1\leq l \leq t-1$};\\
\begin{pmatrix}
\pi^{t}0 &0 & 0 \\
\pi^{2t}1 &\pi^{t}0 & \pi^{t}0 \\
\pi^{2t}0 &\pi^{2t}0 & \pi^{2t}0
\end{pmatrix}, 
\begin{pmatrix}
\pi^{t}0 & 0 & 0 \\
\pi^{2t}0 &\pi^{t}0 & \pi^{t}1 \\
\pi^{2t}0 &\pi^{2t}0 & \pi^{2t}0
\end{pmatrix} & \textit{if $l=t$}
    \end{array} \right.
\]
as elements of $T_m$.
Then the images of $A_1, A_2, A_3$ are 
 \[
\left\{
  \begin{array}{l l}
\left(1, \ast, \ast\ast \right), \left(0, x_{2,1}, \ast \right), \left(0, 0, x_{2,1}x_{1,3} \right) &  \textit{if $l=0$};\\
\left(1, \ast, \ast\ast \right), \left(0, x_{2,1}, 0 \right), \left(0, x_{1,1}, x_{3,1}x_{1,3} \right) &  \textit{if $1\leq l \leq k_1-1$};\\
\left(1,  \ast, \ast\ast \right), \left(0, x_{1,2}, \ast \right), \left(0, 0, x_{3,1}x_{1,2} \right) & \textit{if $l=k_1$}
    \end{array} \right.
\]
up to the sign $\pm 1$.
These three vectors are linearly independent as elements of $T_{\vpi_{3,M}^l(m)}$ and so the map $d(\vpi_{3,M}^l)_{\ast, m}$ is surjective.
\end{proof}

\begin{corollary}
The order of the set  $\vpi_{3,M}^{-1}(\chi_{\gamma})(\kappa)$ is 
 \[
\#(\varphi_{3,M}^l)^{-1}(\chi_{\gamma})(\kappa)=\left\{
  \begin{array}{l l}
q^3(q-1)^3 &   \textit{if $1\leq l \leq t-1$};\\
q^4(q-1)^2 & \textit{if $l=0$ or $l=t$}.
    \end{array} \right.
\]
\end{corollary}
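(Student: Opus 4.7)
\medskip

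The plan is to follow the blueprint of the proofs in Cases 1--2, computing $\varphi_{3,M}^l(m)$ explicitly at the $\kappa$-level for $m \in \cL(L,M)_l(\kappa)$ and then counting solutions to the three reduced equations. The new feature is that we are now in the regime $k_1 = t$, $k_2 = 2t$, so the $\pi^{2t}$-strata and the $\pi^{k_2}$-strata of the characteristic polynomial coincide; consequently the reduction mod $\pi$ of the second coefficient $c_2/\pi^{2t}$ will acquire an extra contribution from the $(1,3)$-$(3,1)$ minor. Throughout I will use that $\chi_\gamma(\pi^t x)/\pi^{3t} \equiv x^3 + \bar{a}_1 x^2 + \bar{a}_2 x + \bar{a}_3 \pmod{\pi}$ is irreducible over $\kappa$, which is equivalent (after recording the trace and determinant) to $\bar{a}_3 \neq 0$ together with the absence of roots in $\kappa$; only $\bar{a}_3 \neq 0$ will be used in the counting.

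First, I would compute the three components of $\varphi_{3,M}^l$ for a general $m$ with entries prescribed as in $\cL(L,M)_l$. This yields, modulo $\pi$, the following three equations, where (E3) depends on $l$:
\[
\textrm{(E1):}\ \bar{x}_{1,1} + \bar{x}_{2,2} = -\bar{a}_1, \qquad \textrm{(E2):}\ \bar{x}_{1,1}\bar{x}_{2,2} - \bar{x}_{1,2}\bar{x}_{2,1} - \bar{x}_{1,3}\bar{x}_{3,1} = \bar{a}_2,
\]
\[
\textrm{(E3)}_0: -\bar{x}_{1,3}\bigl(\bar{x}_{2,1}\bar{x}_{3,2} - \bar{x}_{2,2}\bar{x}_{3,1}\bigr) = \bar{a}_3, \quad
\textrm{(E3)}_l: \bar{x}_{1,3}\bar{x}_{2,2}\bar{x}_{3,1} = \bar{a}_3\ (1\le l\le t{-}1), \quad
\textrm{(E3)}_t: \bar{x}_{3,1}\bigl(\bar{x}_{1,3}\bar{x}_{2,2} - \bar{x}_{1,2}\bar{x}_{2,3}\bigr) = \bar{a}_3.
\]
The invertibility conditions defining $\cL(L,M)_l(\kappa)$ reduce to: $\bar{x}_{2,1}\neq 0$ for $0 \le l \le t-1$ (resp.\ $\bar{x}_{1,2}\neq 0$ for $l = t$), together with non-vanishing of the relevant $3\times 3$ reduced determinant. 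A direct inspection shows that in each case this determinant is exactly $\pm \bar{a}_3$, and hence is automatically nonzero.

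Next, I would count solutions stratum by stratum. In each case, (E1) uniquely determines $\bar{x}_{1,1}$; then (E3) and (E2) each eliminate one additional variable, using that the appropriate coefficient (a product of units) is nonzero. For $l = 0$: (E3)$_0$ determines $\bar{x}_{3,2}$ via $\bar{x}_{1,3}, \bar{x}_{2,1} \in \kappa^\times$, and (E2) determines $\bar{x}_{1,2}$; the free variables are $\bar{x}_{2,2},\bar{x}_{2,3},\bar{x}_{3,1},\bar{x}_{3,3}\in\kappa$ and $\bar{x}_{2,1},\bar{x}_{1,3}\in\kappa^\times$, giving $q^4(q-1)^2$. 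For $l = t$: (E3)$_t$ determines $\bar{x}_{2,3}$ via $\bar{x}_{3,1},\bar{x}_{1,2}\in\kappa^\times$, and (E2) determines $\bar{x}_{2,1}$; the free variables are $\bar{x}_{2,2},\bar{x}_{1,3},\bar{x}_{3,2},\bar{x}_{3,3}\in\kappa$ and $\bar{x}_{1,2},\bar{x}_{3,1}\in\kappa^\times$, again giving $q^4(q-1)^2$. For $1 \le l \le t-1$: (E3)$_l$ determines $\bar{x}_{1,3}$ once we require $\bar{x}_{2,2},\bar{x}_{3,1}\in\kappa^\times$, and (E2) then determines $\bar{x}_{1,2}$ via $\bar{x}_{2,1}\in\kappa^\times$; the free variables are $\bar{x}_{2,3},\bar{x}_{3,2},\bar{x}_{3,3}\in\kappa$ and $\bar{x}_{2,1},\bar{x}_{2,2},\bar{x}_{3,1}\in\kappa^\times$, yielding $q^3(q-1)^3$.

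The main technical issue is just the bookkeeping around the coincidence $k_2 = 2k_1$: one must carefully identify which terms of $\det(m)$ contribute to the reduction of $c_3/\pi^{3t}$ for each value of $l$, and similarly one must notice the new $-\bar{x}_{1,3}\bar{x}_{3,1}$ term in $c_2/\pi^{2t}$ (absent in Case 1 where $k_2 > 2k_1$). Interestingly, the irreducibility hypothesis of Case 4 is not used anywhere in the counting itself; it only enters implicitly through $\bar{a}_3 \neq 0$, which is already guaranteed by $\chi_\gamma(x)$ being irreducible in $\mfo[x]$. The hypothesis, however, is crucial for the prior structural claim that $x_{1,2}\in\mfo^\times$ in the last stratum, which was established in the discussion preceding this corollary.
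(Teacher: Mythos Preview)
Your proof is correct and follows essentially the same strategy as the paper: compute $\varphi_{3,M}^l(m)$ modulo $\pi$, then count solutions to the three resulting equations. The only difference is in bookkeeping: the paper uniformly solves for $(\bar{x}_{1,1},\bar{x}_{1,2},\bar{x}_{1,3})$ (resp.\ $(\bar{x}_{1,1},\bar{x}_{2,1},\bar{x}_{3,1})$ when $l=t$) and carries the invertibility condition as an explicit nonvanishing-of-a-$2\times 2$-minor constraint, arriving at $\bigl(\#\mathrm{GL}_2(\kappa)-q(q-1)^2\bigr)\cdot q^2$ for the boundary cases; you instead choose case-dependent eliminations and make the clean observation that the reduced $3\times 3$ determinant equals $\pm\bar{a}_3$, so the invertibility constraint is automatic once (E3) is imposed. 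Both routes give the same count.
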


\begin{proof}
For $m\in \cL(L,M)_l(\kappa)$,  $\varphi_{3,M}^l(m)$  is 
 \[
\left\{
  \begin{array}{l l}
\left(\pi^{t}(-x_{1,1}-x_{2,2}), \pi^{2t}\left(
\begin{vmatrix}
x_{1,1} &x_{1,2} \\ x_{2,1} &x_{2,2}
\end{vmatrix}-x_{3,1}x_{1,3}\right), \pi^{3t}\left(-x_{1,3}\cdot 
\begin{vmatrix}
\pi^lx_{2,1} &x_{2,2} \\ x_{3,1} &x_{3,2}
\end{vmatrix}\right)\right) &   \textit{if $0\leq l \leq t-1$};\\
\left(\pi^{t}(-x_{1,1}-x_{2,2}), \pi^{2t}\left(
\begin{vmatrix}
x_{1,1} &x_{1,2} \\ x_{2,1} &x_{2,2}
\end{vmatrix}-x_{3,1}x_{1,3}\right), \pi^{3t}\left(-x_{3,1}\cdot 
\begin{vmatrix}
x_{1,2} &x_{1,3} \\ x_{2,2} &x_{2,3}
\end{vmatrix}\right)\right) & \textit{if $l=t$}.
    \end{array} \right.
\]

\begin{enumerate}
\item 
 Suppose that $l=0$ so that   $x_{2,1}, x_{1,3}\neq 0$ and $\begin{vmatrix}
x_{2,1} &x_{2,2} \\ x_{3,1} &x_{3,2}
\end{vmatrix} \neq 0$. 
If $m\in (\varphi_{3,M}^1)^{-1}(\chi_{\gamma})(\kappa)$, then $x_{1,1}, x_{1,2}, x_{1,3}$ are completely determined by all other entries satisfying $\begin{vmatrix}
x_{2,1} &x_{2,2} \\ x_{3,1} &x_{3,2}
\end{vmatrix} \neq 0$. Thus we have 
 \[
\#(\varphi_{3,M}^l)^{-1}(\chi_{\gamma})(\kappa)=
(\#\mathrm{GL}_2(\kappa)-q(q-1)^2)\cdot q^2=q^4(q-1)^2.
\]

\item Suppose that $1\leq l \leq t-1$ so that  $x_{2,1}, x_{1,3}, x_{3,1} \neq 0$ and $\begin{vmatrix}
0 &x_{2,2} \\ x_{3,1} &x_{3,2}
\end{vmatrix} \neq 0$. 
If $m\in (\varphi_{3,M}^1)^{-1}(\chi_{\gamma})(\kappa)$, then $x_{1,1}, x_{1,2}, x_{1,3}$ are completely determined by all other entries satisfying $\begin{vmatrix}
0 &x_{2,2} \\ x_{3,1} &x_{3,2}
\end{vmatrix} \neq 0$. Thus we have 
 \[
\#(\varphi_{3,M}^l)^{-1}(\chi_{\gamma})(\kappa)=
(q-1)^3q\cdot q^2=q^3(q-1)^3.
\]

\item Suppose that $l=t$ so that 
  $x_{1,2}, x_{3,1}\neq 0$ and $\begin{vmatrix}
x_{1,2} &x_{1,3} \\ x_{2,2} &x_{2,3}
\end{vmatrix}\neq 0$.
If $m\in (\varphi_{3,M}^1)^{-1}(\chi_{\gamma})(\kappa)$, then $x_{1,1}, x_{2,1}, x_{3,1}$ are completely determined by all other entries satisfying $\begin{vmatrix}
x_{1,2} &x_{1,3} \\ x_{2,2} &x_{2,3}
\end{vmatrix}\neq 0$.
 Thus we have
  \[
\#(\varphi_{3,M}^l)^{-1}(\chi_{\gamma})(\kappa)=
(\#\mathrm{GL}_2(\kappa)-q(q-1)^2)\cdot q^2=q^4(q-1)^2.
\]
\end{enumerate}
\end{proof}

\begin{corollary}
We have 
\[
c_{(k_{1},k_{2})}\cdot \mathcal{SO}_{\gamma, (k_1, k_2)}=
(q^{3}-1)(q^{2}-1)q^{k_{2}}\cdot \frac{(t+1)q-(t-1)}{q^{6+d}}.
\]

\end{corollary}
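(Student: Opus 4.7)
The plan is to mirror verbatim the three-step argument used in the analogous corollary at the end of Case 1 (or Case 2), simply substituting the Case 4 values $k_1=t$, $k_2=2t$, $d=3t$ and the point counts established just above. First I would introduce the auxiliary affine scheme $\widetilde{\cL}(L,M)_l$ over $\mfo$ defined by the same matrix template as $\cL(L,M)_l$ but with all entrywise invertibility and unit conditions dropped, so that $\cL(L,M)_l$ sits inside $\widetilde{\cL}(L,M)_l$ as an open subscheme with the same generic fiber. Equip it with the translation-invariant top form $\omega_{\widetilde{\cL}(L,M)_l}$ normalized to have total mass $1$ on $\widetilde{\cL}(L,M)_l(\mfo)$, and similarly equip $\mathbb{A}_{M,1}$ with the normalized form $\omega_{\mathbb{A}_{M,1}}$, and let $\omega^{can}_{(\chi_\gamma,L,M)_l}$ denote their quotient.

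Next I would read off the differential comparison from the matrix template by adding exponents: the entries of the matrix contribute $5k_1+3k_2 = 5t + 6t = 11t$ to $\omega_{\mathfrak{gl}_{n,\mfo}}$, while $\mathbb{A}_{M,1}$ contributes $t+2t+3t = 6t$ to $\omega_{\mathbb{A}^n_\mfo}$, so that
\[
|\omega_{\chi_\gamma}^{\mathrm{ld}}| = |\pi|^{11t-6t}\,|\omega^{can}_{(\chi_\gamma,L,M)_l}| = |\pi|^{d+k_2}\,|\omega^{can}_{(\chi_\gamma,L,M)_l}|,
\]
exactly as in Cases 1 and 2. Combined with the smoothness of $(\varphi_{3,M}^l)^{-1}(\chi_\gamma)$ (established in the preceding proposition) and Weil's formula \cite[Theorem 2.2.5]{Weil} applied in relative dimension $n^2 - n = 6$, this gives
\[
\int_{(\varphi_{3,M}^l)^{-1}(\chi_\gamma)(\mfo)}|\omega_{\chi_\gamma}^{\mathrm{ld}}| = \frac{\#(\varphi_{3,M}^l)^{-1}(\chi_\gamma)(\kappa)}{q^{5t+6}}.
\]

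Then I would sum over the decomposition $\varphi_{3,M}^{-1}(\chi_\gamma)(\mfo)=\bigsqcup_{l=0}^{t}(\varphi_{3,M}^l)^{-1}(\chi_\gamma)(\mfo)$, plugging in the point counts from the previous corollary ($q^4(q-1)^2$ for the two endpoint strata $l=0,t$ and $q^3(q-1)^3$ for each of the $t-1$ middle strata) to obtain
\[
\mathcal{SO}_{\gamma,(k_1,k_2)} = \frac{(t-1)q^3(q-1)^3 + 2q^4(q-1)^2}{q^{5t+6}} = \frac{q^3(q-1)^2\bigl[(t+1)q-(t-1)\bigr]}{q^{5t+6}}.
\]
Finally, multiplying by $c_{(k_1,k_2)}=(q^2+q+1)(q+1)q^{2k_2-3}=(q^2+q+1)(q+1)q^{4t-3}$ from Remark \ref{rmk410} and simplifying $(q^2+q+1)(q+1)(q-1)^2=(q^3-1)(q^2-1)$, together with $k_2=2t$ and $d=3t$, yields the claimed identity. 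There is no essential obstacle here: every ingredient has already been proved for Case 4 in the preceding proposition and corollary, so this is purely a matter of assembling Weil's formula with the exponent bookkeeping, exactly as in Cases 1 and 2.
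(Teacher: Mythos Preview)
Your proposal is correct and follows essentially the same approach as the paper's own proof: introduce the auxiliary affine space $\widetilde{\cL}(L,M)_l$, compute the differential comparison $|\omega_{\chi_\gamma}^{\mathrm{ld}}|=|\pi|^{5t}|\omega^{can}_{(\chi_\gamma,L,M)_l}|$, apply Weil's formula stratum by stratum using the point counts from the preceding corollary, sum over $l=0,\ldots,t$, and multiply by $c_{(k_1,k_2)}$ from Remark \ref{rmk410}. The arithmetic and structure are identical to the paper's argument.
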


\begin{proof}
The proof is parallel to that of Corollary \ref{cornm1}.
Let $\widetilde{\cL}(L,M)_l$ be the affine space defined over $\mfo$ such that 
$\widetilde{\cL}(L,M)_l(R)=\Bigl\{\begin{pmatrix}
\pi^t x_{1,1} &\pi^{t-l}x_{1,2} & x_{1,3} \\
\pi^{t+l}x_{2,1} &\pi^{t}x_{2,2} & \pi^{t}x_{2,3} \\
\pi^{2t}x_{3,1} &\pi^{2t}x_{3,2} & \pi^{2t}x_{3,3}
\end{pmatrix}\Bigl\}$ for $0\leq l \leq t$. Then $\cL(L,M)_l$ is an open subscheme of $\widetilde{\cL}(L,M)_l$.

Let $\omega_{\widetilde{\cL}(L,M)_l}$ and $\omega_{\mathbb{A}_{M,1}}$ be nonzero,  translation-invariant forms on   $\mathfrak{gl}_{n, F}$ and $\mathbb{A}^n_F$,
 respectively, with normalizations
$$\int_{\widetilde{\cL}(L,M)_l(\mathfrak{o})}|\omega_{\widetilde{\cL}(L,M)_l}|=1 \mathrm{~and~}  \int_{\mathbb{A}_{M,1}(\mathfrak{o})}|\omega_{\mathbb{A}_{M,1}}|=1.$$
Putting $\omega^{can}_{(\chi_{\gamma},L,M)_l}=\omega_{\widetilde{\cL}(L,M)_l}/\rho_n^{\ast}\omega_{\mathbb{A}_{M,1}}$. 
 Then we have the following comparison among differentials;
 
 \[
\left\{
  \begin{array}{l }
|\omega_{\mathfrak{gl}_{n, \mathfrak{o}}}|=|\pi|^{11t}\cdot|\omega_{\widetilde{\cL}(L,M)_l}|;\\
|\omega_{\mathbb{A}^n_{\mathfrak{o}}}|=
|\pi|^{6t}\cdot |\omega_{\mathbb{A}_{M,1}}|;\\
|\omega_{\chi_{\gamma}}^{\mathrm{ld}}|=|\pi|^{5t}|\omega^{can}_{(\chi_{\gamma},L,M)_l}|.
    \end{array} \right.
\]
Therefore, we have
\begin{multline*}
\mathcal{SO}_{\gamma, (k_1, k_2)}=\sum\limits_{l=0}^{t}\int_{(\varphi_{3,M}^l)^{-1}(\chi_{\gamma})(\mfo)}|\omega_{\chi_{\gamma}}^{\mathrm{ld}}|=
\frac{(t-1)q^3(q-1)^3+2q^4(q-1)^2}{q^{6+5t}}=\\
\frac{(q-1)^2((t+1)q-(t-1))}{q^{3+5t}}.
\end{multline*}
Combing with Remark \ref{rmk410}, we have the final formula:
\begin{multline*}
c_{(k_{1},k_{2})}\cdot \mathcal{SO}_{\gamma, (k_1, k_2)}=(q^{2}+q+1)(q+1)\cdot q^{2k_{2}-3}\cdot\frac{(q-1)^2((t+1)q-(t-1))}{q^{3+5t}}=\\
(q^{3}-1)(q^{2}-1)q^{k_{2}}\cdot \frac{(t+1)q-(t-1)}{q^{6+d}}.
\end{multline*}
\end{proof}

\section{The proofs of Theorems \ref{result3}-\ref{result4}}\label{appc}
\begin{proof}[The proof of Theorem \ref{result3}]
\textit{ }

Recall that the result in Corollary \ref{cor73}. When $n=3$, $d_{a}=\ord (\chi_{\gamma,a}(0))$ is independent of any choice of $a\in \mfo$ which satisfies the conditions in Proposition \ref{charred3} for $\chi_{\gamma}$. From now on until the end of this section, we denote the integer $d_{a}$ by $d_{\gamma}$.

Based on Equation (\ref{eqsor}), let
\[
l=d_{\gamma}-3s~~~~~~~~~~\textit{     for        }~~~~~~~ 0\leq s\leq \lfloor\frac{d_{\gamma}}{3}\rfloor.
\]
We will first analyze the following  summation
\begin{equation}\label{sum3}
\sum\limits_{\substack{k_1+k_2=l;\\ 0\leq k_1\leq k_2}}q^{-3s} c_{(k_1, k_2)}\cdot\mathcal{SO}_{\gamma^{(s)}, (k_1, k_2)}.
\end{equation}

We split Summation (\ref{sum3}) with respect to the cases indicated in Theorem \ref{thm71}. Note that 
\[\left\{
\begin{array}{l l}
l\equiv 0 \textit{ modulo }3&\textit{ if }F_{\chi_{\gamma}}\textit{ is unramified over }F;\\
l\equiv 1 \textit{ or }2 \textit{ modulo }3&\textit{ if }F_{\chi_{\gamma}}\textit{ is ramified over  }F.\end{array} 
\right.\]
\begin{enumerate}
    \item The case when $l>0$ is an odd integer;
    \begin{enumerate}
        \item 
        If $F_{\chi_{\gamma}}$ is unramified over $F$, then Summation (\ref{sum3}) splits  as follows;
        \[
        q^{-3s}\Bigg(c_{(l)}\cdot\mathcal{SO}_{\gamma^{(s)}, (l)}+\sum\limits_{0<k_{1}< \frac{l}{3}}c_{(k_1, k_2)}\cdot\mathcal{SO}_{\gamma^{(s)}, (k_1, k_2)}+c_{(\frac{l}{3}, \frac{2l}{3})}\cdot\mathcal{SO}_{\gamma^{(s)}, (\frac{l}{3}, \frac{2l}{3})}+\sum\limits_{ \frac{l}{3}< k_{1}\leq \frac{l-1}{2}}c_{(k_1, k_2)}\cdot\mathcal{SO}_{\gamma^{(s)}, (k_1, k_2)}\Bigg).
        \]
        \item If $F_{\chi_{\gamma}}$ is ramified over $F$, then Summation (\ref{sum3}) splits  as follows;
        \[
        q^{-3s}\Bigg(c_{(l)}\cdot\mathcal{SO}_{\gamma^{(s)}, (l)}+\sum\limits_{0<k_{1}\leq \lfloor \frac{l}{3}\rfloor}c_{(k_1, k_2)}\cdot\mathcal{SO}_{\gamma^{(s)}, (k_1, k_2)}+\sum\limits_{\lfloor \frac{l}{3}\rfloor< k_{1}\leq \frac{l-1}{2}}c_{(k_1, k_2)}\cdot\mathcal{SO}_{\gamma^{(s)}, (k_1, k_2)}\Bigg).
        \]
    \end{enumerate}
    From the results of Corollary \ref{cornm1} and Theorem \ref{thm71}, Summation (\ref{sum3}), whatever $F_{\chi_{\gamma}}$ is unramified or ramified,  is equal to
    \begin{align*}
    \frac{(q^{3}-1)(q^{2}-1)}{q^{6+d_{\gamma}}}\Bigg(q^{l+1}+\sum\limits_{0<k_{1}\leq \lfloor \frac{l}{3}\rfloor}\left((k_{1}+1)q-(k_{1}-1)\right)q^{l-k_{1}}+\sum\limits_{\lfloor \frac{l}{3}\rfloor<k_{1}\leq \frac{l-1}{2}}((l-2k_{1}+1)q-(l-2k_{1}-1))q^{l-k_{1}}\Bigg).
    \end{align*}
    \\
    \item The case when $l>0$ is an even integer;
    \begin{enumerate}
        \item 
        If $F_{\chi_{\gamma}}$ is unramified over $F$, then Summation (\ref{sum3}) splits  as follows;
        \begin{align*}
        q^{-3s}\Bigg(&c_{(l)}\cdot\mathcal{SO}_{\gamma^{(s)}, (l)}+\sum\limits_{0<k_{1}< \frac{l}{3}}c_{(k_1, k_2)}\cdot\mathcal{SO}_{\gamma^{(s)}, (k_1, k_2)}+c_{(\frac{l}{3}, \frac{2l}{3})}\cdot\mathcal{SO}_{\gamma^{(s)}, (\frac{l}{3}, \frac{2l}{3})}+\sum\limits_{ \frac{l}{3}< k_{1}\leq \frac{l}{2}-1}c_{(k_1, k_2)}\cdot\mathcal{SO}_{\gamma^{(s)}, (k_1, k_2)}\\&+c_{(\frac{l}{2}, \frac{l}{2})}\cdot\mathcal{SO}_{\gamma^{(s)}, (\frac{l}{2}, \frac{l}{2})}\Bigg).
        \end{align*}
        \item If $F_{\chi_{\gamma}}$ is ramified over $F$, then Summation (\ref{sum3}) splits  as follows;
        \[
        q^{-3s}\Bigg(c_{(l)}\cdot\mathcal{SO}_{\gamma^{(s)}, (l)}+\sum\limits_{0<k_{1}\leq \lfloor \frac{l}{3}\rfloor}c_{(k_1, k_2)}\cdot\mathcal{SO}_{\gamma^{(s)}, (k_1, k_2)}+\sum\limits_{\lfloor \frac{l}{3}\rfloor< k_{1}\leq \frac{l}{2}-1}c_{(k_1, k_2)}\cdot\mathcal{SO}_{\gamma^{(s)}, (k_1, k_2)}+c_{(\frac{l}{2}, \frac{l}{2})}\cdot\mathcal{SO}_{\gamma^{(s)}, (\frac{l}{2}, \frac{l}{2})}\Bigg).
        \]
    \end{enumerate}
     From the results of Corollary \ref{cornm1} and Theorem \ref{thm71}, Summation (\ref{sum3}), whatever $F_{\chi_{\gamma}}$ is unramified or ramified,  is equal to
     \begin{align*}
         \frac{(q^{3}-1)(q^{2}-1)}{q^{6+d_{\gamma}}} \Bigg(&q^{l+1}+\sum\limits_{0<k_{1}\leq \lfloor \frac{l}{3}\rfloor}((k_{1}+1)q-(k_{1}-1))q^{l-k_{1}}+\sum\limits_{\lfloor \frac{l}{3}\rfloor<k_{1}\leq \frac{l}{2}-1}((l-2k_{1}+1)q-(l-2k_{1}-1))q^{l-k_{1}}
         \\&+q^{\frac{l}{2}+1}\Bigg).
     \end{align*}
\end{enumerate}

 In particular, for any integer $l\geq 3$,
     \[
        \sum\limits_{0<k_{1}\leq \lfloor \frac{l}{3}\rfloor}((k_{1}+1)q-(k_{1}-1))q^{l-k_{1}}
        =2q^{l}+3q^{l-1}+3q^{l-2}+\cdots+3q^{l-\lfloor\frac{l}{3}\rfloor+1}-(\lfloor\frac{l}{3}\rfloor-1)q^{l-\lfloor\frac{l}{3}\rfloor}    
    \]and
    \begin{align*}
        \sum\limits_{\lfloor \frac{l}{3}\rfloor<k_{1}\leq \lfloor\frac{l-1}{2}\rfloor}
        ((l-2k_{1}+1)q-(l-2k_{1}-1))q^{l-k_{1}}
        =&(l-2\lfloor \frac{l}{3}\rfloor-1)q^{l-\lfloor \frac{l}{3}\rfloor}-(l-2\lfloor \frac{l-1}{2}\rfloor -1)q^{l-\lfloor\frac{l-1}{2}\rfloor}\\
        =&\left\{\begin{array}{l l}
        (l-2\lfloor \frac{l}{3}\rfloor-1)q^{l-\lfloor \frac{l}{3}\rfloor}     & \textit{ when }l\textit{ is odd};\\
        (l-2\lfloor \frac{l}{3}\rfloor-1)q^{l-\lfloor \frac{l}{3}\rfloor}-q^{\frac{l}{2}+1}     & \textit{ when }l\textit{ is even}.
        \end{array}\right.        
    \end{align*}
    
    Therefore, for $l\geq 3$, Summation (\ref{sum3}) turns to be
    \[
    \sum\limits_{\substack{k_1+k_2=l;\\ 0\leq k_1\leq k_2}}q^{-3s} c_{(k_1, k_2)}\cdot\mathcal{SO}_{\gamma^{(s)}, (k_1, k_2)}=
    \frac{(q^{3}-1)(q^{2}-1)}{q^{6+d_{\gamma}}}(q^{l+1}+2q^{l}+3q^{l-1}+\cdots+3q^{l-\lfloor \frac{l}{3}\rfloor+1}+(l-3\lfloor \frac{l}{3}\rfloor)q^{l-\lfloor \frac{l}{3}\rfloor}).
    \]
\\

The desired formula is the sum of (\ref{sum3})'s for all $l$'s such that $l=d_{\gamma}-3s$ for $0\leq s\leq \lfloor\frac{d_{\gamma}}{3}\rfloor$, by 
 Equation (\ref{eqsor}).
In the following, we do calculation case by case, whether $F_{\chi_{\gamma}}$ is unramified or ramified.

\begin{enumerate}
\item{The case when $F_{\chi_{\gamma}}$ is unramified over $F$;}

For $l=d_{\gamma}-3s=0$, by the inductive formula in Corollary \ref{corred},
\[
\sum\limits_{\substack{k_1+k_2=0;\\ 0\leq k_1\leq k_2}}q^{-d_{\gamma}} c_{(k_1, k_2)}\cdot\mathcal{SO}_{\gamma^{(s)}, (k_1, k_2)}=q^{-d_{\gamma}}\cdot\frac{(q^{2}-1)(q-1)}{q^{3}}=\frac{(q^{3}-1)(q^{2}-1)}{q^{6+d_{\gamma}}}\cdot \frac{q^{3}}{q^{2}+q+1}.
\]
 
Therefore, if we set $d_{\gamma}=3d'$ for $d' \geq 0$ and $l=3i$, then we have
\begin{align*}
    \mathcal{SO}_{\gamma}&=\sum\limits_{\substack{k_1+k_2=3i;\\ 0\leq k_1\leq k_2;\\  0\leq i \leq d'}}q^{3(i-d')} c_{(k_1, k_2)}\cdot\mathcal{SO}_{\gamma^{(s)}, (k_1, k_2)}\\
    &=\frac{(q^{3}-1)(q^{2}-1)}{q^{3d'+6}}(\frac{q^{3}}{q^{2}+q+1}+\sum\limits_{i=1}^{d'}(q^{3i+1}+2q^{3i}+3q^{3i-1}+\cdots+3q^{2i+1}))\\
    &=\frac{(q+1)(q^{2}+q+1)}{q^{3}}-\frac{3(q^{2}+q+1)}{q^{d'+3}}+\frac{3}{q^{3d'+3}}.
\end{align*}
By Proposition \ref{da3}, $S(\gamma)=d_{\gamma}=3d'$.

\textit{ }

\item{The case when $F_{\chi_{\gamma}}$ is ramified over $F$;} 

Recall that $d_{\gamma}\equiv 1\textit{ or }2$ modulo $3$.
\begin{enumerate}
\item 
Suppose that  $d_{\gamma}=3d'+1$ for $d'\geq 0$.
For $l(=d_{\gamma}-3s)=1$,
\[
\sum\limits_{\substack{k_1+k_2=1;\\ 0\leq k_1\leq k_2}}q^{-3d'} c_{(k_1, k_2)}\mathcal{SO}_{\gamma^{(s)}, (k_1, k_2)}=\frac{(q^{3}-1)(q^{2}-1)}{q^{d_{\gamma}+6}}q^{2}.
\]

Therefore, if we set $l=3i+1$, then we have
\begin{align*}
   \mathcal{SO}_{\gamma}&=\sum\limits_{\substack{k_1+k_2=3i+1;\\ 0\leq k_1\leq k_2;\\  0\leq i \leq d'}}q^{3(i-d')} c_{(k_1, k_2)}\mathcal{SO}_{\gamma^{(s)}, (k_1, k_2)}\\
    &=\frac{(q^{3}-1)(q^{2}-1)}{q^{3d'+7}}(q^{2}+\sum\limits_{i=1}^{d'}(q^{3i+2}+2q^{3i+1}+3q^{3i}+\cdots+3q^{2i+2}+q^{2i+1}))\\
    &=\frac{(q+1)(q^{2}+q+1)}{q^{3}}-\frac{(2q+1)(q^{2}+q+1)}{q^{d'+4}}+\frac{q^{2}+q+1}{q^{3d'+5}}.
\end{align*}
By Proposition \ref{da3}, $S(\gamma)=d_{\gamma}-1=3d'$.

\item 
Suppose that $d_{\gamma}=3d'+2$ for $d'\geq 0$. For $l(=d_{\gamma}-3s)=2$,
\[
\sum\limits_{\substack{k_1+k_2=2;\\ 0\leq k_1\leq k_2}}q^{-3d'} c_{(k_1, k_2)}\mathcal{SO}_{\gamma^{(s)}, (k_1, k_2)}=\frac{(q^{3}-1)(q^{2}-1)}{q^{d_{\gamma}+6}}(q^{3}+q^{2}).
\]

Therefore, if we set $l=3i+2$,  then we have
\begin{align*}
    \mathcal{SO}_{\gamma}&=\sum\limits_{\substack{k_1+k_2=3i+2;\\ 0\leq k_1\leq k_2;\\  0\leq i \leq d'}}q^{3(i-d')} c_{(k_1, k_2)}\mathcal{SO}_{\gamma^{(s)}, (k_1, k_2)}\\
    &=\frac{(q^{3}-1)(q^{2}-1)}{q^{3d'+8}}(q^{3}+q^{2}+\sum\limits_{i=1}^{d'}(q^{3i+3}+2q^{3i+2}+3q^{3i+1}+\cdots+3q^{2i+3}+2q^{2i+2}))\\
    &=\frac{(q+1)(q^{2}+q+1)}{q^{3}}-\frac{(q+2)(q^{2}+q+1)}{q^{d'+4}}+\frac{q^{2}+q+1}{q^{3d'+6}}.
\end{align*}
By Proposition \ref{da3}, $S(\gamma)=d_{\gamma}-1=3d'+1$.
\end{enumerate}
\end{enumerate}
\end{proof}

\begin{proof}[The proof of Theorem \ref{result4}]
\textit{}

\begin{enumerate}
\item
Suppose that  $\chi_{\gamma}(x)$ involves an irreducible quadratic polynomial as a factor. 
Then the irreducible quadratic  factor of $\chi_{\gamma}(x)$ is expressed as $\chi_{\gamma'}(x)\in \mfo[x]$ for a certain  $\gamma'\in \mathfrak{gl}_{2}(\mfo)$.
 From the inductive formula in Proposition \ref{cor410}, we have
\[
\mathcal{SO}_{\gamma}=\frac{q^{3}-1}{q^{2}(q-1)}\cdot \mathcal{SO}_{\gamma'}.
\]
Combining it with  the result in Theorem \ref{result2} for $\gamma'\in \mathfrak{gl}_{2}(\mfo)$, we obtain the desired formula.

For the relation between $S(\gamma)$ and $S(\gamma')$, we bring the equation of Section 4.1 in \cite{Yun13}\footnote{Yun uses $\delta_{i}=[\mfo _{F_{\chi_{\gamma_{i}}}}:R_{i}]_{R_i}$ (which is  the relative $R_{i}$-length)
and $d_i=[\kappa_{R_i}:\kappa]$,
where $R_{i}=\mfo[x]/(\chi_{\gamma_{i}}(x))$ and $\kappa_{R_i}$ is the residue field of $R_i$.
Note that $\delta_i$  is denoted by $S_{R_{i}}(\gamma_{i})$ in the paragraph following Remark \ref{rmk89}. 
Since  we use $S(\gamma_{i})$ for the relative $\mfo$-length $[\mfo _{F_{\chi_{\gamma_{i}}}}:R_{i}]$, 
we have $d_{i}\delta_{i}=S(\gamma_{i})$.}, which is described as follows:
\[S(\gamma)-\sum_{i\in B(\gamma)}S(\gamma_{i})=\sum_{\{i,j\}\subset B(\gamma),i\neq j}\ord (Res(\chi_{\gamma_{i}},\chi_{\gamma_{j}})),
\]
where the notations are in Section \ref{sec322} and Theorem \ref{genlb1}.  
By Equation (\ref{eq324}), we have \[S(\gamma)-\sum_{i\in B(\gamma)}S(\gamma_{i})=\frac{1}{2}(\ord (\Delta_{\gamma})-\sum\limits_{i\in B(\gamma)}\ord (\Delta_{\gamma_{i}})).\]
If we apply this equation for $\gamma$ whose characteristic polynomial is the product of an irreducible quadratic polynomial $\chi_{\gamma'}(x)$ and a linear polynomial in $\mfo[x]$, then we have 
\[
S(\gamma)-S(\gamma')=\frac{\ord (\Delta_{\gamma})-\ord (\Delta_{\gamma'})}{2}.
\]

\item For a hyperbolic regular semisimple element $\gamma \in \mathfrak{gl}_{3}(\mfo)$, $\chi_{\gamma}(x)$ is the product of three distinct linear polynomials in $\mfo[x]$. The inductive formula of Proposition \ref{cor410}, together with the result in Section \ref{sec62}, then yields
\[
\mathcal{SO}_{\gamma}=\frac{\# \mathrm{GL}_{3}(\kappa)\cdot q^{-9}}{(\#\mathrm{GL}_{1}(\kappa)\cdot q^{-1})^{3}}\cdot 1^{3} = \frac{(q+1)(q^2 +q+1)}{q^{3}}.
\]
\end{enumerate}
\end{proof}

\end{appendices}

\part{Stable orbital integrals for $\mathfrak{u}_n$ and $\mathfrak{sp}_{2n}$ and smoothening}\label{part2}

\section*{Notations}

We keep using notations from Part \ref{part1}, unless otherwise stated in this section. We introduce the following notations.
\begin{itemize}

\item  Let $E$ be an unramified quadratic field extension of $F$ with $\mathfrak{o}_E$ its ring of integers and $\kappa_E$ its residue field.

\item For an element  $x\in E$, the exponential order of $x$ with respect to the maximal ideal in   $\mfo_E$   is written by $\mathrm{ord}(x)$.

\item 
For an element $x\in E$, the value of $x$ is $|x|_E:=q^{-2\mathrm{ord}(x)}$. 


\item Let $\widetilde{F}$ be either $E$ or $F$ with involution $\sigma$. Here when $\widetilde{F}=E$, we understand $\sigma$ to be the non-trivial automorphism of $E/F$. When $\widetilde{F}=F$, we understand $\sigma$ to be the identity.

\item 
Let $L$ be a free $\mfo_{\widetilde{F}}$-module of  rank $\widetilde{n}$.
Let $\epsilon$ be either $1$ or $-1$.

By a hermitian form $h: L \times L \longrightarrow \mfo_{\widetilde{F}}$, we always mean a hermitian form depending on $(\widetilde{F}, \epsilon)$ which is formulated as follows:
\[
h(av, bw)=\sigma(a)b\cdot  h(v,w), ~~~~~~~~ \textit{     }  ~~~~~   h(w,v)=\epsilon \sigma(h(v,w))
\]
for $a,b\in \mfo_{\widetilde{F}}$ and $v,w\in L$.
We consider the following two specific cases:
\begin{itemize}
    \item $(\widetilde{F}, \epsilon)=(E, 1)$. In this case, $h$ is a usual hermitian form. Let $\widetilde{n}=n$ be the rank of $L$.
    \item $(\widetilde{F}, \epsilon)=(F, -1)$. In this case, $h$ is an alternating form. Let $\widetilde{n}=2n$ be the rank of $L$.
\end{itemize}
 We denote by a pair $(L, h)$ a hermitian lattice.

Let $V$ be an $\widetilde{F}$-vector space $L\otimes_{\mfo} F$ equipped with a hermitian form $h$. We then denote by a pair $(V, h)$ a hermitian space.

\item 
We define the dual lattice of $L$, denoted by $L^{\perp}$, as  
$$L^{\perp}=\{x \in V : h(x, L) \subset \mfof \}.$$
From now on, we assume that $(L, h)$ is unimodular, that is, $L^{\perp}=L$.

\item For a commutative $\mfo$-algebra $A$, let
\[
\left\{
\begin{array}{l}
\textit{$\mathfrak{gl}_{\wn,A}$ be the general linear group of dimension $\wn^2$ defined over $A$};\\
\textit{$\mathfrak{gl}_{\wn,A}$ be the Lie algebra of $\mathfrak{gl}_{\wn,A}$};\\
\textit{$\mathrm{G}_{A}$ be the (smooth) reductive group scheme  defined over $A$ stabilizing $(L\otimes_{\mfo}A, h\otimes 1)$};\\
\textit{$\mathfrak{g}_{A}$ be the Lie algebra of $\mathrm{G}_{A}$};\\
\textit{$\mathbb{A}^{n}_A$ be the affine space of dimension $n$ defined over $A$}.
 \end{array}\right.
\]
\end{itemize}

If there is no confusion then we sometimes omit $\mfo$ in the subscript to express schemes over $\mfo$.
Thus $\mathrm{G}$ and $\mfu$ stand for $\mathrm{G}_{\mfo}$ and $\mfu_{\mfo}$, respectively.
Both $\mathrm{G}$ and $\mathfrak{g}$ are closed subschemes of $\mathrm{Res}_{\mfof/\mfo}(\mathfrak{gl}_{\wn,\mfof})$ and $\mathrm{Res}_{\mfof/\mfo}(\mathfrak{gl}_{\wn,\mfof})$ respectively. Here $\mathrm{Res}$ stands for the Weil restriction.
Note that \[
(\mathrm{G}, \mathfrak{g})=\left\{
\begin{array}{l l}
(\mathrm{U}_n, \mathfrak{u}_n)  & \textit{if $(\widetilde{F}, \epsilon)=(E, 1)$};\\
(\mathrm{Sp}_{2n}, \mathfrak{sp}_{2n})  & \textit{if $(\widetilde{F}, \epsilon)=(F, -1)$}.
 \end{array}\right.
\]

\begin{itemize}
\item For a flat $\mfo$-algebra $R$, the reduction of $f(x)\in R\otimes_{\mfo}\mfof[x]$ modulo $\pi$ is denoted by $\bar{f}(x)\in R\otimes_{\kappa} \kf[x]$. 
The same for $X\in \mathfrak{gl}_{n, \mfof}(R\otimes_{\mfo}\mfof)$ so that $\bar{X}\in \mathfrak{gl}_{n, \kf}(R\otimes_{\kappa} \kf)$.

\item Define $\chi_{\gamma}(x)\in \mfof[x]$ such that
$\chi_{\gamma}(x)$  is the characteristic polynomial of  $\gamma \in \mathfrak{g}(\mfof)$.
Since $    h \gamma + \sigma({}^t\gamma) h = 0$, 
We always write 
\begin{equation}\label{equation:chir}
    \chi_{\gamma}(x)=\left\{
\begin{array}{l l}
x^n+c_1x^{n-1}+\cdots + c_{n-1}x+c_n \textit{ with } c_i\in \mfo_E  & \textit{if $(\widetilde{F}, \epsilon)=(E, 1)$};\\
x^{2n}+c_1x^{2n-2}+c_2x^{2n-4}+\cdots + c_{n-1}x^2+c_n \textit{ with } c_i\in \mfo  & \textit{if $(\widetilde{F}, \epsilon)=(F, -1)$}.
 \end{array}\right.
\end{equation}
Here $c_i + (-1)^{i+1} \sigma(c_i) = 0$ in the first case.

\item Let $d_i:=ord(c_i)$.

\item Let $\Delta_{\gamma}$ be the discriminant of $\chi_\gamma(x)$.
Note that $\Delta_{\gamma}$ is contained in $\mfo$ even when $\mfof=\mfo_E$. 

\item Let $\gamma\in \mfu(\mfo)$ be regular and semisimple.
By a regular and semisimple element, we mean that the identity component of the centralizer of $\gamma$ in $\mathrm{G}_{F}$ via the adjoint action is a maximal torus.
This is equivalent that $\chi_{\gamma}(x)$ is separable over $\widetilde{F}$ (cf. \cite[Section 3.1]{Gor22} or \cite[Sections 6-7]{Gro05}).


\end{itemize}

\begin{remark}\label{remark:reim}
In this remark, we suppose that $\widetilde{F}=E$. 
We have that  
 $\mfo_E\cong \mfo[x]/(p(x))$ for an irreducible quadratic polynomial $p(x)$ such that $\kappa_E\cong \kappa[x]/(\overline{p}(x))$. Let $a, b\in \mfo_E$ be two solutions of $p(x)$ so that $\sigma(a)=b$. Note that $a,b,a-b \in \mfo_E^{\times}$.

For $r\in \mfo_E\otimes_{\mfo}R$ with a commutative $\mfo$-algebra $R$, the real part of $r$, denoted by $Re(r)$, is defined to be 
\[
Re(r)=
r+\sigma(r).
\]
The imaginary part of $r$, denoted by $Im(r)$, is defined to be 
\[
Im(r)=\left\{
\begin{array}{l l}
r-\sigma(r) & \textit{if char($\kappa >2$)};\\
r-Re(r)\cdot \frac{a}{a+b}
 & \textit{if char($\kappa) =2$}.
 \end{array}\right.
\]
Then $Re(r)$ and $Im(r)$ satisfy the following relations:
\[\sigma(Re(r))=Re(r), ~~~~~~~~~~\textit{       } ~~~~~~~\sigma(Im(r))=-Im(r).
\]

Thus 
\[
r=\left\{
\begin{array}{l l}
Re(r)\cdot \frac{1}{2}+Im(r)\cdot \frac{1}{2} & \textit{if char($\kappa) >2$};\\
Re(r)\cdot \frac{a}{a+b}+Im(r)
 & \textit{if char($\kappa) =2$}.
 \end{array}\right.
\]
Here,  $\frac{1}{2} \in \mfo_E^{\times}$ when char($\kappa) >2$ and 
$\frac{a}{a+b}\in \mfo_E^{\times}$ when char($\kappa) =2$.
\end{remark}

\section{Stable orbital integral}\label{section10soi}

In this section, we define the stable orbital integral for a regular semisimple element in $\mfu(\mfo)$ and for the unit element in the Hecke algebra, which is parallel to Section \ref{sec3}.

\subsection{Measure}\label{subsubsec:measure}
We will first construct the Chevalley morphism whose fiber gives a stable orbit in $\mfu(\mfo)$.

\subsubsection{The Chevalley morphism $\varphi_n$ on $\mfu$}\label{subsubsection:chevalley}
When $(\widetilde{F}, \epsilon)=(F, -1)$, the Chevalley morphism of $\mathfrak{g}$ is described in \cite[Theorem 1.2]{CR}  as follows:
\[
    \varphi_{n} : \mathfrak{g} \to \mathbb{A}^n, ~~~~ \textit{    }
    \gamma \mapsto (c_1, \cdots, c_n).
\]
Note that the morphism $\varphi_{n}$ is representable as a morphism of schemes over $\mfo$.

When $(\widetilde{F}, \epsilon)=(E, 1)$, we will construct it  by using  the Chevalley morphism  of $\mathfrak{gl}_{n,\mfo_E}$ over $\mathfrak{o}_E$ with the Galois descent.
The Chevalley morphism of $\mathfrak{gl}_{n,\mfo_E}$ over $\mathfrak{o}_E$ is defined by
\[
    \varphi_{n, \mathfrak{o}_E} : \mathfrak{gl}_{n,\mfo_E} \to \mathbb{A}^n_{\mathfrak{o}_E}, 
   ~~~  \textit{   } \gamma \mapsto \textit{coefficients of $\chi_{\gamma}(x)$}=(c_1, \cdots, c_n).
\]
The morphism $ \varphi_{n, \mathfrak{o}_E}$ is also  representable as a morphism of schemes over $\mfo_E$.

Define the functor $\mathcal{A}_i$, for an integer $i$ with  $1\leq i\leq n$, on the category of commutative $\mfo$-algebras to the category of abelian groups such that 
    \begin{equation}\label{def:mathcala}    
        \mathcal{A}_i(R)=\{c \in \mfo_E \otimes_{\mfo}R  : c + (-1)^{i+1} \sigma(c) = 0\}
    \end{equation}
for an $\mfo$-algebra $R$.
Here $\sigma(a\otimes b):=\sigma(a)\otimes b$.

\begin{lemma} \label{lem:c+sigma(c)_bij}
The functor  $\mathcal{A}_i$   is represented by an affine space  $\mathbb{A}_{\mfo}^1$ over $\mfo$ of dimension $1$ for any integer $i$ with $1\leq i\leq n$.
\end{lemma}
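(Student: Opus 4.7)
The plan is to separate the argument by the parity of $i$, since the defining condition $c + (-1)^{i+1}\sigma(c) = 0$ depends only on the sign $(-1)^{i+1}$. First I treat the case $(-1)^{i+1} = -1$ (that is, $i$ even), where $\mathcal{A}_i(R)$ is the submodule of $\sigma$-invariants in $\mfo_E \otimes_{\mfo} R$. Fixing an $\mfo$-basis $\{1, a\}$ of $\mfo_E$ with $a$ a unit lift of a generator of $\kappa_E$ over $\kappa$, and writing $p(T) = T^2 + uT + v \in \mfo[T]$ for the minimal polynomial of $a$, a direct coefficient-by-coefficient calculation on $c = x + y a$ with $x, y \in R$ will show that $\sigma(c) = c$ forces $y = 0$; here one uses that $u \in \mfo^{\times}$ since $E/F$ is unramified (so $\bar{p}$ is separable over $\kappa$). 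This identifies $\mathcal{A}_i(R)$ functorially with $R$, so $\mathcal{A}_i \cong \mathbb{A}^1_{\mfo}$ in this case.

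Next, for $(-1)^{i+1} = 1$ (that is, $i$ odd), the core step is to exhibit an element $\alpha \in \mfo_E^{\times}$ with $\sigma(\alpha) = -\alpha$. I take $\alpha := a - \sigma(a)$ with $a$ as above. To check that $\alpha$ is a unit, I reduce modulo $\pi$: since $E/F$ is unramified, the residue extension $\kappa_E/\kappa$ is separable quadratic, so $\bar{a}$ and $\overline{\sigma(a)}$ are the two distinct Galois conjugates of $\bar{a}$ in $\kappa_E$. Hence $\bar{\alpha} \neq 0$ and $\alpha \in \mfo_E^{\times}$; this argument is uniform in the residue characteristic, including characteristic $2$.

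Given such a unit $\alpha$, multiplication by $\alpha$ induces a functorial $R$-linear isomorphism from the even-index functor to the odd-index functor (and its inverse comes from multiplication by $\alpha^{-1}$), because the identity $\sigma(\alpha c) = -\alpha\, \sigma(c)$ converts $\sigma$-invariants into $\sigma$-anti-invariants. Combined with the first paragraph, this yields $\mathcal{A}_i \cong \mathbb{A}^1_{\mfo}$ for all $i$ in the range $1 \le i \le n$.

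The main obstacle will be the residue characteristic $2$ case: there, $\{1, \alpha\}$ fails to be an $\mfo$-basis of $\mfo_E$, since its change-of-basis determinant with respect to $\{1, a\}$ equals $2$, which lies in the maximal ideal. One therefore cannot naively split $\mfo_E \otimes_{\mfo} R$ as the direct sum of the $\pm 1$-eigenspaces for $\sigma$ realized as complementary free $R$-modules spanned by $\{1, \alpha\}$. My approach sidesteps this issue by reducing the odd-$i$ case to the even-$i$ case via multiplication by the unit $\alpha$, rather than attempting to simultaneously diagonalize $\sigma$ on $\mfo_E \otimes_{\mfo} R$.
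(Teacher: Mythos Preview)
Your approach is essentially the paper's: both produce a unit $\alpha \in \mfo_E^{\times}$ with $\sigma(\alpha) = -\alpha$ and use multiplication by (a power of) $\alpha$ to identify $\mathcal{A}_i$ with the $\sigma$-invariants in $\mfo_E \otimes_{\mfo} R$, which are $R$. Your explicit choice $\alpha = a - \sigma(a)$ together with the mod-$\pi$ separability check is clean, and your direct verification that the $\sigma$-invariants equal $R$ actually makes explicit a step the paper states without proof.

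One small correction: the claim that $u \in \mfo^{\times}$ follows from separability of $\bar p$ is false in odd residue characteristic (take $p(T) = T^2 - d$ with $d \in \mfo^{\times}$ a non-square, so $u = 0$). Writing out $\sigma(x + ya) = x + ya$ yields the pair of conditions $uy = 0$ and $2y = 0$; separability of $\bar p$ says exactly that the discriminant $u^2 - 4v$ is a unit, which forces $u \in \mfo^{\times}$ only when the residue characteristic is $2$. In odd residue characteristic, use instead that $2 \in \mfo^{\times}$, so $2y = 0$ already gives $y = 0$. With this easy fix your argument is complete.
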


\begin{proof}
It suffices to show that  $\mathcal{A}_i(R)$ is a free $R$-submodule in $\mfo_E \otimes_{\mfo}R$ of rank 1 and that $\mathcal{A}_i$ is functorial.
If there is $\alpha \in \mathfrak{o}_E^\times$ such that $\alpha + \sigma(\alpha) = 0$, then for $c \in \mathcal{A}_i(R)$ we have $\sigma((\alpha^i \otimes 1)c) = (\alpha^i\otimes 1) c$.
In particular, $(\alpha^i\otimes 1) c$ is contained in $R$.
Since $\alpha$ is a unit, we have the following $R$-module isomorphism:
\[
\mathcal{A}_i(R) \to R, \quad c \mapsto (\alpha^i\otimes 1) c.
\]
This completes the proof.

It remains to find such $\alpha$.
In the split case, put $\alpha=(-1, 1)$. 
In the unramified case, choose an eigenvector of $\sigma:E\rightarrow E$ as an $F$-vector space with the eigenvalue $-1$.
Multiplying this eigenvector by a suitable power of $\pi$, we obtain a desired element $\alpha$.
\end{proof}

Define \[
\mathcal{A}^n := \mathcal{A}_1 \times \cdots \times \mathcal{A}_n
\]
which is isomorphic to an affine space $\mathbb{A}^n$.

 For the nontrivial $\sigma \in \operatorname{Gal}(E/F)$, we define its action on $\mathbb{A}^n_{\mfo_E}$ by
 \[
     \sigma \cdot (c_1, \cdots, c_i, \cdots, c_n) = (-c_1, \cdots, (-1)^i c_i, \cdots, (-1)^n c_n).
 \]
 The action of $\sigma$ on $\mathfrak{gl}_{n,\mfo_E}$ is $\sigma\cdot m:= -h^{-1}\sigma({}^tm)h$ so that $\mfu_n$ is fixed by $\sigma$.
Then  $\varphi_{n, \mathfrak{o}_E} : \mathfrak{gl}_{n,\mfo_E} \to \mathbb{A}^n_{\mfo_E}$ is compatible with the Galois action so as to  descend to a morphism defined over $\mfo$.

This is defined to be the Chevalley morphism of  $\mathfrak{u}_{n,\mfo}$, which is denoted by 
 $\varphi_n$.
 For a precise description, we observe that  the scheme $\mathbb{A}^n_{\mfo_E}$ descends to $\mathcal{A}^n_{\mfo}$. 
 The morphism $\varphi_n$ is then described as follows:
 
 \begin{equation}\label{eq:chevalleymap}
    \varphi_n : \mathfrak{u}_{n} \longrightarrow \mathcal{A}^n, ~~~~~~~~~~ \textit{      } ~~~~~~~~~~~~~~
    \gamma \mapsto (c_1, \cdots, c_n).     
 \end{equation}

We denote by $\rho_n$ the generic fiber of $\varphi_n$ so that
 \begin{equation}\label{eq:chevalleymapp}
    \rho_n : \mathfrak{u}_{n,F} \longrightarrow \mathcal{A}^n_{F}, ~~~~~~~~~~ \textit{      } ~~~~~~~~~~~~~~
    \gamma \mapsto (c_1, \cdots, c_n).     
 \end{equation}

\begin{remark}\label{remark:chevnotation}
When $(\widetilde{F}, \epsilon)=(F, -1)$,
we let $\mathcal{A}_i=\mathbb{A}^1$ so that $\mathcal{A}^n=\mathbb{A}^n$.
Thus 
\[
\varphi_n : \mathfrak{g} \longrightarrow \mathcal{A}^n, ~~~~~~~  \textit{  and }  ~~~~ 
   \rho_n : \mathfrak{g}_{F} \longrightarrow \mathcal{A}^n_{F}
\]
stand for the Chevalley morphisms over $\mfo$ and $F$ respectively,  in all cases. 
\end{remark}



\subsubsection{Measure and stable orbital integral}\label{subsubsection:measure}
Put
\[
 \underline{G}_{\gamma}:=\varphi_n^{-1}(\chi_{\gamma}) ~~~~~~   \textit{    and   }   ~~~~~~~~~~~~~~
 G_{\gamma}:=\rho_n^{-1}(\chi_{\gamma}).
\]
\begin{proposition}\label{prop:git}
The set $\underline{G}_{\gamma}(\mfo)$ is the stable conjugacy class of $\gamma$ in $\mfu(\mfo)$.
More precisely, \[\underline{G}_{\gamma}(\mfo)=\{g\gamma g^{-1}\in \mfu(\mfo)|g\in \mathrm{G}_{F}(\bar{F})\}.\]
\end{proposition}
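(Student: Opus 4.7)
The plan is to establish the two inclusions separately, with the nontrivial direction reducing to a known structural fact about the Chevalley morphism for classical types.

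First I would handle the inclusion $\supseteq$, which is essentially formal. If $\gamma' = g\gamma g^{-1} \in \mathfrak{g}(\mfo)$ for some $g \in \mathrm{G}_F(\bar{F})$, then conjugation preserves the characteristic polynomial, so $\chi_{\gamma'}(x) = \chi_{\gamma}(x)$ and therefore $\gamma' \in \varphi_n^{-1}(\chi_\gamma)(\mfo) = \underline{G}_\gamma(\mfo)$. This uses only that $\varphi_n$ is defined via the coefficients of the characteristic polynomial, as recorded in Equation~(\ref{eq:chevalleymap}) and Remark~\ref{remark:chevnotation}.

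The substantive direction is $\subseteq$. Let $\gamma' \in \underline{G}_\gamma(\mfo)$, so $\chi_{\gamma'}(x) = \chi_\gamma(x)$. Since $\gamma$ is regular semisimple, $\chi_\gamma(x)$ is separable over $\widetilde F$, and therefore $\gamma'$ is also regular semisimple, with the same eigenvalues as $\gamma$ taken in $\bar F$. It remains to show that $\gamma$ and $\gamma'$ are conjugate by an element of $\mathrm{G}_F(\bar F)$. For this I would invoke the fact that for $\mathfrak{g}$ of type $A_n$ or $C_n$, the stable conjugacy class of a regular semisimple element is completely determined by its characteristic polynomial; this is precisely the content of \cite[Theorems 1.2--1.3]{CR}, which is the reason the authors restrict to these types (as opposed to $D_n$, where the Pfaffian is an additional invariant). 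Equivalently, the Chevalley morphism $\rho_n: \mathfrak{g}_F \to \mathcal A^n_F$ realizes the GIT quotient $\mathfrak{g}_F /\!/ \mathrm{G}_F$, and over $\bar F$ the fiber of a separable characteristic polynomial is a single $\mathrm{G}_F(\bar F)$-orbit consisting of regular semisimple elements.

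The main obstacle is the $\subseteq$ direction, and more specifically verifying that a single stable conjugacy class in $\mathfrak{g}_F(\bar F)$ fills out the geometric fiber of $\rho_n$ over $\chi_\gamma$. For $\mathfrak{u}_{n}$ this is clean because $\mathfrak{u}_n \times_F \bar F \cong \mathfrak{gl}_n \times_F \bar F$, so the question reduces to the $\mathfrak{gl}_n$ case where two regular semisimple matrices with the same characteristic polynomial are $\mathrm{GL}_n(\bar F)$-conjugate by standard linear algebra. For $\mathfrak{sp}_{2n}$ the analogous statement, while classical, is less immediate and is exactly the input being cited from \cite{CR}; no further work is needed once this citation is in place. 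I would not attempt to reprove \cite[Theorems 1.2--1.3]{CR} here but simply invoke them, noting that separability of $\chi_\gamma$ is what puts us in the regular semisimple locus where the cited result applies.
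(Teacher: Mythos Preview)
Your proposal is correct and follows essentially the same approach as the paper. The paper's proof is terser: it simply asserts that it suffices to know $\mathfrak{g}_F/\!/\mathrm{G}_F$ is the affine space of characteristic-polynomial coefficients, citing \cite[Remark~2.1.4]{Kac} for the $\mathfrak{gl}_n$ (and hence $\mathfrak{u}_n$) case and \cite[Theorem~1.3]{CR} for $\mathfrak{sp}_{2n}$, whereas you spell out the two inclusions and the regular-semisimple reduction explicitly and cite \cite{CR} uniformly; these are cosmetic differences.
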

\begin{proof}
It suffices to show that $\mfu_F/\mathrm{G}_F$ is an affine space consisting of coefficients of the characteristic polynomial. 
When $\mfu_F=\mathfrak{gl}_{n,F}$, this is explained in \cite[Remark 2.1.4]{Kac} for an arbitrary field, which  also yields the argument for $\mathrm{u}_{n,F}$. 
When $\mfu_F=\mathrm{sp}_{2n,F}$, this is explained in \cite[Theorem 1.3]{CR} for an arbitrary field.    
\end{proof}

Let $\omega_{\mfu_{\mfo}}$ and $\omega_{\mathcal{A}^n_{\mfo}}$ be nonzero, translation-invariant forms on $\mfu_{F}$ and $\mathcal{A}^n_{F}$, respectively, with normalizations
\[
\int_{\mfu_{\mfo}(\mfo)} |\omega_{\mfu_{\mfo}}| = 1
\text{ and }
\int_{\mathcal{A}^n_{\mfo}(\mfo)} |\omega_{\mathcal{A}^n_{\mfo}}| = 1.
\]
Then we define  $\omega_{\chi_{\gamma}}^{\mathrm{ld}}$ to be $\omega_{\mfu_{\mfo}}/\rho_n^{\ast}\omega_{\mathcal{A}^n_{\mfo}}$ as in Definition \ref{diff1}.




\begin{definition}\label{def:stableorbital}
The stable orbital integral for $\gamma$ and  for the unit element in the Hecke algebra, denoted by $\mathcal{SO}_{\gamma}$, is defined to be 
\[
\mathcal{SO}_{\gamma}=\int_{\uGr(\mfo)}|\omega_{\chi_{\gamma}}^{\mathrm{ld}}|.
\]
Here $\underline{G}_{\gamma}(\mfo)$ is an open subset of $G_{\gamma}(F)$ with respect to the $\pi$-adic topology.
\end{definition}
As in Definition \ref{defsoi}, 
 all of $\mathcal{SO}_{\gamma}$, $\omega_{\chi_{\gamma}}^{\mathrm{ld}}$,  $\uGr$, and $G_{\gamma}$ depend  on the characteristic polynomial $\chi_{\gamma}$, not the element $\gamma$.

\subsection{Transport of hermitian forms}\label{sec:trans_herm}
Define an étale $\widetilde{F}$-algebra 
\[\widetilde{F}_{\chi_{\gamma}} :=  \widetilde{F}[x]/(\chi_{\gamma}(x)).\]
In this subsection, we will identify $\widetilde{F}_{\chi_{\gamma}}$  with $V$ as hermitian  spaces by transporting a hermitian form $h$ defined on $V$ to that defined on $\widetilde{F}_{\chi_{\gamma}}$. This is based on a  reformulation of (2.3.1) and (2.4.1) in \cite{Yun11}.

\subsubsection{Identification of two $\widetilde{F}$-vector spaces  $\widetilde{F}_{\chi_{\gamma}}$ and $V$}\label{sec:field_identification}

Remark that the Lie algebra $\mathfrak{g}(\mfo)$ is defined with respect to a hermitian lattice $(L, h)$. Recall that  $V=L\otimes_{\mfo} F$.
Then the action of $\gamma$ on $V$ yields  the action of $\widetilde{F}_{\chi_{\gamma}}$ on $V$
via the ring isomorphism
    $\widetilde{F}_{\chi_{\gamma}}\cong \widetilde{F}[\gamma]$, $x\mapsto \gamma$.

\begin{proposition}\label{lemma:existenceofe0}
    There exists $e_0\in L$ such that $\{e_{0},\gamma e_{0},\cdots,\gamma^{n-1} e_{0}\}$ is an  $\widetilde{F}$-basis for $V$. 
\end{proposition}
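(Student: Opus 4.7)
The plan is to establish the cyclic-vector statement in two stages: first show that $V$ admits a cyclic vector for $\gamma$ over $\widetilde{F}$, then verify that such a cyclic vector can be chosen inside the lattice $L$.

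For the first stage, the regularity and semisimplicity of $\gamma$ make $\chi_{\gamma}(x)$ separable over $\widetilde{F}$ (equivalently, $\Delta_{\gamma}\neq 0$), so after base change to $\bar{F}$ the endomorphism $\gamma$ becomes diagonalizable with pairwise distinct eigenvalues; in particular, its minimal polynomial on $V$ equals its characteristic polynomial $\chi_{\gamma}(x)$. Applying the structure theorem for finitely generated modules over the PID $\widetilde{F}[x]$ to $V$, viewed as an $\widetilde{F}[x]$-module with $x$ acting as $\gamma$, then forces $V$ to be a cyclic $\widetilde{F}[x]$-module isomorphic to $\widetilde{F}_{\chi_{\gamma}} = \widetilde{F}[x]/(\chi_{\gamma}(x))$. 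Any generator $v$ of this module yields the $\widetilde{F}$-basis $\{v,\gamma v,\ldots,\gamma^{\deg\chi_{\gamma}-1}v\}$ of $V$.

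For the second stage, I would fix an auxiliary $\widetilde{F}$-basis of $V$ and define the polynomial function $P\colon V\to \widetilde{F}$ by $P(v)=\det\bigl(v\,\vert\,\gamma v\,\vert\,\cdots\,\vert\,\gamma^{\deg\chi_{\gamma}-1}v\bigr)$, the determinant being computed in the fixed basis. The first stage shows $P\not\equiv 0$, so its nonvanishing locus is a Zariski-open dense subset of $V$. Since $L$ is a full-rank $\mathfrak{o}_{\widetilde{F}}$-lattice, we have $L\otimes_{\mathfrak{o}_{\widetilde{F}}}\widetilde{F}=V$; if $P$ vanished on all of $L$, then by $\widetilde{F}$-linear extension it would vanish on $V$, contradicting $P\not\equiv 0$. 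Hence there exists $e_{0}\in L$ with $P(e_{0})\neq 0$, and this $e_{0}$ is the desired cyclic vector.

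The argument is largely formal; no real obstacle is expected. The two points deserving care are that one must correctly interpret the length of the basis (namely $n$ for $\mathfrak{g}=\mathfrak{u}_{n}$ and $2n$ for $\mathfrak{g}=\mathfrak{sp}_{2n}$, matching $\deg\chi_{\gamma}=\widetilde{n}$), and that the coincidence of minimal and characteristic polynomial in the second stage really does use $\Delta_{\gamma}\neq 0$ rather than the weaker condition that $\chi_{\gamma}$ is merely the characteristic polynomial.
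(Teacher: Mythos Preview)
Your proof is correct, though it takes a somewhat different route from the paper's. The paper argues directly in the $\pi$-adic topology: it shows that the set of non-cyclic vectors is precisely the finite union $\bigcup_{\varphi\mid\chi_\gamma,\ \deg\varphi<\widetilde{n}}\mathrm{Ker}(\varphi(\gamma))$ of proper $\widetilde{F}$-subspaces, so the cyclic vectors form a $\pi$-adically open dense set, which must meet the open lattice $L$. Your approach instead first produces a cyclic vector abstractly via the structure theorem over the PID $\widetilde{F}[x]$, then transports it into $L$ using the determinant polynomial $P$. Both arguments hinge on the same key input (minimal polynomial $=$ characteristic polynomial, from separability of $\chi_\gamma$); the paper's version has the side benefit of identifying the bad locus explicitly, while yours avoids any topology.

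One phrasing issue: the step ``if $P$ vanished on all of $L$, then by $\widetilde{F}$-linear extension it would vanish on $V$'' is not literally linear extension, since $P$ is a degree-$\widetilde{n}$ polynomial. What makes it work is that $P$ is \emph{homogeneous} of degree $\widetilde{n}$ and every $v\in V$ can be written as $\pi^{-k}w$ with $w\in L$, so $P(v)=\pi^{-k\widetilde{n}}P(w)$. (Equivalently, and even more simply: once you have any cyclic vector $v\in V$, the scalar multiple $\pi^{k}v\in L$ for large $k$ is still cyclic, so Stage~2 is essentially trivial.) With this clarification, your argument is complete.
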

\begin{proof}
We claim that the  complement of the set 
\[
U:=\{v\in V\mid v, \gamma v, \cdots, \gamma^{n-1}v \textit{ are $\widetilde{F}$-linearly dependent}\}
\]
in $V$ is  open and dense in $V$ as a $\pi$-adic topology.
For $v\in U$, we consider the following set $I_{\gamma ,v}=\{\varphi(x)\in \widetilde{F}[x]\mid \varphi(\gamma)(v)=0 \}$. 
It is well-known in linear algebra that $I_{\gamma ,v}$ is a proper ideal of $\widetilde{F}[x]$ with a generator $\mu_{\gamma,v}(x)$ of degree $>0$ dividing the characteristic polynomial $\chi_{\gamma}(x)$, and the kernel of the linear morphism  $\mu_{\gamma,v}(\gamma): V \longrightarrow V, ~~~~~~ w \mapsto \mu_{\gamma,v}(\gamma)(w)$, is an $\widetilde{F}$-subspace of $V$ of dimension $\mathrm{deg}(\mu_{\gamma,v}(x))$, which is less than $n$.
Here the last argument about the kernel of $\mu_{\gamma,v}(\gamma)$ follows from our condition that  $\chi_{\gamma}(x)$ is also a minimal polynomial of $\gamma$.
In particular, we have that 
$v\in \mathrm{Ker}(\mu_{\gamma,v}(\gamma))$ and  that the polynomial  $\mu_{\gamma,v}(x)$ is of degree $<n$ dividing $\chi_{\gamma}(x)$.

Conversely, if $\varphi(x) \in \widetilde{F}[x]$ divides $\chi_{\gamma}(x)$ such that $\mathrm{deg}(\varphi(x))<n$, then the kernel of the linear map $\varphi(\gamma) : V \to V$ is contained in $U$.
Thus we have the following characterization for the set $U$:
\[
U=\bigcup_{\substack{\varphi(x)|\chi_\gamma(x),\ \mathrm{deg}(\varphi(x))<n}} \mathrm{Ker}(\varphi(\gamma)),
\]
where each term $\mathrm{Ker}(\varphi(\gamma))$ is an $\widetilde{F}$-subspace in $V$ of dimension $\mathrm{deg}(\varphi(x))$.
Therefore, the complement of $U$ in $V$ is an open and dense subset of $V$ as a $\pi$-adic topology.
\end{proof}

For $e_0\in L$ as in the above proposition, 
the following  $\widetilde{F}$-linear map turns to be an isomorphism:
\begin{align}\label{eq:def_iota_gamma}
\iota_\gamma : \widetilde{F}_\gamma \to V, \quad x^i \mapsto \gamma^i e_0.
\end{align}

\subsubsection{Transport of the hermitian form $h$ on $V$ to $E_{\chi_{\gamma}}$}\label{sec:transport_herm}

Under the $\widetilde{F}$-isomorphism $\iota_\gamma : \widetilde{F}_{\chi_\gamma} \cong V$ defined in Equation (\ref{eq:def_iota_gamma}), we transport a hermitian form $h$ on $V$ to $(-,-)_\gamma$ on $\widetilde{F}_{\chi_\gamma}$ defined by
\[
(\alpha,\beta)_\gamma := h(\iota_{\gamma}(\alpha), \iota_{\gamma}(\beta)) \left(= h(\alpha(\gamma) e_0, \beta(\gamma) e_0)\right)
\]
for $\alpha, \beta \in \widetilde{F}_{\chi_\gamma}$.
Then we have the isomorphism between hermitian spaces
\begin{align}\label{equation:iota}
    \iota_{\gamma}: (\widetilde{F}_{\chi_{\gamma}}, (-,-)_{\gamma}) \cong (V, h(-,-)), \quad x^i \mapsto \gamma^i e_0.
\end{align}

We extend the involution $\sigma$ to that on $\widetilde{F}_{\chi_{\gamma}}$ such that $\sigma(x)=-x$. 
Note that the action of $\sigma$ on $\widetilde{F}_{\chi_{\gamma}}$ is well-defined by Equation (\ref{equation:chir}).
Then 
we can easily see that the equation  $h(\gamma -,-)+h(-,\gamma -)=0$
 yields
\begin{equation}\label{equation:alphabetasigma}
    (\alpha,\beta)_\gamma = (1,\sigma(\alpha)\beta)_\gamma   ~~~~~   \textit{    for $\alpha, \beta \in \widetilde{F}_{\chi_{\gamma}}$.}   
\end{equation}

\begin{remark}\label{remark:unitarygroupdesc}
Recall that $\mathrm{G}_F$ is the subgroup of $\mathrm{Res}_{\widetilde{F}/F}(\mathrm{Aut}_{\widetilde{F}}(V))$ stabilizing the hermitian form $h$. 
Using the transport     $\iota_{\gamma}: (\widetilde{F}_{\chi_{\gamma}}, (-,-)_{\gamma}) \cong (V, h(-,-))$, 
the group  $\mathrm{G}_F$
can also be viewed as the subgroup of $\mathrm{Res}_{\widetilde{F}/F}(\mathrm{Aut}_{\widetilde{F}}(\widetilde{F}_{\chi_{\gamma}}))$ stabilizing the hermitian form $(-,-)_{\gamma}$.
\end{remark}

\subsection{Factorization of $\widetilde{F}_{\chi_{\gamma}}$}\label{desc_field}
We define an étale $F$-algebra 
\[F^\sigma_{\chi_{\gamma}}:=\left(\widetilde{F}_{\chi_{\gamma}}\right)^\sigma \left(=  \left(\widetilde{F}[x]/(\chi_{\gamma}(x))\right)^\sigma \right).
\]
Here $\sigma$ in the superscript stands for the set of invariant elements under $\sigma$.
\begin{itemize}
    \item  If $(\widetilde{F}, \epsilon)=(E, 1)$, then there is $\alpha \in \mathfrak{o}_E^\times$ such that $\alpha + \sigma(\alpha) = 0$ by the proof of Lemma \ref{lem:c+sigma(c)_bij}.
Then it is easy to see that $\chi_{\alpha \gamma}(x)\in \mfo[x]$ and that $\mfo_E[x]/(\chi_{\gamma}(x)) \cong  \mfo_E[x]/(\chi_{\alpha\gamma}(x))$ for $x \mapsto x/\alpha$, where $\chi_{\alpha \gamma}(x) (= \alpha^n\cdot \chi_{\gamma}(x/\alpha))$ is the characteristic polynomial of $\alpha\gamma$. 
Let \[
\textit{$\psi(x):=\chi_{\alpha\gamma}(x) \in \mfo[x]$ so that $\sigma(x)=x$ in $\mfo_E[x]/(\psi(x))$.} 
\]

\item If $(\widetilde{F}, \epsilon)=(F, -1)$, then there exists $\psi(x)\in \mfo[x]$ such that $\chi_{\gamma}(x)=\psi(x^2)$.
Note that $\sigma(x)=-x$ in $\mfo[x]/(\chi_{\gamma}(x))$.
\end{itemize}

Let
\begin{equation}\label{equation:yxx2}
    y=\left\{
\begin{array}{l l}
x \textit{ so that } \psi(y)=\chi_{\alpha \gamma}(x)  & \textit{if $(\widetilde{F}, \epsilon)=(E, 1)$};\\
x^2 \textit{ so that } \psi(y)=\chi_{\gamma}(x) & \textit{if $(\widetilde{F}, \epsilon)=(F, -1)$}.
 \end{array}\right.
\end{equation}
Note that the degree of $\psi(y) (\in \mfo[y])$ is $n$  and that $\sigma(y)=y$ in both cases. Then we have 
\begin{equation}\label{equation:F_gamma_iso} 
F_{\chi_{\gamma}}^\sigma = F[y]/(\psi(y)).  
\end{equation}

Write $\psi(y)=\prod\limits_{i \in B(\gamma)} \psi_i(y)$, where $\psi_i(y)\in \mfo[y]$ is irreducible over $F$.
Let  $B(\gamma)^{irred}$ be the subset of $B(\gamma)$ such that 
$\psi_i(y)$ is irreducible in $\mfo_{\widetilde{F}}[x]$, equivalently
\[
\left\{
\begin{array}{l l}
\psi_i(x) \textit{ is irreducible in $\mfo_E[x]$}  & \textit{if $(\widetilde{F}, \epsilon)=(E, 1)$};\\
\psi_i(x^2) \textit{ is irreducible in $\mfo[x]$}  & \textit{if $(\widetilde{F}, \epsilon)=(F, -1)$}.
 \end{array}\right.
\]
Let  $B(\gamma)^{split}$ be the complement  of $B(\gamma)^{irred}$.
Then we have
\begin{equation}\label{equation:irrfactor}
F_{\chi_{\gamma}}^\sigma \cong \prod_{i \in B(\gamma)} F[y]/(\psi_i(y))
= \prod_{i \in B(\gamma)^{irred}} F[y]/(\psi_i(y)) \times \prod_{i \in B(\gamma)^{split}} F[y]/(\psi_i(y)).
\end{equation}

\begin{definition}\label{def:echigamma}
    For each $i\in B(\gamma)$, we define
\[
F_i:=F[y]/(\psi_i(y)) ~~~~~~~~~ \textit{    and    } ~~~~~~~~~~~~~~~~~
  \widetilde{F}_i:= \widetilde{F}[x]/(\psi_i(y)) ~~~~~~~~~~~~~~~~~~~~~ \textit{so that}\]
\[F^\sigma_{\chi_{\gamma}} \cong \prod\limits_{i \in B(\gamma)} F_i ~~~~~~~~~~~ 
\textit{    and      } ~~~~~~~~~~~~~~~~  \widetilde{F}_{\chi_{\gamma}} \cong \prod\limits_{i \in B(\gamma)} \widetilde{F}_i=
\prod\limits_{i \in B(\gamma)^{irred}} \widetilde{F}_i \times \prod\limits_{i \in B(\gamma)^{split}} \widetilde{F}_i.
\]
Then $\sigma$ acts on each $\widetilde{F}_i$ such that $\left(\widetilde{F}_i\right)^{\sigma}=F_i$. 
\end{definition}

\begin{remark}\label{remark:fitilde}
Note that $\widetilde{F}_i/F_i$ is an étale extension of degree $2$.
This is  a  field extension if and only if $i\in  B(\gamma)^{irred}$.
\begin{enumerate}
    \item Suppose that  $i\in  B(\gamma)^{irred}$. If $(\widetilde{F},\epsilon) = (E,1)$, then $\widetilde{F}_i/F_i$ is always an unramified field extension. If $(\widetilde{F},\epsilon) = (F,-1)$, then $\widetilde{F}_i/F_i$ is either ramified or unramified.

\item Suppose that  $i\in  B(\gamma)^{split}$. Then
$\widetilde{F}_i\cong \widetilde{F}_i^1\oplus \widetilde{F}_i^2$ such that $\widetilde{F}_i^1\cong \widetilde{F}_i^2 \cong F_i$.
Since $\sigma$ acts on $\widetilde{F}_i$, the involution $\sigma$ switches $\widetilde{F}_i^1$ and  $\widetilde{F}_i^2$. 
In particular,  $\psi_i(y)=\psi_i^1(x)\psi_i^2(x)$ such that $(\sigma(\psi_i^1(x)))=(\psi_i^2(x))$ as ideals in $\mfo_{\widetilde{F}}[x]$.
Thus the degree of $\psi_i(y)$ in $\mfo_{\widetilde{F}}[x]$ is even.
\end{enumerate}
\end{remark}

\subsection{Explicit description of $\mathrm{T}_{\gamma, F}$ and $\mathrm{T}_c$}\label{sec:cent}
Let     $\mathrm{T}_{\gamma, F}$  be the centralizer of $\gamma$  via the adjoint action of $\mathrm{G}_{F}$ and let 
    $\mathrm{T}_c$  be the maximal compact open subgroup of $\mathrm{T}_{\gamma, F}(F)$.
In this subsection, we will describe $\mathrm{T}_{\gamma, F}$ and $\mathrm{T}_c$ explicitly.

Since $\sigma$ acts on each $\widetilde{F}_i$ fixing $F_i$ (cf. Definition \ref{def:echigamma}), we can define the norm map
\[
N_{\widetilde{F}_i/F_i}: \mathrm{Res}_{\widetilde{F}_i/F_i}\mathbb{G}_m \longrightarrow \mathbb{G}_{m, F_i}, ~~~~~~ g \mapsto g\cdot \sigma(g). 
\]
Here  $\widetilde{F}_i/F_i$ is an étale extension of degree $2$ (cf. Remark \ref{remark:fitilde}). 
We denote by  $N^1_{\widetilde{F}_i/F_i}$ the kernel of  $N_{\widetilde{F}_i/F_i}$.
Then $N^1_{\widetilde{F}_i/F_i}$ is a split torus of dimension $1$  if $i\in  B(\gamma)^{split}$  and a non-split torus of dimension $1$ if $i\in  B(\gamma)^{irred}$.

\begin{lemma}\label{lem: Cent_unitary}
For $F^\sigma_{\chi_{\gamma}} \cong \prod\limits_{i \in B(\gamma)} F_i$ (cf. Definition \ref{def:echigamma}), we have
\[
\mathrm{T}_{\gamma, F} \cong\prod_{i \in B(\gamma)} \mathrm{T}_{\gamma,i, F} ~~~~~~~~~ \textit{    where   } ~~~~~~~~ \mathrm{T}_{\gamma,i, F} \cong \operatorname{Res}_{F_i/F}N^1_{\widetilde{F}_i/F_i}.
\]
\end{lemma}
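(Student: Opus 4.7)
The plan is to identify $\mathrm{T}_{\gamma, F}$ as an $F$-group scheme by transporting everything along the isomorphism $\iota_\gamma : (\widetilde{F}_{\chi_\gamma},(-,-)_\gamma) \cong (V,h)$ of Equation \eqref{equation:iota}, using the description of $\mathrm{G}_F$ in Remark \ref{remark:unitarygroupdesc}. First, since $\gamma$ is regular and semisimple, its minimal polynomial equals the characteristic polynomial $\chi_\gamma(x)$, so the $\widetilde{F}$-subalgebra $\widetilde{F}[\gamma]$ of $\mathrm{End}_{\widetilde{F}}(V)$ is isomorphic to $\widetilde{F}_{\chi_\gamma}$ and is a maximal commutative subalgebra. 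A standard linear algebra argument then shows that the centralizer of $\gamma$ in $\mathrm{End}_{\widetilde{F}}(V)$ equals $\widetilde{F}[\gamma]$. Passing to unit groups and taking Weil restriction, the centralizer of $\gamma$ inside $\operatorname{Res}_{\widetilde{F}/F}\mathrm{Aut}_{\widetilde{F}}(V)$ is $\operatorname{Res}_{\widetilde{F}/F}(\widetilde{F}_{\chi_\gamma}^\times)$.

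Second, I would intersect this centralizer with $\mathrm{G}_F$. For any $F$-algebra $R$ and any $\beta \in (\widetilde{F}_{\chi_\gamma}\otimes_F R)^\times$ acting on $\widetilde{F}_{\chi_\gamma}\otimes_F R$ by multiplication, the identity in Equation \eqref{equation:alphabetasigma} gives
\[
(\beta\alpha_1,\beta\alpha_2)_\gamma = (1, \sigma(\beta)\beta\,\sigma(\alpha_1)\alpha_2)_\gamma
\]
for all $\alpha_1,\alpha_2 \in \widetilde{F}_{\chi_\gamma}\otimes_F R$. Nondegeneracy of the form $(-,-)_\gamma$ (which follows from nondegeneracy of $h$ via $\iota_\gamma$, uniformly in both cases $(\widetilde{F},\epsilon)=(E,1)$ and $(\widetilde{F},\epsilon)=(F,-1)$) then implies that $\beta$ preserves $(-,-)_\gamma$ if and only if $\sigma(\beta)\beta = 1$. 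Functorially in $R$, this identifies $\mathrm{T}_{\gamma,F}$ with the norm-one subgroup $N^1_{\widetilde{F}_{\chi_\gamma}/F^\sigma_{\chi_\gamma}}$, viewed as an $F$-group scheme via $\operatorname{Res}_{F^\sigma_{\chi_\gamma}/F}$.

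Third, I would decompose via the factorization $\widetilde{F}_{\chi_\gamma} \cong \prod_{i\in B(\gamma)} \widetilde{F}_i$ from Definition \ref{def:echigamma}, which is $\sigma$-stable component-wise with fixed subring $F^\sigma_{\chi_\gamma} \cong \prod_{i \in B(\gamma)} F_i$. Compatibility of Weil restriction and norm-one tori with products then yields
\[
\mathrm{T}_{\gamma, F} \cong \operatorname{Res}_{F^\sigma_{\chi_\gamma}/F}\bigl(N^1_{\widetilde{F}_{\chi_\gamma}/F^\sigma_{\chi_\gamma}}\bigr) \cong \prod_{i\in B(\gamma)} \operatorname{Res}_{F_i/F} N^1_{\widetilde{F}_i/F_i},
\]
establishing the lemma, with each factor $\mathrm{T}_{\gamma,i,F} := \operatorname{Res}_{F_i/F} N^1_{\widetilde{F}_i/F_i}$.

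The main technical point is the functorial verification in the second step: one must argue carefully that the identity $(\alpha,\beta)_\gamma = (1,\sigma(\alpha)\beta)_\gamma$ and the nondegeneracy of $(-,-)_\gamma$ persist after arbitrary base change $\otimes_F R$, so that the norm-one condition characterizes the centralizer scheme-theoretically rather than merely on $F$-points. The remaining content -- regularity $\Rightarrow$ cyclic $\widetilde{F}[\gamma]$-module structure on $V$, and the decomposition via the Chinese remainder theorem -- is standard.
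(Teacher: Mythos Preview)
Your proposal is correct and follows essentially the same approach as the paper: both transport along $\iota_\gamma$ via Remark \ref{remark:unitarygroupdesc}, compute the centralizer inside $\operatorname{Res}_{\widetilde{F}/F}\mathrm{Aut}_{\widetilde{F}}(\widetilde{F}_{\chi_\gamma})$ as the units of $\widetilde{F}_{\chi_\gamma}$, and then use the identity $(g-,g-)_\gamma = (-,\sigma(g)g-)_\gamma$ from Equation \eqref{equation:alphabetasigma} to extract the norm-one condition. The only cosmetic difference is the order of operations: the paper decomposes into the product over $B(\gamma)$ first and then intersects each factor with $\mathrm{G}_F$, whereas you intersect first and decompose last; your added care about functoriality in $R$ is a welcome refinement but not a genuinely different idea.
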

\begin{proof}
By Remark  \ref{remark:unitarygroupdesc}, the group $\mathrm{G}_F$
is  the subgroup of $\mathrm{Res}_{\widetilde{F}/F}(\mathrm{Aut}_{\widetilde{F}}(\widetilde{F}_{\chi_{\gamma}}))$ stabilizing the hermitian form $(-,-)_{\gamma}$.
The centralizer of $\gamma$ in $\mathrm{Res}_{\widetilde{F}/F}(\mathrm{Aut}_{\widetilde{F}}(\widetilde{F}_{\chi_{\gamma}}))$ is $\prod\limits_{i \in B(\gamma)} \mathrm{Res}_{\widetilde{F}_i/F}\mathbb{G}_m$ by Section \ref{sec322} or \cite[Section 3.2.4]{Yun16}. Thus we have 
\[
\mathrm{T}_{\gamma, F} \cong\prod_{i \in B(\gamma)} \mathrm{Res}_{\widetilde{F}_i/F}\mathbb{G}_m \cap \mathrm{G}_{F}. 
\]
Remark \ref{remark:unitarygroupdesc} yields the following characterization: 
\[
\mathrm{Res}_{\widetilde{F}_i/F}\mathbb{G}_m \cap \mathrm{G}_{F}=\{g\in \mathrm{Res}_{\widetilde{F}_i/F}\mathbb{G}_m | (g-,g-)_{\gamma}=(-,-)_{\gamma}\}. 
\]
Since $(g-,g-)_{\gamma}=(-,\sigma(g)g-)_{\gamma}$ by Equation (\ref{equation:alphabetasigma}), we have
$$\mathrm{Res}_{\widetilde{F}_i/F}\mathbb{G}_m \cap \mathrm{G}_{F}=\{g\in \mathrm{Res}_{\widetilde{F}_i/F}\mathbb{G}_m | g\cdot \sigma(g)=1\}.$$
This completes the proof.
\end{proof}

The lemma shows that the centralizer $\mathrm{T}_{\gamma, F}$ is connected. 
To normalize the measure on $\mathrm{T}_{\gamma, F}(F)$, we need to describe 
the \textit{ft-Néron model}  $\underline{\mathrm{T}}_{\gamma}$ of $\mathrm{T}_{\gamma, F}$, which is defined to be the smooth integral model defined over $\mfo$  such that $\underline{\mathrm{T}}_\gamma(\mathfrak{o})$ is the maximal compact subgroup of $\mathrm{T}_{\gamma, F}(F)$
in \cite[Corllary 2.10.11 and Proposition B.7.2]{KP23}.
Consider the norm map
\[
N_{\mathfrak{o}_{\widetilde{F}_i}/\mathfrak{o}_{F_i}}: \mathrm{Res}_{\mathfrak{o}_{\widetilde{F}_i}/\mathfrak{o}_{F_i}}\mathbb{G}_m \longrightarrow \mathbb{G}_{m, \mathfrak{o}_{F_i}}, ~~~~~~ g \mapsto g\cdot \sigma(g)
\]
defined over $\mathfrak{o}_{F_i}$ with the kernel $N^1_{\mathfrak{o}_{\widetilde{F}_i}/\mathfrak{o}_{F_i}}$.
Here if $\widetilde{F}_i/F_i$ is split (cf. Remark \ref{remark:fitilde}), then $\mathfrak{o}_{\widetilde{F}_i}=\mathfrak{o}_{F_i}\oplus \mathfrak{o}_{F_i}$.
Then $\prod\limits_{i \in B(\gamma)}N^1_{\mathfrak{o}_{\widetilde{F}_i}/\mathfrak{o}_{F_i}}(\mathfrak{o}_{F_i})$ is the maximal compact subgroup of $\mathrm{T}_{\gamma, F}(F)$ by Lemma \ref{lem: Cent_unitary}.

The scheme $N^1_{\mathfrak{o}_{\widetilde{F}_i}/\mathfrak{o}_{F_i}}$ is smooth whenever $\widetilde{F}_i/F_i$ is split or unramified, or whenever $\widetilde{F}_i/F_i$ is ramified with $char(\kappa) > 2$ by \cite[Example B.4.11]{KP23}.
When $\widetilde{F}_i/F_i$ is ramified with $char(\kappa) = 2$, there does exist the desired smooth integral model, denoted by $\left(N^1_{\mathfrak{o}_{\widetilde{F}_i}/\mathfrak{o}_{F_i}}\right)^{sm}$, which is constructed by the smoothening process of \cite[Definition A.6.3]{KP23} applied to $N^1_{\mathfrak{o}_{\widetilde{F}_i}/\mathfrak{o}_{F_i}}$ (cf. \cite[Definition B.7.1]{KP23}). We summarize this discussion as the following corollary:

\begin{corollary}\label{corollary:maxopencpt}
 The ft-Néron model $\underline{\mathrm{T}}_{\gamma}$ of $\mathrm{T}_{\gamma, F}$ is  
     $\underline{\mathrm{T}}_{\gamma} \cong \prod\limits_{i \in B(\gamma)} \underline{\mathrm{T}}_{\gamma,i}$ such that 
     \[
         \underline{\mathrm{T}}_{\gamma,i}  \cong
         \left\{
\begin{array}{l l}
\operatorname{Res}_{\mathfrak{o}_{F_i}/\mathfrak{o}}\left(N^1_{\mathfrak{o}_{\widetilde{F}_i}/\mathfrak{o}_{F_i}}\right)^{sm}  & \textit{if $\widetilde{F}_i/F_i$ is ramified with $char(\kappa) = 2$};\\
\operatorname{Res}_{\mathfrak{o}_{F_i}/\mathfrak{o}}N^1_{\mathfrak{o}_{\widetilde{F}_i}/\mathfrak{o}_{F_i}}  & \textit{otherwise}.
 \end{array}\right.
     \]
Here $\prod\limits_{i \in B(\gamma)}N^1_{\mathfrak{o}_{\widetilde{F}_i}/\mathfrak{o}_{F_i}}(\mathfrak{o}_{F_i})$ is the maximal compact subgroup of $\mathrm{T}_{\gamma, F}(F)$.
\end{corollary}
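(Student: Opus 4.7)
The plan is to reduce the construction of $\underline{\mathrm{T}}_{\gamma}$ to the case of a single factor, and then to apply the smoothening theory of \cite{KP23}. First, Lemma \ref{lem: Cent_unitary} identifies $\mathrm{T}_{\gamma, F}$ with the product $\prod_{i\in B(\gamma)} \operatorname{Res}_{F_i/F} N^{1}_{\widetilde{F}_i/F_i}$, and the ft-Néron model of a product of commutative $F$-tori is the product of the ft-Néron models (this follows from the universal property: maximal compact subgroups of products are products of maximal compact subgroups, and both the smoothness and the recovery of the maximal compact subgroup on $\mathfrak{o}$-points are preserved under finite products). Hence it suffices to construct $\underline{\mathrm{T}}_{\gamma,i}$ for each $i \in B(\gamma)$ separately and to identify its $\mathfrak{o}$-points with $N^{1}_{\mathfrak{o}_{\widetilde{F}_i}/\mathfrak{o}_{F_i}}(\mathfrak{o}_{F_i})$.

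Next I would handle the Weil restriction. Since $\mathfrak{o}_{F_i}/\mathfrak{o}$ is finite flat (and \'etale on the generic fibre), Weil restriction along it carries smooth affine $\mathfrak{o}_{F_i}$-group schemes to smooth affine $\mathfrak{o}$-group schemes and commutes with taking $\mathfrak{o}$-points in the sense $\operatorname{Res}_{\mathfrak{o}_{F_i}/\mathfrak{o}} H (\mathfrak{o}) = H(\mathfrak{o}_{F_i})$. Moreover, Weil restriction respects the universal property characterising the ft-Néron model over a discretely valued base, because the maximal bounded subgroup of $\operatorname{Res}_{F_i/F} N^{1}_{\widetilde{F}_i/F_i}(F)$ is exactly $N^{1}_{\widetilde{F}_i/F_i}(F_i) \cap$ (maximal compact) $= N^{1}_{\mathfrak{o}_{\widetilde{F}_i}/\mathfrak{o}_{F_i}}(\mathfrak{o}_{F_i})$. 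So it remains to construct the ft-Néron model of $N^{1}_{\widetilde{F}_i/F_i}$ over $\mathfrak{o}_{F_i}$ and then apply $\operatorname{Res}_{\mathfrak{o}_{F_i}/\mathfrak{o}}$.

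For the single-factor problem over $\mathfrak{o}_{F_i}$, I would appeal directly to \cite[Example B.4.11]{KP23}: when $\widetilde{F}_i/F_i$ is split or unramified, or ramified with $\operatorname{char}(\kappa) \neq 2$, the naive model $N^{1}_{\mathfrak{o}_{\widetilde{F}_i}/\mathfrak{o}_{F_i}}$ is already smooth (and its $\mathfrak{o}_{F_i}$-points form the maximal compact subgroup, because in these cases $N^{1}_{\widetilde{F}_i/F_i}$ is either a split torus, an anisotropic unramified torus, or a ramified norm-$1$ torus whose standard integral model is smooth in odd residue characteristic). In the remaining case where $\widetilde{F}_i/F_i$ is ramified and $\operatorname{char}(\kappa) = 2$, the naive model fails to be smooth, and one replaces it by its smoothening $\bigl(N^{1}_{\mathfrak{o}_{\widetilde{F}_i}/\mathfrak{o}_{F_i}}\bigr)^{sm}$ in the sense of \cite[Definition A.6.3]{KP23}; by \cite[Proposition B.7.2]{KP23} this is exactly the ft-Néron model and has the same $\mathfrak{o}_{F_i}$-points as the original.

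The expected obstacle is purely bookkeeping: one must verify that Weil restriction along the possibly ramified $\mathfrak{o}_{F_i}/\mathfrak{o}$ preserves the defining property of the ft-Néron model (smoothness together with the maximal compact subgroup on $\mathfrak{o}$-points), and that smoothening commutes with this Weil restriction in the sense that $\operatorname{Res}_{\mathfrak{o}_{F_i}/\mathfrak{o}}$ applied to $\bigl(N^{1}_{\mathfrak{o}_{\widetilde{F}_i}/\mathfrak{o}_{F_i}}\bigr)^{sm}$ is the smoothening of $\operatorname{Res}_{\mathfrak{o}_{F_i}/\mathfrak{o}} N^{1}_{\mathfrak{o}_{\widetilde{F}_i}/\mathfrak{o}_{F_i}}$. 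Both points are standard consequences of the fact that Weil restriction along a finite flat extension of henselian dvrs sends $\mathfrak{o}_{F_i}$-dilatations to $\mathfrak{o}$-dilatations and preserves smoothness; after checking this, the product of the factors $\underline{\mathrm{T}}_{\gamma,i}$ yields $\underline{\mathrm{T}}_{\gamma}$ and the claim follows.
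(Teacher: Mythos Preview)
Your proposal is correct and follows essentially the same approach as the paper: the product decomposition from Lemma~\ref{lem: Cent_unitary}, smoothness of $N^{1}_{\mathfrak{o}_{\widetilde{F}_i}/\mathfrak{o}_{F_i}}$ via \cite[Example~B.4.11]{KP23} in the split/unramified/odd-residue-ramified cases, and smoothening via \cite[Definition~A.6.3 and Proposition~B.7.2]{KP23} in the remaining case. The paper in fact states the corollary as a summary of the paragraph preceding it and does not spell out the Weil-restriction compatibility you discuss; your treatment of that point is more explicit than the paper's, but the argument and references are the same.
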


We now compute  $\#\underline{\mathrm{T}}_{\gamma}(\kappa)$ explicitly.
By Corollary \ref{corollary:maxopencpt}, we may and do work with the case that  $F_{\chi_\gamma}^\sigma/F$ is a finite field extension so that 
$\underline{\mathrm{T}}_{\gamma} = \operatorname{Res}_{\mathfrak{o}_{F_{\chi_\gamma}^\sigma}/\mathfrak{o}} N^1_{\mathfrak{o}_{\widetilde{F}_{\chi_{\gamma}}}/\mathfrak{o}_{F_{\chi_\gamma}^\sigma}}.$

\begin{corollary}\label{lem:pts_spfib_tori}
Suppose that $F_{\chi_\gamma}^\sigma/F$ is a finite field extension of degree $n$.
Let  $K$ be the maximal unramified extension of $F$ contained in $F_{\chi_\gamma}^\sigma$ and let $d=[K:F]$.
Then we have
    \[
    \#\underline{\mathrm{T}}_{\gamma}(\kappa)=
    \begin{cases}
        q^n-q^{n-d} &\textit{if } \widetilde{F}_{\chi_\gamma}/F_{\chi_\gamma}^\sigma \textit{ is split}; \\
        q^n + q^{n-d}&\textit{if } \widetilde{F}_{\chi_\gamma}/F_{\chi_\gamma}^\sigma \text{ is unramified}; \\
       2q^{n} &\textit{if } \widetilde{F}_{\chi_\gamma}/F_{\chi_\gamma}^\sigma \textit{ is ramified and } char(F) > 2.
    \end{cases}
    \]
\end{corollary}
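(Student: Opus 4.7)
The plan is to compute $\#\underline{\mathrm{T}}_\gamma(\kappa)$ directly via the Weil restriction formula. Write $L := F^\sigma_{\chi_\gamma}$ and $\widetilde{L} := \widetilde{F}_{\chi_\gamma}$, so Corollary \ref{corollary:maxopencpt} gives $\underline{\mathrm{T}}_\gamma = \operatorname{Res}_{\mfo_L/\mfo} N^1_{\mfo_{\widetilde{L}}/\mfo_L}$ in the split, unramified, and (because of the hypothesis $\operatorname{char}(F)>2$) ramified cases, without needing to pass to a smoothening. By definition of the Weil restriction, $\underline{\mathrm{T}}_\gamma(\kappa) = N^1_{\mfo_{\widetilde{L}}/\mfo_L}(\kappa \otimes_\mfo \mfo_L) = N^1_{\mfo_{\widetilde{L}}/\mfo_L}(\mfo_L/\pi \mfo_L)$. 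Setting $A := \mfo_L/\pi\mfo_L$ and $\widetilde{A} := \mfo_{\widetilde{L}}/\pi\mfo_{\widetilde{L}} = A \otimes_{\mfo_L} \mfo_{\widetilde{L}}$, we see that $\underline{\mathrm{T}}_\gamma(\kappa)$ is identified with the kernel of the norm map $N: \widetilde{A}^\times \to A^\times$, $x\mapsto x\cdot \sigma(x)$. This reduces the problem to a count inside finite local (or semi-local, in the split case) rings.

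Next I would count unit groups. Since $\mfo_L$ is a free $\mfo$-module of rank $n$, $|A|=q^n$; the maximal ideal $\pi_L A$ of $A$ is the kernel of reduction $A\twoheadrightarrow \kappa_L$, so $|A^\times| = q^n - q^{n-d}$. The structure of $\widetilde{A}$ depends on the case: in the split case $\widetilde{A} \cong A\times A$ with $\sigma$ swapping factors; in the unramified case $\widetilde{A}$ is local with residue field $\kappa_{\widetilde{L}}$ of size $q^{2d}$, so $|\widetilde{A}^\times| = q^{2n} - q^{2n-2d}$; and in the ramified case $\widetilde{A}$ is local with residue field $\kappa_L$ of size $q^d$, so $|\widetilde{A}^\times| = q^{2n} - q^{2n-d}$. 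In the split case a direct computation shows $\ker N = \{(a,a^{-1}) : a \in A^\times\}$, giving $|\ker N| = q^n - q^{n-d}$, matching the first formula.

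In the unramified and ramified cases I would analyze $N$ through the filtration by the powers of the maximal ideal of $\widetilde{A}$. Concretely, $\widetilde{A}^\times$ fits into a short exact sequence $1 \to U^{(1)} \to \widetilde{A}^\times \to \kappa_{\widetilde{L}}^\times \to 1$ (and similarly for $A$), and $N$ is compatible with these filtrations. In the unramified case the induced map $\kappa_{\widetilde{L}}^\times \to \kappa_L^\times$ is the usual norm, which is surjective for quadratic extensions of finite fields; the induced maps on consecutive graded pieces $\pi_L^i \widetilde{A}/\pi_L^{i+1}\widetilde{A} \to \pi_L^i A/\pi_L^{i+1}A$ are surjective by a standard trace-computation for an étale extension (or a Hensel-type lifting argument), whence $N$ itself is surjective. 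Thus $|\ker N| = |\widetilde{A}^\times|/|A^\times| = (q^n+q^{n-d})(q^n-q^{n-d})/(q^n-q^{n-d}) = q^n + q^{n-d}$. In the ramified case $\sigma$ acts trivially on $\kappa_{\widetilde{L}} = \kappa_L$, so the induced map on residues is the squaring map $x\mapsto x^2$, whose image $(\kappa_L^\times)^2$ has index $2$ once $\operatorname{char}(\kappa) \neq 2$; the higher graded pieces are again handled by a Hensel/filtration argument. The image of $N$ therefore has index $2$ in $A^\times$, and $|\ker N| = 2|\widetilde{A}^\times|/|A^\times| = 2q^n(q^n-q^{n-d})/(q^n-q^{n-d}) = 2q^n$, matching the third formula.

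The main obstacle is the careful verification that the norm map is surjective on the higher filtration quotients in the unramified and ramified settings, and the identification of precisely which unit-group filtration descends to $A$ and $\widetilde{A}$. Once this is in place the three counts follow by straightforward multiplicativity of cardinalities in the short exact sequence $1 \to \ker N \to \widetilde{A}^\times \to \operatorname{image}(N) \to 1$. The characteristic hypothesis $\operatorname{char}(F) > 2$ enters precisely to guarantee that $\operatorname{char}(\kappa) \neq 2$, both so that $N^1$ itself (rather than its smoothening) computes the correct $\kappa$-points and so that the cokernel of squaring on $\kappa_L^\times$ has order exactly $2$.
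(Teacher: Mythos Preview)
Your proof is correct. In the split and unramified cases your argument coincides with the paper's: both identify $\underline{\mathrm{T}}_\gamma(\kappa)$ with the kernel of the reduced norm map $\widetilde{A}^\times \to A^\times$ and count via its surjectivity (your filtration-and-trace argument on the higher graded pieces is exactly what makes the paper's one-line appeal to surjectivity rigorous). The ramified case is where the two approaches genuinely diverge. The paper does \emph{not} work directly with the naive norm-one scheme; instead it pulls in the explicit equation $g(y,z)$ for the smoothened model $(N^1)^{sm}$ from \cite[Lemma B.1]{GHY}, unwinds the Weil restriction over $\kappa$, and counts solutions by showing that for each fixed $z(x)$ the resulting equation in $y(x)$ has exactly two roots. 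Your route, by contrast, exploits the hypothesis $\operatorname{char}(F)>2$ (hence $\operatorname{char}(\kappa)>2$) to avoid the smoothening altogether, and then reads off $[\,A^\times : \operatorname{im} N\,]=2$ from the squaring map on $\kappa_L^\times$ together with the standard Hensel/filtration surjectivity of $N$ on principal units. Your argument is shorter and more conceptual under the stated hypothesis; the paper's computation is heavier but actually establishes the formula under the weaker assumption $\operatorname{char}(F)\neq 2$ (so in particular it handles $\operatorname{char}(F)=0$, $\operatorname{char}(\kappa)=2$), a generality not claimed in the statement.
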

\begin{proof}
    Since $F_{\chi_\gamma}^\sigma/K$ is totally ramified of degree $n/d$, we may write $\mathfrak{o}_{F_{\chi_\gamma}^\sigma} \cong \mathfrak{o}_K[x]/(f(x))$ where $f(x)$ is an Eisenstein polynomial of degree $n/d$.
    It follows that 
    $$\mathfrak{o}_{F_{\chi_\gamma}^\sigma} \otimes_{\mfo} \kappa \cong \mathfrak{o}_{K}[x]/(f(x)) \otimes_{\mfo_K} \kappa_{F_{\chi_\gamma}^\sigma} \cong \kappa_{F_{\chi_\gamma}^\sigma}[x]/(x^{n/d})$$.
\begin{enumerate}
    \item     Firstly suppose that $\widetilde{F}_{\chi_\gamma}/F_{\chi_\gamma}^\sigma$ is split.
    Then $N^1_{\mathfrak{o}_{\widetilde{F}_{\chi_\gamma}}/\mathfrak{o}_{F_{\chi_\gamma}^\sigma}} \cong \mathbb{G}_m$ so that $    \underline{\mathrm{T}}_{\gamma} \cong \operatorname{Res}_{\mathfrak{o}_{F_{\chi_\gamma}^\sigma}/\mathfrak{o}_F} \mathbb{G}_m$.
Thus we have 
    \[
    \#\underline{\mathrm{T}}_{\gamma}(\kappa)
    = \#\left(\frac{\kappa_{F_{\chi_\gamma}^\sigma}[x]}{(x^{n/d})}\right)^\times
    =(q^d-1)q^{d(n/d-1)}= q^{n-d}(q^d-1)=q^n-q^{n-d}.
    \]
    
\item    Suppose that $\widetilde{F}_{\chi_\gamma}/F_{\chi_\gamma}^\sigma$ is unramified.
We use the following notation in this proof exclusively:
\[
\widetilde{\kappa}_{\widetilde{F}}:= \textit{the residue field of }\mathfrak{o}_{\widetilde{F}_{\chi_\gamma}}.
\]
Note that the residue field of $\mathfrak{o}_{F_{\chi_\gamma}^\sigma}$ is $\kappa_{F_{\chi_\gamma}^\sigma}$ and that $\widetilde{\kappa}_{\widetilde{F}}/\kappa_{F_{\chi_\gamma}^\sigma}$ is a quadratic field extension.
Then $\underline{\mathrm{T}}_{\gamma}(\kappa)$ is the kernel of the following norm map:
\[
N_{\widetilde{\kappa}_{\widetilde{F}}/\kappa_{F_{\chi_\gamma}^\sigma}}: \left(\widetilde{\kappa}_{\widetilde{F}}[x]/(x^{n/d})\right)^{\times} \longrightarrow \left(\kappa_{F_{\chi_\gamma}^\sigma}[x]/(x^{n/d})\right)^{\times}, ~~~~~~~~~ ax^i\longmapsto  N_{\widetilde{\kappa}_{\widetilde{F}}/\kappa_{F_{\chi_\gamma}^\sigma}}(a)x^i.
\]
Since the norm map on a finite field extension is surjective, the above map is surjective as well.
Therefore, 
\[
 \#\underline{\mathrm{T}}_{\gamma}(\kappa)=\frac{(q^{2d}-1)q^{2d(n/d-1)}}{(q^d-1)q^{d(n/d-1)}}=(q^d+1)q^{d(n/d-1)}=q^n+q^{n-d}.
\]


\item Suppose that $\widetilde{F}_{\chi_\gamma}/F_{\chi_\gamma}^\sigma$ is ramified and that $char(F)\neq 2$. 
We may choose uniformizers $\widetilde{\varpi} \in \widetilde{F}_{\chi_\gamma}$ and  $\varpi \in F_{\chi_\gamma}^\sigma$ such that  the minimal polynomial of $\widetilde{\varpi}$ over $\mathfrak{o}_{\chi_\gamma}$ is $y^2 - a\varpi y + \varpi$ for some $a \in \mathfrak{o}_{F_{\chi_\gamma}^\sigma}$. Note that  the  different ideal  is  $(\widetilde{\varpi}^r) = (2\widetilde{\varpi} - a \varpi)$ by \cite[Proposition III.2.4]{Neu}.
By \cite[Lemma B.1]{GHY}\footnote{\cite{GHY} assumes the restriction $char(F)=0$.
But \cite[Lemma B.1]{GHY} holds whenever $char(F)\neq 2$.},  $\left(N^1_{\mathfrak{o}_{\widetilde{F}_i}/\mathfrak{o}_{F_i}}\right)^{sm}= \mathrm{Spec} ~\mathfrak{o}_{F_{\chi_\gamma}^\sigma}[y,z]/(g(y,z))$
where
\[
g(y,z)
=
\begin{cases}    
    y^{2} + 2\varpi^{-(r-1)/2}y + \varpi(a\varpi^{-(r-1)/2} z + a yz + z^2) &\textit{if } r \textit{ is odd}; \\
    z^2 + a \varpi^{-(r-2)/2} z + \varpi(y^{2} + 2 \varpi^{-r/2} y +  a yz) &\textit{if } r $\textit{ is even}$.
\end{cases}
\]
We claim that  the coefficients $2\varpi^{-(r-1)/2}$ and $a \varpi^{-(r-2)/2}$ of $y$ and $z$ in each case are units.
We observe that  $\mathrm{ord}_{\widetilde{\varpi}} (2 \widetilde{\varpi})$ is odd and  $\mathrm{ord}_{\widetilde{\varpi}}(a \varpi)$ is even since $2, a, \varpi \in \mathfrak{o}_{\chi_{\gamma}}$.
Thus $r$ is odd if and only if $(\widetilde{\varpi}^r) = (2 \widetilde{\varpi})$, and $r$ is even if and only if $(\widetilde{\varpi}^r) = (a \varpi) = (a \widetilde{\varpi}^2)$.
In particular, $(2) = (\varpi^{(r-1)/2})$ if $r$ is odd, and $(a) = (\varpi^{(r-2)/2})$ if $r$ is even.
Note that $\mathrm{ord}_{\widetilde{\varpi}}(2) = 0$ and $r = 1$ if $char(\kappa) >  2$.


The proof for the case when $r$ is even is identical to that for the case when $r$ is odd (by exchanging the roles of $y$ and $z$) and thus we will treat the case of $r$  odd.
To ease  notations, we rewrite 
\[
g(y,z) = y^2 + u y + \varpi b(y,z) \in \mathfrak{o}_{F_{\chi_\gamma}^\sigma}[y,z]
\]
with $u \in \mathfrak{o}_{F_{\chi_\gamma}^\sigma}^\times$ and $b(y,z) \in \mathfrak{o}_{F_{\chi_\gamma}^\sigma}[y,z]$.
If we identify $\mathfrak{o}_{F_{\chi_\gamma}^\sigma} \cong \mathfrak{o}_K[x]/(f(x))$ then we may write $\varpi = \varpi(x)$, $u = u(x)$, and $b(y,z) = b(x,y,z)$ in $\mathfrak{o}_K[x]/(f(x))$.
\cite[Lemma A.3.12]{KP23} yields that
\[
\left(\mathrm{Res}_{\mathfrak{o}_{F_{\chi_\gamma}^\sigma/\mathfrak{o}}}\left(N^{1}_{\mathfrak{o}_{\widetilde{F}_{\chi_\gamma}}/\mathfrak{o}_{F_{\chi_\gamma}^\sigma}}\right)^{sm}\right)
\otimes_\mathfrak{o} \kappa
\cong
\operatorname{Res}_{\frac{\kappa_{F_{\chi_\gamma}^\sigma}[x]}{(x^{n/d})}/\kappa}\left(
    \mathrm{Spec} \frac{\kappa_{F_{\chi_\gamma}^\sigma}[x,y,z]}{(x^{n/d},\overline{g}(x,y,z))} \right)
\]
where
\[
\overline{g}(x,y,z) = y^{2} + \overline{u}(x)y +
\overline{\varpi}(x)(\overline{b}(x,y,z)).
\]
Here we emphasize that $\overline{\varpi}(x), \overline{u}(x), \overline{b}(x,y,z) \in \kappa_{F_{\chi_\gamma}^\sigma}[x,y,z]$ are reductions of $\varpi(x), u(x), b(x,y,z)$ modulo $\pi$, not modulo $\varpi$.
Then $\underline{\mathrm{T}}_\gamma(\kappa) = \mathrm{Res}_{\mathfrak{o}_{F_{\chi_\gamma}^\sigma/\mathfrak{o}}}\left(N^{1}_{\mathfrak{o}_{\widetilde{F}_{\chi_\gamma}}/\mathfrak{o}_{F_{\chi_\gamma}^\sigma}}\right)^{sm}
(\kappa)$ consists of the set of pairs $(y(x), z(x))$ inside $\kappa_{F_{\chi_\gamma}^\sigma}[x]/(x^{n/d})$ satisfying the equation
\begin{equation}\label{eq:count_tori_ramified}
y(x)^{2} + \overline{u}(x)y(x) +
\overline{\varpi}(x)(\overline{b}(x,y(x),z(x)))= 0  \textit{ in } \kappa_{F_{\chi_\gamma}^\sigma}[x]/(x^{n/d}).
\end{equation}
We fix $z(x)$ arbitrarily in $\kappa_{F_{\chi_\gamma}^\sigma}[x]/(x^{n/d})$.
Then we claim that there are exactly two choices of $y(x)$ satisfying the above equation.
The claim directly yields
\[
\#\underline{\mathrm{T}}_\gamma(\kappa)
= 2 (q^d)^{n/d} = 2q^n.
\]

Since $u(x)$ is a unit and $\varpi(x)$ is  a unifomizer in $\mathfrak{o}_K[x]/(f(x))$, we have that $\overline{u}(0)$ is a unit in $\kappa_{F_{\chi_\gamma}^\sigma}$ and that $\overline{\varpi}(x)$ is divisible  by $x$.
If we write $y(x) = \sum\limits_{i=0}^{n/d-1} y_i x^i$ with $y_i \in \kappa_{F_{\chi_\gamma}^\sigma}$, then comparing coefficients of $x^i$ in Equation (\ref{eq:count_tori_ramified}) yields the following $n/d$-equations
\[
\begin{cases}
y_0^2 + \overline{u}(0) y_0 =0 &\textit{when }i=0;\\
(2y_0 + \overline{u}(0))y_i + p(y_0, \cdots, y_{i-1})= c_i &\textit{when } 0< i < n/d
\end{cases}
\]
where $p(y_0, \cdots, y_{i-1})\in \kappa_{F_{\chi_\gamma}^\sigma}[y_0, \cdots, y_{i-1}]$ and $c_i \in \kappa_{F_{\chi_\gamma}^\sigma}$.
The equation for $i = 0$ yields that there are exactly two choices for $y_0$, either zero or $-\overline{u}(0)$.
It follows that $2y_0 + \overline{u}(0)$ is always a unit. 
Thus the rest equations determine $y_i$'s with $i>0$ uniquely.
\end{enumerate}
\end{proof}

\subsection{Comparison of two normalizations}\label{section:comparison}
As in Section \ref{sscotn}, it is necessary to compare the stable orbital integral $\sog$ in Definition \ref{def:stableorbital} and the stable orbital integral which can be found in the literature. 
In this subsection, we will precisely compare  them, following Section \ref{sscotn} and argument of \cite[Section 3.4.3]{Gor22}. 

\subsubsection{Orbital integral with respect to the quotient measure}\label{subsubsection331} 
Our normalization follows \cite[Section 3.2]{Yun16}, which will be used in 
Proposition \ref{orb:lattice}. 
Let
\[
\left\{
\begin{array}{l}
    dt \textit{ be the Haar measure on } \mathrm{T}_{\gamma,F}(F) \textit{ such that } \operatorname{vol}(dt, \mathrm{T}_c) = 1; \\
    dg \textit{ be the Haar measure on } \mathrm{G}(F) \textit{ such that } \operatorname{vol}(dg, \mathrm{G}(\mathfrak{o})) = 1; \\
    d\mu = \frac{dg}{dt} \textit{ be the quotient measure on } \mathrm{T}_{\gamma,F}(F)\backslash \mathrm{G
    }(F).
\end{array}
\right.
\]
The orbital integral with respect to the quotient measure $d\mu$ on $\mathrm{T}_{\gamma,F}(F) \backslash \mathrm{G}(F)$ is defined by
\begin{equation}\label{equation:rationaloi}
        \mathcal{O}^\mathrm{G}_{\gamma,d\mu} = \int_{\mathrm{T}_{\gamma,F}(F) \backslash \mathrm{G}(F)} \mathbbm{1}_{\mathfrak{g}(\mathfrak{o})}(g^{-1} \gamma g) d\mu,
\end{equation}

where $\mathbbm{1}_{\mathfrak{g}(\mathfrak{o})}$ is the characteristic function of $\mathfrak{g}(\mathfrak{o})\subset \mathfrak{g}(F)$.
If there is no confusion then we sometimes omit $\mathrm{G}$ in the superscript to express the orbital integral so that $\mathcal{O}_{\gamma,d\mu}$ stands for $\mathcal{O}^{\mathrm{G}}_{\gamma,d\mu}$.

\subsubsection{Stable orbital integral with respect to the quotient measure}\label{subsubsection331111} 

In this subsection, we define the stable orbital integral for an unramified reductive group, denoted by $\mathcal{SO}^\mathrm{G}_{\gamma,d\mu}$, with respect to the quotient measure $d\mu$ following \cite[Section 3.4.5]{Yun16}.

\begin{enumerate}
   \item 
For $\gamma'$ which is stably conjugate to $\gamma$, we have a canonical isomorphism  $\iota: \mathrm{T}_{\gamma,F}\cong \mathrm{T}_{\gamma',F}$  by conjugation as a morphism of algebraic groups over $F$ (cf. \cite[Section 3.4.5]{Yun16}).
Let $dt'$ be the Haar measure on $\mathrm{T}_{\gamma',F}(F)$ transferred from $dt$ along the isomorphism $\iota$ and let $\mathrm{T}_c'$ be the maximal compact open subgroup of $\mathrm{T}_{\gamma',F}(F)$.
By uniqueness of the maximal compact open subgroup of a torus, the image of $\mathrm{T}_c$ under $\iota$ is $\mathrm{T}_c'$. Therefore 
$\operatorname{vol}(dt', \mathrm{T}'_c) = 1$.

\item 
We denote by $d\mu'$ the quotient measure $\frac{dg}{dt'}$ on $ \mathrm{T}_{\gamma',F}(F)\backslash \mathrm{G}(F)$ and by $\mathcal{O}^\mathrm{G}_{\gamma',d\mu'}$  the associated orbital integral as in Equation (\ref{equation:rationaloi}).
The stable orbital integral for $\gamma$ with respect to the quotient measure $d\mu$ is defined to be
\[
    \mathcal{SO}^\mathrm{G}_{\gamma, d\mu}
    := \sum_{\gamma' \sim \gamma} \mathcal{O}^\mathrm{G}_{\gamma',d\mu'},
\]
where the sum runs over the set of representatives $\gamma'$ of conjugacy classes within the stable conjugacy class of $\gamma$.
This index set is bijective with
$
\operatorname{ker}(H^1(F,\mathrm{T}_{\gamma,F}) \to H^1(F,\mathrm{G}_F))
$ by \cite[Section 3.4.1]{Yun16}. We sometimes omit $\mathrm{G}$ in the superscript so that $\mathcal{SO}_{\gamma, d\mu}$ stands for $\mathcal{SO}^{\mathrm{G}}_{\gamma, d\mu}$, if this does not cause confusion.
\end{enumerate}

\begin{remark}\label{remark:sordmu}
 Sometimes it is convenient to use the \textit{normalized orbital integral} $|D^\mathrm{G}(\gamma)|^{1/2}\mathcal{O}^\mathrm{G}_{\gamma,d\mu}$, where $D^\mathrm{G}(\gamma)$ is the \textit{Weyl discriminant} defined by $\det(\operatorname{ad}(\gamma) : \mathfrak{g}_F/\mathfrak{t}_{\gamma,F} \to \mathfrak{g}_F/\mathfrak{t}_{\gamma,F})$.
Here $\mathfrak{t}_\gamma$ is the Lie algebra of $\mathrm{T}_{\gamma,F}$.
The description of $D^{\mathrm{G}}(\gamma)$ in \cite[Section 7.4]{Kot05} yields that it is preserved under base change and stable conjugation.

\cite[Equation (20)]{Gor22}\footnote{It seems to us that  the formula of the Weyl discriminant for $\mathfrak{sp}_{2n}$ in \cite[Example 3.6]{Gor22} contains a minor error.} gives a description for $D^\mathrm{G}(\gamma)$ in terms of roots of $(\mathrm{G}_F,\mathrm{T}_{\gamma,F})$.
Using \cite[Section 12.1]{Hum72} for a precise description of roots, we have
\[
|D^{\mathrm{G}}(\gamma)|
=
\begin{cases}
    |\Delta_{\gamma}| &\textit{if } (\widetilde{F},\epsilon) = (E,1); \\
    |\Delta_{\gamma}|/|\Delta_{\psi}|  &\textit{if } (\widetilde{F},\epsilon) = (F,-1)
\end{cases}
\]
where $\Delta_\psi$ is the discriminant of the polynomial $\psi(x)$  defined in Section \ref{desc_field}. Here, we recall that $\Delta_\gamma \in \mfo$ (cf. Notations of Part 2).
Thus $|D^{\mathrm{G}}(\gamma)|$ is determined by coefficients of $\chi_{\gamma}(x)$.
Note that $|\Delta_{\gamma}| = |\Delta_{\psi}|^2 \cdot |\chi_{\gamma}(0)| \cdot |2^{2n}|$ if $(\widetilde{F},\epsilon) = (F,-1)$.
\end{remark}

\subsubsection{Comparison}
A comparison between two measures defining  $\mathcal{SO}_{\gamma}$ and $\mathcal{SO}_{\gamma,d\mu}$ is basically explained in \cite[Proposition 3.29]{FLN}.
In loc. cit., they treat quasi-split reductive groups and \cite[Proposition 3.9]{Gor22} treats split reductive Lie algebras.
Since we need to treat  quasi-split reductive Lie algebras and our normalizations are different from these two, we will explicitly prove our comparison result.
We emphasize that  our argument is  parallel to those of \cite[Section 3.5]{FLN} and \cite[Section 3.4.3]{Gor22}.

\begin{proposition}\label{prop:comparison_measures} 
Suppose\footnote{The assumption is necessary in Step (5) `\textit{Comparison between $\omega_{\mathfrak{t}_\gamma}$ and $\omega_{\mathcal{A}^n_\mathfrak{o}}$}' of the proof.} that $char(F) = 0$ or $char(F)>n$.
The difference between $\mathcal{SO}_{\gamma}$ and $\mathcal{SO}_{\gamma,d\mu}$ is described as follows:
\[
    \mathcal{SO}_{\gamma}
    = |D(\gamma)|^{1/2} \prod_{i \in B(\gamma)} |N_{F_{i}/F}(\Delta_{\widetilde{F}_i/F_i})|^{-1/2}|\Delta_{F_i/F}|^{-1/2}\frac{\#\mathrm{G}
    (\kappa) q^{-\dim \mathrm{G}}}{\#\underline{\mathrm{T}}_{\gamma}(\kappa) q^{-\dim \mathrm{T}_{\gamma,F}}} \mathcal{SO}_{\gamma,d\mu}.
\]
Here $\Delta_{\widetilde{F}_i/F_i}$ and $\Delta_{F_i/F}$ are the discriminants of étale extensions $\widetilde{F}_i/F_i$ and $F_i/F$ respectively.
The Weyl discriminant $D(\gamma)$ is described in Remark \ref{remark:sordmu}, and $|N_{F_i/F}(\Delta_{\widetilde{F}_i/F_i})|$ is described in Lemma \ref{lem:Conductor_unitary}.
In particular, if $(\widetilde{F},\epsilon) = (E,1)$, then $|N_{F_{i}/F}(\Delta_{\widetilde{F}_i/F_i})| = 1$ for any $i \in B(\gamma)$ by Remark \ref{remark:fitilde}.
\end{proposition}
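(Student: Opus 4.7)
The plan is to mimic the proof of Proposition \ref{proptrans} in Part 1, which rested on \cite[Proposition 3.29]{FLN} (or equivalently \cite[Proposition 3.9]{Gor22}), and to track how the factor for the maximal torus changes when passing from $\mathrm{GL}_n$ to the quasi-split groups $\mathrm{U}_n$ and $\mathrm{Sp}_{2n}$. The computation decomposes into three pointwise measure comparisons, after which one sums over representatives of the rational classes inside the stable class.

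First I would establish a pointwise comparison of measures on $\underline{G}_\gamma(\mathfrak{o})$. The orbit map $\mathrm{T}_{\gamma,F}\backslash \mathrm{G}_F \to G_\gamma$, $g\mapsto g^{-1}\gamma g$, is an isomorphism by Proposition \ref{prop:git}, and its differential has determinant (up to a sign) equal to the Weyl discriminant $D^{\mathrm{G}}(\gamma)$ of Remark \ref{remark:sordmu}. Combined with the standard formation of the Chevalley form $\omega^{\mathrm{ld}}_{\chi_\gamma}=\omega_{\mathfrak{g}}/\rho_n^{\ast}\omega_{\mathcal{A}^n}$ on the smooth locus of $\rho_n$, this yields the local identity
\[
|\omega^{\mathrm{ld}}_{\chi_\gamma}| = |D(\gamma)|^{1/2}\,|\omega_{\mathrm{T}_{\gamma,F}\backslash \mathrm{G}}|,
\]
where the right hand side is the quotient of invariant forms $\omega_{\mathrm{G}}/\omega_{\mathrm{T}_\gamma}$. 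The hypothesis $\mathrm{char}(F)=0$ or $\mathrm{char}(F)>n$ enters precisely here, since the Kostant--style argument in \cite[Proposition 3.9]{Gor22} (and its quasi-split analogue in \cite[Section 3.5]{FLN}) requires that the Chevalley form descends correctly through the Galois descent $\mathbb{A}^n_{\mathfrak{o}_E}\to \mathcal{A}^n$ constructed via Lemma \ref{lem:c+sigma(c)_bij} in the unitary case, and through the squaring map $y=x^2$ of Equation (\ref{equation:yxx2}) in the symplectic case.

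Next I would convert the three invariant forms into the measures $dg$ and $dt$ used in Section \ref{subsubsection331}. On the group side, Weil's formula gives $|\omega_{\mathrm{G}}|=\#\mathrm{G}(\kappa)\,q^{-\dim\mathrm{G}}\cdot dg$, which accounts for the factor $\#\mathrm{G}(\kappa)q^{-\dim\mathrm{G}}$ in the numerator. On the torus side, Corollary \ref{corollary:maxopencpt} expresses the ft-N\'eron model as $\underline{\mathrm{T}}_\gamma=\prod_{i\in B(\gamma)}\mathrm{Res}_{\mathfrak{o}_{F_i}/\mathfrak{o}}(N^1_{\mathfrak{o}_{\widetilde{F}_i}/\mathfrak{o}_{F_i}})^{sm}$. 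Applying the Weil restriction discriminant formula (cf.\ \cite[Section 2.2.2, Example 2.8]{Gor22}) to $\mathrm{Res}_{\mathfrak{o}_{F_i}/\mathfrak{o}}$ contributes the factor $|\Delta_{F_i/F}|^{1/2}$, while the norm-one torus $N^1_{\mathfrak{o}_{\widetilde{F}_i}/\mathfrak{o}_{F_i}}$ introduces the discriminant of the quadratic étale extension $\widetilde{F}_i/F_i$, contributing $|N_{F_i/F}(\Delta_{\widetilde{F}_i/F_i})|^{1/2}$. Altogether one finds
\[
|\omega_{\mathrm{T}_\gamma}|=\Bigl(\prod_{i\in B(\gamma)}|N_{F_i/F}(\Delta_{\widetilde{F}_i/F_i})|^{1/2}|\Delta_{F_i/F}|^{1/2}\Bigr)\cdot \#\underline{\mathrm{T}}_\gamma(\kappa)\,q^{-\dim\mathrm{T}_{\gamma,F}}\cdot dt.
\]

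Combining the three comparisons integrates into the desired identity for a single rational orbit $\mathcal{O}_{\gamma,d\mu}$. To upgrade this to the stable orbital integral, I would sum over representatives $\gamma'$ of the rational classes inside the stable class: by construction (see step (1) of Section \ref{subsubsection331111}) the tori $\mathrm{T}_{\gamma',F}$ are all canonically isomorphic to $\mathrm{T}_{\gamma,F}$, which forces the point counts $\#\underline{\mathrm{T}}_{\gamma'}(\kappa)$ and the discriminant factors $|N_{F_i/F}(\Delta_{\widetilde{F}_i/F_i})|$, $|\Delta_{F_i/F}|$ to be common, while $|D(\gamma')|=|D(\gamma)|$ and $\#\mathrm{G}(\kappa)$, $\dim\mathrm{G}$, $\dim\mathrm{T}_{\gamma,F}$ are manifestly stable invariants. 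Thus the same prefactor multiplies every term and factors out of the stable sum, yielding the proposition. The main obstacle I anticipate is the torus step when $\widetilde{F}_i/F_i$ is ramified in residue characteristic $2$, where the N\'eron smoothening $(N^1)^{sm}$ of \cite[Definition B.7.1]{KP23} enters; one must check that the invariant top form on the generic fiber extends to a generator of the top differentials on $(N^1)^{sm}$ with the expected discriminant defect, so that the factor $|N_{F_i/F}(\Delta_{\widetilde{F}_i/F_i})|^{1/2}$ comes out correctly. This case is vacuous for $(\widetilde{F},\epsilon)=(E,1)$ since $E/F$ is unramified, and for $(\widetilde{F},\epsilon)=(F,-1)$ it is handled by the explicit description of the smoothening already recorded in the proof of Corollary \ref{lem:pts_spfib_tori}.
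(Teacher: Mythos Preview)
Your overall strategy matches the paper's: identify $G_\gamma$ with $\mathrm{T}_{\gamma,F}\backslash \mathrm{G}_F$, compare $\omega^{\mathrm{ld}}_{\chi_\gamma}$ with a quotient form via the Weyl discriminant, convert group forms to the Haar measures $dg, dt$, and then sum over rational classes in the stable class. The paper organizes the same ingredients into seven labeled steps, passing through the Weyl integration formula (step 4) and the Jacobian of $\rho_n|_{\mathfrak{t}_\gamma}$ (step 5) separately, but the content is the same.

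The one place where your write-up is looser than the paper is the torus step. You assert
\[
|\omega_{\mathrm{T}_\gamma}|=\Bigl(\prod_{i\in B(\gamma)}|N_{F_i/F}(\Delta_{\widetilde{F}_i/F_i})|^{1/2}|\Delta_{F_i/F}|^{1/2}\Bigr)\cdot \#\underline{\mathrm{T}}_\gamma(\kappa)\,q^{-\dim\mathrm{T}_{\gamma,F}}\cdot dt
\]
by invoking ``the Weil restriction discriminant formula'' for both $\mathrm{Res}_{F_i/F}$ and the norm-one torus $N^1_{\widetilde{F}_i/F_i}$. The first factor $|\Delta_{F_i/F}|^{1/2}$ is indeed \cite[Example 2.8]{Gor22}, but that reference does not directly give the norm-one piece $|N_{F_i/F}(\Delta_{\widetilde{F}_i/F_i})|^{1/2}$. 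The paper instead identifies the full torus factor as the refined Artin conductor $|D_{M_{\mathrm{T}}}|^{1/2}$ of \cite{GG99} (step 5), and then computes $a(\mathrm{T}_{\gamma,i,F})$ via the isogeny $\mathrm{T}_{\gamma,i,F}\times \mathrm{Res}_{F_i/F}\mathbb{G}_m \sim \mathrm{Res}_{\widetilde{F}_i/F}\mathbb{G}_m$ in Lemma~\ref{lem:Conductor_unitary}, which is what actually produces the norm factor. Your proposal would be complete once you supply this computation; the ramified residue-characteristic-$2$ concern you flag is orthogonal to it (and indeed the paper's $char(F)>n$ hypothesis already forces $char(\kappa)\nmid [\widetilde{F}_{\chi_\gamma}:F]$, which is where the assumption is used, not in the N\'eron smoothening per se).
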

\begin{proof}

  \begin{enumerate} 
  \item {Comparison between two schemes $\mathrm{T}_{\gamma,F} \backslash \mathrm{G}_F$ and $G_{\gamma}$}

Let $\mathcal{O}(\gamma)$ be the orbit of $\gamma$ under the adjoint action of $\mathrm{G}_{F}$ on $\mfu_{F}$, which is defined in \cite[Section 1.7]{Bor91}. Then $\mathcal{O}(\gamma)$ is a smooth variety by \cite[Proposition in Section 1.8]{Bor91} and is closed in  $\mfu_{F}$ by \cite[Theorem 9.2]{Bor91}.
Furthermore the adjoint morphism yields an isomorphism $\mathrm{Ad}: \mathrm{T}_{\gamma,F}\backslash \mathrm{G}_{F} \rightarrow \mathcal{O}(\gamma)$ by \cite[Proposition (2) in Section 9.1]{Bor91}. 

We claim that 
$$\left(\mathrm{Ad}:\mathrm{T}_{\gamma,F}\backslash \mathrm{G}_{F} \cong \right)\mathcal{O}(\gamma) = G_{\gamma}(:=\rho_n^{-1}(\chi_{\gamma})) \textit{ as subvarieties of } \mathfrak{g}_{F}.
$$ 
Note that both varieties $\mathcal{O}(\gamma)$ and $G_{\gamma}$ are closed in $\mfu_{F}$ and smooth. 
Since they have the same sets of   $\bar{F}$-points, it suffices to show that $\mathcal{O}(\gamma)$ is a subvariety of  $G_{\gamma}$.
This follows from \textit{the universal mapping property} in \cite[Section 6.3]{Bor91}.

Then the set of $F$-points of $G_{\gamma}$ is characterized as follows:

    \[
\bigsqcup_{\gamma' \sim \gamma} \mathrm{T}_{\gamma'}(F) \backslash \mathrm{G}(F) \cong    G_{\gamma}(F), \quad g \mapsto g^{-1} \gamma' g ~~ for ~~~ g\in  \mathrm{T}_{\gamma'}(F) \backslash \mathrm{G}(F), 
    \]
    where $\gamma'$ runs over a set of representatives of the rational conjugacy classes in the stable conjugacy class of $\gamma$.
    Here  (the image of) each $\mathrm{T}_{\gamma'}(F)\backslash \mathrm{G}(F)$ is an open subset of $G_\gamma(F)$ (cf. \cite[Section 20.2]{Kot05}).

    Recall that we defined the measure $d \mu'$ on $\mathrm{T}_{\gamma'}(F)\backslash \mathrm{G}(F)$ in Section \ref{subsubsection331111}.
    On the other hand, it makes sense to restrict the measure $|\omega_{\chi_\gamma}^{\operatorname{ld}}|$ to $\mathrm{T}_{\gamma',F}(F)\backslash \mathrm{G}(F)$. 
Therefore, it suffices to show that for each $\gamma'$, we have
    \[
|\omega_{\chi_{\gamma}}^{\mathrm{ld}}|
= |D(\gamma)|^{1/2} \prod_{i \in B(\gamma)} |N_{F_{i}/F}(\Delta_{\widetilde{F}_i/F_i})|^{-1/2}
|\Delta_{F_i/F}|^{-1/2}\frac{\#\mathrm{G}(\kappa) \cdot q^{-\dim \mathrm{G}}}{\#\underline{\mathrm{T}}_{\gamma}(\kappa) \cdot q^{-\dim   \mathrm{T}_{\gamma,F}}} d\mu'
    \]
    as measures on $\mathrm{T}_{\gamma'}(F)\backslash \mathrm{G}(F)$.
    Here $\#\underline{\mathrm{T}}_{\gamma}(\kappa)=\#\underline{\mathrm{T}}_{\gamma'}(\kappa)$ by Corollary \ref{lem:pts_spfib_tori}.
    Since the coefficient of the right hand side is the same as that for $\gamma'$ (cf. Corollary \ref{corollary:maxopencpt} and Remark \ref{remark:sordmu}), 
    it suffices to show that 
    \begin{equation}
    |\omega_{\chi_{\gamma}}^{\mathrm{ld}}|
= |D(\gamma)|^{1/2} \prod_{i \in B(\gamma)} 
|N_{F_{i}/F}(\Delta_{\widetilde{F}_i/F_i})|^{-1/2} |\Delta_{F_i/F}|^{-1/2}\frac{\#\mathrm{G}(\kappa) \cdot q^{-\dim \mathrm{G}}}{\#\underline{\mathrm{T}}_{\gamma}(\kappa) \cdot q^{-\dim   \mathrm{T}_{\gamma,F}}} d\mu.    
    \end{equation}

        \item {Volume forms}
        
        We denote by $\mathfrak{t}_{\gamma, F}$  the Lie algebra of $\mathrm{T}_{\gamma, F}$.
        Then $\underline{\mathfrak{t}}_\gamma := \operatorname{Lie}(\underline{\mathrm{T}}_\gamma)$ is an affine space over $\mathfrak{o}$, which is a smooth integral model of $\mathfrak{t}_{\gamma, F}$.
        Here $\underline{\mathrm{T}}_\gamma$ is the   ft-Néron model of $\mathrm{T}_{\gamma, F}$ (cf. Corollary \ref{corollary:maxopencpt}).
        We introduce a series of volume forms as follows:
    \[
    \begin{cases}
        \omega_{\mathrm{G}}: \textit{the form on } \mathrm{G}(F) \textit{ such that } \operatorname{vol}(|\omega_{\mathrm{G}}|, \mathrm{G}(\mfo)) =  \frac{\# \mathrm{G}(\kappa)}{q^{\dim \mathrm{G}}}; \\
        \omega_{\mathrm{T}_\gamma}: \textit{the form on }\mathrm{T}_{\gamma,F}(F) \textit{ such that } \operatorname{vol}(|\omega_{\mathrm{T}_\gamma}|, \underline{\mathrm{T}}_\gamma(\mathfrak{o})) =  \frac{\# \underline{\mathrm{T}}_\gamma(\kappa)}{q^{\dim \mathrm{T}_{\gamma,F}}};\\
        \omega_{\mathfrak{g}} : \textit{the form on } \mathfrak{g}_{F} \textit{ such that } \operatorname{vol}(|\omega_{\mathfrak{g}}|,\mathfrak{g}(\mathfrak{o})) = 1; \\
        \omega_{\mathfrak{t}_{\gamma}} : \textit{the form on } \mathfrak{t}_{\gamma, F} \textit{ such that } \operatorname{vol}(|\omega_{\mathfrak{t}_\gamma}|,\underline{\mathfrak{t}}_\gamma(\mfo)) = 1; \\
        \omega_{\mathcal{A}^n_\mathfrak{o}} : \textit{the form on } \mathcal{A}^n_F(F) \textit{ such that } \operatorname{vol}(|\omega_{\mathcal{A}^n_\mathfrak{o}}|,\mathcal{A}^n_\mathfrak{o}(\mathfrak{o})) = 1.
    \end{cases}
    \]
Note that each of these forms is determined by a top degree differential form on each smooth integral model, which is invariant under translation and which is nonzero everywhere on the special fiber over $\kappa$ (cf. \cite[Theorem 2.2.5]{Weil}).

In order to  define a volume form on $\mathrm{T}_{\gamma,F}(F)\backslash \mathrm{G}(F)$ which is invariant under translation by $\mathrm{G}(F)$, it is enough to choose  top degree differential forms on $\mathfrak{g}(F)$ and $\mathfrak{t}_{\gamma,F}(F)$, as mentioned in \cite[\S 7.2]{Kot05}. Therefore, we define the following volume form:
    \[
       \omega_{\mathrm{T}_{\gamma}\backslash\mathrm{G}} : \textit{the form on } \mathrm{T}_{\gamma,F}(F)\backslash \mathrm{G}(F)  \textit{ determined by }\omega_{\mathfrak{g}} \textit{ and } \omega_{\mathfrak{t}_{\gamma,F}}.
    \]
Here we refer to \cite[\S 7.2]{Kot05} for a detailed explanation of the relation between $\omega_{\mathrm{T}_{\gamma}\backslash\mathrm{G}}$ and $\omega_{\mathfrak{g}}, \omega_{\mathfrak{t}_{\gamma,F}}$.
\\

\item {Comparison between $\omega_{\mathrm{T}_{\gamma}\backslash\mathrm{G}}$ and $d\mu(:=\frac{dg}{dt})$}
\cite[Proposition 3.15]{EGM} yields  that assigning a top degree differential form on   $\mathrm{G}_{F}$ which is invariant under tranalation is equivalent to assigning a nonzero element in $\mathfrak{g}(F)$. 
Thus $ \omega_{\mathrm{G}}$ and $\omega_{\mathrm{T}_\gamma}$ correspond to $\omega_{\mathfrak{g}}$ and  $\omega_{\mathfrak{t}_\gamma}$ respectively.
Since $|\omega_{\mathrm{G}}|= \frac{\# \mathrm{G}(\kappa)}{q^{\dim \mathrm{G}}}\cdot dg$ and  
             $|\omega_{\mathrm{T}_\gamma}| =  \frac{\# \underline{\mathrm{T}}_\gamma(\kappa)}{q^{\dim \mathrm{T}_{\gamma,F}}}\cdot dt$, we have 
 \begin{equation}\label{equation:comparisontrundmu}
|\omega_{\mathrm{T}_{\gamma}\backslash\mathrm{G}}|=\frac{\# \mathrm{G}(\kappa)\cdot q^{-\dim \mathrm{G}}}{\# \underline{\mathrm{T}}_\gamma(\kappa) \cdot q^{-\dim \mathrm{T}_{\gamma,F}}}d\mu
 \end{equation}
 as forms on $\mathrm{T}_\gamma(F)\backslash \mathrm{G}(F)$.
Here, $dg$ and $dt$ are defined in Section \ref{subsubsection331}.
\\

    \item {Comparison between $\omega_{\mathrm{T}_{\gamma}\backslash\mathrm{G}} \wedge \omega_{\mathfrak{t}_\gamma}$ and $\omega_{\mathfrak{g}}$}

    Consider the map
    \[
    \beta : \mathrm{T}_{\gamma,F}(F) \backslash \mathrm{G}(F) \times \mathfrak{t}_{\gamma,F}(F) \to \mathfrak{g}_{F}(F), \quad (g,x) \mapsto g^{-1} x g.
    \]
    Then \cite[Section 7.2]{Kot05} gives the equation 
    \begin{equation}\label{equation:comparetrun}
            |W_T|^{-1}\cdot |D(\gamma)| \cdot |\omega_{\mathrm{T}_\gamma\backslash \mathrm{G}} \wedge \omega_{\mathfrak{t}_\gamma}| = |\omega_{\mathfrak{g}}|,
        \end{equation}

where $W_T=N_{\mathrm{G}_F}(\mathrm{T}_{\gamma,F})(F)/\mathrm{T}_{\gamma,F}(F)$.
Note that this relation is also explained in \cite[Equation (28)]{Gor22} in the case of split reductive groups.
\\

    \item {Comparison between $\omega_{\mathfrak{t}_\gamma}$ and $\omega_{\mathcal{A}^n_\mathfrak{o}}$}
    
Since $\mathfrak{t}_{\gamma, F}$ is embedded into $\mfu_{F}$, the morphism $\rho_n$ defined in Equation (\ref{eq:chevalleymapp}) yields 
\[
\rho_{n,F}: \mathfrak{t}_{\gamma,F}(F) \longrightarrow \mathfrak{g}(F) \longrightarrow \mathcal{A}^n_F(F).
\]
We claim that 
\begin{equation}\label{equation:comparetranf}
|W_T|^{-1} \cdot |D(\gamma)|^{1/2} \cdot |D_{M_\mathrm{T}}|^{1/2} \cdot |\omega_{\mathfrak{t}_\gamma}|= |\omega_{\mathcal{A}^n_\mathfrak{o}}|.
\end{equation}
where $D_{M_\mathrm{T}}$ is the \textit{refined Artin conductor} of the motive $M_\mathrm{T}$ associated with $\mathrm{T}_{\gamma,F}$, defined in \cite[(4.5)]{GG99}.
In our case, $M_\mathrm{T}$ is the representation of the Galois group on the character lattice of $\mathrm{T}_\gamma$ (cf. \cite[Section 2.3]{Gor22}).

Firstly, we claim that $\omega_{\mathrm{T}_\gamma}$ coincides with  $\omega^{\mathrm{can}}$, up to multiplication by a unit in $\mathfrak{o}$, where  $\omega^{\mathrm{can}}$ is the volume form on $\mathrm{T}_{\gamma, F}$ defined in \cite[Section 2.2.2]{Gor22} which is associated to the relative identity component $\underline{\mathrm{T}}_{\gamma}^0$ of $\underline{\mathrm{T}}_\gamma$.
Here we refer to  \cite[Definition 4.1.18]{KP23} for the notion of the relative identity component.
Note that the subgroup $\underline{\mathrm{T}}_{\gamma}^0(\mathfrak{o}) \leq \mathrm{T}_c$ is of finite index by \cite[Section 2.2.2]{Gor22}.
It suffices to show that $\mathrm{vol}(|\omega^{\mathrm{can}}|,\mathrm{T}_c)=\mathrm{vol}(|\omega_{\mathrm{T}_\gamma}|,\mathrm{T}_c)$.
\cite[Corollary 11.2.1]{KP23} and \cite[Section 2.2.3.(1)]{Gor22} yield that $[\mathrm{T}_c:\underline{\mathrm{T}}_\gamma^0(\mathfrak{o})] = [\underline{\mathrm{T}}_\gamma(\kappa):\underline{\mathrm{T}}_\gamma^0(\kappa)]$.
Then by \cite[Section 2.2.3.(2)]{Gor22} we have
\[
\mathrm{vol}(|\omega^{\mathrm{can}}|,\mathrm{T}_c)
= [\underline{\mathrm{T}}_\gamma(\kappa):\underline{\mathrm{T}}_\gamma^0(\kappa)] \cdot \frac{\# \underline{\mathrm{T}}_\gamma^0(\kappa)}{q^{\dim \mathrm{T}_{\gamma,F}}}
=\frac{\# \underline{\mathrm{T}}_\gamma(\kappa)}{q^{\dim \mathrm{T}_{\gamma,F}}}
=\mathrm{vol}(|\omega_{\mathrm{T}_\gamma}|,\mathrm{T}_c).
\]

Choose coordinates for $\underline{\mathfrak{t}}_\gamma(\mfo)$ and $\mathcal{A}^n_\mathfrak{o}(\mathfrak{o})$ whose top degree exterior powers are the forms $\omega_{\mathfrak{t}_\gamma}$ and $\omega_{\mathcal{A}^n_\mathfrak{o}}$ respectively.
The Jacobian of the above map $\rho_{n,F}: \mathfrak{t}_\gamma(F) \longrightarrow  \mathcal{A}^n_F(F)$ with respect to these coordinates is a polynomial having coefficients in $\mfo$.

On the other hand, the base change to $\mfo_{\widetilde{F}}$ of these coordinates also form coordinates for $\underline{\mathfrak{t}}_\gamma(\mfo_{\widetilde{F}})$ and $\mathcal{A}^n_\mathfrak{o}(\mathfrak{o}_{\widetilde{F}})$. 
With respect to these, the Jacobian of the map $\rho_{n,\widetilde{F}}: \mathfrak{t}_{\gamma,F}(\widetilde{F}) \longrightarrow  \mathcal{A}^n_F(\widetilde{F})$  is then the same as that for $\rho_{n,F}$. 

Since $\mathrm{G}_{\widetilde{F}}$ is split, 
\cite[Equation (30)]{Gor22} yields that  the Jacobian, with respect to the coordinate for $\mathfrak{t}_{\gamma, \widetilde{F}}$ induced by a basis of the character lattice (cf. \cite[Section 2.2]{Gor22}), is $D(\gamma)^{1/2}$ up to multiplication by a unit in $\mfo_{\widetilde{F}}$.  
In \cite[Section 2.2.3.(3)]{Gor22} the difference between $\omega_{\mathrm{T}_\gamma}$ and the form with respect to a basis of the character lattice is explained; 
 it is $D_{M_\mathrm{T}}^{1/2}$ up to multiplication by a unit in $\mfo$, under the assumption that $char(F) = 0$ or that $\mathrm{T}_{\gamma,F}$ splits over a Galois extension of $F$ of degree relatively prime to $char(F)$.
 Note that $\mathrm{T}_{\gamma,F}$ splits over the Galois closure of $\widetilde{F}_{\chi_\gamma}/F$, where $[\widetilde{F}_{\chi_\gamma}:F]=2n$.
 Thus  the second assumption is satisfied since $char(F)$ is either $0$ or $>n$.

Since an invariant top degree differential form on an algebraic group is completely determined by its value at the identity (cf. \cite[Proposition 3.15]{EGM}), this difference is exactly the same as that for  Lie algebras.

In conclusion,  the Jacobian of $\rho_{n,F}: \mathfrak{t}_{\gamma,F}(F) \longrightarrow  \mathcal{A}^n_F(F)$ is also a constant in $F$ whose absolute value is $|D(\gamma)|^{1/2} \cdot |D_{M_T}|^{1/2}$. 
Since the map $\rho_{n,F}: \mathfrak{t}_{\gamma, F}(F) \longrightarrow  \mathcal{A}^n_F(F)$ is $|W_T|:1$ (cf. \cite[Equation (28)]{Gor22}), we have the desired formula.
\\

\item {Computation of $|D_{M_\mathrm{T}}|$}

By \cite[Corollary 4.6]{GG99}, we have
$|D_{M_\mathrm{T}}| = |\pi^{a(\mathrm{T}_{\gamma,F})}|$, 
where $a(\mathrm{T}_{\gamma,F})$ is the \textit{Artin conductor} of a torus $\mathrm{T}_{\gamma,F}$ defined in \cite[\S 4]{GG99}.
It is well-known (cf. \cite[Proposition VII.11.7]{Neu}) that the Artin conductor satisfies the relation 
$a(\mathrm{T}_{\gamma,F}) = a(\prod\limits_{i\in B(\gamma)}\mathrm{T}_{\gamma,i,F}) = \sum\limits_{i \in B(\gamma)} a(\mathrm{T}_{\gamma,i,F}).$
Here, $\mathrm{T}_{\gamma,i,F}$ is defined in Lemma \ref{lem: Cent_unitary}.
Lemma \ref{lem:Conductor_unitary}, which will be provided below, yields that 
\[
|\pi^{a(\mathrm{T}_{\gamma,i,F})}| = |N_{F_i/F}(\Delta_{\widetilde{F}_i/F_i})|\cdot|\Delta_{F_{i}/F}|.
\]

  \item {Comparison between  $\omega_{\chi_{\gamma}}^{\mathrm{ld}}$ and $d\mu$}

Combining Equations (\ref{equation:comparetrun}) and (\ref{equation:comparetranf}), 
    we have
$|D(\gamma)|^{1/2} \cdot |D_{M_\mathrm{T}}|^{-1/2} \cdot|\omega_{\mathrm{T}_\gamma\backslash \mathrm{G}} \wedge \omega_{\mathcal{A}^n_\mathfrak{o}}| = |\omega_{\mathfrak{g}}|$, 
    which yields that 
     $|D(\gamma)|^{1/2} \cdot |D_{M_\mathrm{T}}|^{-1/2} \cdot|\omega_{\mathrm{T}_\gamma\backslash \mathrm{G}}|=|\omega_{\chi_{\gamma}}^{\mathrm{ld}}|.$
Combining Equation (\ref{equation:comparisontrundmu}), we finally obtain the following formula:
\[
    |\omega_{\chi_{\gamma}}^{\mathrm{ld}}|
= |D(\gamma)|^{1/2} \prod_{i \in B(\gamma)} 
|N_{F_i/F}(\Delta_{\widetilde{F}_i/F_i})|^{-1/2}|\Delta_{F_i/F}|^{-1/2}\frac{\#\mathrm{G}(\kappa) \cdot q^{-\dim \mathrm{G}}}{\#\underline{\mathrm{T}}_{\gamma}(\kappa) \cdot q^{-\dim   \mathrm{T}_{\gamma,F}}} d\mu.
\]
    \end{enumerate}
\end{proof}

\begin{lemma}\label{lem:Conductor_unitary}
    Suppose that $F^\sigma_{\chi_\gamma}/F$ is a field extension of degree $n$ and that $char(F)\neq 2$.
    The Artin conductor $a(\mathrm{T}_{\gamma,F})$ of $\mathrm{T}_{\gamma,F}$ satisfies 
    \[
    |\pi^{a(\mathrm{T}_{\gamma,F})}|
    =|N_{F^\sigma_{\chi_\gamma}/F}(\Delta_{\widetilde{F}_{\chi_\gamma}/F^\sigma_{\chi_\gamma}})| \cdot |\Delta_{F^\sigma_{\chi_\gamma}/F}|,     \textit{    where    }
    \]
 \[
    |N_{F^\sigma_{\chi_\gamma}/F}(\Delta_{\widetilde{F}_{\chi_\gamma}/F^\sigma_{\chi_\gamma}})|
    = q^{-rd} \textit{   with     }   \Delta_{{\widetilde{F}_{\chi_\gamma}}/F^\sigma_{\chi_\gamma}} = (\varpi^r).\]
    Here $\Delta_{\widetilde{F}_{\chi_\gamma}/F^\sigma_{\chi_\gamma}}$ and $\Delta_{F^\sigma_{\chi_\gamma}/F}$ are the discriminants of $\widetilde{F}_{\chi_\gamma}/F^\sigma_{\chi_\gamma}$ and $F^\sigma_{\chi_\gamma}/F$ respectively, 
$d=[K:F]$ (cf. Corollary \ref{lem:pts_spfib_tori}),  and $\varpi$ is a uniformizer of $\mathfrak{o}_{F^\sigma_{\chi_\gamma}}$.
\end{lemma}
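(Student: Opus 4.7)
The plan is to identify the rational Galois representation $M_{\mathrm{T}_{\gamma,F}}$ associated with the torus $\mathrm{T}_{\gamma,F}$ and then apply the standard inductive formula for Artin conductors. Set $L := F^\sigma_{\chi_\gamma}$ and $\widetilde{L} := \widetilde{F}_{\chi_\gamma}$. By Lemma \ref{lem: Cent_unitary} together with the hypothesis that $L/F$ is a field, one has $\mathrm{T}_{\gamma,F} \cong \operatorname{Res}_{L/F} N^1_{\widetilde{L}/L}$. By Remark \ref{remark:fitilde}, $\widetilde{L}/L$ is étale of degree $2$, so it is either split (in which case $N^1_{\widetilde{L}/L} \cong \mathbb{G}_m$, $\Delta_{\widetilde{L}/L} = \mathfrak{o}_L$, and both sides of the desired identity collapse to $|\Delta_{L/F}|$) or a quadratic field extension. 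In the latter case, a direct computation from the exact sequence $1 \to N^1_{\widetilde{L}/L} \to \operatorname{Res}_{\widetilde{L}/L}\mathbb{G}_m \to \mathbb{G}_m \to 1$ and the functoriality of character lattices under Weil restriction identifies $M_{\mathrm{T}_{\gamma,F}} = \operatorname{Ind}_{G_L}^{G_F} \chi$, where $\chi$ is the nontrivial quadratic character of $G_L$ cutting out $\widetilde{L}$.

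Next, I will invoke the inductive formula for Artin conductors of characters (Serre, \emph{Local Fields}, Chapter VI, Section 2, Proposition 4), which reads
\[
f_F\!\left(\operatorname{Ind}_{G_L}^{G_F} \chi\right)
= \mathfrak{d}_{L/F} \cdot N_{L/F}\!\left(f_L(\chi)\right)
\]
as fractional ideals of $\mathfrak{o}$. The conductor-discriminant formula for the quadratic character $\chi$ gives $f_L(\chi) = \Delta_{\widetilde{L}/L}$. Combining these with the identification $|\pi^{a(\mathrm{T}_{\gamma,F})}| = |D_{M_{\mathrm{T}_{\gamma,F}}}|$ already used in Step (6) of the proof of Proposition \ref{prop:comparison_measures} yields the first displayed equality of the lemma.

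For the remaining evaluation, writing $\Delta_{\widetilde{L}/L} = (\varpi^r)$ for a uniformizer $\varpi$ of $\mathfrak{o}_L$, the basic behaviour of the ideal norm gives $v_F(N_{L/F}(\varpi)) = f_{L/F}$, the residue degree of $L/F$. Since $K$ is by construction the maximal unramified subextension of $L/F$, this residue degree equals $[K:F] = d$, so $|N_{L/F}(\Delta_{\widetilde{L}/L})| = q^{-rd}$. The only real subtlety I anticipate is bookkeeping: confirming that the rational Galois representation $M_{\mathrm{T}_{\gamma,F}}$ used in the definition of the refined Artin conductor $D_{M_T}$ of \cite{GG99} (cf. Step (5) of the proof of Proposition \ref{prop:comparison_measures}) agrees with the induction $\operatorname{Ind}_{G_L}^{G_F}\chi$, and that the normalizations of the Artin conductor in \cite{GG99} and in Serre coincide; both are routine but need to be stated carefully, especially in order to handle the split, unramified, and ramified cases for $\widetilde{L}/L$ uniformly.
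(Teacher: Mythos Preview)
Your argument is correct, but it proceeds along a different line from the paper's. You identify the rational character lattice of $\mathrm{T}_{\gamma,F}\cong\operatorname{Res}_{L/F}N^1_{\widetilde L/L}$ explicitly as $\operatorname{Ind}_{G_L}^{G_F}\chi$ (with $\chi$ the quadratic character of $\widetilde L/L$, or trivial in the split case) and then apply Serre's induction formula $f_F(\operatorname{Ind}\chi)=\mathfrak d_{L/F}\cdot N_{L/F}(f_L(\chi))$ together with the conductor--discriminant relation $f_L(\chi)=\Delta_{\widetilde L/L}$. The paper instead avoids naming the Galois representation: from the norm exact sequence it extracts an isogeny $\mathrm{T}_{\gamma,F}\times\operatorname{Res}_{L/F}\mathbb G_m\sim\operatorname{Res}_{\widetilde L/F}\mathbb G_m$, uses isogeny-invariance of the Artin conductor to write $a(\mathrm{T}_{\gamma,F})=a(\operatorname{Res}_{\widetilde L/F}\mathbb G_m)-a(\operatorname{Res}_{L/F}\mathbb G_m)$, evaluates each Weil-restriction conductor as the corresponding discriminant (via \cite{Lee21}), and finishes with the discriminant tower formula $\Delta_{\widetilde L/F}=N_{L/F}(\Delta_{\widetilde L/L})\cdot\Delta_{L/F}^{2}$. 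Both routes ultimately rest on the same inductive behaviour of conductors; yours is more direct and makes the Galois module visible, while the paper's keeps everything at the level of tori and discriminants and sidesteps the bookkeeping you flag about matching the \cite{GG99} normalisation with Serre's. The final evaluation $|N_{L/F}(\varpi^r)|=q^{-rd}$ via the residue degree $f_{L/F}=[K:F]=d$ is handled identically in spirit.
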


Note that  $r=0$ if $\widetilde{F}_{\chi_\gamma}/F^\sigma_{\chi_\gamma}$ is unramified and that $r=1$  if $\widetilde{F}_{\chi_\gamma}/F^\sigma_{\chi_\gamma}$ is ramified with $char(\kappa)\neq 2$.
If $\widetilde{F}_{\chi_\gamma}/F^\sigma_{\chi_\gamma}$ is split, then we understand $r=0$.

\begin{proof}
Note that $\widetilde{F}_{\chi_\gamma}/F^\sigma_{\chi_\gamma}$ is either split or a field extension (cf. Remark \ref{remark:fitilde}). 
We treat each case independently.

\begin{enumerate}
    \item Suppose that $\widetilde{F}_{\chi_\gamma}/F^\sigma_{\chi_\gamma}$ is split so that $\mathrm{T}_{\gamma, F} \cong \operatorname{Res}_{F^\sigma_{\chi_\gamma}/F} \mathbb{G}_m$ by Lemma \ref{lem: Cent_unitary}. 
Then   \cite[Proposition 2.2]{Lee21} (see also \cite[Corollary VII.11.8]{Neu})  yields $|\pi^{a(\mathrm{T}_\gamma)}| = |\Delta_{F^\sigma_{\chi_\gamma}/F}|$.

    \item Suppose that $\widetilde{F}_{\chi_\gamma}/F^\sigma_{\chi_\gamma}$ is a field extension.
    We follow the idea of \cite[Proposition 2.2]{Lee21}.
   The exact sequence
    \[
    1 \to \mathrm{T}_{\gamma, F} \to \operatorname{Res}_{F^\sigma_{\chi_\gamma}/F} \left(\operatorname{Res}_{\widetilde{F}_{\chi_\gamma}/F^\sigma_{\chi_\gamma}} 
    \mathbb{G}_m\right) \xrightarrow{\operatorname{Res}_{F^\sigma_{\chi_\gamma}/F}\left(N_{\widetilde{F}_{\chi_\gamma}/F^\sigma_{\chi_\gamma}}\right)} \operatorname{Res}_{F^\sigma_{\chi_\gamma}/F} \mathbb{G}_m
    \to 1
    \]
    yields an existence of an isogeny $\mathrm{T}_{\gamma,F} \times \operatorname{Res}_{F^\sigma_{\chi_\gamma}/F} \mathbb{G}_m \sim \operatorname{Res}_{\widetilde{F}_{\chi_\gamma}/F} \mathbb{G}_m$ by \cite[Equation (4.3)]{Ono}.
    Since the Artin conductor is invariant under an isogeny, we have that
    \[
    a(\mathrm{T}_{\gamma, F}) = a(\operatorname{Res}_{\widetilde{F}_{\chi_\gamma}/F}\mathbb{G}_m) - a(\operatorname{Res}_{F^\sigma_{\chi_\gamma}/F} \mathbb{G}_m).
    \]
    Then \cite[Proposition III. 8]{Ser} and \cite[Proposition 2.2]{Lee21} yield that
    \[
    |\pi^{a(\mathrm{T}_{\gamma, F})}|
    =\frac{|\Delta_{{\widetilde{F}}_{\chi_\gamma}/F}|}{|\Delta_{F^\sigma_{\chi_\gamma}/F}|}
    =\frac{|N_{F^\sigma_{\chi_\gamma}/F}(\Delta_{\widetilde{F}_{\chi_\gamma}/F^\sigma_{\chi_\gamma}}) \cdot \Delta_{F^\sigma_{\chi_\gamma}/F}^{[\widetilde{F}_{\chi_\gamma}:F^\sigma_{\chi_\gamma}]}|}{|\Delta_{F^\sigma_{\chi_\gamma}/F}|}
    = |N_{F^\sigma_{\chi_\gamma}/F}(\Delta_{\widetilde{F}_{\chi_\gamma}/F^\sigma_{\chi_\gamma}}) \cdot \Delta_{F^\sigma_{\chi_\gamma}/F}|.
    \]
\end{enumerate}
\end{proof}

In the next section, we will explain the parabolic descent so that the stable orbital integral is reduced to the case that the set $B(\gamma)^{split}$ is empty, equivalently $\widetilde{F}_i/F_i$ for all $i\in B(\gamma)$ is a quadratic field extension (cf. Remark \ref{remark:fitilde}). 
In Sections \ref{sec:reduction}-\ref{sectypem}, we will treat the case that $B(\gamma)=B(\gamma)^{irred}$ is a singleton and thus we will provide the comparison in this case.

\begin{definition}\label{def:Serre_inv}
Define $S(\psi)$ and $S(\gamma)$ to be the \textit{Serre invariants} (cf. \cite[Section 2.1]{Yun13}) which are described by the relative module lengths
    \[
    \begin{cases}
        S(\psi) = [\mathfrak{o}_{F_{\chi_\gamma}^\sigma}:\mathfrak{o}[y]/(\psi(y))]_{\mfo}; \\
        S(\gamma) = [\mathfrak{o}_{\widetilde{F}_{\chi_\gamma}}:\mathfrak{o}_{\widetilde{F}}[x]/(\chi_\gamma(x))]_{\mfo_{\widetilde{F}}}.
    \end{cases}
    \]
Here $F_{\chi_{\gamma}}^\sigma = F[y]/(\psi(y))$ in Equation (\ref{equation:F_gamma_iso}) and $\widetilde{F}_{\chi_{\gamma}} :=  \widetilde{F}[x]/(\chi_{\gamma}(x))$ at the beginning of Section \ref{sec:trans_herm}.
Note that  $S(\gamma)=S(\psi)$ when $(\widetilde{F}, \epsilon)=(E, 1)$ since
$E/F$ is a unramified field extension and $\psi(y)=\chi_{\alpha\gamma}(x)$ with $\alpha\in \mfo_E^{\times}$. 
\end{definition}

\begin{corollary}\label{cor:comp_irr}
    Suppose that $char(F) = 0$ or $char(F)>n$, and that $\widetilde{F}_{\chi_\gamma}/\widetilde{F}$ is a field extension. 
    The difference between $\mathcal{SO}_{\gamma}$ and $\mathcal{SO}_{\gamma,d\mu}$ is described as follows:
    \begin{enumerate}
        \item If $(\widetilde{F},\epsilon) = (E,1)$, then we have
        \[
        \mathcal{SO}_\gamma^{\mathrm{U}_n}
        =
        q^{-S(\gamma)} \cdot \frac{\#\mathrm{U}_n(\kappa) q^{-n^2}}{1+q^{-d}} \mathcal{SO}_{\gamma,d\mu}^{\mathrm{U}_n}
        \]
    where $d=[K:F]$ (cf. Corollary \ref{lem:pts_spfib_tori}).
        \item If $(\widetilde{F},\epsilon) = (F,-1)$, then we have
        \[
     \mathcal{SO}_\gamma^{\mathrm{Sp}_{2n}}=
    \begin{cases}
        q^{-S(\gamma)+S(\psi)} \cdot \frac{\#\mathrm{Sp}_{2n}(\kappa) q^{-n(2n+1)}}{1+q^{-d}} \cdot \mathcal{SO}_{\gamma,d\mu}^{\mathrm{Sp}_{2n}}  &\textit{if } \widetilde{F}_{\chi_\gamma}/F_{\chi_\gamma}^\sigma \textit{ is unramified}; \\
        q^{-S(\gamma)+S(\psi)} \cdot \frac{\#\mathrm{Sp}_{2n}(\kappa) q^{-n(2n+1)}}{2} \cdot \mathcal{SO}_{\gamma,d\mu}^{\mathrm{Sp}_{2n}}  &\textit{if } \widetilde{F}_{\chi_\gamma}/F_{\chi_\gamma}^\sigma \textit{ is ramified}.
    \end{cases}
    \]
    \end{enumerate}
    Here $d$ is the inertial degree of $F_{\chi_\gamma}^\sigma/F$.
\end{corollary}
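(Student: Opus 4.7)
The plan is to derive this corollary as a direct specialization of Proposition \ref{prop:comparison_measures} once $B(\gamma)$ is a singleton. The assumption that $\widetilde{F}_{\chi_\gamma}/\widetilde{F}$ is a field extension forces $\chi_\gamma$ to be irreducible over $\widetilde{F}$, so that $B(\gamma) = B(\gamma)^{irred} = \{i\}$ with $F_i = F^\sigma_{\chi_\gamma}$ and $\widetilde{F}_i = \widetilde{F}_{\chi_\gamma}$; the product appearing in Proposition \ref{prop:comparison_measures} collapses to a single factor. The remaining task is to evaluate the torus ratio $\frac{\#\mathrm{G}(\kappa) q^{-\dim \mathrm{G}}}{\#\underline{\mathrm{T}}_\gamma(\kappa) q^{-\dim \mathrm{T}_{\gamma,F}}}$ and to simplify the discriminant factor $|D(\gamma)|^{1/2} \cdot |N_{F_i/F}(\Delta_{\widetilde{F}_i/F_i})|^{-1/2} \cdot |\Delta_{F_i/F}|^{-1/2}$.

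For the torus ratio I would apply Corollary \ref{lem:pts_spfib_tori}. In the unitary case $\widetilde{F}_{\chi_\gamma}/F^\sigma_{\chi_\gamma}$ is always unramified by Remark \ref{remark:fitilde}(1), so $\#\underline{\mathrm{T}}_\gamma(\kappa) = q^n + q^{n-d}$; dividing by $q^{\dim \mathrm{T}_{\gamma,F}} = q^n$ gives the factor $1+q^{-d}$ in the denominator, with $\dim \mathrm{U}_n = n^2$. In the symplectic case the same corollary gives $q^n + q^{n-d}$ in the unramified subcase and $2q^n$ in the ramified subcase (the latter requiring $\mathrm{char}(\kappa) \neq 2$, which is automatic from $\mathrm{char}(F) > n$ when $n \geq 2$), producing respectively the factors $1+q^{-d}$ and $2$, with $\dim \mathrm{Sp}_{2n} = n(2n+1)$.

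For the discriminant factor in the unitary case, I would combine three inputs: $|D(\gamma)| = |\Delta_\gamma|$ from Remark \ref{remark:sordmu}, $|N_{F_i/F}(\Delta_{\widetilde{F}_i/F_i})| = 1$ since the extension is unramified, and Proposition \ref{propserre} applied to the irreducible polynomial $\psi \in \mathfrak{o}[y]$ of degree $n$, giving $|\Delta_{F_i/F}|^{1/2} = q^{S(\psi)} |\Delta_\psi|^{1/2}$. The elementary observation that $\Delta_\psi = \alpha^{n(n-1)} \Delta_\gamma$ with $\alpha^{n(n-1)} \in \mathfrak{o}^\times$ (since $\sigma(\alpha) = -\alpha$ forces $\alpha^2 \in \mathfrak{o}^\times$ and $n(n-1)$ is even) yields $|\Delta_\psi|_F = |\Delta_\gamma|_F$, so the factor collapses to $q^{-S(\psi)} = q^{-S(\gamma)}$ using $S(\gamma) = S(\psi)$ from Definition \ref{def:Serre_inv}.

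The symplectic case is the most delicate piece. Here $|D(\gamma)| = |\Delta_\gamma|/|\Delta_\psi|$, and I would apply Proposition \ref{propserre} twice: to $\chi_\gamma \in \mathfrak{o}[x]$ of degree $2n$ giving $|\Delta_\gamma|^{1/2} = q^{-S(\gamma)} |\Delta_{\widetilde{F}_{\chi_\gamma}/F}|^{1/2}$, and to $\psi \in \mathfrak{o}[y]$ of degree $n$ giving $|\Delta_\psi|^{1/2} = q^{-S(\psi)} |\Delta_{F_i/F}|^{1/2}$. Feeding these into the tower formula for discriminants $|\Delta_{\widetilde{F}_{\chi_\gamma}/F}| = |N_{F_i/F}(\Delta_{\widetilde{F}_i/F_i})| \cdot |\Delta_{F_i/F}|^2$ causes $|N_{F_i/F}(\Delta_{\widetilde{F}_i/F_i})|$ and $|\Delta_{F_i/F}|$ to telescope out, leaving $q^{-S(\gamma)+S(\psi)}$. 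The main obstacle is not conceptual but rather the bookkeeping of absolute values across the local fields $F, E, F_i, \widetilde{F}_i$, where a factor of $2$ in exponents can easily be mishandled; once this is done cleanly, assembling the torus ratio with the discriminant factor yields the stated formula in each subcase.
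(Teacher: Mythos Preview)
Your proposal is correct and follows essentially the same route as the paper's proof: specialize Proposition \ref{prop:comparison_measures} to the case where $B(\gamma)$ is a singleton, evaluate the torus factor via Corollary \ref{lem:pts_spfib_tori}, and reduce the discriminant factor using Proposition \ref{propserre} together with the tower formula for discriminants (which the paper invokes by citing the proof of Lemma \ref{lem:Conductor_unitary}). One small slip: in the ramified symplectic subcase, the hypothesis needed in Corollary \ref{lem:pts_spfib_tori} is $\mathrm{char}(F) \neq 2$, not $\mathrm{char}(\kappa) \neq 2$; this is automatic from $\mathrm{char}(F)=0$ or $\mathrm{char}(F)>n$ once $n \geq 2$, whereas your claimed residue-characteristic condition is not (e.g.\ $\mathrm{char}(F)=0$ with $\mathrm{char}(\kappa)=2$).
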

\begin{proof}
    Proposition \ref{prop:comparison_measures} yields that 
    \[
    \mathcal{SO}_{\gamma}^{\mathrm{G}}
    = |D(\gamma)|^{1/2} |N_{F_{\chi_\gamma}^\sigma/F}(\Delta_{\widetilde{F}_{\chi_\gamma}/F_{\chi_\gamma}^\sigma})|^{-1/2}|\Delta_{F_{\chi_\gamma}^\sigma/F}|^{-1/2}
    \frac{\#\mathrm{G}
    (\kappa) q^{-\dim \mathrm{G}}}{\#\underline{\mathrm{T}}_{\gamma}(\kappa) q^{-\dim \mathrm{T}_{\gamma,F}}} \mathcal{SO}_{\gamma,d\mu}^{\mathrm{G}}.
    \]
By Lemma \ref{lem:Conductor_unitary},      $|N_{F^\sigma_{\chi_\gamma}/F}(\Delta_{\widetilde{F}_{\chi_\gamma}/F^\sigma_{\chi_\gamma}})| = 1$  if $\widetilde{F}_{\chi_\gamma}/{F}^\sigma_{\chi_\gamma}$ is unramified.
    Proposition \ref{propserre} yields
    \[
    (\#\kappa_{\widetilde{F}})^{-S(\gamma)} = |\Delta_\gamma|_{\widetilde{F}}^{1/2} \cdot |\Delta_{\widetilde{F}_{\chi_\gamma}/\widetilde{F}}|_{\widetilde{F}}^{-1/2} \textit{ and }
    q^{-S(\psi)} = |\Delta_\psi|^{1/2} \cdot |\Delta_{{F}_{\chi_\gamma}^\sigma/F}|^{-1/2}.
    \]
    
    \begin{enumerate}
        \item 
    Suppose that $(\widetilde{F},\epsilon) = (E,1)$.
    In this case, we have $|N_{F^\sigma_{\chi_\gamma}/F}(\Delta_{\widetilde{F}_{\chi_\gamma}/F^\sigma_{\chi_\gamma}})| = 1$.
    Remark \ref{remark:sordmu} for $\mathrm{U}_n$ yields that 
    \[
    |D(\gamma)|^{1/2} \cdot |\Delta_{F_{\chi_\gamma}^\sigma/F}|^{-1/2} 
    = |\Delta_\gamma|^{1/2} \cdot |\Delta_{{F}_{\chi_\gamma}^\sigma/F}|^{-1/2} 
    = |\Delta_\psi|^{1/2} \cdot |\Delta_{{F}_{\chi_\gamma}^\sigma/F}|^{-1/2} 
    = q^{-S(\psi)}=q^{-S(\gamma)}.
    \]
    \item
     Suppose that $(\widetilde{F},\epsilon) = (F,-1)$.
     By the proof of Lemma \ref{lem:Conductor_unitary}, we have $|N_{F_{\chi_\gamma}^\sigma/F}(\Delta_{\widetilde{F}_{\chi_\gamma}/F_{\chi_\gamma}^\sigma})||\Delta_{F_{\chi_\gamma}^\sigma/F}| = |\Delta_{\widetilde{F}_{\chi_\gamma}/F}||\Delta_{F_{\chi_\gamma}^\sigma/F}|^{-1}$.
     Then Remark \ref{remark:sordmu} for $(\widetilde{F},\epsilon) = (F,-1)$ yields that 
     \begin{align*}
     &|D(\gamma)|^{1/2} |N_{F^\sigma_{\chi_\gamma}/F}(\Delta_{\widetilde{F}_{\chi_\gamma}/F^\sigma_{\chi_\gamma}})|^{-1/2}|\Delta_{F^\sigma_{\chi_\gamma}/F}|^{-1/2}
     =\frac{|\Delta_{\gamma}|^{1/2}|\Delta_{\widetilde{F}_{\chi_\gamma}/F}|^{-1/2}}{|\Delta_{\psi}|^{1/2}|\Delta_{F_{\chi_\gamma}^\sigma/F}|^{-1/2}}
     =q^{-S(\gamma)} \cdot q^{S(\psi)}.
     \end{align*}
     \end{enumerate}
\end{proof}

\section{Parabolic descent}\label{sec:parabolicdescent}
Recall that $B(\gamma)=B(\gamma)^{irred} \bigsqcup B(\gamma)^{split}$ is the index set of irreducible factors in Equation (\ref{equation:irrfactor}).
A goal of this subsection is to explain a reduction of $\sog$ to the case that  $B(\gamma)^{irred}=B(\gamma)$. 
We will use a well-known method, parabolic descent, by closely following  \cite[Section 8.6]{GH19}.
In order to choose a precise measure appeared in loc. cit., it is necessary to describe a maximal compact subgroup of a Levi subgroup and its Lie algebra. 
Thus we will start with a precise matrix description of $\gamma$ with $\psi=\prod\limits_{i \in B(\gamma)} \psi_i=\prod\limits_{i\in B(\gamma)^{irred}}\psi_{i} \times \prod\limits_{i\in B(\gamma)^{split}}\psi_{i}$ (cf. Equations (\ref{equation:F_gamma_iso})-(\ref{equation:irrfactor})).
We introduce the following notations:
\[
\left\{
\begin{array}{l}
B(\gamma)^{irred}=\{1, \cdots, k\} \textit{   and   } B(\gamma)^{split}=\{k+1, \cdots, k+k^{\dagger}\};\\
  m_i= \textit{ the degree of }\psi_i(y) \textit{ in $\mfo[x]$ with }i\in B(\gamma)^{irred};\\
  2l_i= \textit{ the degree of }\psi_i(y) \textit{ in $\mfo[x]$ with }i\in B(\gamma)^{split};\\
 m=m_1+\cdots +m_k  \textit{   and   } l=l_{k+1}+\cdots +l_{k+k^{\dagger}}.
 \end{array}
\right.
\]

Note that $m+2l=n$ if $(\widetilde{F}, \epsilon)=(E, 1)$ and $m+2l=2n$ if $(\widetilde{F}, \epsilon)=(F, -1)$. In the latter case, $m_i$ is always even since $y=x^2$.

Since the parabolic descent is unnecesary when $B(\gamma)$ is a singleton, we suppose that $\#B(\gamma)\geq 2$  and $\#B(\gamma)^{irred}\geq 1$   in this section.

\subsection{A matrix description of $\gamma$}\label{section:matdescofr}
In this subsection, we suppose that $char(\kappa)>2$  
if $(\widetilde{F}, \epsilon)=(E, 1)$ and $n$ is even.
We do not impose any restriction on $char(\kappa)$ otherwise.

Let $(L_i, h_i)$ be an hermitian lattice represented by the following matrix for a suitable basis:
\[
\left\{
\begin{array}{l l}
Id_{m_i}  &  \textit{ if 
 $(\widetilde{F}, \epsilon)=(E, 1)$ and $1\leq i \leq k$};\\
\begin{pmatrix} 0 & Id_{m_i/2} \\ - Id_{m_i/2} & 0   \end{pmatrix}  & \textit{ if  $(\widetilde{F}, \epsilon)=(F, -1)$ and $1\leq i \leq k$};\\
\begin{pmatrix} 0 & Id_{l_i} \\ \epsilon Id_{l_i} & 0   \end{pmatrix}  & \textit{ if $k+1\leq i \leq k+k^{\dagger}$ in both cases}.
 \end{array}
\right.
\]
Then $(L,h)$ is the orthogonal sum of $(L_i, h_i)$'s with $1\leq i \leq k+k^{\dagger}$ since a unimodular lattice of rank $n$ is unique up to isometry\footnote{This holds without any restriction on $char(\kappa)$.}.
We denote by $\mathrm{G}_{h_i}$ the (smooth) reductive group scheme defined over $\mfo$ stabilizing $(L_i, h_i)$ and by 
$\mfu_{h_i}$ its Lie algebra over $\mfo$ for $1\leq i \leq k+k^{\dagger}$.

\begin{itemize}
    \item 
If $(\widetilde{F}, \epsilon)=(E, 1)$ and $1\leq i \leq k$, then  choose $\gamma_i\in \mfu_{h_i}(\mfo)$ such that $\chi_{\alpha\gamma_i}(x)=\psi_i(x)$. 
Here existence of $\gamma_i$ is explained in \cite[Section 2.3]{LN} and \cite[Proposition 4.3.2]{BC} with the restriction that  $char(\kappa)>2$  
if $(\widetilde{F}, \epsilon)=(E, 1)$ and $n$ is even (cf. \cite{Lee23} for the correction).

\item 
If $(\widetilde{F}, \epsilon)=(F, -1)$ and $1\leq i \leq k$,   then  choose $\gamma_i\in \mfu_{h_i}(\mfo)$ such that $\chi_{\gamma_i}(x)=\psi_i(x^2)$. 
Here existence of $\gamma_i$ is explained in \cite[Proposition 10]{AFV}.

\item 
If $k+1\leq i \leq k+k^{\dagger}$, then choose $g_i\in \mathfrak{gl}_{l_i}(\mfo_{\widetilde{F}})$ such that $\chi_{\alpha g_i}\cdot \sigma(\chi_{\alpha g_i})=\psi_i(x)$ if  $(\widetilde{F}, \epsilon)=(E, 1)$ and such that $\chi_{g_i}\cdot (-1)^{l_i}\chi_{-g_i}=\psi_i(x)$ if  $(\widetilde{F}, \epsilon)=(F, -1)$.
Here existence of $g_i$ follows from the theory of companion matrices.

Then let $\gamma_i=\begin{pmatrix} g_i & 0 \\ 0 & -{}^t\sigma(g_i)   \end{pmatrix}$.
It is easy to see that $\gamma_i\in \mfu_{h_i}(\mfo)$ such that $\chi_{\alpha\gamma_i}(x)=\psi_i(x)$ if  $(\widetilde{F}, \epsilon)=(E, 1)$ and such that $\chi_{\gamma_i}(x)=\psi_i(x^2)$ if  $(\widetilde{F}, \epsilon)=(F, -1)$.
\end{itemize}

Therefore we may and do consider  $\gamma$ as a diagonal block matrix whose each block is $\gamma_i$ for $1\leq i \leq k+k^{\dagger}$. 



\subsection{A description of a Levi subgroup $\mathrm{L}$ of $\mathrm{G}_{F}$}\label{sec:desc_Levi}
We keep assuming  that $char(\kappa)>2$ if $(\widetilde{F}, \epsilon)=(E, 1)$ and $n$ is even, and no restriction otherwise.
Let $\mathrm{G}_{m}$ be the reductive group scheme defined over $\mfo$ stabilizing the hermitian lattice $\bigoplus\limits_{i=1}^{k}(L_i, h_i)$. 
When $k+1\leq i \leq k+k^{\dagger}$, let $\mathrm{G}_{2l_i}$ be the reductive group scheme stabilizing the hermitian lattice $(L_i, h_i)$. 

When $k+1\leq i \leq k+k^{\dagger}$, we define the group scheme $\mathrm{L}_i$ over $\mfo$ as follows:
\[
\mathrm{L}_i(A)=\left\{  
g=\begin{pmatrix} g_1 & 0 \\ 0 & g_2   \end{pmatrix}\in  \mathrm{G}_{2l_i}(A) 
\right\}
\]
for a commutative $\mfo$-algebra $A$. 
Here $g_1, g_2$ are of size $l_i\times l_i$. 
A simple matrix computation shows that $g_2=\sigma(g_1)^{-1}$ and thus 
the group scheme $\mathrm{L}_i$ is isomorphic to $\mathrm{Res}_{\mfo_{\widetilde{F}}/\mfo}(\mathfrak{gl}_{l_i, \mfo_{\widetilde{F}}})$.

Let $\mathrm{L}=\mathrm{G}_m\times \prod\limits_{i=k+1}^{k+k^{\dagger}}\mathrm{Res}_{\mfo_{\widetilde{F}}/\mfo}(\mathfrak{gl}_{l_i, \mfo_{\widetilde{F}}})$, which is smooth over $\mfo$. 
As in the description of $\gamma$, the group scheme $\mathrm{L}$ is block diagonally embedded into the group scheme $\mathrm{G}$.
Then $\mathfrak{l} = \mfu_{m}\oplus \bigoplus\limits_{i=k+1}^{k+k^{\dagger}}\mathrm{Res}_{\mfo_{\widetilde{F}}/\mfo}(\mathfrak{gl}_{l_i, \mfo_{\widetilde{F}}})$, the Lie algebra of $\mathrm{L}$, is also block diagonally embedded into $\mfu$ and  contains $\gamma$.
Here $\mathfrak{g}_m$ is the Lie algebra of $\mathrm{G}_m$.

\begin{lemma}\label{lemma:levidesc}
The algebraic group $\mathrm{L}_F$ is a Levi subgroup of  $\mathrm{G}_{F}$.
\end{lemma}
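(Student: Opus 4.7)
The plan is to exhibit an explicit split torus $\mathrm{S}\subseteq \mathrm{L}_F$ whose centralizer in $\mathrm{G}_F$ is precisely $\mathrm{L}_F$. Since $\mathrm{G}_F$ is connected reductive (being $\mathrm{U}_{n,F}$ or $\mathrm{Sp}_{2n,F}$), the standard structure theory of reductive groups (as in \cite[Section 20]{Bor91}) then identifies the centralizer of any split torus as the Levi subgroup of some parabolic, which gives the statement.

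First, I would construct the torus $\mathrm{S}$ block-by-block. For each $i$ with $k+1\le i\le k+k^{\dagger}$, consider the element
\[
s_i(t) \;=\; \begin{pmatrix} t\cdot Id_{l_i} & 0 \\ 0 & t^{-1}\cdot Id_{l_i} \end{pmatrix} \;\in\; \mathrm{Res}_{\mfo_{\widetilde F}/\mfo}(\mathfrak{gl}_{l_i,\mfo_{\widetilde F}})(F) \quad\text{for } t\in \mathbb{G}_m(F).
\]
A direct verification using the hermitian form $\begin{pmatrix} 0 & Id_{l_i} \\ \epsilon Id_{l_i} & 0 \end{pmatrix}$ representing $(L_i,h_i)$ shows that $s_i(t)$ preserves $h_i$, hence lies in $\mathrm{L}_i(F)\subseteq \mathrm{G}_{2l_i}(F)$. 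Embedding $s_i$ block-diagonally into $\mathrm{G}_F$ (acting trivially on the $\mathrm{G}_m$-block) and taking the product over $i=k+1,\dots,k+k^{\dagger}$ yields a split subtorus $\mathrm{S}\cong \mathbb{G}_m^{k^\dagger}$ of $\mathrm{L}_F$, which is therefore split as a subtorus of $\mathrm{G}_F$.

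Next I would compute the centralizer $Z_{\mathrm{G}_F}(\mathrm{S})$ by a direct matrix calculation. Writing an element $g\in \mathrm{G}_F(\overline F)$ in block form adapted to the decomposition $L\otimes \overline F = (\bigoplus_{i=1}^{k} L_i)\otimes \overline F \;\oplus\; \bigoplus_{i=k+1}^{k+k^{\dagger}} L_i\otimes \overline F$, the condition $g\cdot s = s\cdot g$ for all $s\in \mathrm{S}(\overline F)$ forces $g$ to be block-diagonal with respect to this decomposition, and within each $L_i\otimes \overline F$ (for $i\ge k+1$) to be further block-diagonal with blocks of size $l_i$. Combined with the constraint of preserving $h$, this matches exactly the definition of $\mathrm{L}_F$. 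So $Z_{\mathrm{G}_F}(\mathrm{S}) = \mathrm{L}_F$.

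Finally, invoking \cite[Proposition 20.4]{Bor91}, the centralizer of a split torus in a connected reductive group is a Levi subgroup of some parabolic subgroup of $\mathrm{G}_F$, so $\mathrm{L}_F$ is a Levi subgroup, which is what we wanted. The main subtlety is the first step of checking that $s_i(t)$ actually preserves $h_i$ (a one-line matrix check) and, in the centralizer computation, making sure that the hermitian-preservation condition does not cut $\mathrm{L}_F$ down further — but since $\mathrm{L}_i$ was defined precisely as the subgroup of $\mathrm{G}_{2l_i}$ of the indicated block-diagonal form, no further reduction occurs. Everything else is formal.
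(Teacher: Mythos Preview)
Your proposal is correct and takes essentially the same approach as the paper: both exhibit the split torus $\mathrm{S}\cong\mathbb{G}_m^{k^\dagger}$ given by the block-diagonal elements $\begin{pmatrix} t\cdot Id_{l_i} & 0 \\ 0 & t^{-1}\cdot Id_{l_i}\end{pmatrix}$ in the split blocks, verify by a matrix computation that its centralizer in $\mathrm{G}_F$ is $\mathrm{L}_F$, and then invoke the standard fact that centralizers of split tori are Levi subgroups. The only cosmetic difference is that the paper first identifies $\mathrm{S}$ as the maximal split subtorus of the maximal torus $\mathrm{T}_{\gamma,F}$ (using Lemma~\ref{lem: Cent_unitary}) and cites \cite[Theorem 4.15.b)]{BA}, whereas you write down $\mathrm{S}$ directly and cite \cite{Bor91}.
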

\begin{proof}
By Lemma \ref{lem: Cent_unitary}, $\mathrm{T}_{\gamma,i,F}$ is the centralizer of $\gamma_i$ via the adjoint action of $\mathrm{G}_{h_i, F}$ and thus $\mathrm{T}_{\gamma,F}$ is a maximal torus of $\mathrm{L}_F$. 
By \cite[Theorem 4.15.b)]{BA}, the centralizer of the maximal split subtorus of $\mathrm{T}_{\gamma,F}$ in $\mathrm{G}_{F}$ is a Levi subgroup. 

When $1\leq i \leq k$, the torus $\mathrm{T}_{\gamma,i,F}$ is anisotropic and thus its split subtorus is trivial.
When $k+1\leq i \leq k+k^{\dagger}$, the maximal split subtorus of $\mathrm{T}_{\gamma, i}$ is of dimension $1$, which is $\mathbb{G}_m$.
More precisely, this is visualized as follows in terms of matrices:
\[
\left\{\begin{pmatrix} a\cdot Id_{l_i} & 0 \\ 0 & a^{-1}\cdot Id_{l_i}   \end{pmatrix}| a\in F^{\times}\right\}. 
\]

Thus the maximal split subtorus of $\mathrm{T}_{\gamma,F}$ is isomorphic to 
$\prod\limits_{i=1}^{k}1 \times \prod\limits_{i=k+1}^{k+k^{\dagger}}\mathbb{G}_m$, which is block diagonally embedded into $\mathrm{G}_{F}$. 
Now one can check that its centralizer in $\mathrm{G}_{F}$ is  $\mathrm{L}_F$ by easy matrix computation.
\end{proof}

\subsection{Parabolic descent of stable orbital integrals}\label{subsection12.3}
We keep assuming  that $char(\kappa)>2$ if $(\widetilde{F}, \epsilon)=(E, 1)$ and $n$ is even, and no restriction otherwise.
Let
\[
\left\{
\begin{array}{l}
    \mathrm{P}_F \textit{ be a parabolic subgroup of } \mathrm{G}_{F}   \textit{ with a Levi factor } \mathrm{L}_F;\\
    \mathrm{N}_F \textit{ be the unipotent radical of  } \mathrm{P}_F ; \\
    \mathfrak{p}_F, \mathfrak{n}_F \textit{ be the Lie algebras of } \mathrm{P}_F, \mathrm{N}_F \textit{ respectively};\\
    dn \textit{ be the Haar measure on } \mathrm{N}_F(F) \textit{ such that } \operatorname{vol}(dn, \mathrm{N}_F(F)\cap \mathrm{G}(\mathfrak{o})) = 1; \\
    d\delta \textit{ be the Haar measure on } \mathfrak{n}_F(F) \textit{ such that } \operatorname{vol}(d\delta, \mathfrak{n}_F(F) \cap \mathfrak{g}(\mathfrak{o})) = 1.
\end{array}
\right.
\]

\begin{lemma} \label{lem: good_position}
We have 
\begin{enumerate}
    \item     $\mathrm{P}_F(F) \cap \mathrm{G}(\mathfrak{o}) = \mathrm{L}(\mathfrak{o})(\mathrm{N}_F(F)\cap \mathrm{G}(\mathfrak{o}))$;  
    \item $\mathfrak{p}_F(F) \cap \mathfrak{g}(\mathfrak{o}) = \mathfrak{l}(\mathfrak{o}) \oplus (\mathfrak{n}_F(F) \cap \mathfrak{g}(\mathfrak{o}))$,
    \end{enumerate}
 where $\mathfrak{l}$ is the Lie algebra of $\mathrm{L}$ defined in Section \ref{sec:desc_Levi}.
   
\end{lemma}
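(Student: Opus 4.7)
The plan is to exploit the explicit block structure coming from the orthogonal decomposition $(L,h) = \bigoplus_{i=1}^{k+k^{\dagger}}(L_i,h_i)$ set up in Section \ref{section:matdescofr}. Fix a basis of $L$ subordinate to this decomposition, and for each $i \in B(\gamma)^{split}$ refine the basis on $L_i$ to the standard isotropic splitting $L_i = L_i^+ \oplus L_i^-$ determined by $\begin{pmatrix} 0 & Id_{l_i} \\ \epsilon Id_{l_i} & 0 \end{pmatrix}$. With respect to this basis, the Gram matrix of $h$ is block diagonal, the Levi $\mathrm{L}$ described in Section \ref{sec:desc_Levi} consists of block diagonal matrices, and we take $\mathrm{P}_F$ to be the stabilizer of the flag $0 \subset L_{k+1}^+\otimes F \subset (L_{k+1}^+\oplus L_{k+2}^+)\otimes F \subset \cdots$. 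Then $\mathrm{P}_F$ is block upper triangular and $\mathrm{N}_F$ is the unipotent subgroup of block upper triangular matrices with identity along the block diagonal.

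For part (2), the decomposition $\mathfrak{p}_F = \mathfrak{l}_F \oplus \mathfrak{n}_F$ simply extracts the block diagonal part $x_{\mathfrak{l}}$ and the strictly block upper triangular part $x_{\mathfrak{n}}$ of an element $x \in \mathfrak{p}_F(F)$. If $x \in \mathfrak{g}(\mathfrak{o})$, then every entry of $x$ lies in $\mathfrak{o}_{\widetilde{F}}$, so the same is true of $x_{\mathfrak{l}}$ and $x_{\mathfrak{n}}$. Because $h$ is block diagonal, the defining relation $hx + \sigma({}^tx)h = 0$ splits into independent relations on the diagonal blocks (giving $x_{\mathfrak{l}} \in \mathfrak{l}(\mathfrak{o})$) and on the off-diagonal blocks (giving $x_{\mathfrak{n}} \in \mathfrak{n}_F(F)\cap \mathfrak{g}(\mathfrak{o})$). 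Conversely, any sum $x_{\mathfrak{l}} + x_{\mathfrak{n}}$ of such pieces is obviously in $\mathfrak{p}_F(F)\cap\mathfrak{g}(\mathfrak{o})$.

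For part (1), let $p \in \mathrm{P}_F(F)\cap \mathrm{G}(\mathfrak{o})$. The uniqueness of Levi decomposition gives $p = ln$ where $l$ is the block diagonal part of $p$ and $n = l^{-1}p$ is block unipotent. The entries of $l$ are entries of $p$, hence in $\mathfrak{o}_{\widetilde{F}}$, and the identity $\sigma({}^tp)hp = h$ restricts to the diagonal blocks to give $\sigma({}^tl)hl = h$, so $l \in \mathrm{L}(\mathfrak{o})$. Since $\mathrm{L}$ is smooth over $\mathfrak{o}$, $l^{-1}$ also has entries in $\mathfrak{o}_{\widetilde{F}}$, whence $n = l^{-1}p \in \mathrm{N}_F(F)\cap \mathrm{G}(\mathfrak{o})$. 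The reverse inclusion is immediate.

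The main potential obstacle is verifying that the hermitian/symplectic defining relation really does split cleanly across the Levi-unipotent decomposition; the whole argument rests on this. However, this is guaranteed precisely because we have arranged in Section \ref{section:matdescofr} for $h$ to be block diagonal in the chosen basis, so the cross-block entries of $\sigma({}^tp)hp$ and $hp$ are determined solely by the off-diagonal blocks of $p$, and the on-block entries solely by the diagonal blocks. No subtlety about smoothness of $\mathrm{N}$ or $\mathrm{L}$ arises because these are visibly defined by block-matrix equations with coefficients in $\mathfrak{o}$.
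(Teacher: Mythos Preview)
Your proof is correct but takes a different route from the paper's. The paper argues abstractly for part (1): it uses the split exact sequence $1\to \mathrm{N}_F(F)\to \mathrm{P}_F(F)\to \mathrm{L}_F(F)\to 1$ and observes that the image of the compact group $\mathrm{P}_F(F)\cap \mathrm{G}(\mathfrak{o})$ under the projection to $\mathrm{L}_F(F)$ is a compact subgroup containing $\mathrm{L}(\mathfrak{o})$; since the explicit description of $\mathrm{L}$ in Section~\ref{sec:desc_Levi} shows that $\mathrm{L}(\mathfrak{o})$ is a \emph{maximal} compact subgroup of $\mathrm{L}(F)$, the image must equal $\mathrm{L}(\mathfrak{o})$, and the rest follows. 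Part (2) is then declared identical. Your block-matrix computation is more elementary and self-contained --- it never invokes maximality of $\mathrm{L}(\mathfrak{o})$ --- at the cost of needing the explicit check that the ``Levi'' blocks of the equation $\sigma({}^tp)hp=h$ (namely those pairing $L_i^+$ with $L_i^-$ and $L_0$ with itself, which are anti-diagonal rather than diagonal in the natural ordering) depend only on the block-diagonal part of $p$. Your phrase ``restricts to the diagonal blocks'' is slightly loose in this sense, but the content is right. The paper's argument is shorter and would transfer more readily to settings where an explicit block description of the parabolic is less convenient.
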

\begin{proof}
For the first case,     since the parabolic subgroup $\mathrm{P}_F$ admits a split exact sequence
    \[
    1 \to \mathrm{N}_F(F) \to \mathrm{P}_F(F) \to \mathrm{L}_F(F) \to 1,
    \]
    it suffices to show that the image of $\mathrm{P}_F(F) \cap \mathrm{G}(\mathfrak{o}) (\subset \mathrm{P}_F(F))$ under the quotient map $\mathrm{P}_F(F) \to \mathrm{L}(F)$ coincides with $\mathrm{L}(\mathfrak{o})$.
    By the description of $\mathrm{L}$ in Section \ref{sec:desc_Levi}, $\mathrm{L}(\mathfrak{o})$ is a maximal compact subgroup of $\mathrm{L}(F)$ and is contained in $\mathrm{P}_F(F) \cap \mathrm{G}(\mathfrak{o})$.
The claim follows from the maximality of $\mathrm{L}(\mathfrak{o})$.

 The proof of the second case is identical to the above and thus we omit it.
\end{proof}

\begin{remark}
    Lemma \ref{lem: good_position} implies that $\mathrm{G}(\mathfrak{o})$ is in good position with respect to $(\mathrm{P}_F,\mathrm{L}_F)$ in the language of \cite[Section 8.6]{GH19} 
since the construction of the group scheme $\mathrm{L}$   directly yields that  $\mathrm{L}(\mathfrak{o})=\mathrm{L}_F(F)\cap \mathrm{G}(\mfo)$.
\end{remark}

\begin{proposition}\label{lem:par_des_orb}{\cite[Lemma 13.3]{Kot05}} 
For $\gamma \in \mathfrak{l}(\mathfrak{o})$,  we have the following formula:
\[
|D^{\mathrm{G}}(\gamma)|^{\frac{1}{2}} \mathcal{O}_{\gamma,d\mu}^{\mathrm{G}} = |D^\mathrm{L}(\gamma)|^{\frac{1}{2}}\mathcal{O}_{\gamma,d\mu}^\mathrm{L}
\]
where 
    $\mathcal{O}_{\gamma,d\mu}^{\mathrm{G}}$ and $\mathcal{O}_{\gamma,d\mu}^\mathrm{L}$
are orbital integrals of $\gamma$ on $\mathfrak{g}(F)$ and $\mathfrak{l}(F)$ respectively, which are defined in Section \ref{subsubsection331}.
Here $D^{\mathrm{G}}(\gamma)$ and $D^\mathrm{L}(\gamma)$ are Weyl discriminants of $\gamma$ with respect to $\mathrm{G}$ and $\mathrm{L}$, which are defined in Remark \ref{remark:sordmu}.
\end{proposition}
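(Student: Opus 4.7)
The plan is to carry out the standard Iwasawa decomposition argument, following the template of \cite[Lemma 13.3]{Kot05}. First I would use $\mathrm{G}(F) = \mathrm{P}_F(F)\cdot \mathrm{G}(\mathfrak{o})$ together with the Levi decomposition $\mathrm{P}_F = \mathrm{L}_F \ltimes \mathrm{N}_F$ to write any element of the coset space $\mathrm{T}_{\gamma,F}(F)\backslash \mathrm{G}(F)$ as a product $\bar{l}\,n\,k$ with $\bar{l} \in \mathrm{T}_{\gamma,F}(F)\backslash \mathrm{L}_F(F)$, $n \in \mathrm{N}_F(F)$, and $k \in (\mathrm{P}_F(F)\cap\mathrm{G}(\mathfrak{o}))\backslash \mathrm{G}(\mathfrak{o})$. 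Lemma \ref{lem: good_position}(1) identifies $\mathrm{P}_F(F)\cap\mathrm{G}(\mathfrak{o})$ with $\mathrm{L}(\mathfrak{o})(\mathrm{N}_F(F)\cap\mathrm{G}(\mathfrak{o}))$, which is the key ingredient for checking that the quotient measure $d\mu$ on the $\mathrm{G}$-side factors compatibly as a product of $d\mu_{\mathrm{L}}$ (the analogous quotient measure on $\mathrm{L}$), $dn$, and a $\mathrm{G}(\mathfrak{o})$-invariant counting measure.

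After this reduction, the integrand $\mathbbm{1}_{\mathfrak{g}(\mathfrak{o})}(k^{-1}n^{-1}l^{-1}\gamma l\, n\, k) = \mathbbm{1}_{\mathfrak{g}(\mathfrak{o})}(n^{-1}l^{-1}\gamma l\, n)$ is independent of $k \in \mathrm{G}(\mathfrak{o})$, so $\mathcal{O}^{\mathrm{G}}_{\gamma,d\mu}$ collapses to an iterated integral over $\mathrm{T}_{\gamma,F}(F)\backslash \mathrm{L}_F(F)$ and $\mathrm{N}_F(F)$. Since $\gamma \in \mathfrak{l}(\mathfrak{o})$ and $\mathfrak{n}_F$ is $\mathrm{L}_F$-stable, I would then consider, for each $l \in \mathrm{L}_F(F)$, the polynomial map
\[
\phi_{l^{-1}\gamma l} : \mathrm{N}_F(F) \to \mathfrak{n}_F(F), \quad n \mapsto n^{-1}(l^{-1}\gamma l) n - l^{-1}\gamma l.
\]
Because $\gamma$ is regular semisimple and $\mathrm{ad}(\gamma)$ preserves the triangular decomposition $\mathfrak{g}_F = \mathfrak{l}_F \oplus \mathfrak{n}_F \oplus \bar{\mathfrak{n}}_F$ with invertible restriction to $\mathfrak{n}_F$, the map $\phi_{l^{-1}\gamma l}$ is a bijection whose Jacobian has absolute value $|\det(\mathrm{ad}(\gamma)|_{\mathfrak{n}_F(F)})|$. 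After the change of variables $\delta = \phi_{l^{-1}\gamma l}(n)$ and invoking Lemma \ref{lem: good_position}(2) to identify $\mathfrak{n}_F(F)\cap \mathfrak{g}(\mathfrak{o})$ inside $\mathfrak{p}_F(F)\cap \mathfrak{g}(\mathfrak{o})$, the inner integral becomes $|\det(\mathrm{ad}(\gamma)|_{\mathfrak{n}_F(F)})|^{-1}\cdot \mathbbm{1}_{\mathfrak{l}(\mathfrak{o})}(l^{-1}\gamma l)$.

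Assembling, one obtains $\mathcal{O}^\mathrm{G}_{\gamma, d\mu} = |\det(\mathrm{ad}(\gamma)|_{\mathfrak{n}_F(F)})|^{-1}\,\mathcal{O}^\mathrm{L}_{\gamma, d\mu}$. The standard identity
\[
|D^\mathrm{G}(\gamma)| = |D^\mathrm{L}(\gamma)|\cdot|\det(\mathrm{ad}(\gamma)|_{\mathfrak{n}_F(F)})|\cdot |\det(\mathrm{ad}(\gamma)|_{\bar{\mathfrak{n}}_F(F)})| = |D^\mathrm{L}(\gamma)|\cdot |\det(\mathrm{ad}(\gamma)|_{\mathfrak{n}_F(F)})|^2,
\]
where the second equality holds because the roots appearing in $\mathfrak{n}_F$ are negatives of those in $\bar{\mathfrak{n}}_F$, then rearranges to the stated formula. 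The main obstacle is the careful bookkeeping of Haar measure normalizations through the Iwasawa fibration; Lemma \ref{lem: good_position} does the heavy lifting there, and once the compatibility is verified, the rest is a routine Jacobian computation.
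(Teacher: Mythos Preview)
Your proposal is correct and follows essentially the same route as the paper: both arguments reduce the claim to the two ingredients supplied by Lemma~\ref{lem: good_position}, namely the Iwasawa measure compatibility $\mathrm{P}_F(F)\cap\mathrm{G}(\mathfrak{o}) = \mathrm{L}(\mathfrak{o})(\mathrm{N}_F(F)\cap\mathrm{G}(\mathfrak{o}))$ and the lattice splitting $\mathfrak{p}_F(F)\cap\mathfrak{g}(\mathfrak{o}) = \mathfrak{l}(\mathfrak{o})\oplus(\mathfrak{n}_F(F)\cap\mathfrak{g}(\mathfrak{o}))$. The only difference is packaging: the paper verifies these two hypotheses and then invokes \cite[Lemma~13.3]{Kot05} as a black box (phrasing the second ingredient as the constant-term identity $\mathbbm{1}_{\mathfrak{g}(\mathfrak{o})}^{\mathrm{P}} = \mathbbm{1}_{\mathfrak{l}(\mathfrak{o})}$), whereas you unpack that reference by carrying out the change of variables $n\mapsto n^{-1}(l^{-1}\gamma l)n - l^{-1}\gamma l$ and the Jacobian computation $|\det(\mathrm{ad}(\gamma)|_{\mathfrak{n}_F})|$ explicitly. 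Your version is more self-contained; the paper's is shorter. One small point worth tightening in your write-up is the passage from the Haar measure $dn$ on $\mathrm{N}_F(F)$ to the additive measure $d\delta$ on $\mathfrak{n}_F(F)$ under $\phi_{l^{-1}\gamma l}$: the Jacobian statement implicitly uses that these normalizations are compatible (e.g., via an exponential-type identification $\mathfrak{n}_F\cong\mathrm{N}_F$ sending $\mathfrak{n}_F(F)\cap\mathfrak{g}(\mathfrak{o})$ to $\mathrm{N}_F(F)\cap\mathrm{G}(\mathfrak{o})$), which is straightforward here but should be stated.
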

\begin{proof}
    This is a reformulation of parabolic descent described in \cite[Lemma 13.3]{Kot05}.
    To use it explicitly in our situation, we provide a proof. 
It suffices to show two formulas: 
\[
\left\{
\begin{array}{l}
\int_{\mathrm{G}(F)} \mathbbm{1}_{\mathrm{G}(\mathfrak{o})}(g) dg
    =\int_{\mathrm{L}(F)}\int_{\mathrm{N}(F)}\int_{\mathrm{G}(\mathfrak{o})} \mathbbm{1}_{\mathrm{G}(\mathfrak{o})}(lnk) dk dn dl;\\
    \mathbbm{1}_{\mathfrak{g}(\mathfrak{o})}^\mathrm{P} = \mathbbm{1}_{\mathfrak{l}(\mathfrak{o})}.
\end{array}
\right.
\]

    Here the integral is with respect to the Iwasawa decomposition 
    $\mathrm{G}(F) = \mathrm{P}(F)\mathrm{G}(\mathfrak{o}) = \mathrm{L}(F)\mathrm{N}(F)\mathrm{G}(\mathfrak{o})$, which yields the normalization in \cite[Section 13.10]{Kot05},
    where $dg$ and $dl$  are Haar measures on $\mathrm{G}(F)$ and $L(F)$ defined in Section \ref{subsubsection331} respectively, $dk$ is the measure on $\mathrm{G}(\mfo)$ with $\operatorname{vol}(dk, \mathrm{G}(\mathfrak{o})) = 1$, and $\mathbbm{1}$ stands for the characteristic function.     
The function $\mathbbm{1}^P_{\mathfrak{g}(\mathfrak{o})}$ is defined to be 
   $ \mathbbm{1}_{\mathfrak{g}(\mathfrak{o})}^\mathrm{P}(x):=\int_{\mathfrak{n}(F)} \mathbbm{1}_{\mathfrak{g}(\mathfrak{o})}(x + \delta) d\delta ~~~~~~~ \textit{     for $x\in \mathfrak{l}(F)$}$ in \cite[Section 13.1]{Kot05}.

For the first claim,  the left hand side is $1$. 
Using Lemma \ref{lem: good_position}.(1), the right hand side is 
\begin{align*}
\int_{\mathrm{L}(F)}\int_{\mathrm{N}(F)}\int_{\mathrm{G}(\mfo)} \mathbbm{1}_{\mathrm{G}(\mathfrak{o})}(lnk) dkdndl
= \int_{\mathrm{L}(F)} \mathbbm{1}_{\mathrm{L}(\mathfrak{o})}(l) dl \int_{\mathrm{N}(F)}\mathbbm{1}_{\mathrm{N}(F)\cap \mathrm{G}(\mathfrak{o})}(n) dn  = 1.
\end{align*}

For the second claim, choose  $x \in \mathfrak{l}(F)$ and $\delta \in \mathfrak{n}(F)$.
Then  Lemma \ref{lem: good_position}.(2) yields that $x+\delta \in \mathfrak{g}(\mathfrak{o})$ if and only if $x \in \mathfrak{l}(\mathfrak{o})$ and $\delta \in \mathfrak{n}(F)\cap \mathfrak{g}(\mathfrak{o})$.
This directly implies that $\mathbbm{1}_{\mathfrak{g}(\mathfrak{o})}^P = \mathbbm{1}_{\mathfrak{l}(\mathfrak{o})}$.
\end{proof}

We now extend  Proposition \ref{lem:par_des_orb} to the situation of stable orbital integrals.
Recall from Section \ref{sec:desc_Levi} that  $\mathrm{L} = \mathrm{G}_{m} \times \prod\limits_{i = k+1}^{k+k^\dagger} \operatorname{Res}_{\mathfrak{o}_{\widetilde{F}}/\mathfrak{o}}(\operatorname{GL}_{l_i,\mathfrak{o}_{\widetilde{F}_i}})$ and that $\mathfrak{l} = \mathfrak{g}_m \oplus \bigoplus\limits_{i = k+1}^{k+k^\dagger} \operatorname{Res}_{\mathfrak{o}_{\widetilde{F}}/\mathfrak{o}}(\mathfrak{gl}_{l_i,\mathfrak{o}_{\widetilde{F}}})$.
Thus as mentioned in Section \ref{section:matdescofr}, we write  $\gamma = (\gamma_m,\gamma_{k+1},\cdots,\gamma_{k+k^\dagger})\in \mathfrak{l}(\mathfrak{o})$ with $\gamma_m \in \mathfrak{g}_m(\mathfrak{o})$ and with $\gamma_i \in \mathrm{Res}_{\mathfrak{o}_{\widetilde{F}}/\mathfrak{o}}(\mathfrak{gl}_{l_i,\mathfrak{o}_{\widetilde{F}}})(\mathfrak{o})$ for $i = k+1, \cdots, k+k^\dagger$.

\begin{proposition}\label{lem:par_des_sorb}
For $\gamma \in \mathfrak{l}(\mathfrak{o})$, we have the following identity of stable orbital integrals:
\[
|D^{\mathrm{G}}(\gamma)|^{\frac{1}{2}} \mathcal{SO}_{\gamma,d\mu}^{\mathrm{G}} = 
|D^{\mathrm{G}_m}(\gamma_m)|^{\frac{1}{2}} \mathcal{SO}_{\gamma_m,d\mu}^{\mathrm{G}_m} \times \prod\limits_{i = k+1}^{k+k^\dagger} |D_i(\gamma_i)|^{\frac{1}{2}}\mathcal{SO}_{\gamma_i,d\mu}^{\operatorname{Res}_{\mathfrak{o}_{\widetilde{F}}/\mathfrak{o}}(\operatorname{GL}_{l_i,\mathfrak{o}_{\widetilde{F}}})}.
\]
Here, we shorten the notation $D^{\operatorname{Res}_{\mathfrak{o}_{\widetilde{F}}/\mathfrak{o}}(\operatorname{GL}_{l_i,\mathfrak{o}_{\widetilde{F}}})}(\gamma_i)$ to $D_i(\gamma_i)$.
\end{proposition}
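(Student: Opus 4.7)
The plan is to carry out the proof in two logically distinct steps: first, descend the stable orbital integral from $\mathrm{G}$ to the Levi subgroup $\mathrm{L}$, obtaining the intermediate identity
\[
|D^{\mathrm{G}}(\gamma)|^{1/2}\mathcal{SO}^{\mathrm{G}}_{\gamma, d\mu} = |D^{\mathrm{L}}(\gamma)|^{1/2}\mathcal{SO}^{\mathrm{L}}_{\gamma, d\mu};
\]
second, use the direct product decomposition $\mathrm{L} = \mathrm{G}_{m} \times \prod_{i=k+1}^{k+k^\dagger} \operatorname{Res}_{\mathfrak{o}_{\widetilde{F}}/\mathfrak{o}}(\operatorname{GL}_{l_i,\mathfrak{o}_{\widetilde{F}}})$ to factor the right hand side, using that the Weyl discriminant, the Haar measures $dg, dt$, and the characteristic function $\mathbbm{1}_{\mathfrak{l}(\mathfrak{o})}$ are all compatible with this product structure.

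For the first step, I will apply Proposition~\ref{lem:par_des_orb} to each rational representative in the stable class of $\gamma$ and then sum. The key input is that for our specific $\mathrm{L}$, the natural map $H^1(F,\mathrm{L}_F) \to H^1(F,\mathrm{G}_F)$ is injective: when $\mathrm{G} = \mathrm{Sp}_{2n}$ both groups vanish by Kneser's theorem, and when $\mathrm{G} = \mathrm{U}_n$ both are $\mathbb{Z}/2$ classifying hermitian forms by discriminant, with the split blocks $\operatorname{Res}_{E/F}(\operatorname{GL}_{l_i})$ contributing trivially by Hilbert~90. Combined with the factorization $H^1(F,\mathrm{T}_{\gamma,F}) \to H^1(F,\mathrm{L}_F) \to H^1(F,\mathrm{G}_F)$ and Hilbert~90 applied to each $\operatorname{Res}_{F_i/F}\mathbb{G}_m$ for $i \in B(\gamma)^{\mathrm{split}}$, this injectivity yields
\[
\ker\bigl(H^1(F,\mathrm{T}_{\gamma,F}) \to H^1(F,\mathrm{L}_F)\bigr) = \ker\bigl(H^1(F,\mathrm{T}_{\gamma,F}) \to H^1(F,\mathrm{G}_F)\bigr),
\]
so the rational $\mathrm{G}(F)$-conjugacy classes in the stable class of $\gamma$ are in bijection with rational $\mathrm{L}(F)$-conjugacy classes, and each admits a representative in $\mathfrak{l}(\mathfrak{o})$. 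For each such representative $\gamma'$, Proposition~\ref{lem:par_des_orb} gives the equality $|D^{\mathrm{G}}(\gamma')|^{1/2}\mathcal{O}^{\mathrm{G}}_{\gamma',d\mu'} = |D^{\mathrm{L}}(\gamma')|^{1/2}\mathcal{O}^{\mathrm{L}}_{\gamma',d\mu'}$. Summing and using that the Weyl discriminants are stable invariants (Remark~\ref{remark:sordmu}) produces the desired identity, provided one checks that the canonical isomorphism $\iota:\mathrm{T}_{\gamma,F} \cong \mathrm{T}_{\gamma',F}$ used in defining $d\mu'$ (cf.\ Section~\ref{subsubsection331111}) transports the maximal compact subgroup of $\mathrm{T}_{\gamma,F}(F)$ to that of $\mathrm{T}_{\gamma',F}(F)$.

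For the second step, since $\mathrm{L}$ is a direct product of reductive groups defined over $\mathfrak{o}$, the Haar measures normalized as in Section~\ref{subsubsection331} split as products, as does the characteristic function $\mathbbm{1}_{\mathfrak{l}(\mathfrak{o})} = \mathbbm{1}_{\mathfrak{g}_m(\mathfrak{o})} \cdot \prod_i \mathbbm{1}_{\operatorname{Res}(\mathfrak{gl}_{l_i})(\mathfrak{o})}$ and the maximal torus $\mathrm{T}_{\gamma,F} = \mathrm{T}_{\gamma_m,F} \times \prod_i \mathrm{T}_{\gamma_i,F}$. The Weyl discriminant satisfies $D^{\mathrm{L}}(\gamma) = D^{\mathrm{G}_m}(\gamma_m) \prod_i D_i(\gamma_i)$ by the block structure of $\operatorname{ad}(\gamma)$ on $\mathfrak{l}_F/\mathfrak{t}_{\gamma,F}$. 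Finally, observing that the stable class of $\gamma = (\gamma_m,\gamma_{k+1},\ldots,\gamma_{k+k^\dagger})$ in $\mathfrak{l}(F)$ is the product of stable classes of each component yields the multiplicative factorization of $\mathcal{SO}^{\mathrm{L}}_{\gamma,d\mu}$.

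The main obstacle will be the careful verification that every rational $\mathrm{G}(F)$-conjugacy class in the stable class of $\gamma$ admits a representative in $\mathfrak{l}(\mathfrak{o})$ (not merely $\mathfrak{l}(F)$), so that Proposition~\ref{lem:par_des_orb} is actually applicable; this requires combining the Galois cohomology bijection with an integral representative argument, leveraging the unramified hypothesis on $E/F$ and smoothness of the relevant integral models. A secondary technical point is confirming that the transport of the Haar measure $dt$ along the canonical isomorphism $\iota$ preserves the ft-N\'eron model (Corollary~\ref{corollary:maxopencpt}), particularly in the $(\widetilde{F},\epsilon) = (F,-1)$ case with $\widetilde{F}_i/F_i$ ramified, where we rely on the smoothening $\left(N^1_{\mathfrak{o}_{\widetilde{F}_i}/\mathfrak{o}_{F_i}}\right)^{sm}$ rather than a naive Weil restriction.
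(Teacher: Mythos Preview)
Your proposal is correct and follows essentially the same two-step approach as the paper: first reduce from $\mathrm{G}$ to $\mathrm{L}$ via Proposition~\ref{lem:par_des_orb} together with the equality of kernels $\ker(H^1(F,\mathrm{T}_{\gamma,F}) \to H^1(F,\mathrm{L}_F)) = \ker(H^1(F,\mathrm{T}_{\gamma,F}) \to H^1(F,\mathrm{G}_F))$, then factor over the direct product structure of $\mathrm{L}$. The paper obtains injectivity of $H^1(F,\mathrm{L}_F) \to H^1(F,\mathrm{G}_F)$ by citing \cite[Fact~2.6.1]{KP23} (a general statement for Levi subgroups), whereas you supply a direct case-by-case verification; both are valid.

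Your two flagged obstacles are not genuine issues. For the first, Proposition~\ref{lem:par_des_orb} is stated for $\gamma \in \mathfrak{l}(\mathfrak{o})$, but its proof only establishes the measure normalization and the identity $\mathbbm{1}_{\mathfrak{g}(\mathfrak{o})}^{\mathrm{P}} = \mathbbm{1}_{\mathfrak{l}(\mathfrak{o})}$, neither of which depends on integrality of the element; the underlying Kottwitz descent \cite[Lemma~13.3]{Kot05} applies to any regular semisimple $\gamma' \in \mathfrak{l}(F)$, so you need not produce integral representatives for the other rational classes. For the second, the transport of $dt$ along $\iota$ preserving the maximal compact is already handled in Section~\ref{subsubsection331111}(1) by the uniqueness of the maximal compact subgroup of a torus, so no separate argument for the ft-N\'eron model is required.
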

\begin{proof}
We first claim that $$|D^{\mathrm{G}}(\gamma)|^{\frac{1}{2}} \mathcal{SO}_{\gamma,d\mu}^{\mathrm{G}} = |D^\mathrm{L}(\gamma)|^{\frac{1}{2}}\mathcal{SO}_{\gamma,d\mu}^\mathrm{L}.$$
By Remark \ref{remark:sordmu}, the Weyl discriminant is invariant under stable conjugacy.
Using Proposition \ref{lem:par_des_orb}, it suffices to show that two sets of conjugacy classes within the stable conjugacy class of $\gamma$ in $\mathfrak{g}(F)$ and in $\mathfrak{l}(F)$ have the same cardinality.
These two sets are parametrized by $\operatorname{ker}(H^1(F,\mathrm{T}_\gamma) \to H^1(F,\mathrm{L}_F))$ and $\operatorname{ker}(H^1(F,\mathrm{T}_\gamma) \to H^1(F,\mathrm{G}_{F}))$ respectively, which is explained in Section \ref{subsubsection331111}.

\cite[Fact 2.6.1]{KP23} yields that  the natural map $ H^1(F,\mathrm{L}_F) \to H^1(F,\mathrm{G}_{F})$ is injective, which directly shows  bijectivity between the above two kernels. 

Next, we claim that 
\[
|D^L(\gamma)|^{\frac{1}{2}}\mathcal{SO}_{\gamma,d\mu}^\mathrm{L}=
|D^{\mathrm{G}_m}(\gamma_m)|^{\frac{1}{2}} \mathcal{SO}_{\gamma_m,d\mu}^{\mathrm{G}_m} \times \prod\limits_{i = k+1}^{k+k^\dagger} |D_i(\gamma_i)|^{\frac{1}{2}}\mathcal{SO}_{\gamma_i,d\mu}^{\operatorname{Res}_{\mathfrak{o}_{\widetilde{F}}/\mathfrak{o}}(\operatorname{GL}_{l_i,\mathfrak{o}_{\widetilde{F}}})}.
\]
The description of Weyl discriminant $D^{\mathrm{L}}(\gamma)$ in Remark \ref{remark:sordmu} yields that $D^\mathrm{L}(\gamma) = D^{\mathrm{G}_m}(\gamma_m) \times \prod\limits_{i = k+1}^{k+k'} D_{i}(\gamma_i)$.
It is  obvious that 
$\mathcal{O}_{\gamma,d\mu}^\mathrm{L}=
\mathcal{O}_{\gamma_m,d\mu}^{\mathrm{G}_m} \times \prod\limits_{i = k+1}^{k+k^\dagger} \mathcal{O}_{\gamma_i,d\mu}^{\operatorname{Res}_{\mathfrak{o}_{\widetilde{F}}/\mathfrak{o}}(\operatorname{GL}_{l_i,\mathfrak{o}_{\widetilde{F}}})}$. Thus as in the first claim, it suffices to compare sets of conjugacy classes within the stable conjugacy of $r_m, r_{k+1}, \cdots, r_{k+k^{\dagger}}$. 
This is also obvious since both $\mathrm{T}_{\gamma,F}$ and $\mathrm{L}_F$ split along the index set $\{m\}\cup \{k+1, \cdots, k+k^{\dagger}\}$ which appear in $\operatorname{ker}(H^1(F,\mathrm{T}_\gamma) \to H^1(F,\mathrm{L}_{F}))$.
\end{proof}

We translate the above result in terms of geometric measures. 
Note that 
$|D^{\operatorname{Res}_{\mathfrak{o}_{\widetilde{F}}/\mathfrak{o}}(\operatorname{GL}_{l_i,\mathfrak{o}_{\widetilde{F}}})}(\gamma_i)|_F=|D^{\operatorname{GL}_{l_i,\mathfrak{o}_{\widetilde{F}}}}(\gamma_i)|_{\widetilde{F}}$
for $k+1\leq i\leq k+k^{\dagger}$. 
This, combined with 
Propositions \ref{proptrans}, \ref{prop:comparison_measures}, and \ref{lem:par_des_sorb},  and Lemma \ref{lemma:comparisonofsogweilrest} (to be provided in the next subsection), yields  the following formula.

\begin{proposition}\label{pro:par_des_sorb}
Suppose that $char(F) = 0$ or $char(F)>n$.
With respect to geometric measures, we have 
\[
    \mathcal{SO}_{\gamma}^{\mathrm{G}}
    = \frac{\# \mathrm{G}(\kappa) \cdot q^{-\dim \mathrm{G}}}{\# \mathrm{G}_m(\kappa) \cdot q^{-\dim\mathrm{G}_m} \cdot \prod\limits_{i = k+1}^{k+k^\dagger}\# \operatorname{GL}_{l_i}(\kappa_{\widetilde{F}}) \cdot q^{-[\widetilde{F}:F]\cdot  l_i^2}}   \times
    \mathcal{SO}_{\gamma_m}^{\mathrm{G}_m} \times \prod\limits_{i = k+1}^{k+k^\dagger} \mathcal{SO}_{\gamma_i}^{\operatorname{GL}_{l_i,\mathfrak{o}_{\widetilde{F}}}}.
\]  
\end{proposition}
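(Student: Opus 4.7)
The plan is to combine the parabolic descent in Proposition \ref{lem:par_des_sorb}, which is stated with respect to the quotient measure $d\mu$, with the comparison formulas converting $d\mu$-integrals into geometric-measure integrals, namely Proposition \ref{proptrans} for $\mathrm{GL}$ and Proposition \ref{prop:comparison_measures} for $\mathrm{G}$ and $\mathrm{G}_m$. Concretely, I would start from the identity
\[
|D^{\mathrm{G}}(\gamma)|^{1/2} \mathcal{SO}_{\gamma,d\mu}^{\mathrm{G}} = |D^{\mathrm{G}_m}(\gamma_m)|^{1/2}\mathcal{SO}_{\gamma_m,d\mu}^{\mathrm{G}_m}\prod_{i=k+1}^{k+k^\dagger}|D_i(\gamma_i)|^{1/2}\mathcal{SO}_{\gamma_i,d\mu}^{\operatorname{Res}_{\mathfrak{o}_{\widetilde{F}}/\mathfrak{o}}(\operatorname{GL}_{l_i,\mathfrak{o}_{\widetilde{F}}})}
\]
and rewrite each $d\mu$-integral in terms of the corresponding geometric $\mathcal{SO}$, treating the three pieces separately.

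For $\mathcal{SO}_{\gamma,d\mu}^{\mathrm{G}}$ and $\mathcal{SO}_{\gamma_m,d\mu}^{\mathrm{G}_m}$ I would invoke Proposition \ref{prop:comparison_measures}, which introduces the factors $|D(\gamma)|^{1/2}$, respectively $|D^{\mathrm{G}_m}(\gamma_m)|^{1/2}$, as well as products of local Artin-conductor data $\prod|N_{F_i/F}(\Delta_{\widetilde{F}_i/F_i})|^{-1/2}|\Delta_{F_i/F}|^{-1/2}$ and ratios $\#\mathrm{G}(\kappa)q^{-\dim\mathrm{G}}/\#\underline{\mathrm{T}}_\gamma(\kappa)q^{-\dim\mathrm{T}_{\gamma,F}}$. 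For each $\mathcal{SO}_{\gamma_i,d\mu}^{\operatorname{Res}_{\mathfrak{o}_{\widetilde{F}}/\mathfrak{o}}(\operatorname{GL}_{l_i,\mathfrak{o}_{\widetilde{F}}})}$ with $i\in B(\gamma)^{split}$ I would use the forthcoming Lemma \ref{lemma:comparisonofsogweilrest} to pass to $\mathcal{SO}_{\gamma_i,d\mu}^{\operatorname{GL}_{l_i,\mathfrak{o}_{\widetilde{F}}}}$ over $\widetilde{F}$, and then apply Proposition \ref{proptrans} (recalling $|D^{\operatorname{Res}}(\gamma_i)|_F=|D^{\operatorname{GL}}(\gamma_i)|_{\widetilde{F}}$) to express it geometrically.

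Next I would verify that all the auxiliary factors collapse correctly. Using the multiplicativity of the Weyl discriminant in Remark \ref{remark:sordmu}, the factors $|D^{\mathrm{G}}(\gamma)|^{1/2}$, $|D^{\mathrm{G}_m}(\gamma_m)|^{1/2}$ and $\prod|D_i(\gamma_i)|^{1/2}$ produced by the three comparison steps match the ones in the parabolic descent and cancel. Lemma \ref{lem: Cent_unitary} splits $\underline{\mathrm{T}}_\gamma=\underline{\mathrm{T}}_{\gamma_m}\times\prod_{i\geq k+1}\underline{\mathrm{T}}_{\gamma_i}$ compatibly with dimensions, so the torus ratios on the two sides pair up; moreover for $i\in B(\gamma)^{split}$ one has $\underline{\mathrm{T}}_{\gamma_i}\cong\operatorname{Res}_{\mathfrak{o}_{F_i}/\mathfrak{o}}\mathbb{G}_m$ and this is exactly the torus arising from $\operatorname{GL}_{l_i,\mathfrak{o}_{\widetilde{F}}}$ through the Weil restriction. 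Finally, the conductor/discriminant products split as $\prod_{i\in B(\gamma)^{irred}}(\cdots)\times\prod_{i\in B(\gamma)^{split}}(\cdots)$, and Lemma \ref{lem:Conductor_unitary} combined with the fact that $|N_{F_i/F}(\Delta_{\widetilde{F}_i/F_i})|=1$ when $\widetilde{F}_i/F_i$ is split (Remark \ref{remark:fitilde}) shows these factors precisely cancel the corresponding pieces arising from applying Proposition \ref{proptrans} to each $\operatorname{GL}$-factor and from applying Proposition \ref{prop:comparison_measures} to $\mathrm{G}_m$.

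The hard part, and where I expect most of the bookkeeping to concentrate, is establishing Lemma \ref{lemma:comparisonofsogweilrest} and tracking the interaction between the $F$- and $\widetilde{F}$-valued Weyl discriminants, Serre invariants, and measure normalizations through the Weil restriction for the $\operatorname{GL}_{l_i}$ blocks, and verifying that after all cancellations only the clean ratio of reductive-group point counts $\#\mathrm{G}(\kappa)q^{-\dim\mathrm{G}}\bigl/\bigl(\#\mathrm{G}_m(\kappa)q^{-\dim\mathrm{G}_m}\prod\#\operatorname{GL}_{l_i}(\kappa_{\widetilde{F}})q^{-[\widetilde{F}:F]l_i^2}\bigr)$ survives. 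The hypothesis $\mathrm{char}(F)=0$ or $\mathrm{char}(F)>n$ enters exactly through the hypotheses of Propositions \ref{proptrans} and \ref{prop:comparison_measures}, which are needed to translate between the two measures.
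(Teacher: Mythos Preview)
Your proposal is correct and follows essentially the same approach as the paper: the paper states that the formula follows by combining Propositions \ref{proptrans}, \ref{prop:comparison_measures}, and \ref{lem:par_des_sorb} together with Lemma \ref{lemma:comparisonofsogweilrest}, which is precisely your plan. In fact you spell out the cancellation bookkeeping (Weyl discriminants via Remark \ref{remark:sordmu}, torus factors via Lemma \ref{lem: Cent_unitary} and Corollary \ref{corollary:maxopencpt}, conductor terms via Lemma \ref{lem:Conductor_unitary} and Remark \ref{remark:fitilde}) in considerably more detail than the paper itself does.
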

Since $\mathcal{SO}_{\gamma_i}^{\operatorname{GL}_{l_i,\mathfrak{o}_{\widetilde{F}}}}$ is well-studied in Part 1, we only need to work with $\mathcal{SO}_{\gamma_m}^{\mathrm{G}_m}$.
Thus, our study on $\mathcal{SO}_{\gamma}^{\mathrm{G}}$ is reduced to the case that $B(\gamma)=B(\gamma)^{irred}$. 

\subsection{Comparison between stable orbital integrals for $\mathfrak{gl}_{n, \mfo_E}$ and $\mathrm{Res}_{\mfo_E/\mfo}(\mathfrak{gl}_{n, \mfo_E})$}\label{subsection:weilrestglnoe}
In this subsection, we will not make any restriction on $char(\kappa)$. 
First we show that how Weil restrictions act with Chevalley morphism and stable orbital integrals, which will be used parabolic descent especially if $(\widetilde{F},\epsilon) = (E,1)$.
Since we treat  $\mathfrak{gl}_{n, \mfo_E}$ and $\mathrm{Res}_{\mfo_E/\mfo}(\mathfrak{gl}_{n, \mfo_E})$, we consider $\gamma$ to be a regular semisimple element in $\mathfrak{gl}_{n, \mfo_{E}}(\mfo_{E})$ only in this subsection.
Let  $\sog^{\mathfrak{gl}_{n, \mfo_{E}}}$ to be the stable orbital integral with respect to the geometric measure arising from  the Chevalley morphism $\varphi_{n, \mathfrak{o}_{E}} : \mathfrak{gl}_{n,\mfo_{E}} \longrightarrow  \mathbb{A}^n_{\mathfrak{o}_{E}}, \gamma \mapsto \chi_{\gamma}$ (cf. Section \ref{measure}).

The morphism $\varphi_{n, \mathfrak{o}_{E}}$ 
yields the  Chevalley morphism on $\mathrm{Res}_{\mfo_{E}/\mfo}(\mathfrak{gl}_{n, \mfo_{E}})$ described as follows:
\[
\mathrm{Res}_{\mfo_{E}/\mfo}(\varphi_{n, \mathfrak{o}_{E}}) : \mathrm{Res}_{\mfo_{E}/\mfo}(\mathfrak{gl}_{n,\mfo_{E}}) \longrightarrow  \mathrm{Res}_{\mfo_{E}/\mfo}(\mathbb{A}^n_{\mfo_{E}}). 
\]
Then we choose two volume forms 
 $\omega_{\mfu_{R_{E/F}(\mathfrak{gl}_n)}}$ and $\omega_{R_{E/F}(\mathbb{A}^n)}$ be nonzero, translation-invariant forms on $\mathrm{Res}_{E/F}(\mathfrak{gl}_{n,E})$ and $\mathrm{Res}_{E/F}(\mathbb{A}^n_{E})$, respectively, with normalizations
\[
\int_{\mathrm{Res}_{\mfo_{E}/\mfo}(\mathfrak{gl}_{n,\mfo_{E}})(\mfo)} |\omega_{\mfu_{R_{E/F}(\mathfrak{gl}_n)}}| = 1
\text{ and }
\int_{\mathrm{Res}_{\mfo_{E}/\mfo}(\mathbb{A}^n_{\mathfrak{o}_{E}})(\mfo)} |\omega_{R_{E/F}(\mathbb{A}^n)}| = 1.
\]
Note that $\mathrm{Res}_{\mfo_{E}/\mfo}(\varphi_{n, \mathfrak{o}_{E}})^{-1}(\chi_{\gamma})(\mfo)=\varphi_{n, \mathfrak{o}_{E}}^{-1}(\chi_{\gamma})(\mfo_F)$. 
Let $\sog^{\mathrm{Res}_{\mfo_E/\mfo}(\mathfrak{gl}_{n, \mfo_E})}$ be the volume of $\varphi_{n, \mathfrak{o}_{E}}^{-1}(\chi_{\gamma})(\mfo_F)$ with respect to the quotient of $\omega_{\mfu_{R_{E/F}(\mathfrak{gl}_n)}}$ by  $\omega_{R_{E/F}(\mathbb{A}^n)}$ as in Definition \ref{def:stableorbital}.

\begin{lemma}\label{lemma:comparisonofsogweilrest}
    We have the following identities of stable orbital integrals:
    \[
    \sog^{\mathfrak{gl}_{n, \mfo_{E}}}=\sog^{\mathrm{Res}_{\mfo_{E}/\mfo}(\mathfrak{gl}_{n, \mfo_{E}})}  ~~~~~~   \textit{    and   }   ~~~~~~~~~~~~~~
      \mathcal{SO}_{\gamma,d\mu}^{\mathfrak{gl}_{n, \mfo_{E}}}= \mathcal{SO}_{\gamma,d\mu}^{\mathrm{Res}_{\mfo_{E}/\mfo}(\mathfrak{gl}_{n, \mfo_{E}})}.
      \]
\end{lemma}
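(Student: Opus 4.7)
The plan is to exploit the defining property of Weil restriction: for any commutative $\mfo$-algebra $R$ and any $\mfo_E$-scheme $Y$, we have $\mathrm{Res}_{\mfo_E/\mfo}(Y)(R) = Y(R \otimes_{\mfo} \mfo_E)$. Specializing to $R = \mfo$ and $R = F$ gives canonical topological identifications of integral and rational points with those of the original $\mfo_E$-scheme, and these identifications intertwine the Chevalley morphism $\mathrm{Res}_{\mfo_E/\mfo}(\varphi_{n,\mfo_E})$ with $\varphi_{n,\mfo_E}$ itself. In particular the fibers $\mathrm{Res}_{\mfo_E/\mfo}(\varphi_{n,\mfo_E})^{-1}(\chi_\gamma)(\mfo)$ and $\varphi_{n,\mfo_E}^{-1}(\chi_\gamma)(\mfo_E)$ are literally the same underlying set, so for each of the two claimed equalities what has to be shown is merely that the two measures placed on this common set agree.

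For the geometric measure equality I would argue by uniqueness of Haar measure. Both $|\omega_{\mfu_{\mfo_E}}|$ and $|\omega_{\mfu_{R_{E/F}(\mathfrak{gl}_n)}}|$ define translation-invariant Haar measures on the locally compact abelian group $\mathfrak{gl}_{n,E}(E)$, each normalized to assign mass $1$ to the compact open subgroup $\mathfrak{gl}_{n,E}(\mfo_E) = \mathrm{Res}_{\mfo_E/\mfo}(\mathfrak{gl}_{n,\mfo_E})(\mfo)$, so they coincide. The identical argument applied to $\mathbb{A}^n_{\mfo_E}$ versus $\mathrm{Res}_{\mfo_E/\mfo}(\mathbb{A}^n_{\mfo_E})$ shows that the target measures agree. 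Since the Chevalley-quotient measure $|\omega^{\mathrm{ld}}_{\chi_\gamma}|$ is built functorially from these two via the formalism of Section \ref{measure}, the two quotient measures on the common fiber match, giving $\sog^{\mathfrak{gl}_{n,\mfo_E}} = \sog^{\mathrm{Res}_{\mfo_E/\mfo}(\mathfrak{gl}_{n,\mfo_E})}$.

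For the quotient measure equality, the same Weil-restriction principle identifies $\mathrm{Res}_{E/F}(\mathrm{GL}_{n,E})(F)$ with $\mathrm{GL}_n(E)$ and $\mathrm{Res}_{\mfo_E/\mfo}(\mathrm{GL}_{n,\mfo_E})(\mfo)$ with $\mathrm{GL}_n(\mfo_E)$, so uniqueness of Haar measure again forces the two normalized global measures $dg$ to coincide; the analogous identification for the centralizer torus $\mathrm{T}_{\gamma}$ and its maximal compact subgroup (consistent with the description in Lemma \ref{lem: Cent_unitary}) makes the two $dt$'s coincide as well. Thus the quotient measures $d\mu$ match on $\mathrm{T}_{\gamma,F}(F)\backslash \mathrm{GL}_n(E)$ and the characteristic functions of the integral points of the Lie algebra agree, yielding the equality of rational orbital integrals. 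To lift this to the stable integrals I would invoke that on both sides the set of rational conjugacy classes within the stable class is indexed by $\ker\bigl(H^1(F,\mathrm{T}_\gamma)\to H^1(F,\mathrm{G})\bigr)$, and by Shapiro's lemma together with Hilbert 90 this kernel is trivial in both settings, so the stable sum collapses to the rational integral on each side. The main (indeed essentially only) subtlety here is careful bookkeeping of normalizations across the two settings; once one verifies that ``Haar measure one on the maximal compact'' means the same thing on both sides, uniqueness of Haar measure does all of the real work and no serious obstacle is anticipated.
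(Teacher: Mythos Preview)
Your argument is correct. For the $d\mu$ identity you and the paper agree that it is essentially definitional; the paper simply declares it ``clear by the definition,'' while you unpack the Weil-restriction identifications and observe (correctly, via Shapiro and Hilbert~90) that the stable sum reduces to a single rational orbital integral on each side.

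For the geometric identity the two approaches genuinely differ. You argue directly: the Haar measures on source and target are pinned down by their total mass on the integral points, so uniqueness forces them to match across the Weil-restriction identification, and the Leray quotient measure $|\omega^{\mathrm{ld}}_{\chi_\gamma}|$ is then determined by this pair. The paper instead passes through the local-density formulation (Lemma~\ref{volumeform}): it writes each stable orbital integral as a limit $\lim_{N\to\infty} q^{-N\dim}\cdot \#(\text{fiber over } \mfo/\pi^N)$, checks that the two point-counts literally coincide because $\underline{G}_{\gamma,\mfo_E}(\mfo_E/\pi^N\mfo_E)=R(\underline{G}_\gamma)(\mfo/\pi^N\mfo)$ and the dimension exponents match, and invokes \cite[Theorem 7.6.1]{Igu} to justify that these limits compute the integrals. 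Your route is shorter and avoids the analytic input from Igusa; the paper's route has the virtue of staying within the point-counting framework used elsewhere in the manuscript and of making explicit the general principle (stated just after the proof) that the argument works for any finite unramified extension and any quasi-split Lie algebra.
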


\begin{proof}
The second identity is clear by the definition given in Section \ref{subsubsection331}.
For the first identity, put
\[
 \underline{G}_{\gamma, \mfo_E}:=\varphi_{\gamma, \mfo_E}^{-1}(\chi_{\gamma}) ~~~~~~   \textit{    and   }   ~~~~~~~~~~~~~~
 R(\underline{G}_{\gamma}):=\left(\mathrm{Res}_{\mfo_{E}/\mfo}(\varphi_{n, \mathfrak{o}_{E}})\right)^{-1}(\chi_{\gamma}).
\]

We claim that 
\[
\left\{
\begin{array}{l}
\sog^{\mathfrak{gl}_{n, \mfo_{E}}}=\lim\limits_{N\rightarrow \infty} q^{-2N\cdot (n^2-n)}
\#\underline{G}_{\gamma, \mfo_E}(\mfo_E/\pi^N \mfo_E); \\
\sog^{\mathrm{Res}_{\mfo_{E}/\mfo}(\mathfrak{gl}_{n, \mfo_{E}})}=\lim\limits_{N\rightarrow \infty} q^{-N\cdot (2n^2-2n))}
\#R(\underline{G}_{\gamma, \mfo_E})(\mfo/\pi^N \mfo),
\end{array}
\right.
\]
where the limit stabilizes for $N$ sufficiently large.
Here, $n^2-n$ (resp. $2n^2-2n$) is the dimension of the generic fiber of $\underline{G}_{\gamma, \mfo_E}$ (resp. $R(\underline{G}_{\gamma, \mfo_E})$).
Then both right hand sides are equal, which completes the proof. 
We believe that this is a well-known fact but  could not find a proof in the literature. 
Thus we provide a sketch of the proof. 

We will treat  a more general situation. Consider a morphism $\varphi$ from an affine space $\mathbb{A}^m$ to another affine space $\mathbb{A}^n$ defined over an Henselian ring  $R$ having a finite residue field $\kappa_R$ such that the generic fiber of $\varphi$ at $\gamma(\in \mathbb{A}^n(R))$ is smooth.
Choose two volume forms $\omega_{\mathbb{A}^m}$ and $\omega_{\mathbb{A}^n}$ such that the volumes of $\mathbb{A}^m(R)$ and $\mathbb{A}^m(R)$ are $1$. 
Then we claim that  the volume of $\varphi^{-1}(\varphi(\gamma))(R)$, with respect to the quotient of $\omega_{\mathbb{A}^m}$ by $\omega_{\mathbb{A}^n}$, is described as the limit $\lim\limits_{N\rightarrow \infty} q^{-N\cdot (m-n)}
\#\varphi^{-1}(\varphi(\gamma))(R/\pi_R R)$, where $\pi_R$ is a uniformizer in $R$ and $q=\#\kappa_R$ . 

The proof is identical to that of \cite[Lemma 3.2]{CY}, if we prove that  the function defined on $U$ evaluating the volume of $\varphi^{-1}(a)(R)$ for $a\in U$ is locally constant, where $U$ is a small open neighborhood in the target space $\mathbb{A}^n$ containing $\varphi(\gamma)$.
This is the statement of   \cite[Theorem 7.6.1]{Igu}. 
\end{proof}
\begin{remark}
The above lemma holds 
whenever $E$ is replaced by any finite and unramified extension $F'$ of $F$ and whenever $\mathfrak{gl}_{n, \mfo_E}$ is replaced by any quasi-split Lie algebra over $\mfo_{F'}$. 
\end{remark}

\section{Reductions and stratification}\label{sec:reduction}
By Proposition \ref{pro:par_des_sorb} (or Proposition \ref{lem:par_des_sorb}) and Lemma \ref{lemma:comparisonofsogweilrest}, we may and do assume that $B(\gamma)=B(\gamma)^{irred}$.
See Equation (\ref{equation:irrfactor}) for the notion of $B(\gamma)$ and $B(\gamma)^{irred}$.
In this section we suppose that  $B(\gamma)=B(\gamma)^{irred}$ is a singleton, equivalently $\chi_{\gamma}$ is irreducible over $\widetilde{F}$ (or $F$),  equivalently $\widetilde{F}_{\chi_{\gamma}}$ is a field extension of $\widetilde{F}$ (or $F$).

\subsection{Reductions}\label{subsec: Reductions}

The goal of this section is to simply the form of the characteristic polynomial in order to apply smoothening process in Section \ref{sectypem}.
Recall from Sections \ref{sec:trans_herm}-\ref{desc_field} that  $\widetilde{F}_{\chi_{\gamma}}=  \widetilde{F}[x]/(\chi_{\gamma}(x))$  and $F^\sigma_{\chi_{\gamma}}=F[y]/(\psi(y))$. 
We refer to  Equation (\ref{equation:yxx2}) for the relation between $x$ and $y$. Note that
\[
\left\{
\begin{array}{l l}
\widetilde{F}_{\chi_{\gamma}}\cong E[x]/(\psi(x)), ~~~  x\mapsto x/\alpha  & \textit{if $(\widetilde{F}, \epsilon)=(E, 1)$};\\
\widetilde{F}_{\chi_{\gamma}}= F[x]/(\psi(x^2)) & \textit{if $(\widetilde{F}, \epsilon)=(F, -1)$}
 \end{array}\right.
\]
for $\alpha \in \mathfrak{o}_E^\times$ such that $\alpha + \sigma(\alpha) = 0$.
We introduce the following notations:
\begin{equation}
   R(\widetilde{F}):= \mfo_{\widetilde{F}}[x]/(\chi_{\gamma}(x)) ~~~~~~~~  \textit{    and    } ~~~~~~~~~~    R:=(R(\widetilde{F}))^{\sigma}=\mfo[y]/(\psi(y))
   \end{equation}
so that 
\[
\left\{
\begin{array}{l l}
R(\widetilde{F})\cong \mfo_E[x]/(\psi(x)), ~~~  x\mapsto x/\alpha  & \textit{if $(\widetilde{F}, \epsilon)=(E, 1)$};\\
R(\widetilde{F})= \mfo[x]/(\psi(x^2)) & \textit{if $(\widetilde{F}, \epsilon)=(F, -1)$}.
 \end{array}\right.
\]


Let $\widetilde{K}$ be the unique unramified extension of $\widetilde{F}$ in $\widetilde{F}_{\chi_{\gamma}}$ corresponding to the residue field $\kappa_{R(\widetilde{F})}$.
Then $\mfo_{\widetilde{K}}$ is contained in $R(\widetilde{F})$ by Lemma \ref{unramadd}.
Since $\widetilde{K}$ is Galois over $F$, the involution $\sigma$ preserves $\widetilde{K}$ and $\mathfrak{o}_{\widetilde{K}}$.
Let $\widetilde{l}= [\widetilde{K}:F]$.
Our setting is  visualized as follows:
\begin{equation}\label{diag2_unsp}
\begin{tikzcd}
F \arrow[r,"\widetilde{l}"] \arrow[d, no head]               & \widetilde{K} \arrow[rr,"2n/\widetilde{l}"] \arrow[d, no head]     &                        & \widetilde{F}_{\chi_{\gamma}} \arrow[ld, no head] \arrow[d, no head]    \\
\mfo \arrow[r, "\widetilde{l}"] \arrow[d, no head] & \mfo_{\widetilde{K}} \arrow[r, "2n/\widetilde{l}"] \arrow[d, no head]      & R(\widetilde{F}) \arrow[r] \arrow[ld, no head] & \mfo_{\widetilde{F}_{\chi_{\gamma}}} \arrow[d, no head] \\
\kappa \arrow[r,"\widetilde{l}"]                     & \kappa_{R(\widetilde{F})} \arrow[rr] &                      & \kappa_{\widetilde{F}_{\chi_{\gamma}}}.
\end{tikzcd}\end{equation}
Here the letter over an arrow stands for the rank of an extension as a free module.

Let $K^\sigma = (\widetilde{K})^\sigma$ so that $\mathfrak{o}_{K^\sigma} = (\mathfrak{o}_{\widetilde{K}})^\sigma$.
Then $K^\sigma$ is  the unramified extension of $F$ in $F^\sigma_{\chi_\gamma}$ corresponding to the residue field $\kappa_R$.
Let $l = [K^\sigma:F]$.
Then we have \[
\begin{cases}
[\widetilde{K}:K^\sigma]=2 \textit{ and } \widetilde{l} = 2l
 &\textit{if } \widetilde{K}\not\subset F_{\chi_\gamma}^\sigma; \\
  K^{\sigma}=\widetilde{K} \textit{ and } \widetilde{l} = l &\textit{if } \widetilde{K}\subset F^\sigma_{\chi_\gamma}.
\end{cases}
\]

Under the action of $\sigma$, Diagram (\ref{diag2_unsp}) descends to the diagram
\begin{equation}\label{diag_reduction}
    \begin{tikzcd}
F \arrow[r,"l"] \arrow[d, no head] &K^\sigma \arrow[rr,"n/l"]\arrow[d, no head]& & F^\sigma_{\chi_{\gamma}} \arrow[ld, no head] \arrow[d, no head]    \\
\mfo \arrow[r, "l"] \arrow[d, no head] & \mfo_{K^\sigma} \arrow[r, "n/l"] \arrow[d, no head]      & R \arrow[r] \arrow[ld, no head] & \mfo_{F^\sigma_{\chi_{\gamma}}} \arrow[d, no head] \\
\kappa \arrow[r,"l"]                     & \kappa_{R} \arrow[rr] &                      & \kappa_{F^\sigma_{\chi_{\gamma}}}.            
\end{tikzcd}
\end{equation}

Let
\[
K_\gamma (\supset \widetilde{F}) =
\begin{cases}
    \widetilde{K} &\textit{if } (\widetilde{F},\epsilon) = (E,1); \\
    K^\sigma &\textit{if } (\widetilde{F},\epsilon) = (F,-1).
\end{cases}
\]

\begin{remark}\label{rmk:reduction_cond}
\begin{enumerate}
    \item Suppose that $(\widetilde{F},\epsilon) = (E,1)$.
    Then  $[\widetilde{K}:K^\sigma] = 2$ and the integer $l$ is odd  since $E/F$ is unramified and 
    $\chi_{\gamma}(x)$ is irreducible over $E$.
    
    \item Suppose  that $(\widetilde{F},\epsilon) = (F,-1)$.
    If $K^\sigma = \widetilde{K}$ and $char(\kappa) > 2$, then $\overline{\psi}(x)$ and $\overline{\psi}(x^2)$ share the same irreducible factor so that  $\overline{\chi}_\gamma(x) = \overline{\psi}(x^2) = x^{2n}$ in $\kappa[x]$.
    For an instance, if $\widetilde{F}_{\chi_\gamma}/F^\sigma_{\chi_\gamma}$ is a ramified (quadratic) extension, then $\widetilde{K}\subset F^\sigma_{\chi_\gamma}$ and thus $K^{\sigma}=\widetilde{K}$.
\end{enumerate}
\end{remark}

\subsubsection{Comparison between unimodular hermitian forms on $\mfo_{\widetilde{F}}$ and on $\mfo_{K_\gamma}$}

In order to emphasize that $(L,h)$ is defined over $\mathfrak{o}_{\widetilde{F}}$, we temporarily use the notion $(L(\widetilde{F}), h_{\widetilde{F}})$ for $(L, h)$ in this subsection.

The $\mathfrak{o}_{\widetilde{F}}$-lattice $L$ is an $R(\widetilde{F})$-module so as to be  a free $\mfo_{K_\gamma}$-module since $\mfo_{\widetilde{F}} \subset \mfo_{K_\gamma} \subset R(\widetilde{F})$ (cf. Diagram (\ref{diag2_unsp})).
From now on, when we view $L$ as an $\mfo_{K_\gamma}$-lattice, we denote it by $L(K_\gamma)$.
The rank of  $L(K_\gamma)$ as an $\mfo_{K_\gamma}$-lattice is 
\[
\begin{cases}
  n/l  &\textit{if } (\widetilde{F},\epsilon) = (E,1); \\
   2n/l  &\textit{if } (\widetilde{F},\epsilon) = (F,-1).
\end{cases}
\]
Since $\gamma$ commutes with any element of $K_\gamma$,  $\gamma$ can also be considered as an element of  $\mathrm{End}_{\mfo_{K_\gamma}}(L(K_\gamma))$.
In this case,  we use the notion  $\gamma_K$ for $\gamma$.

Let $\chi_{\gamma_K}(x)\in \mfo_{K_\gamma}[x]$ be the minimal polynomial of $\gamma_K$ over $\mfo_{K_\gamma}$.
Then $\chi_{\gamma_K}(x)$ is the characteristic polynomial of $\gamma_K$ and is irreducible over $K_\gamma$.
This is due to the fact that $\widetilde{F}_{\chi_\gamma} \cong K_\gamma[x]/(\chi_{\gamma_K}(x))$ is a field extension of $K_{\gamma}$ whose degree is same with the rank of $L(K_\gamma)$ as a free  $\mfo_{K_\gamma}$-module. 

We consider a unimodular hermitian lattice  $h_{K_\gamma} : L(K_\gamma) \times L(K_\gamma) \to \mathfrak{o}_{K_\gamma}$.

\begin{lemma}\label{extherm}
The map 
\[
h'_{\widetilde{F}} : L(\widetilde{F}) \times L(\widetilde{F}) \to \mathfrak{o}_{\widetilde{F}}, \quad  (x,y) \mapsto \mathrm{Tr}_{K_\gamma/\widetilde{F}}(h_{K_\gamma}(x,y))
\]
is a unimodular hermitian form on $L(\widetilde{F})$.
\end{lemma}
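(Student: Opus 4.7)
The plan is to verify the three defining properties in turn: $\widetilde{F}$-sesquilinearity, the hermitian symmetry with sign $\epsilon$, and unimodularity of the pairing $h'_{\widetilde{F}}$. All three will follow from standard properties of the trace combined with the fact, emphasized just before Diagram $(\ref{diag_reduction})$, that $K_\gamma/\widetilde{F}$ is unramified and that $\sigma$ preserves $K_\gamma$ and $\mathfrak{o}_{K_\gamma}$.

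For sesquilinearity, given $a, b \in \mathfrak{o}_{\widetilde{F}}$ and $x,y \in L$, I would compute
\[
h'_{\widetilde{F}}(ax, by) = \mathrm{Tr}_{K_\gamma/\widetilde{F}}\bigl(\sigma(a)\,b\,h_{K_\gamma}(x,y)\bigr) = \sigma(a)\,b\cdot h'_{\widetilde{F}}(x,y),
\]
using the sesquilinearity of $h_{K_\gamma}$ together with the fact that $\sigma(a)b \in \mathfrak{o}_{\widetilde{F}}$, so it can be pulled out of the $\widetilde{F}$-linear trace. For the symmetry, I would use that $\sigma$, viewed as an element of $\mathrm{Gal}(K_\gamma/F)$, commutes with $\mathrm{Tr}_{K_\gamma/\widetilde{F}}$ because $\sigma$ permutes the $\widetilde{F}$-embeddings of $K_\gamma$ into its Galois closure; consequently
\[
h'_{\widetilde{F}}(y,x) = \mathrm{Tr}_{K_\gamma/\widetilde{F}}\bigl(\epsilon\,\sigma(h_{K_\gamma}(x,y))\bigr) = \epsilon\,\sigma\bigl(h'_{\widetilde{F}}(x,y)\bigr).
\]
These two checks show that $h'_{\widetilde{F}}$ is an $(\widetilde{F},\epsilon)$-hermitian form in the sense fixed in the Notations of Part 2.

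The main point is unimodularity. I would compute the dual lattice of $L$ with respect to $h'_{\widetilde{F}}$:
\[
L^{\perp,h'_{\widetilde{F}}} = \{x \in V : \mathrm{Tr}_{K_\gamma/\widetilde{F}}(h_{K_\gamma}(x, L)) \subset \mathfrak{o}_{\widetilde{F}}\}.
\]
Since $L = L(K_\gamma)$ is stable under multiplication by $\mathfrak{o}_{K_\gamma}$, the containment $\mathrm{Tr}_{K_\gamma/\widetilde{F}}(h_{K_\gamma}(x, L)) \subset \mathfrak{o}_{\widetilde{F}}$ is equivalent to $\mathrm{Tr}_{K_\gamma/\widetilde{F}}(a \cdot h_{K_\gamma}(x,v)) \in \mathfrak{o}_{\widetilde{F}}$ for every $a \in \mathfrak{o}_{K_\gamma}$ and every $v \in L$. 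Because $K_\gamma/\widetilde{F}$ is unramified, the inverse different equals $\mathfrak{o}_{K_\gamma}$ and the trace pairing $\mathfrak{o}_{K_\gamma} \times \mathfrak{o}_{K_\gamma} \to \mathfrak{o}_{\widetilde{F}}$ is perfect; this forces $h_{K_\gamma}(x,v) \in \mathfrak{o}_{K_\gamma}$ for all $v \in L$, i.e. $x \in L^{\perp,h_{K_\gamma}}$, which is $L$ by unimodularity of $(L(K_\gamma), h_{K_\gamma})$. The reverse inclusion is immediate from $\mathrm{Tr}_{K_\gamma/\widetilde{F}}(\mathfrak{o}_{K_\gamma}) \subset \mathfrak{o}_{\widetilde{F}}$, so $L^{\perp,h'_{\widetilde{F}}} = L$.

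The only subtle point, and the one I would want to spell out carefully, is the compatibility of $\sigma$ with $\mathrm{Tr}_{K_\gamma/\widetilde{F}}$; this boils down to the remark preceding Diagram $(\ref{diag_reduction})$ that $\widetilde{K}$ is Galois over $F$ and $\sigma$ preserves $\widetilde{K}$, together with the observation that in the case $(\widetilde{F},\epsilon)=(F,-1)$ we have $\sigma = \mathrm{id}$ on $\widetilde{F}$ and the trace-compatibility is automatic. No other obstacle arises.
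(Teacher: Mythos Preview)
Your proof is correct and follows essentially the same line as the paper's: the symmetry computation is identical, and your unimodularity argument via the perfect trace pairing on $\mathfrak{o}_{K_\gamma}$ (inverse different equals $\mathfrak{o}_{K_\gamma}$ for an unramified extension) is precisely the content of \cite[Proposition III.7]{Ser}, which the paper invokes directly. The only difference is that you spell out the sesquilinearity check and the $\sigma$-compatibility with the trace explicitly, while the paper treats these as implicit.
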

\begin{proof}
For any $x,y \in L(\widetilde{F})$, the following computation shows that $h'_{\widetilde{F}}$ is a hermitian form: 
\[
h'_{\widetilde{F}}(y,x) =\mathrm{Tr}_{K_\gamma/\widetilde{F}}(h_{K_\gamma}(y,x)) = \mathrm{Tr}_{K_\gamma/
\widetilde{F}}(\epsilon \sigma(h_{K_\gamma}(x,y)))
=\epsilon \sigma( \mathrm{Tr}_{K_\gamma/\widetilde{F}}(h_{K_\gamma}(x,y)))
=\epsilon \sigma( h'_{\widetilde{F}}(x,y)).
\] 
It suffices to show that the lattice $L(\widetilde{F})$ is unimodular with respect to $h'_{\widetilde{F}}$. 
Since $K_\gamma/\widetilde{F}$ is unramified, \cite[Proposition III.7]{Ser} yields that $h'_{\widetilde{F}}(x,L(\widetilde{F}))\subset \mfo_{\widetilde{F}}$ if and only if $h_{K_\gamma}(x,L(K_\gamma))\subset \mfo_{K_\gamma}$.
The claim follows since $L(K_\gamma)$ is unimodular with respect to $h_{K_\gamma}$.
\end{proof}

It is well known that  two unimodular hermitian lattices are isometric (cf. \cite[Proposition 4.2]{GY})
 so that there exists an isometry 
 \[
 g \left(\in \mathrm{GL}_{n, \mfo_{\widetilde{F}}}(\mfo_{\widetilde{F}})\right) : (L(\widetilde{F}),h_{\widetilde{F}}) \longrightarrow (L(\widetilde{F}),h_{\widetilde{F}}'), \quad
 ~~~~ x \mapsto g\cdot x ~~~~ \textit{ for $x\in L(\widetilde{F})$}.
 \]

\begin{lemma}\label{red:equiv_herm}
  We have
    \[
 \mathcal{SO}_{\gamma} =   \mathcal{SO}_{g \gamma g^{-1}}.
    \]
Here, $\mathcal{SO}_{g \gamma g^{-1}}$ is the stable orbital integral associated to the hermitian lattice $(L(\widetilde{F}),h_{\widetilde{F}}')$. 
\end{lemma}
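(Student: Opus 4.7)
The idea is to transport the entire geometric setup defining $\mathcal{SO}_{\gamma}$ along the isometry $g$, and to check that the Chevalley data and the chosen volume forms are preserved on the nose.

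First, I would observe that conjugation by $g$ makes sense over $\mathfrak{o}$: since $g\in \mathrm{GL}_{n,\mathfrak{o}_{\widetilde{F}}}(\mathfrak{o}_{\widetilde{F}})$, the map $c_g(\delta):=g\delta g^{-1}$ defines an automorphism of the $\mathfrak{o}$-scheme $\mathrm{Res}_{\mathfrak{o}_{\widetilde{F}}/\mathfrak{o}}(\mathfrak{gl}_{n,\mathfrak{o}_{\widetilde{F}}})$. Writing $\mathfrak{g}$ and $\mathfrak{g}'$ for the Lie algebras of the reductive group schemes attached to the unimodular hermitian lattices $(L(\widetilde{F}),h_{\widetilde{F}})$ and $(L(\widetilde{F}),h_{\widetilde{F}}')$ respectively, the isometry condition $h_{\widetilde{F}}'(gv,gw)=h_{\widetilde{F}}(v,w)$ yields that $c_g$ restricts to an isomorphism of $\mathfrak{o}$-schemes $c_g:\mathfrak{g}\xrightarrow{\sim}\mathfrak{g}'$ with $c_g(\gamma)=g\gamma g^{-1}$.

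Next I would compare the Chevalley morphisms. Since characteristic polynomials are preserved under conjugation, we have $\chi_{g\gamma g^{-1}}=\chi_{\gamma}$, and, more importantly, the diagram
\[
\xymatrix{
\mathfrak{g} \ar[r]^{c_g}_{\sim} \ar[d]_{\varphi_n} & \mathfrak{g}' \ar[d]^{\varphi_n'} \\
\mathcal{A}^n \ar@{=}[r] & \mathcal{A}^n
}
\]
commutes as a diagram of $\mathfrak{o}$-schemes, where $\varphi_n$ and $\varphi_n'$ denote the Chevalley morphisms of Section \ref{subsubsection:chevalley} for the two hermitian structures. Consequently $c_g$ identifies $\underline{G}_{\gamma}=\varphi_n^{-1}(\chi_{\gamma})$ with $\underline{G}'_{g\gamma g^{-1}}=(\varphi_n')^{-1}(\chi_{g\gamma g^{-1}})$ as $\mathfrak{o}$-schemes, and in particular induces a bijection $\underline{G}_{\gamma}(\mathfrak{o})\cong \underline{G}'_{g\gamma g^{-1}}(\mathfrak{o})$.

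The final point is that the two volume forms match. Let $\omega_{\mathfrak{g}}$ and $\omega_{\mathfrak{g}'}$ be the translation-invariant top forms normalized so that $\mathrm{vol}(\mathfrak{g}(\mathfrak{o}))=\mathrm{vol}(\mathfrak{g}'(\mathfrak{o}))=1$ as in Section \ref{subsubsection:measure}. Since $c_g$ is a linear isomorphism of $F$-vector spaces carrying the $\mathfrak{o}$-lattice $\mathfrak{g}(\mathfrak{o})$ bijectively onto $\mathfrak{g}'(\mathfrak{o})$, its determinant as a $\mathbb{Z}$-linear endomorphism has absolute value $1$, hence $|c_g^{\ast}\omega_{\mathfrak{g}'}|=|\omega_{\mathfrak{g}}|$. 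Combined with the identity $\varphi_n=\varphi_n'\circ c_g$ and the definition $\omega^{\mathrm{ld}}_{\chi}=\omega_{\mathfrak{g}}/\rho_n^{\ast}\omega_{\mathcal{A}^n_{\mathfrak{o}}}$, this gives $|c_g^{\ast}\omega^{\mathrm{ld}}_{\chi_{g\gamma g^{-1}}}|=|\omega^{\mathrm{ld}}_{\chi_{\gamma}}|$. Integrating over $\underline{G}_{\gamma}(\mathfrak{o})$ and transporting along the bijection above yields $\mathcal{SO}_{\gamma}=\mathcal{SO}_{g\gamma g^{-1}}$.

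The only mildly delicate point is the last one, namely checking that $c_g$ really preserves the normalized top forms; but this is automatic once one notes that $c_g(\mathfrak{g}(\mathfrak{o}))=\mathfrak{g}'(\mathfrak{o})$, which in turn is forced by $g$ and $g^{-1}$ being $\mathfrak{o}_{\widetilde{F}}$-integral and by the isometry property.
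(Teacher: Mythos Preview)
Your proof is correct and follows essentially the same approach as the paper: set up the commutative diagram with $c_g:\mathfrak{g}\xrightarrow{\sim}\mathfrak{g}'$ over the identity on $\mathcal{A}^n$, and conclude. The paper's version is terser, simply stating the diagram and invoking the template of Lemma~\ref{constantlemma}, whereas you spell out the volume-form comparison explicitly.
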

\begin{proof}
The proof is identical to that of Lemma \ref{constantlemma}. 
The isometry $g$ yields an isomorphism 
    \[
g: \mathfrak{g}(\mfo) \longrightarrow \mathfrak{g}'(\mfo), ~~~~~~~~~ \textit{    }  ~~~~~~~~~ \gamma \mapsto g \gamma g^{-1}.
    \]
    Here $\mathfrak{g}'$ is the Lie algebra associated to $(L(\widetilde{F}),h_{\widetilde{F}}')$.
Thus we have the following commutative diagram as schemes defined over $\mfo$:
    \[
    \begin{tikzcd}
    \mathfrak{g} \arrow[d, "\gamma \mapsto g \gamma g^{-1}"'] \arrow[rr, "\varphi_{n}"] &  & \mathcal{A}^{n}_{\mfo} \arrow[d, "identity"] \\
    \mathfrak{g}' \arrow[rr, "\varphi_{n}"]                                            &  & \mathcal{A}^{n}_{\mfo}
    \end{tikzcd}.
    \]
This completes the proof.
\end{proof}

By Lemma \ref{red:equiv_herm}, we may and do assume that 
$h_{\widetilde{F}}=\mathrm{Tr}_{K_\gamma/\widetilde{F}}(h_{K_\gamma})$.

\subsubsection{Interpretation of $\gamma$ in $\mathfrak{g}(\mfo)$ as an element in $\mathfrak{g}_{h_{K_{\gamma}}, \mfo_{K^\sigma}}(\mfo_{K^\sigma})$}

In this subsection, we temporarily use the following notations:
\[
\left\{
\begin{array}{l}
\textit{$\mathrm{G}_{h_{K_{\gamma}}, \mfo_{K^\sigma}}$ be the (smooth) reductive group scheme  defined over $\mfo_{K^\sigma}$ stabilizing $(L(K_\gamma),h_{K_{\gamma}})$};\\
\textit{$\mathfrak{g}_{h_{K_{\gamma}}, \mfo_{K^\sigma}}$ be the Lie algebra of $\mathrm{G}_{h_{K_{\gamma}}, \mfo_{K^\sigma}}$}.
 \end{array}\right.
\]

\begin{lemma}\label{lem:gammainnd} 
  The element $\gamma_K \in \mathrm{End}_{\mfo_{K_\gamma}}(L(K_\gamma))$ is a regular semisimple element of  $\mathfrak{g}_{h_{K_{\gamma}}, \mfo_{K^\sigma}}(\mfo_{K^\sigma})$.
\end{lemma}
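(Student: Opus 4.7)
The plan is to verify two things in turn: first that $\gamma_K$ lies in $\mathfrak{g}_{h_{K_\gamma}, \mathfrak{o}_{K^\sigma}}(\mathfrak{o}_{K^\sigma})$, and second that $\gamma_K$ is regular semisimple there. Note that in the case $(\widetilde{F},\epsilon)=(F,-1)$ the involution $\sigma$ is trivial on $\widetilde{F}$ and on $K_\gamma=K^\sigma$, while in the case $(\widetilde{F},\epsilon)=(E,1)$ the involution $\sigma$ acts non-trivially on $K_\gamma=\widetilde{K}$ with fixed field $K^\sigma$; my argument will cover both situations in a uniform way, so this is only a cosmetic split.

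For the first claim, I would begin by observing that $\gamma_K \in \mathrm{End}_{\mathfrak{o}_{K_\gamma}}(L(K_\gamma))$ since $\mathfrak{o}_{K_\gamma}\subset R(\widetilde{F})=\mathfrak{o}_{\widetilde{F}}[\gamma]$, so $\gamma$ commutes with the $\mathfrak{o}_{K_\gamma}$-action on $L$. Then I would introduce the $\mathfrak{o}_{K_\gamma}$-valued form
\[
f(u,v):=h_{K_\gamma}(\gamma u,v)+h_{K_\gamma}(u,\gamma v),
\]
and check from the commutation just noted, together with the hermitian/alternating property of $h_{K_\gamma}$, that $f$ is sesquilinear over $\mathfrak{o}_{K_\gamma}$ (in particular $f(cu,v)=\sigma(c)f(u,v)$ for $c\in\mathfrak{o}_{K_\gamma}$). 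The hypothesis $\gamma\in\mathfrak{g}(\mathfrak{o})$ combined with $h_{\widetilde{F}}=\mathrm{Tr}_{K_\gamma/\widetilde{F}}(h_{K_\gamma})$ gives $\mathrm{Tr}_{K_\gamma/\widetilde{F}}(f(u,v))=0$ for all $u,v$. Substituting $cu$ for $u$ (and using that $\sigma$ permutes $\mathfrak{o}_{K_\gamma}$), this upgrades to $\mathrm{Tr}_{K_\gamma/\widetilde{F}}(c\cdot f(u,v))=0$ for every $c\in\mathfrak{o}_{K_\gamma}$; scaling by powers of a uniformizer extends this to every $c\in K_\gamma$. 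Since $K_\gamma/\widetilde{F}$ is a separable (indeed unramified) extension, the trace pairing is non-degenerate, and we conclude $f\equiv 0$, i.e.\ $\gamma_K$ preserves $h_{K_\gamma}$ in the Lie algebra sense.

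For the second claim, I would use that $\chi_{\gamma_K}(x)\in\mathfrak{o}_{K_\gamma}[x]$ is irreducible over $K_\gamma$, as recorded in the paragraph preceding the lemma. Its roots in an algebraic closure of $F$ are among the eigenvalues of $\gamma$ acting on $L\otimes_{\mathfrak{o}}\overline{F}$, which are precisely the roots of $\chi_\gamma$; since $\gamma$ is regular semisimple these roots are pairwise distinct, so $\chi_{\gamma_K}$ is separable. By the characterization of regular semisimple elements of classical Lie algebras recalled in the notations of Part~2 (separable characteristic polynomial, cf.\ \cite[Section 3.1]{Gor22} or \cite[Sections 6-7]{Gro05}), this gives regular semisimplicity of $\gamma_K$ in $\mathfrak{g}_{h_{K_\gamma},\mathfrak{o}_{K^\sigma}}(\mathfrak{o}_{K^\sigma})$.

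The only delicate point is the trace-pairing argument in the first step; in particular, one must be careful that the twist by $\sigma$ under sesquilinearity does not obstruct the non-degeneracy conclusion. The key observation resolving this is that $\sigma$ is an automorphism of $\mathfrak{o}_{K_\gamma}$, so replacing $c$ by $\sigma^{-1}(c)$ in the identity $\mathrm{Tr}_{K_\gamma/\widetilde{F}}(\sigma(c)f(u,v))=0$ converts it into the ordinary $K_\gamma$-linear statement $\mathrm{Tr}_{K_\gamma/\widetilde{F}}(c\,f(u,v))=0$ to which non-degeneracy applies. Everything else is routine.
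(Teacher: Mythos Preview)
Your proof is correct and follows essentially the same route as the paper's: both verify the Lie algebra condition by showing that $\mathrm{Tr}_{K_\gamma/\widetilde{F}}(c\cdot f(u,v))=0$ for all scalars $c$ and then invoke non-degeneracy of the trace on the unramified extension $K_\gamma/\widetilde{F}$, and both deduce regular semisimplicity from the fact that $\chi_{\gamma_K}$ divides the separable polynomial $\chi_\gamma$. The only cosmetic difference is that the paper multiplies the \emph{second} (linear) argument of $h_{K_\gamma}$ by the scalar, thereby avoiding the $\sigma$-twist altogether, while you multiply the first argument and then undo the twist using that $\sigma$ is an automorphism of $K_\gamma$; this costs nothing.
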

\begin{proof}
Since $\chi_{\gamma_K}(x)$ divides $\chi_\gamma(x)$, which is separable, it suffices to show that $\gamma_K$ is an element of $\mathfrak{g}_{K^\sigma}(\mfo_{K^\sigma})$, equivalently  that $h_{K_\gamma}(\gamma_K x,y)+h_{K_\gamma}(x,\gamma_K y)=0$ for all $x,y \in L(K_\gamma)$.
Suppose there exist nonzero $x,y \in L(K_\gamma)$ such that $h_{K_\gamma}(\gamma_K x,y)+h_{K_\gamma}(x,\gamma_K y) = b$ for some $b\neq 0$ in $\mfo_{K_\gamma}$.
For any $a \in K_{\gamma}$, we have
\begin{align*}
    \mathrm{Tr}_{K_\gamma/\widetilde{F}}(a)
  &= \mathrm{Tr}_{K_\gamma/\widetilde{F}}(ab^{-1}b)
  =\mathrm{Tr}_{K_\gamma/\widetilde{F}}(ab^{-1}(h_{K_{\gamma}}(\gamma_K x,y)+h_{K_{\gamma}}(x,\gamma_K y)))\\
  &=\mathrm{Tr}_{K_\gamma/\widetilde{F}}(h_{K_{\gamma}}(\gamma_K x,ab^{-1}y)+h_{K_\gamma}(x,ab^{-1}\gamma_K y))
=h_{\widetilde{F}}'(\gamma_K x,ab^{-1}y)+h_{\widetilde{F}}'(x,\gamma_K ab^{-1}y)=0.
\end{align*}
Here $ab^{-1}\gamma_K y=\gamma_K ab^{-1}y$ since $\gamma_K$ is $K_\gamma$-linear.
This yields that $\mathrm{Tr}_{K_\gamma/\widetilde{F}}(K_{\gamma}) = 0$, which contradicts that $K_{\gamma}/\widetilde{F}$ is unramified.
\end{proof}

\subsubsection{Comparison between $\mathcal{SO}_{\gamma,d\mu}$ and $\mathcal{SO}_{\gamma_{K},d\mu_K}$}

We define the stable orbital integral $\mathcal{SO}_{\gamma_K,d\mu_K}$ of $\gamma_K$ on $\mathfrak{g}_{h_{K_{\gamma}}, \mfo_{K^\sigma}}$, where $d \mu_K$ is the quotient measure on $\mathrm{T}_{\gamma_K, K^\sigma} (K^\sigma)\backslash \mathrm{G}_{h_{K_\gamma},K^\sigma}(K^\sigma)$ as defined  in Section \ref{subsubsection331}.
Here $\mathrm{T}_{\gamma_K, K^\sigma}$ is the centralizer of $\gamma_K$ via the adjoint action of $\mathrm{G}_{h_{K_\gamma}, {K^\sigma}}$ (cf. the beginning of Section \ref{sec:cent}). 
We will prove that $\mathcal{SO}_{\gamma,d\mu} = \mathcal{SO}_{\gamma_{K},d\mu_K}$, by using lattice counting arguments.

Recall that $V = L(\widetilde{F})\otimes_{\mathfrak{o}_{\widetilde{F}}} \widetilde{F}=L(K_\gamma)\otimes_{\mfo{K_\gamma}}K_\gamma$.
When we view $V$ as an $\widetilde{F}$-vector space or as a $K_\gamma$-vector space, we use the notion $V(\widetilde{F})$ or $V(K_\gamma)$ respectively. 
We define
\[
\left\{
\begin{array}{l}
     X_{\gamma}:=\{M \subset V(\widetilde{F})| M \textit{ is a unimodular $\mfo_{\widetilde{F}}$-lattice under } h_{\widetilde{F}} \textit{ such that } \gamma M \subset M\};  \\
      X_{\gamma_K}:=\{N \subset V(K_\gamma)| N \textit{ is a unimodular $\mfo_{K_\gamma}$-lattice under } h_{K_{\gamma}} \textit{ such that } \gamma_K N \subset N \}.  \\
\end{array}\right.
\]

\begin{lemma}\label{lem:lattice_reduction}
We have the identification
 $X_{\gamma} = X_{\gamma_K}$.
\end{lemma}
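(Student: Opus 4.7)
The plan is to show containment in both directions, with the key inputs being (a) the inclusion $\mfo_{K_\gamma}\subset R(\widetilde{F})$ from Diagram (\ref{diag2_unsp}) together with Lemma \ref{unramadd}, which forces every $\gamma$-stable $\mfo_{\widetilde{F}}$-lattice to automatically be an $\mfo_{K_\gamma}$-lattice, and (b) the fact that $K_\gamma/\widetilde{F}$ is unramified, which (via the trace pairing) makes the two unimodularity conditions coincide.

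First I would take $M\in X_\gamma$. Since $\gamma M\subset M$, the lattice $M$ is a module over $\mfo_{\widetilde{F}}[\gamma]$. Under the ring isomorphism $\mfo_{\widetilde{F}}[\gamma]\cong R(\widetilde{F})$ we have $\mfo_{K_\gamma}\subset R(\widetilde{F})$ by Lemma \ref{unramadd}, so $M$ is automatically an $\mfo_{K_\gamma}$-submodule of $V(K_\gamma)$, finitely generated and torsion-free, hence an $\mfo_{K_\gamma}$-lattice with $\gamma_K M\subset M$. Conversely, for $N\in X_{\gamma_K}$, the inclusion $\mfo_{\widetilde{F}}\subset \mfo_{K_\gamma}$ shows immediately that $N$ is an $\mfo_{\widetilde{F}}$-lattice satisfying $\gamma N\subset N$. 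Hence the underlying lattices in $X_\gamma$ and $X_{\gamma_K}$ coincide as subsets of $V$.

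It remains to compare unimodularity. By definition $M$ lies in $X_\gamma$ iff $h_{\widetilde{F}}(x,M)\subset\mfo_{\widetilde{F}}$ for all $x\in M$, and $M\in X_{\gamma_K}$ iff $h_{K_\gamma}(x,M)\subset \mfo_{K_\gamma}$ for all $x\in M$. Since by assumption $h_{\widetilde{F}}=\mathrm{Tr}_{K_\gamma/\widetilde{F}}\circ h_{K_\gamma}$ and $K_\gamma/\widetilde{F}$ is unramified, \cite[Proposition III.7]{Ser} (the characterization of the codifferent of an unramified extension as the full ring of integers) gives the equivalence
\[
h_{K_\gamma}(x,M)\subset\mfo_{K_\gamma}\iff \mathrm{Tr}_{K_\gamma/\widetilde{F}}(h_{K_\gamma}(x,M))\subset\mfo_{\widetilde{F}}.
\]
This is exactly the argument already used in the proof of Lemma \ref{extherm}, so I would simply invoke it. Combining the two steps yields $X_\gamma=X_{\gamma_K}$.

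The only subtle point is the unimodularity equivalence, but since the extension $K_\gamma/\widetilde{F}$ is unramified the codifferent is trivial and the trace pairing preserves duals; so there is no real obstacle, only a careful bookkeeping of which ring each lattice is viewed over.
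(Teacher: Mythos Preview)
Your approach is essentially identical to the paper's: both directions of containment use $\mfo_{K_\gamma}\subset R(\widetilde{F})$ to transfer the lattice structure, and the unimodularity comparison is reduced to the trace equivalence from \cite[Proposition III.7]{Ser}, exactly as in Lemma \ref{extherm}. One small slip to fix: your sentence ``$M$ lies in $X_\gamma$ iff $h_{\widetilde{F}}(x,M)\subset\mfo_{\widetilde{F}}$ for all $x\in M$'' characterizes only $M\subset M^\perp$, not unimodularity $M=M^\perp$; the correct statement (which the paper uses and which your displayed equivalence actually proves) is that the $h_{\widetilde{F}}$-dual and the $h_{K_\gamma}$-dual of $M$ coincide as subsets of $V$, since the trace equivalence holds for \emph{all} $x\in V$, and hence $M=M^\perp$ with respect to one form iff with respect to the other.
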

\begin{proof}
    Firstly we claim that $X_{\gamma} \subset X_{\gamma_K}$.
    For $M\in X_\gamma$, since $\gamma M \subset M$, the lattice $M$ has a $R(\widetilde{F})$-module structure, where $x \in R(\widetilde{F}) = \mathfrak{o}_{\widetilde{F}}[x]/(\chi_\gamma(x))$ acts on $M$ by $\gamma$.
    Since $\mathfrak{o}_{K_\gamma} \subset R(\widetilde{F})$ (cf. diagram (\ref{diag2_unsp})), the lattice  $M$ has  an $\mathfrak{o}_{K_\gamma}$-module structure. 
Thus it suffices to show that $M$ is unidmodular with respect to $h_{K_\gamma}$. 
This is because \cite[Proposition III.7]{Ser} yields that $h_{K_{\gamma}}(x,M)\subset \mfo_{K_\gamma}$ if and only if $h_{\widetilde{F}}(x,M)\subset \mfo_{\widetilde
    {F}}$ for any $x \in V$.

    The opposite direction $X_{\gamma_K} \subset X_{\gamma}$ directly follows from Lemma \ref{extherm}.
\end{proof}

\begin{proposition}\label{orb:lattice}
 We have
$\mathcal{O}_{\gamma, d\mu}=\# X_{\gamma}$ and 
    $\mathcal{O}_{\gamma_K,d\mu_K} = \# X_{\gamma_K}$ and thus Lemma \ref{lem:lattice_reduction} yields
    \[
    \mathcal{O}_{\gamma,d\mu}=\mathcal{O}_{\gamma_{K},d\mu_K}.
    \]

\end{proposition}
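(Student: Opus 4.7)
The plan is to establish the two equalities $\mathcal{O}_{\gamma,d\mu} = \#X_\gamma$ and $\mathcal{O}_{\gamma_K,d\mu_K} = \#X_{\gamma_K}$ independently; Lemma \ref{lem:lattice_reduction} then immediately yields the final identity. Both arguments are formally identical, so I describe only the first in detail. The key preliminary observation is that under the standing assumption that $B(\gamma) = B(\gamma)^{irred}$ is a singleton, Lemma \ref{lem: Cent_unitary} identifies $\mathrm{T}_{\gamma, F}$ with $\mathrm{Res}_{F_{\chi_\gamma}^\sigma/F}\, N^1_{\widetilde{F}_{\chi_\gamma}/F_{\chi_\gamma}^\sigma}$. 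Since $\widetilde{F}_{\chi_\gamma}/F_{\chi_\gamma}^\sigma$ is a quadratic field extension, the norm-one torus $N^1$ is anisotropic, so $\mathrm{T}_{\gamma, F}(F)$ is compact; hence $\mathrm{T}_c = \mathrm{T}_{\gamma,F}(F)$ and the normalization $\mathrm{vol}(dt,\mathrm{T}_c) = 1$ makes $\mathrm{T}_{\gamma,F}(F)$ itself have total mass one. The analogous statement for $\mathrm{T}_{\gamma_K, K^\sigma}(K^\sigma)$ holds because $\chi_{\gamma_K}(x)$ was shown to be irreducible over $K_\gamma$ in the discussion preceding Lemma \ref{lem:gammainnd}.

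Compactness lets me unfold the quotient measure. The standard Fubini-style identity $\int_{\mathrm{G}(F)} f(g)\,dg = \mathrm{vol}(\mathrm{T}_c,dt) \int_{\mathrm{T}_{\gamma,F}(F) \backslash \mathrm{G}(F)} f(g)\,d\mu(g)$ for left-$\mathrm{T}_{\gamma,F}(F)$-invariant $f$ combined with $\mathrm{vol}(\mathrm{T}_c,dt)=1$ gives
\[
\mathcal{O}_{\gamma, d\mu} \;=\; \int_{\mathrm{T}_{\gamma,F}(F) \backslash \mathrm{G}(F)} \mathbbm{1}_{\mathfrak{g}(\mathfrak{o})}(g^{-1}\gamma g)\,d\mu(g) \;=\; \int_{\mathrm{G}(F)} \mathbbm{1}_{\mathfrak{g}(\mathfrak{o})}(g^{-1}\gamma g)\,dg \;=\; \mathrm{vol}(Y_\gamma, dg),
\]
where $Y_\gamma := \{g \in \mathrm{G}(F) : g^{-1}\gamma g \in \mathfrak{g}(\mathfrak{o})\}$. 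Since $g^{-1}\gamma g$ preserves $L$ if and only if $\gamma$ preserves $gL$, and $g \in \mathrm{G}(F)$ automatically transports the form $h$ isometrically, we see that $Y_\gamma = \{g \in \mathrm{G}(F) : gL \in X_\gamma\}$. The map $g \mapsto gL$ then descends to $Y_\gamma/\mathrm{G}(\mathfrak{o}) \to X_\gamma$; it is injective because $\mathrm{Stab}_{\mathrm{G}(F)}(L) = \mathrm{G}(\mathfrak{o})$, and surjective because any two unimodular hermitian $\mathfrak{o}_{\widetilde{F}}$-lattices in $V$ are isometric, as already invoked in Lemma \ref{red:equiv_herm} via \cite[Proposition 4.2]{GY}.

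Choosing representatives $g_M \in \mathrm{G}(F)$ with $g_M L = M$ for each $M \in X_\gamma$ decomposes $Y_\gamma = \bigsqcup_{M \in X_\gamma} g_M \mathrm{G}(\mathfrak{o})$, and the normalization $\mathrm{vol}(\mathrm{G}(\mathfrak{o}), dg) = 1$ yields $\mathcal{O}_{\gamma,d\mu} = \#X_\gamma$. The identical argument, with $\mathrm{G}(F)$, $L$, and $\mathrm{T}_{\gamma,F}$ replaced respectively by $\mathrm{G}_{h_{K_\gamma}, K^\sigma}(K^\sigma)$, $L(K_\gamma)$, and $\mathrm{T}_{\gamma_K, K^\sigma}$—using Lemma \ref{lem:gammainnd} together with the isometry uniqueness of unimodular hermitian $\mathfrak{o}_{K_\gamma}$-lattices in $V(K_\gamma)$—gives $\mathcal{O}_{\gamma_K,d\mu_K} = \#X_{\gamma_K}$. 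Combined with Lemma \ref{lem:lattice_reduction}, the proof is complete. The only subtle point, and the one that makes the assumption $B(\gamma) = B(\gamma)^{irred}$ essential, is compactness of $\mathrm{T}_{\gamma,F}(F)$; without it the integrand would have infinite measure on $\mathrm{G}(F)$ and the counting interpretation would need a $\Lambda$-quotient as in \cite[Section 4]{Yun13}.
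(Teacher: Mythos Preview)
Your proof is correct and follows essentially the same approach as the paper: both establish the bijection $Y_\gamma/\mathrm{G}(\mathfrak{o}) \cong X_\gamma$ via $g \mapsto gL$ (using uniqueness of unimodular hermitian lattices for surjectivity) and then use compactness of $\mathrm{T}_{\gamma,F}(F)$ to eliminate the torus quotient. The only cosmetic difference is that the paper cites \cite[Lemma 3.2.8]{Yun16} to write $\mathcal{O}_{\gamma,d\mu} = \#(\Lambda_\gamma \backslash X'_\gamma)$ and then argues $\Lambda_\gamma = 0$ from irreducibility of $\chi_\gamma$, whereas you unfold this step directly by using compactness to lift the integral from $\mathrm{T}_{\gamma,F}(F)\backslash \mathrm{G}(F)$ to $\mathrm{G}(F)$; these are two phrasings of the same computation.
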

\begin{proof}
The proof of the second claim is identical to that of the first one.
Thus we will only prove the first equality $\mathcal{O}_{\gamma, d\mu}=\# X_{R(F)}$.
We claim that the following map $f$ is bijective:
    \[
f:     X'_\gamma \left(:= \{[g] \in \mathrm{G}(F)/\mathrm{G}(\mathfrak{o}) \mid g^{-1} \gamma g \in \mathfrak{g}(\mathfrak{o})\}\right)  \longrightarrow  X_{\gamma}, \quad ~~~~~~ [g] \mapsto g L.
\]
    This is well-defined since  $gL$ is a unimodular $\mathfrak{o}_{\widetilde{F}}$-lattice for any $g \in \mathrm{G}(F)$ and $kL = L$ for any $k \in \mathrm{G}(\mathfrak{o})$.
    In order to show that $f$ is injective, suppose that $gL = g'L$ for some $g, g' \in G(F)$.
    Then we have $g'^{-1}g L = L$, which is equivalent to say that $g'^{-1}g \in \mathrm{G}(\mathfrak{o})$.

    For  the surjectivity, 
choose $M \in X_\gamma$.
Since $L$ and $M$  are unimodular lattices contained in the hermitian space $(V(\widetilde{F}), h_{\widetilde{F}})$, there exists an isometry  $g \in \mathrm{G}(F)$ such that $M = gL$.
    Note that  $(g^{-1}\gamma g)L \subset L$ since $M = gL$ is preserved by $\gamma$.
    This is equivalent that $g^{-1} \gamma g \in \mathfrak{g}(\mathfrak{o})$.

    {\cite[Lemma 3.2.8]{Yun16}\footnote{\cite{Yun16} assumes that $char(\kappa)$ is ``large enough'' and that $\mathrm{G}$ is split. However, \cite[Lemma 3.2.8]{Yun16} holds for an unramified group $\mathrm{G}$ without restriction on $char(F)$ and $char(\kappa)$.}}
    yields that $\mathcal{O}_{\gamma,d\mu} = \#(\Lambda_{\gamma} \backslash X'_{\gamma})$, where $\Lambda_{\gamma}$ is the complementary to the maximal compact subgroup $\mathrm{T}_{c}$ in $\mathrm{T}_{\gamma,F}(F)$.
    Since $\chi_{\gamma}(x)$ is irreducible, Lemma \ref{lem: Cent_unitary} and Corollary \ref{corollary:maxopencpt} yield that $\Lambda_\gamma = 0$.
    Thus we have $\mathcal{O}_{\gamma,d\mu} = \# X'_{\gamma} = \# X_{\gamma}$.
\end{proof}

We now extend this to the equality between the stable orbital integrals.

\begin{proposition}\label{red:result1}
We have
    $
\mathcal{SO}_{\gamma,d\mu}=\mathcal{SO}_{\gamma_{K},d\mu_K}.
    $
\end{proposition}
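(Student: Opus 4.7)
The plan is to promote the identity $\mathcal{O}_{\gamma,d\mu}=\mathcal{O}_{\gamma_K,d\mu_K}$ of Proposition \ref{orb:lattice} to the sum over the full stable conjugacy class. By Section \ref{subsubsection331111}, one has $\mathcal{SO}_{\gamma,d\mu}=\sum_{\gamma'\sim\gamma}\mathcal{O}_{\gamma',d\mu'}$ and $\mathcal{SO}_{\gamma_K,d\mu_K}=\sum_{\delta\sim\gamma_K}\mathcal{O}_{\delta,d\mu''_K}$, where the index sets parametrize rational conjugacy classes inside the respective stable classes. I would therefore construct a bijection between these two index sets under which the corresponding orbital integrals are equal termwise, and sum.

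The first step is to match up the parametrizing sets via Galois cohomology. Both sets are described as the kernels of $H^1(F,\mathrm{T}_{\gamma,F})\to H^1(F,\mathrm{G}_F)$ and of $H^1(K^\sigma,\mathrm{T}_{\gamma_K,K^\sigma})\to H^1(K^\sigma,\mathrm{G}_{h_{K_\gamma},K^\sigma})$. Applying Lemma \ref{lem: Cent_unitary} once over $F$ and once over $K^\sigma$, using that $\chi_{\gamma_K}$ is irreducible over $K_\gamma$ with $K_\gamma[x]/(\chi_{\gamma_K}(x))\cong \widetilde{F}_{\chi_\gamma}$ and hence $F^\sigma_{\chi_{\gamma_K}}\cong F^\sigma_{\chi_\gamma}$, the two source tori are Weil restrictions of the same torus $N^1_{\widetilde{F}_{\chi_\gamma}/F^\sigma_{\chi_\gamma}}$ but along $F^\sigma_{\chi_\gamma}/F$ and $F^\sigma_{\chi_\gamma}/K^\sigma$ respectively. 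A Shapiro-type identification then gives a canonical isomorphism $H^1(F,\mathrm{T}_{\gamma,F})\cong H^1(F^\sigma_{\chi_\gamma},N^1_{\widetilde{F}_{\chi_\gamma}/F^\sigma_{\chi_\gamma}})\cong H^1(K^\sigma,\mathrm{T}_{\gamma_K,K^\sigma})$. Realizing each rational class by a representative $\gamma'\in\mathfrak{g}(\mathfrak{o})$, the assignment $\gamma'\mapsto \gamma'_K$ (which lands in $\mathfrak{g}_{h_{K_\gamma},\mathfrak{o}_{K^\sigma}}(\mathfrak{o}_{K^\sigma})$ by Lemma \ref{lem:gammainnd} applied to $\gamma'$, since $\chi_{\gamma'}=\chi_\gamma$ is irreducible) gives an explicit geometric realization of this identification.

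Having established the bijection, for each matched pair $(\gamma',\gamma'_K)$ I would apply Proposition \ref{orb:lattice} to $\gamma'$ and to $\gamma'_K$ separately, obtaining $\mathcal{O}_{\gamma',d\mu'}=\#X_{\gamma'}$ and $\mathcal{O}_{\gamma'_K,d\mu'_K}=\#X_{\gamma'_K}$; then the argument of Lemma \ref{lem:lattice_reduction} goes through verbatim with $\gamma$ replaced by $\gamma'$, because its only inputs are the irreducibility of $\chi_{\gamma'}=\chi_\gamma$, the fact that $\mathfrak{o}_{K_\gamma}\subset R(\widetilde{F})$ depends only on $\chi_{\gamma'}$, and Lemma \ref{extherm} (which is independent of $\gamma'$). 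Summing over the bijection completes the proof. The main obstacle is the compatibility check in the second step, namely verifying that the Shapiro identification really sends $\ker(H^1(F,\mathrm{T}_{\gamma,F})\to H^1(F,\mathrm{G}_F))$ onto $\ker(H^1(K^\sigma,\mathrm{T}_{\gamma_K,K^\sigma})\to H^1(K^\sigma,\mathrm{G}_{h_{K_\gamma},K^\sigma}))$. A cleaner route that bypasses this check is to describe both parametrizing sets lattice-theoretically, as isomorphism classes of pairs $(M,\tau)$ consisting of a unimodular $\mathfrak{o}_{\widetilde{F}}$-lattice (resp.\ unimodular $\mathfrak{o}_{K_\gamma}$-lattice) together with an endomorphism $\tau$ of characteristic polynomial $\chi_\gamma$ (resp.\ $\chi_{\gamma_K}$); Lemma \ref{extherm} then exhibits the bijection directly and reduces the proof to a bookkeeping of the identification $X_{\gamma'}=X_{\gamma'_K}$ already established in Lemma \ref{lem:lattice_reduction}.
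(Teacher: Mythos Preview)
Your proposal is correct and follows the same overall strategy as the paper: reduce to establishing a bijection between the two index sets of rational conjugacy classes and then invoke Proposition \ref{orb:lattice} (together with Lemma \ref{lem:lattice_reduction}) termwise.

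The difference lies in how the bijection is obtained. You attempt an abstract Shapiro-type identification of $H^1(F,\mathrm{T}_{\gamma,F})$ with $H^1(K^\sigma,\mathrm{T}_{\gamma_K,K^\sigma})$ and correctly flag the obstacle of matching the kernels into the ambient groups. The paper bypasses this entirely by computing both parametrizing sets explicitly, case by case. In the unitary case $(\widetilde{F},\epsilon)=(E,1)$ it cites \cite[Proposition 3.6]{Xiao18} to conclude that both kernels are trivial, so there is nothing to match. In the symplectic case $(\widetilde{F},\epsilon)=(F,-1)$ it uses $H^1(F,\mathrm{Sp}_{2n})=1$ and local class field theory to see that both kernels are $\mathbb{Z}/2\mathbb{Z}$, and then checks injectivity of the assignment $\gamma'\mapsto\gamma'_K$ directly: if $g_K\in \mathrm{G}_{h_{K_\gamma},K^\sigma}(K^\sigma)$ conjugates $\gamma_K$ to $\gamma'_K$, then the relation $h_{\widetilde{F}}=\mathrm{Tr}_{K_\gamma/\widetilde{F}}\circ h_{K_\gamma}$ forces $g_K\in\mathrm{G}(F)$, so $\gamma$ and $\gamma'$ were already conjugate over $F$. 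This explicit computation is shorter than either the Shapiro route or your lattice-theoretic alternative, and it avoids the compatibility check you identified as the main difficulty.
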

\begin{proof}
Recall that in Section \ref{subsubsection331111} the stable orbital integrals are defined by
\[
\mathcal{SO}_{\gamma,d\mu} = \sum_{\gamma'\sim \gamma} \mathcal{O}_{\gamma',d\mu}, \textit{ and } 
\mathcal{SO}_{\gamma_K,d\mu_K} = \sum_{\gamma'_K\sim \gamma_K} \mathcal{O}_{\gamma'_K,d\mu_K}.
\]
Here the second sum runs over the set of representatives $\gamma'_K$ of conjugacy classes within the stable conjugacy class of $\gamma_K$.
By Proposition \ref{orb:lattice}, it suffices to show that there is a bijection between the index sets $\{\gamma' \in \mathfrak{g}(F) \mid \gamma' \sim \gamma\}$ and $\{\gamma'_K \in \mathfrak{g}_{h_{K_\gamma},{K^\sigma}}(K^\sigma) \mid \gamma'_K \sim \gamma_K\}$.
By Section \ref{subsubsection331111} these index sets are bijectively identified with $\mathrm{ker}(H^1(F,\mathrm{T}_{\gamma,F}) \to H^1(F,\mathrm{G}_F))$.

\begin{enumerate}
    \item Suppose that $(\widetilde{F},\epsilon) = (E,1)$.
    Since $\chi_{\gamma}(x)$ is irreducible, 
 \cite[Proposition 3.6]{Xiao18} yields that the index set is trivial. The same for $\chi_{\gamma_K}(x)$.

    \item Suppose that $(\widetilde{F},\epsilon) = (F,-1)$.
Then $H^1(F,\operatorname{G}_{F}) = 1$ by \cite[Theorem 5.12.24]{Poonen}.
Since $\chi_{\gamma}(x)$ is irreducible, Lemma \ref{lem: Cent_unitary} and the local class field theory yield that
\[
H^1(F,\mathrm{T}_\gamma) \cong H^1(F_{\chi_\gamma}^\sigma,N^1_{\widetilde{F}_{\chi_\gamma}/F^\sigma_{\chi_\gamma}}\mathbb{G}_m) \cong 
(F^\sigma_{\chi_\gamma})^{\times} /N_{\widetilde{F}_{\chi_\gamma}/F^{\sigma}_{\chi_\gamma}}({\widetilde{F}_{\chi_\gamma}}^\times)
\cong \mathrm{Gal}(\widetilde{F}_{\chi_\gamma}/F^\sigma_{\chi_\gamma})
\cong \mathbb{Z}/2\mathbb{Z}.
\]
It follows that $\mathrm{ker}(H^1(F,\mathrm{T}_{\gamma,F}) \to H^1(F,\mathrm{G}_F)) \cong \mathbb{Z}/2\mathbb{Z}$.
Since $\chi_{\gamma_K}(x)$ is irreducible as well, the second index set is also bijective to  $\mathbb{Z}/2\mathbb{Z}$.
Therefore it suffices to show that if $\gamma$ and $\gamma'$ represent different conjugacy classes, so do $\gamma_K$ and $\gamma'_K$.
Suppose that there exists $g_K \in \mathrm{G}_{h_{K_\gamma},K^\sigma}(K^\sigma)$ such that $g_K^{-1}\cdot \gamma_K\cdot g_K = \gamma'_K$.
From the following computation $$h_{\widetilde{F}}(g_K\cdot x,g_K\cdot y) = \mathrm{Tr}_{K_\gamma/\widetilde{F}} (h_{K_{\gamma}}(g_K \cdot x, g_K \cdot y)) = \mathrm{Tr}_{K_\gamma/\widetilde{F}} (h_{K_{\gamma}}(x,y)) = h_{\widetilde{F}}(x,y),$$
we have  that $g_K \in \mathrm{G}(F)$. Thus $\gamma$ and $\gamma'$ represent the same conjugacy class in $\mathfrak{g}(F)$.
This completes the proof. 
\end{enumerate}
\end{proof}

We translate the above identity in terms of the geometric measure based on  Proposition \ref{prop:comparison_measures}.

\begin{corollary}\label{red:result2}
Suppose that $char(F) = 0$ or $char(F)>n$.
Then we have
\[
\mathcal{SO}_{\gamma}=
\begin{cases}
\frac{\#\mathrm{U}_n(\kappa)}{\#\mathrm{U}_{n/l}(\kappa_{R})\cdot q^{n^2-n^2/l}}\cdot \mathcal{SO}_{\gamma_K} &\textit{if } (\widetilde{F},\epsilon)=(E,1);\\
  \frac{\#\mathrm{Sp}_{2n}(\kappa)}{\#\mathrm{Sp}_{2n/l}(\kappa_{R})\cdot q^{2n^2-2n^2/l}}\cdot \mathcal{SO}_{\gamma_K} &\textit{if } (\widetilde{F},\epsilon) = (F,-1).
\end{cases}
\]
See Diagram (\ref{diag_reduction}) for the notion of $l$ and $\kappa_R$.
\end{corollary}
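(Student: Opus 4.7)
Plan: The strategy is to apply Proposition~\ref{prop:comparison_measures} twice --- once to $\gamma \in \mathfrak{g}(\mathfrak{o})$ over $F$ and once to $\gamma_K \in \mathfrak{g}_{h_{K_\gamma},\mathfrak{o}_{K^\sigma}}(\mathfrak{o}_{K^\sigma})$ over $K^\sigma$ --- and then to feed in the identity $\mathcal{SO}_{\gamma,d\mu} = \mathcal{SO}_{\gamma_K,d\mu_K}$ of Proposition~\ref{red:result1}. Since $\chi_{\gamma_K}(x)$ is irreducible over $K^\sigma$ (as $\widetilde{F}_{\chi_\gamma} \cong K_\gamma[x]/(\chi_{\gamma_K}(x))$ is a field), both $B(\gamma)$ and $B(\gamma_K)$ are singletons, and each application of Proposition~\ref{prop:comparison_measures} reduces to a single factor. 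The hypothesis $\mathrm{char}(F) = 0$ or $\mathrm{char}(F) > n$ transfers to $K^\sigma$ since $n/l \le n$, so the proposition is applicable in both settings.

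The next step is to verify that the ``non-group'' factors --- the Weyl discriminant, the conductor $|N_{F^\sigma_{\chi_\gamma}/F}(\Delta_{\widetilde{F}_{\chi_\gamma}/F^\sigma_{\chi_\gamma}})|^{-1/2}$, and $|\Delta_{F^\sigma_{\chi_\gamma}/F}|^{-1/2}$ --- are preserved under the transition from $F$ to $K^\sigma$. This relies on two ingredients: (i) $K^\sigma/F$ is unramified, so $\Delta_{K^\sigma/F} = (1)$ and Serre's tower formula (\cite[Proposition~III.8]{Ser}) together with $|N_{K^\sigma/F}(a)|_F = |a|_{K^\sigma}$ for $a \in K^\sigma$ identify the corresponding discriminant and conductor terms over $F$ and over $K^\sigma$; (ii) Remark~\ref{remark:sordmu} expresses the Weyl discriminant in terms of $|\Delta_\gamma|$ (and $|\Delta_\psi|$ in the symplectic case), and an analogue of Proposition~\ref{propserre} applied to $\widetilde{F}_{\chi_\gamma}/\widetilde{F}$ versus $\widetilde{F}_{\chi_\gamma}/K_\gamma$ together with $S(\gamma) = [\mathfrak{o}_{\widetilde{F}_{\chi_\gamma}} : \mathfrak{o}_{\widetilde{F}}[x]/(\chi_\gamma)]$ produces the needed equality $|D(\gamma)|_F = |D(\gamma_K)|_{K^\sigma}$.

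The remaining ``group'' factors are $\#\mathrm{G}(\kappa) \cdot q^{-\dim\mathrm{G}}$ versus $\#\mathrm{G}_{h_{K_\gamma},K^\sigma}(\kappa_R) \cdot q_{K^\sigma}^{-\dim \mathrm{G}_{h_{K_\gamma},K^\sigma}}$ and the analogous ratios for the tori. For the tori, the key observation is that by Lemma~\ref{lem: Cent_unitary} and Corollary~\ref{corollary:maxopencpt} both $\underline{\mathrm{T}}_\gamma$ and $\mathrm{Res}_{\mathfrak{o}_{K^\sigma}/\mathfrak{o}}\underline{\mathrm{T}}_{\gamma_K}$ are realized as the Weil restriction to $\mathfrak{o}$ of the same norm-one torus of $\mathfrak{o}_{\widetilde{F}_{\chi_\gamma}}/\mathfrak{o}_{F^\sigma_{\chi_\gamma}}$; hence $\#\underline{\mathrm{T}}_\gamma(\kappa) = \#\underline{\mathrm{T}}_{\gamma_K}(\kappa_R)$ and $\dim\mathrm{T}_{\gamma,F} = l\cdot\dim\mathrm{T}_{\gamma_K,K^\sigma}$ (with $q_{K^\sigma}=q^l$), so all torus contributions cancel. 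The surviving factor is then $\#\mathrm{G}(\kappa) \cdot \#\mathrm{G}_{h_{K_\gamma},K^\sigma}(\kappa_R)^{-1} \cdot q^{-\dim\mathrm{G} + l\dim\mathrm{G}_{h_{K_\gamma},K^\sigma}}$, which specializes to the two stated formulas using $\dim\mathrm{U}_n - l\dim\mathrm{U}_{n/l} = n^2 - n^2/l$ and $\dim\mathrm{Sp}_{2n} - l\dim\mathrm{Sp}_{2n/l} = 2n^2 - 2n^2/l$.

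The main obstacle is the bookkeeping in the symplectic case, where the extra factor $|\Delta_\psi|$ in the Weyl discriminant and the term $|N_{F^\sigma_{\chi_\gamma}/F}(\Delta_{\widetilde{F}_{\chi_\gamma}/F^\sigma_{\chi_\gamma}})|$ must be simultaneously tracked through the unramified base change $F \to K^\sigma$; both require a careful invocation of Lemma~\ref{lem:Conductor_unitary} and the discriminant tower formula. Once these identifications are in place, the corollary follows by direct substitution into the two comparison formulas and cancellation against Proposition~\ref{red:result1}.
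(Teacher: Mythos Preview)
Your proposal is correct and follows exactly the approach the paper indicates: the paper's proof is the single sentence ``We translate the above identity in terms of the geometric measure based on Proposition~\ref{prop:comparison_measures},'' and you have supplied precisely the bookkeeping that this translation requires --- applying Proposition~\ref{prop:comparison_measures} over $F$ and over $K^\sigma$, invoking Proposition~\ref{red:result1}, and checking that the Serre-invariant, conductor, and torus contributions agree under the unramified base change so that only the group-volume ratio survives.
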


\subsubsection{Invariance under the translation}\label{sec:inv}  

By Corollary \ref{red:result2}, our study on $\mathcal{SO}_{\gamma}$ is reduced to the case that $[\kappa_{R}:\kappa]=1$.
This is equivalent that  the characteristic polynomial $\chi_\gamma(x)$ is of the following form:
\[
\overline{\chi}_\gamma(x) =
\begin{cases}
    (x-a)^n \textit{ for some } a \in \kappa_E &\textit{if } (\widetilde{F},\epsilon) = (E,1);\\
    (x^2-a)^n \textit{ for some } a \in \kappa &\textit{if } (\widetilde{F},\epsilon) = (F,-1).
\end{cases}
\]
In this subsection, we will prove  invariance of $\mathcal{SO}_{\gamma}$ under the translation  when $(\widetilde{F},\epsilon) = (E,1)$, as Lemma \ref{constantlemma} for $\mathfrak{gl}_n$ case. 
This invariance does not hold when   $(\widetilde{F},\epsilon) = (F,-1)$.
See Remark \ref{noinvariancerforsp} for further discussion.
\begin{lemma}
      Suppose that $(\widetilde{F}, \epsilon)=(E, 1)$.
      If $\overline{\chi}_{\gamma}(x)=(x-a)^{n}$ for some $a\in \kappa_{E}$, then $\sigma(a)=-a$.
\end{lemma}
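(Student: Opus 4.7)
The plan is to exploit the functional equation that the characteristic polynomial $\chi_\gamma(x)$ must satisfy as a consequence of $\gamma$ lying in $\mathfrak{u}_n(\mathfrak{o})$, and then reduce everything modulo $\pi$.

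First I would observe that the defining relation $h\gamma + \sigma({}^t\gamma)h = 0$ gives $\gamma = -h^{-1}\sigma({}^t\gamma)h$, so $\gamma$ is conjugate to $-\sigma({}^t\gamma)$ in $\mathfrak{gl}_n(E)$. Taking characteristic polynomials and using that transposition preserves $\chi$ while $\sigma$ acts on coefficients, one obtains the clean symmetry
\begin{equation*}
\chi_\gamma(x) = (-1)^n \sigma(\chi_\gamma)(-x) \quad \text{in } \mathfrak{o}_E[x].
\end{equation*}
This is really just a repackaging of the stated condition $c_i + (-1)^{i+1}\sigma(c_i) = 0$ on the coefficients in Equation \eqref{equation:chir}, but the polynomial form is easier to reduce.

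Next, since $E/F$ is unramified, $\sigma$ descends to the nontrivial element of $\mathrm{Gal}(\kappa_E/\kappa)$, which I will still denote by $\sigma$. Reducing the above identity modulo $\pi$ yields
\begin{equation*}
\overline{\chi}_\gamma(x) = (-1)^n \sigma(\overline{\chi}_\gamma)(-x) \quad \text{in } \kappa_E[x].
\end{equation*}
Substituting the hypothesis $\overline{\chi}_\gamma(x) = (x-a)^n$ into both sides, the right-hand side becomes $(-1)^n(-x-\sigma(a))^n = (x+\sigma(a))^n$, so I get the equality $(x-a)^n = (x+\sigma(a))^n$ in $\kappa_E[x]$.

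Finally, comparing roots in $\kappa_E$ (or any algebraic closure) gives $a = -\sigma(a)$, which is the claim. The only place where one might worry about a subtlety is whether $(x-a)^n = (x-b)^n$ really forces $a = b$ in positive residue characteristic dividing $n$; but this holds over any field since both sides are equal if and only if they have the same (unique) root. Thus there is no genuine obstacle: the proof is a one-step reduction of the known symmetry $\chi_\gamma(x) = (-1)^n\sigma(\chi_\gamma)(-x)$ modulo $\pi$, and the only mild check is that $\sigma$ still makes sense on $\kappa_E$, which is guaranteed by the unramifiedness of $E/F$.
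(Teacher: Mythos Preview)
Your proof is correct and reaches the same intermediate identity $(x-a)^n=(x+\sigma(a))^n$ in $\kappa_E[x]$ as the paper does, via the equivalent functional-equation formulation of the coefficient condition $c_i+(-1)^{i+1}\sigma(c_i)=0$. The only difference is the final step: the paper deduces $a=-\sigma(a)$ by a case split on whether $p\mid n$, writing $n=p^m l$ with $(p,l)=1$, expanding $(x-a)^n=(x^{p^m}+(-a)^{p^m})^l$, and comparing the coefficient of $x^{p^m(l-1)}$; your one-line observation that $(x-a)^n$ has the unique root $a$ over any field is shorter and avoids the case analysis entirely.
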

\begin{proof}
    Equation (\ref{equation:chir}) yields that $(x-a)^{n}=(x+\sigma(a))^{n}$.
    Let $p = char(\kappa_E)$.
    If $(n,p)=1$, then we have $-a=\sigma(a)$ by comparing the coefficients of $x^{n-1}$ since $n$ is a unit in $\kappa_E$.

    If $(n,p)\neq 1$, then we can write $n=p^m\cdot l$ such that $(p,l)=1$.
    Then $(x-a)^n=(x^{p^m}+(-a)^{p^m})^l$ and $(x+\sigma(a))^n=(x^{p^m}+(\sigma(a))^{p^m})^l$.
    Then $(-a)^{p^m}=(\sigma(a))^{p^m}$ by comparing the coefficients of $x^{p^m(l-1)}$ since $l$ is a unit in $\kappa_E$. Since $0=(\sigma(a))^{p^m}-(-a)^{p^m}=(\sigma(a)-(-a))^{p^m}$, we conclude that $\sigma(a)=-a$.
\end{proof}

\begin{lemma}\label{lem:invarianct_translation}
    Suppose that $(\widetilde{F}, \epsilon)=(E, 1)$.
    We have that $\mathcal{SO}_{\gamma}=\mathcal{SO}_{\gamma+c}$ for a constant matrix $c\in\mathfrak{u}_n(\mfo)$.
\end{lemma}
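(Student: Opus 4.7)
The plan is to mirror the proof of Lemma \ref{constantlemma} for $\mathfrak{gl}_n$: construct an automorphism $\iota_c$ of $\mathcal{A}^n$ that fits into a commutative square with $\varphi_n$ and translation by $c$, from which $\mathcal{SO}_\gamma = \mathcal{SO}_{\gamma+c}$ follows at once because $\iota_c$ preserves the reference form $\omega_{\mathcal{A}^n_\mfo}$.

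First I would observe that a constant matrix $c \in \mathfrak{u}_n(\mfo)$ must be scalar: writing $c = \alpha \cdot Id_n$ with $\alpha \in \mfo_E$, the defining relation $hc + \sigma({}^tc)h = 0$ forces $\alpha + \sigma(\alpha) = 0$. The identity $\chi_{\gamma+c}(x) = \chi_\gamma(x - \alpha)$ then produces the explicit formula
\[
c_i' = \sum_{j=0}^{i}\binom{n-j}{i-j}(-\alpha)^{i-j}c_j \qquad (\text{with } c_0 = 1)
\]
for the coefficients of $\chi_{\gamma+c}(x)$, and this defines $\iota_c$ first as an endomorphism of the ambient affine space $\mathbb{A}^n_{\mfo_E}$ over $\mfo_E$ (the Chevalley target before Galois descent).

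The step that is genuinely new compared to the $\mathfrak{gl}_n$ situation is to verify that $\iota_c$ descends to an endomorphism of the Galois-fixed scheme $\mathcal{A}^n$ defined over $\mfo$, i.e., that $c_i' \in \mathcal{A}_i(R)$ whenever $(c_1, \ldots, c_n) \in \mathcal{A}^n(R)$. Using $\sigma(c_j) = (-1)^j c_j$ (with $c_0 = 1$) together with $\sigma(\alpha) = -\alpha$ from the previous lemma, a direct computation gives $\sigma(c_i') = (-1)^i c_i'$, which is precisely the condition defining $\mathcal{A}_i$. Since $\iota_c \circ \iota_{-c} = \iota_0 = \mathrm{id}$, this $\iota_c$ is in fact an automorphism of $\mathcal{A}^n$, and its Jacobian is unipotent upper triangular, so it preserves $\omega_{\mathcal{A}^n_\mfo}$. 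Consequently $\iota_c$ transports $\omega_{\chi_\gamma}^{\mathrm{ld}}$ to $\omega_{\chi_{\gamma+c}}^{\mathrm{ld}}$ under the translation bijection $m \mapsto m + c$, yielding $\mathcal{SO}_\gamma = \mathcal{SO}_{\gamma+c}$.

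The only real obstacle is the descent verification above, which depends entirely on the identity $\sigma(\alpha) = -\alpha$ supplied by the previous lemma; the rest of the argument is purely formal and parallels Lemma \ref{constantlemma}. The argument genuinely collapses for $\mathfrak{sp}_{2n}$, consistent with the remark preceding the lemma, because a scalar matrix $\alpha \cdot Id_{2n}$ lies in $\mathfrak{sp}_{2n}(\mfo)$ only when $\alpha = 0$, so there is no nontrivial translation to perform.
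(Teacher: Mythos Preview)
Your proposal is correct and follows essentially the same approach as the paper: construct an automorphism $\iota_c$ of $\mathcal{A}^n$ making the square with $\varphi_n$ and translation commute, exactly as in Lemma~\ref{constantlemma}. The only cosmetic difference is in the descent check: the paper argues that the coefficients of $f(x-c)$ lie in $\mathcal{A}^n$ because $f(x-c)$ is the characteristic polynomial of $m+c\in\mathfrak{u}_n$, whereas you verify the relation $\sigma(c_i')=(-1)^ic_i'$ by direct computation from $\sigma(c_j)=(-1)^jc_j$ and $\sigma(\alpha)=-\alpha$; both justifications are valid. (One minor remark: the identity $\sigma(\alpha)=-\alpha$ already follows from $c=\alpha\cdot Id_n\in\mathfrak{u}_n(\mfo)$, as you note at the outset, so you need not invoke the previous lemma for it.)
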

\begin{proof}
The proof is similar to that of Lemma \ref{constantlemma}.
    We note that a constant matrix $c\in\mathfrak{u}_n(\mfo)$ satisfies $\sigma(c)=-c$ and that the morphism $\mathfrak{u}_n \rightarrow \mathfrak{u}_n,\, m \mapsto m+c$ is an isomorphism. 
    Then it suffices to show that there exists an automorphism $\iota_{c}$ of $\mathcal{A}_{\mfo}$, depending on the choice of $c$, which makes the following diagram commutes;
    \[
    \begin{tikzcd}
    \mathfrak{u}_n \arrow[d, "\gamma \mapsto \gamma +c"'] \arrow[rr, "\varphi_{n}"] &  & \mathcal{A}^{n}_{\mfo} \arrow[d, "\iota_{c}"] \\
    \mathfrak{u}_n \arrow[rr, "\varphi_{n}"]                                            &  & \mathcal{A}^{n}_{\mfo}.
    \end{tikzcd}
    \]
The construction of $\iota_c$ is similar to that in the proof of Lemma \ref{constantlemma}.
Only difference is to check that the coefficients of $f(x-c)$ are an element of $\mathcal{A}^{n}_{\mfo}$.
This is because $f(x-c)$ is also the characteristic polynomial of $m+c \in \mathfrak{u})_n$ if $f(x)$ is the characteristic polynomial of $m\in \mathfrak{u}_n$. 
\end{proof}

\begin{remark}\label{noinvariancerforsp}
    Suppose that $(\widetilde{F},\epsilon) = (F,-1)$.
    Then the only constant matrix $c\in \mathfrak{g}(\mfo)$ is the zero matrix.
In addition to this, two characteristic polynomials $\chi^1(x)$ and $\chi^2(x)$ with $\overline{\chi}^1(x)=x^{2n}$ and $\overline{\chi}^2(x)=(x^2-a)^n$ have different discriminants whenever $a\neq 0$ in $\kappa$. 

Therefore it seems to us that invariance under the translation (with respect to any element in $\mathfrak{g}$) does not work. 
On the other hand, if $K^\sigma = \widetilde{K}$ and $char(\kappa) > 2$, then  $\overline{\chi}_\gamma(x) = x^{2n}$ by Remark \ref{rmk:reduction_cond}.
\end{remark}

\subsection{Stratification}\label{sec:st_unsp}
The goal of this section is to construct a stratification of $\underline{G}_{\gamma}(\mfo)$.
Here $\underline{G}_{\gamma}(\mfo)=\{g\gamma g^{-1}\in \mfu(\mfo)|g\in \mathrm{G}_{F}(\bar{F})\}$ in Proposition \ref{prop:git}.
As in Section \ref{sec42st}, let us introduce the following notations and related facts:
\begin{itemize}
\item  For a sublattice  $M$ of $L$ of the same rank $\wn$, 
define the subset $\widetilde{\cL}(L,M)(\mfo)$ of $\mfu(\mfo)$ as follows:
\[
\widetilde{\cL}(L,M)(\mfo)=\{f:L \rightarrow M\in \mfu(\mfo)\}.
\]
Note that  the set $\widetilde{\cL}(L,M)(\mfo)$ is the intersection of  $\mfu(\mfo)$ and the set $\{f:L \rightarrow M\in \mathrm{End}_{\mfo_{\widetilde{F}}}(L)\}$.
Since the latter set is an open subset of $\mathrm{End}_{\mfo_{\widetilde{F}}}(L)$ in terms of $\pi$-adic topology, 
the set $\widetilde{\cL}(L,M)(\mfo)$ is  open in $\mfu(\mfo)$ as well.

\item  For a sublattice  $M$ of $L$ of the same rank $\wn$, 
define another subset $\cL(L,M)(\mfo)$ of $\mathfrak{g}(\mfo)$ as follows:
\[
\cL(L,M)(\mfo)=\{f:L \rightarrow M\in \mathfrak{g}(\mfo)\mid\textit{$f$ is surjective}\}.
\]
The set  $\cL(L,M)(\mfo)$ is also open in $\mfu(\mfo)$ since  the surjectivity is an open condition.

\item Define the orbit of $\gamma$ inside $\cL(L,M)(\mfo)$ as follows:
\[O_{\gamma, \cL(L,M)}=\uGr(\mfo)\cap \cL(L,M)(\mfo)=\{f\in \cL(L,M)(\mfo)\mid \vpi_n(f)=\vpi_n(\gamma)\}\] 
where the intersection is taken inside $\mfu(\mfo)$.
Then $O_{\gamma, \cL(L,M)}$ is an open subset of $\uGr(\mfo)$.

\end{itemize}

We now have the following stratification;
\begin{equation}\label{st1_unsp}
O_{\gamma, \mfu(\mfo)} \left(=\uGr(\mfo)\right)  =\bigsqcup_M O_{\gamma, \cL(L,M)}
\end{equation}
where $M$ runs over all $\mfo_{\widetilde{F}}$-sublattices of $L$ such that $[L:M]=d_n$.
Here, $[L:M]$ is the index as $\mfo_{\widetilde{F}}$-modules.
Recall from Notations given at the beginning of Part 2  that $d_n$ is defined to be $\mathrm{ord}(c_n)$, where $c_{n}$ is the constant term of  $\chi_{\gamma}(x)$.
Thus there are finitely many such $M$'s and so the above disjoint union is finite.

In this stratification, we emphasize that the set $O_{\gamma, \cL(L,M)}$ could be empty. 
This will be explained more carefully in Section \ref{sectypem}.

\begin{definition}\label{def:tjt}
Let $M$ be a sublattice of $L$ with rank $\wn$. In the following,  we define two types to $M$ as a sublattice of $L$ and as an hermitian lattice.
\begin{enumerate}
    \item{\textbf{Type of $M$, denoted by $\mathcal{T}(M)$}}
    
The type of $M$, denoted by $\T(M)$, is defined to be $(k_1, \cdots, k_{\wn-m})$, where $k_i\in \Z_{\geq 1}$ and $k_i\leq k_j$ for $i\leq j$, such that 
\[
L/M \cong \mfo_{\widetilde{F}}/\pi^{k_1}\mfo_{\widetilde{F}} \oplus \cdots \oplus \mfo_{\widetilde{F}}/\pi^{k_{\wn-m}}\mfo_{\widetilde{F}}.
\]
Here  $k_1+\cdots +k_{\wn-m}=[L:M]$.

    \item{\textbf{Jordan Type of $M$, denoted by $\JT(M)$}}
    
    For a hermitian lattice $(M, h)$, there always exists a basis $(e_1, \cdots, e_{\wn})$ of $M$ which satisfies the following condition, so that we define the Jordan type of $M$ as follows in each case.
    \begin{itemize}
        \item In the case that $(\widetilde{F},\epsilon)=(E,1)$, 
        with respect to $(e_1, \cdots, e_{n})$ the hermitian form $h$ is represented by a diagonal matrix with diagonal entries $u_i\pi^{t_i}$'s such that $t_i\leq t_{j}$ for $i<j$ (cf. Section 7 of \cite{Jac}).
       
        The Jordan type of $(M,h)$ is defined to be $(t_{l}, \cdots, t_n)$ such that $t_l>0$ and $t_{l-1}=0$.
        
        \item In the case that $(\widetilde{F},\epsilon)=(F,-1)$,
        with respect to $(e_1, \cdots, e_{2n})$ the hermitian form $h$ is represented by an orthogonal direct sum of lattices of $\mathbb{H}(u_i\pi^{t_i})$ such that $t_i\leq t_j$ for $i<j$ by \cite{GY}[Proposition 4.2 and Section 9.1], where $u_i\in \mfo^{\times}$.

        Here we denote by $\mathbb{H}(a)$ the rank-2 lattice $(\mfo\cdot f_1+\mfo \cdot f_2,\ \langle\cdot,\cdot\rangle)$ with the hermitian form such that $\langle f_1,f_1\rangle=\langle f_2,f_2\rangle=0$ and $\langle f_1,f_2\rangle=a$ for $a\in \mfo$.

        The Jordan type of $(M,h)$ is defined to be $(t_l,t_l, \cdots,t_n, t_n)$ such that $t_l>0$ and $t_{l-1}=0$.
    \end{itemize}
    Then  both $\T(M)$ and $\JT(M)$ are invariants of $M$. 
\end{enumerate}
\end{definition}
 Using $\mathcal{T}(M)$,  the above stratification in Equation (\ref{st1_unsp}) is refined as follows:
\begin{equation}\label{st2_unsp}
O_{\gamma, \mfu(\mfo)} \left(=\uGr(\mfo)\right) =\bigsqcup_{\substack{(k_1, \cdots, k_{\wn-m}),  \\ \sum k_i=d_n,\\ m\geq 0 }}
\left(\bigsqcup_{\substack{M:\T(M)= \\ (k_1, \cdots, k_{\wn-m})}} O_{\gamma, \cL(L,M)}\right).
\end{equation}
Here, the set $O_{\gamma, \cL(L,M)}$ could be empty for a large set of  $M$'s.

Let
\begin{equation}\label{equation:som}
\mathcal{SO}_{\gamma, M}=\int_{O_{\gamma, \cL(L,M)}}|\omega_{\chi_{\gamma}}^{\mathrm{ld}}|.
\end{equation}
\textit{ }

\subsection{Reduction when $\T(M)=(k_1, \cdots, k_{\wn})$}

Recall that we keep assuming $\chi_{\gamma}(x)$ of being irreducible over $\mfo_{\widetilde{F}}$.
Suppose that    $\T(M)=(k_1, \cdots, k_{\wn})$ so that $M\subset \pi^{k_1} L$. 
We define the following two items:
  \[
\left\{
  \begin{array}{l }
\chi_{\gamma^{(k_1)}}(x):=\chi_{\gamma}(\pi^{k_1} x)/\pi^{k_1\wn} \in \mfo_{\widetilde{F}}[x];\\
\textit{$\gamma^{(k_1)}\in\mfu(\mfo)$ whose characteristic polynomial is $\chi_{\gamma^{(k_1)}}(x)$.}
    \end{array} \right.
\]
More precisely, 
\[    \chi_{\gamma^{(k_1)}}(x)=
\left\{
\begin{array}{l l}
x^n+c_{1}^{(k_1)}x^{n-1}+\cdots +c_{n-1}^{(k_1)}x+c_n^{(k_1)} \textit{ with }c_i^{(k_1)}:=c_i/\pi^{ik_1}\in \mathcal{A}_i(\mfo)  & \textit{if $(\widetilde{F}, \epsilon)=(E, 1)$};\\
x^{2n}+c_1^{(k_1)}x^{2n-2}+\cdots + c_{n-1}^{(k_1)}x^2+c_n^{(k_1)} \textit{ with }c_i^{(k_1)}:=c_i/\pi^{2ik_1}\in \mfo   & \textit{if $(\widetilde{F}, \epsilon)=(F, -1)$}.
 \end{array}\right.
\]
Here the Newton polygon of an irreducible polynomial $\chi_{\gamma}(x)$  confirms $\chi_{\gamma^{(k_1)}}(x)\in \mfo_{\widetilde{F}}[x]$.

\begin{proposition}\label{propendred_unsp}
If $\T(M)=(k_1, \cdots, k_{\wn})$, then we have
\[    \mathcal{SO}_{\gamma, M}=
\left\{
\begin{array}{l l}
q^{-k_1\cdot \frac{n(n-1)}{2}}\mathcal{SO}_{\gamma^{(k_1)}, \pi^{-k_1}M}  & \textit{if $(\widetilde{F}, \epsilon)=(E, 1)$};\\
q^{-k_1\cdot n^2}\mathcal{SO}_{\gamma^{(k_1)}, \pi^{-k_1}M}  & \textit{if $(\widetilde{F}, \epsilon)=(F, -1)$}.
 \end{array}\right.
\]
Here $\T(\pi^{-k_1}M)=(0, k_2-k_1, \cdots, k_{\wn}-k_1)$. 
\end{proposition}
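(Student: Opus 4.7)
The plan is to adapt the scaling argument of Proposition~\ref{propredend} in Part~\ref{part1} to the present classical Lie algebra setting, tracking the change of measure on both the source $\mathfrak{g}$ and the Chevalley target $\mathcal{A}^n$. Since $\mathcal{T}(M)=(k_1,\dots,k_{\widetilde{n}})$ forces $M \subset \pi^{k_1} L$, we have $\cL(L,M)(\mathfrak{o}) \subset \pi^{k_1}\mathfrak{g}(\mathfrak{o})$, and the scaling
\[
\iota : \pi^{k_1}\mathfrak{g}(\mathfrak{o}) \longrightarrow \mathfrak{g}(\mathfrak{o}), \qquad f \longmapsto f/\pi^{k_1},
\]
is a bijection that restricts to a bijection $\cL(L,M)(\mathfrak{o}) \cong \cL(L,\pi^{-k_1}M)(\mathfrak{o})$. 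Since $\chi_{f/\pi^{k_1}}(x) = \chi_f(\pi^{k_1}x)/\pi^{k_1\widetilde{n}}$, the map $\iota$ sends $\chi_\gamma$ to $\chi_{\gamma^{(k_1)}}$ and identifies $O_{\gamma,\cL(L,M)}$ with $O_{\gamma^{(k_1)},\cL(L,\pi^{-k_1}M)}$.

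To mirror this on the target I would, as in Section~\ref{subsub442}, introduce the subfunctor $\mathcal{A}^n_{k_1} \subset \mathcal{A}^n$ whose $R$-points are tuples $(c_1,\dots,c_n)$ with $c_i \in \pi^{ik_1}\mathcal{A}_i(R)$ in the unitary case and $c_i \in \pi^{2ik_1}R$ in the symplectic case; the Newton polygon for the irreducible $\chi_\gamma(x)$ guarantees $\chi_\gamma \in \mathcal{A}^n_{k_1}(\mathfrak{o})$. Viewed as an $\mathfrak{o}$-scheme in its own right, $\mathcal{A}^n_{k_1}$ is affine of the same dimension as $\mathcal{A}^n$, and the companion scaling
\[
\tau : \mathcal{A}^n_{k_1} \xrightarrow{\ \sim\ } \mathcal{A}^n, \qquad (c_1,\dots,c_n) \longmapsto (c_i/\pi^{ik_1})_i\ \text{(resp.\ }(c_i/\pi^{2ik_1})_i\text{)},
\]
is an isomorphism of $\mathfrak{o}$-schemes fitting into a commutative square $\varphi_n \circ \iota = \tau \circ \varphi_n|_{\pi^{k_1}\mathfrak{g}}$, exactly as in Diagram~(\ref{diag48}).

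Finally, I would compare the measures as in Section~\ref{sec443}. Letting $\omega_{\pi^{k_1}\mathfrak{g}}$ and $\omega_{\mathcal{A}^n_{k_1}}$ be the translation-invariant top forms normalized to unit volume on $\pi^{k_1}\mathfrak{g}(\mathfrak{o})$ and $\mathcal{A}^n_{k_1}(\mathfrak{o})$, one obtains
\[
|\omega_{\mathfrak{g}}| = |\pi|^{k_1 \dim\mathfrak{g}}\cdot|\omega_{\pi^{k_1}\mathfrak{g}}|, \qquad |\omega_{\mathcal{A}^n}| = |\pi|^{k_1 S}\cdot|\omega_{\mathcal{A}^n_{k_1}}|,
\]
where $S = n(n+1)/2$ when $(\widetilde{F},\epsilon)=(E,1)$ (since the $i$-th coordinate scales by $\pi^{-ik_1}$) and $S = n(n+1)$ when $(\widetilde{F},\epsilon)=(F,-1)$ (since the $i$-th coordinate scales by $\pi^{-2ik_1}$). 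Taking the ratio yields $|\omega_\gamma^{\mathrm{ld}}| = |\pi|^{k_1(\dim\mathfrak{g}-S)}$ times the analogous quotient form, which the isomorphisms $(\iota,\tau)$ carry to $|\omega_{\gamma^{(k_1)}}^{\mathrm{ld}}|$. Using $\dim\mathfrak{u}_n = n^2$ and $\dim\mathfrak{sp}_{2n} = n(2n+1)$, the exponent $\dim\mathfrak{g}-S$ equals $n(n-1)/2$ for $\mathfrak{u}_n$ and $n^2$ for $\mathfrak{sp}_{2n}$, matching the claim. The only subtle point --- already encountered in Section~\ref{subsub442} --- is that $\mathcal{A}^n_{k_1}$ is a subfunctor but not a subscheme of $\mathcal{A}^n$ on flat $\mathfrak{o}$-algebras, so $\tau$ must be viewed as an isomorphism between two separately constructed affine $\mathfrak{o}$-schemes rather than a literal inclusion; the commutative square with $\iota$ is then formal from the transformation law for $\chi_\gamma$.
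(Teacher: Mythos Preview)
Your proposal is correct and follows essentially the same approach as the paper, which simply refers back to the $\mathfrak{gl}_n$ argument of Propositions~\ref{propendred} and~\ref{propredend} without spelling out the details. Your explicit computation of the exponent $k_1(\dim\mathfrak{g}-S)$ in the two cases is exactly the bookkeeping the paper omits.
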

The proof is identical to that of Proposition \ref{propendred}, which is based on the proof of  Propositions \ref{propredend},  so that we skip it.

\section{Geometric formulation of $\mathcal{SO}_{\gamma}$}\label{sectypem}
In this section, we will suppose that $\chi_\gamma(x)$ is irreducible over $\mfo_{\widetilde{F}}$ and that $\overline{\chi}_{\gamma}(x)=x^{\wn}$ with $\wn \geq 2$. 
Here, when $(\widetilde{F},\epsilon)=(E,1)$, the case that  $\chi_\gamma(x)$ is irreducible over $\mfo_{\widetilde{F}}$ is reduced to this assumption by Proposition \ref{red:result1} (or Corollary \ref{red:result2}) and Lemma \ref{lem:invarianct_translation}. 
The goal of this section is to provide a precise formula for the volume of \begin{equation}\label{mtypestrata}
\bigsqcup_{M:\T(M)=(d_n)} O_{\gamma, \cL(L,M)}\end{equation} appeared in Equation (\ref{st2_unsp}) under the assumption that $\chi_\gamma$ is irreducible over $\widetilde{F}$.
Thus we will concentrate on the stratum whose type is $(d_n)$.


\subsection{Jordan type of a lattice $M$}\label{section:jordan}
Let $M$ be a sublattice of $L$ such that $\mathcal{T}(M)=(d_n)$ with $d_n>0$. 
We choose a basis $(e_1, \cdots, e_{\wn})$  of $L$ such that $(e_1, \cdots, e_{\wn-1}, \pi^{d_n}e_{\wn})$ is a basis of $M$.
With respect to this basis, the Gram matrix of $h$ and the matrix representation of $X\in\cL(L,M)(\mfo_{\tilde{F}})$    are given as follows:
\begin{equation}\label{matrixforx}
h=\left(
\begin{array}{c c c c}
     a_{1,1}&  a_{1,2}&\cdots & a_{1,\wn}\\
     \epsilon\sigma(a_{1,2})&  a_{2,2}&\cdots & a_{2,\wn}\\
     \vdots&  \vdots&\ddots & \vdots\\
     \epsilon\sigma(a_{1,\wn})&  \epsilon\sigma(a_{2,\wn})&\cdots & a_{\wn,\wn}
\end{array}
\right)\text{ and }
X=
\left(
\begin{array}{c c c c}
     x_{1,1}&  x_{1,2}&\cdots & x_{1,\wn}\\
     x_{2,1}&  x_{2,2}&\cdots & x_{2,\wn}\\
     \vdots&  \vdots&\ddots & \vdots\\
     \pi^{d_n}x_{\wn, 1}&  \pi^{d_n}x_{\wn, 2}&\cdots & \pi^{d_n}x_{\wn,\wn}
\end{array}
\right)
\in \mathrm{M}_{\wn}(\mfo_{\tilde{F}}) 
\end{equation}
such that $\epsilon\sigma(a_{i,i})=a_{i,i}$ for $1\leq i\leq \wn$ and such that $hX+\sigma({}^tX)h=0$.

\begin{lemma}\label{lemma61}
Suppose that  $O_{\gamma, \cL(L,M)}$ is non-empty. Then the rank of the Gram matrix $h|_M$ modulo $\pi$ is $\wn-2$.
\end{lemma}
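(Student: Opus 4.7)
The plan is to extract linear-algebraic information about an element $X \in O_{\gamma, \cL(L,M)}$ (assumed nonempty) and transfer it to information about the Gram matrix of $h|_M$ using the skew-hermitian constraint $hX + \sigma({}^tX)h = 0$. Write $\bar{L} = L/\pi L$ and let $\bar{M}$ be the image of $M$ in $\bar{L}$; since $\T(M)=(d_n)$ with $d_n \geq 1$, we have $\bar{M} = \mathrm{span}(\bar{e}_1,\ldots,\bar{e}_{\wn-1})$, which has dimension $\wn-1$. Also the Gram matrix in the basis $(e_1,\ldots,e_{\wn-1},\pi^{d_n}e_{\wn})$ has its last row and column divisible by $\pi^{d_n}$, so its reduction modulo $\pi$ is block-diagonal with a zero $(1\times 1)$ block and the upper-left $(\wn-1)\times(\wn-1)$ block equal to the Gram matrix of $\bar{h}|_{\bar{M}}$; hence the claimed rank $\wn-2$ is equivalent to showing $\dim(\bar{M}\cap \bar{M}^{\perp})=1$ in $(\bar{L},\bar{h})$.

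First I would record three properties of the reduction $\bar{X}:\bar{L}\to\bar{L}$: (a) $\mathrm{im}(\bar{X}) = \bar{M}$, because $X:L\to L$ is $\mfof$-linear and surjective onto $M$; (b) $\bar{X}$ is nilpotent with $\bar{\chi}_X(x) = x^{\wn}$, because $\varphi_n(X)=\varphi_n(\gamma)$ and $\bar{\chi}_\gamma(x)=x^{\wn}$ by hypothesis; (c) $\ker(\bar{X}) = (\mathrm{im}\,\bar{X})^{\perp} = \bar{M}^{\perp}$, obtained by reducing the skew-hermitian relation modulo $\pi$ and using non-degeneracy of $\bar{h}$ (which holds since $(L,h)$ is unimodular, in both the unitary and symplectic cases).

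From (a), $\bar{X}$ has rank $\wn-1$, so $\dim\ker\bar{X}=1$. Combined with (b), $\bar{X}$ is a nilpotent endomorphism of $\bar{L}$ with a single Jordan block of size $\wn$, hence $\mathrm{im}(\bar{X}^{\wn-1}) = \ker(\bar{X})$, and for $\wn\geq 2$ this is contained in $\mathrm{im}(\bar{X}) = \bar{M}$. Together with (c) this gives $\bar{M}^{\perp}\subset\bar{M}$, so $\bar{M}\cap\bar{M}^{\perp} = \bar{M}^{\perp}$ has dimension $1$. Thus $\mathrm{rank}(\bar{h}|_{\bar{M}}) = (\wn-1)-1 = \wn-2$, finishing the proof.

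There is no serious obstacle: the argument is essentially a standard duality computation for skew-hermitian nilpotents, valid uniformly in both cases $(\widetilde{F},\epsilon)=(E,1)$ and $(F,-1)$, since we use only non-degeneracy of $\bar{h}$ and the Jordan structure of a nilpotent of maximal rank. The only place where one should be mildly careful is in writing the Gram matrix in step one, where we use that $\sigma(\pi^{d_n}) = \pi^{d_n}$ (which holds since $\pi\in\mfo$ and $E/F$ is unramified in the unitary case); this is routine.
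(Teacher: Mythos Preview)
Your proof is correct and takes a genuinely different, more conceptual route than the paper. The paper proceeds by explicit matrix computation: after changing the basis of the span of $(e_1,\ldots,e_{\wn-1})$ so that $\overline{X}_{\wn-1,\wn-1}$ becomes the single Jordan block $J_{\wn-1}$, it uses the relation $\overline{h}\overline{X}+\sigma({}^t\overline{X})\overline{h}=0$ entry by entry, via an induction on the row index, to show that $\overline{h}$ (the Gram matrix for $L$ in this adapted basis) is anti-upper-triangular, from which the rank of $\overline{h|_M}$ is read off directly. Your argument instead packages the same skew-hermitian relation into the single duality statement $\ker(\bar{X})=(\mathrm{im}\,\bar{X})^\perp=\bar{M}^\perp$, and then uses only the Jordan-block fact $\ker(\bar{X})\subset\mathrm{im}(\bar{X})$ to conclude $\bar{M}^\perp\subset\bar{M}$; this is shorter and coordinate-free. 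The trade-off is that the paper's explicit computation yields the full anti-triangular shape of $\overline{h}$, which is reused later (in Lemma~\ref{lem:form_h22} and in the proof of Proposition~\ref{corcounting}), whereas your argument gives exactly the rank statement and no more.
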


\begin{proof}
For $X\in O_{\gamma,\cL(L,M)}$, we denote by $X_{\wn-1,\wn-1}$ the  left and upper submatrix of $X$ of size $(\wn-1)\times (\wn-1)$ in Equation (\ref{matrixforx}).
Since  $\overline{\chi}_\gamma(x)=x^{\wn}$,  the characteristic polynomial of $\overline{X}_{\wn-1,\wn-1}$ is $x^{\wn-1}$.
We note that the rank of $\overline{X}_{\wn-1,\wn-1}$ is at least $\wn-2$ by Lemma \ref{lem411}.
Therefore $\overline{X}_{\wn-1,\wn-1}$ is conjugate to the following Jordan canonical form
\[J_{\wn}:=
\begin{array}{c c c c c}
     \begin{pmatrix}
        0 &1 &\cdots &0 \\
        \vdots &\vdots &\ddots &\vdots \\
        0 &0 &\cdots &1 \\
        0 &0 &\cdots &0 \\
    \end{pmatrix}
\end{array}
\]by an element in $\mathrm{GL}_{\wn-1}(\kappa_{\widetilde{F}})$.
By choosing another basis for the $\mfo_{\widetilde{F}}$-span of $(e_1, \cdots, e_{\wn-1})$ (if necessary),  we may and do assume that $\overline{X}_{\wn-1,\wn-1} = J_{\wn}$.
Then  $\bar{x}_{i,j}=\left\{
    \begin{array}{l l}
         1&\textit{for $i=j-1$}  \\
         0&\textit{otherwise} 
    \end{array}
    \right.$ 
    for $1\leq i,j\leq \wn-1$ and  $x_{\wn-1,\wn}\in \mfo_{\widetilde{F}}^{\times}$ since $\mathrm{ord}(\mathrm{det}(X))=d_n$.

Using the matrix representations of $h$ and $X$ given in Equation (\ref{matrixforx}),  the $(i,j)$-entry of the equation $\overline{hX}+ \sigma({}^t\overline{X})\overline{h}=0$ over $\kappa_{\widetilde{F}}$, for  $1\leq i< j \leq \wn$, is given as follows:
\begin{equation}\label{ijcomponent}
(i,j): ~~~ \sum_{k=1}^{i-1}\epsilon\sigma(\bar{a}_{k,i})\bar{x}_{k,j}+\sum_{k=i}^{\wn-1}\bar{a}_{i,k}\bar{x}_{k,j} + \sum_{k=1}^{j-1}\bar{a}_{k,j}\sigma(\bar{x}_{k,i})+\sum_{k=j}^{\wn-1}\epsilon\sigma(\bar{a}_{j,k}\bar{x}_{k,i})=0. \end{equation}
 We claim that 
\[
\bar{h}=\begin{pmatrix}
    0&\cdots&0&\ast\\
    \vdots&&\iddots&\vdots\\
    0&\iddots&&\vdots\\
    \ast&\cdots&\cdots&\ast
\end{pmatrix}.
\]
Since $h$ defines a unimodular hermitian form on $L$,  the entries on anti-diagonal of $h$ are units  in $\mfo_{\widetilde{F}}^{\times}$. 
We note that $(e_1,\cdots,e_{\wn-1},\pi^{d_n}e_{\wn})$ is a basis of $M$. 
Then the Gram matrix of $\bar{h}|_M$ has a rank of $\wn-2$, which completes the proof.

To prove the claim, we use an  induction on $i$. 
\begin{enumerate}
    \item When $i=1$,  Equation (\ref{ijcomponent}) for $(1,j+1)$ with $1\leq j \leq \wn-2$ yields that      $\bar{a}_{1j}=0$, and that for $(1,\wn)$  yields that 
    $\bar{a}_{1,\wn-1} \bar{x}_{\wn-1,\wn}=0$.
Since  $x_{\wn-1,\wn}\in\mfo_{\widetilde{F}}^{\times}$, we have $\bar{a}_{1,\wn-1}=0$.

\item When $1\leq i\leq \lfloor\frac{\wn}{2}\rfloor-1$, suppose that $\bar{a}_{ij}=0$ for $i\leq j \leq \wn-i$ (so that $\bar{a}_{ji}=0$ by the symmetry of $h$). 
Then we claim that $\bar{a}_{i+1, i+l}=0$ for $1\leq l\leq \wn-2i-1$. 
In Equation (\ref{ijcomponent}) for $(i+1,i+1+l)$,
we have
\begin{align*}
\textit{the first two sums in (\ref{ijcomponent})}= &\sum_{k=1}^{i}\sigma(\bar{a}_{k,i+1})\bar{x}_{k,i+1+l}+\sum_{k=i+1}^{\wn-1}\bar{a}_{i+1,k}\bar{x}_{k,i+1+l}=\bar{a}_{i+1,i+l};\\
\textit{the last two sums in (\ref{ijcomponent})}=&\sum_{k=1}^{i+l}\bar{a}_{k,i+1+l}\sigma(\bar{x}_{k,i+1})+ \sum_{k=i+1+l}^{\wn-1}\sigma(\bar{a}_{i+1+l,k}\bar{x}_{k,i+1})=\bar{a}_{i,i+1+l}=0.
\end{align*}
We then have $\bar{a}_{i+1,i+l}=0$.
\end{enumerate}
By the symmetry of $h$, we obtain the desired claim. 
\end{proof}

In order to ease the computation, we rewrite the matrix $X$ as the following block matrix
\begin{equation}\label{eq:form_X}
    X =
    \begin{pmatrix}
        X_{1,1}  &X_{1,2} &X_{1,3} \\
        X_{2,1} &X_{2,2} &X_{2,3} \\
        \pi^{d_{n}} X_{3,1} &\pi^{d_{n}} X_{3,2} & \pi^{d_{n}} X_{3,3}
    \end{pmatrix},    
\end{equation}
where the size of $X_{2,2}$ is $(\wn-2)\times (\wn-2)$ and so on.
We will simplify the matrix form of $h$ by choosing a suitable $\mathfrak{o}_{\widetilde{F}}$-basis of $L$.

\begin{lemma}\label{modif_matr}
Suppose that  $O_{\gamma, \cL(L,M)}$ is non-empty. 
Then there exists an $\mfo_{\widetilde{F}}$-basis of $L$ such that 
$h$ and $X \in O_{\gamma, \cL(L,M)}$ are represented by the following matrices respectively:
\begin{equation}\label{eq:form_X_1}
h=
    \begin{pmatrix}
            a&0&u\\
            0&h_{2,2}&0\\
            \epsilon\sigma(u)&0&b
    \end{pmatrix}
        \textit{ and }
        X =
    \begin{pmatrix}
        X_{1,1}  &X_{1,2} &X_{1,3} \\
        X_{2,1} &X_{2,2} &X_{2,3} \\
        \pi^{d_{n}} X_{3,1} &\pi^{d_{n}} X_{3,2} & \pi^{d_{n}} X_{3,3}
    \end{pmatrix}, \textit{ where}
\end{equation}
 $h_{2,2}$ is a unimodular hermitian matrix of size $(\wn-2)\times (\wn-2)$,  $u\in\mfo^{\times}_{\widetilde{F}}$, and $a,b \in \mfo_{\widetilde{F}}$ such that
\[\left\{\begin{array}{l l}
    a=\pi^{\xi} \textit{ with } \xi>0 & \textit{if $(\widetilde{F},\epsilon)=(E,1)$}; \\
    a=b=0 & \textit{if $(\widetilde{F},\epsilon)=(F,-1)$}.
\end{array}\right.
\]
\end{lemma}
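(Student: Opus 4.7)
The plan is to start from the basis of $L$ already fixed in the proof of Lemma \ref{lemma61}, where one has $\overline{X}_{\wn-1,\wn-1}=J_{\wn}$ and $\bar a_{ij}=0$ whenever $i+j\le \wn$, while the anti-diagonal entries $\bar a_{i,\wn+1-i}$ are units. Writing $h$ in the block form
\[
h=\begin{pmatrix} a_{1,1} & v_1^{\intercal} & a_{1,\wn} \\ \epsilon\sigma(v_1) & H_{22} & v_2 \\ \epsilon\sigma(a_{1,\wn}) & \epsilon\sigma(v_2)^{\intercal} & a_{\wn,\wn} \end{pmatrix}
\]
with $H_{22}$ the middle $(\wn-2)\times (\wn-2)$ submatrix, the zero region forces $v_1\in \pi\mathfrak{o}_{\widetilde F}^{\wn-2}$, while $a_{1,\wn}\in \mathfrak{o}_{\widetilde F}^{\times}$. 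Because $\overline{H}_{22}$ inherits the upper-triangular-zero / unit-anti-diagonal pattern, its determinant modulo $\pi$ is a unit, so $H_{22}$ is itself unimodular.

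The orthogonalization is carried out by modifying only $e_1$ and $e_{\wn}$, adding $\mathfrak{o}_{\widetilde F}$-integral combinations of $e_2,\ldots,e_{\wn-1}$. Setting $e'_1=e_1+\sum_{i=2}^{\wn-1}\mu_i e_i$ and $e'_{\wn}=e_{\wn}+\sum_{i=2}^{\wn-1}\lambda_i e_i$, the requirements $h(e'_1,e_j)=0$ and $h(e'_{\wn},e_j)=0$ for $2\le j\le \wn-1$ reduce to the two linear systems
\[
\sigma(\mu)^{\intercal} H_{22}=-v_1^{\intercal},\qquad \sigma(\lambda)^{\intercal} H_{22}=-\epsilon\sigma(v_2)^{\intercal},
\]
each of which has a unique integral solution by the unimodularity of $H_{22}$; moreover $v_1\in \pi\mathfrak{o}_{\widetilde F}^{\wn-2}$ forces $\mu\in\pi\mathfrak{o}_{\widetilde F}^{\wn-2}$. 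The key point is that this particular direction of modification preserves $M$: clearly $e'_1\in M$, and $\pi^{d_n}e'_{\wn}=\pi^{d_n}e_{\wn}+\sum \pi^{d_n}\lambda_i e_i\in M$, so $(e'_1,e_2,\ldots,e_{\wn-1},\pi^{d_n}e'_{\wn})$ is still an $\mathfrak{o}_{\widetilde F}$-basis of $M$. In this new basis the matrix of $h$ acquires the sought block form with middle block $H_{22}$, and the matrix form of any $X\in \cL(L,M)(\mathfrak{o}_{\widetilde F})$ described in \eqref{eq:form_X_1} is immediate from $M$ being preserved.

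It remains to identify $u$, $a$, and $b$. Expanding $h(e'_1,e'_{\wn})$ and reducing modulo $\pi$, the terms involving $\mu$ vanish since $\mu\equiv 0$, and the terms $\lambda_j a_{1,j}$ vanish since $\bar a_{1,j}=0$ for $j\le \wn-1$; hence $u:=h(e'_1,e'_{\wn})\equiv \bar a_{1,\wn}$ modulo $\pi$, which is a unit. When $(\widetilde F,\epsilon)=(F,-1)$ the form $h$ is alternating so $a=h(e'_1,e'_1)=0$ and $b=h(e'_{\wn},e'_{\wn})=0$ are automatic. When $(\widetilde F,\epsilon)=(E,1)$, $a$ lies in $\mathfrak{o}=\mathfrak{o}_F$ because $\sigma(a)=a$, and $a\equiv \bar a_{1,1}=0$ modulo $\pi$, so $a=u'\pi^{\xi}$ with $\xi\ge 1$ and $u'\in \mathfrak{o}^{\times}$; by the surjectivity of $N_{E/F}\colon \mathfrak{o}_E^{\times}\to \mathfrak{o}^{\times}$ (which holds since $E/F$ is unramified) one can rescale $e'_1\mapsto v e'_1$ with $N_{E/F}(v)=(u')^{-1}$, which normalizes $a$ to $\pi^{\xi}$ while preserving both the unit status of $u$ and the lattice $M$.

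The main conceptual obstacle is the compatibility with $M$: one is not free to modify the middle vectors by multiples of $e_{\wn}$ without destroying $\mathcal{T}(M)=(d_n)$, so the orthogonalization must go in the opposite direction (modifying $e_1$ and $e_{\wn}$ by the middle vectors). That this asymmetric procedure still works hinges exactly on the unimodularity of $H_{22}$, which is exactly what Lemma \ref{lemma61} provides via the unit anti-diagonal of $\bar h$.
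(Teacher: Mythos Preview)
Your proof is correct and follows essentially the same strategy as the paper: use the unimodularity of the middle $(\wn-2)\times(\wn-2)$ block (which you deduce directly from the anti-diagonal unit pattern of $\bar h$ established in Lemma~\ref{lemma61}) to orthogonalize $e_1$ and $e_{\wn}$ against $e_2,\ldots,e_{\wn-1}$ by modifying the former by integral combinations of the latter, which preserves $M$. The paper's version differs only cosmetically---it first abstractly re-chooses a basis of $L'=\mathrm{span}(e_1,\ldots,e_{\wn-1})$ to put $h|_{L'}$ in block form before performing the same orthogonalization---but the key mechanism and the compatibility check with $M$ are identical.
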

\begin{proof}
We start with a basis $(e_1, \cdots, e_{\wn})$ of $L$ which is chosen in Equation (\ref{matrixforx}).
Let $L'$ be the sublattice  of $L$ spanned by $(e_1, \cdots, e_{\widetilde{n}-1})$ so that the rank of $L'$ is $\wn-1$.
Our idea is to choose a suitable basis for $L'$ so that the matrix form for $X$ in Equation (\ref{eq:form_X}) is unchanged.

The restriction of $h$ to $L'$, which we denote by $h'$, is also a hermitian lattice (possibly degenerate).
Lemma \ref{lemma61} yields that $\overline{h'}$ is of rank $\wn-2$.
Thus we can choose a  basis $(e_1', \cdots, e'_{\widetilde{n}-1})$ for $L'$ with respect to which  $h$ is represented by 
   $h =
    \begin{pmatrix}
        a & c' & u \\
        \epsilon\sigma(c') & h_{2,2}& c \\
        \epsilon\sigma(u) &\epsilon\sigma(^t c)  &b' 
    \end{pmatrix}$, 
where $h_{2,2}$ is unimodular of size $(\wn-2)\times (\wn-2)$ and $\overline{a}=0, \overline{c'}=0$.
Adding to  $e_1'$ a suitable linear combination of $(e_2', \cdots, e'_{\widetilde{n}-1})$, we may and do assume that $c'=0$ so that  $h =
    \begin{pmatrix}
        a & 0 & u \\
        0 & h_{2,2}& c \\
        \epsilon\sigma(u) &\epsilon\sigma(^t c)  &b' 
    \end{pmatrix}$. 
Since $h$ is unimodular, $u$ should be a unit. 

To eliminate $c$, we choose another basis for $L$ consisting of  columns vectors  of the matrix $g = \begin{pmatrix}
        1 &0 &0 \\
        0 &Id_{\wn-2} &-h_{2,2}^{-1}c \\
        0 &0 &1 \\
    \end{pmatrix}
    \in \operatorname{GL}_{\wn}(\mathfrak{o}_{\widetilde{F}})$.
Then $$\sigma({}^t g) \cdot h \cdot g
    = \begin{pmatrix}
    a & 0 & u \\
    0 & h_{2,2} & 0 \\
    \epsilon\sigma(u) &0 & b
    \end{pmatrix} \textit{  and  } g^{-1} \cdot X \cdot g = \begin{pmatrix}
        X'_{1,1}  &X'_{1,2} &X'_{1,3} \\
        X'_{2,1} &X'_{2,2} &X'_{2,3} \\
        \pi^{d_{n}} X'_{3,1} &\pi^{d_{n}} X'_{3,2} & \pi^{d_{n}} X'_{3,3}
    \end{pmatrix}.$$
If $(\widetilde{F},\epsilon) = (F,-1)$, then both $a$ and $b$ are $0$.
If $(\widetilde{F},\epsilon) = (E,1)$, then multiplying a suitable unit in $\mfo_E$ to $e_1'$ guarantees that $a = \pi^\xi$ for $\xi>0$.
    This completes the proof.
\end{proof}

\begin{lemma}\label{lem:compare_xi}
    Suppose that $(\widetilde{F},\epsilon)=(E,1)$ and that  $O_{\gamma, \cL(L,M)}$ is non-empty. 
    Then the exponential order of the $(1,1)$-entry of $h$ in Equation (\ref{eq:form_X_1}), denoted by $\xi$, is described as follows:
    \[
    \begin{cases}
    \xi = d_{n-1} &\textit{if } d_{n-1}<d_n; \\
    \xi \geq d_{n} &\textit{if }d_{n-1} \geq d_n.
    \end{cases}
    \]
\end{lemma}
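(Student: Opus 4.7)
The plan is to express the coefficient $c_{n-1}$ modulo $\pi^{d_n}$ as a single principal minor of $X$ and compare its exact valuation to $\xi$. Using the matrix forms in Equation~(\ref{eq:form_X_1}), I would first analyze the $(3,1)$ and $(1,2)$ blocks of the unitarity relation $hX + \sigma({}^t X)h = 0$ to derive
\[
X_{1,1} \equiv -\frac{\pi^\xi}{\sigma(u)} \sigma(X_{1,3}) \pmod{\pi^{d_n}}, \qquad X_{2,1} \equiv -\pi^\xi h_{2,2}^{-1} \sigma({}^t X_{1,2}) \pmod{\pi^{d_n}}.
\]
Hence the first column of the upper-left $(n-1) \times (n-1)$ block $X_{top}$ is divisible by $\pi^{\min(\xi,d_n)}$. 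Expanding $\det(\lambda I - X)$ along the last row of $X$ then yields $\chi_\gamma(\lambda) \equiv \lambda \cdot \chi_{X_{top}}(\lambda) \pmod{\pi^{d_n}}$, and comparing constant terms gives $(-1)^{n-1}c_{n-1} \equiv M_n \pmod{\pi^{d_n}}$, where $M_n := \det(X_{top})$.

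If $\xi \geq d_n$ then $\pi^{d_n}$ already divides $M_n$, hence $c_{n-1} \in (\pi^{d_n})$, i.e.\ $d_{n-1} \geq d_n$; contrapositively this also forces $\xi < d_n$ whenever $d_{n-1} < d_n$. The main work is thus to show $\mathrm{ord}(M_n) = \xi$ exactly when $\xi < d_n$, from which $d_{n-1} = \xi$ follows at once. Factoring $\pi^\xi$ out of the first column of $X_{top}$, the reduction $\overline{M_n/\pi^\xi}$ is an explicit $(n-1)\times(n-1)$ determinant over $\kappa_E$. I would show its non-vanishing by exploiting that $\mathrm{ord}(c_n) = d_n$ forces the matrix $\overline{X}'$, obtained from $X$ by dividing the last row by $\pi^{d_n}$, to be invertible in $\mathrm{GL}_n(\kappa_E)$; combined with $\overline{X}_{1,1} = \overline{X}_{2,1} = 0$, this implies both $\overline{X}_{3,1} \neq 0$ and the non-vanishing of $\det\begin{pmatrix} \overline{X}_{1,2} & \overline{X}_{1,3} \\ \overline{X}_{2,2} & \overline{X}_{2,3} \end{pmatrix}$. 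The additional unitarity relation $\overline{X}_{2,3} = -\bar u\, \bar h_{2,2}^{-1} \sigma({}^t \overline{X}_{1,2})$ (from the $(3,2)$ block) lets me rewrite this latter minor as a rank-one perturbation of $\overline{X}_{2,2}$, which via the matrix determinant lemma can be compared with $\overline{M_n/\pi^\xi}$.

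The main obstacle is precisely this comparison: $\overline{M_n/\pi^\xi}$ is also a rank-one perturbation of $\overline{X}_{2,2}$, but with a Galois-twisted scalar coefficient (involving $\sigma(\bar u)/\sigma(\overline{X}_{1,3})$ instead of $\bar u/\overline{X}_{1,3}$), so the non-vanishing of the minor coming from $\overline{X}'$ does not formally imply that of $\overline{M_n/\pi^\xi}$. I anticipate closing this gap by invoking the further hermitian identities supplied by the $(1,3)$ and diagonal blocks of the unitarity relation, which link the two twisted coefficients via $\sigma$ and rule out a simultaneous cancellation against $\det(\overline{X}_{2,2})$. Once $\overline{M_n/\pi^\xi} \neq 0$ is established, $\mathrm{ord}(M_n) = \xi$ follows, and the two implications combine to yield both cases of the lemma.
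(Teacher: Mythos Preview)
Your reductions and the overall logical structure match the paper's proof: both hinge on showing that when $\xi<d_n$, the top-left $(n-1)\times(n-1)$ block (after factoring out $\pi^\xi$ from its first column) is invertible modulo $\pi$, which pins down $\mathrm{ord}(M_n)=\xi$ and hence $d_{n-1}=\xi$.

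The difference lies in how this invertibility is established. Your route---expressing both $\overline{M_n/\pi^\xi}$ and the minor $\det\begin{pmatrix}\overline{X}_{1,2}&\overline{X}_{1,3}\\\overline{X}_{2,2}&\overline{X}_{2,3}\end{pmatrix}$ as rank-one perturbations of the nilpotent $\overline{X}_{2,2}$ and comparing them via the matrix determinant lemma---runs into the Galois-twist obstacle you yourself flag, and your plan to resolve it via ``further hermitian identities from the $(1,3)$ and diagonal blocks'' is not yet concrete. The paper sidesteps this entirely with a pure rank argument. Invertibility of $\overline{X}'$ forces $\mathrm{rank}\,\overline{X}_{2,2}=n-3$ and $\mathrm{rank}\begin{pmatrix}\overline{X}_{1,2}\\\overline{X}_{2,2}\end{pmatrix}=n-2$. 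If $\begin{pmatrix}\overline{X}_{1,1}&\overline{X}_{1,2}\\\overline{X}_{2,1}&\overline{X}_{2,2}\end{pmatrix}$ were singular, its rank would be exactly $n-2$, so its first column lies in the span of the remaining columns; in particular $\overline{X}_{2,1}=\overline{X}_{2,2}\,c$ for some vector $c$. Then the two relations $\overline{X}_{1,2}=-\sigma({}^t\overline{X}_{2,1})\,\overline{h}_{2,2}$ and $\overline{h}_{2,2}\,\overline{X}_{2,2}=-\sigma({}^t\overline{X}_{2,2})\,\overline{h}_{2,2}$ combine to give $\overline{X}_{1,2}=\sigma({}^tc)\,\overline{h}_{2,2}\,\overline{X}_{2,2}$, placing $\overline{X}_{1,2}$ in the \emph{row} span of $\overline{X}_{2,2}$ and forcing $\mathrm{rank}\begin{pmatrix}\overline{X}_{1,2}\\\overline{X}_{2,2}\end{pmatrix}\le n-3$, a contradiction. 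The Galois twist is absorbed into the single step ``column span $\to$ row span'' via the hermitian relation on $\overline{X}_{2,2}$, so no determinant comparison is needed at all.
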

\begin{proof}
For $X\in O_{\gamma,\cL(L,M)}$, the  equation
    $h X + \sigma({}^tX)h = 0$ in the setting of Equation (\ref{eq:form_X_1}) yields the following matrix equations:
\begin{equation}\label{eq:eqs_about_X_1}
\left\{
\begin{array}{l l}
        \pi^{\xi}(X_{1,1} + \sigma(X_{1,1})) = -\pi^{d_{n}}(uX_{3,1} + \sigma(uX_{3,1}))  & \textit{by the $1\times 1$-block};\\
        \pi^{\xi} X_{1,2} + \pi^{d_{n}}u X_{3,2} = -\sigma({}^t X_{2,1}) h_{2,2}  & \textit{by the $1\times 2$-block};\\
        \pi^{\xi} X_{1,3} + \pi^{d_{n}}(u X_{3,3} +  b \sigma(X_{3,1})) = - u \sigma(X_{1,1}) & \textit{by the $1\times 3$-block};\\
        h_{2,2} X_{2,2} + \sigma({}^{t}X_{2,2}) h_{2,2} = 0& \textit{by the $2\times 2$-block};\\
        h_{2,2} X_{2,3} + \pi^{d_{n}} b \sigma({}^{t}X_{3,2}) = -u \sigma({}^{t}X_{1,2})  & \textit{by the $2\times 3$-block};\\
        (\sigma(u)X_{1,3} + u \sigma(X_{1,3})) = -\pi^{d_{n}}b( X_{3,3} +  \sigma(X_{3,3}))& \textit{by the $3\times 3$-block}.
 \end{array}\right.
\end{equation}
We claim that if $\xi < d_n$, then $\xi = d_{n-1}$.
Suppose that the claim is true. Then the desired results are proved as follows:
\begin{enumerate}
    \item {The case $d_{n-1}<d_n$}
    
If $\xi \geq d_{n}$, then the second and the third of Equation (\ref{eq:eqs_about_X_1}) yield that each entry of $X_{1,1}, X_{2,1}$ is contained in the ideal $(\pi^{d_{n}})$.
By computing the sum of all principal minors of $X$ of size $n-1$, we have $d_{n-1} \geq d_{n}$, which is a contradiction to the assumption that $d_{n-1} < d_{n}$.

Thus $\xi < d_{n}$, which directly yields that $\xi = d_{n-1}$ by the claim.

    \item{The case $d_{n-1} \geq d_n$}
    
    If $\xi<d_n$, then  the claim yields that $\xi = d_{n-1}$, which is a contradiction to the assumption $d_{n-1} \geq d_n$.
    Thus  $\xi \geq d_{n}$.
\end{enumerate}

It remains to prove the claim and thus we suppose that $\xi<d_n$.
Using the second and the third of Equation  (\ref{eq:eqs_about_X_1}), we have that each entry of $X_{1,1}, X_{2,1}$ is contained in the ideal $(\pi^\xi)$.
Thus we replace $X_{1,1},X_{2,1}$ with $\pi^\xi X_{1,1}, \pi^\xi X_{2,1}$ respectively so that one may and do rewrite Equations (\ref{eq:form_X_1}) and (\ref{eq:eqs_about_X_1}) by
\begin{equation} \label{eq:form_X_dn-1<dn}
      h=  \begin{pmatrix}
        \pi^\xi &0 &u \\
        0 &h_{2,2} &0 \\
        \sigma(u) &0 & b
    \end{pmatrix},
    \quad
    X =
    \begin{pmatrix}
        \pi^\xi X_{1,1}  &X_{1,2} &X_{1,3} \\
        \pi^\xi X_{2,1} &X_{2,2} &X_{2,3} \\
        \pi^{d_n} X_{3,1} &\pi^{d_n} X_{3,2} & \pi^{d_n} X_{3,3}
    \end{pmatrix} 
\end{equation}
with a unimodular hermitian matrix $h_{2,2}$, $u \in \mathfrak{o}_E^\times$, and
\begin{equation}\label{eq:eqs_about_X_dn-1<dn_1}
\left\{
\begin{array}{l l}
        \pi^{2\xi}(X_{1,1} + \sigma(X_{1,1})) = -\pi^{d_n}(uX_{3,1} + \sigma(uX_{3,1}))  & \textit{by the $1\times 1$-block};\\
        X_{1,2} + \pi^{d_n-\xi}u X_{3,2} = -\sigma({}^t X_{2,1}) h_{2,2}   & \textit{by the $1\times 2$-block};\\
        X_{1,3} + \pi^{d_n - \xi}(u X_{3,3} +  b \sigma(X_{3,1})) = - u \sigma(X_{1,1})   & \textit{by the $1\times 3$-block};\\ 
        h_{2,2} X_{2,2} + \sigma({}^{t}X_{2,2}) h_{2,2} = 0 & \textit{by the $2\times 2$-block};\\
        h_{2,2} X_{2,3} + \pi^{d_{n}} b \sigma({}^{t}X_{3,2}) = -u \sigma({}^{t}X_{1,2})& \textit{by the $2\times 3$-block};\\
        (\sigma(u)X_{1,3} + u \sigma(X_{1,3})) = -\pi^{d_{n}}(b X_{3,3} + b \sigma(X_{3,3}))& \textit{by the $3\times 3$-block}.
  \end{array}\right.
\end{equation}
Recall that  the matrix $X_{ij}$ modulo $\pi$ is denoted 
by $\overline{X}_{ij}$ (cf. Notations in Part \ref{part1}).
If the matrix $\begin{pmatrix}
        \overline{X}_{1,1} &\overline{X}_{1,2} \\
        \overline{X}_{2,1} &\overline{X}_{2,2}
    \end{pmatrix}$ is invertible over $\kappa_E$, then by summing all the principal minors of $X$ of size $n-1$ we have that $\xi = d_{n-1}$.

Therefore, it suffices to show that the matrix $\begin{pmatrix}
        \overline{X}_{1,1} &\overline{X}_{1,2} \\
        \overline{X}_{2,1} &\overline{X}_{2,2}
    \end{pmatrix}$ is invertible.    
Recall from Lemma \ref{modif_matr} that $\xi>0$. Since $\overline{\chi}_{\gamma}(x)=x^n\in \kappa_E[x]$, the characteristic polynomial of $\overline{X}_{2,2}$ is $x^{n-2}$, by the description of $X$ in Equation (\ref{eq:form_X_dn-1<dn}).
Thus the rank of $\overline{X}_{2,2}$ is at most $n-3$.
On the other hand, since $X$ is an element of $O_{\gamma, \cL(L,M)}$ which is a subset of $\cL(L,M)(\mfo)$, 
the matrix $    \begin{pmatrix}
        0  &\overline{X}_{1,2} &\overline{X}_{1,3} \\
        0 &\overline{X}_{2,2} &\overline{X}_{2,3} \\
        \overline{X}_{3,1} &\overline{X}_{3,2} & \overline{X}_{3,3}
    \end{pmatrix}$ is invertible over $\kappa_E$. 
This shows that the rank of  $\overline{X}_{2,2}$  is at least $n-3$ and thus should be $n-3$.
The invertibility of this matrix  also yields that the  matrix $    \begin{pmatrix}
        \overline{X}_{1,2} \\
        \overline{X}_{2,2}
    \end{pmatrix}$  is of rank $n-2$.

Now suppose that  
$\begin{pmatrix}
        \overline{X}_{1,1} &\overline{X}_{1,2} \\
        \overline{X}_{2,1} &\overline{X}_{2,2}
    \end{pmatrix}$  is not invertible.
    Then its rank should be $n-2$ 
since the rank of  $    \begin{pmatrix}
        \overline{X}_{1,2} \\
        \overline{X}_{2,2}
    \end{pmatrix}$ is $n-2$.
This yields that   $\begin{pmatrix}
        \overline{X}_{1,1} \\
        \overline{X}_{2,1}
    \end{pmatrix}$  is a linear combination of the column vectors of  $    \begin{pmatrix}
        \overline{X}_{1,2} \\
        \overline{X}_{2,2}
    \end{pmatrix}$. 
In particular,  $\overline{X}_{2,1}$  is a linear combination  of column vectors of 
 $\overline{X}_{2,2}$.

On the other hand, the second and the fourth of Equation (\ref{eq:eqs_about_X_dn-1<dn_1}) modulo $\pi$ yield that $$\overline{X}_{1,2} = - \sigma({}^t \overline{X}_{2,1}) \overline{h}_{2,2} ~~~~~~~~~~ \textit{ 
     and         } ~~~~~~~~~~~~ \overline{h}_{2,2}\overline{X}_{2,2} + \sigma({}^t \overline{X}_{2,2}) \overline{h}_{2,2} = 0$$ since $\xi<d_n$. 
These two equations imply that  $\overline{X}_{1,2}$ is a linear combination of row vectors of 
 $\overline{X}_{2,2}$.
 This is a contradiction since  the rank of  $    \begin{pmatrix}
        \overline{X}_{1,2} \\
        \overline{X}_{2,2}
    \end{pmatrix}$   is $n-2$, whereas  $\overline{X}_{2,2}$ is  of rank $n-3$.
This completes the proof of the claim.
\end{proof}

We finally have the following formula for the Jordan type of $M$.

\begin{proposition} \label{lem:Jordan_of_type_m}
For a lattice $M$ of type $(d_n)$ such that $O_{\gamma, \cL(L,M)}$ is non-empty, we have
\[
\mathcal{JT}(M)=(\widetilde{d}_{n-1},2d_n-\widetilde{d}_{n-1}),
\]
where $\widetilde{d}_{n-1}$  is an integer defined by
\begin{equation}\label{eq:def_widetilde_d_n-1}
    \widetilde{d}_{n-1} =
\begin{cases}
    d_{n-1} &\textit{if } (\widetilde{F},\epsilon)=(E,1) \textit{ and } d_{n-1}<d_n; \\
    d_n &\textit{if } (\widetilde{F},\epsilon)=(E,1) \textit{ and }d_{n-1} \geq d_{n}, \textit{ or } (\widetilde{F},\epsilon)=(F,-1).
\end{cases}    
\end{equation}
\end{proposition}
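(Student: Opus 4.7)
The plan is to compute $\mathcal{JT}(M)$ by fully exploiting the basis of $L$ provided by Lemma \ref{modif_matr}. First I would fix such a basis $(e_1, e_2', \ldots, e_{\wn-1}', e_{\wn})$ in which the Gram matrix of $h$ has the block form of Equation (\ref{eq:form_X_1}); since $M$ has type $(d_n)$, a basis of $M$ is then $(e_1, e_2', \ldots, e_{\wn-1}', \pi^{d_n}e_{\wn})$, and with respect to this basis the Gram matrix of the restricted hermitian form is
\[
h|_M = \begin{pmatrix} a & 0 & \pi^{d_n} u \\ 0 & h_{2,2} & 0 \\ \pi^{d_n}\epsilon\sigma(u) & 0 & \pi^{2d_n} b \end{pmatrix}.
\]
Because $h_{2,2}$ is unimodular of size $\wn-2$, its contribution to any Jordan splitting is trivial (all exponents vanish), and hence $\mathcal{JT}(M)$ is determined by the Jordan type of the rank-$2$ sublattice $N$ with Gram matrix $G := \begin{pmatrix} a & \pi^{d_n}u \\ \pi^{d_n}\epsilon\sigma(u) & \pi^{2d_n}b \end{pmatrix}$.

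Next I would establish a short lemma: for a nondegenerate hermitian (resp. alternating) rank-$2$ Gram matrix $G$ over $\mfof$, the pair of exponents $(\alpha_1, \alpha_2)$ with $\alpha_1 \le \alpha_2$ appearing in its Jordan splitting is characterized by (i) $\alpha_1 = \min\{\ord(g_{ij})\}$ over the entries of $G$, and (ii) $\alpha_1 + \alpha_2 = \ord(\det G)$. Here (ii) follows from $\det(\sigma({}^tg)\cdot G \cdot g) = N_{\widetilde{F}/F}(\det g)\cdot \det G$; for (i), the inequality $\alpha_1 \ge \min\{\ord(g_{ij})\}$ is immediate from the definition of Jordan type, while the reverse is obtained by exhibiting an explicit diagonalizing $g \in \mathrm{GL}_2(\mfof)$ that moves the minimum-order entry to the top-left corner.

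Finally I would split into three cases. If $(\widetilde{F},\epsilon) = (F,-1)$, Lemma \ref{modif_matr} forces $a = b = 0$, so $N = \mathbb{H}(\pi^{d_n}u)$ and $\mathcal{JT}(N) = (d_n, d_n)$ by definition; since $\widetilde{d}_{n-1} = d_n$ here, the claim follows. If $(\widetilde{F},\epsilon) = (E,1)$ with $d_{n-1} < d_n$, then Lemma \ref{lem:compare_xi} gives $\xi = d_{n-1}$, so the minimum entry order of $G$ is $d_{n-1} = \widetilde{d}_{n-1}$ and hence $\alpha_1 = \widetilde{d}_{n-1}$; the determinant $\pi^{d_{n-1}+2d_n}b - \pi^{2d_n}u\sigma(u)$ has order $2d_n$ because $u\sigma(u)$ is a unit and $d_{n-1}+2d_n > 2d_n$, giving $\alpha_2 = 2d_n - \widetilde{d}_{n-1}$. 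If $(\widetilde{F},\epsilon) = (E,1)$ with $d_{n-1} \ge d_n$, then $\xi \ge d_n$ by Lemma \ref{lem:compare_xi}, so $\alpha_1 = d_n = \widetilde{d}_{n-1}$; since $\xi > 0$, the determinant again has order $2d_n$, yielding $\alpha_2 = d_n = 2d_n - \widetilde{d}_{n-1}$.

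The main (and only) nontrivial step is the Smith-type characterization of the Jordan exponents in terms of entry orders and the determinant. I expect this to require a short but careful argument exploiting that $E/F$ is unramified (so the standard diagonalization of hermitian forms carries no residue-characteristic obstruction), together with the unimodular middle block $h_{2,2}$ being dropped entirely; once this is in place, the three cases above reduce to a direct reading of Equation (\ref{eq:form_X_1}) combined with Lemma \ref{lem:compare_xi}.
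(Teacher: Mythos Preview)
Your proposal is correct and reaches the same conclusion as the paper, but by a genuinely different (and somewhat cleaner) route.

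The paper's proof writes down $h|_M$ in the basis of Lemma~\ref{modif_matr}, exactly as you do, and then in each case constructs an explicit $g \in \mathrm{GL}_{\wn}(\mfo_{\widetilde{F}})$ that diagonalizes $h|_M$ into Jordan form. In particular, in the unitary case with $d_{n-1}\ge d_n$ the paper further splits into the subcases $\xi=d_n$ and $\xi>d_n$ and uses different congruence matrices for each (the latter requiring an auxiliary element $\beta\in\mfo_E$ with $\beta+\sigma(\beta)=1$). Your approach instead isolates the rank-$2$ block $G$ orthogonal to the unimodular $h_{2,2}$ and reads off $(\alpha_1,\alpha_2)$ from the two invariants $\min_{i,j}\ord(G_{ij})$ and $\ord(\det G)$. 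The key observation---that both of these quantities are preserved under congruence by $\mathrm{GL}_2(\mfo_{\widetilde{F}})$ (the first because passing from $G$ to $\sigma({}^tg)Gg$ and back are both $\mfo_{\widetilde{F}}$-linear in the entries, the second because $\det$ changes by a unit norm)---lets you bypass any explicit diagonalization and in particular avoids the $\xi=d_n$ versus $\xi>d_n$ case split. The paper's explicit matrices have the advantage of being self-contained; your invariant argument buys uniformity and brevity. One small point worth making explicit: your determinant computation in the case $d_{n-1}<d_n$ uses $d_{n-1}>0$, which holds by the standing assumption $\overline{\chi}_\gamma(x)=x^{\wn}$ at the start of Section~\ref{sectypem}.
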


\begin{proof}
If we choose an $\mathfrak{o}_{\widetilde{F}}$-basis $(e_1,\cdots,e_{\wn-1},e_{\wn})$ of $L$ such that $h$ is represented as in Equation (\ref{eq:form_X_1}), then we have
\[
h|_M = \begin{pmatrix}
 a &0 &\pi^{d_n}u \\
 0 &h_{2,2} &0 \\
 \pi^{d_n}\sigma(u) &0 &\pi^{2d_n}b
\end{pmatrix}
\]
with respect to a basis $(e_1,\cdots,e_{\wn-1},\pi^{d_n} e_{\wn})$ of $M$.
In the case that $(\widetilde{F},\epsilon)=(F,-1)$, we have $a=b=0$ and $u\in \mfo_{\widetilde{F}}^{\times}$ by Lemma \ref{modif_matr}. Therefore the Jordan type of $M$ is $(d_n,d_n)$.

Now suppose that $(\widetilde{F},\epsilon)=(E,1)$ so that $a=\pi^\xi$ with $\xi>0$. 
In order to compute the Jordan type of $h|_M$, we will choose an element $g \in \operatorname{GL}_{\wn}(\mathfrak{o}_{\widetilde{F}})$ such that $\sigma({}^t g) \cdot h|_{M} \cdot g$ is diagonal.

\begin{enumerate}
    \item The case  $d_{n-1}<d_n$

By Lemma \ref{lem:compare_xi} we have $\xi = d_{n-1}$ .
For $g=\begin{pmatrix}
1&0&-\pi^{d_n-d_{n-1}}u\\
    0&Id_{n-2}&0\\
    0&0&1
\end{pmatrix}\in \mathfrak{gl}_n(\mfo_E)$, we have
\[
\sigma({}^tg)\cdot h|_M \cdot g=\begin{pmatrix}
        \pi^{d_{n-1}} &0 &0 \\
        0 &h_{2,2} &0 \\
        0 &0 & \pi^{2 d_n - d_{n-1}}  b'
    \end{pmatrix}
\]
for some $b'\in \mfo^\times$.
Since $h_{2,2}$ is unimodular, the Jordan type of $M$ is $(d_{n-1},2d_n-d_{n-1})$.

\item The case  $d_{n-1}\geq d_n$

By Lemma \ref{lem:compare_xi} we have $\xi \geq d_n$.
When $\xi = d_n$, choose
$
g = \begin{pmatrix}
  1 &0 &-u \\
  0 &Id_{n-2} &0 \\
  0 & 0 & 1
\end{pmatrix}
$
so that 
$\sigma({}^t g) \cdot h|_M \cdot g
= \begin{pmatrix}
    \pi^{d_n}  &0 &0 \\
    0 &h_{2,2} &0 \\
    0 &0 &\pi^{d_n} b'
\end{pmatrix}$
for some $b' \in \mathfrak{o}^\times$.
Since $h_{2,2}$ is unimodular, the Jordan type of $M$ is $(d_n,d_n)$.

When $\xi>d_n$, choose
$
g = \begin{pmatrix}
    1 &0 &0 \\
    0 &Id_{n-2} &0 \\
    \beta u^{-1} &0 &1
\end{pmatrix}
$ where $\beta \in \mathfrak{o}_E$ such that $\beta + \sigma(\beta) = 1$.
Then we have
$\sigma({}^t g) \cdot h|_M \cdot g
= \begin{pmatrix}
    \pi^{d_n} v &0 & \pi^{d_n}u' \\
    0 &h_{2,2} &0 \\
    \pi^{d_n}\sigma(u') &0 &\pi^{2d_n} b
\end{pmatrix}$
for some $v \in \mathfrak{o}^\times$, $u' \in \mathfrak{o}_E^\times$, and $b \in \mathfrak{o}$.
Note that such $\beta$ exists since the trace map $\mathfrak{o}_E \to \mathfrak{o}$ is surjective.
Then for
$
g' = \begin{pmatrix}
    1 &0 &-u'v^{-1} \\
    0 &Id_{n-2} &0 \\
    0 &0 &1
\end{pmatrix}
$
we have
$\sigma({}^t g')\cdot \sigma({}^t g) \cdot h|_{M} \cdot g \cdot g'
= \begin{pmatrix}
    \pi^{d_n} v &0 &0 \\
    0 &h_{2,2} &0 \\
    0 &0 &\pi^{d_n} b'
\end{pmatrix}$
for some $b' \in \mathfrak{o}^\times$.
Since $h_{2,2}$ is unimodular, the Jordan type of $M$ is $(d_n,d_n)$.
\end{enumerate}
\end{proof}

\subsection{Geometric formulation of \texorpdfstring{$O_{\gamma, \cL(L,M)}$}{Ogamma(L,M)}}


By Proposition \ref{lem:Jordan_of_type_m}, Equation (\ref{mtypestrata}) is refined as follows:
\begin{equation}\label{equation:stradn}
    \bigsqcup_{M:\T(M)=(d_n)} O_{\gamma, \cL(L,M)}=
    \left\{\begin{array}{c l}
    \bigsqcup\limits_{\substack{M:\T(M)=(d_{n});\\ \JT(M)=(\widetilde{d}_{n-1},2d_{n}-\widetilde{d}_{n-1})}}O_{\gamma,\cL(L,M)}&\textit{if $(\widetilde{F},\epsilon)=(E,1)$};\\
    \bigsqcup\limits_{\substack{M:\T(M)=(d_{n});\\ \JT(M)=(d_{n},d_{n})}}O_{\gamma,\cL(L,M)}&\textit{if $(\widetilde{F},\epsilon)=(F,-1)$}.
    \end{array}
    \right.
\end{equation}

We fix a sublattice $M$ of $L$ such that   
$$\T(M)=(d_n)  \textit{   and   }  \mathcal{JT}(M)=(\widetilde{d}_{n-1},2d_n-\widetilde{d}_{n-1}).$$
Here we do not impose the condition that $O_{\gamma, \cL(L,M)}$ is non-empty (cf. Proposition \ref{lem:Jordan_of_type_m}).
The goal of this section is to find a closed formula for $\mathcal{SO}_{\gamma, M}$ in Proposition \ref{cor:610} and a closed formula for $\sum\limits_{M:\mathcal{T}(M)=(d_n)}\mathcal{SO}_{\gamma,M}$ in Theorem \ref{theorem:closedformulafordn},  which only depend on two integers $d_n$ and $\widetilde{d}_{n-1}$.
See Equation (\ref{equation:som}) for the notion of $\mathcal{SO}_{\gamma, M}$.

\begin{lemma}\label{lemma:matrixformoflmh}
    There exists a basis $(e_1, \cdots, e_{\wn})$ for $L$ such that $(e_1, \cdots, e_{\wn-1}, \pi^{d_n}e_{\wn})$ is a basis for $M$ and with respect to which the matrix form of $h$ is 
\[
    h=  \begin{pmatrix}
        a &0 &u \\
        0 &h_{2,2} &0 \\
        \epsilon\sigma(u) &0 & b
    \end{pmatrix} \textit{ such that } \begin{cases}
   a=\pi^{d_{n-1}} &\textit{if $(\widetilde{F}, \epsilon)=(E, 1)$ and $d_{n-1}<d_n$};\\
   a=\pi^{\xi} \textit{ with } \xi\geq d_n  &\textit{if $(\widetilde{F}, \epsilon)=(E, 1)$ and $d_{n-1}\geq d_n$};\\
a=b=0   &\textit{if $(\widetilde{F}, \epsilon)=(F, -1)$}.
\end{cases}
\]
Here $u\in\mfo_{\widetilde{F}}^{\times}$, $b\in\mfo_{\widetilde{F}}$ and $h_{2,2}$ is unimodular. 
If $\xi=\infty$, then we understand $\pi^{\infty}=0$. 
\end{lemma}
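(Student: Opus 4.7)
The plan is to build the desired basis by first splitting off the unimodular part of $(M, h|_M)$ via its Jordan decomposition, then extending this decomposition orthogonally to $L$, and finally analyzing the resulting rank-two pair $(L', M_1)$.

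First I would write $M = M_0 \perp M_1$, where $M_0$ is unimodular of rank $\wn-2$ and $M_1$ has rank $2$ with Jordan invariants $(\widetilde{d}_{n-1}, 2d_n - \widetilde{d}_{n-1})$. Since unimodularity is intrinsic, $M_0$ is also a unimodular sublattice of $L$, so we obtain an orthogonal splitting $L = M_0 \perp L'$ with $L' := M_0^\perp \cap L$ unimodular of rank $2$, and $M_1 = M \cap L'$ with $[L' : M_1] = d_n$. Any $\mfo_{\widetilde{F}}$-basis $(e_1,\ldots,e_{\wn-2})$ of $M_0$ makes its Gram matrix $h_{2,2}$ unimodular and supplies the middle block of $h$; the task reduces to constructing a basis $(e_{\wn-1}, e_{\wn})$ of $L'$ such that $(e_{\wn-1}, \pi^{d_n} e_{\wn})$ is a basis of $M_1$ and $h(e_{\wn-1}, e_{\wn-1}) = a$ has the prescribed form.

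For the rank-two step, I would choose any basis $(f_1, f_2)$ of $L'$ such that $(f_1, \pi^{d_n} f_2)$ is a basis of $M_1$ (possible because $L'/M_1$ is cyclic of order $d_n$), and write the Gram matrix as $\bigl(\begin{smallmatrix} a' & u' \\ \epsilon\sigma(u') & b' \end{smallmatrix}\bigr)$. In the symplectic case, the alternating property forces $a'=b'=0$ and $u' \in \mfo^\times$ by unimodularity, and we are done on setting $e_{\wn-1}=f_1, e_{\wn}=f_2$. In the unitary case, the Gram matrix of $h|_{M_1}$ is $\bigl(\begin{smallmatrix} a' & \pi^{d_n} u' \\ \pi^{d_n} \sigma(u') & \pi^{2d_n} b' \end{smallmatrix}\bigr)$, and the first (smallest) Jordan invariant of this hermitian $\mfo_E$-module equals the minimum $\mathrm{ord}$ of its entries (the content ideal). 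Comparing this minimum with the imposed $\JT(M_1)$ yields $\mathrm{ord}(a') = d_{n-1}$ when $d_{n-1} < d_n$, and $\mathrm{ord}(a') \geq d_n$ when $d_{n-1} \geq d_n$ (the extreme $a' = 0$ corresponding to $\xi = \infty$).

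To finish in the unitary case, I would use that $\sigma(a')=a'$ forces $a' \in \mfo$, so $a' = \pi^\xi v$ with $v \in \mfo^\times$ (or $a'=0$). Replacing $f_1$ by $w f_1$ with $w \in \mfo_E^\times$ rescales $a'$ by $N_{E/F}(w)$ without disturbing the basis or Jordan-type conditions, and surjectivity of $N_{E/F} : \mfo_E^\times \twoheadrightarrow \mfo^\times$ (which holds because $E/F$ is unramified) lets us normalize $a = \pi^\xi$ exactly. The main obstacle will be the precise identification of $\mathrm{ord}(a')$ from $\JT(M_1)$; this rests on the content-ideal characterization of the first Jordan invariant of a hermitian form over $\mfo_E$, which is the same structural input that underlies Proposition \ref{lem:Jordan_of_type_m}.
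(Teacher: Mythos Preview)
Your proposal is correct and gives a clean, self-contained argument. The paper itself does not give an independent proof of this lemma; it simply says the construction is ``exactly the same as that explained in the proof of Lemma~\ref{modif_matr}''. That earlier proof, however, starts from a fixed rank-$(\wn-1)$ sublattice $L'' = \mathrm{span}(e_1,\dots,e_{\wn-1}) \subset L$, observes (originally via Lemma~\ref{lemma61}) that $\overline{h|_{L''}}$ has rank $\wn-2$, diagonalizes $h|_{L''}$ to extract the unimodular block $h_{2,2}$, and then modifies $e_{\wn}$ by an upper-triangular change of basis to kill the cross terms. The precise value of $\xi$ is not determined in that proof; one must reverse the Jordan-type computation of Proposition~\ref{lem:Jordan_of_type_m} to pin it down.

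Your route is different: you begin from the Jordan decomposition of $M$ itself (which is natural since the hypothesis fixes $\JT(M)$), split off the unimodular rank-$(\wn-2)$ piece $M_0$ inside $L$, and reduce immediately to a rank-two problem $(L', M_1)$. This buys you a direct identification of $\mathrm{ord}(a')$ via the content ideal, avoiding the need to reverse-engineer $\xi$ from a separate computation. Both approaches ultimately isolate the same unimodular piece, but yours uses the given Jordan type as the organizing principle rather than recovering it at the end. The paper's approach has the advantage of reusing machinery already developed in the non-empty-orbit setting; yours has the advantage of being tailored to the actual hypothesis of the lemma.
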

A construction of such a basis is exactly the same as that explained in the proof of  Lemma \ref{modif_matr} and so we skip the proof here.

\subsubsection{Construction of $\varphi_{n,M}: \cL(L,M) \longrightarrow \mathcal{A}_{M}$}
Using a basis explained in Lemma \ref{lemma:matrixformoflmh}, we define the functor $\widetilde{\cL}(L,M)$ from the category of flat $\mfo$-algebras to the category of abelian groups by
\begin{equation}\label{defL_1}
    \widetilde{\cL}(L,M)(R)=
    \left\{\begin{pmatrix}
       \pi^{\widetilde{d}_{n-1}}X_{1,1}  &X_{1,2} &X_{1,3} \\
        \pi^{\widetilde{d}_{n-1}}X_{2,1} &X_{2,2} &X_{2,3} \\
        \pi^{d_{n}} X_{3,1} &\pi^{d_{n}} X_{3,2} & \pi^{d_{n}} X_{3,3}
    \end{pmatrix}: L\otimes R \longrightarrow M\otimes R\in \mathfrak{g}(R)
    \right\}
\end{equation}
for a flat $\mfo$-algebra $R$, regardless of non-emptiness of $O_{\gamma, \cL(L,M)}$.

We claim that  $O_{\gamma, \cL(L,M)}\subset \widetilde{\cL}(L,M)(\mfo)$.
If $O_{\gamma, \cL(L,M)}$ is empty, then it is obvious. 
If $O_{\gamma, \cL(L,M)}$ is non-empty, then 
  the second and third  of Equation (\ref{eq:eqs_about_X_1}), combined with Lemma \ref{lem:compare_xi}, yield that each entry of $X_{1,1}, X_{2,1}$ in Equation (\ref{eq:form_X_1}) is contained in the ideal  $(\pi^{\widetilde{d}_{n-1}})$.
Here $\widetilde{d}_{n-1}$ is an integer defined in Equation (\ref{eq:def_widetilde_d_n-1}).
Note that Equation (\ref{eq:eqs_about_X_1}) is for the case $(\widetilde{F},\epsilon)=(E,1)$ but is applicable for the other case $(\widetilde{F},\epsilon)=(F,-1)$ if we replace $\pi^\xi$ and $b$ by $0$.

\begin{lemma}\label{lem:64affine}
    The functor $\widetilde{\cL}(L,M)$ is represented by an affine  space over $\mfo$ of dimension \[\left\{\begin{array}{l l}
    n^2&\textit{if $(\widetilde{F},\epsilon)=(E,1)$};\\
    n(2n+1)&\textit{if $(\widetilde{F},\epsilon)=(F,-1)$}.
    \end{array}\right.\]
\end{lemma}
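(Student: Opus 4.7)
The plan is to combine a generic-rank computation with an explicit block-by-block elimination to identify $\widetilde{\cL}(L,M)$ as an affine space over $\mfo$. Observe first that the ``rescaling'' $\mfo$-linear morphism
\[
\phi\colon (X_{i,j})_{i,j} \longmapsto \begin{pmatrix} \pi^{\widetilde{d}_{n-1}}X_{1,1} & X_{1,2} & X_{1,3} \\ \pi^{\widetilde{d}_{n-1}}X_{2,1} & X_{2,2} & X_{2,3} \\ \pi^{d_n}X_{3,1} & \pi^{d_n}X_{3,2} & \pi^{d_n}X_{3,3} \end{pmatrix}
\]
from the ambient affine space of free tuples $(X_{i,j})$ (with entries in $R\otimes_{\mfo}\mfo_{\widetilde{F}}$) is injective on the $R$-points of any flat $\mfo$-algebra $R$, since $\pi$ is a non-zerodivisor, and becomes an isomorphism onto $\mathrm{Res}_{\mfo_{\widetilde{F}}/\mfo}(\mathrm{Mat}_{\wn})$ after inverting $\pi$. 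Since $\widetilde{\cL}(L,M)=\phi^{-1}(\mathfrak{g})$ by construction, its generic fiber identifies with $\mathfrak{g}_{F}$, so it has $F$-dimension $\dim\mathfrak{g}=n^{2}$ in the unitary case and $n(2n+1)$ in the symplectic case, pinning down the expected target dimension.

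To upgrade this generic computation into an explicit affine-space structure, I would expand the Lie-algebra condition $h\phi(Y)+\sigma({}^{t}\phi(Y))h=0$ into its nine block equations using the form of $h$ in Lemma \ref{lemma:matrixformoflmh}, where $a=\pi^{\widetilde{d}_{n-1}}a'$ for some $a'\in\mfo$ in the unitary case (by Lemma \ref{lem:compare_xi} and the definition of $\widetilde{d}_{n-1}$) and $a=b=0$ in the symplectic case. Only the six blocks with $i\le j$ are independent, by the hermitian/alternating symmetry of the left-hand side. The off-diagonal blocks $(1,2)$, $(2,3)$, and $(1,3)$ carry the unimodular $h_{2,2}$ or the unit $u$ as their dominant coefficient, so after dividing by the common $\pi$-power they uniquely determine $X_{2,1}$, $X_{2,3}$, and one of $\{X_{1,3},X_{1,1}\}$ in terms of the remaining variables. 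The $(2,2)$ block cuts $X_{2,2}$ down to the Lie-algebra scheme $\mathfrak{g}_{h_{2,2}}$ attached to the unimodular form $h_{2,2}$ of rank $\wn-2$, an affine space of dimension $\dim\mathfrak{g}_{h_{2,2}}$ by the same classical-group setup with $\wn$ decreased by $2$. Finally the $(1,1)$ and $(3,3)$ blocks contribute self-adjointness conditions whose solution spaces are affine $\mathcal{A}_{1}$-lines in the unitary case (by Lemma \ref{lem:c+sigma(c)_bij}), whereas in the symplectic case those blocks are automatic and the $(1,3)$-block reduces to $X_{3,3}=-X_{1,1}$.

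Tallying the surviving free parameters produces a free $\mfo$-module of rank $\dim\mathfrak{g}$, which combined with compatibility of the elimination with flat base change (coming from the $\mfo$-flatness of the selected splitting of the defining equations) yields representability of $\widetilde{\cL}(L,M)$ by $\mathbb{A}^{\dim\mathfrak{g}}_{\mfo}$, hence an affine space of dimension $n^{2}$ or $n(2n+1)$ as asserted. The main obstacle will be the case split within the unitary setting among the regimes $\xi=d_{n-1}<d_n$, $d_n\le\xi<\infty$, and $\xi=\infty$ (i.e., $a=0$): in the first two the $(1,3)$-block naturally determines $X_{1,3}$ and the $(1,1)$-block provides an honest $\mathcal{A}_{1}$-constraint on $X_{1,1}$, whereas in the extreme $a=0$ the roles reverse, so that $(1,3)$ determines $\sigma(X_{1,1})$ (leaving $X_{1,3}$ free) while $(1,1)$ becomes automatic, preserving the total $\mfo$-dimension. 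A subsidiary technical point is that every division by a $\pi$-power in the elimination is only valid on $\mfo$-flat algebras, where $\pi$ is a non-zerodivisor, so that the pre- and post-division equations define the same subfunctor on the category of flat $\mfo$-algebras.
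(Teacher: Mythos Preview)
Your proposal is correct and follows essentially the same approach as the paper: block-by-block elimination of the equation $hX+\sigma({}^{t}X)h=0$ using the special form of $h$ from Lemma~\ref{lemma:matrixformoflmh}, together with a final tally of free parameters. The paper organizes the corner eliminations slightly differently---it uses the $(1,1)$- and $(1,3)$-blocks together to solve for $X_{1,1}$ and the real part of $uX_{3,1}$ (leaving $X_{3,3}$, $\mathrm{Im}(\sigma(u)X_{1,3})$, and $\mathrm{Im}(uX_{3,1})$ free), rather than your sequence of first eliminating via $(1,3)$ and then treating $(1,1)$ and $(3,3)$ as separate real constraints---but the two orderings are equivalent and yield the same free-variable count. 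Your generic-fiber observation that $\widetilde{\cL}(L,M)_F\cong\mathfrak{g}_F$ is a useful sanity check the paper does not make explicit. One small wording point: in the unitary case the $(1,1)$-block has unit coefficient on $\mathrm{Re}(uX_{3,1})$, not on $X_{1,1}$, so it is more naturally read as determining the former; this does not affect your count.
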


\begin{proof}
We use a basis for $L$  explained in Lemma \ref{lemma:matrixformoflmh} so that 
 Equation (\ref{eq:eqs_about_X_1}) is applicaable.
 Each block of the  equation
    $h X + \sigma({}^tX)h = 0$  for $X\in \widetilde{\cL}(L,M)(R)$ with a flat $\mfo$-algebra $R$, yields the following matrix equations:

\begin{itemize}
\item for all cases,
    \begin{equation}\label{eq:eqs_about_X_uniform}
    \left\{\begin{array}{l l} (1):  h_{2,2} X_{2,2} + \sigma({}^{t}X_{2,2}) h_{2,2} = 0& \textit{by the $2\times 2$-block};\\
       (2):  h_{2,2} X_{2,3} + \pi^{d_{n}} b \sigma({}^{t}X_{3,2}) +u \sigma({}^{t}X_{1,2})=0  & \textit{by the $2\times 3$-block};\\
      (3):   \epsilon\sigma(u)X_{1,3} + u \sigma(X_{1,3}) +\pi^{d_{n}}b( X_{3,3} +  \sigma(X_{3,3}))=0& \textit{by the $3\times 3$-block}; 
  \end{array}\right.
  \end{equation}

    \item for the case $(\widetilde{F},\epsilon)=(E,1)$ and $d_{n-1}<d_n$,
\begin{equation}\label{eq:eqs_about_X_dn-1<dn}
       \left\{\begin{array}{l l}
       (4): \pi^{d_n}\left(\pi^{2d_{n-1}-d_n}(X_{1,1} + \sigma(X_{1,1})) +(uX_{3,1} + \sigma(uX_{3,1}))\right)=0  & \textit{by the $1\times 1$-block};\\
       (5):    \pi^{d_{n-1}}\left( X_{1,2} + \pi^{{d}_{n}-d_{n-1}}u X_{3,2} +\sigma({}^t X_{2,1}) h_{2,2}\right)=0  & \textit{by the $1\times 2$-block};\\
      (6):    \pi^{d_{n-1}}\left(X_{1,3} + \pi^{{d}_{n}-d_{n-1}}(u X_{3,3} +  b \sigma(X_{3,1})) + u \sigma(X_{1,1})\right)=0  & \textit{by the $1\times 3$-block}.
  \end{array}\right. 
\end{equation}
Here $2 d_{n-1} - d_n \geq 0$ by the Newton polygon of an irreducible polynomial $\chi_{\gamma}(x)$.
  
  \item for the case $(\widetilde{F},\epsilon)=(E,1)$ and $d_{n-1} \geq d_n$,
    \begin{equation}\label{eq:eqs_about_X_dn-1>=dn}
\left\{\begin{array}{l l}
    (4'): \pi^{d_n}\left(\pi^{\xi}(X_{1,1} + \sigma(X_{1,1})) + (uX_{3,1} + \sigma(uX_{3,1}))\right)=0  & \textit{by the $1\times 1$-block};\\
     (5'): \pi^{d_n}\left( \pi^{\xi-d_{n}}X_{1,2} + u X_{3,2} +\sigma({}^t X_{2,1}) h_{2,2}\right)=0 & \textit{by the $1\times 2$-block};\\
      (6'):\pi^{d_n}\left( \pi^{\xi-d_{n}} X_{1,3} + (u X_{3,3} +  b \sigma(X_{3,1})) + u \sigma(X_{1,1})\right)=0 & \textit{by the $1\times 3$-block}.
      \end{array}\right.
    \end{equation}

    \item for the case $(\widetilde{F},\epsilon)=(F,-1)$,
    \begin{equation}\label{eq:eqs_about_X_sp2n}
       \left\{\begin{array}{l l}
       (4''): \pi^{d_n}\left(uX_{3,1}-uX_{3,1}\right)=0  & \textit{by the $1\times 1$-block};\\
      (5''): \pi^{d_n}\left(uX_{3,2}+{}^tX_{2,1}h_{2,2}\right)=0  & \textit{by the $1\times 2$-block};\\
     (6''):  \pi^{d_n}\left(uX_{3,3}+uX_{1,1}\right)=0  & \textit{by the $1\times 3$-block}.
  \end{array}\right. 
\end{equation}
\end{itemize}

Let us interpret the above matrix equations case by case as follows:
\begin{enumerate}
    \item for all cases,
    \begin{itemize}
        \item Since $h_{2,2}$ is unimodular, Equation (\ref{eq:eqs_about_X_uniform}).(1) yields that $X_{2,2}$ forms $\mathfrak{u}_{n-2}(R)$ if $(\widetilde{F},\epsilon) = (E,1)$, and $\mathfrak{sp}_{2n-2}(R)$ if $(\widetilde{F},-1) = (F,-1)$.

    \item Equation (\ref{eq:eqs_about_X_uniform}).(2) yields that $X_{2,3}$ is completely determined by $X_{1,2}$ and $X_{3,2}$  since $h_{2,2}$ is unimodular.

    \item If $(\widetilde{F},\epsilon)=(E,1)$, then Equation (\ref{eq:eqs_about_X_uniform}).(3) yields that the real part of $\sigma(u)X_{1,3}$ is completely determined by $ X_{3,3}$.
    Here, we refer to Remark \ref{remark:reim} for  the notion of the real part.    
    If $(\widetilde{F},\epsilon)=(F,-1)$, Equation (\ref{eq:eqs_about_X_uniform}).(3) is equivalent to the equation $uX_{1,3}-uX_{1,3}=0$ (note $b=0$), and thus is redundant.
    \end{itemize}
    
    \item for the case $(\widetilde{F},\epsilon) = (E,1)$,
    \begin{itemize}
        \item Equation (\ref{eq:eqs_about_X_dn-1<dn}).(4) and Equation (\ref{eq:eqs_about_X_dn-1>=dn}).(4$'$) yield that the real part of $u X_{3,1}$ is determined by the real part of $X_{1,1}$.
        By plugging these into Equation (\ref{eq:eqs_about_X_dn-1<dn}).(6) and Equation \ref{eq:eqs_about_X_dn-1>=dn}.(6$'$) respectively, $X_{1,1}$ is completely determined by $X_{1,3}$, $X_{3,3}$, and the imaginary part of $u X_{3,1}$.
        Here, we refer to Remark \ref{remark:reim} for  the notion of the imaginary part.

        \item Since $h_{2,2}$ is unimodular, Equation (\ref{eq:eqs_about_X_dn-1<dn}).(5) and Equation (\ref{eq:eqs_about_X_dn-1>=dn}).(5$'$) yield that $X_{2,1}$ is completely determined by $X_{1,2}$ and $X_{3,2}$.
    \end{itemize}

    \item for the case $(\widetilde{F},\epsilon) = (F,-1)$,
    \begin{itemize}
        \item Equation (\ref{eq:eqs_about_X_sp2n}).(4$''$) is redundant.
        \item Since $h_{2,2}$ is unimodular, Equation (\ref{eq:eqs_about_X_sp2n}).(5$''$) yields that $X_{2,1}$ is completely determined by $X_{3,2}$.
        \item Equation (\ref{eq:eqs_about_X_sp2n}).(6$''$) yields that $X_{1,1}$ is completely determined by $X_{3,3}$.
    \end{itemize}
\end{enumerate}

In summary,  when $(\widetilde{F},\epsilon)=(E, 1)$, each entry of $X$ is completely determined by $X_{2,2} \in \mathfrak{u}_{n-2}(R)$, $X_{1,2}$, $X_{3,2}$, $X_{3,3}$, the imaginary part of $\sigma(u)X_{1,3}$, and the imaginary part of $uX_{3,1}$.
When $(\widetilde{F},\epsilon)=(F,-1)$, each entry of $X$ is completely determined by $X_{2,2}\in \mathfrak{sp}_{2n-2}(R),X_{1,2}, X_{3,2}, X_{3,3},X_{1,3}$, and $X_{3,1}$.
We note that these variables are all algebraically independent.
The number of these algebraically independent variables   is \[\left\{\begin{array}{l l}
(n-2)^2+2(n-2)+2(n-2)+2+1+1=n^2 &\textit{if $(\widetilde{F},\epsilon)=(E,1)$};\\
(n-1)(2n-1)+(2n-2)+(2n-2)+1+1+1=n(2n+1)&\textit{if $(\widetilde{F},\epsilon)=(F,-1)$}.
\end{array}\right.
\]
This completes the proof.
\end{proof}

\begin{definition}\label{rmk:const_phi_n,M}
We will define a morphism     $\varphi_{n,M}: \cL(L,M) \longrightarrow \mathcal{A}_{M}$ step by step below.
\begin{enumerate}
\item Define the functor $\cL(L,M)$ to be the subfunctor of $\widetilde{\cL}(L,M)$  on the category of flat $\mfo$-algebras such that
\[
\cL(L,M)(R)=\{X:L\otimes_{\mfo} R\longrightarrow M\otimes_{\mfo} R\in \widetilde{\cL}(L,M)(R)\mid\textit{$X$ is surjective}\}
\]
for a flat $\mfo$-algebra $R$. Then $\cL(L,M)$ is an open subscheme of $\widetilde{\cL}(L,M)$ so as to be smooth over $\mfo$.

    \item Define the another functor $\mathcal{A}_{M}$ on the category of flat $\mfo$-algebras such that 
\[
\mathcal{A}_{M}(R) =\left\{ 
\begin{array}{l l}
\prod\limits_{i = 1}^{n-2} \mathcal{A}_i(R) \times  
 \pi^{\widetilde{d}_{n-1}} \mathcal{A}_{n-1}(R) \times \pi^{d_{n}} \mathcal{A}_n(R)&\textit{if $(\widetilde{F},\epsilon)=(E,1)$}\\
R^{n-1}  \times \pi^{d_{n}} R&\textit{if $(\widetilde{F},\epsilon)=(F,-1)$}
 \end{array}\right.
 \]
for a flat $\mfo$-algebra $R$. 
Here $\widetilde{d}_{n-1}$ is described in Equation (\ref{eq:def_widetilde_d_n-1}).
The functor $\mathcal{A}_{M}$ is then represented by an affine space $\mathbb{A}^n_\mathfrak{o}$ as in  Lemma \ref{lem:c+sigma(c)_bij}.

\item Note that  both $\cL(L,M)$ and $\mathcal{A}_{M}$ are subfunctors  of $\mathfrak{g}$ and $\mathcal{A}^n$ (thus $\mathbb{A}^n$) on the category of flat $\mfo$-algebras, respectively. 
Here, $\mathcal{A}^n$ is defined in Section \ref{subsubsection:chevalley}.
If we restrict the Chevalley morphism $\varphi_n$, which is defined in Equation (\ref{eq:chevalleymap}) and Remark \ref{remark:chevnotation}, to $\cL(L,M)$ on the category of flat $\mfo$-algebras, then it induces a well-defined map 
\[
\varphi_{n,M}: \cL(L,M)(R) \longrightarrow \mathcal{A}_{M}(R)
\]
for a flat $\mfo$-algebra $R$.
This map is represented by a morphism of schemes over $\mfo$.
\end{enumerate}
\end{definition}
\begin{remark}\label{remark:rmkchangemea}
In this remark, we will explain how to interpret the morphism $\varphi_{n,M}$ as matrices for a general $\mfo$-algebra $R$ (especially when $R$ is a $\kappa$-algebra).

    \begin{enumerate}
        \item 
We  first explain how to interpret an element of $\cL(L,M)(R)$ as a matrix. 
We formally write  $X=\begin{pmatrix}
       \pi^{\widetilde{d}_{n-1}}X_{1,1}  &X_{1,2} &X_{1,3} \\
        \pi^{\widetilde{d}_{n-1}}X_{2,1} &X_{2,2} &X_{2,3} \\
        \pi^{d_{n}} X_{3,1} &\pi^{d_{n}} X_{3,2} & \pi^{d_{n}} X_{3,3}
    \end{pmatrix}$, where each $X_{i,j}$ has entries in $R$ (or $\mfo_E\otimes_{\mfo}R$ if $(\widetilde{F},\epsilon)=(E,1)$). 
    Then we formally compute $h X + \sigma({}^tX)h$, which is  of the form $\begin{pmatrix}
      \pi^{d_{n}} Y_{1,1}  &\pi^{\widetilde{d}_{n-1}}Y_{1,2} &\pi^{\widetilde{d}_{n-1}}Y_{1,3} \\
       \pi^{\widetilde{d}_{n-1}} \epsilon\sigma({}^t Y_{1,2}) &Y_{2,2} &Y_{2,3} \\
       \pi^{\widetilde{d}_{n-1}} \epsilon\sigma({}^t Y_{1,3}) &\epsilon\sigma({}^t Y_{2,3}) & Y_{3,3}
    \end{pmatrix}$.
    Here each $Y_{i,j}$ has entries in $R$ (or $\mfo_E\otimes_{\mfo}R$ if $(\widetilde{F},\epsilon)=(E,1)$) and $\epsilon\sigma({}^t Y_{i,i})=Y_{i,i}$.
    Note that $Y_{1,1}, Y_{3,3}$, and the diagonal entries of $Y_{2,2}$ are $0$ if $(\widetilde{F},\epsilon)=(F,-1)$ and in the real part of $\mfo_E\otimes_{\mfo}R$ if $(\widetilde{F},\epsilon)=(E,1)$.

    Each matrix $Y_{i,j}$ with $i\leq j$ is described as a linear polynomial of $X_{i', j'}$'s by Equations (\ref{eq:eqs_about_X_uniform})-(\ref{eq:eqs_about_X_sp2n}). 
    Then an element of $\cL(L,M)(R)$ is described as the above formal matrix $X$ such that  $Y_{i,j}=0$ for $i\leq j$.
            
            \item 
        With the above formal matrix $X$ in $\cL(L,M)(R)$, we compute  $\varphi_{n,M}(X)$ formally. It is of the form 
    \[\left\{\begin{array}{l l}
    \begin{pmatrix}r_1, \cdots, r_{n-2}, \pi^{\widetilde{d}_{n-1}}r_{n-1},  \pi^{d_n}r_n\end{pmatrix}&\textit{if $(\widetilde{F},\epsilon)=(E,1)$};\\
    \begin{pmatrix}r_1,\cdots,r_{n-1},\pi^{d_n}r_n\end{pmatrix}&\textit{if $(\widetilde{F},\epsilon)=(F,-1)$},
    \end{array}
    \right.
    \]
    where $r_i\in \mathcal{A}_i(R)$.
    The image of $X$, under the morphism $\varphi_{n,M}$, is then $(r_1, \cdots, r_{n-1}, r_n)$.
    \end{enumerate}
\end{remark}

\subsubsection{Counting  $\#\vpi_{n,M}^{-1}(\chi_{\gamma})(\kappa)$}

\begin{lemma} \label{lem:form_h22}
For $J_{\wn}=    \begin{pmatrix}
        0 &1 &\cdots &0 \\
        \vdots &\vdots &\ddots &\vdots \\
        0 &0 &\cdots &1 \\
        0 &0 &\cdots &0 \\
    \end{pmatrix} \in \mathrm{M}_{\wn}(\kappa_{\widetilde{F}})$ (cf. Lemma \ref{lemma61}),  
if $\overline{h}\in \mathrm{M}_{\wn}(\kappa_{\widetilde{F}})$ is a non-degenerate hermitian matrix satisfying
    \begin{align} \label{eq:hx+x^th}
      \overline{h} J_{\wn} + \sigma({}^t J_{\wn}) \overline{h} = 0,  
    \end{align}
    then $\overline{h}$ is of the form
    \[
    \overline{h} =
    \begin{pmatrix}
        0 &0 &0 &\cdots &0 &0  &c \\
        0 &0 &0 &\cdots &0 &-c  &d \\
        0 &0 &0 &\cdots &c &-d &* \\
        \vdots &\vdots &\vdots &\iddots &\vdots &\vdots &\vdots \\
        0 &0 &\epsilon\sigma(c) &\cdots &* &* &* \\
        0 &-\epsilon\sigma(c) &-\epsilon\sigma(d)  &\cdots &* &* &* \\
        \epsilon\sigma(c) &\epsilon\sigma(d) &* &\cdots &* &* &*
    \end{pmatrix},
    \]
    where  the anti-upper triangular submatrix is zero.
    Here $c\in \kappa_{\widetilde{F}}^{\times}$ and $d \in \kappa_{\widetilde{F}}$ are the $(1,\widetilde{n})$-entry and the $(2,\widetilde{n})$-entry of $\overline{h}$ respectively such that $c + (-1)^{\wn} \epsilon\sigma(c) = 0$ and $d + (-1)^{\widetilde{n}-1}\epsilon \sigma(d)=0$.
\end{lemma}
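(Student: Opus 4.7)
\medskip

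\noindent\textbf{Proof plan.} The plan is to unpack the matrix identity \eqref{eq:hx+x^th} entry-by-entry and then propagate the resulting scalar relations along anti-diagonals of $\overline{h}$. Since $J_{\wn}$ has $(i,j)$-entry $\delta_{j,i+1}$, direct computation gives $(\overline{h}J_{\wn})_{i,j}=\bar h_{i,j-1}$ for $j\geq 2$ and $0$ for $j=1$; similarly, as $\sigma({}^tJ_{\wn})={}^tJ_{\wn}$, one has $(\sigma({}^tJ_{\wn})\overline{h})_{i,j}=\bar h_{i-1,j}$ for $i\geq 2$ and $0$ for $i=1$. Reading Equation \eqref{eq:hx+x^th} off entry-wise then yields three families of relations:
\begin{equation*}
\bar h_{1,k}=0\ \ (1\leq k\leq \wn-1),\qquad \bar h_{k,1}=0\ \ (1\leq k\leq \wn-1),\qquad \bar h_{i,j-1}+\bar h_{i-1,j}=0\ \ (i,j\geq 2).
\end{equation*}

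\medskip

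\noindent The next step is to exploit that the chain relation $\bar h_{i,j-1}=-\bar h_{i-1,j}$ connects consecutive entries on a fixed anti-diagonal $\{(p,q):p+q=s\}$. I would prove by induction on $s$ that $\bar h_{p,q}=0$ whenever $p+q=s\leq \wn$: on such an anti-diagonal the chain links every position to $\bar h_{1,s-1}$, which vanishes by the first family of relations since $s-1\leq \wn-1$. This clears the anti-upper triangular submatrix.

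\medskip

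\noindent For the anti-diagonal $s=\wn+1$, define $c:=\bar h_{1,\wn}$; the chain forces $\bar h_{i,\wn+1-i}=(-1)^{i-1}c$ for every $1\leq i\leq \wn$. Applying the hermitian property $\bar h_{j,i}=\epsilon\sigma(\bar h_{i,j})$ at the extreme entry gives $(-1)^{\wn-1}c=\epsilon\sigma(c)$, which is exactly $c+(-1)^{\wn}\epsilon\sigma(c)=0$. The same argument on the anti-diagonal $s=\wn+2$ with $d:=\bar h_{2,\wn}$ produces $\bar h_{i,\wn+2-i}=(-1)^{i}d$ for $2\leq i\leq \wn$ and yields the identity $d+(-1)^{\wn-1}\epsilon\sigma(d)=0$. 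This matches the matrix form displayed in the lemma.

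\medskip

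\noindent Finally I would invoke non-degeneracy to see $c\in\kappa_{\widetilde{F}}^{\times}$: if $c=0$, then combined with the vanishing of the anti-upper triangle the first row of $\overline{h}$ would be identically zero, contradicting that $\overline h$ is non-degenerate. The entries on anti-diagonals $s\geq \wn+3$ are left unconstrained by \eqref{eq:hx+x^th} beyond the chain relations, which is why they appear as stars. The main obstacle is really just bookkeeping: one must keep track of the parities of the alternating signs produced by the chain, so that the hermitian-symmetry comparison at $(1,\wn)$ and $(2,\wn)$ gives exactly the stated identities $c+(-1)^{\wn}\epsilon\sigma(c)=0$ and $d+(-1)^{\wn-1}\epsilon\sigma(d)=0$ rather than their sign-flipped cousins.
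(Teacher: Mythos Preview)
Your proof is correct and follows essentially the same idea as the paper's: both arguments exploit that the relation \eqref{eq:hx+x^th} forces constraints propagating along anti-diagonals of $\overline{h}$. The only cosmetic difference is that the paper iterates the relation to $\overline{h}J_{\wn}^k=(-1)^k\sigma({}^tJ_{\wn}^k)\overline{h}$ and reads off the anti-diagonal structure from the explicit shape of $\overline{h}J_{\wn}^k$, whereas you iterate the single-step recursion $\bar h_{i,j-1}=-\bar h_{i-1,j}$ directly; the content and the sign bookkeeping are identical.
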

\begin{proof}
Denote  by $\overline{h}_{i,j}$ the $(i,j)$-entry of an hermitian matrix $\overline{h}$.
 Equation (\ref{eq:hx+x^th}) induces the equation
\[
\overline{h}\cdot J_{\wn}^k = (-1)^k \sigma({}^t J_{\wn}^k)\overline{h}
=(-1)^k\epsilon\sigma({}^t(\overline{h}\cdot J_{\wn}^k))
\]
for any $1\leq k\leq \wn$.
Combining this with the computation
\begin{align*}
\overline{h}\cdot J_{\wn}^k
&=
\begin{pmatrix}
    \overline{h}_{1,1} &\cdots  &\overline{h}_{1,k+1} &\cdots &\overline{h}_{1,\wn} \\
    \vdots &\ddots &\vdots &\ddots &\vdots \\
    \overline{h }_{k+1,1} &\cdots &\overline{h}_{k+1,k+1}  &\cdots &
    \overline{h}_{k+1,\wn} \\
    \vdots  &\ddots & \vdots &\ddots &\vdots \\
    \overline{h}_{\wn,1}  &\cdots &\overline{h}_{\wn,k+1}  &\cdots &\overline{h}_{\wn,\wn}
\end{pmatrix}
\begin{pmatrix}
    0 &\cdots  &1 &\cdots &0 \\
    \vdots &\ddots &\vdots &\ddots &\vdots \\
    0 &\cdots &0  &\cdots &1 \\
    \vdots  &\ddots & \vdots &\ddots &\vdots \\
    0  &\cdots &0  &\cdots &0
\end{pmatrix}     \\
&=\begin{pmatrix}
    0 &\cdots &0  &\overline{h}_{1,1} &\cdots &\overline{h}_{1,\wn-k} \\
    \vdots &\ddots &\vdots &\vdots &\ddots &\vdots \\
    0 &\cdots &0 &\overline{h}_{k+1,1}  &\cdots &\overline
    {h}_{k+1,\wn-k} \\
    \vdots  &\ddots &\vdots &\vdots &\ddots &\vdots \\
    0  &\cdots &0 &\overline{h}_{\wn,1}  &\cdots &\overline{h}_{\wn,\wn-k}
\end{pmatrix}  ~~~~   \textit{for $1\leq k\leq \wn-1$},
\end{align*}
 we have that $\overline{h}_{i,j} = 0$ if $i+j \leq \wn$, and $\overline{h}_{k+l,\wn-k} = (-1)^k \epsilon\sigma(\overline{h}_{\wn,l})$ for $1 \leq l \leq \widetilde{n}-k$.

We write  $c:=\overline{h}_{1,\wn}$, so that  $\overline{h}_{\wn,1} = \epsilon\sigma(c)$ and  $\overline{h}_{k+1,\wn-k} = (-1)^k\epsilon c$ for any $0 \leq k \leq \wn-1$.
Putting $k=\wn-1$, we have $\overline{h}_{\wn,1} = \epsilon\sigma(c) = (-1)^{\wn-1}c$ so that  $c$ satisfies $c + (-1)^{\wn} \epsilon \sigma(c) = 0$.
In the similar manner, we write $d := \overline{h}_{2,\wn}$ so that it satisfies $d + (-1)^{\widetilde{n}-1}\epsilon \sigma(d) = 0$.
Since $\overline{h}$ is non-degenerate, the element $c$ is a unit. 
\end{proof}

\begin{lemma}\label{lem:6.7countingu}
The number of regular and nilpotent elements in $\mathfrak{g}_{\kappa}(\kappa)$ is 
\begin{enumerate}
    \item $\frac{\#\mathrm{U}_n(\kappa)}{(q+1)q^{n-1}}$    \textit{   }    if $(\widetilde{F}, \epsilon)=(E, 1)$;
    \item $\frac{\#\mathrm{Sp}_{2n}(\kappa)}{q^{n}}$     \textit{   }    if $(\widetilde{F}, \epsilon)=(F, -1)$;
\end{enumerate}
\end{lemma}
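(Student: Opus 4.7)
The plan is to count the regular nilpotent elements via the orbit-stabilizer theorem. The variety $\mathcal{N}^{\mathrm{reg}}$ of regular nilpotent elements in $\mathfrak{g}_\kappa$ is a single orbit under the adjoint action of $\mathrm{G}_{\kappa,\bar{\kappa}}$ (the principal nilpotent orbit), so its $\kappa$-rational points decompose into finitely many $\mathrm{G}_\kappa(\kappa)$-orbits parameterized by $\ker(H^1(\kappa, C) \to H^1(\kappa, \mathrm{G}_\kappa))$, where $C = C_{\mathrm{G}_\kappa}(N_0)$ is the centralizer of a fixed rational regular nilpotent $N_0$. Since $\mathrm{G}_\kappa$ is connected and $\kappa$ is finite, Lang's theorem gives $H^1(\kappa, \mathrm{G}_\kappa) = 0$, so the orbits are indexed by $H^1(\kappa, C)$.

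To exhibit a rational $N_0$, I would choose an $\mathfrak{o}_{\widetilde{F}} \otimes \kappa$-basis of $L \otimes \kappa$ with respect to which $\overline{h}$ is anti-diagonal (possible since $(L,h)$ is unimodular). With a suitable normalization of the anti-diagonal entries (compatible with the hermitian or symplectic symmetry), the shift matrix $J_{\widetilde{n}}$ lies in $\mathfrak{g}_\kappa(\kappa)$ and, having a single Jordan block with characteristic polynomial $x^{\widetilde{n}}$, is a regular nilpotent element.

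To compute $C$: over $\bar{\kappa}$, the centralizer of $J_{\widetilde{n}}$ inside $\mathrm{GL}_{\widetilde{n}}$ is the commutative algebra $\{\sum_{i=0}^{\widetilde{n}-1} a_i J_{\widetilde{n}}^i : a_0 \neq 0\}$, of dimension $\widetilde{n}$. The intersection with $\mathrm{G}_\kappa$ imposes the form-preservation condition on $(a_0, a_1, \dots, a_{\widetilde{n}-1})$, which becomes a system of linear conditions cutting out an abelian subgroup that is the direct product of the center $Z(\mathrm{G}_\kappa)$ with a connected unipotent group $U(N_0)$ of dimension equal to the semisimple rank. For $\mathrm{G}_\kappa = \mathrm{U}_n$, the center is the norm-one torus $\mathrm{U}_1$ (connected with $q+1$ rational points) and $U(N_0)$ has dimension $n-1$ with $q^{n-1}$ rational points, so $C$ is connected with $(q+1)q^{n-1}$ rational points; $H^1(\kappa, C) = 0$ by Lang and we obtain a single orbit of size $\#\mathrm{U}_n(\kappa)/((q+1)q^{n-1})$. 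For $\mathrm{G}_\kappa = \mathrm{Sp}_{2n}$, the center is $\mu_2$ and $U(N_0)$ has dimension $n$; in odd characteristic, $C$ has component group $\mu_2$ with $2q^n$ rational points, and $H^1(\kappa, \mu_2) \cong \mathbb{Z}/2\mathbb{Z}$ yields two rational orbits summing to $\#\mathrm{Sp}_{2n}(\kappa)/q^n$, while in characteristic two the centralizer becomes connected of order $q^n$ and gives a single orbit of the same size.

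The main obstacle will be the centralizer computation, specifically verifying that the unipotent part $U(N_0)$ has the asserted dimension $n-1$ (resp. $n$) for $\mathrm{U}_n$ (resp. $\mathrm{Sp}_{2n}$). This reduces to solving the linear system on the coefficients $(a_1, \dots, a_{\widetilde{n}-1})$ arising from the form-preservation condition, and a direct calculation using the anti-diagonal structure of $\overline{h}$ (in the spirit of Lemma \ref{lem:form_h22}) should suffice. For $\mathrm{Sp}_{2n}$ in odd characteristic, the extra subtlety of summing correctly over the two rational orbits can be made transparent by exhibiting explicit representatives distinguished by whether a relevant entry of $N_0$ is a square or a non-square in $\kappa^\times$.
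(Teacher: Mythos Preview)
Your proposal is correct and follows essentially the same route as the paper: orbit--stabilizer for the principal nilpotent orbit, Lang's theorem to parameterize rational orbits by $H^1(\kappa, C)$, and the decomposition $C \cong Z(\mathrm{G}_\kappa) \times U(N_0)$ with $U(N_0)$ connected unipotent of dimension equal to the semisimple rank. The only difference is that where you propose a direct matrix computation of the centralizer, the paper instead cites \cite[Theorems B, C]{Hum17} for the structure of $C$ and the transitivity on regular nilpotents, and \cite[Lemma 6.3.3]{GY} for $\#U(N_0)(\kappa) = q^{\dim U(N_0)}$.
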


\begin{proof}
Let $\overline{X}\in \mathfrak{g}_{\kappa}(\kappa)$ be a regular and nilpotent element. Let $\mathrm{Z_{G_{\kappa}}}(\overline{X})$ be its centralizer, defined over $\kappa$, with respect to the Adjoint action.  
By \cite[Theorem C]{Hum17}, we have that 
$$\mathrm{Z_{G_{\kappa}}}(\overline{X})\cong \mathrm{Z(G_{\kappa})}\times \mathrm{Z_{U_{\kappa}}}(\overline{X}),$$
where 
$Z(G_\kappa)$ is the center of $G_{\kappa}$ and $\mathrm{Z_{U_{\kappa}}}(\overline{X})$ is the unipotent radical of a Borel subgroup of $G_{\kappa}$, which is  a connected unipotent group of dimension $n-dim(Z(G_{\kappa}))$. 
Note that \cite[Theorem C]{Hum17} is stated when $\mathrm{\mathrm{G}_{\kappa}}$ is a semisimple algebraic group defined over an algebraically closed field. However, this theorem is applicable in our cases.

On the other hand, \cite[Theorem 8.5]{Hum} (which is a simple application of the \textit{Lang's Theorem}) yields that  the number of conjugacy classes in the stable conjugacy class containing $\overline{X}$ is bijectively identified with $H^1(\kappa, \mathrm{Z(G_{\kappa})}/\mathrm{Z(G_{\kappa})^{\circ}})$, where $\mathrm{Z(G_{\kappa})^{\circ}}$ is the neutral component of $\mathrm{Z(G_{\kappa})}$. 
By \cite[Theorem B]{Hum17}, the stable conjugacy class containing $\overline{X}$ is the same as the set of regular and nilpotent elements in $\mathfrak{g}_{\kappa}(\kappa)$.
Here we have
\[
\mathrm{Z(G_{\kappa})}=\left\{
\begin{array}{l l}
\mathrm{N^1_{\kappa_E/\kappa}} (=\textit{the norm $1$-torus})  & \textit{if $(\widetilde{F}, \epsilon)=(E, 1)$};\\
\mu_2  & \textit{if $(\widetilde{F}, \epsilon)=(F, -1)$};\\
 \end{array}\right.
\]
Thus $\#H^1(\kappa, \mathrm{Z(G_{\kappa})}/\mathrm{Z(G_{\kappa})^{\circ}})=2$ if $(\widetilde{F}, \epsilon)=(F, -1)$ with $char(\kappa)>2$, and $1$ otherwise.
\cite[Lemma 6.3.3]{GY} yields that $\#\mathrm{Z_{U_{\kappa}}}(\overline{X})(\kappa)=q^{\dim(\mathrm{Z_{U_{\kappa}}}(\overline{X}))}$.
Thus we have the desired formula.
\end{proof}

\begin{proposition}\label{corcounting}
The order of the set  $\vpi_{n,M}^{-1}(\chi_{\gamma})(\kappa)$ is 
\[ 
\#\vpi_{n,M}^{-1}(\chi_{\gamma})(\kappa)=
\begin{cases}
    \#\mathrm{U}_{n-2}(\kappa)\cdot q^{3n-4} &\textit{if $(\widetilde{F}, \epsilon)=(E, 1)$ and $d_{n-1}<d_n$};\\
    \#\mathrm{U}_{n-2}(\kappa)\cdot (q-1) q^{3n-5} &\textit{if $(\widetilde{F}, \epsilon)=(E, 1)$ and $d_{n-1}\geq d_n$};\\
\#\mathrm{Sp}_{2n-2}(\kappa)\cdot(q-1)q^{3n-2}   &\textit{if $(\widetilde{F}, \epsilon)=(F, -1)$}.
\end{cases}
\]
Here we regard $\#\mathrm{U}_{2-2}(\kappa)=\#\mathrm{Sp}_{2-2}(\kappa)=1$.
\end{proposition}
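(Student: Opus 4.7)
The plan is to describe $\varphi_{n,M}^{-1}(\chi_\gamma)(\kappa)$ explicitly by reducing the matrix parametrization of $\widetilde{\mathcal{L}}(L,M)$ from Equation (\ref{defL_1}) modulo $\pi$ and counting solutions subject to the surjectivity condition on $\mathcal{L}(L,M)$ and to $\varphi_{n,M}(X)=\chi_\gamma$. I would fix a basis of $L$ as in Lemma \ref{lemma:matrixformoflmh}, so that $h$ takes the stated block form with $h_{2,2}$ unimodular, and write $X$ in the block form of (\ref{defL_1}). Since $\overline{\chi}_\gamma(x)=x^{\wn}$ forces every $d_i>0$ (hence $\widetilde{d}_{n-1}\geq 1$ and $d_n\geq 1$), the reduction $\overline{X}$ has vanishing first column and vanishing last row, so $\chi_{\overline{X}}(x)=x^2\cdot \chi_{\overline{X}_{2,2}}(x)$. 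The first $n-2$ coordinates of $\varphi_{n,M}(X)=\chi_\gamma$ then force $\overline{X}_{2,2}$ to be nilpotent with characteristic polynomial $x^{n-2}$, while Equation (\ref{eq:eqs_about_X_uniform})(1) reduced modulo $\pi$ places $\overline{X}_{2,2}$ in $\mathfrak{u}_{n-2}(\kappa)$ or $\mathfrak{sp}_{2n-2}(\kappa)$ according to the case.

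I would next show that $\overline{X}_{2,2}$ must be regular nilpotent. Passing to the matrix $\widetilde{X}$ of $X$ expressed in the basis $(e_1,\dots,e_{\wn-1},\pi^{d_n}e_{\wn})$ of $M$ and expanding $\det\widetilde{X}$ along the first column, the surjectivity condition for $\mathcal{L}(L,M)$ over $\kappa$ becomes
\[
\overline{X}_{3,1}\neq 0 \textit{ and } \det\!\begin{pmatrix}\overline{X}_{1,2}&\overline{X}_{1,3}\\ \overline{X}_{2,2}&\overline{X}_{2,3}\end{pmatrix}\neq 0,
\]
and this latter determinant coincides, up to sign and a unit, with the reduction of $c_n/\pi^{d_n}$, which is a unit in $\kappa_{\widetilde{F}}$ by definition of $d_n$. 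Since $\overline{X}_{2,2}$ is nilpotent, invertibility of this $(n-1)\times(n-1)$ block forces $\mathrm{rank}\,\overline{X}_{2,2}=n-3$, and hence $\overline{X}_{2,2}$ is regular nilpotent. Lemma \ref{lem:6.7countingu} then gives $\#\mathrm{U}_{n-2}(\kappa)/((q+1)q^{n-3})$ or $\#\mathrm{Sp}_{2n-2}(\kappa)/q^{n-1}$ such $\overline{X}_{2,2}$.

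With $\overline{X}_{2,2}$ fixed, Equations (\ref{eq:eqs_about_X_uniform})--(\ref{eq:eqs_about_X_sp2n}) reduced modulo $\pi$ express $\overline{X}_{1,1}, \overline{X}_{2,1}, \overline{X}_{2,3}$ (and, in the unitary case, the real parts of $\sigma(u)\overline{X}_{1,3}$ and $u\overline{X}_{3,1}$) as explicit functions of the remaining free reduced entries; the surjectivity condition above then cuts out an open locus whose contribution I would compute entry by entry, and the single remaining polynomial constraint is the $c_{n-1}/\pi^{\widetilde{d}_{n-1}}$ coordinate. The three cases are distinguished by the behaviour of this last equation. When $(\widetilde{F},\epsilon)=(E,1)$ with $d_{n-1}<d_n$, so that $\widetilde{d}_{n-1}=d_{n-1}$, the reduction $\overline{c_{n-1}/\pi^{d_{n-1}}}$ is a priori a unit and the equation is a single linear condition solvable uniquely for one free variable, giving the factor $q^{3n-4}$; in the remaining two cases $\widetilde{d}_{n-1}=d_n$ and the reduced coefficient may vanish, so the equation becomes a bilinear condition coupling $\overline{X}_{3,1}$ with another free entry which, combined with $\overline{X}_{3,1}\neq 0$, produces the additional $(q-1)$ factor and the respective remaining counts $q^{3n-5}$ and $q^{3n-2}$.

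The main obstacle will be the final bookkeeping. To extract the exact powers of $q$, one must parametrize a regular nilpotent $\overline{X}_{2,2}$ by conjugating to its Jordan canonical form (using Lemma \ref{lem:form_h22} to simultaneously normalize $h_{2,2}$), and then use the explicit identifications coming from (\ref{eq:eqs_about_X_uniform})(2)--(3) to reduce the $c_{n-1}$-equation to a single polynomial in the free variables; the three-case dichotomy flows from whether the leading coefficient of this polynomial is generically a unit or only an element of $\kappa_{\widetilde{F}}$. A further subtlety specific to the unitary case is tracking how the imaginary parts of $\overline{X}_{1,3}$ and $\overline{X}_{3,1}$ contribute, which requires the decomposition of $\mfo_E\otimes R$ into real and imaginary parts recorded in Remark \ref{remark:reim}.
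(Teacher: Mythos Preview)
Your approach is essentially the paper's: set up the block form of $\overline{X}$, use the characteristic-polynomial coordinates to force $\overline{X}_{2,2}$ nilpotent, use the surjectivity/determinant condition (the $c_n$ coordinate) to force it \emph{regular} nilpotent, invoke Lemma~\ref{lem:6.7countingu}, then normalize $\overline{X}_{2,2}$ to Jordan form via Lemma~\ref{lem:form_h22} and do entry-by-entry bookkeeping. This is exactly the skeleton of the paper's proof.

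One point in your sketch needs correcting. In the symplectic case there is \emph{no} separate $c_{n-1}/\pi^{\widetilde d_{n-1}}$ coordinate: by Definition~\ref{rmk:const_phi_n,M}.(2), $\mathcal{A}_M(R)=R^{n-1}\times \pi^{d_n}R$, and since $\chi_{\overline{X}}(x)=x^2\cdot\chi_{\overline{X}_{2,2}}(x)$ with $\overline{X}_{2,2}\in\mathfrak{sp}_{2n-2}(\kappa)$, the first $n-1$ (not $n-2$) coordinates are already absorbed by the condition $\chi_{\overline{X}_{2,2}}(x)=x^{2n-2}$. So the only remaining equation over $\kappa$ is the $c_n$ equation, and the factor $(q-1)$ there comes directly from it (concretely, from the first entry $\overline z_1$ of $\overline{X}_{1,2}$ lying in $\kappa^\times$), not from a $c_{n-1}$-type bilinear relation. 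Similarly, in the unitary case with $d_{n-1}\geq d_n$ the $(q-1)$ arises because the imaginary part of $\overline u\,\overline{X}_{3,1}$ is now a free nonzero parameter (equation $(9')$ no longer pins it down as $(9)$ does when $d_{n-1}<d_n$), rather than from the $c_{n-1}$ equation becoming degenerate. Once you adjust these two attributions, the detailed count goes through exactly as in the paper.
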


\begin{proof}
By Definition \ref{rmk:const_phi_n,M} and Remark \ref{remark:rmkchangemea}, an 
    element $\overline{X} \in \vpi_{n,M}^{-1}(\chi_{\gamma})(\kappa)$ is formally represented by
    $\overline{X} =
    \begin{pmatrix}
        \pi^{\widetilde{d}_{n-1}} \overline{X}_{1,1}  &\overline{X}_{1,2} &\overline{X}_{1,3} \\
        \pi^{\widetilde{d}_{n-1}} \overline{X}_{2,1} &\overline{X}_{2,2} &\overline{X}_{2,3} \\
        \pi^{d_n} \overline{X}_{3,1} &\pi^{d_n} \overline{X}_{3,2} & \pi^{d_n} \overline{X}_{3,3}
    \end{pmatrix}$, where each matrix $\overline{X}_{i,j}$ has entries in $\kappa_{
\widetilde{F}}$. 
    Then Equations (\ref{eq:eqs_about_X_uniform})-(\ref{eq:eqs_about_X_sp2n}) should be intrepreted as in Remark \ref{remark:rmkchangemea}.(1). These, together with  Remark \ref{remark:rmkchangemea}.(2),  yield the set of equations which will be listed  below precisely.
      Let  $\delta(-,-)$ be a function on $\mathbb{Z}\times \mathbb{Z}$  defined by
    $\delta(m,m')
    =\begin{cases}
        1 &\text{if } m = m'; \\
        0 &\text{otherwise}.
    \end{cases}$
 
    \begin{itemize}
        \item for all cases (cf. Equation (\ref{eq:eqs_about_X_uniform})),
        \begin{equation}\label{eq:eqs_about_X_uniform_red}
        \left\{
    \begin{array}{l l}
        (1): \overline{h}_{2,2} \overline{X}_{2,2} + \sigma({}^t\overline{X}_{2,2}) \overline{h}_{2,2} = 0  & \textit{by the $2\times 2$-block};\\
        (2): \overline{u} \sigma({}^t\overline{X}_{1,2}) +  \overline{h}_{2,2} \overline{X}_{2,3} =0  & \textit{by the $2\times 3$-block};\\
        (3): \epsilon\sigma(\overline{u})\overline{X}_{1,3} + \overline{u} \sigma(\overline{X}_{1,3}) =0 &\textit{by the $3\times 3$-block}; \\
        (7): \overline{X}_{3,1}\cdot \det\begin{pmatrix}
        \overline{X}_{1,2} & \overline{X}_{1,3} \\     \overline{X}_{2,2} & \overline{X}_{2,3}
        \end{pmatrix}=u_{1}  & \textit{by the constant term in }
         \chi_{\gamma}(x); \\
        (8): \chi_{\overline{X}_{2,2}}(x)= x^{\wn-2} \in \kappa_E[x] & \textit{by the coefficients of $x^{\wn}, \cdots, x^2$ in $\chi_{\gamma}(x)$}.
    \end{array}\right.
    \end{equation}
    Here $u_{1}=(-1)\cdot\overline{c_n/\pi^{d_n}}\in \mathcal{A}_{n}(\kappa)$ and $u_1\neq 0$.

        \item for the case $(\widetilde{F},\epsilon) = (E,1)$ and $d_{n-1}<d_n$ (cf. Equation (\ref{eq:eqs_about_X_dn-1<dn})),
        \begin{equation}\label{eq:eqs_about_X_dn-1<dn_red}
        \left\{
            \begin{array}{l l}
            (4): \overline{u} \overline{X}_{3,1} + \sigma(\overline{u}\overline{X}_{3,1}) =-\delta(2d_{n-1},d_n)(\overline{X}_{1,1} + \sigma(\overline{X}_{1,1}))  & \textit{by the $1\times 1$-block};\\
            (5): \overline{X}_{1,2} + \sigma({}^t\overline{X}_{2,1}) \overline{h}_{2,2} =0  & \textit{by the $1\times 2$-block};\\
            (6): \overline{u} \sigma(\overline{X}_{1,1}) + \overline{X}_{1,3}  =0  & \textit{by the $1\times 3$-block};\\
            (9): \det\begin{pmatrix}
             \overline{X}_{1,1} & \overline{X}_{1,2} \\     \overline{X}_{2,1} & \overline{X}_{2,2}
             \end{pmatrix}=u_{2}
             & \textit{by the coefficient of $x$ in $\chi_{\gamma}(x)$.}
        \end{array}\right.
        \end{equation}
        Here $u_{2}=(-1)^{n-1}\cdot\overline{c_{n-1}/\pi^{d_{n-1}}}\in \mathcal{A}_{n-1}(\kappa)$ and $u_2\neq 0$.

        \item for the case $(\widetilde{F},\epsilon) = (E,1)$ and $d_{n-1}\geq d_n$ (cf. Equation (\ref{eq:eqs_about_X_dn-1>=dn})),
        \begin{equation}\label{eq:eqs_about_X_dn-1>=dn_red}
        \left\{
        \begin{array}{l l}
            (4'): \overline{u} \overline{X}_{3,1} + \sigma(\overline{u}\overline{X}_{3,1}) = 0  & \textit{by the $1\times 1$-block};\\
            (5'): \delta(\xi,d_n)\overline{X}_{1,2} + \overline{u} \overline{X}_{3,2} + \sigma({}^t\overline{X}_{2,1}) \overline{h}_{2,2} =0 & \textit{by the $1\times 2$-block}; \\
            (6'): \delta(\xi,d_n) \overline{X}_{1,3} + \overline{u} \sigma(\overline{X}_{1,1}) + \overline{u}\overline{X}_{3,3} + \overline{b}\sigma(\overline{X}_{3,1})  =0 & \textit{by the $1\times 3$-block};  \\
            (9'): 
        \det\begin{pmatrix}
            \overline{X}_{1,1} & \overline{X}_{1,2} \\     \overline{X}_{2,1} & \overline{X}_{2,2}
        \end{pmatrix}
        +\det\begin{pmatrix}
            \overline{X}_{2,2} & \overline{X}_{2,3} \\
            \overline{X}_{3,2} &\overline{X}_{3,3}
        \end{pmatrix} & \textit{by the coefficient of $x$ in $\chi_{\gamma}(x)$.}  \\
        \quad \quad +(-1)^{n-2}\overline{X}_{3,1} \cdot \textit{the sum of (i+1,i)-minors of}\begin{pmatrix}
            \overline{X}_{1,2} & \overline{X}_{1,3} \\
            \overline{X}_{2,2} &\overline{X}_{2,3}
        \end{pmatrix}  \\
        \quad\quad = \delta(d_{n-1},d_n) u_2  
        \end{array}\right.
        \end{equation}
        Here $u_{2}=(-1)^{n-1}\cdot\overline{c_{n-1}/\pi^{d_{n-1}}}\in \mathcal{A}_{n-1}(\kappa)$, and the index $i$ in Equation (\ref{eq:eqs_about_X_dn-1>=dn_red}).(9$'$) runs over $i = 1, \cdots, n-2$.
        If $d_{n-1} = \infty$, then we understand $u_2 = 0$.
        
        \item for the case $(\widetilde{F},\epsilon) = (F,-1)$ (cf. Equation (\ref{eq:eqs_about_X_sp2n})),
        \begin{equation}\label{eq:eqs_about_X_sp2n_red}
        \left\{
            \begin{array}{l l}
            (4''): \overline{u} \overline{X}_{3,1} -\overline{u}\overline{X}_{3,1} = 0  & \textit{by the $1\times 1$-block};\\
            (5''):\overline{u}\overline{X}_{3,2}+{}^t\overline{X}_{2,1}\overline{h}_{2,2}=0 & \textit{by the $1\times 2$-block}; \\
            (6''):\overline{u}\overline{X}_{3,3}+\overline{u}\overline{X}_{1,1}=0 & \textit{by the $1\times 3$-block}.
        \end{array}\right.
        \end{equation}
    \end{itemize}
    For all cases, $\overline{h}_{2,2}, \overline{u}, \overline{b}$ are the reductions of $h_{2,2}, u,b$ in Equation (\ref{eq:form_X_1}) modulo $\pi$ respectively (cf. Notations in Part \ref{part1}).
    Equation (\ref{eq:eqs_about_X_uniform_red}).(7)-(8), Equation (\ref{eq:eqs_about_X_dn-1<dn}).(9), and Equation (\ref{eq:eqs_about_X_dn-1>=dn_red}).(9') guarantee that $\overline{X}$ is contained in the fiber of  $\varphi_{n,M}$ at $\chi_\gamma$.

        We will interpret Equation (\ref{eq:eqs_about_X_uniform_red})-Equation (\ref{eq:eqs_about_X_sp2n_red}) below case by case. 
        To do that, let 
        \[
        \left\{
        \begin{array}{l}
        \overline{z}_1 \textit{ and } \overline{z}_1' \textit{ be the first and the second entries of } \overline{X}_{1,2} \textit{ respectively}; \\
        \overline{z}_2 \textit{ be the last entry of } \overline{X}_{2,1}; \\
        \overline{z}_3 \textit{ and } \overline{z}_3' \textit{ be the $(\wn-3)$-th and the $(\wn-2)$-th entries of } \overline{X}_{2,3} \textit{ respectively};\\
        \overline{z}_4 \textit{ be the first entry of } \overline{X}_{3,2}. \\
        \end{array}\right.
        \]
    \begin{enumerate}
        \item for all cases,
        \begin{itemize}
            \item 
            Equation (\ref{eq:eqs_about_X_uniform_red}).(7)-(8) yields that the rank of $\overline{X}_{2,2}$ is 
$\wn-3$. This, combined with  Equation (\ref{eq:eqs_about_X_uniform_red}).(1),   yields that $\overline{X}_{2,2}$ is a regular nilpotent element of the Lie algebra $\mathfrak{u}_{n-2}(\kappa)$ if $(\widetilde{F},\epsilon) = (E,1)$, or $\mathfrak{sp}_{2n-2}(\kappa)$ if $(\widetilde{F},\epsilon) = (F,-1)$.
            By Lemma \ref{lem:6.7countingu}, the number of such $\overline{X}_{2,2}$ is
            \begin{equation}\label{eq:count_X22}
            \left\{\begin{array}{l l}
            \frac{\#\mathrm{U}_{n-2}(\kappa)}{(q+1)q^{n-3}}&\textit{if $(\widetilde{F},\epsilon)=(E,1)$};\\
            \frac{\#\mathrm{Sp}_{2n-2}(\kappa)}{q^{n-1}}&\textit{if $(\widetilde{F},\epsilon)=(F,-1)$}.
            \end{array}\right.
            \end{equation}

As in the case of $\mathfrak{gl}_n$ (cf. (2) just below Equation (\ref{eq83})),              
           we may and do choose a $\kappa_{\widetilde{F}}$-basis for the $\kappa_{\widetilde{F}}$-span of $(\overline{e}_2, \cdots, \overline{e}_{\wn-1})$ such that $\overline{X}_{2,2}=J_{\wn-2}$ so that  $\overline{h}_{2,2}$ is represented by a matrix in Lemma \ref{lem:form_h22}.
           We keep using $c$ and $d$ to stand for entries of $\overline{h}_{2,2}$ as in Lemma \ref{lem:form_h22}.


        \item  Equation (\ref{eq:eqs_about_X_uniform_red}).(2) is equivalent that  $\overline{X}_{2,3}$ is determined by $\overline{X}_{1,2}$.
        Especially, we have the equations
        \begin{equation}\label{eq:z3_as_z_1}
        \overline{u}\sigma(\overline{z}_1) + c \overline{z}'_3 = 0, \quad \textit{and} \quad
        \overline{u}\sigma(\overline{z}'_1) - c \overline{z}_3 + d\overline{z}_3' = 0.
        \end{equation}
        
        \item Using the first in the above two,  Equation (\ref{eq:eqs_about_X_uniform_red}).(7) is equivalent to the equation 
        \begin{equation}\label{eq:eq_comp_sp1}
            \overline{z}_1\sigma(\overline{z}_1)=-(cu_{1})/(\overline{u}\overline{X}_{3,1}),
        \end{equation}
        where $\overline{z}_1, \overline{z}_3', \overline{X}_{3,1}\neq 0$.
    We remark that when $(\widetilde{F},\epsilon) = (E,1)$  this equation has a solution only if its right hand side  is contained in $\kappa^{\times}$.
        Since $c + (-1)^{n} \sigma(c) = 0$ and $u_1 (\neq 0) \in \mathcal{A}_{n}(\kappa)$, it suffices to claim that   the real part of $\overline{u}\overline{X}_{3,1}$ is zero (cf. Remark \ref{remark:reim}).
        This will be proved below when we treat Equations (\ref{eq:eqs_about_X_dn-1<dn_red}).(4) (if $d_{n-1}<d_n$) and (\ref{eq:eqs_about_X_dn-1>=dn_red}).(4$'$) (if $d_{n-1} \geq d_n$).
        \end{itemize}
        
        \item for the case $(\widetilde{F},\epsilon) = (E,1)$ and $d_{n-1} < d_n$,
        \begin{itemize}
            \item Equation (\ref{eq:eqs_about_X_uniform_red}).(3) is equivalent that the real part of $\sigma(\overline{u})\overline{X}_{1,3}$ is zero.

            \item Then Equation(\ref{eq:eqs_about_X_dn-1<dn_red}).(6) is equivalent that the real part of $\overline{X}_{1,1}$ is zero and the imaginary part of $\overline{X}_{1,1}$ is completely determined by the imaginary part of $\sigma(\overline{u})\overline{X}_{1,3}$.

            \item Then Equation (\ref{eq:eqs_about_X_dn-1<dn_red}).(4) is equivalent that the real part of $\overline{u}\overline{X}_{3,1}$ is zero.
        
            \item Equation (\ref{eq:eqs_about_X_dn-1<dn_red}).(5) is equivalent that $\overline{X}_{2,1}$ is completely determined by $\overline{X}_{1,2}$. Especially, we have the equation
            \begin{equation}\label{eq:z2_as_z1_dn-1<d_n}
            \overline{z}_1+\sigma(c\overline{z}_2)=0.
            \end{equation}

            \item By using Equations (\ref{eq:eq_comp_sp1})-(\ref{eq:z2_as_z1_dn-1<d_n}),  Equation (\ref{eq:eqs_about_X_dn-1<dn_red}).(9) is equivalent to the equation 
            \begin{equation}\label{eq:eq_comp_sp2}
               (-1)^n \cdot \overline{u}\overline{X}_{3,1} = u_{1}/u_{2} (\neq 0).
            \end{equation}
            Note that the real part of both sides of Equation (\ref{eq:eq_comp_sp2}) is zero since
            the real part of $\overline{u}\overline{X}_{3,1}$ is zero, $u_{1} (\neq 0) \in \mathcal{A}_{n}(\kappa)$ and $u_{2} (\neq 0) \in \mathcal{A}_{n-1}(\kappa)$.
            It turns out that the imaginary part of $\overline{u}\overline{X}_{3,1}$ is determined.
        \end{itemize}

        In summary, each entry of $\overline{X}$ is completely determined by $\overline{X}_{12}, \overline{X}_{2,2}, \overline{X}_{3,2}, \overline{X}_{3,3}$, and the imaginary part of $\sigma(\overline{u})\overline{X}_{1,3}$.
We analyze this more precisely.

By Equation (\ref{eq:count_X22}), the contribution from the set of $\overline{X}_{2,2}$'s is $\frac{\#\mathrm{U}_{n-2}(\kappa)}{(q+1)q^{n-3}}$.
        The imaginary part of $\sigma(\overline{u})\overline{X}_{1,3}$ contributes to $\kappa$.
        Since Equation (\ref{eq:eq_comp_sp2}) determines $\overline{u}\overline{X}_{3,1}$, 
      Equation (\ref{eq:eq_comp_sp1}) yields that  $\overline{z}_1$ contributes to $N^1_{\kappa_E/\kappa}(\kappa)$.
        The rest  of $\overline{X}_{1,2}$ contributes to $\kappa^{2n-6}$.
        The entries of $\overline{X}_{3,2}$ and $\overline{X}_{3,3}$ contribute to $\kappa^{2n-4}$ and $\kappa^2$ respectively.
        By summing up, we have the desired result:
        \[
\#\vpi_{n,M}^{-1}(\chi_{\gamma})(\kappa)=        \frac{\#\mathrm{U}_{n-2}(\kappa)}{(q+1)q^{n-3}} \cdot q \cdot \#N^1_{\kappa_E/\kappa}(\kappa) \cdot q^{2n-6} \cdot q^{2n-4} \cdot q^2
        = \#\mathrm{U}_{n-2}(\kappa) \cdot q^{3n-4}.
        \]
        
        \item for the case $(\widetilde{F},\epsilon) = (E,1)$ and $d_{n-1} \geq d_n$,
        \begin{itemize}
            \item Equation (\ref{eq:eqs_about_X_uniform_red}).(3) is equivalent that the real part of $\sigma(\overline{u})\overline{X}_{1,3}$ is zero.

            \item Equation (\ref{eq:eqs_about_X_dn-1>=dn_red}).(4$'$) is equivalent that that the real part of $\overline{u} \overline{X}_{3,1}$ is zero.
            
            \item Then Equation (\ref{eq:eqs_about_X_dn-1>=dn_red}).(6$'$) is equivalent that $\overline{X}_{1,1}$ is completely determined by  $\overline{X}_{3,3}$, the imaginary part of $\overline{u}\overline{X}_{3,1}$, and the imaginary part of $\sigma(\overline{u})\overline{X}_{1,3}$.

            \item Equation (\ref{eq:eqs_about_X_dn-1>=dn_red}).(5$'$) is equivalent that $\overline{X}_{2,1}$ is completely determined by $\overline{X}_{1,2}$, $\overline{X}_{3,2}$.
            Especially, we have the equation
            \begin{equation}\label{eq:z2_as_z1_dn-1>=dn}
            \delta(\xi,d_n) \overline{z}_1 + \overline{u}\overline{z}_4 + \sigma(c\overline{z}_2) = 0.
            \end{equation}
            
            \item
            Equation (\ref{eq:eqs_about_X_dn-1>=dn_red}).(9$'$) is equivalent to the equation
            \[
            (-1)^{n-2}(\overline{z}_1 \overline{z}_2 + \overline{z}_3'\overline{z}_4 + \overline{X}_{3,1}(\overline{z}_1\overline{z}_3 + \overline{z}_1' \overline{z}_3'))
            =\delta(d_{n-1},d_n)u_2.
            \]
           By plugging Equations (\ref{eq:z3_as_z_1}) and  (\ref{eq:z2_as_z1_dn-1>=dn}), the above is equivalent to 
            \begin{equation}\label{eq:eq_comp_sp3}
            \begin{aligned}
            &-(\overline{z}_1 \sigma(\overline{u}\overline{z}_4) +  \sigma(\overline{z}_1)\overline{u}\overline{z}_4) -\delta(\xi,d_n)\overline{z}_1\sigma(\overline{z}_1)
            + \overline{u}\overline{X}_{3,1}(\overline{z}_1\sigma(\overline{z}_1') - \overline{z}_1'\sigma(\overline{z}_1) - c^{-1}d \overline{z}_1 \sigma(\overline{z}_1) ) \\
            &= (-1)^{n-2}\delta(d_{n-1},d_n) c u_{2}.
            \end{aligned}
            \end{equation}
            Note that $\overline{z}_1$ and $\overline{u}\overline{X}_{31}$ are nonzeros by Equation (\ref{eq:eq_comp_sp1}).
            Both sides of Equation (\ref{eq:eq_comp_sp3}) are contained in $\kappa$ since $c + (-1)^{n} \sigma(c) = 0$, $d + (-1)^{n-1} \sigma(d) = 0$, $u_2 \in \mathcal{A}_{n-1}(\kappa)$, and the real part of $\overline{u}\overline{X}_{3,1}$ is zero.
            Equation (\ref{eq:eq_comp_sp3}) is equivalent that the real part of $\sigma(\overline{z}_1)\overline{u}\overline{z}_4$ is determined by $\overline{z}_1$, $\overline{z}_1'$, and the imaginary part of $\overline{u}\overline{X}_{3,1}$.
        \end{itemize}
        In summary, each entry of $\overline{X}$ is completely determined by $\overline{X}_{12}, \overline{X}_{2,2}, \overline{X}_{3,2}, \overline{X}_{3,3}$, the imaginary part of $\overline{u}\overline{X}_{3,1}$, and the imaginary part of $\sigma(\overline{u})\overline{X}_{1,3}$.
        We analyze this more precisely. 
        
        By Equation (\ref{eq:count_X22}), the contribution from the set of $\overline{X}_{2,2}$'s is $\frac{\#\mathrm{U}_{n-2}(\kappa)}{(q+1)q^{n-3}}$.
        The imaginary part of $\sigma(\overline{u})\overline{X}_{1,3}$ contributes to $\kappa$.
        Equation (\ref{eq:eq_comp_sp1}) yields that the choice of $\overline{z}_1$ and the imaginary part of $\overline{u}\overline{X}_{3,1}$ contribute to $N^1_{\kappa_E/\kappa}(\kappa) \times \kappa^\times$.
        The rest entries of $\overline{X}_{1,2}$ contribute to $\kappa^{2n-6}$.
        If $\overline{X}_{1,2}$ and the imaginary part of $\overline{u}\overline{X}_{3,1}$ are determined, then the real part of $\sigma(\overline{z}_1)\overline{u}\overline{z}_4$ is determined as well by Equation (\ref{eq:eq_comp_sp3}).
        Since $\overline{z}_1$ is a unit by Equation (\ref{eq:eq_comp_sp1}), the imaginary part of $\sigma(\overline{z}_1)\overline{u}\overline{z}_4$ contributes to $\kappa$.
        The rest entries of $\overline{X}_{3,2}$ contribute to $\kappa^{2n-6}$, and the choice of $\overline{X}_{3,3}$ contribute to $\kappa^2$.
        By summing up, we have the desired result:
        \[
        \frac{\#\mathrm{U}_{n-2}(\kappa)}{(q+1)q^{n-3}} \cdot q \cdot \#N^1_{\kappa_E/\kappa}(\kappa) \cdot (q-1) \cdot q^{2n-6} \cdot q \cdot q^{2n-6} \cdot q^2 = \#\mathrm{U}_{n-2}(\kappa) \cdot (q-1)q^{3n-5}.
        \]
        
        \item for the case $(\widetilde{F},\epsilon) = (F,-1)$,
        \begin{itemize}
            \item Equations (\ref{eq:eqs_about_X_uniform_red}).(3) and  (\ref{eq:eqs_about_X_sp2n_red}).(4$''$) are redundant.
            \item Equation (\ref{eq:eqs_about_X_sp2n_red}).(6$''$) is equivalent that $\overline{X}_{1,1}$ is completely determined by $\overline{X}_{3,3}$.
            \item Equation (\ref{eq:eqs_about_X_sp2n_red}).(5$''$)  is equivalent that $\overline{X}_{2,1}$ is completely determined by $\overline{X}_{3,2}$. 
            Especially, we have the equation 
            \begin{equation}\label{eq:z2_as_z1_sp}
            \overline{u}\overline{z}_4 - c\overline{z}_2 = 0.
            \end{equation}
        \end{itemize}
        In summary, each entry of $\overline{X}$ is completely determined by $\overline{X}_{12}, \overline{X}_{2,2}, \overline{X}_{3,2}, \overline{X}_{3,3}$, and  $\overline{X}_{1,3}$.
        We analyze this more precisely.
        
        By Equation (\ref{eq:count_X22}), the contribution from the set of $\overline{X}_{2,2}$'s is $\frac{\# \mathrm{Sp}_{2n-2}(\kappa)}{q^{n-1}}$.
        The choice of $\overline{X}_{1,3}$ contributes to $\kappa$.
        By Equation (\ref{eq:eq_comp_sp1}), the entry $\overline{z}_1$ contributes to $\kappa^\times$ and $\overline{z}_1$ determines $\overline{X}_{3,1}$.
        The rest entries of $\overline{X}_{1,2}$ contribute to $\kappa^{2n-3}$.
        The entries of $\overline{X}_{3,2}$ and $\overline{X}_{3,3}$ contribute to $\kappa^{2n-2}$ and $\kappa$ respectively.
        By summing up, we have the desired result:
        \[
        \frac{\#\mathrm{Sp}_{2n-2}(\kappa)}{q^{n-1}} \cdot q \cdot (q-1) \cdot q^{2n-3} \cdot q^{2n-2} \cdot q
        = \#\mathrm{Sp}_{2n-2}(\kappa) \cdot (q-1)q^{3n-2}.
        \]
    \end{enumerate}
\end{proof}

\subsubsection{Smootheness of $\varphi_{n,M}^{-1}(\chi_\gamma)$}
\begin{theorem}\label{theorem:smoothness}
The scheme
    $\varphi^{-1}_{n,M}(\chi_{\gamma})$
    is smooth over $\mathfrak{o}$.
\end{theorem}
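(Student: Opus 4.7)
The proof will follow the template of Theorem \ref{thm55} in Part 1. Write $\varphi := \varphi_{n,M}$. Since smoothness is stable under base change it suffices to show that $\varphi$ itself is smooth at every point of $\varphi^{-1}(\chi_{\gamma})$. The scheme $\cL(L,M)$ is an open subscheme of the affine space $\widetilde{\cL}(L,M)$ by Lemma \ref{lem:64affine} and in particular flat over $\mfo$, so by \cite[Lemma 5.5.1]{GY} it is enough to verify that the generic fiber $\varphi_F$ and the special fiber $\varphi_\kappa$ are smooth at the relevant points. The generic fiber is a restriction of the Chevalley morphism $\rho_n$ to an open subset of $\mathfrak{g}_F$: every $F$-point of $\varphi^{-1}(\chi_{\gamma})$ is stably conjugate to $\gamma$ by Proposition \ref{prop:git} and hence regular semisimple, so smoothness on the generic fiber is the standard regularity criterion (as in \cite[Theorem 4.20]{Hum} for the unitary case and \cite[Theorem 1.3]{CR} for the symplectic case).

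The substance is the special fiber. Fix $\bar m \in \varphi^{-1}(\chi_{\gamma})(\bar\kappa)$. I need to produce enough tangent vectors in $T_{\bar m}$ to surject onto $T_{\varphi(\bar m)} \cong \bar\kappa^n$. Using Lemma \ref{modif_matr}, I may replace the basis of $L$ so that $h$ has the block form of Equation (\ref{eq:form_X_1}); by the analysis carried out in the proof of Proposition \ref{corcounting}, I may further assume $\overline{X}_{2,2} = J_{\wn-2}$, that $\overline{h}_{2,2}$ has the anti-diagonal shape of Lemma \ref{lem:form_h22}, and that the pivot entries $\overline{z}_1$ and $\overline{u}\overline{X}_{3,1}$ (or its imaginary part, when $(\widetilde{F},\epsilon)=(E,1)$) recorded there are units. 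I will exhibit tangent vectors $A_1,\dots,A_n \in T_{\bar m}$ whose images under $d\varphi_{\bar m}$, expressed in the coordinates $(r_1,\dots,r_n)$ of Remark \ref{remark:rmkchangemea}, form an upper triangular matrix with nonzero diagonal. For $1 \leq i \leq n-2$ take $A_i$ to be a perturbation supported in the $X_{2,2}$-block: since $\overline{X}_{2,2}$ is a regular nilpotent element of $\mathfrak{u}_{n-2}(\bar\kappa)$ (resp.\ $\mathfrak{sp}_{2n-2}(\bar\kappa)$), the Chevalley morphism of the smaller Lie algebra is smooth at $\overline{X}_{2,2}$, which supplies $n-2$ independent tangent directions hitting the first $n-2$ coordinates and contributing zero to $r_{n-1}, r_n$. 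For $A_{n-1}$ I take a perturbation in $X_{1,2}$ (or $X_{3,2}$), compensated in $X_{2,1}$ by the linearization of Equations (5)/(5$'$)/(5$''$); the image contributes a nonzero multiple of the pivot to the $r_{n-1}$-coordinate. For $A_n$ I take a perturbation in the imaginary part of $\overline{u}\overline{X}_{3,1}$ (resp.\ in $\overline{X}_{3,1}$), which by Equation (\ref{eq:eqs_about_X_uniform_red}).(7) moves $r_n$ by a unit multiple.

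The main obstacle is the case $(\widetilde{F},\epsilon)=(E,1)$ with $d_{n-1} \geq d_n$, where the $(n-1)$-th coefficient is governed by the considerably more intricate polynomial Equation (\ref{eq:eqs_about_X_dn-1>=dn_red}).(9$'$) involving three minor-determinant terms and the auxiliary entries $\overline{z}_1,\overline{z}_1',\overline{z}_3,\overline{z}_3',\overline{z}_4$. One must verify that the chosen $A_{n-1}$ produces a non-cancelling contribution to $r_{n-1}$, which reduces to checking that the linear functional extracted by differentiating Equation (\ref{eq:eq_comp_sp3}) in the chosen direction is nonzero; the units $\overline{z}_1$ and $\overline{u}\overline{X}_{3,1}$ make this a direct calculation. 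A related bookkeeping point is that each $A_j$ must actually lie in $T_{\bar m}$, i.e.\ satisfy the linearizations of the hermitian constraints in Equations (\ref{eq:eqs_about_X_uniform})–(\ref{eq:eqs_about_X_sp2n}); this is automatic because the proof of Lemma \ref{lem:64affine} already expressed $\widetilde{\cL}(L,M)$ in a free set of coordinates, and each $A_j$ is defined by prescribing its value in those free coordinates.

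As a sanity check, the explicit count in Proposition \ref{corcounting} gives $\#\varphi^{-1}(\chi_{\gamma})(\kappa)$ of leading order $q^{n(n-1)}$ in the unitary case and $q^{2n^2}$ in the symplectic case, matching the expected relative dimension $\dim \cL(L,M) - \dim \mathcal{A}_M$. Thus the special fiber is already known to have the correct dimension, and tangent-space surjectivity at every geometric point is precisely what remains to conclude smoothness.
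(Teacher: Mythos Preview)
Your strategy matches the paper's exactly: reduce to tangent-space surjectivity on the special fibre, dispatch the first $n-2n/\widetilde{n}$ coordinates via regularity of $\overline{X}_{2,2}$ in the smaller Lie algebra, and then construct explicit tangent vectors for the remaining one or two coordinates. The generic-fibre step and the $\overline{X}_{2,2}$-block argument are correct.

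The gap is in your handling of the last two coordinates in the unitary case $d_{n-1}\ge d_n$. Your $A_{n-1}$ (perturbing the first entry of $X_{1,2}$ by $v_1$) has $r_{n-1}$-component equal to the $v_1$-linearization of Equation~(\ref{eq:eq_comp_sp3}), namely
\[
c^{-1}\Bigl(\delta(\xi,d_n)\,\mathrm{Tr}(\sigma(\overline{z}_1)v_1)+\mathrm{Tr}(\sigma(v_1)\overline{u}\overline{z}_4)+\overline{u}\overline{X}_{3,1}\bigl(\overline{z}_1'\sigma(v_1)-\sigma(\overline{z}_1')v_1\bigr)+\overline{u}\overline{X}_{3,1}c^{-1}d\,\mathrm{Tr}(\sigma(\overline{z}_1)v_1)\Bigr).
\]
None of these coefficients is forced to be a unit: at a point of the fibre with $\xi>d_n$ and $\overline{z}_1'=\overline{z}_4=d=0$ (such points exist whenever $d_{n-1}>d_n$, since both sides of~(\ref{eq:eq_comp_sp3}) then vanish) this functional is identically zero in $v_1$. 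At the same points your $A_n$ contributes nothing to $r_{n-1}$ either, because the coefficient of $\overline{Y}_{3,1}$ in~(9$'$) is $\overline{z}_1\overline{z}_3+\overline{z}_1'\overline{z}_3'$, and $\overline{z}_3=c^{-1}\overline{u}(\sigma(\overline{z}_1')-c^{-1}d\sigma(\overline{z}_1))=0$ there. So neither of your chosen directions reaches $r_{n-1}$, and the upper-triangular picture collapses; the ``direct calculation'' you defer does not go through using only the units $\overline{z}_1$ and $\overline{u}\overline{X}_{3,1}$.

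The paper resolves this case by reversing the roles and changing one direction: it perturbs the first entry of $X_{1,2}$ to hit $r_n$ first (coefficient $c^{-1}\overline{u}\overline{X}_{3,1}\,\mathrm{Tr}(v_1\sigma(\overline{z}_1))$, always nonzero by surjectivity of the trace), and then perturbs the first entry of $X_{3,2}$---not $X_{3,1}$---to hit $r_{n-1}$ (coefficient $c^{-1}\mathrm{Tr}(\sigma(\overline{z}_1)\overline{u}v_2)$, again always nonzero). Your parenthetical ``(or $X_{3,2}$)'' is therefore not an optional variant but the required choice in this subcase, and the three subcases call for genuinely different pairs of tangent directions, which the paper works out case by case.
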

\begin{proof}
The argument of the proof follows that of Theorem \ref{thm55}.
It suffices to show that for any $\overline{X}\in \varphi_{n,M}^{-1}(\chi(\gamma))(\bar{\kappa})$,
the induced map on the Zariski tangent space 
\[
d(\vpi_{n,M})_{\ast, \overline{X}}: T_{\overline{X}} \longrightarrow T_{\vpi_{n,M}(\overline{X})}
\] 
is surjective, where $T_{\overline{X}}$ is the Zariski tangent space of $ \cL(L,M)\otimes \bar{\kappa}$ at $\overline{X}$ and
 $T_{\vpi_{n,M}(\overline{X})}$ is the Zariski tangent space of $\mathcal{A}_{M}\otimes \bar{\kappa}$ at $\vpi_{n,M}(\overline{X})$. 

Let us identify $T_{\overline{X}}$ with the subset of $\mathcal{L}(L,M)(\overline{\kappa}[\epsilon])$, where $\epsilon^2 = 0$, whose element is of the form $\overline{X}+\epsilon \overline{Y}$.
Since the equation
$    \overline{h}(\overline{X}+ \epsilon \overline{Y}) + \sigma({}^t \overline{X} + \epsilon {}^t \overline{Y}) \overline{h} = 0$ yields the equation $    \overline{h} \overline{Y} + \sigma({}^t \overline{Y}) \overline{h} = 0$, 
 $\overline{Y}$ can be viewed as an element of  $\widetilde{\mathcal{L}}(L,M)(\overline{\kappa})$.
  We then formally write
\begin{align} \label{eq:form_Y} 
    \overline{X}+ \epsilon \overline{Y} = 
    \begin{pmatrix}
        \pi^{\widetilde{d}_{n-1}}\overline{X}_{1,1}  &\overline{X}_{1,2} &\overline{X}_{1,3}  \\
        \pi^{\widetilde{d}_{n-1}}\overline{X}_{2,1} &\overline{X}_{2,2}  &\overline{X}_{2,3}  \\
        \pi^{d_n}\overline{X}_{3,1} &\pi^{d_n}\overline{X}_{3,2} & \pi^{d_n}\overline{X}_{3,3} 
    \end{pmatrix}
    +
    \epsilon\begin{pmatrix}
        \pi^{\widetilde{d}_{n-1}}\overline{Y}_{1,1}  &\overline{Y}_{1,2} &\overline{Y}_{1,3}  \\
        \pi^{\widetilde{d}_{n-1}}\overline{Y}_{2,1} &\overline{Y}_{2,2}  &\overline{Y}_{2,3}  \\
        \pi^{d_n}\overline{Y}_{3,1} &\pi^{d_n}\overline{Y}_{3,2} & \pi^{d_n}\overline{Y}_{3,3}
    \end{pmatrix}
\end{align}
where $\overline{X}_{i,j}$ and $\overline{Y}_{i,j}$ are matrices with entries in $\kappa_{\widetilde{F}}\otimes\overline{\kappa}$.
This representation satisfies the following conditions:
\begin{enumerate}
    \item for the case $(\widetilde{F},\epsilon)=(E,1)$ and $d_{n-1}<d_n$, 
    $\overline{X}$ satisfies Equations (\ref{eq:eqs_about_X_uniform_red}) and  (\ref{eq:eqs_about_X_dn-1<dn_red}), while
    $\overline{Y}$ satisfies 
    Equation (\ref{eq:eqs_about_X_uniform_red}) for all indices except $(7)$ and $(8)$, and Equation (\ref{eq:eqs_about_X_dn-1<dn_red}) for all indices except $(9)$;

    \item for the case $(\widetilde{F},\epsilon)=(E,1)$ and $d_{n-1} \geq d_n$, 
    $\overline{X}$ satisfies Equations (\ref{eq:eqs_about_X_uniform_red}) and 
    (\ref{eq:eqs_about_X_dn-1>=dn_red}), while        
    $\overline{Y}$ satisfies Equations (\ref{eq:eqs_about_X_uniform_red}) for all indices except $(7)$ and $(8)$, and Equation (\ref{eq:eqs_about_X_dn-1>=dn_red}) for all indices except $(9')$;

    \item for the case $(\widetilde{F},\epsilon)=(F,-1)$, 
     $\overline{X}$ satisfies Equations (\ref{eq:eqs_about_X_uniform_red}) and  (\ref{eq:eqs_about_X_sp2n_red}), while
    $\overline{Y}$ satisfies Equation (\ref{eq:eqs_about_X_uniform_red}) for all indices except $(7)$ and $(8)$, and Equation (\ref{eq:eqs_about_X_sp2n_red}).
\end{enumerate}
Here we consider these equations after replacing $\kappa_{
\widetilde{F}}$ with $\kappa_{\widetilde{F}}\otimes_\kappa \overline{\kappa}$.
We formally compute the image $\varphi_{n,M}(\overline{X} + \epsilon \overline{Y}) \in \mathcal{A}_M(\overline{\kappa}[\epsilon])$ based on Remark \ref{remark:rmkchangemea}.
It consists of $n$-entries whose $j$-th entry is the sum of $\frac{\wn}{n}\cdot j\times \frac{\wn}{n}\cdot j$ principal minors of $\overline{X} + \epsilon \overline{Y}$ up to the sign $\pm 1$.
It is then of the form
\[
    \varphi_{n,M}(\overline{X}+\epsilon \overline{Y})
    =\left\{\begin{array}{l l}
    \varphi_{n,M}(\overline{X}) + \epsilon \cdot (w_1, \cdots, w_{n-2}, \pi^{\widetilde{d}_{n-1}} w_{n-1}, \pi^{d_n} w_n)&\textit{if $(\widetilde{F},\epsilon)=(E,1)$};\\
    \varphi_{n,M}(\overline{X})+\epsilon\cdot (w_1,\cdots,w_{n-1},\pi^{d_n}w_n)&\textit{if $(\widetilde{F},\epsilon)=(F,-1)$}
    \end{array}\right.
\]
where $w_j \in \mathcal{A}_j(\overline{\kappa})$. 
The image of $\overline{Y}$ is then $\left( w_1, \cdots, w_{n-1}, w_n\right)$.

Our method to prove the surjectivity of $d(\varphi_{n,M})_{\ast,\overline{X}}$ is to choose a certain subspace of $T_{\overline{X}}$ mapping onto $T_{\varphi_{n,M}(\overline{X})}$. 
We proceed the proof in two steps. 
At first we identify a vector whose image is $(w_1,\cdots,w_{n-2n/\wn},\cdots)$ for any given $w_i \in \mathcal{A}_i(\overline{\kappa})$ with $1\leq i \leq n-2n/\wn$. 
We next identify a vector whose image is $\left\{\begin{array}{l l}(0,\cdots,0,w_{n-1},w_{n})&\textit{if $(\widetilde{F},\epsilon)=(E,1)$};\\
(0,\cdots,0,w_n)&\textit{if $(\widetilde{F},\epsilon)=(F,-1)$}\end{array}\right.$ with any given $w_i\in\mathcal{A}_i(\overline{\kappa})$ for $n-2n/\wn+1\leq i\leq n$.
By combining these two results, we obtain the desired surjectivity of $d(\varphi_{n,M})_{\ast,\overline{X}}$.

First of all, we treat the first $n-2n/\widetilde{n}$ entries $(w_1, w_2, \cdots, w_{n-2n/\wn})$. 
Since all entries in the last row and the first column of $\overline{X} + \epsilon \overline{Y}$ involve  $\pi$,  $w_j$'s with $1\leq j\leq n-2n/\wn$ are completely determined by principal minors of $\overline{X}_{2,2}+\epsilon \overline{Y}_{2,2}$.
Note that $\overline{X}_{2,2}$ is a regular element in $\mathfrak{u}_{n-2}(\bar{\kappa})$ or $\mathfrak{sp}_{2n-2}(\bar{\kappa})$ as mentioned in the paragraph just before Equation (\ref{eq:count_X22}) . By \cite[Theorem 4.20]{Hum}, the Chevalley map on $\overline{X}_{2,2}$ is smooth so that the induced map on the tangent space is surjective. 
Thus there exists $\overline{Y}_{2,2}$ which maps to  $(w_1, w_2, \cdots, w_{n-2n/\wn})$. 
By putting $\overline{Y}_{i,j}=0$ for all $(i,j)\neq (2,2)$, we have the desired result.

We now treat the last $2n/\wn$ entries.
As in the proof of Proposition \ref{corcounting}, we may  and do assume that $\overline{X}_{2,2}=J_{\wn-2}$ so that $\overline{h}_{22}$ is represented by a matrix in Lemma \ref{lem:form_h22}.
Subsequently, we can formally write 
\begin{equation}\label{eq:forms_h_and_X_sp}
\overline{h}
    =     \begin{pmatrix}
        \pi^\xi &0 &0&\cdots &0&0 &\overline{u} \\
        0 &0 &0&\cdots &0&c &0 \\
        0 &0 &0&\cdots &-c&\ast &0 \\
        \vdots &\vdots &\vdots&\iddots &\vdots&\vdots &\vdots \\
        0 &0&-\epsilon\sigma(c) &\cdots &\ast&\ast &0 \\
        0 &\epsilon\sigma(c)&\ast &\cdots &\ast&\ast &0 \\
        \epsilon\sigma(\overline{u})&0 &0 &\cdots &0&0 &\overline{b}
    \end{pmatrix},
    \overline{X} = 
    \begin{pmatrix}
        \pi^{\widetilde{d}_{n-1}} \overline{X}_{1,1} &\overline{z}_1 &\overline{z}_1' &\cdots &* &\overline{X}_{1,3} \\
        \pi^{\widetilde{d}_{n-1}} * &0 &1 &\cdots &0 &* \\
        \vdots &\vdots &\ddots &\ddots &\vdots &\vdots \\
        \pi^{\widetilde{d}_{n-1}} * &0 &\cdots &0 &1 & \overline{z}_3 \\
        \pi^{\widetilde{d}_{n-1}} \overline{z}_2 &0 &\cdots &0 &0 & \overline{z}_3' \\
        \pi^{d_{n}} \overline{X}_{3,1} &\pi^{d_n} \overline{z}_4 &\cdots &\pi^{d_n} * &\pi^{d_n} * &\pi^{d_n}\overline{X}_{3,3}
    \end{pmatrix}
    \end{equation}
with $\overline{X}_{3,1}, \overline{z}_1, \overline{z}'_3 , \overline{u}, c \in (\kappa_{\widetilde{F}}\otimes_{\kappa}\overline{\kappa})^\times$ and $\overline{z}_1', \overline{z}_2,  \overline{z}_3, \overline{z}_4,\bar{b} \in \kappa_{\widetilde{F}}\otimes_\kappa \overline{\kappa}$.
if $(\widetilde{F},\epsilon)=(F,-1)$, then $\pi^{\xi}=\bar{b}=0$.
Here notations coincide with those used in  the proof of Proposition \ref{corcounting}.
We reproduce Equations (\ref{eq:z3_as_z_1}), (\ref{eq:z2_as_z1_dn-1<d_n}),  (\ref{eq:z2_as_z1_dn-1>=dn}),  and  (\ref{eq:z2_as_z1_sp}):
  \begin{equation}\label{equation:13.31}
        \overline{z}_3' = -c^{-1}\overline{u}\sigma(\overline{z}_1)
  ~~~~~~   \textit{ and }  ~~~~~~~
    \overline{z}_3 = c^{-1}\overline{u}(\sigma(\overline{z}_1')-c^{-1}d\sigma(\overline{z}_1)); 
      \end{equation}
      \begin{equation}\label{equation:13.32}
          \overline{z}_2 =
\begin{cases}
    -c^{-1}\sigma(\overline{z}_1) &\textit{if $(\widetilde{F},\epsilon)=(E,1)$ and $d_{n-1}<d_n$}; \\
    -c^{-1}(\delta(\xi,d_n)\sigma(\overline{z}_1) + \sigma(\overline{u}\overline{z}_4)) &\textit{if $(\widetilde{F},\epsilon)=(E,1)$ and $d_{n-1}\geq d_n$}.
\end{cases}
      \end{equation}

We claim that there exists $\overline{Y}\in\widetilde{\cL}(L,M)(\overline{\kappa})$ whose image under $d(\varphi_{n,M})_{\ast,\overline{X}}$ is 
\[\left\{\begin{array}{l l}
(0,\cdots,0,w_{n-1},w_{n})&\textit{if $(\widetilde{F},\epsilon)=(E,1)$};\\
(0,\cdots,0,w_n)&\textit{if $(\widetilde{F},\epsilon)=(F,-1)$}
\end{array}\right.\]
for any given $w_i \in \mathcal{A}_i(\overline{\kappa})$ where $n- 2n/\wn\leq i\leq n$.

\begin{enumerate}
    \item For the case $(\widetilde{F},\epsilon)=(E,1)$ and $d_{n-1}<d_n$, we set
\[\overline{Y}=
\begin{pmatrix}
    0&\overline{Y}_{1,2}&0\\
    \overline{Y}_{2,1}&0&\overline{Y}_{2,3}\\
    v_2&0&0
\end{pmatrix}\in \widetilde{\cL}(L,M)(\overline{\kappa}),
\]
where $\overline{Y}_{1,2} = (v_1,0,\cdots,0)$. Here, $v_1,v_2 \in \kappa_E\otimes_{\kappa}\overline{\kappa}$ such that $\overline{u}v_2 + \sigma(\overline{u}v_2) = 0$.
Applying Equations (\ref{eq:z3_as_z_1}) and (\ref{eq:z2_as_z1_dn-1<d_n}) to $\overline{Y}$, the last entry of $\overline{Y}_{2,3}$ is  $-c^{-1}\overline{u}\sigma(v_1)$ and    the last entry of $\overline{Y}_{2,1}$ is  $-c^{-1}\sigma(v_1)$, respectively. To prove the claim, we find suitable $v_1$ and $v_2$ as the following process:
\begin{itemize}
    \item Since $\overline{Y}_{2,2}=0$, the first $(n-2)$-entries of  $d(\varphi_{n,M})_{\ast,\overline{X}}(\overline{Y})$ are vanished.
    \item
  The computation of the ($n-1$)-th entry of $\varphi_{n,M}(\overline{X} + \epsilon \overline{Y})$ is as follows:
\[
    \det
    \begin{pmatrix}
        \overline{X}_{1,1} + \epsilon \overline{Y}_{1,1} &\overline{X}_{1,2} + \epsilon \overline{Y}_{1,2} \\
        \overline{X}_{2,1} + \epsilon \overline{Y}_{2,1} &\overline{X}_{2,2} + \epsilon \overline{Y}_{2,2}
    \end{pmatrix}
    = (-1)^{n-1}\cdot \overline{c_{n-1}/\pi^{d_{n-1}}}
    + (-1)^{n-2}\epsilon\{-\overline{z}_1c^{-1}\sigma(v_1)  + v_1 \overline{z}_2\}.
\]
By Equation (\ref{equation:13.32}), we consider the following equation for $v_1$:
\[
(-1)^{n-1}w_{n-1} = (-1)^{n-1}c^{-1}(\overline{z}_1 \sigma(v_1)+\sigma(\overline{z}_1)v_1).
\]
Since $c$ and $\overline{z}_1$ are units, the existence of a solution $v_1\in \kappa_E\otimes_{\kappa}\overline{\kappa}$ of this equation is guaranteed 
by the surjectivity of the trace mapping $\mathrm{Tr}_{\kappa_E\otimes _\kappa \overline{\kappa}/\overline{\kappa}}:\kappa_E\otimes_{\kappa}\overline{\kappa} \rightarrow \overline{\kappa}$.
We fix such a $v_1$ to be a solution for this equation.
\item
The computation of the $n$-th entry of $\varphi_{n,M}(\overline{X}+ \epsilon \overline{Y})$ is as follows:
\[
    \det(\overline{X} + \epsilon \overline{Y})
    = (-1)^n\cdot \overline{c_n/\pi^{d_n}}
    +(-1)^{n-1}\epsilon\{v_1\overline{z}_3'\overline{X}_{3,1} - c^{-1}\overline{u}\sigma(v_1)\overline{z}_1\overline{X}_{3,1} + v_2 \overline{z}_1\overline{z}_3'\}.
\]
By Equation (\ref{equation:13.31}), we have the following equation for $v_2$:
\begin{align*}
    (-1)^{n}w_n
= (-1)^{n} c^{-1}\overline{u}(v_1\sigma(\overline{z}_1)\overline{X}_{3,1}+\sigma(v_1)\overline{z}_1\overline{X}_{3,1}+v_2\overline{z}_1\sigma(\overline{z}_1)).
\end{align*}
This equation has a solution $v_2$ since $\overline{u}, \overline{z}_1, c$ are units.
It remains to show that the solution $v_2$ satisfies the equation $\overline{u}v_2 + \sigma(\overline{u}v_2) = 0$.
From the above equation we obtain that
\[
\overline{u}v_2
= \frac{cw_n- \overline{u}\overline{X}_{3,1}( v_1\sigma(\overline{z}_1) +\sigma(v_1)\overline{z}_1 )}{\overline{z}_1\sigma(\overline{z}_1)}.
\]
Since $\overline{u}\overline{X}_{3,1} + \sigma(\overline{u}\overline{X}_{3,1}) = 0$, $w_n \in \mathcal{A}_n(\overline{\kappa})$, and $c + (-1)^{\wn}\sigma(c) = 0$, we can deduce that $\overline{u}v_2+\sigma(\overline{u}v_2)=0$.
\end{itemize}

\item For the case $(\widetilde{F},\epsilon)=(E,1)$ and $d_{n-1} \geq d_n$,
we set
\[\overline{Y}=
\begin{pmatrix}
    0&\overline{Y}_{1,2}&0\\
    \overline{Y}_{2,1}&0&\overline{Y}_{2,3}\\
    0&\overline{Y}_{3,2}&0
\end{pmatrix}\in \widetilde{\cL}(L,M)(\overline{\kappa}),
\]
where $\overline{Y}_{1,2} = (v_1,0,\cdots,0)$ and $\overline{Y}_{3,2} = (v_2,0,\cdots,0)$.
Here,  $v_1,v_2 \in \kappa_E\otimes_{\kappa}\overline{\kappa}$.
Applying Equations (\ref{eq:z3_as_z_1}) to $\overline{Y}$, the last entry and the $(\wn-3)$-th entry of $\overline{Y}_{2,3}$ are  $-c^{-1}\overline{u}\sigma(v_1)$ and $-dc^{-2}\overline{u}\sigma(v_1)$, respectively.
Applying Equation (\ref{eq:z2_as_z1_dn-1>=dn}) to $\overline{Y}$, the last entry of $\overline{Y}_{2,1}$ is $-c^{-1}(\delta(\xi,d_n)\sigma(v_1) + \sigma(\overline{u}v_2))$. To prove the claim, we find suitable $v_1$ and $v_2$ as the following process:

\begin{itemize}
    \item Since $\overline{Y}_{2,2}=0$, the first $(n-2)$-entries of  $d(\varphi_{n,M})_{\ast,\overline{X}}(\overline{Y})$ are vanished.
    \item 
The computation of $n$-th entry of $\varphi_{n,M}(\overline{X}+ \epsilon \overline{Y})$ is as follows:
\[    \det(\overline{X} + \epsilon \overline{Y})
    = (-1)^n\cdot \overline{c_n/\pi^{d_n}}+ (-1)^{n-1}\epsilon (
    v_1\overline{z}_3'\overline{X}_{3,1}-c^{-1}\overline{u}\sigma(v_1)\overline{z}_1\overline{X}_{3,1}).
\]
By Equation (\ref{equation:13.31}), we have the following equation for $v_1$:
\[
(-1)^{n}w_n=
(-1)^{n} c^{-1}\overline{u} (v_1\sigma(\overline{z}_1)\overline{X}_{3,1}+\sigma(v_1)\overline{z}_1\overline{X}_{3,1}).
\]
Since $\overline{X}_{3,1}, \overline{u},\overline{z}_1,c$ are units, the existence of a solution $v_1 \in \kappa_E\otimes \overline{\kappa}$ is guaranteed by the surjectivity of the trace mapping $\mathrm{Tr}_{\kappa_E\otimes_\kappa \overline{\kappa}/\overline{\kappa}}:\kappa_E\otimes_{\kappa}\overline{\kappa} \rightarrow \overline{\kappa}$. We fix such a $v_1$ to be a solution for this equation.
    \item
The computation of $(n-1)$-th entry of $\varphi_{n,M}(\overline{X}+\epsilon \overline{Y})$ is as follows:
\begin{align*}
    &\det
    \begin{pmatrix}
        \overline{X}_{1,1} + \epsilon \overline{Y}_{1,1} &\overline{X}_{1,2} + \epsilon \overline{Y}_{1,2} \\
        \overline{X}_{2,1} + \epsilon \overline{Y}_{2,1} &\overline{X}_{2,2} + \epsilon \overline{Y}_{2,2}
    \end{pmatrix}
     +
     \det
    \begin{pmatrix}
        \overline{X}_{2,2} + \epsilon \overline{Y}_{2,3} &\overline{X}_{3,2} + \epsilon \overline{Y}_{3,3} \\
        \overline{X}_{2,1} + \epsilon \overline{Y}_{2,1} &\overline{X}_{2,2} + \epsilon \overline{Y}_{2,2}
    \end{pmatrix} \\
    &+(-1)^{n-2} (\overline{X}_{3,1} + \epsilon \overline{Y}_{3,1}) \cdot \textit{the sum of $(i+1,i)$-minors of }
    \begin{pmatrix}
        \overline{X}_{1,2} + \epsilon \overline{Y}_{1,2} &\overline{X}_{1,3} + \epsilon \overline{Y}_{1,3} \\
        \overline{X}_{2,2} + \epsilon \overline{Y}_{2,2} &\overline{X}_{2,3} + \epsilon \overline{Y}_{2,3}
    \end{pmatrix}
    \\
    &=(-1)^{n-1}\cdot\overline{c_{n-1}/\pi^{d_n}}
    +(-1)^{n-2}\epsilon
    \Bigl(v_1\overline{z}_2-c^{-1}\overline{z}_1(\delta(\xi,d_n)\sigma(v_1)+\sigma(\overline{u}v_2))
    +\overline{z}_{3}'v_2-c^{-1}\overline{z}_{4}\overline{u}\sigma(v_1)\\
    &-\overline{z}_1'c^{-1}\overline{u}\sigma(v_1)\overline{X}_{3,1}+
    \overline{X}_{3,1}(\overline{z}_3v_1-dc^{-2}\overline{z}_1\overline{u}\sigma(v_1))\Bigr)
\end{align*}
with $1 \leq i \leq  \wn-2$.
By Equations (\ref{equation:13.31}) and (\ref{equation:13.32}), we have 
\begin{align*}
    w_{n-1}
    =&c^{-1}\Bigl(
    \delta(\xi,d_n)(\sigma(\overline{z}_1)v_1+\overline{z}_1\sigma(v_1))
    +(v_1\sigma(\overline{u}\overline{z_4})+\sigma(v_1)\overline{u}\overline{z}_4)
    +(\overline{z}_1\sigma(\overline{u}v_2)+\sigma(\overline{z}_1)\overline{u}v_2)
    \\&+\overline{u}\overline{X}_{3,1}(\overline{z}_1'\sigma(v_1)-\sigma(\overline{z}_1') v_1)+\overline{u}\overline{X}_{3,1}c^{-1}d(\sigma(\overline{z}_1)v_1+\overline{z}_1\sigma(v_1))
    \Bigr).
\end{align*}
Since $\overline{u},\overline{z}_1,c$ are units, the existence of a solution $v_2 \in \kappa_E \otimes _\kappa \overline{\kappa}$ of this equation is guaranteed by the  surjectivity of the trace map $\mathrm{Tr}_{\kappa_E\otimes _\kappa \overline{\kappa}/\overline{\kappa}}:\kappa_E\otimes_{\kappa}\overline{\kappa} \rightarrow \overline{\kappa}$.

\end{itemize}
\item For the case $(\widetilde{F},\epsilon)=(F,-1)$, we set 
\[
\overline{Y}=\begin{pmatrix}
    0&0&0\\
    0&0&0\\
    v&0&0
\end{pmatrix}\in \widetilde{\cL}(L,M)(\overline{\kappa}),
\]
where $v\in\overline{\kappa}$. 
The $n$-th entry of $\varphi_{n,M}(\overline{X}+\epsilon \overline{Y})$ is described as follows:
\[\mathrm{det}(\overline{X}+\epsilon \overline{Y})=
(-1)^{2n}\cdot \overline{c_n/\pi^{d_n}}
-\epsilon\overline{z}_1\overline{z}_3'\cdot v.
\]
Then the equation 
$w_n=-\overline{z}_1\overline{z}_3'\cdot v$ has a solution $v$ since 
$\overline{z}_1$ and $\overline{z}_3'$ are units.
\end{enumerate}
\end{proof}

\subsubsection{Formula for $\mathcal{SO}_{\gamma, M}$}

\begin{proposition}\label{cor:610}
We have the following formula:
\[ 
\mathcal{SO}_{\gamma, M}
=\begin{cases}
    \frac{\#\mathrm{U}_{n-2}(\kappa)}{q^{(n-2)^2+(2n-2)d_n -d_{n-1}}} &\textit{if $(\widetilde{F}, \epsilon)=(E, 1)$ and $d_{n-1}<d_n$};\\
    \frac{\#\mathrm{U}_{n-2}(\kappa)\cdot (q-1)}{q^{(n-2)^2+(2n-3)d_n+1}} &\textit{if $(\widetilde{F}, \epsilon)=(E, 1)$ and $d_{n-1}\geq d_n$};\\
\frac{\#\mathrm{Sp}_{2n-2}(\kappa)\cdot(q-1)}{q^{2n^2-3n+2+(2n-1)d_n}}   &\textit{if $(\widetilde{F}, \epsilon)=(F, -1)$}.
\end{cases}
\]
\end{proposition}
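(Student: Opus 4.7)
The proof follows the strategy of Corollary~\ref{cornm1} in Part~1: combine the smoothness result of Theorem~\ref{theorem:smoothness}, Weil's formula, the point count in Proposition~\ref{corcounting}, and a comparison of volume forms. First I would define the canonical volume form $\omega^{\mathrm{can}} := \omega_{\widetilde{\mathcal{L}}(L,M)}/\tilde{\varphi}_{n,M}^{\ast}\omega_{\mathcal{A}_M}$ on $\varphi_{n,M}^{-1}(\chi_\gamma)$, where $\omega_{\widetilde{\mathcal{L}}(L,M)}$ and $\omega_{\mathcal{A}_M}$ are translation-invariant top forms normalized so that the $\mathfrak{o}$-points of the respective affine schemes (Lemma~\ref{lem:64affine} and Definition~\ref{rmk:const_phi_n,M}) have volume one. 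Theorem~\ref{theorem:smoothness} together with Weil's formula (\cite[Theorem 2.2.5]{Weil}) then gives
\[
\int_{O_{\gamma,\mathcal{L}(L,M)}}|\omega^{\mathrm{can}}| \;=\; \frac{\#\varphi_{n,M}^{-1}(\chi_\gamma)(\kappa)}{q^{D}},
\]
where $D = n^2-n$ for $\mathrm{U}_n$ and $D = 2n^2$ for $\mathrm{Sp}_{2n}$ is the relative dimension of $\tilde{\varphi}_{n,M}$, computed from Lemma~\ref{lem:64affine}.

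The central remaining task is to relate $|\omega_{\chi_\gamma}^{\mathrm{ld}}|$ and $|\omega^{\mathrm{can}}|$ by an integer power of $|\pi|_F$. From the normalizations one has $|\omega_{\mathfrak{g}}| = |\pi|_F^{\alpha}|\omega_{\widetilde{\mathcal{L}}(L,M)}|$ and $|\omega_{\mathcal{A}^n_\mathfrak{o}}| = |\pi|_F^{\beta}|\omega_{\mathcal{A}_M}|$, so that $|\omega_{\chi_\gamma}^{\mathrm{ld}}| = |\pi|_F^{\alpha-\beta}|\omega^{\mathrm{can}}|$. The exponent $\beta$ is read off directly from Definition~\ref{rmk:const_phi_n,M}: $\beta = \widetilde{d}_{n-1}+d_n$ for $\mathrm{U}_n$ and $\beta = d_n$ for $\mathrm{Sp}_{2n}$. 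To obtain $\alpha$, I would use the explicit parametrization of $\widetilde{\mathcal{L}}(L,M)$ given in the proof of Lemma~\ref{lem:64affine} and track how each free parameter is scaled by $\pi^{\widetilde{d}_{n-1}}$ or $\pi^{d_n}$ in the matrix representation~(\ref{defL_1}). Crucially, the entries scaled by $\pi^{\widetilde{d}_{n-1}}$ (those of $X_{1,1}$ and $X_{2,1}$) are \emph{not} free: they are determined by the hermitian equations~(\ref{eq:eqs_about_X_uniform})--(\ref{eq:eqs_about_X_sp2n}), hence contribute nothing to $\alpha$. Counting only the free $\pi^{d_n}$-scaled blocks yields $\alpha = (2n-1)d_n$ for $\mathrm{U}_n$ (from $X_{3,2}$, $X_{3,3}$, and the imaginary part of $uX_{3,1}$) and $\alpha = 2nd_n$ for $\mathrm{Sp}_{2n}$ (from $X_{3,1}$, $X_{3,2}$, $X_{3,3}$).

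Substituting $\alpha-\beta$ together with Proposition~\ref{corcounting} and the fiber dimension $D$ into
\[
\mathcal{SO}_{\gamma,M} \;=\; |\pi|_F^{\alpha-\beta}\cdot\frac{\#\varphi_{n,M}^{-1}(\chi_\gamma)(\kappa)}{q^{D}}
\]
and simplifying reproduces the three claimed formulas (for instance, in the $\mathrm{U}_n$ case with $d_{n-1}<d_n$ one gets $\alpha-\beta = (2n-2)d_n - d_{n-1}$ and the exponent of $q$ in the denominator is $(2n-2)d_n - d_{n-1}+(n^2-n)-(3n-4)=(n-2)^2+(2n-2)d_n-d_{n-1}$). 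The main technical obstacle is the precise determination of $\alpha$: in the unitary case the matrix entries lie in $\mathfrak{o}_E$ and nominally contribute $2$ $F$-dimensions each, but the hermitian constraint and the real/imaginary decomposition from the proof of Lemma~\ref{lem:64affine} — notably that only the \emph{imaginary} parts of $\sigma(u)X_{1,3}$ and $uX_{3,1}$ are free — cut several blocks down by one. One also has to split the two sub-cases $d_{n-1}<d_n$ and $d_{n-1}\geq d_n$, since both $\beta$ and $\widetilde{d}_{n-1}$ depend on this distinction.
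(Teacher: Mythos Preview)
Your proposal is correct and follows essentially the same approach as the paper's proof: define the canonical form $\omega^{\mathrm{ld}}_{(\chi_\gamma,\widetilde{\mathcal{L}}(L,M))}$, apply Weil's formula via Theorem~\ref{theorem:smoothness} and Proposition~\ref{corcounting}, and compare $|\omega_{\chi_\gamma}^{\mathrm{ld}}|$ with it by the powers $|\pi|^{\alpha}$ and $|\pi|^{\beta}$ you describe. Your identification of the free $\pi^{d_n}$-scaled parameters (and the key observation that $X_{1,1},X_{2,1}$ are determined, hence do not contribute to $\alpha$) matches exactly what the paper extracts from the proof of Lemma~\ref{lem:64affine}, yielding $\alpha=(2n-1)d_n$ (unitary) and $\alpha=2nd_n$ (symplectic).
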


\begin{proof}
    Note that generic fibers of $\widetilde{\mathcal{L}}(L,M)$ and $\mathcal{A}_M$ are isomorhpic to $\mathfrak{g}_{F}$ and $\mathcal{A}^n_F$ respectively.
    Let $\omega_{\widetilde{\cL}(L,M)}$ and $\omega_{\mathcal{A}_{M}}$ be nonzero,  translation-invariant forms on   $\mathfrak{g}_{F}$ and $\mathcal{A}^n_F$
  with normalizations
\[
\int_{\widetilde{\cL}(L,M)(\mathfrak{o})}|\omega_{\widetilde{\cL}(L,M)}|=1 \textit{ and }  \int_{\mathcal{A}_{M}(\mathfrak{o})}|\omega_{\mathcal{A}_{M}}|=1.
\]
Let $\omega^{\mathrm{ld}}_{(\chi_{\gamma},\widetilde{\cL}(L,M))}=\omega_{\widetilde{\cL}(L,M)}/\rho_{n}^{\ast}\omega_{\mathcal{A}_{M}}$ be a differential on $G_{\gamma}$. 

Recall from Section \ref{subsubsection:measure} that 
 $\omega_{\mathfrak{g}_{\mathfrak{o}}}$ and $\omega_{\mathcal{A}^n_{\mathfrak{o}}}$ are invariant forms on $\mathfrak{g}_{F}$ and $\mathcal{A}_{F}^n$ with normalizations
\[
\int_{\mathfrak{g}(\mathfrak{o})} |\omega_{\mathfrak{g}_{\mathfrak{o}}}| = 1
\textit{ and }
\int_{\mathcal{A}^n(\mathfrak{o})} |\omega_{\mathcal{A}^n_{\mathfrak{o}}}| = 1.
\]

In order to compare two volume forms $\omega_{\mathfrak{g}_{\mathfrak{o}}}$ and $\omega_{\widetilde{\cL}(L,M)}$, it suffices to compute $[\mathfrak{g}(\mathfrak{o}):\widetilde{\cL}(L,M)(\mfo)]$ since both are affine spaces of the same dimension over $\mfo$ by Lemma \ref{lem:64affine}.
Using Equations (\ref{defL_1})-(\ref{eq:eqs_about_X_sp2n}),  we have the following  comparison among differentials;
\[
  \begin{array}{l l}
\left\{
  \begin{array}{l }
|\omega_{\mathfrak{u}_{n,\mathfrak{o}}}|=|\pi|^{(2n-1)d_n}|\omega_{\widetilde{\cL}(L,M)}|;\\
|\omega_{\mathcal{A}^n_{\mathfrak{o}}}|=
|\pi|^{\widetilde{d}_{n-1}+d_n} |\omega_{\mathcal{A}_{M}}|;\\
|\omega_{\chi_{\gamma}}^{\mathrm{ld}}|=|\pi|^{(2n-2)d_n-\widetilde{d}_{n-1}}|\omega^{\mathrm{ld}}_{(\chi_{\gamma},\widetilde{\cL}(L,M))}|;
    \end{array} \right. & \textit{if $(\widetilde{F},\epsilon)=(E,1)$};\\
\left\{\begin{array}{l}
|\omega_{\mathfrak{sp}_{2n,\mfo}}|=|\pi|^{2n d_n}  |\omega_{\widetilde{\cL}(L,M)}|;\\
     |\omega_{\mathcal{A}_{\mfo}^n}|= |\pi|^{d_n}|\omega_{\mathcal{A}_M}|;\\
     |\omega_{\chi_\gamma}^{\mathrm{ld}}|=|\pi|^{(2n-1)d_n}|\omega^{\mathrm{ld}}_{(\chi_\gamma,\widetilde{\cL}(L,M))}|.
\end{array}
\right.
& \textit{if $(\widetilde{F},\epsilon)=(F,-1)$}.
    \end{array} 
\]

On the other hand, Proposition \ref{corcounting} yields the following formula:
\[
\int_{O_{\gamma, \cL(L,M)}}|\omega^{\mathrm{ld}}_{(\chi_{\gamma},\widetilde{\cL}(L,M))}|=
\begin{cases}
    \frac{\#\vpi_{n,M}^{-1}(\chi_{\gamma})(\kappa)}{q^{n^2-n}}=\frac{\#\mathrm{U}_{n-2}(\kappa)}{q^{(n-2)^2}} 
    &\textit{if $(\widetilde{F}, \epsilon)=(E, 1)$ and $d_{n-1}<d_n$};\\   
    \frac{\#\vpi_{n,M}^{-1}(\chi_{\gamma})(\kappa)}{q^{n^2-n}}=\frac{\#\mathrm{U}_{n-2}(\kappa)\cdot (q-1)}{q^{(n-2)^2+1}} &\textit{if $(\widetilde{F}, \epsilon)=(E, 1)$ and $d_{n-1}\geq d_n$};\\
    \frac{\#\vpi_{n,M}^{-1}(\chi_{\gamma})(\kappa)}{q^{2n^2}}=\frac{\#\mathrm{Sp}_{2n-2}(\kappa)\cdot(q-1)}{q^{2n^2-3n+2}}
    &\textit{if $(\widetilde{F}, \epsilon)=(F, -1)$}
\end{cases}
\]
by \cite[Theorem 2.2.5]{Weil} since $\vpi_{n,M}^{-1}(\chi_{\gamma})$ is smooth over $\mfo$ with 
$\vpi_{n,M}^{-1}(\chi_{\gamma})(\mfo)=O_{\gamma, \cL(L,M)}$ and thus 
$\omega^{\mathrm{ld}}_{(\chi_{\gamma},\widetilde{\cL}(L,M))}$ is a nowhere vanishing differential of top degree on $\vpi_{n,M}^{-1}(\chi_{\gamma})$ over $\mfo$.
Here, the dimension of the special fiber of $\vpi_{n,M}^{-1}(\chi_{\gamma})$ is  $n^2-n$ if $(\widetilde{F}, \epsilon)=(E, 1)$ and $2n^2$ if $(\widetilde{F}, \epsilon)=(F, -1)$.
Therefore,
\[\mathcal{SO}_{\gamma, M}
=\int_{O_{\gamma, \cL(L,M)}}|\omega_{\chi_{\gamma}}^{\mathrm{ld}}|
=
\begin{cases}
  q^{-(2n-2)d_n + \widetilde{d}_{n-1}}\cdot\int_{O_{\gamma, \cL(L,M)}}|\omega^{\mathrm{ld}}_{(\chi_{\gamma},\widetilde{\cL}(L,M))}| &\textit{if } (\widetilde{F}, \epsilon)=(E, 1) \\
    q^{-(2n-1)d_n}\cdot \int_{O_{\gamma, \cL(L,M)}}|\omega^{\mathrm{ld}}_{(\chi_{\gamma},\widetilde{\cL}(L,M))}| &\textit{if } (\widetilde{F}, \epsilon)=(F, -1)
\end{cases}.
\]
This directly yields the desired formula. 
\end{proof}

Thus, Equation (\ref{equation:stradn}) yields the following formula.
\begin{theorem}\label{theorem:closedformulafordn}
We have the following formula:
\[
  \sum_{M:\T(M)=(d_n)} \mathcal{SO}_{\gamma, M}
    =\begin{cases}
    \#\mathcal{S}^{(E,1)}_{(d_{n-1},2d_{n}-d_{n-1})}\cdot\frac{\#\mathrm{U}_{n-2}(\kappa)}{q^{(n-2)^2+(2n-2)d_n -d_{n-1}}}
    &\textit{if $(\widetilde{F}, \epsilon)=(E, 1)$ and $d_{n-1}<d_n$};\\   
    \#\mathcal{S}^{(E,1)}_{(d_n, d_n)}\cdot\frac{\#\mathrm{U}_{n-2}(\kappa)\cdot (q-1)}{q^{(n-2)^2+(2n-3)d_n+1}}
    &\textit{if $(\widetilde{F}, \epsilon)=(E, 1)$ and $d_{n-1}\geq d_n$};\\
\#\mathcal{S}^{(F,-1)}_{(d_n,d_n)}\cdot \frac{\#\mathrm{Sp}_{2n-2}(\kappa)\cdot(q-1)}{q^{2n^2-3n+2+(2n-1)d_n}}
    &\textit{if $(\widetilde{F}, \epsilon)=(F, -1)$}.
\end{cases}
\]
Here $\mathcal{S}^{(\widetilde{F},\epsilon)}_{(l,2d_n-l)}$ defined to be 
\[
\mathcal{S}^{(\widetilde{F},\epsilon)}_{(l,2d_n-l)} := \{M \subset L\mid \T(M)=(d_{n}), ~~~~~~~~ \JT(M)=(l,2d_n-l)\}.
\]

\end{theorem}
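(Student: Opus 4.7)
The plan is to assemble the statement directly from the stratification in Equation (\ref{equation:stradn}), the non-emptiness criterion in Proposition \ref{lem:Jordan_of_type_m}, and the closed formula for a single stratum in Proposition \ref{cor:610}. The key point is that, once $\T(M)=(d_n)$ is fixed, all remaining data controlling $\mathcal{SO}_{\gamma,M}$ is packaged in the pair $(d_{n-1},d_n)$ via the integer $\widetilde{d}_{n-1}$ defined in Equation (\ref{eq:def_widetilde_d_n-1}).

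First, I would decompose the left-hand side as
\[
\sum_{M:\T(M)=(d_n)} \mathcal{SO}_{\gamma,M}
=
\sum_{(l_1,l_2)} ~\sum_{\substack{M:\T(M)=(d_n) \\ \JT(M)=(l_1,l_2)}} \mathcal{SO}_{\gamma,M},
\]
where $(l_1,l_2)$ runs over all possible Jordan types with $l_1+l_2=2d_n$. The outer sum collapses drastically by Proposition \ref{lem:Jordan_of_type_m}: whenever $\JT(M)\neq(\widetilde{d}_{n-1},2d_n-\widetilde{d}_{n-1})$, the set $O_{\gamma,\cL(L,M)}$ is empty, hence $\mathcal{SO}_{\gamma,M}=0$. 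Therefore only the single Jordan type $(\widetilde{d}_{n-1},2d_n-\widetilde{d}_{n-1})$ contributes, and the stratification in Equation (\ref{equation:stradn}) is exactly this observation.

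Next, for each surviving sublattice $M$ with $\T(M)=(d_n)$ and $\JT(M)=(\widetilde{d}_{n-1},2d_n-\widetilde{d}_{n-1})$, Proposition \ref{cor:610} yields the explicit value of $\mathcal{SO}_{\gamma,M}$. Crucially, this value depends only on $n$, $d_n$, and $d_{n-1}$ (via the case distinction between $d_{n-1}<d_n$ and $d_{n-1}\geq d_n$ when $(\widetilde{F},\epsilon)=(E,1)$, and unconditionally when $(\widetilde{F},\epsilon)=(F,-1)$), and not on the specific choice of $M$. Consequently the inner sum is simply the cardinality $\#\mathcal{S}^{(\widetilde{F},\epsilon)}_{(\widetilde{d}_{n-1},2d_n-\widetilde{d}_{n-1})}$ multiplied by the single-stratum formula, which gives precisely the three displayed cases of the theorem.

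The main (and only) obstacle is the uniformity claim: that $\mathcal{SO}_{\gamma,M}$ is the same for every $M$ in $\mathcal{S}^{(\widetilde{F},\epsilon)}_{(\widetilde{d}_{n-1},2d_n-\widetilde{d}_{n-1})}$. This uniformity is exactly what was secured in Proposition \ref{cor:610}, whose proof in turn rests on the smoothness result of Theorem \ref{theorem:smoothness} and the point count of Proposition \ref{corcounting}; both of these were carried out using a normal form for $h$ and $X$ (Lemmas \ref{modif_matr} and \ref{lemma:matrixformoflmh}) that exists for every $M$ with the prescribed type and Jordan type. Given these inputs, the theorem itself is a short bookkeeping step: stratify, discard the empty strata via Proposition \ref{lem:Jordan_of_type_m}, and factor the constant value of $\mathcal{SO}_{\gamma,M}$ out of the remaining sum.
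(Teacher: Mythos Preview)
Your proposal is correct and follows essentially the same approach as the paper: the paper presents this theorem as an immediate consequence of Equation (\ref{equation:stradn}) and Proposition \ref{cor:610}, with Proposition \ref{lem:Jordan_of_type_m} ensuring that only the single Jordan type $(\widetilde{d}_{n-1},2d_n-\widetilde{d}_{n-1})$ can contribute. Your write-up simply makes explicit the bookkeeping that the paper compresses into the one-line remark ``Thus, Equation (\ref{equation:stradn}) yields the following formula.''
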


The formula for $\# \mathcal{S}^{(\widetilde{F},\epsilon)}_{(l,2d_n-l)}$ will be computed in Proposition \ref{cor:Sab} below.
Using this,  Theorem \ref{theorem:closedformulafordn} is rewritten more concretely  as follows:

\begin{theorem}\label{thm:SO_dn_type}
    For a regular semisimple $\gamma\in \mathfrak{g}(\mfo)$ such that $\chi_\gamma(x)$ is irreducible over $\widetilde{F}$ and $\overline{\chi}_\gamma(x)=x^{\wn}$,
    we have
    \[
    \sum_{M:\T(M)=(d_n)} \mathcal{SO}_{\gamma, M}
    =
    \left\{
    \begin{array}{l l}
    \frac{\#\mathrm{U}_n(\kappa)}{(1+q^{-1})q^{n^2}}&\textit{if } (\widetilde{F},\epsilon) = (E,1);\\
    \frac{\#\mathrm{Sp}_{2n}(\kappa)}{q^{n(2n+1)}}
    &\textit{if } (\widetilde{F},\epsilon) = (F,-1).
    \end{array}
    \right.
    \]
\end{theorem}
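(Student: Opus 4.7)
The plan is to combine the reduction already accomplished in Theorem \ref{theorem:closedformulafordn} with an explicit count of the sets $\mathcal{S}^{(\widetilde{F},\epsilon)}_{(l,2d_n-l)}$, and then verify that the resulting expression simplifies to the claimed closed form, independent of $d_{n-1}$.

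First I would invoke Proposition \ref{lem:Jordan_of_type_m} to reduce the sum: since $\mathcal{SO}_{\gamma, M} = 0$ unless $\mathcal{JT}(M) = (\widetilde{d}_{n-1}, 2d_n - \widetilde{d}_{n-1})$, and $\mathcal{SO}_{\gamma, M}$ depends only on the type and Jordan type by Proposition \ref{cor:610}, the sum equals $\#\mathcal{S}^{(\widetilde{F},\epsilon)}_{(\widetilde{d}_{n-1}, 2d_n - \widetilde{d}_{n-1})}$ times a single explicit value. Next I would parametrize sublattices $M \subset L$ of type $(d_n)$: using the hermitian pairing to identify $L$ with $L^{*}$, such $M$ correspond bijectively to primitive vectors $v \in L / \pi^{d_n} L$ modulo the scaling action of $(\mathfrak{o}_{\widetilde{F}}/\pi^{d_n})^{\times}$, via $M = \{w \in L : h(w,v) \equiv 0 \bmod \pi^{d_n}\}$. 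A direct count recovers the total $\frac{(\#\kappa_{\widetilde{F}})^{\wn}-1}{\#\kappa_{\widetilde{F}}-1} (\#\kappa_{\widetilde{F}})^{(\wn-1)(d_n-1)}$, matching the analog of Lemma \ref{counttype} over $\mathfrak{o}_{\widetilde{F}}$.

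Third, I would show that $\mathcal{JT}(M)$ is controlled by $\mathrm{ord}(h(v,v))$: if $\mathrm{ord}(h(v,v)) = l < d_n$ then $\mathcal{JT}(M) = (l, 2d_n - l)$, while if $\mathrm{ord}(h(v,v)) \geq d_n$ then $\mathcal{JT}(M) = (d_n, d_n)$. The count of primitive vectors with a specified value of $\mathrm{ord}(h(v,v))$ is a stratification problem: one counts the isotropy locus of the hermitian quadric in $L/\pi L$ (for the leading coefficient) and then lifts to higher truncations via the obvious fibration. In the symplectic case, $h(v,v)$ vanishes identically because $h$ is alternating, forcing $\mathcal{JT}(M) = (d_n, d_n)$ automatically for every $M$ of type $(d_n)$; in the unitary case, the count of isotropic vs.\ anisotropic primitive lines is the classical enumeration of points on a hermitian quadric over $\kappa$, and yields a formula expressible in terms of $\#\mathrm{U}_{n-1}(\kappa)$.

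Finally, I would plug these counts into Theorem \ref{theorem:closedformulafordn} and simplify. The standard identities $\#\mathrm{U}_n(\kappa)/\#\mathrm{U}_{n-2}(\kappa) = q^{2n-3}(q^n - (-1)^n)(q^{n-1} - (-1)^{n-1})$ and $\#\mathrm{Sp}_{2n}(\kappa)/\#\mathrm{Sp}_{2n-2}(\kappa) = q^{2n-1}(q^{2n}-1)$ drive the simplification. The main obstacle will be step three: one has to establish both that $\mathrm{ord}(h(v,v))$ is the correct invariant controlling $\mathcal{JT}(M)$ (which requires unpacking Lemma \ref{modif_matr} and Proposition \ref{lem:Jordan_of_type_m} from the ``direction'' viewpoint), and that the two sub-cases $d_{n-1} < d_n$ and $d_{n-1} \geq d_n$ in the unitary setting yield exactly the same final value so that the answer is uniform in $d_{n-1}$.
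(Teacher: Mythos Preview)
Your proposal is correct and follows the same overall route as the paper: reduce via Theorem~\ref{theorem:closedformulafordn} (equivalently, Propositions~\ref{lem:Jordan_of_type_m} and~\ref{cor:610}) to an explicit count of $\#\mathcal{S}^{(\widetilde F,\epsilon)}_{(\widetilde d_{n-1},\,2d_n-\widetilde d_{n-1})}$, carry out that count, and simplify using the order formulas for $\mathrm{U}_n(\kappa)$ and $\mathrm{Sp}_{2n}(\kappa)$.

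The one genuine difference is your parametrization in steps two and three. The paper parametrizes sublattices of type $(d_n)$ by an explicit basis (Lemma~\ref{lema1cl}) and then computes $\mathcal{JT}(M)$ as the order of the determinant of a certain $(\widetilde n-1)\times(\widetilde n-1)$ submatrix of $h|_M$ (Proposition~\ref{jtofalllattice}). Your dual parametrization by a primitive vector $v\in L/\pi^{d_n}L$ modulo units is equivalent: in the paper's coordinates (with $h=\mathrm{Id}_{\widetilde n}$ in the unitary case) the vector $v$ corresponding to the basis of Lemma~\ref{lema1cl} is, up to scaling, $(-\sigma(a_1),\dots,-\sigma(a_{k-1}),1,-\pi\sigma(a_{k+1}),\dots)$, and one checks directly that $h(v,v)=1+\sum_{i<k}a_i\sigma(a_i)+\pi^2\sum_{i>k}a_i\sigma(a_i)$, which is precisely the determinant appearing in Proposition~\ref{jtofalllattice}. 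Your observation that $h(v,v)\equiv 0$ in the symplectic case likewise recovers the paper's direct Gram-matrix computation. The counting in step four is then the same problem in different clothing: the paper's sets $\mathrm{NmGr}(k)$ and the lifting argument via Lemma~\ref{refined_norm} are exactly the stratification of primitive vectors by $\mathrm{ord}(h(v,v))$ that you describe. Your formulation is more coordinate-free and makes the role of the hermitian quadric transparent; the paper's explicit-basis approach is more hands-on but leads to the same recursion. Either way the final simplification (your step four) goes through, and the two unitary sub-cases do collapse to a single value as you anticipate.
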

Note that this formula holds for an arbitrary local field $F$ of any characteristic.

\subsubsection{Formula for $\#\mathcal{S}^{(\widetilde{F},\epsilon)}_{(a,b)}$}\label{subsubsec:Sab}
\begin{proposition}\label{cor:Sab}
 We have the following formula for  $\#\mathcal{S}^{(\widetilde{F},\epsilon)}_{(l,2d-l)}$:
 \[\#\mathcal{S}^{(\widetilde{F},\epsilon)}_{(l,2d-l)}=\left\{\begin{array}{l l}
    \frac{(q^n-(-1)^{n})(q^{n-1}-(-1)^{n-1})}{q+1}\cdot q^{2(d-1)(n-1)-l}&\textit{if $(\widetilde{F},\epsilon)=(E,1)$ and 
    $1\leq l\leq d-1$};\\
    \frac{(q^n-(-1)^n)(q^{n-1}-(-1)^{n-1})}{q^2-1}\cdot q^{(d-1)(2n-3)}&\textit{if $(\widetilde{F},\epsilon)=(E,1)$ and $l=d$};\\
    \frac{q^{2n}-1}{q-1}\cdot q^{(2n-1)(d-1)}&\textit{if $(\widetilde{F},\epsilon)=(F,-1)$ and $l=d$}.
    \end{array}
    \right.
    \]
\end{proposition}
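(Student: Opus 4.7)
Proof proposal. Throughout write $d = d_n$. My strategy has three steps: parametrize type-$(d)$ sublattices by primitive vectors modulo $\pi^d$, compute $\JT(M)$ in terms of $\xi := \operatorname{ord}(h(v, v))$, and count the resulting primitive vectors by a Hensel-type argument. The main obstacle will be the Jordan-type analysis when $\xi \ge d$.

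\emph{Parametrization.} Using unimodularity of $h$, any sublattice $M \subset L$ with $L/M \cong \mfo_{\widetilde F}/\pi^d$ is the kernel of the surjection $x \mapsto h(v, x) \bmod \pi^d$ for a unique primitive $v \in L/\pi^d L$ modulo scaling by $(\mfo_{\widetilde F}/\pi^d)^\times$ (which acts on $v$ through $\sigma$). This yields a bijection
\[
\{M \subset L : \T(M) = (d)\} \longleftrightarrow \{v \in L/\pi^d L \text{ primitive}\}/(\mfo_{\widetilde F}/\pi^d)^\times.
\]

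\emph{Jordan type.} Fix primitive $v$ and choose $v^\vee \in L$ with $h(v, v^\vee) = 1$. When $\xi \ge 1$, the rank-$2$ lattice $\mfo_{\widetilde F} v \oplus \mfo_{\widetilde F} v^\vee$ is unimodular (its determinant $ab - 1$ is a unit, where $a := h(v,v)$, $b := h(v^\vee, v^\vee)$), so its orthogonal complement $W$ is unimodular of rank $\wn - 2$. On the basis $(W\text{-basis}, v - a v^\vee, \pi^d v^\vee)$ of $M$, the Gram matrix splits as $h|_W \oplus G$ with
\[
G = \begin{pmatrix} -a(1-ab) & \pi^d(1-ab) \\ \pi^d(1-ab) & \pi^{2d} b \end{pmatrix},
\]
of entry valuations $\xi$, $d$, $\ge 2d$ and determinant of valuation exactly $2d$. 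When $\xi < d$, the basis change $e_2' = \pi^d v^\vee + (\pi^d/a)(v - a v^\vee)$ (with $\pi^d/a \in \mfo_{\widetilde F}$) diagonalizes $G$ to $\operatorname{diag}(\pi^\xi u_1, \pi^{2d - \xi} u_2)$ with $u_1, u_2 \in \mfo_{\widetilde F}^\times$, giving Jordan invariants $(\xi, 2d - \xi)$. When $\xi \ge d$, every entry of $G$ lies in $\pi^d \mfo_{\widetilde F}$, and $G/\pi^d$ is hermitian with unit determinant $ab - 1$; hence $G$ is equivalent to $\pi^d$ times a unimodular rank-$2$ hermitian form, forcing Jordan invariants $(d, d)$. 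The case $\xi = 0$ gives length-one Jordan type $(2d)$ and does not contribute to $\mathcal{S}^{(E,1)}_{(l, 2d-l)}$. In the symplectic case $h$ is alternating and $h(v,v) = 0$ always, so every type-$(d)$ sublattice has $\JT(M) = (d,d)$; the third formula of the proposition then follows from the standard count $c_{(d)} = \tfrac{q^{2n}-1}{q-1} q^{(2n-1)(d-1)}$ of rank-one $\mfo$-quotients of $L$.

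\emph{Counting (unitary case).} Let $R_l := \#\{v \in L/\pi^l L \text{ primitive} : h(v, v) \equiv 0 \bmod \pi^l\}$. For $l = 1$, $R_1$ is the classical count of nonzero isotropic vectors in the non-degenerate hermitian $\kappa_E$-space $\kappa_E^n$, namely $(q^n - (-1)^n)(q^{n-1} - (-1)^{n-1})$. For $l \ge 2$, a smoothness/Hensel step will give $R_l = R_{l-1} q^{2n-1}$: for a lift $v_1 \in L/\pi^l L$ of primitive $v_0 \in L/\pi^{l-1} L$ with $h(v_0, v_0) \equiv 0 \bmod \pi^{l-1}$, the equation $h(v_1 + \pi^{l-1} w, v_1 + \pi^{l-1} w) \equiv 0 \bmod \pi^l$ reduces to the single $\kappa$-linear condition $\operatorname{Tr}_{E/F}(h(v_1, w)) \equiv -h(v_1, v_1)/\pi^{l-1} \bmod \pi$ on $w \in L/\pi$; this condition is surjective onto $\kappa$ since $h(v_1, -)$ is a nonzero $\kappa_E$-linear form and $\operatorname{Tr}_{\kappa_E/\kappa}$ is surjective, so exactly $q^{2n-1}$ values of $w$ satisfy it. Inductively $R_l = R_1 q^{(2n-1)(l-1)}$. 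Setting $P_{\ge l} = R_l \cdot q^{2n(d-l)}$ (the number of primitive $v \in L/\pi^d L$ with $\xi \ge l$), computing $P_{\ge l} - P_{\ge l+1}$ for $1 \le l < d$ (or $P_{\ge d}$ for $l = d$), and dividing by $|(\mfo_E/\pi^d)^\times| = q^{2(d-1)}(q^2 - 1)$ give the first two formulas of the proposition after simplification.
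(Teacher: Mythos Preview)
Your proof is correct and takes a genuinely different route from the paper's.

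The paper parametrizes type-$(d)$ sublattices by an explicit basis of the form $(e_1+a_1e_k,\ldots,\pi^d e_k,\ldots,e_{\wn}+\pi a_{\wn}e_k)$ indexed by a position $k$ and a tuple $(a_i)$ (Lemma~\ref{lema1cl}), then computes $\JT(M)$ by a direct Gram-matrix calculation in these coordinates (Proposition~\ref{jtofalllattice}), obtaining $l=\min\{\mathrm{ord}(1+\sum a_i\sigma(a_i)+\pi^2\sum a_i\sigma(a_i)),\,d\}$. The counting is done by fixing $k$ and the residues $\overline{a}_i$, using a Hensel-type surjectivity of the norm map on one coordinate (Lemma~\ref{refined_norm}), and finally summing $\sum_{k=1}^{n-1}\#\mathrm{NmGr}(k)$ via a recurrence in $k$. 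Your dual-vector parametrization $M\leftrightarrow v$ is coordinate-free and collapses the paper's sum over $k$ and over the sets $\mathrm{NmGr}(k)$ into the single classical count of nonzero isotropic vectors in the hermitian space $L/\pi L$; indeed, the paper's quantity $1+\sum a_i\sigma(a_i)+\pi^2\sum a_i\sigma(a_i)$ is exactly $h(v,v)$ for the $v$ corresponding to $M$, so the two Jordan-type formulas agree. Your Hensel step on isotropic vectors then replaces both the paper's coordinate-wise norm-map argument and the recurrence on $\mathrm{NmGr}(k)$, while the final division by $|(\mfo_E/\pi^d)^\times|$ replaces the bookkeeping of Lemma~\ref{lema1cl}.(2). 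What the paper's approach buys is that it reuses the explicit basis already set up for the smoothness analysis in Section~\ref{section:jordan}; what your approach buys is a shorter, invariant argument that makes the appearance of the isotropic-vector count $(q^n-(-1)^n)(q^{n-1}-(-1)^{n-1})$ transparent rather than emerging from a recurrence.
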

The proof is postponed to the end of this subsection.
We will first investigate the description of a basis for a sublattice $M$ in $L$ of type $(d)$ and then will study its Jordan type.

\begin{lemma}\label{lema1cl}
Let $M \subset L$ be a sublattice of  type $(d)$. Choose a basis  $(e_1,\cdots,e_{\wn})$ for $L$.
    \begin{enumerate}
        \item 
        There exists an integer $1\leq k\leq \widetilde{n}$ such that  the following $\wn$-tuple forms a basis for $M$:
    \begin{equation}\label{basisformallcase} 
(e_1+a_1e_k,\cdots, e_{k-1}+a_{k-1}e_k,\pi^d e_k,e_{k+1}+\pi a_{k+1}e_k,\cdots, e_{\wn}+\pi a_{\wn}e_{k})
    \end{equation}
    where $a_i \in \mfo_{\widetilde{F}}$ for $1 \leq i(\neq k) \leq \wn$. 
    

    \item For two sublattices $M, M' \subset L$ with $\mathcal{T}(M) = \mathcal{T}(M') = (d)$, consider 
    \[
    \begin{cases}
        \textit{a basis for } M : (e_1+a_1e_k,\cdots, e_{k-1}+a_{k-1}e_k,\pi^d e_k,e_{k+1}+\pi a_{k+1}e_k,\cdots, e_{\wn}+\pi a_{\wn}e_{k});  \\
    \textit{a basis for } M' : (e_1+a'_1e_{k'},\cdots, e_{k'-1}+a'_{k'-1}e_{k'},\pi^d e_{k'},e_{k'+1}+\pi a'_{k'+1}e_{k'},\cdots, e_{\wn}+\pi a'_{\wn}e_{k'}).     
    \end{cases}
    \]
    Then $M = M'$ if and only if 
    $\left\{
    \begin{array}{l l}
        k = k';& \\
        a_i \equiv a'_i \textit{ modulo } \pi^d &\textit{ for } 1\leq i \leq k-1; \\
        \pi a_i \equiv \pi a'_i \textit{ modulo } \pi^d &\textit{ for } k+1\leq i \leq \widetilde{n}.
    \end{array}\right.$
    \end{enumerate}
\end{lemma}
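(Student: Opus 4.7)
The plan is to use the fact that $L/M$ is a cyclic $\mfo_{\widetilde{F}}$-module to single out a canonical index $k$, and then read off the coefficients $a_i$ from elementary linear relations in this quotient. Since $\T(M)=(d)$, we have $L/M\cong \mfo_{\widetilde{F}}/\pi^d\mfo_{\widetilde{F}}$, and the further quotient $L/(M+\pi L)\cong (L/M)\otimes_{\mfo_{\widetilde{F}}}\kappa_{\widetilde{F}}$ is one-dimensional over $\kappa_{\widetilde{F}}$. The images of $e_1,\dots,e_{\widetilde{n}}$ span this one-dimensional quotient, so I define $k$ to be the largest index such that $\bar{e}_k\neq 0$ in $L/(M+\pi L)$. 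By Nakayama's lemma applied to the finitely generated cyclic $\mfo_{\widetilde{F}}$-module $L/M$, the element $\bar{e}_k$ then generates $L/M$ and hence has annihilator exactly $\pi^d\mfo_{\widetilde{F}}$.

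For part (1), for each $i<k$ the cyclicity gives $a_i\in\mfo_{\widetilde{F}}$ (unique mod $\pi^d$) with $\bar{e}_i+a_i\bar{e}_k=0$ in $L/M$, hence $e_i+a_ie_k\in M$. For $i>k$, the maximality of $k$ forces $e_i\in M+\pi L$, i.e.\ $\bar{e}_i\in\pi(L/M)=\pi\mfo_{\widetilde{F}}\bar{e}_k$, so I can write $\bar{e}_i+\pi a_i\bar{e}_k=0$ for some $a_i\in\mfo_{\widetilde{F}}$ (unique mod $\pi^{d-1}$), giving $e_i+\pi a_ie_k\in M$. The resulting $\widetilde{n}$-tuple lies in $M$; its change-of-basis matrix against $(e_1,\dots,e_{\widetilde{n}})$ is the identity except that the $k$-th column carries $\pi^d$ in the $k$-th slot, hence has determinant $\pm\pi^d$. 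This shows the tuple is $\mfo_{\widetilde{F}}$-linearly independent and spans a sublattice of $L$ of covolume $\pi^d$ contained in $M$, so it must equal $M$.

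For part (2), note that $k$ was characterized purely by $M$: it is the largest index $j$ with $e_j\notin M+\pi L$. Hence $k=k'$ whenever $M=M'$. Given $k=k'$, the identities $a_i\bar{e}_k=-\bar{e}_i=a'_i\bar{e}_k$ for $i<k$ and $\pi a_i\bar{e}_k=-\bar{e}_i=\pi a'_i\bar{e}_k$ for $i>k$, combined with $\mathrm{Ann}(\bar{e}_k)=\pi^d\mfo_{\widetilde{F}}$, force $a_i\equiv a'_i\pmod{\pi^d}$ and $\pi a_i\equiv\pi a'_i\pmod{\pi^d}$ respectively. Conversely, if these congruences hold, each basis vector of $M$ and the corresponding basis vector of $M'$ differ by an element of $\pi^d\mfo_{\widetilde{F}}\cdot e_k$, which lies in $M=M'$, so the two ordered tuples generate the same sublattice.

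The argument is essentially linear algebra over the discrete valuation ring $\mfo_{\widetilde{F}}$, and I do not anticipate a serious obstacle. The only step requiring real care is the canonical extraction of the index $k$ from the one-dimensional quotient $L/(M+\pi L)$, which is precisely what forces the asymmetric role of $i<k$ versus $i>k$ in the statement; together with the change-of-basis determinant computation, this is what upgrades the candidate tuple to an actual basis of $M$.
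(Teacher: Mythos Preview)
Your proof is correct and takes a genuinely different route from the paper. The paper works with the \emph{subspace} $M/(M\cap\pi L)\subset L/\pi L$, puts a basis for this $(\widetilde{n}-1)$-dimensional space into a canonical column-echelon form to extract $k$ and the $\alpha_i$'s, and then lifts this basis to $M$ by successive column operations, using that $\pi^d e_k\in M$. You work with the dual picture: the \emph{quotient} $L/M\cong\mfo_{\widetilde{F}}/\pi^d$ is cyclic, Nakayama identifies $\bar e_k$ as a generator (with $k$ the largest index whose image in $L/(M+\pi L)$ is nonzero), and the coefficients $a_i$ are read off directly from the relations $\bar e_i=-a_i\bar e_k$ or $\bar e_i=-\pi a_i\bar e_k$. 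The two choices of $k$ agree, since the paper's echelon form forces $\bar e_i\in M+\pi L$ precisely for $i>k$.

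Your approach is arguably cleaner: it replaces the explicit lifting and column-operation step by a single appeal to cyclicity plus a determinant computation, and in part (2) the congruences fall out immediately from the annihilator $\mathrm{Ann}(\bar e_k)=\pi^d\mfo_{\widetilde{F}}$ rather than from separate consideration of $\phi(M)=\phi(M')$ and then differences of basis vectors. Two small cosmetic points: your description of the change-of-basis matrix as ``the identity except that the $k$-th column carries $\pi^d$'' omits the $a_i$'s sitting in the $k$-th row, though the determinant is unaffected since one expands along the $k$-th column; and in the converse of (2), writing ``which lies in $M=M'$'' reads as circular---you mean $\pi^d\mfo_{\widetilde{F}}\cdot e_k$ lies in both $M$ and $M'$ separately (since $\pi^d e_k$ is a basis vector of each), hence each basis of one lattice lies in the other.
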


\begin{proof}
\begin{enumerate}
    \item 
We define a map
\[
   \phi : \{M \subset L  \mid \mathcal{T}(M) = (d)\} \longrightarrow \mathrm{Gr}_{\kappa_{\widetilde{F}}}(\wn-1,\wn),\
    M \mapsto M/(M\cap \pi L)
\] where $\mathrm{Gr}_{\kappa_{\widetilde{F}}}(\wn-1,\wn)$ is the Grassmannian variety classifying $(\wn-1)$-dimensional subspaces of a $\wn$-dimensional vector space $L/\pi L$ over $\kappa_{\widetilde{F}}$.
Our strategy is to find a basis for $\phi(M)$ and then lift to $M$. 

We denote by $\overline{e}_i$ the image of $e_i$ under the quotient map $L\rightarrow L/\pi L$.
We will first analyze a basis for  $W\in \mathrm{Gr}_{\kappa_{\widetilde{F}}}(\wn-1,\wn)$ with respect to $(\overline{e}_1, \cdots, \overline{e}_{\wn})$. 
If we consider the matrix $X$ of size $\wn\times (\wn-1)$ whose columns consist of a basis of $W$, then the set of column vectors of any matrix resulting from column operations on $X$ forms a basis for $W$ as well.
Then it is easy to see that 
$W$ is uniquely determined by the basis of the  following form:
\begin{equation}\label{basisforgrassmanian}
   (\overline{e}_1+\alpha_1 \overline{e}_k ,\cdots, \overline{e}_{k-1}+\alpha_{k-1}\overline{e}_k,\overline{e}_{k+1} ,\cdots, \overline{e}_{\wn}) ~~~~ \textit{ for } 1\leq k \leq \wn.
\end{equation}
Here  $\overline{e}_k$ is missing. 
In other words, $\mathrm{Gr}_{\kappa_{\widetilde{F}}}(\wn-1,\wn)$ is completely characterized by  $1\leq k\leq \wn$ and $(\alpha_1, \cdots, \alpha_{k-1})$ with $\alpha_i\in \kappa_{\widetilde{F}}$.
Note that this observation directly yields the well-known formula that
$\#\mathrm{Gr}_{\kappa_{\widetilde{F}}}(\wn-1,\wn)=\frac{\widetilde{q}^{\wn}-1}{\widetilde{q}-1}$, where $\widetilde{q}=\#\kappa_{\widetilde{F}}$.

Choose $M (\subset L)$ of type $(d)$ such that $\phi(M)=W$, where $W$ is spanned by a basis of the form in Equation (\ref{basisforgrassmanian}). 
Then $M$ contains $\wn-1$ elements in $L$ of the following forms;
\[
(e_1+\widetilde{\alpha}_1e_k+v_1,\cdots, e_{k-1}+\widetilde{\alpha}_{k-1}e_k+v_{k-1}, e_{k+  1}+v_{k+1}, \cdots, e_{\wn}+v_{\wn})
\]
where $v_j \in \pi L$ for $1\leq j \leq \wn$ with $j\neq k$ and
where $\widetilde{\alpha}_i \in \mfo_{\widetilde{F}}$ is a lift of $\alpha_i$ for $1\leq i \leq k-1$.
Note that the above $\wn-1$ vectors are linearly independent over $\mfo_{\widetilde{F}}$ since their reductions modulo $\pi$ are.

On the other hand, $M$ contains $\pi^d e_k$ since $\pi^d L \subset M$.
Then as in Equation (\ref{basisforgrassmanian}),  we consider the matrix $\widetilde{X}$ consisting of the columns of the above vectors together with $\pi^d e_k$. 
By applying a finite sequence of column operations on $\widetilde{X}$ and by using the fact that $v_j \in \pi L$, 
the following $\wn$ vectors are linearly independent and are contained in $L$:
\[
(e_1+a_1e_k,\cdots, e_{k-1}+a_{k-1}e_k,\pi^d e_k,e_{k+1}+\pi a_{k+1}e_k,\cdots, e_{\wn}+\pi a_{\wn}e_{k}),
\]
where $a_i\in \mfo_{\widetilde{F}}$ with $1\leq i (\neq k) \leq \wn$ such that $\bar{a}_i=\alpha_i$ for $1\leq i \leq k-1$.
The exponential order of the determinant of the matrix whose columns consist of these vectors is $d$. 
Therefore, these form a basis for $M$.
\\

\item
Suppose that  $k = k'$, 
        $a_i \equiv a'_i$ modulo $\pi^d$ for  $1\leq i \leq k-1$, and 
        $\pi a_i \equiv \pi a'_i$ modulo  $\pi^d$ for  $k+1\leq i \leq \widetilde{n}$.
Since $M$ contains $\pi^d e_k$, all vectors in a basis for  $M'$ are contained in $M$ and vice versa.
This yields that $M=M'$.

Conversely  suppose that $M=M'$.
Then $\phi(M)=\phi(M')$ so that  $k=k'$ and $\bar{a}_i=\bar{a}_i'$ for $1\leq i \leq k-1$, since 
Equation (\ref{basisforgrassmanian}) characterizes $\phi(M)$ uniquely (cf. the proof of the above (1)).
On the oher hand, $M(=M')$ contains the following vectors:
\[\left\{
\begin{array}{l l}
(e_i+a_ie_k)-(e_i+a_i'e_k)=
(a_i-a_i')e_k
\in  \mfo_{\widetilde{F}} \cdot e_k &\textit{for $1\leq i\leq k-1$ and};\\
(e_i+\pi a_i e_k)-(e_i+\pi a_i' e_k)=
(\pi a_i-\pi a_i')e_k
\in \mfo_{\widetilde{F}} \cdot e_k &\textit{for $k+1\leq i\leq \wn$.}
\end{array}\right.
\]
If $a_i \not\equiv a'_i$ modulo $\pi^d$ for some $1\leq i \leq k-1$, then $M$ contains $\pi^{\beta}e_k$ with $\beta<d$. This is a contradiction since $[L;M]\leq \beta$.
The same argument also works with $\pi a_i-\pi a_i'$ for for $k+1\leq i\leq \wn$. 
This completes the proof. 
\end{enumerate}
\end{proof}

We choose a basis for $L$ and the associated Gram matrix for $h$ as follows: 
\[\left\{\begin{array}{l l}
   (e_1,\cdots,e_{\wn}) \textit{ and } h=Id_{\wn} & \textit{if $(\widetilde{F}, \epsilon)=(E, 1)$};
    \\
(\underbrace{e_1,e_1'}_{1},\cdots, \underbrace{e_n,e_n'}_n)  \textit{ and }  h=\begin{pmatrix}
    0&1\\
    -1&0
\end{pmatrix}^n
& \textit{if $(\widetilde{F}, \epsilon)=(F, -1)$}.
\end{array}
\right.
\]

In the case that $(\widetilde{F}, \epsilon)=(F, -1)$,  Lemma \ref{lema1cl}
is interpreted as one of the following forms:
\begin{equation}\label{basisforsympl}
\left\{\begin{array}{l}
( \underbrace{e_1+a_{1}e_k , e_{1}'+a_{1}'e_k}_1,\cdots,\underbrace{\pi^{d}e_k,e_{k}'+\pi a_{k}' e_k}_k,\cdots, \underbrace{e_n+\pi a_n e_k,e_n'+\pi a_{n}'e_k}_n);
    \\
(\underbrace{e_1+a_{1}e_k' , e_{1}'+a_{1}'e_k'}_1,\cdots, \underbrace{e_k+a_ke_k',\pi^d e_{k}'}_k,\cdots, \underbrace{e_n+\pi a_{n}e_k',e_n'+\pi a_{n}'e_k'}_n).
\end{array}
\right.
\end{equation}

\begin{proposition}\label{jtofalllattice}
Let $M \subset L$ be a sublattice of type $(d)$ corresponding to a basis described in Equation (\ref{basisformallcase}). 
The Jordan type of $M$ is described as follows:
\[
\JT(M)=\left\{\begin{array}{l l}
  (l,2d-l) & \textit{if $(\widetilde{F}, \epsilon)=(E, 1)$};  \\
(d,d) & \textit{if $(\widetilde{F}, \epsilon)=(F, -1)$}.
\end{array}
\right.
\]
Here $l:=
min\{\mathrm{ord}(1+\sum\limits_{i=1}^{k-1}a_i\sigma(a_i)+\pi^2\sum\limits_{i=k+1}^{n} a_i\sigma(a_i))
,d\}$.
\end{proposition}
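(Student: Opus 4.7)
The plan is to compute the Gram matrix of $h|_M$ in the basis \eqref{basisformallcase} (respectively \eqref{basisforsympl}) and to bring it to Jordan normal form via $\mathfrak{o}_{\widetilde{F}}$-unimodular congruence. Setting $b := (a_1,\ldots,a_{k-1},\pi a_{k+1},\ldots,\pi a_n)^T$ and $s := 1+\sum_i b_i\sigma(b_i)\in\mathfrak{o}$ (the latter since $\sigma(s)=s$), direct calculation shows that in the unitary case $(\widetilde{F},\epsilon)=(E,1)$ with $h=I_n$, the Gram matrix takes the form
\[
G \;=\; \begin{pmatrix} I_{n-1} + \bar b b^T & \pi^d \bar b \\[2pt] \pi^d b^T & \pi^{2d} \end{pmatrix}.
\]
In the symplectic case, the pairs $(g_i,g_i')$ with $i\neq k$ form standard hyperbolic blocks $\mathbb{H}(1)$, the pair $(g_k,g_k')=(\pi^d e_k,\,e_k'+\pi a_k' e_k)$ satisfies $h(g_k,g_k')=\pi^d$ and $h(g_k,g_k)=h(g_k',g_k')=0$, and the only non-vanishing cross-pairings are $h(g_i,g_k')=b_i$ and $h(g_i',g_k')=b_i'$. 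Either via Sherman--Morrison or direct cofactor expansion, $\det G = \pi^{2d}$, so the sum of Jordan invariants equals $2d$.

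For the symplectic case I would verify that the explicit orthogonalization $\tilde g_k := g_k$ and $\tilde g_k' := g_k' + \sum_{j\neq k}(b_j' g_j - b_j g_j')$ yields $h(\tilde g_k',g_j)=h(\tilde g_k',g_j')=0$ for all $j\neq k$, $h(\tilde g_k,\tilde g_k')=\pi^d$, and $h(\tilde g_k',\tilde g_k')=0$, so that $(M,h|_M) \cong \mathbb{H}(1)^{\oplus(n-1)}\oplus\mathbb{H}(\pi^d)$ and $\JT(M)=(d,d)$.

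For the unitary case the plan is to peel off an $(n-2)$-dimensional unimodular orthogonal summand from $M$ and reduce to a $2\times 2$ Gram-matrix problem. When $\mathrm{ord}(s) = 0$ (which is forced whenever every $b_i\in\pi\mathfrak{o}_E$, since then $s\in 1+\pi^2\mathfrak{o}$), $A := I + \bar b b^T$ is unimodular, a Schur complement reduction gives $G\sim \mathrm{diag}(A,\pi^{2d}/s)$, and $A$ admits an orthonormal basis, yielding $\JT(M)=(0,2d)$. When $\mathrm{ord}(s)>0$, one shows some $b_j$ must be a unit; I would then introduce the ``eigenvector'' $f_1 := \sum_i\sigma(b_i)g_i$, satisfying $A f_1 = s f_1$ and $h(f_1,f_1) = s(s-1)$, together with $f_j := g_j - (b_j/(s-1))f_1$ for $j\geq 2$ (legitimate since $s-1$ is a unit). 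The matrix determinant lemma then shows the lower-right $(n-2)\times(n-2)$ block $A'$ of the transformed Gram matrix has determinant $b_j\sigma(b_j)/(s-1)\in\mathfrak{o}_E^{\times}$, hence is unimodular; using $h(f_j,\pi^d e_k)=0$ for $j\geq 2$, the Gram matrix of $M$ splits as $A' \oplus H_2$ with
\[
H_2 \;=\; \begin{pmatrix} s(s-1) & \pi^d(s-1) \\[2pt] \pi^d(s-1) & \pi^{2d} \end{pmatrix}.
\]

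The Jordan type of $M$ then reduces to that of $H_2$. When $\mathrm{ord}(s) \leq d$, the pivot $c := \pi^d/s \in\mathfrak{o}_E$ diagonalizes $H_2$ to $\mathrm{diag}(s(s-1),\pi^{2d}/s)$ with invariant orders $(l, 2d-l)$. When $\mathrm{ord}(s)>d$, this pivot is no longer integral; here $l = d$ and the entries of $H_2$ have orders $(\mathrm{ord}(s), d, d, 2d)$, so the first elementary divisor of $H_2$ equals $(\pi^d)$, which combined with $\det H_2 = \pi^{2d}(s-1)$ forces both Jordan invariants of $H_2$ to equal $d$. The main obstacle will be this last regime $\mathrm{ord}(s) > d$: explicit row reduction is unavailable, and the conclusion must be extracted from an elementary-divisor argument rather than an explicit diagonalization. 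Unifying the three unitary sub-regimes via $l = \min\{\mathrm{ord}(s), d\}$ then completes the proof.
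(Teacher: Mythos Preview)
Your argument is correct and follows the same overall architecture as the paper: in both approaches one splits $(M,h|_M)$ into a unimodular rank-$(\widetilde n-2)$ piece and a rank-$2$ piece, then reads the Jordan type off the $2\times 2$ block. The execution, however, differs in a way worth noting.

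In the unitary case the paper proceeds non-constructively: it observes that the $(n-1)\times(n-1)$ Gram matrix $h_{k,k}$ of the vectors other than $\pi^d e_k$ has determinant $s$ (computed by the same rank-$1$ characteristic-polynomial trick you use implicitly) and reduction mod~$\pi$ of corank at most $1$, and then invokes Jacobson's structure theorem to conclude $h_{k,k}\sim\operatorname{diag}(\pi^{\operatorname{ord}(s)},I_{n-2})$ in one step, with no case split on $\operatorname{ord}(s)=0$ versus $\operatorname{ord}(s)>0$. The $2\times 2$ analysis is then deferred to the proof of Lemma~\ref{lem:Jordan_of_type_m}. Your approach instead builds the splitting by hand via the eigenvector $f_1=\bar b$ of $I+\bar b b^T$, which is more explicit but forces you to relabel so that the ``unit'' coordinate sits in position $1$ (otherwise your transition matrix has determinant $\sigma(b_1)$ and need not be a unit), and to treat the cases $\operatorname{ord}(s)=0$, $0<\operatorname{ord}(s)\le d$, $\operatorname{ord}(s)>d$ separately. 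One small caution: in your last regime $\operatorname{ord}(s)>d$ you pass from the elementary divisors of $H_2$ to its Jordan invariants; this is legitimate precisely because a hermitian diagonalization of $H_2$ exists (Jacobson again) and elementary divisors are invariant under the coarser equivalence $H\mapsto PHQ$---so you are tacitly invoking the same structure theorem the paper cites.

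For the symplectic case your explicit orthogonalization $\tilde g_k'=g_k'+\sum_{j\neq k}(b_j'g_j-b_jg_j')$ works and matches what a direct check gives; the paper instead argues even more briefly, noting that the Gram matrix modulo $\pi$ has even rank between $2n-2$ and $2n-1$, hence exactly $2n-2$, and combines this with $\operatorname{ord}(\det h|_M)=2d$ to force $\mathcal{JT}(M)=(d,d)$ without writing down the splitting.
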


\begin{proof}
    \begin{enumerate}
        \item For the case $(\widetilde{F},\epsilon)=(E,1)$, the Gram matrix of $h|_M$ with respect to Equation (\ref{basisformallcase}) is 
        \begin{equation}\label{matrixformsyme}
\begin{pmatrix}       
    1+a_{1}\sigma(a_{1}) &a_{1}\sigma(a_{2}) &\cdots & a_{1}\pi^d &  a_{1}\sigma(a_{k+1})\pi&\cdots\\
    a_{2}\sigma(a_{1}) & 1+a_{2}\sigma(a_{2})& \cdots & a_{2}\pi^d&a_{2}\sigma(a_{k+1})\pi&\cdots\\
    \vdots&\vdots&\ddots&\vdots&\vdots&\ddots\\
    \sigma(a_{1})\pi^d&\sigma(a_{2})\pi^d&\cdots&\pi^{2d}&\sigma(a_{k+1})\pi^{d+1}&\cdots\\
    a_{k+1}\sigma(a_{1})\pi&a_{k+1}\sigma(a_{2})\pi&\cdots &a_{k+1}\pi^{d+1} &1+ a_{k+1}\sigma(a_{k+1})\pi^2&\cdots\\
    \vdots &\vdots&\ddots &\vdots&\vdots&\ddots
\end{pmatrix}. \end{equation}
        Let $M'$ be the sublattice of $M$ spanned by \[(e_1+a_1e_k,\cdots, e_{k-1}+a_{k-1}e_k,e_{k+1}+\pi a_{k+1}e_k,\cdots, e_{\wn}+\pi a_{\wn}e_{k}). \] 
        Here $\pi^d e_k$ is missing so that the rank of $M'$ is $\wn-1$. 
        Then the Gram matrix of $h|_{M'}$ is the $(k,k)$-minor of Equation (\ref{matrixformsyme}). 
        We denote it by $h_{k,k}$.
        The determinant of $h_{k,k}$ is equal to the value of the characteristic polynomial of 
        $I-h_{k,k}$ at $1$.
        Since the rank of $I-h_{k,k}$ is $1$, its characteristic polynomial is given by $x^{n}+\mathrm{Tr}(h_{k,k}-I)x^{n-1}=x^{n}+(\sum_{i=1}^{k-1}a_i\sigma(a_i)+\pi^2\sum_{i=k+1}^{\wn} a_i\sigma(a_i))x^{n-1}$. Therefore we have
       \[
        \mathrm{det}(h_{k,k})=
        1+\sum_{i=1}^{k-1}a_i\sigma(a_i)+\pi^2\sum_{i=k+1}^{\wn} a_i\sigma(a_i).
        \] 
        We denote  by $m$ the exponential order of $\mathrm{det}(h_{k,k})$. 
       On the other hand, the reduction of $h_{k,k}$ modulo $\pi$ is of   rank $\geq \wn-2$ since   $(L,h)$ is unimodular. 
       Then by  \cite[Section 7]{Jac}, there exists a basis for $M'$ where 
        $h_{k,k}$ is represented by the matrix $h_{k,k}=\begin{pmatrix}
    \pi^m &0\\
    0&Id_{n-2}
\end{pmatrix}$. 
By adding the element $\pi^{d} e_k$ to this basis, we have a basis for $M$ whose associated Gram matrix  is 
 $h|_{M}=
\begin{pmatrix}
    \pi^m &0&\pi^d b_1\\
    0&Id_{\wn-2}& \pi^d b_2\\
    \pi^d\sigma(^t b_1)&\pi^d \sigma(^t b_2)&\pi^{2d}b'
\end{pmatrix}.$ Here, $b_2,b'$ have entries in $\mfo_E$ and $b_1\in \mfo_E^{\times}$.

To simplify this Gram matrix,
we let  $g=\begin{pmatrix}
    1&0&0\\
    0&Id_{\wn-2}&-\pi^{d}b_2\\
    0&0&1
\end{pmatrix}$ (cf. the proof of Lemma \ref{modif_matr}).
Then  
$\sigma({}^t g )\cdot h|_M\cdot g=\begin{pmatrix}
    \pi^m &0&\pi^d b_1\\
    0&Id_{\wn-2}& 0\\
    \pi^d\sigma(^t b_1)&0&\pi^{2d}b
\end{pmatrix}$.
This yields that 
        \[\mathcal{JT}(M)=\left\{\begin{array}{l l}
        (m,2d-m) &\textit{if $m<d$ and};\\
        (d,d) &\textit{if $m\geq d$}
        \end{array}\right.
        \]
by using the argument of the proof of Lemma $\ref{lem:Jordan_of_type_m}$.
\\

        \item For the case $(\widetilde{F},\epsilon)=(F,-1)$,
        we will treat   the first case of Equation (\ref{basisforsympl}), as the same argument works for the second case.
     The Gram matrix of $h|_M$ is 
$$    \begin{pmatrix}
        0&1&\cdots &0&a_1&\cdots&0&0\\
        -1&0&\cdots&0&a_1'&\cdots&0&0\\
        \vdots&\vdots&\ddots&\vdots&\vdots&\ddots&\vdots&\vdots\\
        0&0&\cdots &0&\pi^d&\cdots&0&0\\
        -a_1&-a_1'&\cdots&-\pi^d&0&\cdots&-\pi a_n&-\pi a_n'\\
        \vdots&\vdots&\ddots&\vdots&\vdots&\ddots&\vdots&\vdots\\
        0&0&\cdots &0&\pi a_n&\cdots&0&1\\
        0&0&\cdots&0&\pi a_n'&\cdots&-1&0
    \end{pmatrix}.$$
If we consider the rank of the reduction of this matrix modulo $\pi$, then its rank is at least $2n-2$ and at most $2n-1$ and thus it should be $2n-2$. Since the exponential order of the determinant of $h|_M$ is $2d$, we conclude that $\mathcal{JT}(M)=(d,d)$ (cf. Definition \ref{def:tjt}. (2)). 
     \end{enumerate}
\end{proof}

The following lemma is needed  in the case that $(\widetilde{F},\epsilon)=(E,1)$.
\begin{lemma}\label{refined_norm}
    Let $a$ be an element in $ \mfo_E^{\times}$. Then the following map
    \[
    \mathrm{Nm}_{d,\overline{a}}:
    \{x\in \mfo_E/\pi^d\mfo_E \mid x\equiv a\  \textit{modulo }\pi\} 
    \rightarrow 
    \{x'\in \mfo/\pi^{d}\mfo\mid x'\equiv a \sigma(a)\ \textit{modulo }\pi\},\ x\mapsto x\sigma(x)
    \]
    is surjective and the cardinality of each fiber is  $q^{d-1}$.
\end{lemma}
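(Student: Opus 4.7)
The plan is to verify well-definedness quickly, then reduce the statement to two independent claims: surjectivity of $\mathrm{Nm}_{d,\overline{a}}$, and uniformity of the fiber cardinality. Well-definedness is immediate, since $x \equiv a \pmod{\pi}$ forces $x\sigma(x) \equiv a\sigma(a) \pmod{\pi}$ and $x\sigma(x) \in \mfo$ always. A cardinality check already suggests this is the right structure: the source has $q^{2(d-1)}$ elements and the target has $q^{d-1}$ elements, so a surjection with uniform fibers must have fibers of size $q^{d-1}$.

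For surjectivity, I would argue by induction on $d$, lifting a solution modulo $\pi^n$ to one modulo $\pi^{n+1}$ in Hensel fashion. The base case $d=1$ is trivial. Given $x_n \in \mfo_E$ with $x_n \equiv a \pmod{\pi}$ and $x_n\sigma(x_n) \equiv x' \pmod{\pi^n}$, I seek $x_{n+1} = x_n + \pi^n u$ with $u \in \mfo_E$ satisfying the congruence modulo $\pi^{n+1}$. Expanding,
\[
x_{n+1}\sigma(x_{n+1}) \equiv x_n\sigma(x_n) + \pi^n\bigl(x_n\sigma(u) + \sigma(x_n)u\bigr) \pmod{\pi^{n+1}},
\]
so I need $a\sigma(u) + \sigma(a)u \equiv \pi^{-n}(x' - x_n\sigma(x_n)) \pmod{\pi}$. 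Writing $u' = u\sigma(a)$, this becomes $\mathrm{Tr}_{E/F}(u') \equiv c \pmod{\pi}$ for a prescribed $c \in \mfo$, which is solvable because the trace map $\kappa_E \to \kappa$ of the separable quadratic residue extension is surjective.

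For the fiber cardinality, I would fix one preimage $x_0$ of a given $x'$ (which exists by surjectivity) and note that any other preimage is of the form $x_0 \cdot u$ where $u \in \mfo_E/\pi^d\mfo_E$ satisfies $u\sigma(u) \equiv 1 \pmod{\pi^d}$ together with $u \equiv 1 \pmod{\pi}$. This puts the fiber in bijection with
\[
\ker\Bigl(N\colon (1+\pi\mfo_E)/(1+\pi^d\mfo_E) \longrightarrow (1+\pi\mfo)/(1+\pi^d\mfo)\Bigr).
\]
The source has order $q^{2(d-1)}$, the target has order $q^{d-1}$, and the same Hensel-type argument as above, applied to the restriction $N\colon 1+\pi\mfo_E \to 1+\pi\mfo$ modulo successive powers, shows this norm map is surjective. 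Consequently the kernel has order $q^{d-1}$, which matches what is claimed.

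The main obstacle I anticipate is purely bookkeeping: verifying at each inductive step that the lift $u$ can indeed be chosen in $\mfo_E$ (not just modulo $\pi$), and confirming that the surjectivity of $N$ on the filtration quotients is genuinely uniform so that no extra factor appears in the fiber count. Both ultimately rely on the unramifiedness of $E/F$, which guarantees that the trace $\mfo_E \to \mfo$ is surjective on residues and that $N(1+\pi\mfo_E) = 1+\pi\mfo$; once that is invoked, the rest is a clean two-step induction.
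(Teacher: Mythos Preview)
Your argument is correct. The paper takes a slightly different, more economical route: it reduces \emph{both} surjectivity and fiber uniformity to the case $a=1$ in one step, via the commutative square
\[
\begin{array}{ccc}
\{x \equiv 1 \bmod \pi\} & \xrightarrow{\ \mathrm{Nm}\ } & \{x' \equiv 1 \bmod \pi\} \\
\downarrow \cdot\, a & & \downarrow \cdot\, a\sigma(a) \\
\{x \equiv a \bmod \pi\} & \xrightarrow{\ \mathrm{Nm}_{d,\overline{a}}\ } & \{x' \equiv a\sigma(a) \bmod \pi\}
\end{array}
\]
whose vertical arrows are bijections since $a$ is a unit. The top row is then a group homomorphism (norm on principal units modulo $\pi^d$), and the paper simply cites \cite[Proposition V.3]{Ser} for its surjectivity; the fiber count $q^{2(d-1)}/q^{d-1}$ follows immediately from the group structure.

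By contrast, you prove surjectivity directly for general $a$ by Hensel lifting, and then separately reduce the fiber count to the kernel of the norm on principal units---where you essentially rerun the same Hensel argument to establish surjectivity again. This is self-contained (no appeal to Serre) but does the core computation twice. Had you performed the multiplication-by-$a$ reduction at the outset, your Hensel argument would serve both purposes at once, and you would recover the paper's proof with the citation replaced by your explicit lift.
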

\begin{proof}
We consider the following commutative diagram
\[
\begin{tikzcd}
\{x\in \mfo_E/\pi^d\mfo_E \mid x\equiv 1\  \textit{modulo }\pi\} \arrow[r,"\mathrm{Nm}"]\arrow[d, "\textit{multiplication by } a"]&
\{x'\in \mfo/\pi^{d}\mfo\mid x'\equiv 1\ \textit{modulo }\pi\}
\arrow[d,"\textit{multiplication by } a\sigma(a)"]\\
    \{y\in \mfo_E/\pi^d\mfo_E \mid y\equiv a\  \textit{modulo }\pi\} 
    \arrow[r,"{\mathrm{Nm}_{d,\overline{a}}}"]&
    \{y'\in \mfo/\pi^{d}\mfo\mid y'\equiv a \sigma(a)\ \textit{modulo }\pi\}.
\end{tikzcd}
\]
Two vertical maps are bijective since both allow the inverse maps, as $a\in \mfo_E^\times$ and $a\sigma(a)\in \mfo^\times$.
In addition, the first horizontal map $\mathrm{Nm}$ is a surjective group homomorphism by \cite{Ser}[Proposition V.3].
Thus $\mathrm{Nm}_{d,\overline{a}}$ is surjective and  the cardinality of each fiber is  $q^{2(d-1)}/q^{d-1}=q^{d-1}$.
\end{proof}

\begin{proof}[Proof of Proposition \ref{cor:Sab}]
In the case that $(\widetilde{F},\epsilon)=(F,-1)$,  Proposition \ref{jtofalllattice} yields that  any sublattice $M$ of type $(d)$ in $L$ satisfies $\mathcal{JT}(M)=(d,d)$. By Lemma \ref{counttype}, we then have
\[
\#\mathcal{S}_{(d,d)}^{(F,-1)}=\#\mathrm{Gr}_{\kappa}(2n-1,2n)\cdot q^{(2n-1)(d-1)}=\frac{q^{2n}-1}{q-1}\cdot q^{(2n-1)(d-1)}.
\]

Suppose that $(\widetilde{F},\epsilon)=(E,1)$. 
By Lemma \ref{lema1cl} and Proposition \ref{jtofalllattice}, $\#\mathcal{S}^{(E,1)}_{(l,2d-l)}$ is equal to the number of  $(n-1)$-tuples $(a_1,\cdots,a_{k-1},\pi a_{k+1},\cdots,\pi a_n)$ satisfying 
\begin{equation}\label{countingcond}
\left\{\begin{array}{l}
1\leq k\leq n;\\  
a_i \in \mfo_E/\pi^d\mfo_E    \textit{     for $1\leq i\leq k-1$};\\ 
\pi a_i \in \pi\mfo_E/\pi^d\mfo_E    \textit{     for $k+1\leq i\leq n$};\\
l=min\{\mathrm{ord}(1+\sum\limits_{i=1}^{k-1}a_i\sigma(a_i)+\pi^2\sum\limits_{i=k+1}^{n} a_i\sigma(a_i))
,d\}.
\end{array}\right.
\end{equation}
Note that since $l>0$, we have $1+\sum\limits_{i=1}^{k-1}\overline{a}_i\sigma(\overline{a}_i)= 0$ and that $k\geq 2$.

    Fix $k$ with $2\leq k\leq n$ and $(\alpha_1,\cdots,\alpha_{k-1})$ with $\alpha_i \in\kappa_E$ such that $1+\sum\limits_{i=1}^{k-1}\alpha_i\sigma(\alpha_i)=0$.   
    We will firstly compute the number of the $(n-1)$-tuples $(a_1,\cdots,a_{k-1},\pi a_{k+1},\cdots, \pi a_n)$ satisfying Equation (\ref{countingcond}) such that $\overline{a}_i=\alpha_i$ for $1\leq i\leq k-1$.
    Since $1+\sum\limits_{i=1}^{k-1}\alpha_i\sigma(\alpha_i)= 0$,  at least one $\alpha_i$ is nonzero.
    Thus, we may assume that $\overline{a}_1=\alpha_1\neq 0$.
    We consider the following map from Lemma \ref{refined_norm}:
    \[
        \mathrm{Nm}_{d,\alpha_1}:
    \{x\in \mfo_E/\pi^d\mfo_E \mid x\equiv \alpha_1\  \textit{modulo }\pi\} 
    \rightarrow 
    \{x'\in \mfo/\pi^{d}\mfo\mid x\equiv \alpha_1 \sigma(\alpha_1)\ \textit{modulo }\pi\},\ x\mapsto x\sigma(x)
    \]
    \begin{itemize}
        \item For the case that $l<d$, the condition that 
        $l=\mathrm{ord}(1+\sum\limits_{i=1}^{k-1}a_i\sigma(a_i)+\pi^2\sum\limits_{i=k+1}^{n} a_i\sigma(a_i))$
 is equivalent to the condition that
        \[
        a_1 \in \mathrm{Nm}_{d,\alpha_1}^{-1}\left(-(1+\sum\limits_{i=2}^{k-1}a_i\sigma(a_i)+\sum\limits_{i=k+1}^{n} \pi a_i\sigma(\pi a_i))+\pi^l\mfo^{\times} \right).
        \] 
        By Lemma \ref{refined_norm}, the number of such $a_1$'s, for fixed $(a_2,\cdots,a_{k-1},\pi a_{k+1},\cdots, \pi a_{n})$,  is $(q-1)\cdot q^{(d-1)-l}\cdot q^{d-1}$.
The number of $(a_2,\cdots,a_{k-1},\pi a_{k+1},\cdots, \pi a_{n})$'s with $\overline{a}_i =\alpha_i$ for $1\leq i\leq k-1$ is $(q^{2(d-1)})^{n-2}$. 
Thus the number of $(n-1)$-tuples $(a_1,\cdots,a_{k-1},\pi a_{k+1},\cdots,\pi a_n)$ satisfying 
Equation (\ref{countingcond}) with fixed $k$ and $(\alpha_1,\cdots,\alpha_{k-1})$ is 
        \[
        (q-1)\cdot q^{2(d-1)-l}\cdot (q^{2(d-1)})^{n-2}=(q-1)\cdot q^{2(d-1)(n-1)-l}.
        \]

        \item
        For the case that $l=d$, the condition that 
        $\mathrm{ord}(1+\sum\limits_{i=1}^{k-1}a_i\sigma(a_i)+\pi^2\sum\limits_{i=k+1}^{n} a_i\sigma(a_i))\geq d$
         is equivalent to the condition that
        \[
        a_1 \in \mathrm{Nm}_{d,\alpha_1}^{-1}\left(-(1+\sum\limits_{i=2}^{k-1}a_i\sigma(a_i)+\sum\limits_{i=k+1}^n \pi a_i\sigma(\pi a_i))\right).
        \] 
As in the above case,   the number of $(n -1)$-tuples $(a_1,\cdots,a_{k-1},\pi a_{k+1},\cdots,\pi a_n)$ satisfying Equation (\ref{countingcond}) with fixed $k$ and $(\alpha_1,\cdots, \alpha_{k-1
        })$ is 
        \[
        q^{d-1}\cdot (q^{2(d-1)})^{n-2}=q^{d-1}\cdot q^{2(d-1)(n-2)}.
        \]
    \end{itemize}

Let $\mathrm{NmGr}(k):=\{(\alpha_1,\cdots,\alpha_k)\in \kappa_E^k \mid 1+\alpha_1\sigma(\alpha_1)+\cdots +\alpha_{k}\sigma(\alpha_k)=0\}$ for $1\leq k \leq n-1$.
Then the desired counting number is 
    \[\#\mathcal{S}_{(l,2d-l)}^{(E,1)}=
    \left\{\begin{array}{l l}
    (\sum\limits_{k=1}^{n-1}\# \mathrm{NmGr}(k))\cdot (q-1)\cdot q^{2(d-1)(n-1)-l} &\textit{if $0<l<d$};\\
    (\sum\limits_{k=1}^{n-1}\# \mathrm{NmGr}(k))\cdot q^{d-1}\cdot q^{2(d-1)(n-2)}&\textit{if $l=d$}.
    \end{array}
    \right.
    \]
    
We claim that the number $\# \mathrm{NmGr}(k)$ satisfies  the following recurrence relation:
    \[
\#\mathrm{NmGr}(k)=\#\mathrm{NmGr}(k-1)+(q^{2(k-1)}-\#\mathrm{NmGr}(k-1))\cdot \# N^1_{\kappa_E/\kappa}(\kappa),
    \]
    where $\#\mathrm{NmGr}(1)=\#N^1_{\kappa_E/\kappa}(\kappa)=q+1$.
For an element $(\alpha_1,\cdots, \alpha_{k})\in \mathrm{NmGr}(k)$, if $(\alpha_1,\cdots, \alpha_{k-1})\in \mathrm{NmGr}(k-1)$, then $\alpha_k=0$.
 Otherwise, we have that $\alpha_k\sigma(\alpha_k)=-(1+\alpha_1\sigma(\alpha_1)+\cdots+\alpha_{k-1})\neq 0$.
The number of such $(\alpha_1,\cdots,\alpha_{k-1})$'s is $q^{2(k-1)}-\#\mathrm{NmGr}(k-1)$ and  the number of $a_k$'s, for such  a fixed $(\alpha_1,\cdots,\alpha_{k-1})$,  is $\#N^{1}_{\kappa_E/\kappa}(\kappa)$. This directly yields the above recurrence relation.

We then inductively obtain that
    \begin{align*}
\sum\limits_{k=1}^{n-1}\# \mathrm{NmGr}(k)&=(q+1)\left(
\sum_{k=1}^{n-1}(-q)^{k-1}\cdot \frac{q^{2(n-k)}-1}{q^{2}-1}
\right)
=\frac{(q^n-(-1)^{n})(q^{n-1}-(-1)^{n-1})}{q^2-1}.
\end{align*}
\end{proof}


\subsection{A lower bound  for $\mathcal{SO}_{\gamma}$}

    Recall that Theorem \ref{thm:SO_dn_type} gives the volume of the strata $\bigsqcup\limits_{M : \mathcal{T}(M) = (d_n)} \mathcal{O}_{\gamma,\mathcal{L}(L,M)}$, which serves as  a lower bound for $\mathcal{SO}_\gamma$.
    

    We will weaken the assumption in Theorem \ref{thm:SO_dn_type} by using the reduction steps (cf. Section \ref{sec:reduction}) and the parabolic descent (cf. Section \ref{sec:parabolicdescent}).
    Here we apply the reduction step only for the case $(\widetilde{F},\epsilon) = (E,1)$ since the translation in Section \ref{sec:inv} does not work for the case $(\widetilde{F},\epsilon) = (F,-1)$ (cf. Remark \ref{noinvariancerforsp}).
    Using Corollary \ref{red:result2} and Lemma \ref{lem:invarianct_translation} in the case $(\widetilde{F},\epsilon)=(E,1)$, Theorem \ref{thm:SO_dn_type} yields the following result:

    \begin{theorem}\label{thm:lowerbound_geom}
    Suppose that $char(F) = 0$ or $char(F)>n$.
For a regular semisimple element  $\gamma \in \mathfrak{g}(\mfo)$  such that $\chi_\gamma(x)$ is irreducible over $\widetilde{F}$, we have the following bounds:
\[ 
\mathcal{SO}_{\gamma}>
\begin{cases}
    \frac{\#\mathrm{U}_n(\kappa)}{(1 + q^{-l})q^{n^2}} &\textit{if $(\widetilde{F}, \epsilon)=(E, 1)$};\\
\frac{\#\mathrm{Sp}_{2n}(\kappa)}{q^{n(2n+1)}}   &\textit{if $(\widetilde{F}, \epsilon)=(F, -1)$ and $\overline{\chi}_\gamma(x) = x^{2n}$},
\end{cases}
\]
\[ 
\mathcal{SO}_{\gamma,d\mu}>
\begin{cases}
   q^{S(\gamma)} \cdot \frac{1+q^{-d}}{1+q^{-l}} &\textit{if $(\widetilde{F}, \epsilon)=(E, 1)$};\\
q^{S(\gamma)-S(\psi)} \cdot (1+q^{-d})   &\textit{if $(\widetilde{F},\epsilon) = (F,-1)$,  $\widetilde{F}_{\chi_\gamma}/F_{\chi_\gamma}^\sigma$ is unramified, and $\overline{\chi}_\gamma(x) = x^{2n}$};\\
q^{S(\gamma)-S(\psi)} \cdot 2   &\textit{if $(\widetilde{F},\epsilon) = (F,-1)$,  $\widetilde{F}_{\chi_\gamma}/F_{\chi_\gamma}^\sigma$ is ramified, and $\overline{\chi}_\gamma(x) = x^{2n}$}.
\end{cases}
\]
Here $d$ is the inertial degree of $F_{\chi_\gamma}^\sigma/F$ and $l = [\kappa_R:\kappa]$ (cf. Diagram (\ref{diag_reduction})). 
We refer to Definition \ref{def:Serre_inv} for the notion of $S(\gamma)$ and $S(\psi)$.
    \end{theorem}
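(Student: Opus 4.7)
The plan is to assemble three ingredients already established in the excerpt: the closed-form evaluation of the $(d_n)$-type stratum given in Theorem \ref{thm:SO_dn_type}, the reduction to the case $\overline{\chi}_\gamma(x) = x^n$ carried out in Section \ref{subsec: Reductions} (which is available only when $(\widetilde{F},\epsilon) = (E,1)$), and the comparison between $\sog$ and $\sog_{\gamma,d\mu}$ in Corollary \ref{cor:comp_irr}. Since the stratification in Equation (\ref{st2_unsp}) decomposes $\uGr(\mfo)$ into a disjoint union indexed by sublattices $M$ of $L$ with $[L:M] = d_n$, the partial sum $\sum_{M:\T(M)=(d_n)} \mathcal{SO}_{\gamma, M}$ serves as a lower bound for $\mathcal{SO}_\gamma$, and any closed evaluation of this partial sum gives a lower bound for the whole integral.

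For the case $(\widetilde{F},\epsilon) = (F,-1)$ with $\overline{\chi}_\gamma(x) = x^{2n}$, the bound is immediate: Theorem \ref{thm:SO_dn_type} evaluates $\sum_{M:\T(M)=(d_n)} \mathcal{SO}_{\gamma, M}$ to $\frac{\#\mathrm{Sp}_{2n}(\kappa)}{q^{n(2n+1)}}$, which is exactly the claimed bound. For the case $(\widetilde{F},\epsilon) = (E,1)$, since $\chi_\gamma$ need not satisfy $\overline{\chi}_\gamma(x) = x^n$ a priori, I would first apply Corollary \ref{red:result2} together with the translation invariance of Lemma \ref{lem:invarianct_translation} to pass from $\gamma$ to an element $\gamma_K \in \mathfrak{u}_{n/l,\mathfrak{o}_{K^\sigma}}(\mathfrak{o}_{K^\sigma})$ whose (still irreducible) characteristic polynomial reduces to $x^{n/l}$ modulo $\pi$. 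Applying Theorem \ref{thm:SO_dn_type} to $\gamma_K$ over the residue field $\kappa_R$ of cardinality $q^l$ gives $\mathcal{SO}_{\gamma_K} \geq \frac{\#\mathrm{U}_{n/l}(\kappa_R)}{(1+q^{-l})q^{n^2/l}}$, and substituting into
\[
\mathcal{SO}_\gamma = \frac{\#\mathrm{U}_n(\kappa)}{\#\mathrm{U}_{n/l}(\kappa_R)\cdot q^{n^2 - n^2/l}}\cdot \mathcal{SO}_{\gamma_K}
\]
produces the claimed bound $\frac{\#\mathrm{U}_n(\kappa)}{(1+q^{-l})q^{n^2}}$ after cancellation of $\#\mathrm{U}_{n/l}(\kappa_R)$ and the powers of $q$. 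The three $\sog_{\gamma, d\mu}$ bounds follow immediately by multiplying the geometric bounds by the appropriate factors from Corollary \ref{cor:comp_irr}: for instance, in the $(E,1)$ case the relation $\mathcal{SO}_\gamma = q^{-S(\gamma)}\cdot \frac{\#\mathrm{U}_n(\kappa)q^{-n^2}}{1+q^{-d}}\mathcal{SO}_{\gamma,d\mu}$ combines with the geometric bound to yield $\mathcal{SO}_{\gamma,d\mu} > q^{S(\gamma)}\cdot \frac{1+q^{-d}}{1+q^{-l}}$, and the $(F,-1)$ sub-cases are analogous.

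The main obstacle here is not conceptual but rather bookkeeping: one must ensure that residue-field cardinalities are tracked correctly when Theorem \ref{thm:SO_dn_type} is invoked over $K^\sigma$ rather than $F$, so that the $(1+q^{-1})$ appearing in the $\mathrm{U}_n$-case of that theorem becomes $(1+q^{-l})$ in the bound for the original $\gamma$. The strict inequality $>$ (as opposed to $\geq$) reflects the positive contribution from strata $M$ whose type $(k_1,\ldots,k_{\widetilde{n}-m})$ has $m < \widetilde{n}-1$, which is non-empty once $d_n \geq 2$; these extra contributions have been discarded in our estimate. Since all the substantive geometric work is already contained in Theorem \ref{thm:SO_dn_type}, no further smoothening or orbit analysis is required.
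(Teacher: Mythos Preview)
Your proposal is correct and follows essentially the same route as the paper: the paper also derives the geometric bounds by combining Theorem \ref{thm:SO_dn_type} with Corollary \ref{red:result2} and Lemma \ref{lem:invarianct_translation} in the $(E,1)$ case (noting explicitly that the translation step is unavailable for $(F,-1)$), and then obtains the $d\mu$ bounds via Corollary \ref{cor:comp_irr}. Your remark about tracking the residue-field cardinality $q^l$ when applying Theorem \ref{thm:SO_dn_type} over $K^\sigma$ is exactly the bookkeeping the paper has in mind.
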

    Note that if $(\widetilde{F},\epsilon) = (F,-1)$,  $\widetilde{F}_{\chi_\gamma}/F_{\chi_\gamma}^\sigma$ is ramified, and $char(\kappa) > 2$, then $\overline{\chi}_\gamma(x) = x^{2n}$ always so that the above theorem is applicable (cf. Remark \ref{rmk:reduction_cond}).

    We extend Theorem \ref{thm:lowerbound_geom} by using the parabolic descent.
    Recall the following factorization 
    \[
    \psi(y) = \prod_{i\in B(\gamma)}\psi_i(y)=\left(\prod_{i\in B(\gamma)^{irred}}\psi_i(y)\right) \times \left(\prod_{i\in B(\gamma)^{split}}\psi_i(y)\right),
    \]
where $\psi(y)$ is a polynomial in $\mfo[y]$ related to $\chi_{\gamma}(x)$ (cf. Section \ref{desc_field}) and
$y$ is explained in Equation (\ref{equation:yxx2}).
With respect to this factorization, 
 we may and do write 
$\gamma = \gamma_0 \oplus \bigoplus\limits_{i \in B(\gamma)^{split}} \gamma_i$
where $\gamma_0 \in \mathfrak{g}_{m}(\mathfrak{o})$ and $\gamma_i = \begin{pmatrix} g_i & 0 \\ 0 & -{}^t\sigma(g_i)   \end{pmatrix}$ with $g_i \in \mathfrak{gl}_{l_i,\mathfrak{o}_{\widetilde{F}}}(\mathfrak{o}_{\widetilde{F}})$
(cf. Section \ref{section:matdescofr}), where $\mathfrak{g}_{m}(\mathfrak{o})$ is explained in Section \ref{sec:desc_Levi}. 
Here we need to suppose that $char(\kappa)>2$ if $(\widetilde{F},\epsilon) = (E,1)$  and $n$ is even,  and no restriction otherwise due to the existence of a Kostant section (cf. Section \ref{section:matdescofr}). 
The elements $\gamma_0$ and $\gamma_i$ have the following interpretation:
\[
\begin{cases}
    \chi_{\alpha \gamma_0}(x) = \prod\limits_{i\in B(\gamma)^{irred}}\psi_i(x) \textit{ and } \chi_{\alpha \gamma_i}(x) = \psi_i(x) \textit{ for each } i \in B(\gamma)^{split} &\textit{if } (\widetilde{F},\epsilon) = (E,1); \\
    \chi_{\gamma_0}(x) = \prod\limits_{i\in B(\gamma)^{irred}}\psi_i(x^2) \textit{ and } \chi_{\gamma_i}(x) = \psi_i(x^2) \textit{ for each } i \in B(\gamma)^{split} &\textit{if } (\widetilde{F},\epsilon) = (F,-1).
\end{cases}
\]
Here $2l_i = \deg \psi_i(x)$ for each $i \in B(\gamma)^{split}$.

    By applying Theorem \ref{thm:lowerbound_geom} to $\gamma_0 \in \mathfrak{g}_{m}(\mathfrak{o})$ and Theorem \ref{genlb1} to $g_i \in \mathfrak{gl}_{m_i,\mathfrak{o}_{\widetilde{F}}}(\mathfrak{o}_{\widetilde{F}})$ with $i \in B(\gamma)^{split}$,  the parabolic descent in Proposition  \ref{pro:par_des_sorb} and the comparison of two measures in Proposition \ref{prop:comparison_measures} yield the following lower bounds.
    
    \begin{theorem}\label{thm:lowerbound_SO}
        Suppose that $char(F)=0$ or $char(F)>n$ and that  $B(\gamma)^{irred}$ is a singleton. 
        We further assume that $char(\kappa)>2$ if $(\widetilde{F},\epsilon) = (E,1)$, $n$ is even, and $\#B(\gamma) \geq 2$ (cf. Section \ref{section:matdescofr}).
        Then we have the following lower bounds:
        \[ 
\mathcal{SO}_{\gamma}>
\begin{cases}
    \frac{\#\mathrm{U}_n(\kappa)}{(1+q^{-l})q^{n^2}} \prod\limits_{i \in B(\gamma)^{split}} N'_{g_{i}}(q^{2d_{i}}) &\textit{if }(\widetilde{F}, \epsilon)=(E, 1);\\
\frac{\#\mathrm{Sp}_{2n}(\kappa)}{q^{n(2n+1)}} \prod\limits_{i \in B(\gamma)^{split}}N'_{g_{i}}(q^{d_{i}})   &\textit{if $(\widetilde{F}, \epsilon)=(F, -1)$ and $\overline{\chi}_\gamma(x) = x^{2n}$},
\end{cases}
\]
\[ 
\mathcal{SO}_{\gamma,d\mu}>
\begin{cases}
  q^{\rho(\gamma)} \cdot \frac{1 + q^{-d}}{1+q^{-l}} \cdot\prod\limits_{i \in B(\gamma)^{split}} N'_{g_{i},d\mu}(q^{2d_{i}}) &\textit{if $(\widetilde{F}, \epsilon)=(E, 1)$};\\
q^{\rho(\gamma) - S(\psi)} \cdot (1 + q^{-d}) \cdot \prod\limits_{i \in B(\gamma)^{split}} N'_{g_{i},d\mu}(q^{d_{i}})   &\textit{if $(\widetilde{F},\epsilon) = (F,-1)$,  $\widetilde{F}_{\chi_\gamma}/F_{\chi_\gamma}^\sigma$unramified, $\overline{\chi}_\gamma(x) = x^{2n}$};\\
q^{\rho(\gamma) - S(\psi)} \cdot 2 \cdot\prod\limits_{i \in B(\gamma)^{split}} N'_{g_{i},d\mu}(q^{d_{i}})   &\textit{if $(\widetilde{F},\epsilon) = (F,-1)$,  $\widetilde{F}_{\chi_\gamma}/F_{\chi_\gamma}^\sigma$ramified, $\overline{\chi}_\gamma(x) = x^{2n}$}.
\end{cases}
\]
Here the notations are as follows:
\[
\begin{cases}
    d : \textit{ inertial degree of }F^\sigma_{\chi_{\gamma_0}}/F; \\
    l = [\kappa_{(\mathfrak{o}_E(x)/\chi_{\gamma_0}(x))^\sigma}:\kappa] ~ (= \textit{the degree of the irreducible factor of } \chi_{\alpha\gamma_0}(x)) ~ (\textit{cf. Diagram (\ref{diag_reduction})});  \\
    S(\gamma), S(\psi) : \textit{ Serre invariants (cf. Definition \ref{def:Serre_inv})} ;\\
    \rho(\gamma) = S(\gamma) - \sum\limits_{i \in B(\gamma)^{split}} S(g_i).
\end{cases}
\]
    The other notations $d_i$, $N'_{g_i}(-)$, $N'_{g_i,d\mu}(-)$, and $S(g_i)$  are referred to Equation (\ref{eq:Ngamma'})-(\ref{eq:notations_rho_gamma}) for each $i \in B(\gamma)^{split}$ (by replacing $\gamma \in \mathfrak{gl}_n(\mathfrak{o})$ with $g_i \in \mathfrak{gl}_{l_i}(\mathfrak{o}_{\widetilde{F}})$ so that $B(g_i)$ is a singleton).
    
\end{theorem}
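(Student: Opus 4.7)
The plan is to combine the parabolic descent of Proposition \ref{pro:par_des_sorb} with the two lower bounds already at our disposal: Theorem \ref{thm:lowerbound_geom} for the ``elliptic'' factor $\gamma_0$ (where $\chi_{\gamma_0}(x)$ is irreducible over $\widetilde{F}$ since $B(\gamma)^{irred}$ is a singleton), and Theorem \ref{genlb1} from Part 1 for each $\mathfrak{gl}_{l_i,\mathfrak{o}_{\widetilde{F}}}$-factor $g_i$ with $i \in B(\gamma)^{split}$. The hypothesis on $char(\kappa)$ is exactly what is needed by Section \ref{section:matdescofr} to write $\gamma = \gamma_0 \oplus \bigoplus_{i \in B(\gamma)^{split}} \gamma_i$ inside a Levi subalgebra of the form required by Proposition \ref{pro:par_des_sorb}.

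First I would invoke Proposition \ref{pro:par_des_sorb} to obtain
\[
\mathcal{SO}_\gamma^{\mathrm{G}} \;=\; \frac{\#\mathrm{G}(\kappa)\,q^{-\dim\mathrm{G}}}{\#\mathrm{G}_m(\kappa)\,q^{-\dim\mathrm{G}_m}\cdot \prod_{i\in B(\gamma)^{split}}\#\mathrm{GL}_{l_i}(\kappa_{\widetilde{F}})\,q^{-[\widetilde{F}:F]\,l_i^2}}\cdot \mathcal{SO}_{\gamma_0}^{\mathrm{G}_m}\cdot \prod_{i\in B(\gamma)^{split}} \mathcal{SO}_{g_i}^{\mathrm{GL}_{l_i,\mathfrak{o}_{\widetilde{F}}}}.
\]
Since $\chi_{\gamma_0}(x)$ is irreducible over $\widetilde{F}$, and (in the symplectic case) $\overline{\chi}_{\gamma_0}(x)=x^{2m}$ is inherited from $\overline{\chi}_\gamma(x)=x^{2n}$, Theorem \ref{thm:lowerbound_geom} yields a lower bound for $\mathcal{SO}_{\gamma_0}^{\mathrm{G}_m}$ whose leading factor is $\tfrac{\#\mathrm{G}_m(\kappa)}{(1+q^{-l})q^{\dim \mathrm{G}_m}}$ (unitary) or $\tfrac{\#\mathrm{Sp}_{2m}(\kappa)}{q^{\dim \mathrm{Sp}_{2m}}}$ (symplectic). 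For each $i\in B(\gamma)^{split}$, the element $g_i\in \mathfrak{gl}_{l_i}(\mathfrak{o}_{\widetilde{F}})$ is elliptic regular semisimple with $B(g_i)$ a singleton, so Theorem \ref{genlb1} applied over $\mathfrak{o}_{\widetilde{F}}$ (with residue cardinality $q^{[\widetilde{F}:F]}$) gives $\mathcal{SO}_{g_i}^{\mathrm{GL}_{l_i,\mathfrak{o}_{\widetilde{F}}}} > \tfrac{\#\mathrm{GL}_{l_i}(\kappa_{\widetilde{F}})}{q^{[\widetilde{F}:F]\,l_i^2}}\cdot N'_{g_i}(q^{[\widetilde{F}:F]\,d_i})$. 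Substituting these into the parabolic descent and observing the cancellations of $\#\mathrm{G}_m(\kappa)$ and $\#\mathrm{GL}_{l_i}(\kappa_{\widetilde{F}})$ yields precisely the claimed lower bounds for $\mathcal{SO}_\gamma$; the exponent $2d_i$ in the unitary case and $d_i$ in the symplectic case arise from $[\widetilde{F}:F]=2$ versus $1$.

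For the $d\mu$-version, I would first translate each factor in the product through the measure comparison. Applying Corollary \ref{cor:comp_irr} to $\gamma_0$ (whose characteristic polynomial is irreducible over $\widetilde{F}$) converts the geometric lower bound into the desired prefactor $q^{S(\gamma_0)}\cdot \tfrac{1+q^{-d}}{1+q^{-l}}$ in the unitary case, $q^{S(\gamma_0)-S(\psi)}(1+q^{-d})$ in the symplectic unramified case, and $q^{S(\gamma_0)-S(\psi)}\cdot 2$ in the symplectic ramified case. Likewise for $\mathfrak{gl}_{l_i,\mathfrak{o}_{\widetilde{F}}}$, Proposition \ref{proptrans} together with Lemma \ref{lemma:comparisonofsogweilrest} translates the $g_i$-factor into $q^{[\widetilde{F}:F]\,S(g_i)}\cdot N'_{g_i,d\mu}(q^{[\widetilde{F}:F]\,d_i})$. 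Finally applying Proposition \ref{lem:par_des_sorb} (the $d\mu$-version of parabolic descent) in place of Proposition \ref{pro:par_des_sorb} and recalling $\rho(\gamma) = S(\gamma) - \sum_{i\in B(\gamma)^{split}} S(g_i)$ consolidates all the Serre-invariant contributions into a single $q^{\rho(\gamma)}$ (or $q^{\rho(\gamma)-S(\psi)}$ in the symplectic case).

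The main technical obstacle will be the bookkeeping of the various factors, rather than any new geometric input. In particular one must carefully reconcile the ratio $\#\mathrm{G}(\kappa)q^{-\dim\mathrm{G}}/\#\underline{\mathrm{T}}_\gamma(\kappa)q^{-\dim\mathrm{T}_{\gamma,F}}$ that appears in Proposition \ref{prop:comparison_measures}, using the factorization $\underline{\mathrm{T}}_\gamma \cong \underline{\mathrm{T}}_{\gamma_0}\times \prod_{i\in B(\gamma)^{split}} \operatorname{Res}_{\mathfrak{o}_{\widetilde{F}_i}/\mathfrak{o}}\mathbb{G}_m$ together with Corollary \ref{lem:pts_spfib_tori} for $\underline{\mathrm{T}}_{\gamma_0}$, so that after cancellation only the factors $1+q^{-d}$ (unramified) or $2$ (ramified) survive. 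The split-versus-ramified dichotomy in the symplectic case is governed entirely by this torus-point count, explaining the bifurcation in the final statement.
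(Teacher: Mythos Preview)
Your proposal is correct and follows essentially the same approach as the paper: combine the parabolic descent of Proposition \ref{pro:par_des_sorb} with Theorem \ref{thm:lowerbound_geom} applied to $\gamma_0$ and Theorem \ref{genlb1} applied to each $g_i$, then translate to the $d\mu$-normalization via Proposition \ref{prop:comparison_measures} (and its consequence Corollary \ref{cor:comp_irr}). The paper states this proof in a single sentence, and you have simply unpacked the bookkeeping that makes the cancellations transparent.
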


\section{Closed formula for $\mathcal{SO}_{\gamma}$ in $\mathfrak{u}_{2}$ when $\#B(\gamma)$=1}\label{sectionformulaforu2}

In this section, we will work with the case that $(\widetilde{F}, \epsilon)=(E, 1)$ and that $n=2$. 
The condition that $\# B(\gamma)=1$ is equivalent that $\chi_{\alpha\gamma}(x)$ is irreducible over $F$, equivalently $F_{\chi_\gamma}^\sigma/F$ is a quadratic field extension (cf. Section \ref{desc_field}).
More precisely we have that 
\[\#B(\gamma)=\left\{\begin{array}{l l}
\#B(\gamma)^{split}=1 & \textit{ if $F_{\chi_\gamma}^\sigma/F$ is unramified};\\
\#B(\gamma)^{irred}=1 & \textit{ if $F_{\chi_\gamma}^\sigma/F$ is ramified}.
\end{array}\right.
\]

Applying  Proposition \ref{charred3} to $\chi_{\alpha\gamma}(x) (= \alpha^2\cdot \chi_{\gamma}(x/\alpha))\in \mfo[x]$, it is easy to see that there exists an element $a\in \mfo\cdot \alpha^{-1}$ 
such that
\[\chi_\gamma(x+a)=x^{2}+c_{1,a}x+c_{2,a} \textit{ where }\ord(c_{2,a})\equiv
\left\{\begin{array}{l l}
     0 \textit{ modulo 2}& \textit{ if $F_{\chi_\gamma}^\sigma/F$ is unramified};  \\
     1\textit{ modulo 2}& \textit{ if $F_{\chi_\gamma}^\sigma/F$ is ramified}.
\end{array}
\right.
\]
Here $\alpha \in \mfo_E^{\times}$ such that $\alpha+\sigma(\alpha)=0$ in the proof of Lemma \ref{lem:c+sigma(c)_bij}. 
Note that when 
$F_{\chi_\gamma}^\sigma/F$ is unramified, $\chi_{\gamma}(x)$ is not irreducible over $E$ but $\chi_{\alpha\gamma}(x)$ is irreducible over $F$. 

When $F_{\chi_\gamma}^\sigma/F$ is ramified, we denote $\mathrm{ord}(c_{2,a})$ by $d_\gamma$.
Then we claim that $S(\gamma)=\frac{d_{\gamma}-1}{2}$, which implies the independency of $d_\gamma$ with respect to the choice of $a$.
 Proposition \ref{da2} yields that $\frac{d_{\gamma}-1}{2}=S(\psi)$, which is equal to $S(\gamma)$ as explained in Definition \ref{def:Serre_inv}. 


\begin{theorem}\label{theorem12.1sorn=2}
Suppose that $char(F)=0$ or $char(F)>2$. 
For $\gamma \in \mathfrak{u}_2(\mfo)$, we have the following formula:
\[
\left(\mathcal{SO}_{\gamma},  \mathcal{SO}_{\gamma,d\mu}\right)=
\left\{\begin{array}{l l}
\left(\frac{q+1}{q},  q^{S(\gamma)}\right) & \textit{ if $F_{\chi_\gamma}^\sigma/F$ is unramified};\\
\left(\frac{(q+1)(q^{S(\gamma)+1}-1)}{q^{S(\gamma)+2}}, \frac{q^{S(\gamma)+1}-1}{q-1}\right) & \textit{ if $F_{\chi_\gamma}^\sigma/F$ is ramified}.
\end{array}\right.
\]
\end{theorem}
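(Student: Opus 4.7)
My plan is to handle the two cases separately, since the structure of $\chi_\gamma(x)$ differs markedly. In the unramified case $\chi_\gamma$ in fact splits over $F$, and I will extract the formula via parabolic descent; in the ramified case $\chi_\gamma$ is irreducible over $E$, and I will combine the reduction and stratification machinery of Sections~\ref{sec:reduction}--\ref{sectypem} with the closed formula of Theorem~\ref{thm:SO_dn_type}. For the unramified case, the assumption $\#B(\gamma)=1$ combined with the unitary constraint $c_1+\sigma(c_1)=0$ forces $\chi_\gamma(x)=x^2-\beta^2$ for some $\beta\in F^\times$: indeed $c_1\in \alpha F\cap F=\{0\}$ and, writing the two roots in $F$, they must sum to zero. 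The polynomial $\chi_{\alpha\gamma}(y)=y^2-\delta\beta^2$ (with $\delta=\alpha^2$) is irreducible over $F$ because $\delta\beta^2\in \delta F^{\times 2}$ is a nonsquare; hence $B(\gamma)=B(\gamma)^{split}$ is a singleton with $l_1=1$ and $\gamma$ sits diagonally in the Levi $\mathrm{Res}_{\mfo_E/\mfo}(\mathrm{GL}_1)$ as $\mathrm{diag}(\beta,-\beta)$. I then apply Proposition~\ref{pro:par_des_sorb}: the comparison factor $\#\mathrm{U}_2(\kappa)q^{-4}/(\#\mathrm{GL}_1(\kappa_E)q^{-2})$ simplifies to $(q+1)/q$, and $\mathcal{SO}_{g_1}^{\mathfrak{gl}_{1,\mfo_E}}=1$ since $\mathfrak{gl}_1$ is abelian, yielding $\mathcal{SO}_\gamma=(q+1)/q$. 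For the $d\mu$-version, Proposition~\ref{lem:par_des_sorb} together with $|D_1(\gamma_1)|=1$ and $\mathcal{SO}_{g_1,d\mu}^{\mathfrak{gl}_{1,\mfo_E}}=1$ reduces matters to computing $|D^G(\gamma)|^{1/2}=|\Delta_\gamma|^{1/2}=|2\beta|_F$, and verifying from Definition~\ref{def:Serre_inv} that $S(\gamma)=\ord(\beta)$; the latter is a direct calculation using $\widetilde{F}_{\chi_\gamma}=E\times E$ and the determinant of the embedding matrix of $\mfo_E[x]/\chi_\gamma(x)\hookrightarrow \mfo_E\times\mfo_E$, whose cokernel is $\mfo_E/2\beta\mfo_E$. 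This delivers $\mathcal{SO}_{\gamma,d\mu}=q^{S(\gamma)}$.

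For the ramified case, $\chi_\gamma$ is irreducible over $E$ and $[\kappa_R:\kappa]=1$ holds automatically. I will invoke Lemma~\ref{lem:invarianct_translation} to translate $\gamma$ by $-(c_1/2)I$ (valid since $c_1/2\in \alpha\mfo_F$) and reduce to $\chi_\gamma(x)=x^2+c_2$ with $c_2\in F^\times$. The fact that $F_{\chi_\gamma}^\sigma=F[\sqrt{-\delta c_2}]$ is a ramified quadratic extension forces $-\delta c_2\in \pi F^{\times 2}\cup \pi\delta F^{\times 2}$, so $d_2:=\ord(c_2)$ is odd, and a direct computation using the uniformizer $\sqrt{\pi u}$ of $\widetilde{F}_{\chi_\gamma}/E$ gives $d_2=2S(\gamma)+1$. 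Now I apply the stratification of Equation~(\ref{st2_unsp}): for $n=2$ the strata are of type $(d_2)$ or $(k_1,k_2)$ with $k_1+k_2=d_2$ and $1\leq k_1\leq k_2$, and the odd parity of $d_2$ excludes the awkward case $k_1=k_2$. The $(d_2)$-stratum contributes $(q^2-1)/q^2$ by Theorem~\ref{thm:SO_dn_type} with $l=1$. For each $(k_1,k_2)$-stratum, Proposition~\ref{propendred_unsp} reduces the count to $\gamma^{(k_1)}$, whose characteristic polynomial $x^2+c_2/\pi^{2k_1}$ still satisfies the hypotheses of Theorem~\ref{thm:SO_dn_type} (irreducibility over $E$ is inherited because $\widetilde{F}_{\chi_{\gamma^{(k_1)}}}=\widetilde{F}_{\chi_\gamma}$, and the reduction remains $x^2$ precisely because $k_1<d_2/2$), so the inner sum is again $(q^2-1)/q^2$, weighted by $q^{-k_1}$. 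Summing the geometric series gives $\mathcal{SO}_\gamma=\tfrac{q^2-1}{q^2}\sum_{k_1=0}^{S(\gamma)}q^{-k_1}=\tfrac{(q+1)(q^{S(\gamma)+1}-1)}{q^{S(\gamma)+2}}$; the $d\mu$-version then follows from Corollary~\ref{cor:comp_irr} (applicable since $\widetilde{F}_{\chi_\gamma}/E$ is a field extension) with $d=1$, yielding $\mathcal{SO}_{\gamma,d\mu}=(q^{S(\gamma)+1}-1)/(q-1)$.

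The main obstacle will be establishing the odd parity of $d_2$ in the ramified case after the translation to $c_1=0$, since this parity drives the shape of the geometric series and removes the $k_1=k_2$ stratum for which Theorem~\ref{thm:SO_dn_type} would not directly apply. This rests on the classification of $F^\times/F^{\times 2}$ in odd residue characteristic (four classes) and the identification of $\pi F^{\times 2}$ and $\pi\delta F^{\times 2}$ as the two classes producing ramified quadratic extensions of $F$. A secondary concern is verifying uniformly that each reduced element $\gamma^{(k_1)}$ continues to satisfy the hypotheses of Theorem~\ref{thm:SO_dn_type}; the irreducibility is automatic from $\widetilde{F}_{\chi_{\gamma^{(k_1)}}}=\widetilde{F}_{\chi_\gamma}$, while the Newton-polygon bound $\ord_E(c_1)\geq d_2/2$ (forced by irreducibility over $E$) combined with the strict inequality $k_1\leq(d_2-1)/2<d_2/2$ guarantees that $\overline{\chi}_{\gamma^{(k_1)}}(x)=x^2$ throughout the iteration.
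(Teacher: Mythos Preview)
Your proposal is correct and follows essentially the same route as the paper: parabolic descent in the unramified (split) case and stratification plus Proposition~\ref{propendred_unsp} plus Theorem~\ref{thm:SO_dn_type} in the ramified case. Two small remarks. First, in the unramified case your justification for $c_1=0$ is circular as written: the claim ``$c_1\in\alpha F\cap F$'' presupposes $c_1\in F$, which you then deduce from the roots lying in $F$---but that is exactly what you want to show. You can fix this trivially by translating by $-(c_1/2)I$ (just as you do in the ramified case), or better, simply observe that the parabolic descent formulas work for any $g_1\in\mfo_E$ without needing $g_1\in F$; the identity $\Delta_\gamma=(g_1+\sigma(g_1))^2$ and the cokernel computation giving $S(\gamma)=\mathrm{ord}(\mathrm{Tr}_{E/F}(g_1))$ go through verbatim. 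Second, for $\mathcal{SO}_{\gamma,d\mu}$ in the unramified case the paper instead applies Proposition~\ref{prop:comparison_measures} directly (using $|\Delta_\gamma|^{1/2}=|\Delta_{\alpha\gamma}|^{1/2}=q^{-S(\psi)}=q^{-S(\gamma)}$ via Proposition~\ref{propserre}); your route through Proposition~\ref{lem:par_des_sorb} with the explicit cokernel is a perfectly valid alternative.
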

     \begin{proof}
         \begin{enumerate}
             \item 
If $F_{\chi_\gamma}^\sigma/F$ is unramified, then $ F_{\chi_\gamma}^{\sigma}\cong E$ and thus $\widetilde{F}_{\chi_\gamma}\cong F_{\chi_\gamma}^{\sigma}\times F_{\chi_\gamma}^{\sigma}$.
Proposition \ref{pro:par_des_sorb} then yields that 
\[
\mathcal{SO}_{\gamma}=\frac{\#\mathrm{U}_{2}(\kappa)\cdot q^{-4}}{\#\mathrm{GL}_{1}(\kappa_E)\cdot q^{-2}}\cdot 1=\frac{q+1}{q}.
\]
Then by Proposition \ref{prop:comparison_measures}, 
\[
\mathcal{SO}_{\gamma,d\mu}=  |N_{F_{\chi_\gamma}^{\sigma}/F}(\Delta_{\widetilde{F}_{\chi_\gamma}/F_{\chi_\gamma}^{\sigma}})|^{1/2}|\cdot|\Delta_{F_{\chi_\gamma}^\sigma/F}|^{1/2}\cdot |D(\gamma)|^{-1/2}\cdot\frac{\#\underline{\mathrm{T}}_{\gamma}(\kappa)\cdot q^{-2}}{\#\mathrm{U}_2(\kappa)\cdot q^{-4}}\cdot \frac{q+1}{q}.
\]
Here $\#\underline{\mathrm{T}}_{\gamma}(\kappa)=q^2-1$   by Corollary \ref{lem:pts_spfib_tori},
$|N_{F_{\chi_\gamma}^{\sigma}/F}(\Delta_{\widetilde{F}_{\chi_\gamma}/F_{\chi_\gamma}^{\sigma}})|=1$ by Lemma \ref{lem:Conductor_unitary}, and $|\Delta_{F_{\chi_\gamma}^\sigma/F}|=1$ since $F_{\chi_\gamma}^\sigma/F$ is unramified.
Remark \ref{remark:sordmu} yields that 
$|D(\gamma)|^{-1/2}=|\Delta_{\gamma}|^{-1/2}=|\Delta_{\alpha\gamma}|^{-1/2}$, which is equal to $q^{S(\alpha\gamma)}$ by Proposition \ref{propserre}. 
Here $\Delta_{\gamma} (\in \mfo)$ is the discriminant of $\chi_{\gamma}(x)$ (cf. Notations of Part 2). 
The integer $S(\alpha\gamma)$ was defined to be  $S(\psi)$ in Definition \ref{def:Serre_inv}, which is the same as $S(\gamma)$. 

Combining these, we have the desired formula.

\item
If $F_{\chi_\gamma}^\sigma/F$ is ramified, then  
  Equation (\ref{st2_unsp}) and Proposition \ref{propendred_unsp} yield that
  \begin{align*}
    \mathcal{SO}_\gamma 
    =\sum_{k_1=0}^{(d_\gamma-1)/2}\sum_{\mathcal{T}(M) = (k_1,d_\gamma-k_1)} SO_{\gamma,M}
    =\sum_{k_1=0}^{(d_\gamma-1)/2}q^{-k_1}\sum_{\mathcal{T}(\pi^{-k_1}M) = (d_\gamma - 2k_1)} SO_{\gamma^{(k_1)},\pi^{-k_1}M},
  \end{align*}
  where $\chi_{\gamma^{(k_1)}}(x) = x^2 + c_1/\pi^{k_1} x + c_2/\pi^{2k_1}$.
Note that $S(\gamma)=\frac{d_{\gamma}-1}{2}$.  
Theorem \ref{thm:SO_dn_type} then yields that
  \[
  \mathcal{SO}_\gamma= \sum_{k_1=0}^{S(\gamma)}q^{-k_1}\cdot \frac{\#\mathrm{U}_2(\kappa)}{(q+1)q^{3}}
  =q^{-S(\gamma)} \cdot \frac{q^2-1}{q^2} \cdot \frac{q^{S(\gamma)+1}-1}{q-1}.
  \]
  By applying Corollary \ref{cor:comp_irr}, we have
  \[
  \mathcal{SO}_{\gamma,d\mu}=q^{S(\gamma)}\cdot\frac{1+q^{-1}}{\#\mathrm{U}_2(\kappa)q^{-4}}\cdot \frac{(q+1)(q^{S(\gamma)+1}-1)}{q^{S(\gamma)+2}}=\frac{q^{S(\gamma)+1}-1}{q-1}.
  \]
         \end{enumerate}
\end{proof}

\bibliographystyle{alpha}
\bibliography{References}

\end{document}